\documentclass[12pt]{amsart}
\usepackage{hyperref}
\hypersetup{
    colorlinks=true,
    linkcolor=red,
    filecolor=magenta,      
    urlcolor=cyan,
}
\usepackage{amssymb}
\usepackage{graphicx}
\usepackage{mathrsfs}
\usepackage[all]{xy}
\makeindex
\newcommand{\ad}{\mathrm{ad}}
\newcommand{\Ou}{\scriptsize\bullet}
\newcommand{\Bu}{\scriptsize\bullet,\scriptsize\bullet}
\newcommand{\coh}{\mathrm{coh}}
\newcommand{\cBC}{\ch_{\mathrm{BC}}}
\newcommand{\Db}{\mathrm{D^{b}_{\mathrm{coh}}}}

\newcommand{\LW}{\Lambda\left(W_{\C}^{*}\right)}
\newcommand{\As}{A^{E\prime \prime}}
\newcommand{\sXBC}{\square^{X}_{\mathrm{BC}}}

\newcommand{\cO}{\mathcal{O}}

\newcommand{\Hom}{\mathrm{Hom}}

\newcommand{\TsX}{T^{*}X}

\newcommand{\Aut}{\mathrm{Aut}}

\newcommand{\we}{\wedge}
\newcommand{\pa}{\partial}

\newcommand{\N}{\mathbf{N}}
\newcommand{ \Z}{\mathbf{Z}}
\newcommand{\R}{\mathbf{R}}
\newcommand{\C}{\mathbf{C}}

\newcommand{\n}{\nabla}

\newcommand{\fa}{f^{\alpha}}
\newcommand{\fb}{f^{\beta}}
\newcommand{\ho}{ \widehat { \otimes }}

\def \Hom{\mathrm {Hom}}
\def \Trs{\mathrm {Tr_{s}}}
\def \Tr{\mathrm{Tr}}
\def \Td{\mathrm {Td}}
\def \ch{\mathrm {ch}}
\def \End{\mathrm {End}}

\def \even{\mathrm{even}}

\def\deg{\mathrm{deg}}

\theoremstyle{plain}
\newtheorem{theorem}{Theorem}

\newtheorem{proposition}[theorem]{Proposition}
\newtheorem{cor}[theorem]{Corollary}
\theoremstyle{definition}
\newtheorem{definition}[theorem]{Definition}
\newtheorem{example}[theorem]{Example}

\theoremstyle{remark}
\newtheorem{remark}[theorem]{Remark}

\numberwithin{equation}{section}
\numberwithin{theorem}{section}
\counterwithin*{figure}{section}
\begin{document} 
 
 \bibliographystyle{alpha}
 
\title[Coherent sheaves and RRG]{Coherent 
sheaves, superconnections, and RRG}


\author{Jean-Michel \textsc{Bismut}}
\address{Institut de Math\'ematique d'Orsay \\ Universit\'e 
Paris-Saclay
\\ B\^atiment 307 \\ 91405 Orsay \\ France} 
\curraddr{}
\email{Jean-Michel.Bismut@universite-paris-saclay.fr}
\author{Shu \textsc{Shen}}
\address{Institut de Mathématiques de Jussieu-Paris Rive-Gauche\\
Sorbonne Université\\ Case Courrier 247\\4 place Jussieu\\75252 Paris 
Cedex 05\\ France}
\email{shu.shen@imj-prg.fr}
\author{Zhaoting \textsc{Wei}}
\address{Department of Mathematics\\
Texas A\&M University-Commerce\\Commerce\\ TX 75429
\\ USA}
\email{zhaoting.wei@tamuc.edu}
\thanks{}
\subjclass{18G80, 19L10, 35H10}
\keywords{Derived categories, Riemann-Roch theorems, Chern 
characters, Hypoelliptic equations}
\date{}


\dedicatory{}

\begin{abstract}
	Given a compact complex manifold, the purpose of this paper is to construct the Chern character for 
	coherent sheaves with values in  Bott-Chern cohomology, and to 
	prove a corresponding  
	 Riemann-Roch-Grothendieck formula. Our paper is based 
	on a fundamental construction of Block.
\end{abstract}
\maketitle
\tableofcontents
\section{Introduction}%
\label{sec:intro}
Let $X$ be a compact complex manifold. Let 
\index{KX@$K^{\cdot}\left(X\right)$}%
\index{KX@$K_{\cdot}\left(X\right)$}%
$K^{\cdot}\left(X\right),K_{\cdot}\left(X\right)$ be the 
Grothendieck groups of the locally free $\mathcal O_{X}$-sheaves, 
and of the $\mathcal O_{X}$-coherent sheaves on $X$,  
and let $H_{\mathrm{BC}}^{\scriptsize\bullet,\scriptsize\bullet}\left(X,\R\right)$  be the 
Bott-Chern cohomology of $X$. Set
\begin{equation}\label{eq:intro1}
H^{(=)}_{\mathrm{BC}}\left(X,\R\right)=\bigoplus_{p=0}^{\dim 
X}H_{\mathrm{BC}}^{p,p}\left(X,\R\right).
\end{equation}

There are   characteristic classes 
$K^{\cdot}\left(X\right)\to H^{(=)}_{\mathrm{BC}}\left(X,\R\right)$, like the Chern character 
$\ch_{\mathrm{BC}}$, or the Todd class 
$\Td_{\mathrm{BC}}$, that refine on the classical Chern classes with 
values in  the even de Rham cohomology $H_{\mathrm{dR}}^{\even}\left(X,\R\right)$.

In the present paper, we extend the definition of the   Chern 
character  $\ch_{\mathrm{BC}}:K^{\cdot}\left(X\right)\to 
H_{\mathrm{BC}}^{(=)}\left(X,\R\right)$ to a map
$K_{\cdot}\left(X\right)\to 
H_{\mathrm{BC}}^{(=)}\left(X,\R\right)$. In degree $(1,1)$, this map 
is just the first Chern class of the corresponding determinant line
bundle of Knudsen-Mumford \cite{KnudsenMumford}.  Also we prove a 
Riemann-Roch-Grothendieck  
formula. Namely, if $Y$ is another compact complex manifold, let $f:X\to Y$ 
be a holomorphic map.  Let $f_{!}:K_{\cdot}\left(X\right)\to 
K_{\cdot}\left(Y\right)$ be the direct image, 
and let $f_{*}:H_{\mathrm{BC}}^{(=)}\left(X,\R\right)\to 
H^{(=)}_{\mathrm{BC}}\left(Y,\R\right)$ denote the   
push-forward map. 
\begin{theorem}\label{thm:rrg}
	If $\mathscr F\in K_{\cdot}\left(X\right)$, then 
	\begin{equation}\label{eq:intro1a1}
\Td_{\mathrm{BC}}\left(TY\right)\ch_{\mathrm{BC}}\left(f_{!} \mathscr 
F\right)=f_{*}\left[\Td_{\mathrm{BC}}\left(TX\right)\ch_{\mathrm{BC}}\left(\mathscr F\right)\right]
\ \mathrm{in}\  H_{\mathrm{BC}}^{(=)}\left(Y,\R\right).
\end{equation}
\end{theorem}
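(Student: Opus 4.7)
\medskip

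\textbf{Overall strategy.} My plan is to follow Block's philosophy: a coherent sheaf $\mathscr F$ on $X$ is represented (up to quasi-isomorphism) by a $\Z$-graded $C^{\infty}$ complex vector bundle $E^{\cdot}$ on $X$ equipped with an antiholomorphic superconnection $A^{E\prime\prime}$ of total degree $1$ satisfying $(A^{E\prime\prime})^{2}=0$. This replaces $\mathscr F$ by a globally defined Dolbeault-type object on $X$, to which I can apply Chern-Weil theory. Given such a representative, choose a Hermitian metric $g^{E}$; this determines an adjoint $A^{E\prime}$, and the full superconnection $A^{E}=A^{E\prime}+A^{E\prime\prime}$ has curvature $(A^{E})^{2}\in\Omega^{\cdot}(X,\End E)$. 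The Chern character form is then defined by the Quillen-type formula $\ch(A^{E},g^{E})=\Tr_{s}[\exp(-(A^{E})^{2})]$. The first task is to show that this is a sum of forms of type $(p,p)$, is $d$-closed, and that changing either $g^{E}$ or the Block representative alters it by a form in $\partial\bar\partial\Omega^{\cdot}(X)$, so that the resulting class $\cBC(\mathscr F)\in H^{(=)}_{\mathrm{BC}}(X,\R)$ is an invariant of $\mathscr F$ and additive on short exact sequences. The agreement with the usual $\cBC$ on locally free sheaves, and with $c_{1}$ of the Knudsen-Mumford determinant in bidegree $(1,1)$, is checked on a resolution.

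\medskip

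\textbf{Proof of RRG.} The Riemann-Roch-Grothendieck statement I would attack by the classical factorisation: write $f:X\to Y$ as the composition of the graph embedding $i:X\hookrightarrow X\times Y$ and the projection $p:X\times Y\to Y$. It suffices to prove the formula separately for closed immersions and for holomorphic submersions, and then to check that both sides are functorial under composition. For a holomorphic submersion $p$, I would take a Block representative $(E^{\cdot},A^{E\prime\prime})$ of $\mathscr F$ and build a Bismut-type superconnection $B_{t}$ on the infinite-dimensional bundle of fibrewise forms with values in $E^{\cdot}$, whose small-time limit localises by the Mathai-Quillen / local families index theorem to produce $p_{*}[\Td_{\mathrm{BC}}(TX/Y)\cBC(\mathscr F)]$, and whose large-time limit recovers $\cBC(p_{!}\mathscr F)$ computed on the fibrewise cohomology bundle. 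The transgression formula between the two limits, after an appropriate $\partial\bar\partial$-refinement, yields the desired equality modulo exact forms in Bott-Chern. For the immersion case $i$, I would represent $i_{!}\mathscr F$ by a Koszul-type Block resolution supported near $X$ and apply an embedding formula in the spirit of Bismut-Lebeau, deforming the superconnection to concentrate the Chern character on $X$ and producing the extra factor of $\Td_{\mathrm{BC}}^{-1}(N_{X/X\times Y})$ that matches the quotient of Todd classes.

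\medskip

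\textbf{Main obstacle.} The genuinely delicate step is the submersion case, specifically the control of $\partial\bar\partial$-exact error terms in the transgression. Working with Bott-Chern classes is strictly stronger than working modulo $d$-exact forms, so the usual Chern-Simons transgression $\int_{0}^{\infty}\frac{d}{dt}\Tr_{s}[\exp(-B_{t}^{2})]\,dt$ must be refined to a transgression modulo $\partial\bar\partial$. This requires identifying a secondary form, of type $(p-1,p-1)$, whose $\partial\bar\partial$ is the difference of the two limits; in Bismut's work on holomorphic families this is done via a double transgression with auxiliary parameters, and I expect that machinery to transfer here, but the need to carry it out for a genuine superconnection $A^{E\prime\prime}$ (rather than an honest $\bar\partial$-operator on a holomorphic bundle) is what makes the analysis substantially heavier. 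A secondary technical difficulty is verifying that the constructions descend correctly from Block representatives to $K_{\cdot}(X)$, i.e. are insensitive to quasi-isomorphism of cohesive modules; I would handle this by showing that any two Block representatives are connected by elementary homotopies and that $\cBC$ is invariant under each such homotopy up to $\partial\bar\partial$-exact forms.
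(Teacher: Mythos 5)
Your overall architecture matches the paper's: represent coherent sheaves via Block's antiholomorphic superconnections, develop the Chern character in Bott--Chern cohomology on that model, factor $f$ through the graph embedding and the projection, and handle the projection by infinite-dimensional superconnections and local index theory. Two points deserve comment, one of which is a genuine gap.

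\textbf{A different route for embeddings.} For the embedding factor you propose a Bismut--Lebeau--type deformation concentrating the Chern character on $X$. The paper instead uses the deformation to the normal cone together with functoriality under pull-backs, tensor products, and the Poincar\'e--Lelong equation. The paper's route is essentially cohomological and avoids introducing analysis for the embedding step; a Bismut--Lebeau argument could in principle also work but would cost you a second heat-kernel machine (on top of the one for the projection), and making it interact correctly with the $\Z$-graded, superconnection-valued data is substantially harder than for a holomorphic vector bundle. This is a stylistic difference, not an error.

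\textbf{The genuine gap: the small-time limit does not produce $\Td_{\mathrm{BC}}$ in general.} You write that the small-time limit of the Bismut superconnection localises to $p_{*}[\Td_{\mathrm{BC}}(TX/Y)\cBC(\mathscr F)]$ by the local families index theorem, and you treat the passage from ``modulo $d$-exact'' to ``modulo $\partial\overline{\partial}$-exact'' as the hard but routine technical step. This misplaces the difficulty. When one chooses a fibrewise Hermitian metric with K\"ahler form $\omega^{X}$ and rescales $\omega^{X}\to\omega^{X}/t$, the Getzler-rescaled limit is governed by a connection on $T_{\R}X$ with torsion (the paper's $\underline{\nabla}^{T_{\R}X}$), and the index density that comes out is
$\widehat{A}(T_{\R}X,\underline{\nabla}^{T_{\R}X})\,e^{c_{1}(TX,g^{TX})/2}$, not $\Td(TX,g^{TX})$. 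To identify the Bott--Chern class of the $\widehat{A}$-form with $\widehat{A}_{\mathrm{BC}}(TX)$ one needs a Bott--Chern--theoretic argument (the exact sequence argument of Bismut--Gillet--Soul\'e for the sequence $0\to T^{*}X\to F\to TX\to 0$), and that argument, together with the convergence of the rescaled heat kernel itself, requires $\overline{\partial}^{X}\partial^{X}\omega^{X}=0$; the unwanted term $-\overline{\partial}^{X}\partial^{X}i\omega^{X}/t$ in the curvature blows up otherwise. Since a compact complex manifold of dimension $\geq 3$ need not admit any pluriclosed metric, the elliptic superconnection argument you sketch cannot be completed in general. The paper overcomes this by introducing hypoelliptic superconnections on the total space of $\widehat{TX}$, and then a further ``exotic'' deformation in which the K\"ahler form is replaced by $\omega^{X}_{\theta}=(1-c+\tfrac{cd}{2}|Y|^{2})\omega^{X}$ depending explicitly on the vertical coordinate $Y$; the parameter $c=1$ kills the offending $\overline{\partial}^{X}\partial^{X}\omega^{X}$-term and the local index density becomes the ``exotic'' Todd form of Bismut, whose Bott--Chern class is $\Td_{\mathrm{BC}}(TX)$. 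Your proposal as written has no device to handle this, so the submersion case would fail outside the pluriclosed setting.

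A second, smaller issue: you would also need an analogue of Grauert's direct image theorem at the level of Block modules, i.e.\ a finite-dimensional antiholomorphic superconnection $\underline{\mathscr E}$ on $S$ quasi-isomorphic to $p_{*}\mathscr E$, together with a proof that the infinite-dimensional Chern character of $p_{*}\mathscr E$ agrees with $\cBC(\underline{\mathscr E})$. The paper establishes this via spectral truncations. You gesture at the ``large-time limit recovers $\cBC(p_{!}\mathscr F)$'' but that step presupposes local freeness of $\mathscr H Rp_{*}\mathscr F$, which again does not hold in general; the spectral truncation/cone argument is what replaces the large-time limit in the non-locally-free case.
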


When $X$ is projective, then 
$K_{\cdot}\left(X\right)=K^{\cdot}\left(X\right)$, and our definition 
of $\ch_{\mathrm{BC}}$ is the standard one. Similarly, when $X,Y$ are 
projective, Theorem \ref{thm:rrg} is a consequence of 
Riemann-Roch-Grothendieck.

We will describe the ideas and techniques that are used to 
obtain Theorem \ref{thm:rrg}.
\subsection{The construction of $\cBC$}%
\label{subsec:cocBC}
If $E$ is a holomorphic vector bundle on $X$, and if $g^{E}$ is a Hermitian metric on 
$E$, there is an associated  unitary Chern connection, whose curvature is of 
type $(1,1)$. Using Chern-Weil theory,  one can construct 
a corresponding Chern character form,
$\ch\left(E,g^{E}\right)$, which is closed and lies in 
$\Omega^{(=)}\left(X,\R\right)$. Results of Bott and Chern 
\cite{BottChern65} show that the class of $\ch\left(E,g^{E}\right)$ 
in $H^{(=)}_{\mathrm{BC}}\left(X,\R\right)$ does not depend on 
$g^{E}$, which gives us a Chern character map
$\cBC:K^{\cdot}\left(X\right)\to H^{(=)}_{\mathrm{BC}}\left(X,\R\right)$.

If $X$ is projective,  if $E\in K_{\cdot}\left(X\right)$, it has a finite 
locally free projective 
resolution  $R$. It can be shown 
that $\cBC\left(R\right)$ does not depend on $R$, so that we get a 
Chern character map $\cBC:K_{\cdot}\left(X\right)\to 
H^{(=)}_{\mathrm{BC}}\left(X,\R\right)$. As shown by Voisin 
\cite{Voisin02}, if $X$ is not projective, 
such  projective resolutions may well not exist\footnote{In particular, 
for a generic torus of dimension $\ge 3$, the ideal sheaf of a point 
does not admit a finite locally free resolution.}.

To sidestep this problem,   we use a fundamental construction of Block 
\cite{Block10}. To explain this construction, we  replace 
$K_{\cdot}\left(X\right)$ by $\Db\left(X\right)$, the derived category of 
bounded $\mathcal{O}_{X}$-complexes with coherent cohomology. In 
\cite{Block10}, Block established an equivalence of categories 
between $\Db\left(X\right)$ and another category 
$\underline{\mathrm{B}}\left(X\right)$ of objects which extend holomorphic 
vector bundles. More precisely, an  object in 
$\underline{\mathrm{B}}\left(X\right)$ is represented by $\mathscr E:\left(E, 
A^{E \prime \prime }\right)$, where $E$ a $\Z$-graded vector bundle which 
is a free $\Lambda\left(\overline{\TsX}\right)$-module, and $A^{E 
\prime \prime }$ is an antiholomorphic superconnection on $E$ in the sense 
of Quillen \cite{Quillen85a} such that $A^{E \prime \prime,2}=0$.

Following earlier   work by Qiang \cite{Qiang16,Qiang17}, as in the case of holomorphic vector bundles, one can define 
generalized metrics on $E$, and use these to  
define a Chern character $\cBC:\underline{\mathrm{B}}\left(X\right)\to 
H^{(=)}_{\mathrm{BC}}\left(X,\R\right)$, from which we  get a map 
$\cBC:\Db\left(X\right)\to H^{(=)}_{\mathrm{BC}}\left(X,\R\right)$, and finally 
an associated map $\cBC:K_{\cdot}\left(X\right)\to 
H^{(=)}_{\mathrm{BC}}\left(X,\R\right)$. Also we show that in degree 
$(1,1)$, our Chern character coincides with the first Chern class of 
the Knudsen-Mumford determinant  bundle \cite{KnudsenMumford}. These results are established by showing 
that the analytic constructions are compatible with the  underlying 
functorial properties of $\Db\left(X\right)$.  
\subsection{The Riemann-Roch theorem}%
\label{subsec:rrint}
Let us now describe the main steps in our proof of Theorem 
\ref{thm:rrg}. As in Borel-Serre \cite[Section 7]{BorelSerre58}, the proof is reduced to the case where $f$ 
is an embedding or a projection. Indeed, let $i:H \subset X\times Y$ be the graph of $f$, and 
let $p$ be the projection $X\times Y\to Y$. Then $f=pi$. Also $X\sim 
H$, so that $i$ is an embedding $X\to X\times Y$.  By functoriality, 
we will reduce the proof to the case where $f=i $ or $f=p$.
\subsubsection{The case of embeddings}%
\label{subsubsec:emb}
In the case of embeddings, the proof is obtained by a suitable 
adaptation of the deformation to the normal cone. The proof relies 
mostly on the functorial properties of the above objects $\mathscr E$.
\subsubsection{The case of projections}%
\label{subsubsec:pro}
In the case of a projection $p:M=S\times X\to S$,  we reduce the proof to the 
case where $\mathscr F$ is represented by an   object  
$\mathscr  E$ in $\underline{\mathrm{B}}\left(M\right)$. By a theorem of 
Grauert \cite[Theorem 10.4.6]{GrauertRemmert84}, $p_{*}\mathscr E$ is known to define an object 
in $\Db\left(S\right)$, but it is not an object in 
$\underline{\mathrm{B}}\left(S\right)$, because it is infinite-dimensional. 

Then, we adapt the methods of  \cite{Bismut10b}, 
where the case of proper holomorphic submersions was considered, when  $\mathscr E$ is a 
 holomorphic vector bundle, and the direct image 
$Rp_{*}\mathscr E$ is   locally free, in which case $\cBC\left(Rp_{*}\mathscr E\right)$ 
can be defined without using the theory of Block \cite{Block10}. The proof was obtained using the 
infinite-dimensional elliptic superconnections on the infinite-dimensional 
$p_{*} \mathscr E$ of 
\cite{Bismut86d,BismutGilletSoule88b,BismutGilletSoule88c},  
their hypoelliptic deformations\footnote{This refers to  properties 
 of the curvatures of the superconnections, that are fiberwise
elliptic differential operators, or fiberwise hypoelliptic operators.}, as well 
as local index theory\footnote{Local index theory is a method 
which computes the local asymptotics of the supertraces of certain 
heat kernels as $t\to 0$ in terms of characteristic forms.}.

 Given metric data 
which involve in particular  a Hermitian metric on $TX$ with Kähler form 
$\omega^{X}$, we define infinite-dimensional Chern character 
forms,   and we prove that these forms represent 
$\cBC\left(Rp_{*}\mathscr E\right)$. For $t>0$, when replacing 
$\omega^{X}$ by $\omega^{X}/t$, if 
$\overline{\pa}^{X}\pa^{X}\omega^{X}=0$, 
we take the limit of our elliptic superconnection forms, and we prove 
Theorem \ref{thm:rrg}. When there is no such a metric, we inspire 
ourselves from the methods of \cite{Bismut10b}, where 
superconnections with hypoelliptic curvature are defined,  which 
ultimately allow us to prove Theorem \ref{thm:rrg}. In the 
constructions  and in the proofs, the fact we deal with 
antiholomorphic superconnections instead of holomorphic vector 
bundles introduces extra 
difficulties with respect to \cite{Bismut10b}. 
\subsubsection{The spectral truncations}
In Subsections \ref{subsec:trunc} and \ref{subsec:sptrin}, we develop 
a technique of spectral truncations, which is used when 
dealing with the infinite-dimensional $p_{*} \mathscr E$. This method 
extends classical spectral truncations to families, while preserving critical 
cohomological information. 
\subsection{Earlier work}
Let us describe  earlier related work.  
In \cite{OBrianToledoTong81a}, O'Brian, Toledo and Tong defined a 
Chern character on $K_{\cdot}\left(X\right)$ with values in the 
Hodge cohomology $H^{\Ou}\left(X,\Omega^{\Ou}_{X}\right)$. In 
\cite{OBrianToledoTong85}, they proved a corresponding 
Riemann-Roch-Grothendieck formula. Since Bott-Chern cohomology maps 
into Hodge cohomology, the above results follow from Theorem 
\ref{thm:rrg}. 

In \cite{RLevy87}, Levy 
proved  a topological version of our result, which is  valid for 
 general complex spaces. Let 
$K^{0}_{\mathrm{top}}\left(X\right)$ be the Grothendieck group generated 
by smooth vector bundles on $X$. Levy 
constructed a morphism  $K_{\cdot}\left(X\right)\to 
K^{0}_{\mathrm{top}}\left(X\right)$ that extends the obvious morphism 
$K^{\cdot}\left(X\right)\to K^{0}_{\mathrm{top}}\left(X\right)$, and he 
proved the compatibility of this morphism to direct images. Since 
Bott-Chern cohomology maps into de Rham cohomology, Levy's results 
are now consequences of Theorem \ref{thm:rrg}.

In \cite{Grivaux10}, Grivaux defined a Chern character on 
$K_{\cdot}\left(X\right)$ with values in the rational Deligne 
cohomology ring $A\left(X\right)$,  and he proved
Riemann-Roch-Grothendieck for projective morphisms.  He also proved a 
uniqueness result for characteristic classes on 
$K_{\cdot}\left(X\right)$ with values in a cohomology  theory that 
verifies a 
simple set of axioms. Recently, 
following earlier work by Green \cite{Green80},  Toledo-Tong \cite{ToledoTong86}, Hosgood 
\cite{Hosgood20a,Hosgood20b} constructed resolutions of coherent sheaves by simplicially locally free sheaves, 
and obtained Chern classes  of coherent sheaves in de Rham 
cohomology (and in twisted de Rham cohomology), which he showed 
to be compatible with the construction of Grivaux 
\cite{Grivaux10}. 

In \cite{Wu20}, Wu has proved a Riemann-Roch-Grothendieck 
theorem for coherent sheaves and projective morphisms. His Chern 
character takes values in rational Bott-Chern cohomology.  He proves 
a Riemann-Roch-Grothendieck for projective morphisms, while our 
results are unconditional.

As explained before, the constructions and results of Block \cite{Block10} play a 
fundamental role in our paper, as an inspiration, and also for 
fundamental ideas. Also  in \cite{Qiang16,Qiang17}, Qiang had 
constructed a Chern character $\cBC:K_{\cdot}\left(X\right)\to 
H^{(=)}_{\mathrm{BC}}\left(X,\R\right)$  by 
essentially the same methods as ours.  More 
detailed comparison with his results will be given in Section  
\ref{sec:chfoge}.

Moreover,  we show that our Chern character 
$\cBC:K_{\cdot}\left(X\right)\to H_{\mathrm{BC}}^{(=)}\left(X,\R\right)$ verifies 
the uniqueness conditions of Grivaux \cite{Grivaux10}, and so, it is 
compatible with the previous constructions of the Chern 
character. 
\subsection{The organization of the paper}%
\label{subsec:org}
This paper is organized as follows. In Section  \ref{sec:boch}, we 
recall various properties of Bott-Chern cohomology, and we recall the 
construction of characteristic classes with values in Bott-Chern 
cohomology.

In Section \ref{sec:derblo}, we state various properties of the 
derived category $\Db\left(X\right)$, and of the associated functors.

In Section \ref{sec:lia}, we recall elementary properties of  
 free modules over an exterior algebra.

In Section \ref{sec:ahscn}, we introduce antiholomorphic 
superconnections $\mathscr E=\left(E,A^{E \prime \prime }\right)$, we 
show that the corresponding cohomology sheaf $\mathscr H \mathscr E$ 
is coherent, and that the associated spectral sequence degenerates at 
$\mathscr E_{2}$. In particular, we prove the result of Block 
\cite{Block10} asserting that locally,  $\mathscr E$ is just a  complex of holomorphic 
vector bundles.

In Section \ref{sec:blo}, we establish the result of Block 
\cite{Block10} on the equivalence of categories 
 $\underline{\mathrm{B}}\left(X\right) \simeq \Db\left(X\right)$.
 
 In Section \ref{sec:ascngm}, given a splitting $E \simeq E_{0}$, 
 with 
 \index{ho@$\ho$}%
 $E_{0}=\Lambda\left(\overline{\TsX}\right)\ho D$\footnote{In 
 the paper, $D$ is called the diagonal bundle associated with $E$. 
 If $\mathcal{A},\mathcal{A'}$ are $\Z_{2}$-graded algebras, 
 $\mathcal{A}\ho\mathcal{A'}$ is the $\Z_{2}$-graded algebra which is 
 the tensor product of $\mathcal{A},\mathcal{A'}$.}, we define the generalized metrics on $E_{0}$, and the adjoint 
 $A^{E_{0} \prime }$ of the antiholomorphic superconnection $A^{E_{0} 
 \prime \prime }$. We also introduce the curvature $\left[A^{E_{0} 
 \prime \prime }, A^{E_{0} \prime }\right]$. This chapter is the 
 extension of what is done for holomorphic Hermitian vector bundles, 
 when constructing the corresponding Chern connection.
 
 In Section \ref{sec:chfoge}, given a generalized metric $h$, we 
 construct the Chern character form $\ch\left(A^{E_{0}\prime \prime} ,h\right)\in \Omega^{(=)}\left(X,\R\right)$, whose Bott-Chern 
 cohomology class does not depend on $h$, or on  the splitting and 
 will be denoted $\cBC\left(A^{E \prime \prime }\right)$. We show 
 that $\cBC$ induces a map $\Db\left(X\right)\to 
 H^{(=)}_{\mathrm{BC}}\left(X,\R\right)$, and ultimately a map 
 $K_{\cdot}\left(X\right)\to H^{(=)}_{\mathrm{BC}}\left(X,\R\right)$. 
 This Section gives another approach to earlier results by Qiang 
 \cite{Qiang16,Qiang17}.
 
 In Section \ref{sec:imm}, we establish Theorem \ref{thm:rrg} when 
 $f$ is an embedding $i:X\to Y$. Using the deformation to the normal 
 cone, we reduce the proof to the embedding $X\to 
 \mathbf{P}\left(N_{X/Y}\oplus \C\right)$. Also we prove that our 
 Chern character $\cBC:K_{\cdot}\left(X\right)\to 
 H^{(=)}_{\mathrm{BC}}\left(X,\R\right)$ verifies the uniqueness conditions of 
 Grivaux \cite{Grivaux10}.
 
 In Section \ref{sec:suel}, when $f$ is the projection $p:M=S\times 
 X\to S$, and $\mathscr E$ is an antiholomorphic superconnection on 
 $M$, we construct the infinite-dimensional antiholomorphic 
 superconnection $\left( p_{*} \mathscr E, A^{p_{*} \mathscr E \prime 
 \prime } \right) $ over $S$. Given a splitting $E \simeq 
 E_{0}$, a metric $g^{TX}$ on $TX$ 
 with Kähler form $\omega^{X}$,  and a Hermitian metric $g^{D}$ on 
 $D$,  we construct the $L_{2}$-adjoint $A^{p_{*} \mathscr 
 E_{0}\prime}$ of $A^{p_{*} \mathscr E_{0} \prime \prime }$, 
and we 
 give a Lichnerowicz formula for the curvature $\left[A^{p_{*} 
 \mathscr E_{0} \prime \prime },A^{p_{*} \mathscr E_{0}\prime 
 }\right]$.  Also we construct the elliptic Chern character forms 
 $\ch\left(A^{p_{*} \mathscr E_{0} \prime \prime }, \omega^{X},g^{D}\right)$,  and 
 their common Bott-Chern class $\cBC\left(A^{p_{*} \mathscr E \prime \prime 
 }\right)\in H^{(=)}_{\mathrm{BC}}\left(S,\R\right)$.
 
 In Section \ref{sec:lift}, we prove that $\cBC\left(A^{p_{*} \mathscr E \prime \prime 
 }\right)=\cBC\left(Rp_{*} \mathscr E\right)$. The proof is 
 based on the construction of a quasi-isomorphism from a classical  
 antiholomorphic superconnection $\underline{\mathscr E}$ to $p_{*} 
 \mathscr E$. Spectral truncations play a critical role in the proofs.
 
 In Section \ref{sec:prka}, we prove Theorem \ref{thm:rrg} for the projection 
 $p$ when $\overline{\pa}^{X}\pa^{X}\omega^{X}=0$. The proof is 
 obtained by taking the limit as $t\to 0$ of the forms 
 $\ch\left(A^{p_{*} \mathscr E_{0} \prime \prime}, \omega^{X}/t,g^{D}\right)$ by 
 methods of local index theory. 
 
 In Section \ref{sec:hypo}, we construct superconnections with 
 hypoelliptic curvature. More precisely, if $\widehat{TX}$ is another 
 copy of $TX$, we introduce the total space $\mathcal{M}$ of 
 $\widehat{T_{\R}X}$, and also the tautological Koszul complex 
 associated with the embedding $M\to \mathcal{M}$.  Given a splitting 
 $E \simeq E_{0}$, and metrics 
 $g^{TX},g^{\widehat{TX}},g^{D}$ on $TX,\widehat{TX},D$, we construct 
 a nonpositive Hermitian form $\epsilon_{X}$. The adjoint of our new 
 antiholomorphic superconnection $\mathcal{A}^{\prime \prime }_{Y}$ 
 over $S$
 is constructed using $\epsilon_{X}$. The corresponding 
 curvature is a fiberwise hypoelliptic operator. Our constructions 
 combine the methods of \cite{Bismut10b} with the 
 results of the previous Sections.
 
 In Section \ref{sec:hypofo}, the hypoelliptic superconnection  forms 
 $\ch\left(\mathcal{A}''_{Y},\omega^{X},g^{D},g^{\widehat{TX}}\right)$ are constructed, and  their Bott-Chern class is shown to be the same as the class of the elliptic superconnection forms.
 
 In Section \ref{sec:hypodb}, when 
 $\overline{\pa}^{X}\pa^{X}\omega^{X}=0$, we give another proof of 
Theorem \ref{thm:rrg} for $p$. The results of this Section duplicate 
the results of Section \ref{sec:prka}, with techniques that prepare 
for a proof of the Theorem \ref{thm:rrg} in full generality.

Finally, in Section \ref{sec:RBC}, we introduce a new deformation of 
our hypoelliptic superconnection, in which the 
Kähler form is  deformed to an object that is still a 
$(1,1)$-form depending explicitly on the extra coordinate 
$Y$, and we prove Theorem \ref{thm:rrg} for $p$, which completes the 
proof of this Theorem in full generality.
 
In the whole paper, if $A=A_{+} \oplus A_{-}$ is a $\Z_{2}$-graded 
algebra, if $a,b\in A$, we denote by 
\index{ab@$\left[a,b\right]$}%
$\left[a,b\right]$ the 
supercommutator of $a,b$, which is bilinear in $a,b$. More 
precisely, if $a\in A$, set 
\begin{align}\label{eq:dega1}
\deg\, a=&0\,\mathrm{if}\,a\in A_{+}, \\
&1\,\mathrm{if}\, a\in A_{-}. \nonumber 
\end{align}
If $a,b\in A_{\pm}$, then
\begin{equation}\label{eq:dega2}
\left[a,b\right]=ab-\left(-1\right)^{\deg a\deg b}ba.
\end{equation}
If $A$ is trivially $\Z_{2}$-graded, then $\left[a,b\right]$ is the 
standard commutator.
\section{Bott-Chern cohomology  and characteristic classes}%
\label{sec:boch}
The purpose of this Section is to recall  basic facts on 
Bott-Chern cohomology of a compact complex manifold $X$.

This Section is organized as follows. In Subsection \ref{subsec:boc}, 
we recall the definition of the Bott-Chern 
cohomology $H^{\Bu}_{\mathrm{BC}}\left(X,\C\right)$.

In Subsection \ref{subsec:kospl}, we give various properties of the 
Bott-Chern
Laplacian of Kodaira-Spencer, which imply that 
$H_{\mathrm{BC}}^{\Bu}\left(X,\C\right)$ is 
finite-dimensional, that it can be represented by smooth forms or by 
currents, and also that it is a bigraded algebra.

In Subsection \ref{subsec:unpro}, we describe various functors
that act on Bott-Chern cohomology.

Finally, in Subsection \ref{subsec:bovb}, we explain the construction 
of characteristic classes of holomorphic vector bundles with values 
in Bott-Chern cohomology.
\subsection{The Bott-Chern cohomology}%
\label{subsec:boc}
Let $X$ be a compact complex manifold of  dimension $n$.  Let $TX$ be 
the holomorphic tangent bundle, and let $T_{\R}X$ be the 
corresponding real tangent bundle. If $T_{\C}X=T_{\R}X \otimes 
_{\R}\C$, then $T_{\C}X=TX \oplus \overline{TX}$.

Let 
$\left(\Omega\left(X,\R\right),d\right)$ be the de Rham 
complex of smooth real differential forms on $X$. The  de Rham cohomology groups 
\index{HXR@$H^{\Ou}\left(X,\R\right)$}%
$H\left(X,\R\right)$ are defined by
\begin{equation}\label{eq:flug2}
H^{\Ou}\left(X,\R\right)=\ker 
d\cap\Omega^{\Ou}\left(X,\R\right)/d\Omega^{\scriptsize\bullet-1}\left(X,\R\right).
\end{equation}
If $\alpha\in \Omega\left(X,\R\right)$ is closed, let
\index{a@$\left[\alpha\right]$}%
$\left[\alpha\right]\in H\left(X,\R\right)$ be its cohomology 
class. We may as well replace $\R$ by $\C$, and obtain 
the complexification of the above objects.

We follow   \cite[Section 6.8]{Demaillylivre} and 
\cite[Section 1]{Schweitzer07}. For $0\le p,q\le n$, 
let $\Omega^{p,q}\left(X,\C\right)$ be the vector space of smooth 
sections of $\Lambda^{p,q}\left(T^{*}_{\C}X 
\right)=\Lambda^{p}\left(T^{*}X\right)\ho\Lambda^{q}\left(\overline{T^{*}X}\right)$. Note that $d$ splits as $d=\overline{\pa}+\pa$.
\begin{definition}\label{BottChe}
The Bott-Chern cohomology groups
\index{HBC@$H^{p,q}_{\mathrm{BC}}\left(X,\C\right)$}%
$H^{p,q}_{\mathrm{BC}}\left(X,\C\right)$ are given by
\begin{equation}\label{eq:BoCh1}
H^{p,q}_{\mathrm{BC}}\left(X,\C\right)= \left( 
\Omega^{p,q}\left(X,\C\right)\cap \ker d\right) 
/\overline{\pa}\pa\Omega^{p-1,q-1}\left(X,\C\right).
\end{equation}
Then $H^{\Bu}_{\mathrm{BC}}\left(X,\C\right)$ is a bigraded 
vector space. 
If $\alpha\in 
\Omega^{p,q}\left(X,\C\right)$ is closed, let
\index{a@$\left\{\alpha\right\}$}%
$\left\{\alpha\right\}$ be the class of $\alpha$ in 
$H^{p,q}_{\mathrm{BC}}\left(X,\C\right)$.
There is a  canonical map 
$H^{p,q}_{\mathrm{BC}}\left(X,\C\right)\to 
H^{p+q}\left(X,\C\right)$ that maps 
$\left\{\alpha\right\}$ to $\left[\alpha\right]$.
\end{definition}
Then $H^{\Bu}_{\mathrm{BC}}\left(X,\C\right)$ inherits from 
$\Omega^{\Bu}\left(X,\C\right)$ the 
structure of a bigraded algebra. 

Put
\index{OXC@$\Omega^{(=)}\left(X,\C\right)$}%
\index{HXC@$H^{(=)}_{\mathrm{BC}}\left(X,\C\right)$}%
\begin{align}\label{eq:BoCh0}
    &\Omega^{(=)}\left(X,\C\right)=\bigoplus_{0\le p\le 
n}\Omega^{p,p}\left(X,\C\right),
&H^{(=)}_{\mathrm{BC}}\left(X,\C\right)=\bigoplus_{0\le p\le 
n}H^{p,p}_{\mathrm{BC}}\left(X,\C\right).
\end{align}
The vector spaces  
$\Omega^{(=)}\left(X,\C\right),H_{\mathrm{BC}}^{(=)}\left(X,\C\right)$ are preserved by 
conjugation. We obtain this way the  real vector space
\index{OXR@$\Omega^{(=)}\left(X,\R\right)$}%
$\Omega^{(=)}\left(X,\R\right)$ and the corresponding real Bott-Chern 
cohomology  
\index{HXR@$H^{(=)}_{\mathrm{BC}}\left(X,\R\right)$}%
$H^{(=)}_{\mathrm{BC}}\left(X,\R\right)$. Also 
$H^{(=)}_{\mathrm{BC}}\left(X,\R\right)$ is  an algebra.
\subsection{The  Bott-Chern Laplacian of Kodaira-Spencer}%
\label{subsec:kospl}
Let $g^{TX}$ be a Hermitian  metric on $TX$. We equip 
$\Omega\left(X,\C\right)$ with the associated $L_{2}$ Hermitian 
product, so that the $\Omega^{\Bu}\left(X,\C\right)$ are mutually 
orthogonal. If $A$ is a differential operator acting on 
$\Omega\left(X,\C\right)$, we denote by $A^{*}$ its formal 
adjoint. If $A,B$ are two operators, let 
\index{AB@$\left[A,B\right]_{+}$}%
$\left[A,B\right]_{+}=AB+BA$ be their anticommutator.

We introduce the Kodaira-Spencer Laplacian \cite[Proposition 
5]{KodairaSpencer60}, \cite[Section 2b]{Schweitzer07},
\begin{equation}\label{eq:or1}
\square^{X}_{\mathrm{BC}}=\left[\overline{\pa}\pa,\left(\overline{\pa}\pa\right)^{*}\right]_{+}+
\left[\overline{\pa}^{*}\pa,\left(\overline{\pa}^{*}\pa\right)^{*}\right]_{+}+\overline{\pa}^{*}\overline{\pa}+\pa^{*}\pa.
\end{equation}
Then $\square^{X}_{\mathrm{BC}}$ is a formally self-adjoint 
nonnegative real 
operator   acting on $\Omega\left(X,\R\right)$ and 
preserving the bidegree. This operator is elliptic of order $4$, with 
scalar principal symbol $\left\vert  \xi\right\vert^{4}_{T_{\R}^{*}X}/4$.

Let
\index{DXR@$\mathscr D\left(X,\R\right)$}%
$\mathscr D\left(X,\R\right)$ be the vector space of real currents 
on $X$, and let 
\index{DXC@$\mathscr D\left(X,\C\right)$}%
$\mathscr D\left(X,\C\right)$ be its complexification.  Then $\Omega\left(X,\R\right) \subset \mathscr 
D\left(X,\R\right)$.  
If $u\in \mathscr D\left(X,\C\right)$, if 
$\square^{X}_{\mathrm{BC}}u=0$, since $\square^{X}_{\mathrm{BC}}$ is 
elliptic, then $u\in\Omega\left(X,\C\right)$.

Set\begin{equation}\label{eq:or2}
\mathcal{H}\left(X\right)=\ker 
\square^{X}_{\mathrm{BC}}.\end{equation}
Then $\mathcal{H}\left(X\right)$ is also bigraded, and moreover,
\begin{equation}\label{eq:or3}
\mathcal{H}\left(X\right)=\ker 
\overline{\pa}\cap\ker \pa\cap\ker\overline{\pa}^{*}\pa^{*}.
\end{equation}
By \cite[Theorem 2.2]{Schweitzer07}, we have the orthogonal splitting
\begin{equation}\label{eq:or3a1}
\Omega\left(X,\C\right)=\mathcal{H}\left(X\right) \oplus \mathrm{im} 
\overline{\pa}\pa \oplus \left( 
\mathrm{im}\pa^{*}+\mathrm{im}\overline{\pa}^{*} \right),
\end{equation}
and we have the canonical isomorphism of bigraded vector spaces,
\begin{equation}\label{eq:or4}
H_{\mathrm{BC}}\left(X,\C\right) \simeq 
\mathcal{H}\left(X\right).
\end{equation}
In particular, the Bott-Chern groups 
$H^{\Bu}_{\mathrm{BC}}\left(X,\C\right)$ are finite 
dimensional.

By the above, if we define Bott-Chern cohomology 
using instead $\mathscr D\left(X,\C\right)$, it is 
the same as before, and it is still represented canonically by the 
smooth forms in 
$\mathcal{H}\left(X\right)$. While, in general,  
the multiplication of currents is  not well-defined, the 
multiplication of the Bott-Chern classes of currents is still 
well-defined via the multiplication of corresponding smooth 
representatives. 

If $s\in \ker \overline{\pa}\cap\ker \pa$, then
\begin{equation}\label{eq:or5}
\square^{X}_{\mathrm{BC}}s=\overline{\pa}\pa 
\left(\overline{\pa}\pa\right)^{*}s,
\end{equation}
so   that if $s\in \Omega\left(X,\C\right)$, then
\begin{equation}\label{eq:or6}
\square^{X}_{\mathrm{BC}}\overline{\pa}\pa 
s=\overline{\pa}\pa\left(\overline{\pa}\pa\right)^{*}\overline{\pa}\pa s.
\end{equation}
In particular $\square^{X}_{\mathrm{BC}}$ preserves $\ker \overline{\pa}\cap\ker \pa$ 
and $\mathrm{im}\overline{\pa}\pa$, while mapping the first vector 
space into the second one.  By (\ref{eq:or3}), we have the orthogonal 
splitting,
\begin{equation}\label{eq:or6a1}
\ker \overline{\pa}\cap\ker\pa=\mathcal{H}\left(X\right) \oplus 
\mathrm{im}\overline{\pa}\pa,
\end{equation}
$\square^{X}_{\mathrm{BC}}$ preserves the splitting in 
(\ref{eq:or6a1}), and $\mathcal{H}\left(X\right)$ is its kernel.

For $t>0$, let $P_{t}\left(x,x'\right)$ denote the smooth kernel 
associated with the operator 
$\exp\left(-t\square^{X}_{\mathrm{BC}}\right)$ with respect to the 
Riemannian volume $dx'$. The operator 
$\exp\left(-t\square^{X}_{\mathrm{BC}}\right)$ preserves $\ker 
\overline{\pa}\cap\ker \pa$. 

For $t>0$, if $s\in \mathscr D\left(X,\C\right)$, then
\begin{equation}\label{eq:or7}
\exp\left(-t\square^{X}_{\mathrm{BC}}\right)s-s=-\int_{0}^{t}\square^{X}_{\mathrm{BC}}\exp\left(-u\sXBC\right)sdu.
\end{equation}
For $t>0$, if  $s\in \mathscr D\left(X,\C\right)$, then 
$\exp\left(-t\sXBC\right)s\in 
\Omega\left(X,\C\right)$.
If $s\in \ker\overline{\pa}\cap\ker\pa$, by (\ref{eq:or5}), 
(\ref{eq:or7}), we deduce that
\begin{equation}\label{eq:or8}
\exp\left(-t\square^{X}_{\mathrm{BC}}\right)s-s=-\overline{\pa}\pa\int_{0}^{t}\left(\overline{\pa}\pa\right)^{*}\exp\left(-u\sXBC\right)sdu.
\end{equation}
Therefore if $s\in \mathscr D\left(X,\C\right)$ lies in 
$\ker\overline{\pa}\cap\ker\pa$, and if $\left\{s\right\}\in 
H_{\mathrm{BC}}\left(X,\C\right)$ is the 
corresponding Bott-Chern class, the form 
$\exp\left(-t\sXBC\right)s\in 
\Omega\left(X,\C\right)$ is a smooth representative 
of $\left\{s\right\}$. 
Also, as  $t\to 0$, $\exp\left(-t\sXBC\right)s$ converges to $s$ in 
$\mathscr D\left(X,\C\right)$.
\begin{proposition}\label{prop:plim}
	If for $n\in \N$, $s_{n}\in \mathscr  D\left(X,\C\right)$ lies in $\ker 
	\overline{\pa}\cap \ker \pa$,  if $s\in \mathscr D\left(X,\C\right)$ 
	and $s_{n}\to s$ in $\mathscr D\left(X,\C\right)$, then $s\in \ker 
	\overline{\pa}\cap \ker \pa$, and 
	$\left\{s_{n}\right\}\to\left\{s\right\}$ in 
	$H_{\mathrm{BC}}\left(X,\C\right)$.
\end{proposition}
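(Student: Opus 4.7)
The first claim is immediate. Since $\overline{\pa}$ and $\pa$ act continuously on $\mathscr D(X,\C)$ with its weak topology, their kernels are closed; passing to the limit in $\overline{\pa}s_{n}=0$ and $\pa s_{n}=0$ gives $s\in\ker\overline{\pa}\cap\ker\pa$.

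For the convergence of Bott-Chern classes, my plan is to combine the smoothing action of the heat semigroup with the finite-dimensionality of $H_{\mathrm{BC}}(X,\C)$. I fix any $t>0$. Because $\exp(-t\sXBC)$ has smooth Schwartz kernel $P_{t}(x,x')$, it defines a continuous linear map $\mathscr D(X,\C)\to\Omega(X,\C)$ for the smooth Fréchet topology on the right: the derivatives of $\exp(-t\sXBC)s_{n}(x)$ are obtained by pairing $s_{n}$ against $x$-derivatives of $P_{t}(x,\cdot)$, and these pairings depend continuously on $s_{n}$ in $\mathscr D(X,\C)$ and smoothly on $x$. Consequently $\exp(-t\sXBC)s_{n}\to\exp(-t\sXBC)s$ in $\Omega(X,\C)$.

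Next, by the discussion preceding the proposition, and in particular by formula (\ref{eq:or8}), the hypothesis $s_{n},s\in\ker\overline{\pa}\cap\ker\pa$ ensures that $\exp(-t\sXBC)s_{n}$ and $\exp(-t\sXBC)s$ are smooth representatives of $\{s_{n}\}$ and $\{s\}$ in $H_{\mathrm{BC}}(X,\C)$ respectively. It remains to see that the linear map $\Omega(X,\C)\cap\ker\overline{\pa}\cap\ker\pa\to H_{\mathrm{BC}}(X,\C)$ sending a closed form to its Bott-Chern class is continuous in the Fréchet topology on the source. By the orthogonal decomposition (\ref{eq:or6a1}) together with the identification (\ref{eq:or4}), this map is realized as the $L_{2}$-orthogonal projection $\Pi$ onto the finite-dimensional space $\mathcal H(X)$ of smooth harmonic forms, and $\Pi$ is computed via $L_{2}$-pairing with a fixed finite orthonormal basis of $\mathcal H(X)$; such pairings are continuous on $\Omega(X,\C)$ (in fact already on $\mathscr D(X,\C)$). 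Applying this continuity yields $\{s_{n}\}\to\{s\}$ in $H_{\mathrm{BC}}(X,\C)$.

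The only nontrivial point to verify is the continuity of $\exp(-t\sXBC):\mathscr D(X,\C)\to\Omega(X,\C)$, and this is a routine consequence of the smoothness of $P_{t}$ and the definition of the topology on $\mathscr D(X,\C)$; it is not a serious obstacle. The essential conceptual mechanism of the argument is that the heat operator upgrades convergence of currents to smooth convergence of representatives, after which finite-dimensionality of $H_{\mathrm{BC}}(X,\C)$ automatically forces convergence of classes.
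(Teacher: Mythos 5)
Your proof is correct and follows essentially the same route as the paper: pass through $\exp(-t\sXBC)$ to obtain smooth representatives converging in $\Omega(X,\C)$, then conclude. The paper leaves the final step (that Fréchet convergence of closed smooth forms forces convergence of their Bott-Chern classes) implicit, while you spell it out via the orthogonal projection onto $\mathcal H(X)$; this is a welcome elaboration rather than a different method.
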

\begin{proof}
It is obvious that $s\in\ker 
	\overline{\pa}\cap \ker \pa$. For $t>0$, 
$\exp\left(-t\square^{X}_{\mathrm{BC}}\right)s_{n}$  converges 
to $\exp\left(-t\square^{X}_{\mathrm{BC}}\right)s$ in 
$\Omega\left(X,\C\right)$. Also the operator 
$\exp\left(-t\square^{X}_{\mathrm{BC}}\right)$ does not change the 
Bott-Chern class. This completes the proof of our proposition.
\end{proof}
\subsection{Functorial properties of Bott-Chern cohomology}%
\label{subsec:unpro}
Let $Y$ be a compact complex manifold of dimension $n'$, and let $f:X\to Y$ be a 
holomorphic map. Then $f^{*}$ maps $\Omega\left(Y,\C\right)$ into 
$\Omega\left(X,\C\right)$ as a morphism of bigraded algebras. Therefore 
$f^{*}$ induces a morphism of bigraded algebras $H_{\mathrm{BC}}\left(Y,\C\right)\to 
H_{\mathrm{BC}}\left(X,\C\right)$. Also $f^{*}$ preserves the 
corresponding real vector spaces.

By duality, $f_{*}$ maps $\mathscr D^{\Bu}\left(X,\C\right)$ into 
$\mathscr 
D^{n'-n+\scriptsize\bullet,n'-n+\scriptsize\bullet}\left(Y,\C\right)$, and it preserves the associated real vector spaces. Since the Bott-Chern cohomology can 
be defined  using currents, we  get a morphism of bigraded vector 
spaces  $f_{*}:H_{\mathrm{BC}}^{\Bu}\left(X,\C\right)\to 
H_{\mathrm{BC}}^{n'-n+\scriptsize\bullet,
n'-n+\scriptsize\bullet}\left(Y,\C\right)$. This makes 
sense, in spite of the fact that, in general, 
$f^{*}$ does not act on $\mathscr D\left(Y,\C\right)$, and $f_{*}$ 
does not map $\Omega\left(X,\C\right)$ into 
$\Omega\left(Y,\C\right)$\footnote{Still, if $f$ is a submersion, 
both assertions are true.}.  

Let $\mathcal{Y}$ be 
the total space of $T^{*}_{\R}Y$, so that $Y$ is viewed as the zero 
section of $\mathcal{Y}$.  Let $\mathscr 
D_{f}\left(Y,\C\right)$ be the set of currents $s$ on $Y$ such that  its 
wave-front set $\mathrm{WF}\left(s\right)\in \mathcal{Y}\setminus Y$ 
does not intersect $N_{f}=f_{*}T_{\R}X^{\perp}$\footnote{This is the set of 
the $\left(f\left(x\right),\eta\right)\vert_{x\in X, \eta\in 
T^{*}_{\R,f\left(x\right)}Y}$ such that $f^{*}\left(x\right)\eta=0$.}. By Hörmander 
\cite[Theorem 8.2.4]{Hormander83a},  if $s\in 
\mathscr D_{f}\left(Y,\C\right)$ is a current on $Y$, the current $f^{*}s$ is 
well-defined. If $s_{n}\vert_{n\in\N}\in \Omega\left(Y,\C\right)$ is a microlocal 
approximation of $s$ in $\mathscr 
D_{f}\left(Y,\C\right)$\footnote{This means that there is there is a 
closed conic subset $\Gamma \subset \mathcal{Y}$ such that  
$\Gamma\cap N_{f}=\emptyset$,
$\mathrm{WF}\left(s\right)\subset 
\Gamma,\mathrm{WF}\left(s_{n}\right) \subset \Gamma$, and 
$s_{n}\vert_{n\in\N}$ 
converges to $s$ in $\mathscr D_{\Gamma}\left(Y\right)$ in the sense 
of \cite[Definition 8.2.2]{Hormander83a}. By \cite[Theorem 
8.2.3]{Hormander83a}, given $s$ as before, such a sequence 
$s_{n}\vert_{n\in\N}$ 
always exists.} , then 
$f^{*}s_{n}$ converges to $f^{*}s$ in $\mathscr D\left(X,\C\right)$. If 
$s\in \mathscr D_{f}^{\Bu}\left(Y,\C\right)$ is closed, the above shows that $f^{*}s\in \mathscr 
D^{\Bu}\left(X,\C\right)$ is closed, and that 
$\left\{f^{*}s\right\}=f^{*}\left\{s\right\}\,\mathrm{in}\, 
H_{\mathrm{BC}}^{\Bu}(X,\C)$. 

In \cite[Theorem 8.2.10]{Hormander83a}, the above argument is used to 
show that if $\mathcal{X}$ is the total space of $T^{*}_{\R}X$,  if 
$s,s'$ are two currents in $\mathscr D\left(X,\C\right)$ such that there 
are no $\left(x,\xi\right)\in \mathcal{X}$ such that 
$\left(x,\xi\right)\in \mathrm{WF}\left(s\right), \left(x,-\xi\right)\in 
\mathrm{WF}\left(s'\right)$, the product of currents $ss'$ is 
well-defined and verifies suitable continuity properties. In 
particular, if $s,s'\in \mathscr D^{\Bu}\left(X,\C\right)$ are 
closed, $ss'$ is still closed and moreover,
\begin{equation}\label{eq:pro1}
\left\{ss'\right\}=\left\{s\right\}\left\{s'\right\}\,\mathrm{in}\, 
H_{\mathrm{BC}}^{\Bu}\left(X,\C\right).
\end{equation}
\subsection{Bott-Chern characteristic classes of holomorphic vector 
bundles}%
\label{subsec:bovb}
Here we follow \cite[Section (e)]{BismutGilletSoule88a}. Let $\left(F,\n^{F \prime 
\prime }\right)$ be a holomorphic 
vector bundle of rank $m$ on $X$. Let $g^{F}$ be a Hermitian 
metric on $F$,  let $\n^{F}=\n^{F \prime \prime}+\n^{F \prime}$ denote the corresponding Chern 
connection on $F$. Then $\n^{F \prime  \prime },\n^{F \prime }$ act 
as odd operators on $\Omega\left(X,F\right)$. Let $R^{F}$ be its curvature. Then 
$R^{F}$ is a $(1,1)$-form on $X$ with values in 
skew-adjoint sections of $\End\left(F\right)$. Then\footnote{Recall that $\left[\,\right]$ is our notation for the supercommutator.}
\begin{align}\label{eq:su1}
&\n^{F \prime \prime,2}=0,&\n^{F \prime,2}=0,\qquad
&R^{F}=\left[\n^{F \prime \prime },\n^{F \prime }\right].
\end{align}
By (\ref{eq:su1}), we obtain the Bianchi identities,
\begin{align}\label{eq:su2}
&\left[\n^{F \prime \prime },R^{F}\right]=0,
&\left[\n^{F \prime },R^{F}\right]=0.
\end{align}

Let $P\left(B\right)$ be an invariant polynomial on 
$\mathfrak{gl}\left(m,\C\right)$ under the adjoint action of 
$\mathrm{GL}\left(m,\C\right)$. 
The polynomial $P$ is said to be real if 
\begin{equation}\label{eq:cw0}
\overline{P\left(B\right)}=P\left(\overline{B}\right).
\end{equation}

Put
\index{PF@$P\left(F,g^{F}\right)$}%
\begin{equation}\label{eq:cw1}
P\left(F,g^{F}\right)=P\left(-R^{F}/2i\pi\right).
\end{equation}
Then the form $P\left(F,g^{F}\right)$ lies in 
$\Omega^{(=)}\left(X,\C\right)$.
By Chern-Weil theory, the form $P\left(F,g^{F}\right)$ 
is closed, and  its cohomology class does not depend on $g^{F}$. 
This characteristic class will be denoted
\index{PF@$P\left(F\right)$}%
$P\left(F\right)\in 
H^{\even}\left(X,\C\right)$. If $P$ is a real polynomial, then 
$P\left(F,g^{F}\right)\in \Omega^{(=)}\left(X,\R\right)$, and 
$P\left(F\right)\in H^{\even}\left(X,\R\right)$.

 Let $\mathscr M^{F}$ be the space of 
metrics on $F$, and let $d^{\mathscr M^{F}}$ be the de Rham operator 
on $\mathscr M^{F}$. Then $\theta=\left(g^{F}\right)^{-1}d^{\mathscr M^{F}}g^{F}$ 
is a $1$-form on $\mathscr M^{F}$ with values in self-adjoint 
sections of $\End\left(F\right)$ with respect to the given $g^{F}$. 

Note that $d^{\mathscr M^{F}}P\left(F,g^{F}\right)$ is a $1$-form on 
$\mathscr M^{F}$ with values in $\Omega^{(=)}\left(X,\C\right)$. Let 
$P'$ be the derivative of $P$. Put
\begin{equation}\label{eq:cw4}
\gamma_{P}\left(F,g^{F}\right)=\left\langle  P'\left(-\frac{R^{F}}{2i\pi}\right),\theta\right\rangle.
\end{equation}
Then $\gamma_{P}\left(F,g^{F}\right)$ a $1$-form on $\mathscr M^{F}$ with values in 
$\Omega^{(=)}\left(X,\C\right)$. If $P$ is real, this form is also 
real.

Let us recall a result of 
Bott-Chern \cite[Proposition 3.28]{BottChern65}, \cite[Theorem 
1.27]{BismutGilletSoule88a}. 
\begin{proposition}\label{prop:pbc}
	The following identity holds:
	\begin{equation}\label{eq:cw2}
d^{\mathscr 
M^{F}}P\left(F,g^{F}\right)=-\frac{\overline{\pa}\pa}{2i\pi}\gamma_{P}\left(F,g^{F}\right).
\end{equation}
In particular the Bott-Chern class 
$\left\{P\left(F,g^{F}\right)\right\}\in 
H_{\mathrm{BC}}^{(=)}\left(X,\C\right)$ does not depend on $g^{F}$. 
If $P$ is real, this class lies in 
$H^{(=)}_{\mathrm{BC}}\left(X,\R\right)$.
\end{proposition}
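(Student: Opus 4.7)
The strategy is to vary the metric via a universal $1$-parameter family on $\mathscr M^{F}\times X$ and track how the Chern curvature changes, observing that the resulting transgression has a refined $\overline{\pa}\pa$-exact form, not merely $d$-exact. Since $\n^{F \prime \prime }=\overline{\pa}$ does not depend on $g^{F}$, only $\n^{F \prime }$ varies. In a local holomorphic frame write $A=(g^{F})^{-1}\pa g^{F}$, so $\n^{F \prime }=\pa+A$. From $\theta=(g^{F})^{-1}d^{\mathscr M^{F}}g^{F}$ one gets $d^{\mathscr M^{F}}g^{F}=g^{F}\theta$ and $d^{\mathscr M^{F}}(g^{F})^{-1}=-\theta(g^{F})^{-1}$. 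Combining these with $d^{\mathscr M^{F}}\pa=-\pa d^{\mathscr M^{F}}$ (both differentials are odd on the product) gives
\begin{equation*}
d^{\mathscr M^{F}}A=-\pa\theta-[A,\theta].
\end{equation*}
Since $R^{F}=\overline{\pa}A$ in this frame (the $(2,0)$-part of $dA+A\wedge A$ vanishes because $R^{F}$ is of type $(1,1)$), anticommuting yields $d^{\mathscr M^{F}}R^{F}=\overline{\pa}(\pa\theta+[A,\theta])$.

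Next, by Chern-Weil, $d^{\mathscr M^{F}}P(F,g^{F})=-\frac{1}{2i\pi}\Tr[P'(B)\,d^{\mathscr M^{F}}R^{F}]$ with $B=-R^{F}/2i\pi$. The $\overline{\pa}$-Bianchi identity $\overline{\pa}R^{F}=0$, hence $\overline{\pa}P'(B)=0$, lets one pull $\overline{\pa}$ out of the trace, yielding $d^{\mathscr M^{F}}P(F,g^{F})=\frac{1}{2i\pi}\overline{\pa}\,\Tr[P'(B)\,d^{\mathscr M^{F}}A]$. I would then perform the analogous operation with $\pa$: the $\pa$-Bianchi identity $\pa R^{F}=-[A,R^{F}]$ gives $\pa P'(B)=-[A,P'(B)]$ (shown for $P(B)=\Tr(B^{n})$ by a telescoping sum, then extended to general invariant polynomials via $\mathrm{Ad}$-equivariance); combined with graded cyclicity of $\Tr$, keeping track of the odd degrees of $A$ and $\theta$, this produces
\begin{equation*}
\Tr[P'(B)(\pa\theta+[A,\theta])]=\pa\,\Tr[P'(B)\theta]=\pa\gamma_{P}(F,g^{F}).
\end{equation*}
Substitution gives $d^{\mathscr M^{F}}P(F,g^{F})=-\frac{\overline{\pa}\pa}{2i\pi}\gamma_{P}(F,g^{F})$, which is \eqref{eq:cw2}.

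Finally, $\mathscr M^{F}$ is convex, so any two metrics $g_{0}^{F},g_{1}^{F}$ are joined by a smooth path, and integrating the transgression identity along it shows $P(F,g_{1}^{F})-P(F,g_{0}^{F})$ is $\overline{\pa}\pa$-exact, so the Bott-Chern class $\{P(F,g^{F})\}\in H_{\mathrm{BC}}^{(=)}(X,\C)$ is independent of $g^{F}$. When $P$ is real, since $-R^{F}/2i\pi$ and $\theta$ are self-adjoint in an orthonormal frame, $P(F,g^{F})$ and $\gamma_{P}(F,g^{F})$ take values in $\Omega^{(=)}(X,\R)$, so the class refines to $H_{\mathrm{BC}}^{(=)}(X,\R)$. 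The main technical obstacle is careful sign bookkeeping: the anticommutation $d^{\mathscr M^{F}}\pa=-\pa d^{\mathscr M^{F}}$ (in contrast with $\partial/\partial t$ commuting with $\pa$) flips signs compared with a naive scalar-parameter computation, and the graded cyclicity of $\Tr$ must correctly account for $\theta$ being an odd form on $\mathscr M^{F}$.
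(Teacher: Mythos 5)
Your proof is correct and follows the same transgression approach as the paper's first argument: differentiate the curvature with respect to the metric and use the $\overline{\pa}$- and $\pa$-Bianchi identities to peel off both halves of $\overline{\pa}\pa$. The only cosmetic difference is that you work in a local holomorphic frame with $A=(g^{F})^{-1}\pa g^{F}$ and $R^{F}=\overline{\pa}A$, whereas the paper phrases the identical computation globally via $d^{\mathscr M^{F}}\n^{F\prime}=-\left[\n^{F\prime},\theta\right]$ and supercommutators.
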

\begin{proof}
Note that $\n^{F \prime }$ depends on $g^{F}$, and we have the 
	identity
	\begin{equation}\label{eq:cw7}
d^{\mathscr M^{F}}\n^{F \prime}=-\left[\n^{F \prime },\theta\right].
\end{equation}
Since $R^{F}=\left[\n^{F \prime \prime }, \n^{F \prime }\right]$, we 
get
\begin{equation}\label{eq:cw7a1}
d^{\mathscr M^{F}}R^{F}=-\left[\n^{F \prime \prime },d^{\mathscr 
M^{F}}\n^{F \prime }\right].
\end{equation}
Also
\begin{equation}\label{eq:cw7a2}
d^{\mathscr M^{F}}P\left(-R^{F}\right)=-\left\langle  
P'\left(-R^{F}\right),d^{\mathscr M^{F}}R^{F}\right\rangle.
\end{equation}
By (\ref{eq:cw7})--(\ref{eq:cw7a2}), we obtain
\begin{equation}\label{eq:cw7a3}
d^{\mathscr M^{F}}P\left(-R^{F}\right)=-\left\langle  
P'\left(-R^{F}\right),\left[\n^{F \prime \prime },\left[\n^{F \prime 
},\theta\right]\right]\right\rangle.
\end{equation}
Using the Bianchi identities and (\ref{eq:cw7a3}), we get 
(\ref{eq:cw2}). If $P$ is real, it is well known the form 
$P\left(F,g^{F}\right)$ is real.

Let us now give another argument taken from \cite[Theorem 
1.29]{BismutGilletSoule88a} that shows that 
$\left\{P\left(F,g^{F}\right)\right\}$ does not depend on $g^{F}$. On 
$\mathbf{P}^{1}$, we consider the canonical meromorphic coordinate 
$z$ and the Poincaré-Lelong equation
\begin{equation}\label{eq:bc5}
\frac{\overline{\pa}^{\mathbf{P}^{1}}\pa^{\mathbf{P}^{1}}}{2i\pi}\log\left(\left\vert  z\right\vert^{2}\right)=\delta_{0}-\delta_{\infty }.
\end{equation}
Let $p_{1},p_{2}$ be the projections from $X\times \mathbf{P}^{1}$ on 
$X,\mathbf{P}^{1}$. Let $g^{F'}$ be another Hermitian metric. Let 
$g^{p_{1}^{*}F}$ be a Hermitian metric  on 
$p_{1}^{*}F$ that restricts to $g^{F}, g^{F \prime }$ on $X\times 
\left\{0\right\},X\times \left\{ \infty\right\} $. By (\ref{eq:bc5}), we get
\begin{equation}\label{eq:bc5a1r0}
\frac{\overline{\pa}^{X}\pa^{X}}{2i\pi}p_{1*}\left[p_{2}^{*}\log\left(\left\vert  z\right\vert^{2}\right)P\left(p_{1}^{*}F,g^{p_{1}^{*}F}\right)\right]=P\left(F,g^{F}\right)-P\left(F,g^{F \prime }\right),
\end{equation}
which shows that $\left\{P\left(F,g^{F}\right)\right\}$ does not 
depend on $g^{F}$. The proof of our proposition is completed. 
\end{proof}

 \begin{definition}\label{def:Dboch}
 	Let
\index{PBC@$P_{\mathrm{BC}}\left(F\right)$}%
$P_{\mathrm{BC}}\left(F\right)\in H^{(=)}_{\mathrm{BC}}\left(X,\C\right)$ 
be the Bott-Chern class of $P\left(F,g^{F}\right)$.
 \end{definition}
 
If $P$ is real, then $P_{\mathrm{BC}}\left(F\right)\in 
H^{(=)}_{\mathrm{BC}}\left(X,\R\right)$.

Recall that if $B\in \mathfrak{gl}\left(m,\C\right)$, then
\index{Td@$\Td\left(B\right)$}%
\index{A@$\widehat{A}\left(B\right)$}%
\index{ch@$\ch\left(B\right)$}%
\index{c@$c_{1}\left(B\right)$}%
\begin{align}\label{eq:cw6}
&\Td\left(B\right)=\det\left[\frac{B}{1-e^{-B}}\right], 
&\widehat{A}\left(B\right)=\det\left[\frac{B/2}{\sinh\left(B/2\right)}\right],\\
&\ch\left(B\right)=\Tr\left[\exp\left(B\right)\right], &c_{1}\left(B\right)=\Tr\left[B\right]. \nonumber 
\end{align}
The associated  characteristic classes are called the Todd class, the 
$\widehat{A}$ class,   
the Chern character, and the first Chern class. Also
\begin{equation}\label{eq:crisp1}
\Td\left(B\right)=\widehat{A}\left(B\right)e^{c_{1}\left(B\right)/2}.
\end{equation}

In Section \ref{sec:prka}, we will also use the real form of the 
class $\widehat{A}$. More precisely, given $k\in \N$, if $\mathfrak 
{so}(k)$ is the Lie algebra of $\mathrm{SO}\left(k\right)$, if 
$B\in \mathfrak{so}(k)$, set
\begin{equation}\label{eq:crisp2}
\widehat{A}\left(B\right)=\det\left[\frac{B/2}{\sinh\left(B/2\right)}\right]^{1/2}.
\end{equation}
To $\widehat{A}\left(B\right)$, one can associate corresponding 
Pontryagin forms for real Euclidean vector bundles equipped with a 
metric connection.

In the present paper, analogues of equation (\ref{eq:cw2}) will reappear in other contexts, 
that include vector bundles of infinite dimension.\footnote{The 
theory of \textit{Bott-Chern classes} developed in 
\cite{BismutGilletSoule88a} is a secondary version of Bott-Chern 
cohomology. It is an analogue of Chern-Simons theory for holomorphic 
vector bundles. Here, this secondary theory will not play any role.}
\section{The derived category $\Db\left(X\right)$}%
\label{sec:derblo}
The purpose of this Section is to recall basic properties of the 
derived category $\Db\left(X\right)$  of 
bounded complexes of $\mathcal{O}_{X}$-modules with coherent cohomology.

This Section is organized as follows. In Subsection 
\ref{subsec:catdb}, we give the  definition of 
$\Db\left(X\right)$ and of the associated $K$-group $K\left(X\right)$.

In Subsection  \ref{subsec:pubaa}, if $Y$ is a compact complex 
manifold, and if $f:X\to Y$ is a holomorphic map, we define the  
derived functor $Lf^{*}:\Db\left(Y\right)\to\Db\left(X\right)$.

In Subsection \ref{subsec:ten}, we consider the  derived 
tensor products in $\Db\left(X\right)$.

Finally, in Subsection \ref{subsec:dira}, we recall the properties of the 
 derived function $Rf_{*}:\Db\left(X\right)\to 
\Db\left(Y\right)$.

\subsection{Definition of the derived category $\Db\left(X\right)$}%
\label{subsec:catdb}
Let $X$ be a compact complex manifold. Let 
\index{coh@$\coh\left(X\right)$}%
$\coh\left(X\right)$ be 
the abelian category of $\mathcal{O}_{X}$-coherent sheaves on $X$, 
and let $K\left(\mathrm{coh}(X)\right)$ denote the corresponding 
Grothendieck group. In the sequel, we use the notation\footnote{In the introduction, we 
used the notation $K_{\cdot}\left(X\right)$.}
\index{KX@$K\left(X\right)$}%
\begin{equation}\label{eq:deri2}
K\left(X\right)=K\left(\coh\left(X\right)\right).
\end{equation}

Let 
\index{CbX@$\mathrm{C^{b}}\left(X\right)$}%
$\mathrm{C^{b}}\left(X\right)$ be the 
category of bounded complexes of  $\mathcal O_{X}$-modules.  Let 
\index{DbX@$\mathrm{D^{b}\left(X\right)}$}%
$\mathrm{D^{b}\left(X\right)}$ be the  corresponding derived category. 

Let 
\index{Cbc@$\mathrm{C}_{\coh}^{\mathrm{b}}\left(X\right)$}%
$\mathrm{C}_{\coh}^{\mathrm{b}}\left(X\right)$ be the full 
subcategory of ${\rm C^{b}}(X)$  whose objects have coherent cohomology.  
Let 
\index{Dbc@$\Db\left(X\right)$}%
$\Db\left(X\right)$ be the corresponding derived category. Then ${\rm 
D^{b}_{coh}}(X)$ is the full subcategory of  ${\rm 
D^{b}}(X)$ whose objects have coherent cohomology.

The same arguments as in  	\cite[\href{https://stacks.math.columbia.edu/tag/0FCP}{Tag
0FCP}]{stacks-project} show that the map 
	\begin{equation}\label{eq:hipi1}
		\mathcal{E}\in \Db\left(X\right)\to 
		\sum_{i}(-1)^{i}\mathcal{H}^{i}\mathcal{E}\in K\left(X\right)
	\end{equation} 
induces an isomorphism of $K$-groups,
	\begin{equation}\label{eq:hipi2}
K({\rm 
D_{\mathrm{coh}}^{b}}(X)) \simeq K\left(X\right).
\end{equation} 
	\subsection{Pull-backs}
\label{subsec:pubaa}%
Let $Y$ be another compact complex manifold, and let $f:X\to Y$ be a 
holomorphic map.

By \cite[\href{https://stacks.math.columbia.edu/tag/0095}{Tag 
0095}]{stacks-project}, if $\mathcal{E}\in \mathrm{C}^{\mathrm{b}}_{\mathrm{coh}}\left(Y\right)$, one can define 
its pull-back
 $f^{*}\mathcal{E}$, which is a complex of $\mathcal{O}_{X}$-modules, by the formula
\begin{equation}\label{eq:pu1x1}
		f^{*}  \mathcal{E}= f^{-1}\mathcal E\otimes_{f^{-1}\mathcal 
		O_{Y}} \mathcal 
		O_{X}. 
	\end{equation} 
	By Grauert-Remmert \cite[Section 1.2.6]{GrauertRemmert84}, 
	$f^{*}\mathcal{E}\in 
	\mathrm{C}^{\mathrm{b}}_{\mathrm{coh}}\left(X\right)$. We can define the 
left-derived functor 
\index{Lf@$Lf^{*}$}%
$Lf^{*}$,  which to $\mathcal{E}\in 
\Db\left(Y\right)$ associates $Lf^{*}\mathcal{E}\in 
\Db\left(X\right)$.

If $\mathcal{E}$ is a bounded complex of flat $\cO_{Y}$-modules, by 
\cite[\href{https://stacks.math.columbia.edu/tag/06YJ}{Tag 06YJ}]{stacks-project}, we have
\begin{equation}\label{eqLf}
		Lf^{*}\mathcal{E}=f^{*}\mathcal{E}. 
	\end{equation} 
	Also  $Lf^{*}$ induces a morphism on Grothendieck groups, 
	\index{f@$f^{!}$}%
	\begin{equation}
		f^{!}: K(Y)\to K(X).
	\end{equation} 

If $Z$ is another compact complex manifold, if $g:Y\to Z$ is a holomorphic map, using 
\cite[Proposition I.9.15]{Borel87}, there is a canonical isomorphism 
between the functors $Lg^{*}Lf^{*}$ and $L\left(fg\right)^{*}$. In 
particular, we  get an identity of morphisms 
of Grothendieck groups,  
\begin{equation}
		f^{!}g^{!}=(gf)^{!}: K(Z)\to K(X).
	\end{equation}
	\subsection{Tensor products}%
\label{subsec:ten}
Let $\mathcal{E},\mathcal{F}$ be objects in $\Db\left(X\right)$. In  
\cite[\href{https://stacks.math.columbia.edu/tag/064M}{Tag 
064M}]{stacks-project}, a  
derived tensor product $\mathcal{E} \ho 
^{\mathrm{L}}_{\mathcal{O}_{X}}\mathcal{F}$\footnote{We have replaced 
the usual notation $\otimes ^{\mathrm{L}}$ by $\ho^{\mathrm{L}}$, to 
underline the fact that the objects in $\Db\left(X\right)$ are 
$\Z$-graded.}, also an object in 
$\Db\left(X\right)$ is defined. By 
\cite[\href{https://stacks.math.columbia.edu/tag/079U}{Tag 
079U}]{stacks-project}, if $Y$ is a compact complex manifold, and if 
$f:X\to Y$ is holomorphic, if $\mathcal{E},\mathcal{F}$ are objects 
in $\Db\left(Y\right)$, then
\begin{equation}\label{eq:tenx1}
Lf^{*}\left(\mathcal{E} 
\ho_{\mathcal{O}_{Y}}^{\mathrm{L}}\mathcal{F}\right)=Lf^{*}\mathcal{E} \ho _{\mathcal{O}_{X}}^{\mathrm{L}}Lf^{*}\mathcal{F}.
\end{equation}

Let $i:X\to X\times X$ be the diagonal embedding, and let 
$p_{1},p_{2}:X\times X\to X$ be the two projections. Since $p_{1}i$, 
$p_{2}i$ are the identity in $X$, using the results of Subsection 
\ref{subsec:pubaa} and equation (\ref{eq:tenx1}), we find that there is a canonical isomorphism,
\begin{equation}\label{eq:tenx1z1}
\mathcal{E} \ho 
^{\mathrm{L}}_{\mathcal{O}_{X}}\mathcal{F} \simeq Li^{*}\left(Lp_{1}^{*} \mathcal{E}\ho _{\mathcal{O}_{X\times 
X}}^{\mathrm{L}}Lp_{2}^{*}\mathcal{F}\right).
\end{equation}

By \cite[\href{https://stacks.math.columbia.edu/tag/06XY}{Tag 
06XY}]{stacks-project}, if $\mathcal{E}, \mathcal{F}$ are objects in $\Db\left(X\right)$, if 
one of them consists of flat modules over $\mathcal{O}_{X}$, then
\begin{equation}\label{eq:tenx2}
\mathcal{E}\ho^{\mathrm{L}}_{\mathcal{O}_{X}} 
 \mathcal{F}=\mathcal{E}\ho_{\mathcal{O}_{X}}\mathcal{F}.
\end{equation}
\subsection{Direct images}%
\label{subsec:dira}	
	Let 
	\index{RF@$Rf_{*}$}%
	$Rf_{*}$ be the right derived functor of the direct image 
	$f_{*}$. By  a theorem of
Grauert \cite[Theorem 10.4.6]{GrauertRemmert84}, if $\mathcal{E}$ is an object in ${\rm D_{\mathrm{coh}}^{b}}(X)$, 
$Rf_{*}\mathcal{E}$ is  an object in  ${\rm 
D_{\mathrm{coh}}^{b}}(Y)$. 

By \cite[Theorem 
9.5]{Borel87} and by \cite[Proposition IV.4.14]{Demaillylivre}, if $\mathcal{E}$ is a  bounded complex 
of soft $\cO_X$-modules, then 
	\begin{equation}\label{eq:fon1}
		Rf_{*}\mathcal{E}=f_{*}\mathcal{E}. 
	\end{equation} 
Also $Rf_{*}$ induces a morphism of Grothendieck groups,
\index{f@$f_{!}$}%
	\begin{equation}\label{eq:fon2}
		f_{!}: K(X)\to K(Y). 
	\end{equation}

If  $Z$ is a   compact  complex	manifold, and if $g: Y\to 
Z$ is a holomorphic  map, by 
\cite[Proposition I.9.15]{Borel87}, there is a canonical 
isomorphism between  the
functors $Rg_{*}Rf_{*}$ and $R(gf)_{*}$.
In particular, we have an 
identity of morphisms of Grothendieck groups, 
	\begin{equation}
		g_{!}	f_{!}=(gf)_{!}: K(X)\to K(Z). 
	\end{equation}
\section{Preliminaries on linear algebra and differential geometry}%
\label{sec:lia}
The purpose of this Section is to explain elementary facts of linear 
algebra and  differential geometry that will be used in the rest of the paper.

This Section is organized as follows. In Subsection 
\ref{subsec:filal}, if $W$ is a complex vector space, we introduce 
the category $\mathscr C_{W}$ of free 
$\Lambda\left(\overline{W}^{*}\right)$-modules.

In Subsection \ref{subsec:scmtr}, we define the 
supertraces, and give its main properties.

In Subsection \ref{subsec:moco}, we recall the construction of cones 
in homological algebra.

In Subsection \ref{subsec:dua}, we construct generalized Hermitian 
metrics on objects in $\mathscr C_{W}$.

In Subsection \ref{subsec:cli}, we recall elementary properties of the Clifford 
algebras of complex Hermitian vector spaces.

Finally, in Subsection \ref{subsec:remdi}, we explain various 
relations  of the de Rham operator to connections with nonzero torsion on the 
tangent bundle of a real manifold.
\subsection{Filtered vector space and exterior algebras}%
\label{subsec:filal}
Let $W$ be a finite-dimensional complex vector space of dimension 
$m$, and let $W_{\R}$ be the corresponding real vector space. If
 $W_{\C}=W_{\R} \otimes _{\R}\C$ is its complexification, 
 then $W_{\C}=W \oplus \overline{W}$. 
 
If $r,r'\in\Z,r\le r'$, let
$E=\bigoplus_{i=r}^{r'}E^{i}$ be a $\Z$-graded vector space  on which 
$\Lambda^{p}\left(\overline{W}^{*}\right)$ acts on the left as linear morphisms 
of degree $p$ (i.e., they increase the degree by $p$). We can also 
define a right action of $\Lambda\left(\overline{W}^{*}\right)$ on 
$E$,  so that if $e\in 
E,\alpha\in\Lambda\left(\overline{W}^{*}\right)$, then
\begin{equation}\label{eq:til2}
e\alpha=\left(-1\right)^{\deg e\deg \alpha}\alpha e.
\end{equation}

For $0\le p\le m$, put
\begin{equation}\label{eq:bc6r1}
F^{p}E=\Lambda^{p}\left(\overline{W}^{*}\right)E.
\end{equation}
The $F^{p}E$ form a decreasing filtration
\begin{equation}\label{eq:bc7r1}
F^{0}E=E\supset F^{1}E\supset\ldots\supset F^{m+1}E=0.
\end{equation}
Put
\begin{equation}\label{eq:bc8r1}
E_{0}^{p,q}=F^{p}E^{p+q}/F^{p+1}E^{p+q}.
\end{equation}
For $0\le p'\le m$,  $\Lambda^{p'}\left(\overline{W}^{*}\right)$ maps 
$E_{0}^{p,q}$ into 
$E_{0}^{p+p',q}$ surjectively.

For $r\le q\le r'$, set
\begin{equation}\label{eq:bc13a}
D^{q}=E_{0}^{0,q}=E^{q}/F^{1}E^{q}.
\end{equation}
Then $\Lambda^{p}(\overline{W}^{*})$ maps $D^{q}$ into  
$E_{0}^{p,q}$ surjectively.

In the sequel, we assume that we have the identification of left
$\Lambda\left(\overline{W}^{*}\right)$-modules
\begin{equation}\label{eq:bc12}
E_{0}^{\Bu}=\Lambda^{\Ou}\left(\overline{W}^{*}\right) 
\ho D^{\Ou}.
\end{equation}
We will call 
 \index{D@$D$}%
$D$ the diagonal vector space 
associated with $E$. There is a canonical degree preserving morphism 
$E\to D$, and we have the exact sequence
\begin{equation}\label{eq:exab1}
0\to F^{1}E\to E\to D\to 0.
\end{equation}
Note that a basis of $D$ lifts in $E$ to a basis of $E$ as a  free
$\Lambda\left(\overline{W}^{*}\right)$-module.

For $r\le i\le r'$, put
\index{E0@$E_{0}$}%
\begin{align}\label{eq:bc12a0}
&E_{0}^{i}=\bigoplus_{p+q=i}E_{0}^{p,q},
&E_{0}=\bigoplus_{i=r}^{r'}E^{i}_{0}. 
\end{align}
Then $E_{0}$ is a $\Z$-graded 
filtered vector 
space, which has the same properties as $E$, with the same $D$.

Let $\underline{E}$ be another $\Z$-graded vector space like $E$.  We underline the 
objects associated with $\underline{E}$. If $k\in \Z$, if $\phi$ is a linear morphism 
from $E$ into $\underline{E}$ that maps $E^{\Ou}$ into 
$\underline{E}^{\Ou+k}$, we will say that $\phi$ is of degree  $k$.  Let 
$\mathrm{Hom}\left(E,\underline{E}\right)$ be the set of 
 linear maps from $E$ into 
$\underline{E}$  such that if
$\alpha\in\Lambda\left(\overline{W}^{*}\right)$, then
\begin{equation}\label{eq:com1}
\phi\alpha-\left(-1\right)^{\deg\alpha\deg\phi}\alpha\phi=0,
\end{equation}
If $\alpha\in \Lambda\left(\overline{W}^{*}\right),e\in E$, equation 
(\ref{eq:com1}) is equivalent to the fact that
\begin{equation}\label{eq:com1z1}
\phi\left(e\alpha\right)=(\phi e)\alpha.
\end{equation}
Then $\Hom\left(E,\underline{E}\right)$ is a $\Z$-graded algebra. 
There is a left action of $\Lambda^{p}\left(\overline{W}^{*}\right)$ on 
$\Hom\left(E,\underline{E}\right)$ by morphisms of degree $p$.

Given $W$, let
\index{CW@$\mathscr C_{W}$}%
$\mathscr C_{W}$ be the category consisting of the above objects $E$ that verify 
(\ref{eq:bc12}),  and of the 
morphisms in $\Hom\left(E,\underline{E}\right)$.
If $E,\underline{E}$ are objects in $\mathscr C_{W}$, then 
$\Hom\left(E,\underline{E}\right)$ is still an object in $\mathscr C_{W}$, and  
the 
corresponding diagonal vector space  is given by 
$\Hom\left(D,\underline{D}\right)$. 

If 
$\underline{E}=\Lambda\left(\overline{W}^{*}\right)$, then 
$\underline{D}=\C$, and 
$\Hom\left(\Lambda\left(\overline{W}^{*}\right),E\right)=E$. 
Put
\index{Ed@$E^{\dag}$}%
\begin{equation}\label{eq:dua1}
E^{\dag}=\Hom\left(E,\Lambda\left(\overline{W}^{*}\right)\right).
\end{equation}
Then $D^{\dag}=D^{*}$. If $e\in E$, the map $\phi\in 
\Hom\left(E,\Lambda\left(\overline{W}^{*}\right)\right)\to 
\left(-1\right)^{\deg \phi\deg e}\phi 
e\in\Lambda\left(\overline{W}^{*}\right)$ gives the identification 
\begin{equation}\label{eq:dua2}
\left(E^{\dag}\right)^{\dag}=E.
\end{equation}
Also
\begin{equation}\label{eq:crac1}
E^{\dag}_{0}=\Lambda\left(\overline{W}^{*}\right)\ho D^{*}.
\end{equation}

If $W'$ has the same properties as $W$,  by \cite[Section 
2.3]{Keller06}, there is   a tensor product that is a functor
from $\mathscr C_{W} \ho\mathscr C_{W'}$ to $\mathscr 
C_{W \oplus W'}$. 
\begin{proposition}\label{prop:pis}
	If $E,\underline{E}$ are objects in $ \mathscr C_{W}$, the morphism $\phi\in \Hom\left(E,\underline{E}\right)$ is an isomorphism 
	if and only if $\phi$ induces an isomorphism from  $D$ into 
	$\underline{D}$.
\end{proposition}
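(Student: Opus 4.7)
The plan is to exploit the graded $\Lambda(\overline{W}^{*})$-linearity condition \eqref{eq:com1} to reduce the question to the associated graded level, where it becomes essentially tautological via the identification \eqref{eq:bc12}. The first observation I would record is that any $\phi\in\Hom(E,\underline{E})$ automatically preserves the filtration $F^{\Ou}$: by \eqref{eq:bc6r1}, $F^{p}E=\Lambda^{p}(\overline{W}^{*})E$, and \eqref{eq:com1z1} together with the graded commutation rule (\ref{eq:til2}) give $\phi(\alpha e)=\pm\alpha\,\phi(e)$ for $\alpha\in\Lambda^{p}(\overline{W}^{*})$, whence $\phi(F^{p}E)\subset F^{p}\underline{E}$. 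Consequently $\phi$ descends to maps $\phi^{p,q}\colon E_{0}^{p,q}\to\underline{E}_{0}^{p,q}$ for every $p,q$.

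Next I would identify these graded maps explicitly. Using the splitting \eqref{eq:bc12}, the surjection $\Lambda^{p}(\overline{W}^{*})\otimes D^{q}\twoheadrightarrow E_{0}^{p,q}$ is an isomorphism, and similarly for $\underline{E}$. Because $\phi^{p,q}$ commutes with the left $\Lambda(\overline{W}^{*})$-action, it is forced to be $\mathrm{id}_{\Lambda^{p}(\overline{W}^{*})}\otimes\phi^{0,q}$, where $\phi^{0,\Ou}\colon D\to\underline{D}$ is the map induced by $\phi$ on the diagonal quotients (see \eqref{eq:bc13a} and \eqref{eq:exab1}). Thus $\phi^{p,\Ou}$ is an isomorphism \emph{for every} $p$ as soon as $\phi^{0,\Ou}$ is, and conversely $\phi^{0,\Ou}$ is an isomorphism whenever $\phi$ is (with inverse $(\phi^{-1})^{0,\Ou}$, noting that $\phi^{-1}$ automatically lies in $\Hom(\underline{E},E)$).

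The "only if" direction is now immediate. For the "if" direction, assume $\phi^{0,\Ou}\colon D\to\underline{D}$ is an isomorphism. By the previous step, each $\phi^{p,\Ou}$ is an isomorphism. Since the filtration terminates at $F^{m+1}E=0$ (cf. \eqref{eq:bc7r1}), I would proceed by descending induction on $p$ using the short exact sequences
\begin{equation*}
0\to F^{p+1}E\to F^{p}E\to E_{0}^{p,\Ou}\to 0
\end{equation*}
and the analogous ones for $\underline{E}$: the five lemma together with the inductive hypothesis and the isomorphism on $E_{0}^{p,\Ou}$ yields an isomorphism $F^{p}E\to F^{p}\underline{E}$. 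Taking $p=0$ gives $\phi\colon E\to\underline{E}$ an isomorphism. I do not anticipate a genuine obstacle: the only nontrivial ingredient is the freeness assumption \eqref{eq:bc12}, which allows one to lift a basis of $D$ to a $\Lambda(\overline{W}^{*})$-basis of $E$ and thereby trivializes the graded analysis.
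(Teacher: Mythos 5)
Your argument is correct and follows the same route as the paper: the "only if" direction is a formality, and the "if" direction is handled by observing that $\phi$ induces isomorphisms on the associated graded pieces (via the freeness identification~\eqref{eq:bc12}) and then running an induction along the finite filtration~\eqref{eq:bc7r1}. The only cosmetic difference is that you phrase the induction as descending on $p$ using the short exact sequences $0\to F^{p+1}E\to F^pE\to E_0^{p,\Ou}\to 0$, whereas the paper recurses on the length $i$ of the truncated quotients $F^pE/F^{p+i}E$; these are equivalent.
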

\begin{proof}
	We only need to show that if $\phi$ maps isomorphically  
	$D$ into $\underline{D}$, then it acts isomorphically from $E$ into $\underline{E}$. 
	By (\ref{eq:bc8r1}), (\ref{eq:bc12}), $\phi$ also acts as an 
	isomorphism from $F^{p}E/F^{p+1}E$ into $F^{p}\underline{E}/F^{p+1}\underline{E}$.   A recursion argument shows that $\phi$ is an 
	isomorphism from $F^{p}E/F^{p+i}E$ into 
	$F^{p}\underline{E}/F^{p+i}\underline{E}$.  For $p=0, i=m+1$, we 
	obtain our proposition.
\end{proof}

Observe that $\Aut\left(E\right)$ is a Lie group, and 
 its Lie algebra  is just $\End\left(E\right)$. There is a  surjective group homomorphism  $\rho:
\Aut\left(E\right)\to\Aut\left(D\right)$, that induces a corresponding morphism of Lie algebras 
 $\End\left(E\right)\to \End\left(D\right)$. Put
 \index{Aut@$\Aut_{0}\left(E\right)$}%
 \index{End@$\End_{0}\left(E\right)$}%
 \begin{align}\label{eq:defa1}
&\Aut_{0}\left(E\right)=\left\{g\in\Aut\left(E\right),\rho 
g=1\right\},
&\End_{0}\left(E\right)=\left\{f\in \End\left(E\right),\rho 
f=0\right\}. 
\end{align}
Then $\Aut_{0}\left(E\right)$ is a Lie subgroup of $\Aut\left(E\right)$ with Lie algebra 
$\End_{0}\left(E\right)$.

Let 
\index{Aut@$\Aut^{0}\left(E\right)$}%
$\Aut^{0}\left(E\right)$ be the Lie subgroup of the elements of 
$\Aut\left(E\right)$ of degree $0$. Its Lie algebra $\End^{0}\left(E\right)$  consists of 
the elements of $\End\left(E\right)$ of degree $0$. We use the 
notation
\begin{align}\label{eq:defa2}
&\Aut_{0}^{0}\left(E\right)=\Aut_{0}\left(E\right)\cap \Aut^{0}\left(E\right),
&\End_{0}^{0}\left(E\right)=\End_{0}\left(E\right)\cap\End^{0}\left(E\right).
\end{align}

 If $g\in \Aut_{0}\left(E\right)$, then $\left(g-1\right)^{m+1}=1$. Similarly if 
 $f\in \End_{0}\left(E\right)$, then $f^{m+1}=0$.
\begin{proposition}\label{prop:fact}
	If $g\in \Aut\left(E_{0}\right)$, there exists a unique pair 
	$h\in \Aut\left(D\right), f\in \End_{0}\left(E_{0}\right)$ such that
\begin{equation}\label{eq:fact1}
g=\exp\left(f\right)h,
\end{equation}
and $h=\rho g$.
\end{proposition}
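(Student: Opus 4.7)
The plan is to exhibit the decomposition $g=\exp(f)h$ by first using the canonical projection $\rho: \Aut(E_{0})\to \Aut(D)$ of (\ref{eq:defa1}) to pull off the ``semisimple part'' $h=\rho g$, and then showing that the remaining factor is unipotent in a controlled way. Recall that because $E_{0}=\Lambda(\overline{W}^{*})\ho D$ by (\ref{eq:bc12}), any $h\in \Aut(D)$ lifts canonically to an element $\widetilde{h}=1\ho h\in \Aut(E_{0})$, and this canonical lift provides a splitting $\Aut(D)\to \Aut(E_{0})$ of $\rho$. I will systematically identify $h$ with its canonical lift $\widetilde{h}$ when no confusion is possible.

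For existence, set $h=\rho g\in \Aut(D)$ and consider $k=g\,\widetilde{h}^{-1}\in \Aut(E_{0})$. Then $\rho k = (\rho g)h^{-1}=1$, so $k\in \Aut_{0}(E_{0})$. Writing $k=1+N$, the condition $\rho k=1$ forces $N(E_{0})\subset F^{1}E_{0}$, i.e.\ $N\in \End_{0}(E_{0})$. Since $\End_{0}(E_{0})$ consists of endomorphisms shifting the filtration (\ref{eq:bc7r1}) up by at least one, one has $N^{m+1}=0$, as already observed just before the proposition. Consequently the series
\begin{equation*}
f=\log(1+N)=\sum_{k=1}^{m}\frac{(-1)^{k-1}}{k}N^{k}
\end{equation*}
is a finite sum defining an element of $\End_{0}(E_{0})$, and $\exp(f)=1+N=k$, so that $g=\exp(f)\,\widetilde{h}$ as desired.

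For uniqueness, suppose $g=\exp(f')h'$ with $h'\in \Aut(D)$ (identified with $1\ho h'$) and $f'\in \End_{0}(E_{0})$. Because $f'$ lowers the associated graded to $0$, the same nilpotency argument gives $\exp(f')\in \Aut_{0}(E_{0})$, hence $\rho\exp(f')=1$. Applying $\rho$ to the equation yields $\rho g = h'$, so necessarily $h'=h=\rho g$. Then $\exp(f')=g\,\widetilde{h}^{-1}=\exp(f)$, and since $f,f'$ are both nilpotent in $\End_{0}(E_{0})$, applying the (finite) logarithmic series recovers $f'=\log\exp(f')=\log\exp(f)=f$.

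The only mild subtlety is the compatibility of the canonical section $h\mapsto 1\ho h$ with the filtration structure: one must verify that identifying $h\in \Aut(D)$ with an element of $\Aut(E_{0})$ via the splitting (\ref{eq:bc12}) commutes with $\rho$, which is immediate since $\rho(1\ho h)=h$ by (\ref{eq:bc13a}). Everything else reduces to the formal manipulation of nilpotent endomorphisms, so no analytic or topological input is needed; the argument is purely algebraic and the ``hard'' step amounts to observing that $\End_{0}(E_{0})$ acts nilpotently with index at most $m+1$, which was recorded before the statement.
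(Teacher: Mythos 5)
Your proof is correct and takes essentially the same approach as the paper: you factor out the canonical lift of $\rho g$, observe that what remains lies in $\Aut_{0}(E_{0})$ and is therefore unipotent, and take the nilpotent logarithm. The paper's proof is merely more terse; you additionally spell out the uniqueness argument, which the paper leaves implicit.
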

\begin{proof}
	Replacing $g$ by $g\left(\rho 
	g\right)^{-1}$, we may as well assume that $g\in 
	\Aut_{0}\left(E_{0}\right)$. Then $g-1$ 
	is nilpotent, so that if $f=\log 
	g=\log\left(1+\left(g-1\right)\right)$,
	then $f\in \End_{0}\left(E_{0}\right)$. The proof of our proposition is completed. 
\end{proof}

Observe that
\begin{equation}\label{eq:com1z1x}
\End\left(E_{0}\right)=\Lambda\left(\overline{W}^{*}\right)\ho\End\left(D\right).
\end{equation}
Any $f\in \End\left(E_{0}\right)$ can be written in the form
\begin{align}\label{eq:com1z1y}
&f=\sum_{0}^{m}f_{i}, & f_{i}\in 
\Lambda^{i}\left(\overline{W}^{*}\right)\ho\End\left(D\right).
\end{align}
Then $f\in \Aut\left(E_{0}\right)$ if and only if $f_{0}\in 
\Aut\left(D\right)$.

By (\ref{eq:exab1}), there are  non-canonical splittings,
\begin{equation}\label{eq:bc12a1}
E^{q}=D^{q} \oplus F^{1}E^{q}.
\end{equation}
By (\ref{eq:bc12}), (\ref{eq:bc12a1}), we get the non-canonical 
identification of  $\Z$-graded $\Lambda\left(\overline{W}^{*}\right)$-modules, 
\begin{equation}\label{eq:bc13}
E \simeq E_{0}.
\end{equation}

Let $\Hom^{0}\left(D,F^{1}E_{0}\right)$ be the homomorphisms of degree 
$0$ of $D$ in $F^{1}E_{0}$.
The  other 
splittings are obtained via  the action of $A\in 
\Hom^{0}\left(D,F^{1}E_{0}\right)$.

If we consider another splitting as in (\ref{eq:bc12a1}) associated with $A\in \Hom^{0}\left(D,F^{1}E_{0}\right)$, the corresponding $E_{0}$ is unchanged, but the 
identification to $E$ is different. This way, we obtain an 
automorphism of $E_{0}$, 
  $g=1+A\in \Aut^{0}_{0}\left(E_{0}\right)$.
  \subsection{Supercommutators and supertraces}%
  \label{subsec:scmtr}
  Let $\mathcal{A}=\mathcal{A}_{+} \oplus \mathcal{A}_{-}$ be a 
  $\Z_{2}$-graded algebra. If $a,b\in \mathcal{A}$, recall that 
  \index{ab@$\left[a,b\right]$}%
  $\left[a,b\right]$ is the supercommutator of $a,b$.
  
  If $E$ is an object in $\mathscr C_{W}$, then 
$\End\left(E\right)=\Hom\left(E,E\right)$ is a $\Z$-graded algebra, 
and so it inherits a corresponding $\Z_{2}$-grading. If we fix 
splittings as in (\ref{eq:bc13}), then 
$\End\left(E\right)=\End\left(E_{0}\right)$, and 
$\End\left(E_{0}\right)$ is given by (\ref{eq:com1z1x}).

Let $\tau=\pm 1$ be the involution of $D$ that defines its 
$\Z_{2}$-grading. If $A\in \End\left(D\right)$, its supertrace
\index{Trs@$\Trs$}%
$\Trs\left[A\right]\in \C$ is defined by
\begin{equation}\label{eq:scm2}
\Trs\left[A\right]=\Tr\left[\tau A\right].
\end{equation}
We extend $\Trs$ to a map from 
$\Lambda\left(W^{*}_{\C}\right)\ho\End\left(D\right)$ into 
$\Lambda\left(W^{*}_{\C}\right)$, with the convention that if 
$\alpha\in \Lambda\left(W^{*}_{\C}\right), A\in \End\left(D\right)$, 
then
\begin{equation}\label{eq:scm3}
\Trs\left[\alpha A\right]=\alpha\Trs\left[A\right].
\end{equation}
By \cite{Quillen85a},   $\Trs$ vanishes on supercommutators in  
$\Lambda\left(W^{*}_{\C}\right)\ho\End\left(D\right)$.
\begin{proposition}\label{prop:psm}
	The linear map $\Trs$ induces a morphism 
	$\Lambda\left(W^{*}\right)\ho\End\left(E\right)\to 
	\Lambda\left(W^{*}_{\C}\right)$ that does not 
	depend on the splitting (\ref{eq:bc12a1}), and vanishes on  
	supercommutators.
\end{proposition}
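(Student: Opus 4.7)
The plan is to fix a splitting as in (\ref{eq:bc12a1}), which identifies $\End(E)$ with $\Lambda(\overline{W}^{*}) \ho \End(D)$, viewed as a $\Z_{2}$-graded subalgebra of $\Lambda(W^{*}_{\C}) \ho \End(D)$. Multiplication then defines a morphism of $\Z_{2}$-graded algebras
\begin{equation*}
\mu: \Lambda(W^{*}) \ho \End(E) \to \Lambda(W^{*}_{\C}) \ho \End(D),
\end{equation*}
sending $\alpha \otimes f$ to $\alpha \cdot f$, and composition with the $\Trs$ of (\ref{eq:scm3}) yields a candidate map $\Lambda(W^{*}) \ho \End(E) \to \Lambda(W^{*}_{\C})$. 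The vanishing on supercommutators is then immediate: $\mu$ preserves supercommutators by construction, and $\Trs$ annihilates them by the Quillen property recalled just above the statement.

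For independence of the splitting, I would observe that two splittings (\ref{eq:bc12a1}) are related by an automorphism $g = 1 + A \in \Aut^{0}_{0}(E_{0})$ with $A \in \Hom^{0}(D, F^{1}E_{0})$. Expanding $A = \sum_{k \ge 1} \alpha_{k} \otimes B_{k}$ with $\alpha_{k} \in \Lambda^{k}(\overline{W}^{*})$, the total-degree-zero condition forces each $B_{k}$ to be of degree $-k$ in $\End(D)$, so each summand has $\Z_{2}$-parity $k + k \equiv 0 \bmod 2$; hence $g$ is $\Z_{2}$-even. Under a change of splitting, $f \in \End(E)$ is represented by $g f g^{-1}$ in the new identification, and the key identity to verify is
\begin{equation*}
\Trs[\alpha \cdot g f g^{-1}] = \Trs[\alpha f], \qquad \alpha \in \Lambda(W^{*}).
\end{equation*}
Since $g$ is even, $\alpha$ super-commutes with $g^{\pm 1}$ inside $\Lambda(W^{*}_{\C}) \ho \End(D)$, so $\alpha g = g \alpha$; applying the cyclicity of $\Trs$, i.e.\ the vanishing on $[g, \alpha f g^{-1}]$, yields $\Trs[g \alpha f g^{-1}] = \Trs[\alpha f g^{-1} g] = \Trs[\alpha f]$.

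The main obstacle is the careful bookkeeping of super-Koszul signs; the argument works precisely because the change-of-splitting element $g$ is $\Z_{2}$-even, which is what allows $\alpha \in \Lambda(W^{*})$ of arbitrary parity to be moved past $g^{\pm 1}$ without sign, and because every element of $\Lambda(W^{*}_{\C})$, sitting as $\alpha \otimes 1$, super-commutes with all of $\Lambda(W^{*}_{\C}) \ho \End(D)$.
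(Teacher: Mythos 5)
Your proof is correct and takes essentially the same route as the paper's: it reduces independence of the splitting to conjugation-invariance of $\Trs$ under the automorphism $g\in \Aut^{0}_{0}\left(E_{0}\right)$, which is even, exactly as in the paper. Your explicit parity bookkeeping for $g$ is a nice check but superfluous, since membership in $\Aut^{0}\left(E_{0}\right)$ (degree $0$) already forces $g$ to be $\Z_{2}$-even.
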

\begin{proof}
	As we saw at the end of Subsection \ref{subsec:filal}, the choice 
	of another splitting is equivalent to the choice of $g\in 
	\Aut_{0}^{0}\left(E_{0}\right)$. Such a $g$ is necessarily even, 
	so that if $a\in 
	\Lambda\left(W^{*}_{\C}\right)\ho\End\left(D\right)$, then
	\begin{equation}\label{eq:scm4}
\Trs\left[gag^{-1}\right]=\Trs\left[a\right],
\end{equation}
from which our proposition follows.
\end{proof}
\subsection{Morphisms and cones}%
\label{subsec:moco}
If    $\left(Q,d^{Q}\right)$ is a bounded complex of complex vector 
spaces with  differential $d^{Q}$ of degree $1$, we denote by 
\index{HQ@$HQ$}%
$HQ$ its cohomology.

Let $\left(Q,d^{Q}\right), 
\left(\underline{Q},d^{\underline{Q}}\right)$  be two such bounded 
complexes of
	 complex vector spaces.  Let $\phi:Q\to\underline{Q}$ be a 
	 morphism of complexes, so that $\phi$ preserves the degree, so 
	 that
	 \begin{equation}\label{eq:compl1}
\phi d^{Q}=d^{\underline{Q}}\phi.
\end{equation}
Then $\phi$ induces a morphism $\phi:HQ\to 
H\underline{Q}$. Also $\phi$ is said to be a quasi-isomorphism 
if $\phi:HQ\to H\underline{Q}$ is an 
isomorphism.

	 Put
	 \begin{equation}\label{eq:res4a0b}
C=\mathrm{cone}\left(Q,\underline{Q}\right).
\end{equation}
Then $\left(C,d^{C}\right)$ is also  a bounded complex such that
\begin{equation}\label{eq:res4a0z1b}
C^{\Ou}=Q^{\Ou+1}\oplus \underline{Q}^{\Ou}.
\end{equation}
The morphism $\phi$ can be identified with an endomorphism of $C$, that 
maps $C^{\Ou}$ into $C^{\Ou +1}$, i.e., $\phi$ can be viewed as a 
morphism of degree $1$ in $\End\left(C\right)$, 
and the differential $d^{C}_{\phi}$ is given by 
\begin{equation}\label{eq:res4a0z2b}
d^{C}_{\phi}=
\begin{bmatrix}
	d^{Q} & 0 \\
	\phi\left(-1\right)^{\deg} & d^{\underline{Q}}
\end{bmatrix}.
\end{equation}

We have the exact sequence of complexes,
\begin{equation}\label{eq:res4a-1b}
\xymatrix{0\ar[r] &\underline{Q}^{\Ou}\ar[r]& C^{\Ou}\ar[r]& 
Q^{\Ou+1}\ar[r]& 0.}
\end{equation}
There is a corresponding long exact sequence in cohomology,
\begin{equation}\label{eq:loex0}
\xymatrix{H^{\Ou}\underline{Q}\ar[r] &H^{\Ou}C
\ar[r] &H^{\Ou+1}Q\ar[r]^{\phi\left(-1\right)^{\Ou+1}}
&H^{\Ou+1}\underline{Q}}.
\end{equation}
By (\ref{eq:loex0}), $\phi$ is a quasi-isomorphism if and only if 
$HC=0$.

For $t\in \C$, let $M_{t}\in \End\left(C\right)$ be given in matrix 
form by
\begin{equation}\label{eq:for1}
M_{t}=
\begin{bmatrix}
	1 & 0 \\
	0 & t
\end{bmatrix}.
\end{equation}
For $t\neq 0$, $M_{t}$ is invertible, and
\begin{equation}\label{eq:for2}
d^{C}_{t\phi}=M_{t}d^{C}_{\phi}M_{t}^{-1},
\end{equation}
so that $M_{t}$ is an isomorphism from $\left(C,d^{C}_{\phi}\right)$ 
into $\left(C,d^{C}_{t\phi}\right)$.
\subsection{Generalized Hermitian metrics}%
\label{subsec:dua}
We make the same assumptions as in Subsection \ref{subsec:filal}.
 If $e\in W^{*}_{\C} $, set
\begin{equation}\label{eq:bc13a1}
\widetilde e =-e.
\end{equation}
We still denote by\, 
\index{ti@$\widetilde{}$}%
$ \widetilde {}$\, the corresponding anti-automorphism of algebras 
of 
$\Lambda\left(W^{*}_{\C}\right)$, so that if $\alpha\in 
\Lambda^{p}\left(W^{*}_{\C}\right)$, then
\begin{equation}\label{eq:bc13ax1}
\widetilde \alpha=\left(-1\right)^{p\left(p+1\right)/2}\alpha.
\end{equation}
Put
\index{a@$\alpha^{*}$}%
\begin{equation}\label{eq:bc1A3ax2}
\alpha^{*}=\overline{\widetilde \alpha}.
\end{equation}

If $A\in \End\left(D\right)$, let $\widetilde A\in 
\End\left(D^{*}\right)$ be its transpose. 
Then \ $\widetilde {}$ \  extends to an anti-automorphism from  
$\End\left(E_{0}\right)=\Lambda\left(\overline{W}^{*}\right)\ho\End\left(D\right)$ to $\End\left(E^{\dag}_{0}\right)=\Lambda\left(\overline{W}^{*}\right)\ho\End\left(D^{*}\right)$.

If $f\in \End\left(D\right)$, we define $f^{*}\in 
\End\left(\overline{D}^{*} \right) $ by the formula
\begin{equation}\label{eq:crac3}
f^{*} =\overline{\widetilde f}.
\end{equation}
Our conventions here coincide with the ones in \cite[Section 1 
(c)]{BismutLott95} and \cite[Section 3.5]{Bismut10b}.

Recall that
\begin{equation}\label{eq:supp1}
\LW=\Lambda\left(W^{*}\right)\ho\Lambda\left(\overline{W^{*}}\right),
\end{equation}
and that the natural grading $\LW$ is given by the sum of the degrees 
in $\Lambda\left(W^{*}\right)$ and 
$\Lambda\left(\overline{W^{*}}\right)$. If $\alpha\in 
\Lambda^{i}\left(W^{*}_{\C}\right)$, we write
\index{deg@$\deg$}%
\begin{equation}\label{eq:supp2}
\deg \,\alpha=i.
\end{equation}

We will often count the degree in 
$\Lambda^{p}\left(\overline{W}^{*}\right),\Lambda^{q}\left(W^{*}\right)$ to be $p,-q$, and we introduce the corresponding degree 
\index{deg@$\deg_{-}$}%
$\deg_{-}$ on $\Lambda\left(W^{*}_{\C}\right) $ so that if $\alpha\in \Lambda^{p}\left(\overline{W}^{*}\right), \beta\in \Lambda^{q}\left(W^{*}\right)$, 
\begin{equation}\label{eq:supp3}
\deg_{-}\,\alpha\we\beta=p-q.
\end{equation}
Most of the time, we will use this second grading on 
$\LW$. We will still use the notation  $\Lambda^{i}\left(W^{*}_{\C}\right)$ 
for the forms of total degree  equal to $i$.

In the sequel, elements in $D^{i},\overline{D}^{i*}$ will be said to 
be of degree $i$. If $A\in \mathrm{Hom}\left(D,\overline{D}^{*}\right)=D^{*} \otimes 
\overline{D}^{*}$, we count its 
degree as the difference of the degrees in $\overline{D}^{*}$ and 
$D^{*}$.   Let $A^{*}\in 
\Hom\left(D,\overline{D}^{*}\right)$ denote the conjugate of the 
transpose of $A$. 
We equip  $\LW\ho\Hom\left(D,\overline{D}^{*}\right)$ with 
the obvious antilinear involution $*$ and with the degree induced by $\deg_{-}$ on 
$\LW$ and by the above degree on 
$\Hom\left(D,\overline{D}^{*}\right)$. Under $*$, this degree is 
changed into its negative. An element of 
$\LW\ho
\Hom\left(D,\overline{D}^{*}\right)$ is said to be self-adjoint if it is invariant under $*$.

If $h\in \LW\ho
\Hom\left(D,\overline{D}^{*}\right)$, we can write $h$ in the 
form
\begin{align}\label{eq:bc13a2}
	&h=\sum_{i=0}^{2m}h_{i}, & h_{i}\in 
\Lambda^{i}\left(W^{*}_{\C}\right)\ho\Hom\left(D,\overline{D}^{*}\right).
\end{align}
\begin{definition}\label{def:bc13a2}
	An element $h\in \LW\ho\Hom\left(D,\overline{D}^{*}\right)$ is 
	said to be a generalized metric on $D$ if it is  of degree $0$, 
	self-adjoint, and such that $h_{0}$ defines a $\Z$-graded Hermitian metric on 
	the $\Z$-graded vector space $D$\footnote{This just means that 
	the $D^{i}$ are mutually orthogonal.}.   Let $\mathscr M^{D}$ be the 
	set of generalized metrics on $D$. 
	
	A metric $h\in \mathscr M^{D}$ is said to be pure if $h=h_{0}$.
\end{definition}

If $h\in \mathscr M^{D}$, then $h^{-1}\in \mathscr M^{\overline{D}^{*}},\overline{h}^{-1}\in 
\mathscr M^{D^{*}}$.  If 
$f\in \Lambda\left(W^{*}_{\C}\right)\ho\End\left(D\right)$, we define $f^{*}\in 
\Lambda\left(W^{*}_{\C}\right)\ho\End\left(\overline{D}^{*}\right)$ by the same procedure as 
before. 

Observe that $\Aut^{0}\left(E_{0}\right)$ acts on 
$\mathscr M^{D}$ so that if $g\in\Aut^{0}\left(E_{0}\right)$,  the action of $g^{-1}$ on $\mathscr M^{D}$ 
is given by 
\index{gh@$g^{\dag}h$}%
\begin{equation}\label{eq:bc13a3}
g^{\dag} h=g^{*}h g.
\end{equation}
\subsection{Clifford algebras}%
\label{subsec:cli}
Let $V$ be a finite-dimensional complex vector space of dimension $n$, and let 
$V_{\R}$ be its real form. Let $V_{\C}=V_{\R} \otimes _{\R}\C$ be the 
complexification of $V_{\R}$, so that $V_{\C}=V \oplus \overline{V}$.

Let $g^{V^{*}}$ be a Hermitian metric on $V^{*}$, and let 
$g^{V^{*}_{\R}}$ be the corresponding scalar product on  $V^{*}_{\R}$. 
Then $g^{V^{*}}$ identifies $V^{*}$ and 
$\overline{V}$. Let $c\left(V^{*}_{\R}\right)$ be the Clifford algebra of 
$\left(V^{*}_{\R}, g^{V^{*}_{\R}}\right)$\footnote{For convenience, 
we prefer to introduce the Clifford algebra of $V^{*}_{\R}$ instead 
of the usual $c\left(V_{\R}\right)$.}, which is  generated by $1, f\in V^{
*}_{\R}$, 
and  the commutation relations  for $f,f'\in V^{*}_{\R}$, 
\footnote{The  commutation relations
are usually written with an extra factor $2$ in the right-hand side. 
In complex geometry, it is more natural to drop the factor $2$.} 
\begin{equation}\label{eq:xiao2}
ff'+f'f=-\left\langle  f,f'\right\rangle_{g^{V^{*}_{\R}}}.
\end{equation} 

As a $\Z_{2}$-graded vector space, 
$c\left(V^{*}_{\R}\right)$ is canonically isomorphic to the exterior 
algebra $\Lambda\left(V^{*}_{\R}\right)$. Let 
$f^{1},\ldots,f^{2n}$ be an orthonormal basis of $V^{*}_{\R}$.  If 
$0\le p\le n$, and if
$1\le i_{1}<\ldots<i_{p}\le n$, the 
identification maps $f^{i_{1}}\ldots f^{i_{p}}\in 
c\left(V^{*}_{\R}\right)$ to $f^{i_{1}}\we\ldots\we 
f^{i_{p}}\in\Lambda\left(V^{*}_{\R}\right)$. If $B\in 
\Lambda\left(V^{*}_{\R}\right)$, we denote by 
\index{cB@${}^{c}B$}%
${}^{c}B$ the 
corresponding element in $c\left(V^{*}_{\R}\right)$.

If $f\in V^{*}$, if $f_{*}=g^{V^{*}}f\in \overline{V}$, put
\begin{align}\label{eq:xiao2a1}
&c\left(\overline{f}\right)=\overline{f}\we,
&c\left(f\right)=-i_{f_{*}}.
\end{align}
By (\ref{eq:xiao2a1}), $\Lambda\left(\overline{V}^{*}\right)$ is a 
$c\left(V^{*}_{\R}\right)$-Clifford module. We have the 
identification of $\Z_{2}$-graded algebras,
\begin{equation}\label{eq:bing1}
c\left(V^{*}_{\R}\right) \otimes 
_{\R}\C=\End\left(\Lambda\left(\overline{V}^{*}\right)\right).
\end{equation}
In the sequel, we will not distinguish between the two sides of 
(\ref{eq:bing1}). In particular, if $B\in 
\Lambda\left(V^{*}_{\C}\right)$, ${}^{c}B$ is viewed as an element of 
$\End\left(\Lambda\left(\overline{V}^{*}\right)\right)$. In 
particular, if $B\in\Lambda\left(\overline{V}^{*}\right)$, ${}^{c}B$ 
is just the exterior product by $B$.

Let $U$ be another finite-dimensional complex vector 
space.  We will view $\Lambda\left(U^{*}_{\R}\right)$ as an algebra 
acting upon itself by left multiplication. 

Then
\begin{equation}\label{eq:jix2a2}
	\Lambda \left(U^{*}_{\R}\oplus 
V_{\R}^{*}\right)=\Lambda\left(U^{*}_{\R}\right)\ho\Lambda \left(V^{*}_{\R}\right).
\end{equation}
Also  $\Lambda \left(U^{*}_{\R} \oplus 
V^{*}_{\R}\right)$  can be identified with 
$\Lambda \left(U^{*}_{\R}\right)\ho c\left(V^{*}_{\R}\right)$. If 
$B\in \Lambda \left(U^{*}_{\C} \oplus 
V^{*}_{\C}\right)$, let ${}^{\mathrm{c}}B$ the 
associated element in 
$\Lambda \left(U^{*}_{\C}\right)\ho\End\left(\Lambda \left(\overline{V}^{*}\right)\right)$. If $B\in \Lambda \left(\overline{U}^{*} \oplus \overline{V}^{*}\right)$, ${}^{\mathrm{c}}B$ is still multiplication by $B$.  
\subsection{A simple remark of differential geometry}%
\label{subsec:remdi}
Let $Z$ be a real manifold, with real tangent bundle $TZ$, and let $\n^{TZ}$ be a connection on 
$TZ$, with torsion $T$. Let $\n^{\Lambda\left(T^{*}Z\right)}$ denote 
the induced connection on $\Lambda\left(T^{*}Z\right)$.
Let 
\index{nLTZ@$\n^{\Lambda\left(T^{*}Z\right)}_{\mathrm a}$}%
$\n^{\Lambda\left(T^{*}Z\right)}_{\mathrm a}$ denote the 
antisymmetrization of  $\n^{\Lambda\left(T^{*}Z\right)}$, that acts on 
$\Omega\left(Z,\R\right)$ and increases the degree by $1$. 

Let $d^{Z}$ be the de Rham operator on $Z$.  Then 
\index{iT@$i_{T}$}%
$i_{T}$ is a $2$-form on $Z$ with values in contraction operators\footnote{We will 
always use the normal ordering of such operators, so that contraction 
operators act before multiplication by forms.}. Let $\left(f_{\alpha}\right)$ be a basis of $TZ$, and let 
$\left(\fa\right)$ be the corresponding dual basis of $T^{*}Z$. Then 
$i_{T}$ is given by
\begin{equation}\label{eq:it1}
i_{T}=\frac{1}{2}\fa\fb i_{T\left(f_{\alpha},f_{\beta}\right)}.
\end{equation}
We have the classical identity
\begin{equation}\label{eq:cla1}
d^{Z}=\n^{\Lambda\left(T^{*}Z\right)}_{\mathrm a}+i_{T}.
\end{equation}

For $a\in \R$, let
\index{nTZ@$\n^{TZ,a}$}%
$\n^{TZ,a}$ be the connection on $TZ$ such that if 
$U,V\in TZ$, 
\begin{equation}\label{eq:cla2}
\n^{TZ,a}_{U}V=\n^{TZ}_{U}V+aT\left(U,V\right).
\end{equation}
Then the torsion of $\n^{TZ,a}$ is given by $\left(1+2a\right)T$. 
Let $\n^{\Lambda\left(T^{*}Z\right),a}$ be the connection on 
$\Lambda\left(T^{*}Z\right)$ that is induced by $\n^{TZ,a}$. 

In the sequel, we define the operator $i_{T\left(U,\cdot\right)}$ 
using normal ordering, i.e.,
\begin{equation}\label{eq:clap4}
i_{T}\left(U,\cdot\right)=\fa i_{T\left(U,f_{\alpha}\right)}.
\end{equation}
By (\ref{eq:cla2}), if $U\in TZ$,
\begin{equation}\label{eq:cla2a1}
\n^{\Lambda\left(T^{*}Z\right),a}_{U}=\n_{U}^{\Lambda\left(T^{*}Z\right)}-a i_{T\left(U,\cdot\right)}.
\end{equation}

 We 
have the identity of connections on $\Lambda\left(T^{*}Z\right)$, 
\begin{equation}\label{eq:cla3}
	\n^{\Lambda\left(T^{*}Z\right),a}=\n^{\Lambda\left(T^{*}Z\right)}-a
\left\langle  
T\left(\cdot,f_{\beta}\right),f^{\gamma}\right\rangle\fb 
i_{f_{\gamma}}.
\end{equation}
By (\ref{eq:it1}), (\ref{eq:cla3}), we deduce that
\begin{equation}\label{eq:cla4}
	\n^{\Lambda\left(T^{*}Z\right),a}_{\mathrm{a}}=\n^{\Lambda\left(T^{*}Z\right)}_{\mathrm a}-2ai_{T}.
\end{equation}

By (\ref{eq:cla1}), (\ref{eq:cla4}), we get
\begin{equation}\label{eq:cla4b1}
d^{Z}=\n^{\Lambda\left(T^{*}Z\right),-1/2}_{\mathrm{a}}.
\end{equation}

If $U$ is a smooth section of $TZ$, then
\begin{equation}\label{eq:clap1}
\left[U,V\right]=\n^{TZ}_{U}V-T\left(U,V\right)-\n^{TZ}_{V}U.
\end{equation}
Equation (\ref{eq:clap1}) can be rewritten in the form
\begin{equation}\label{eq:clap2}
\left[U,V\right]=\n^{TZ,-1}_{U}V -\n^{TZ}_{V}U.
\end{equation}
We define the operator $i_{\n^{TZ}U}$ using normal ordering, i.e., 
\begin{equation}\label{eq:clap2a1}
i_{\n^{TZ}U}=\fa i_{\n^{TZ}_{f_{\alpha}}U}.
\end{equation}

By (\ref{eq:clap2}), when acting on $\Omega\left(Z,\R\right)$, the Lie derivative operator $L_{U}$ is 
given by
\begin{equation}\label{eq:clap3}
L_{U}=i_{\n^{TZ}U}+\n^{\Lambda\left(T^{*}Z\right),-1}_{U}.
\end{equation}

Combining (\ref{eq:cla2a1}) and (\ref{eq:clap3}), we obtain
\begin{equation}\label{eq:clap4b}
L_{U}=i_{\n^{TZ}U}+\n^{\Lambda\left(T^{*}Z\right)}_{U}+i_{T\left(U,\cdot\right)}.
\end{equation}
Equation (\ref{eq:clap4b}) is compatible with equation (\ref{eq:cla1}) 
and with Cartan formula $L_{U}=\left[d^{Z},i_{U}\right]$, which can 
also be written in the form
\begin{equation}\label{eq:clap4ax1}
L_{U}=\left[\n^{\Lambda\left(T^{*}Z\right)}_{\mathrm{a}}+i_{T},i_{U}\right]=
\left[\n^{\Lambda\left(T^{*}Z\right),-1/2}_{\mathrm{a}},i_{U}\right].
\end{equation}

Let $Z$ be a complex manifold,  let $TZ$ be its holomorphic 
tangent bundle, and let  $T_{\R}Z$ be its real tangent bundle. Let $g^{TZ}$ be a Hermitian metric on $TZ$, let 
$\n^{TZ}$ be the associated holomorphic Hermitian connection on $TZ$, 
and let $\n^{T_{\R}Z}$ be the induced connection on $T_{\R}Z$, with 
torsion $T$. Then 
$T$ splits as $\left(2,0\right)$-form with values in $TZ$ and a 
$\left(0,2\right)$-form with values in $\overline{TZ}$. For $a\in 
\R$, $\n^{T_{\R}Z,a}$ preserves $TZ$ and $\overline{TZ}$. More 
precisely, if $u,v\in TZ$ are smooth sections of $TZ$,
\begin{align}\label{eq:clap5}
&\n^{TZ, a \prime \prime }=\n^{TZ \prime \prime },&\n^{TZ,a \prime 
}_{u}v=\n_{u}^{TZ}v+aT\left(u,v\right).
\end{align}
Also  equation (\ref{eq:cla4b1}) splits into separate formulas 
for $\overline{\pa}^{Z}, \pa^{Z}$.
\section{The antiholomorphic superconnections of Block}%
\label{sec:ahscn}
The purpose of this Section is to describe the antiholomorphic superconnections introduced by Block \cite{Block10}.

This Section is organized as follows. In Subsection \ref{subsec:def}, 
we define the antiholomorphic superconnections $\left(E,A^{E\prime 
\prime}\right)$ on a compact complex manifold $X$.  

In Subsection \ref{subsec:co}, we establish the local conjugation  
result of Block, which asserts that locally, $A^{E \prime \prime }$ 
can be reduced to a canonical form. This result is an extension of 
the Newlander-Nirenberg theorem for holomorphic vector bundles.

In Subsection \ref{subsec:applco}, when viewing $\mathscr 
E=\left(E,A^{E \prime \prime }\right)$ as a sheaf of 
$\mathcal{O}_{X}$-complexes, we show that its cohomology  is a 
$\Z$-graded coherent sheaf, and that the natural spectral sequence degenerates at $ E_{2}$.

In Subsection \ref{subsec:detli}, we show that the complex line $\det 
D$ has a natural holomorphic structure, and that $\det D$ coincides 
with the 
 determinant line  of Knudsen-Mumford 
 \cite{KnudsenMumford}.
 
 In Subsection \ref{subsec:acyc}, we show that $\mathscr H \mathscr 
 E=0$ if and only if for any $x\in X$, the complex 
 $\left(D,v_{0}\right)_{x}$ is exact.
 
 In Subsection \ref{subsec:scmoco}, given a morphism of 
 antiholomorphic superconnections, we construct the corresponding cone.
 
In Subsection \ref{subsec:pullten}, we construct the 
 pull-backs of antiholomorphic superconnections.
 
 Finally, in Subsection \ref{subsec:tenspro}, we obtain a tensor 
 product of antiholomorphic superconnections.
\subsection{The antiholomorphic superconnections}%
\label{subsec:def}
Let $X$ be a compact complex manifold of dimension $n$. If $r,r'\in 
\Z, r\le r'$, let 
$E=\bigoplus_{i=r}^{r'}E^{i}$ be  a  
finite-dimensional  complex  
$\Z$-graded vector 
bundle on $X$, which is also a left
$\Lambda \left(\overline{\TsX}\right)$-module, so that elements 
of $\Lambda^{p}\left(\overline{\TsX}\right)$ act on $E$ as morphisms of 
degree $p$. 

We  use the notation of Section \ref{sec:lia}, with $W$ 
replaced by $TX$. We assume that the $D^{q}$ as defined 
(\ref{eq:bc13a}) have constant rank, so that the corresponding 
diagonal $D$ is a complex 
$\Z$-graded vector bundle on $X$. Also we assume that the analogue of 
(\ref{eq:bc12}) holds. 

Then $C^{\infty }\left(X,E\right)$ is a $\Z$-graded vector space. The 
degree of an operator acting on $C^{\infty }\left(X,E\right)$ is 
counted as in Section \ref{sec:lia}.

	Let
	\index{P@$P$}%
	$P$ denote the projection $E\to D$. Then $P$ 
	preserves the $\Z$-grading. Also $P$ vanishes on $F^{1}E$.

As in (\ref{eq:bc12a0}),  set
\index{E0@$E_{0}$}%
\begin{align}\label{eq:bot1}
&E_{0}^{i}=\bigoplus_{p+q=i}\Lambda^{p}\left(\overline{\TsX}\right)
\otimes D^{q},
&E_{0}=\bigoplus_{i=r}^{r'} E_{0}^{i}.
\end{align}
Then $E_{0}$ is a $\Z$-graded filtered vector bundle with the 
same $D$ as $E$.

We can write the analogue of (\ref{eq:bc12a1}) as a non-canonical 
splitting of smooth vector bundles,
\begin{equation}\label{eq:iv4a-1}
E^{q}=D^{q} \oplus F^{1}E^{q}.
\end{equation}
As in (\ref{eq:bc13}), we get the smooth non-canonical identification of 
$\Z$-graded $\Lambda\left(\overline{\TsX}\right)$-modules,
\begin{equation}\label{eq:iv4}
E \simeq E_{0}.
\end{equation}
Any other splitting is obtained from smooth sections of 
$\Hom^{0}\left(D,F^{1}E_{0}\right)$.

Now we follow Quillen \cite[Section 2]{Quillen85a} and Block \cite[Definition 
2.4]{Block10}.
\begin{definition}\label{def:scn}
	A differential operator 
\index{AE@$A^{E \prime \prime }$}%
	$A^{E \prime \prime}$ acting on 
	$C^{\infty }\left(X,E\right)$ is said to be an antiholomorphic 
	flat
	superconnection\footnote{In \cite[Sections 2 and 4]{Block10}, Block writes instead 
	that $A^{E \prime \prime }$ is a $\Z$-connection.} if it acts as a differential operator of degree 
	$1$ on $C^{\infty}\left(X,E\right)$  such that
	\begin{itemize}
		\item  If $\alpha\in \Omega^{0,\scriptsize\bullet}\left(X,\C\right), s\in 
		C^{\infty}\left(X,E\right)$, then
		\begin{equation}\label{eq:iv5}
A^{E \prime \prime }\left(\alpha s\right)=\left( 
\overline{\pa}^{X}\alpha \right)  
s+\left(-1\right)^{\mathrm{deg}\alpha}\alpha A^{E \prime \prime }s.
\end{equation}
\item The following identity holds:
\begin{equation}\label{eq:iv6}
A^{E \prime \prime,2}=0.
\end{equation}
	\end{itemize}
\end{definition}
In the sequel, we will omit `flat', and  say instead that $A^{E \prime \prime }$ is an 
antiholomorphic superconnection.

The operator $\As$ preserves the filtration $F$. In particular $A^{E 
\prime \prime }$ acts on $D=E/F^{1}E$ like a smooth section 
\index{v@$v_{0}$}%
$v_{0}\in \End\left(D\right)$ 
which is of degree $1$, and such that $v_{0}^{2}=0$. Then $P$ is a morphism of  complexes 
$\left(C^{\infty }\left(X,E \right),A^{E \prime \prime }\right)\to 
\left( C^{\infty }\left(X,D\right),v_{0}\right)$. 

If $g$ is a smooth section of $\Aut^{0}\left(E\right)$, then 
$\underline{A}^{E \prime \prime }=g A^{E \prime \prime  }g^{-1}$ is another antiholomorphic 
superconnection on $E$. In the sequel, we will write that $\underline{A}^{E 
\prime \prime } \simeq A^{E \prime \prime }$.

Assume for the moment that we fix a non-canonical identification as in 
(\ref{eq:iv4a-1}), (\ref{eq:iv4}). Let $A^{E_{0}\prime \prime }$ be the 
corresponding antiholomorphic superconnection on $E_{0}$. 	We can write $A^{E_{0} \prime \prime }$ in the form
	\begin{equation}\label{eq:iv36a6}
A^{E_{0} \prime \prime }=v_{0}+\n^{D \prime \prime }+\sum_{i\ge 
2}^{}v_{i},
\end{equation}
where   $\n^{D \prime \prime }$ a degree preserving antiholomorphic 
connection on $D$\footnote{In general, its square does not 
vanish, i.e., it does not define a holomorphic structure on $D$. The 
distinction between antiholomorphic connection and holomorphic 
structure will reappear in the sequel.},  and 
for $i=0\ \mathrm{or}\ i\ge 2$, $v_{i}\in 
\Lambda^{i}\left(\overline{\TsX}\right)\ho 
\End^{1-i}\left(D\right)$. Since $A^{E \prime \prime ,2}=0$, from 
(\ref{eq:iv36a6}), we get
\begin{align}\label{eq:carr1}
&v_{0}^{2}=0,&\left[\n^{D \prime \prime },v_{0}\right]=0, \qquad
\n^{D \prime \prime ,2}+\left[v_{0},v_{2}\right]=0.
\end{align}

Put
\index{B@$B$}%
\begin{equation}\label{eq:baba1}
B=v_{0}+\sum_{i\ge 2}^{}v_{i}.
\end{equation}
Then $B$ is a section of degree $1$ of
$\Lambda\left(\overline{\TsX}\right)\ho\End\left(D\right)$, and (\ref{eq:iv36a6}) can be written in the form,
\begin{equation}\label{eq:baba2}
A^{E_{0} \prime \prime }=\n^{D \prime \prime}+B.
\end{equation}

Let $g$ be a smooth section of $\Aut^{0}_{0}\left(E_{0}\right)$ 
associated with a different splitting in (\ref{eq:iv4a-1}), 
(\ref{eq:iv4}). Let $\underline{A}^{E_{0} \prime \prime }$ be the superconnection 
on $E_{0}$ that corresponds to $A^{E \prime  \prime }$ as before. Then
\begin{equation}\label{eq:carr1a1}
\underline{A}^{E_{0} \prime \prime }=gA^{E_{0} \prime \prime }g^{-1}.
\end{equation}
Then $\underline{A}^{ E\prime \prime }$ has an expansion similar to 
(\ref{eq:iv36a6}), in which the corresponding terms will also be 
underlined. Note that $\underline{v}_{0}=v_{0}$, and also that there 
is a smooth section $\alpha$  of 
$\overline{\TsX}\ho\End^{-1}\left(D\right)$ such that
\begin{equation}\label{eq:roug}
\underline{\n}^{D \prime \prime}=\n^{D \prime \prime}+\left[\alpha,v_{0}\right].
\end{equation}
\begin{example}\label{exa:triva}
Here are  two trivial cases of antiholomorphic superconnections: 
	\index{CX@$\mathscr C^{X}$}%
	\index{CX@$\underline{\mathscr C}^{X}$}%
	\begin{align}\label{eq:triva1}
&\mathscr 
C^{X}=\left(\Lambda\left(\overline{\TsX}\right),\overline{\pa}^{X}\right),
&\underline{\mathscr C}^{X}=\left(\Lambda\left(T^{*}_{\C}X \right) 
,\overline{\pa}^{X}\right).
\end{align}
In the first case, $D=\C$, in the second case, 
$D=\Lambda\left(\TsX\right)$. 
In both cases, we have canonical identifications $E=E_{0}$.
\end{example}
\subsection{A conjugation result}%
\label{subsec:co}
The following extension of the Newlander-Nirenberg theorem has been 
established in  \cite[p.17]{Block10}.
\begin{theorem}\label{thm:Conj}
	If $\left(E,A^{E \prime \prime }\right)$ is an antiholomorphic superconnection, 
	if $x\in X$, there is an open neighborhood $U$ of $x$,  a 
	holomorphic structure $\underline{\n}^{D\vert_{U} \prime \prime }$ on $D\vert 
	_{U}$ such that $\underline{\n}^{D\vert_{U} \prime \prime }v_{0}=0$, and moreover,
	\begin{equation}\label{eq:raa1}
	(E,A^{E\prime\prime })|_{U} \simeq 
	\left(\Lambda(\overline{T^{*}U})\otimes 
	D\vert_{U},v_{0}+\underline{\nabla}^{D\vert_{U}\prime\prime 
	}\right).
	\end{equation} 
\end{theorem}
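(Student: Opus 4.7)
The plan is to locally conjugate $A^{E\prime\prime}$ into the desired form by an iterated gauge procedure, then extract the holomorphic structure on $D|_{U}$ from the flatness constraint $A^{E\prime\prime,2}=0$. First, I would shrink $U$ to a small polydisc around $x$ on which $D$ is smoothly trivializable and all Dolbeault $\overline{\pa}$-equations for vector bundles with smooth coefficients are solvable in positive degrees. Fix a smooth splitting as in (\ref{eq:iv4a-1})--(\ref{eq:iv4}), identifying $E|_{U}\simeq E_{0}|_{U}=\Lambda(\overline{\TsX|_{U}})\ho D|_{U}$, and write $A^{E_{0}\prime\prime}=\n^{D\prime\prime}+v_{0}+\sum_{i\ge 2}v_{i}$ as in (\ref{eq:iv36a6}), with the flatness relations (\ref{eq:carr1}) and their higher analogues. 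Note that only finitely many $v_{i}$ are nonzero, since the form degree is bounded by $n=\dim_{\C}X$ and the cohomological amplitude of $D$ is finite.

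Next I would produce $g=\exp(\alpha)\in \Aut_{0}^{0}(E_{0}|_{U})$ with $\alpha=\sum_{i\ge 1}\alpha_{i}$, $\alpha_{i}\in\Gamma(U,\Lambda^{i}(\overline{\TsX|_{U}})\ho\End^{-i}(D))$, by induction on the form degree. Conjugation by $\exp(\alpha_{i})$ affects only the components of $A^{E_{0}\prime\prime}$ of form degree $\ge i$; at step $i$ the equation determining $\alpha_{i}$ is a linear equation of the form $[v_{0},\alpha_{i}]+\overline{\pa}$-corrections $=$ (current $v_{i}$), where the right-hand side is forced by the flatness identities at the previous steps to be an $\overline{\pa}$-cocycle in positive Dolbeault degree with values in the bigraded sheaf $\Lambda(\overline{\TsX|_{U}})\ho\End(D)$. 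Dolbeault's lemma on $U$ furnishes the solution $\alpha_{i}$, and the induction terminates in finitely many steps, yielding $gA^{E_{0}\prime\prime}g^{-1}=\underline{\n}^{D\prime\prime}+v_{0}$ for some antiholomorphic connection $\underline{\n}^{D\prime\prime}$ on $D|_{U}$.

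The holomorphicity is then automatic: the identity $(\underline{\n}^{D\prime\prime}+v_{0})^{2}=gA^{E\prime\prime,2}g^{-1}=0$ decomposes by form degree into $v_{0}^{2}=0$, $[\underline{\n}^{D\prime\prime},v_{0}]=0$, and $\underline{\n}^{D\prime\prime,2}=0$. The last equation is the Newlander-Nirenberg integrability condition, making $\underline{\n}^{D\prime\prime}$ a holomorphic structure on $D|_{U}$; the middle equation says that $v_{0}$ is holomorphic with respect to this structure, so that $(D|_{U},v_{0})$ is a complex of holomorphic vector bundles. The resulting isomorphism is precisely (\ref{eq:raa1}).

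The main obstacle is that $\overline{\pa}$ and $\ad v_{0}$ do not strictly anticommute; their supercommutator is controlled by the very terms $v_{i}$ we are trying to eliminate, so there is no ambient double complex in which to run a direct spectral sequence argument. The induction must therefore be set up so that at each stage the residual obstruction is a $\overline{\pa}$-cocycle of positive form degree in a carefully chosen twist of $(\End(D),\ad v_{0})$, where Dolbeault acyclicity on the polydisc applies. This homological perturbation bookkeeping is the technical core of the argument and is precisely what forces the conclusion to be local: there is no global analogue, which is ultimately why an object of $\Db(X)$ need not admit a global resolution by holomorphic vector bundles.
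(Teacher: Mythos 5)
Your outline correctly identifies the phenomenon, but the mechanism you propose does not close. The core of your plan is to conjugate iteratively by $\exp(\alpha_{i})$, solving at each step a $\overline{\partial}$-equation by the Dolbeault lemma on a small polydisc. That step is precisely where the gap sits. After fixing a smooth trivialization of $D$ on a polydisc, write $A^{E_{0}\prime\prime}=\overline{\partial}+B_{0}$; the flatness $A^{E_{0}\prime\prime,2}=0$ is the Maurer--Cartan equation $\overline{\partial}B_{0}+B_{0}^{2}=0$, not $\overline{\partial}B_{0}=0$. Consequently the obstruction at stage $i$ is \emph{not} a $\overline{\partial}$-cocycle: from the form-degree $(i+1)$ component of Maurer--Cartan one gets $\overline{\partial}v_{i}=-[\Gamma_{1},v_{i}]-[v_{0},v_{i+1}]-\ldots$, which is nonzero in general. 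The obstruction \emph{is} closed for the total differential $\overline{\partial}+\mathrm{ad}(v_{0})$ once lower $v_{j}$ have been killed, but the local exactness you would then need is not classical Dolbeault acyclicity: it is a Poincar\'e lemma for the twisted complex $\overline{\partial}+v_{0}$, and at this point $v_{0}$ is not yet known to be holomorphic for any holomorphic structure on $D$, so there is no well-posed total complex to apply a standard homotopy to. That local exactness is essentially the content of the theorem itself (it is what underlies the $E_{2}$-degeneration in Theorem \ref{thm:cohco}, whose proof invokes Theorem \ref{thm:Conj}); taken at face value your scheme is circular. You acknowledge the obstacle in your last paragraph, but ``carefully chosen twist'' and ``homological perturbation bookkeeping'' name the missing step rather than supply it.

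The paper's proof avoids any such cohomological input by a genuinely different, Newlander--Nirenberg/Malgrange-type argument. After the dilation $\delta_{t}$ makes $\|B_{0}^{(\ge 1)}\|_{C^{b}}$ small, one solves the gauge-fixing problem one complex coordinate at a time: the equation for $J_{i}$ is the single-variable ODE \eqref{eqJi1} in $\overline{z}^{i}$, solved via the convolution operator $R$ built from the Cauchy kernel $\frac{dz^{1}}{2i\pi z^{1}}$ in \eqref{eq:ra1}--\eqref{eq:ra2}; smallness makes $1+F_{z'}$ invertible so the Neumann series \eqref{eqgz1} converges, and the flatness relation at the previous step forces $J_{i}$ to be holomorphic in $z^{1},\ldots,z^{i-1}$, which is what allows the induction to proceed. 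This replaces the unavailable Poincar\'e lemma with an elementary one-variable $\overline{\partial}$-equation plus a quantitative smallness hypothesis, and that analytic substitution is exactly the technical core your proposal does not supply.
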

\begin{proof}
	We will proceed as in proof of the  Newlander-Nirenberg 
	theorem \cite[Subsection 2.2.2]{Donaldson88}. Recall that $\dim X=n$. Let
	$(z^{1},\ldots,z^{n})$ be a coordinate system near $x$ such that $x$ is represented by	
	$z=0$. For $r>0$ small enough, let  $U_{r}=\{z\in \C^{n}: |z^{i}|< 
	r \}$ be the corresponding  polydisc. For $r$ small enough, we 
	can view $U_{2r}$ as an open neighborhood  of $x$ in $X$, such 
	that 
	$D\vert_{U_{2r}}$ is a trivial smooth vector bundle. In particular, $D$ is equipped with the 
	holomorphic structure $\overline{\pa}$. By (\ref{eq:iv36a6}), on 
	$U_{2r}$, there is  a smooth section $B_{0}$ of 
	$\Lambda\left(\overline{\C}^{n}\right)\ho\End\left(D\right)$ of 
	degree  $1$  such that
	\begin{equation}\label{eq:adi1}
	A^{E_{0}\prime\prime }|_{U_{2r}}=\overline{\partial}+B_{0}.
	\end{equation}
	If $B_{0}^{(\ge 1)}$ is the sum of the components of $B_{0}$ of degree $\ge 1$ in 
	$\Lambda\left(\overline{\C}^{n}\right)$, then
	\begin{equation}\label{eq:dec1}
B_{0}=v_{0}+B_{0}^{(\ge 1)}.
\end{equation}

	For $t\in \C,\left\vert  t\right\vert\le 1$, consider  the 
	dilation $z\to \delta_{t}(z)= tz$. Replacing $r$ by $tr$, and  
	$A^{E_{0} \prime \prime }\vert_{U_{2r}}$ 
	by its conjugate by $\delta_{t}^{*}$, we may as well assume that 
	$\left\Vert  B_{0}^{(\ge 1)}\right\Vert_{C^{b}\left(U_{2r}\right)}$ is arbitrary small.

We will show by 
induction  that for $1\le i\le n$, there are  smooth sections  
$J_{i}$ of $\Aut^{0}(\Lambda\left(\overline{\C}^{n}\right)\ho D)$,  and  antiholomorphic superconnections 
	$\overline{\partial}+B_{i}$ on $U_{r}$ such 	that  $(J_{i},B_{i})$ do not contain the Grassmannian variable 	
	$d\overline{z}^{j},j\le i$, and that 
	\begin{equation}\label{eq:lab0}
	J_{i}^{-1}\left(\overline{\partial}+B_{i-1}\right)J_{i}=	\overline{\partial}+B_{i}. 
	\end{equation} 
If $J=J_{1}\cdots J_{n}$ , we will obtain part of our theorem by conjugating 
$A^{E_{0} \prime \prime }$ by $J^{-1}$.
	
For $i=1$,  we need to find 
$J_{1}$  such that $J_{1}$ and 
	$\overline{\partial} J_{1}+B_{0}J_{1}
$	do not contain  $d\overline{z}^{1}$, in which case
	\begin{align}
		B_{1}=J_{1}^{-1}\left( \overline{\partial} J_{1}\right) +J_{1}^{-1}B_{0}J_{1}. 
	\end{align} 
	In the sequel we view $\Lambda\left(\overline{\C}^{n-1}\right)$ 
	as being generated by 
	$d\overline{z}^{2},\ldots,d\overline{z}^{n}$.
If $z=(z^{1},z')\in U_{r}$, with $z'\in\C^{n-1}$ fixed for the moment, we  have to  find an invertible section $J_{1}\in 
C^{\infty}(U_{r},\Lambda (\overline{\C}^{n-1})\ho
\End(D))$ of  degree $0$ such that 
	\begin{equation}\label{ew:lab0}
		\frac{\partial J_{1}}{\partial\overline{ z}^{1}} 
		+\left(i_{\frac{\partial}{\partial 
		\overline{z}^{1}}}B_{0}\right)J_{1}=0.
	\end{equation} 

Let $\phi\in C^{\infty,c}\left(\C^{n}, [0,1]\right)$ be  such 
that $\phi=1$ on $U_{r}$, the support of which is included  in $U_{2r}$. It is enough 
to show that for  $|z'|$ small 
enough, there is an invertible section $j_{z'}\in 
C^{\infty}(\C,\Lambda (\C^{n-1})\ho 
\End(D))$ of total degree $0$, which depends smoothly on $z'$,  such that 
	\begin{equation}\label{eq:ors1}
		\frac{\partial j_{z'}}{\partial \overline{z}^{1}} 
		+\phi \left (i_{\frac{\partial}{\partial 
		\overline{z}^{1}}}B_{0}\right)j_{z'}=0. 
	\end{equation} 
If $j_{z'}=1+g_{z'}$, equation (\ref{eq:ors1}) is equivalent to 
	\begin{equation}\label{eqgz}
		\left(\frac{\pa}{\pa\overline{z}^{1}}+\phi i_{\frac{\partial}{\partial 
		\overline{z}^{1}}}B_{0} \right)g_{z'} +\phi i_{\frac{\partial}{\partial 
		\overline{z}^{1}}}B_{0}=0.
	\end{equation} 

	The form $\frac{dz^{1}}{2i\pi z^{1}}$ on $\C$ is locally integrable, and 
	 by Poincaré-Lelong (\ref{eq:bc5}), we have the equation of currents,
\begin{align}\label{eqfunda}
\overline{\pa}\frac{dz^{1}}{ {2i\pi}	 z^{1}}=\delta_{0}.
	\end{align}
 Let $\pi_{1},\pi_{2}$ be the two canonical projections $\C^{2}\to \C$. Let $\pi:\C^{2}\to \C$ be given by $\left(z^{1},z^{1 \prime 
	}\right)=z^{1}+z^{1 \prime }$.
Let $C^{c}\left(\C,\C\right)$ be the vector space of continuous 
complex functions on $\C$ with compact support. If $f\in C^{c}\left(\C,\C\right)$, put
\begin{equation}\label{eq:ra1}
Rf=\pi_{*}\left[\pi_{1}^{*}fd\overline{z}^{1}\pi_{2}^{*}\frac{dz^{1}}{2i\pi z^{1}}\right]. 
\end{equation}
Equation (\ref{eq:ra1}) gives the convolution of two 
distributions.  Let $C^{b}\left(\C,\C\right)$ be the vector space of 
bounded continuous functions from $\C$ into itself. Then $Rf\in C^{b}\left(\C,\C\right)$. 
By (\ref{eqfunda}), we get
\begin{equation}\label{eq:ra2}
\frac{\pa}{\pa\overline{z}^{1}}Rf=f.
\end{equation}

Let 
$C^{0,b}\left(\C,\Lambda\left(\C^{n-1}\right)\ho\End\left(D\right)\right)$ be the space of bounded continuous sections of degree $0$ of $\Lambda\left(\C^{n-1}\right)\ho\End\left(D\right)$.  Let $F_{z'}$ be the bounded operator  acting on  
$C^{0,b}\left(\C,\Lambda(\C^{n-1})\ho \End(D)\right)$,
\begin{equation}\label{eqFdef}
		F_{z'}=R\phi i_{\frac{\pa}{\pa \overline{z}^{1}}}B_{0}.
	\end{equation}
Since $B_{0}$ is of degree $1$,  $F_{z'}$ is of degree $0$. 

	By \eqref{eq:ra2}, we get  
	\begin{align}\label{eqFonda2}
		\frac{\partial}{\partial \overline{ 
		z}_{1}}F_{z'}=\phi
		i_{\frac{\pa}{\pa\overline{z}^{1}}}B_{0}. 
	\end{align} 
By  (\ref{eq:ra1}), \eqref{eqFdef}, there is $C>0$ such that 
	\begin{equation}\label{eq:ineq1}
		\|F_{z'}\| \le C \left\|B_{0}^{(\ge 1)}\right\|_{C^{b}(U_{2r})}. 
	\end{equation} 

Using (\ref{eqFonda2}), equation (\ref{eqgz}) can be 
written in the form
\begin{equation}\label{eq:ra3}
\frac{\pa}{\pa\overline{z}^{1}}\left[\left(1+F_{z'}\right)g_{z'}+F_{z'}1\right]=0.
\end{equation}

We may and we will assume that $\left\Vert  B_{0}^{(\ge 
1)}\right\Vert_{C^{b}\left(U_{2r}\right)}$ is 
small enough, so that, when using (\ref{eq:ineq1}),   $\left\Vert  
F_{z'}\right\Vert<1$. Then
		 $1+F_{z'}$ is 
		invertible with uniformly bounded inverse.
		
		Put 
	\begin{align}\label{eqgz1}
		g_{z'}=-(1+F_{z'})^{-1}F_{z'}1. 
	\end{align} 
Then $g_{z'}$ is a solution of degree $0$ of (\ref{eq:ra3}), and so it solves 
(\ref{eqgz}).   Also it is obviously smooth in all variables. 
Moreover, 
	\begin{equation}\label{eq:lab3}
		\|g_{z'}\|\le \frac{\|F_{z'}\|}{1-\|F_{z'}\|}. 
	\end{equation} 
If $\left\Vert  B_{0}^{(\ge 1)}\right\Vert_{C^{b}\left(U_{2r}\right)}$ is small enough so that  
$\|F_{z'}\|\le1/3$, then	 $\|g_{z'}\| \le 1/2 $. In particular, $j_{z'}=1+g_{z'}$ is invertible. 	

Assume that $i\ge1$ and that we have constructed $(J_{k}, B_{k})$ for 
 $k\le i-1$.  By (\ref{eq:lab0}), we have to find $J_{i}$ such 
that $J_{i}$ and $\overline{\partial} J_{i}+B_{i-1}J_{i}$
do not contain $d\overline{z}^{1},\ldots,d\overline{z}^{i}$.  This  
is equivalent to the fact that 
$J_{i}$ is holomorphic in $z^{1},\ldots,z^{i-1}$, and 
	\begin{align}\label{eqJi1}
		\frac{\partial J_{i}}{\partial \overline{z}^{i}} +\left(i_{\frac{\partial}{\partial 
		\overline{z}^{i}}}B_{i-1}\right)J_{i}=0. 
	\end{align} 
By the argument given in (\ref{eq:lab0})--(\ref{eq:lab3}), using 
again a dilation $\delta_{t},\left\vert  t\right\vert<1$,we may 
assume that 
$\left\Vert  B_{i-1}^{(\ge 1)}\right\Vert_{C^{b}\left(U_{2r}\right)}$ is small enough, so that
\eqref{eqJi1} can be solved. Since 
$\left(\overline{\partial}+B_{i-1}\right)^{2}=0$ and since $B_{i-1}$ does not contain
$d\overline{z}^{1},\ldots,d\overline{z}^{i-1}$, $B_{i-1}$ is 
a holomorphic function  of 
$z^{1},\ldots,z^{i-1}$. The arguments that were used before show that 
the solution $J_{i}$ is also holomorphic in $z^{1},\ldots,z^{i-1}$.

As explained after (\ref{eq:lab0}), for $r>0$ small enough, on 
$U=U_{r}$,  
 there exists a smooth section $\underline{v}_{0}$ of 
$\End^{0}\left(D\right)$ such that
\begin{equation}\label{eq:comp1}
J^{-1}A^{E_{0},\prime \prime }\vert_{U}J=\overline{\pa}+\underline{v}_{0}.
\end{equation}
Let $J^{(0)}$ be the component of $J$ of degree $0$ in 
$\Lambda\left(\overline{\TsX}\right)$. By (\ref{eq:iv36a6}), (\ref{eq:comp1}), we get
\begin{equation}\label{eq:comp2}
\left[J^{(0)}\right]^{-1}v_{0}J^{(0)}=\underline{v}_{0}.
\end{equation}
Put
\begin{align}\label{eq:comp3}
&K=J\left[J^{(0)}\right]^{-1},&\underline{\n}^{D\vert_{U} \prime \prime 
}=J^{(0)}\overline{\pa}\left[J^{(0)}\right]^{-1}.
\end{align}
Then $\underline{\n}^{D\vert_{U} \prime \prime }$ is a holomorphic structure 
on $D\vert_{U}$. By (\ref{eq:comp1})--(\ref{eq:comp3}), we get
\begin{equation}\label{eq:comp5}
K^{-1}A^{E_{0} \prime \prime }K=\underline{\n}^{D\vert_{U} \prime \prime 
}+v_{0},
\end{equation}
which completes the proof of our theorem.
\end{proof} 
\subsection{Antiholomorphic superconnections and coherent sheafs}%
\label{subsec:applco}
Let $F$ be a smooth vector bundle on $X$,  and let
\index{OXF@$\mathcal{O}_{X}^{\infty }\left(F\right)$}%
$\mathcal{O}_{X}^{\infty }\left(F\right)$ be  
the sheaf of smooth sections of $F$. 
\begin{definition}\label{def:Ds}
	Let 
	\index{E@$\mathscr E$}%
	$\mathscr E$ be the  sheaf of $\mathcal{O}_{X} $-complexes  
	$\left(\mathcal{O}_{X}^{\infty }\left(E\right),\As\right)$,	 and let 
	\index{HE@$\mathscr H \mathscr E$}%
	$\mathscr H \mathscr E$ denote 
	its cohomology.
	For $r\ge 0$,  let $\left( \mathscr E_{r}, d_{r}\right) $ denote the  spectral 
	sequence of $\mathcal{O}_{X}$-sheaves associated with the 
	filtration by 
	$\mathcal{O}_{X}^{\infty}\left(\Lambda\left(\overline{\TsX}\right)\right)$.
	
	Let 
	\index{HD@$HD$}%
	$HD$ be the cohomology of  $\left( \mathcal{O}_{X}^{\infty 
}\left(D\right),v_{0}\right)$.
\end{definition}

The sheaf $\mathscr H\mathscr E$ is a $\Z$-graded filtered sheaf of 
$\mathcal{O}_{X}$-modules.
 Also 
\begin{align}\label{eq:do1}
&\left(\mathscr E_{0},d_{0}\right)=\left( \mathcal{O}_{X}^{\infty 
}\left(\Lambda\left(\overline{\TsX}\right)\ho D\right),v_{0} \right), &\mathscr E_{1}=\mathcal{O}_{X}^{\infty }\left( 
\Lambda \left(\overline{\TsX}\right) \right) \ho_{\mathcal{O}_{X}^{\infty }} HD.
\end{align}
The differential $d_{1}$ on $\mathscr E_{1}$ is such that 
$d_{1}^{2}=0$.  Also it verifies Leibniz rule with respect to 
multiplication by $\Omega^{0,\scriptsize\bullet}\left(X,\C\right)$. 
Therefore $d_{1}$ induces a holomorphic structure on the 
$\mathcal{O}_{X}^{\infty }$-module $HD$, which will 
also be denoted
\index{nHD@$\n^{HD \prime \prime}$}%
$\n^{HD \prime \prime}$.
\begin{remark}\label{rem:locf}
	Assume that $HD$ has locally constant rank. Then $HD$ is a smooth 
	vector bundle on $X$.  Then $\n^{HD \prime \prime }$ defines a canonical 
 holomorphic structure on $HD$. 
\end{remark}

When fixing a splitting in (\ref{eq:iv4a-1}), using (\ref{eq:iv4}),  and writing 
$A^{E_{0} \prime 
\prime }$ as in (\ref{eq:iv36a6}), from the three equations in 
(\ref{eq:carr1}), we can also derive a construction of $\n^{HD \prime 
\prime }$. Indeed, by (\ref{eq:carr1}),  the connection $\n^{D \prime \prime }$ induces a holomorphic 
structure on the $\mathcal{O}_{X}^{\infty }$-module $HD$, which is just $d_{1}=\n^{HD \prime 
\prime }$.  This holomorphic structure is canonical, and does not 
depend on the splitting in (\ref{eq:iv4a-1}), as is also clear from 
(\ref{eq:roug}).
\begin{definition}\label{def:cohdH}
	 Let 
	\index{HHD@$\mathscr H HD$}%
	 $\mathscr H HD$ be the  cohomology 
 of  the complex of $\mathcal{O}_{X}$-modules $ \left( \mathcal{O}_{X}^{\infty 
 }\left(\Lambda\left(\overline{\TsX}\right)\right)\ho_{\mathcal{O}_{X}^{\infty}} HD,\n^{HD 
 \prime \prime }\right) $.
\end{definition}
Then $\mathscr H HD$ is a $\Z$-graded $\mathcal{O}_{X}$-module. By 
(\ref{eq:do1}), 
\begin{equation}\label{eq:cons1}
\mathscr E_{2}=\mathscr H  HD.
\end{equation}

We recall a fundamental result of Block \cite[Lemma 4.5]{Block10}.
\begin{theorem}\label{thm:cohco}
	The complex $\mathscr E$ has $\Z$-graded coherent cohomology $\mathscr H 
	\mathscr E$. The spectral sequence $ \mathscr E_{r}$ degenerates at 
	$\mathscr E_{2}= \mathscr H \mathscr E$, and $F^{1} \mathscr H 
	\mathscr E=0$. 
	
	For $i\ge 1$, 
	\begin{equation}\label{eq:cons2-a}
\mathscr H^{i}HD=0.
\end{equation}
We have the identity of $\Z$-graded coherent sheaves,
	\begin{equation}\label{eq:cons2}
\mathscr H \mathscr E= \mathscr H HD,
\end{equation} 
and also the identity of $\mathcal{O}_{X}^{\infty }$-modules,
\begin{equation}\label{eq:cons2a1}
\mathscr H \mathscr E \otimes 
_{\mathcal{O}_{X}}\mathcal{O}_{X}^{\infty }=HD.
\end{equation}
	\end{theorem}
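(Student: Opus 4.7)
The plan is to reduce everything to the local normal form provided by Theorem~\ref{thm:Conj} and then run a standard double-complex argument. By Block's conjugation theorem, each $x\in X$ admits a neighborhood $U$ on which $(E,A^{E\prime\prime})\vert_U$ is isomorphic to $\left(\Lambda(\overline{T^{*}U})\otimes D\vert_U,\,v_{0}+\underline{\nabla}^{D\vert_U \prime \prime}\right)$, with $\underline{\nabla}^{D\vert_U \prime \prime}$ a holomorphic structure on $D\vert_U$ for which $v_{0}$ is holomorphic. Thus $(D\vert_U,v_{0})$ becomes a bounded complex of holomorphic vector bundles; write $\mathcal{D}\vert_U$ for the corresponding complex of locally free coherent $\mathcal{O}_U$-modules. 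All conclusions of the theorem are local in nature, so it suffices to verify them on such $U$.

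Next I would study the double complex $K^{p,q}=\mathcal{O}_U^{\infty}(\Lambda^{p}(\overline{T^{*}U}))\otimes_{\mathcal{O}_U}\mathcal{D}^{q}\vert_U$ with horizontal differential $\overline{\partial}$ and vertical differential $v_{0}$, whose total complex is precisely $\mathscr{E}\vert_U$. Filtering by the $D$-degree and applying the classical Dolbeault lemma for holomorphic vector bundles, the associated spectral sequence collapses at $E_{2}$ and identifies $\mathscr{H}^{n}\mathscr{E}\vert_U \simeq \mathcal{H}^{n}(\mathcal{D}\vert_U)$ as $\mathcal{O}_U$-modules. This shows that $\mathscr{H}\mathscr{E}$ is a $\mathbf{Z}$-graded coherent sheaf; since every class admits a representative in $K^{0,n}=\mathcal{D}^{n}\vert_U$, the filtration $F^{p}\mathscr{H}\mathscr{E}$ from the $\Lambda$-degree satisfies $F^{1}\mathscr{H}\mathscr{E}\vert_U=0$. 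The other spectral sequence of the same bicomplex, filtering by the $\Lambda$-degree, is by construction the spectral sequence $(\mathscr{E}_{r},d_{r})$ of the theorem, with $\mathscr{E}_{2}^{p,q}=\mathscr{H}^{p}HD^{q}$.

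To obtain $\mathscr{H}^{i}HD=0$ for $i\ge 1$ I would invoke the flatness of $\mathcal{O}_X^{\infty}$ over $\mathcal{O}_X$ (Malgrange's theorem). Since $v_{0}$ is $\mathcal{O}_U$-linear after the local reduction, tensoring with the flat sheaf $\mathcal{O}_U^{\infty}$ commutes with taking cohomology, giving $HD\vert_U\simeq\mathcal{H}(\mathcal{D}\vert_U)\otimes_{\mathcal{O}_U}\mathcal{O}_U^{\infty}$. The Dolbeault complex of $HD$ is then the Dolbeault complex of the coherent sheaf $\mathcal{H}(\mathcal{D}\vert_U)$. On a sufficiently small $U$, this coherent sheaf admits a finite locally free resolution; running a further two-spectral-sequence argument on the bicomplex obtained by tensoring this resolution with $\mathcal{O}_U^{\infty}(\Lambda^{0,\scriptsize\bullet})$ and using the Dolbeault lemma on each locally free piece, one concludes that the Dolbeault cohomology of any coherent sheaf is concentrated in degree zero. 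This yields $\mathscr{H}^{i}HD=0$ for $i\ge 1$, forces degeneration of $(\mathscr{E}_{r})$ at $\mathscr{E}_{2}$, and identifies $\mathscr{H}\mathscr{E}=\mathscr{H}HD$ as $\mathbf{Z}$-graded coherent sheaves. The final identity $\mathscr{H}\mathscr{E}\otimes_{\mathcal{O}_X}\mathcal{O}_X^{\infty}=HD$ is then the Malgrange isomorphism already used.

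The chief technical obstacle is the coherent Dolbeault vanishing invoked in the third step: it is not evident \emph{a priori} that the positive-degree Dolbeault cohomology of a coherent (not locally free) sheaf vanishes. This step rests crucially on Malgrange's flatness theorem and on the existence of finite locally free resolutions of coherent sheaves over small complex-analytic neighborhoods. Once these tools are granted, the remaining steps reduce to routine bicomplex manipulations combined with the local conjugation result of Theorem~\ref{thm:Conj}.
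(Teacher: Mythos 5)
Your approach is essentially the same as the paper's: reduce to the local normal form via Theorem~\ref{thm:Conj}, view $\mathscr{E}\vert_U$ as the total complex of the double complex $\bigl(\mathcal{O}_U^{\infty}(\Lambda^{p}(\overline{T^{*}U}))\otimes_{\mathcal{O}_U}D^{q},\ \overline{\partial},\ v_{0}\bigr)$, play the two spectral sequences against each other, and bring in Malgrange's flatness theorem together with the $\overline{\partial}$-Poincaré lemma to obtain the coherent Dolbeault vanishing $\mathscr{H}^{i}HD=0$ for $i\ge 1$. Where you write out a bicomplex argument involving a finite locally free resolution of $\mathscr{H}(\mathcal{D}\vert_U)$ to prove that the Dolbeault cohomology of a coherent sheaf is concentrated in degree zero, the paper compresses this into a single invocation of ``the Poincar\'e lemma'' together with Malgrange flatness; your version is more explicit but not different in substance. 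One small point of presentation: the claim that $F^{1}\mathscr{H}\mathscr{E}\vert_U=0$ follows merely from every class having a representative in $K^{0,n}$ is slightly too quick as stated --- it additionally uses the injectivity of the edge map $[z]\mapsto[z_{0}]\in\mathscr{H}D^{n}$ coming from the collapse of the row-filtered spectral sequence (equivalently, it follows once the degeneration at $\mathscr{E}_{2}$ and the vanishing $\mathscr{H}^{p}HD=0$ for $p\ge1$ are in hand), but this is exactly what you establish in the following paragraph, so the proof as a whole is sound.
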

\begin{proof}
	By Theorem \ref{thm:Conj}, if $U$ is sufficiently 
small, after conjugation by a smooth  section of 
$\Aut^{0}_{0}\left(E_{0}\right)$ on $U$, we may and we 
will assume that on $U$, $A^{E_{0} \prime \prime }$ has the form
\begin{equation}\label{eq:iv8}
A^{E_{0} \prime \prime }=v_{0}+\n^{D \prime \prime}.
\end{equation}
In (\ref{eq:iv8}), $\n^{D \prime \prime }$ is a holomorphic 
structure on $D\vert_{U}$ that preserves the $\Z$-grading and is 
such that
\begin{equation}\label{eq:iv8a1}
\n^{D \prime \prime }v_{0}=0.
\end{equation}
In (\ref{eq:iv8}), $A^{E_{0}\prime \prime }$ now preserves the 
decreasing filtration associated with the degree in $D$, which was 
not the case in the original form of $A^{E_{0} \prime \prime }$. We 
will compute $\mathscr H \mathscr E$ using the associated spectral 
sequence.

Let $\mathscr HD$ be the cohomology of 
$\left(\mathcal{O}_{U}\left(D\right),v_{0}\right)$.
 By \cite[Theorems II.3.13 and II.3.14]{Demaillylivre}, $\mathscr HD$ is a coherent sheaf.
Using the Poincaré lemma for Dolbeault cohomology \cite[Lemma 
I.3.29]{Demaillylivre}, if  $V$ is a small polydisk, 
 then
 \begin{equation}\label{eq:iv9}
\mathscr H\mathscr 
E\left(V\right)=\mathscr HD\left(V\right),  
\end{equation}
so that
\begin{equation}\label{eq:iv9a1}
\mathscr H \mathscr E= \mathscr H D.
\end{equation}
Therefore $\mathscr H\mathscr E$ is a $\Z$-graded coherent sheaf. Also, 
the natural
filtration $F$ is trivial on $\mathscr H \mathscr E$, so that $F^{1}\mathscr H 
\mathscr E=0$, a property which is intrinsic to $\mathscr H \mathscr 
E$, and does not depend on the choice of a conjugation.

By \cite[Corollary VI.1.12]{Malgrange67}, $\mathcal{O}_{X}^{\infty }$ 
is flat on $\mathcal{O}_{X}$, so that the map
\begin{equation}\label{eq:iv11}
\rho: \mathscr H D\otimes _{\mathcal{O}_{U}}\mathcal
O_{U}^{\infty }\to HD
\end{equation}
is an isomorphism of $\mathcal{O}_{U }^{\infty }$-modules.  Moreover, 
the action of $d_{1}$ on $HD$ is just $\n^{D 
\prime \prime }$. Using the Poincaré  lemma, we deduce that for $i\ge 
1$,  $\mathscr 
H^{i}HD=0$, and also that
\begin{equation}\label{eq:iv11a1}
\mathscr H D= \mathscr H H D.
\end{equation}

By the above, the 
cohomology of $\left(\mathscr E_{1},d_{1}\right)$ coincides with $\mathscr H D= \mathscr H \mathscr E$. Therefore, the original 
spectral sequence degenerates at $\mathscr E_{2}$, and  $\mathscr E_{2}=\mathscr H 
\mathscr E$. Using (\ref{eq:iv9a1}), (\ref{eq:iv11}), we get 
(\ref{eq:cons2a1}). The proof of our theorem is completed. 
\end{proof}
\begin{remark}\label{rem:loccst}
	Assume that $HD$ has locally constant rank. As we saw in Remark 
	\ref{rem:locf}, $HD$ is equipped with a canonical holomorphic 
	structure $\n^{HD \prime \prime }$. By Theorem \ref{thm:cohco}, 
	we get
	\begin{equation}\label{eq:locrk}
\mathscr H\mathscr E=\mathcal{O}_{X}\left(HD\right).
\end{equation}

In general,  $HD$ depends only on $D,v_{0}$, and  $\mathscr H HD$ 
depends only on $D,v_{0}, \n^{D \prime \prime}$. By (\ref{eq:cons2}), 
 $\mathscr H \mathscr E$ depends only on $D,v_{0},\n^{D \prime \prime}$, and 
not on the full $A^{E_{0} \prime \prime }$.
\end{remark}
\subsection{The determinant line bundle}%
\label{subsec:detli}
Since $ \mathscr H \mathscr E=\mathscr H HD$ is a $\Z$-graded coherent 
	sheaf, by Knudsen-Mumford \cite{KnudsenMumford}, its determinant 
	\index{dHE@$\det \mathscr H \mathscr E$}%
	$\det \mathscr H \mathscr E$ is a canonically defined holomorphic 
	line bundle on $X$.
	
Let 
\index{dD@$\det D$}%
$\det D$ denote the smooth determinant line bundle, 
\begin{equation}\label{eq:det0}
\det D=\bigotimes_{r}^{r'} \left( \det D^{i}\right) ^{\left(-1\right)^{i}}.
\end{equation}

Now we fix identifications  in (\ref{eq:iv4a-1}), (\ref{eq:iv4}), 
and we write $A^{E_{0} \prime \prime }$ as in (\ref{eq:iv36a6}). 
 The 
antiholomorphic connection $\n^{D \prime \prime }$ on $D$ induces a 
corresponding antiholomorphic connection 
\index{ndet@$\n^{\det D \prime \prime}$}%
$\n^{\det D \prime \prime}$ on $\det D$. 
\begin{theorem}\label{thm:pdet}
	The antiholomorphic connection $\n^{\det D \prime \prime }$ on 
	$\det D$ does not depend on the splitting in (\ref{eq:iv4a-1}), 
	(\ref{eq:iv4}). It defines a holomorphic structure on $\det D$. 
Finally, we have the canonical isomorphism of holomorphic line 
bundles on $X$, 
\begin{equation}\label{eq:sq1a1}
\det D \simeq \det \mathscr H \mathscr E.
\end{equation}
\end{theorem}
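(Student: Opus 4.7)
The proof splits naturally along the three assertions. For the independence of $\nabla^{\det D\prime\prime}$ from the splitting in \eqref{eq:iv4a-1}, I would invoke \eqref{eq:roug}: passing to another splitting amounts to conjugation by $g\in\Aut^{0}_{0}(E_{0})$ and replaces $\nabla^{D\prime\prime}$ by $\nabla^{D\prime\prime}+[\alpha,v_{0}]$ for a suitable smooth section $\alpha$ of $\overline{\TsX}\ho\End^{-1}(D)$. The induced connection on $\det D$ is the image of $\nabla^{D\prime\prime}$ under $\Trs$, and by Proposition \ref{prop:psm} the supertrace of the supercommutator $[\alpha,v_{0}]$ vanishes. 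Hence $\nabla^{\det D\prime\prime}$ is intrinsic.

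For the second assertion, I would use the third identity in \eqref{eq:carr1} to write $\nabla^{D\prime\prime,2}=-[v_{0},v_{2}]$. The curvature of $\nabla^{\det D\prime\prime}$ on the line bundle $\det D$ is again obtained by applying $\Trs$, and vanishes on the supercommutator by Proposition \ref{prop:psm}. Therefore $\nabla^{\det D\prime\prime,2}=0$, and the antiholomorphic connection defines a genuine holomorphic structure on $\det D$.

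For the isomorphism \eqref{eq:sq1a1}, I would argue locally first. By Theorem \ref{thm:Conj}, each point has a neighborhood $U$ on which there exist $K\in\Aut^{0}_{0}(E_{0}|_{U})$ and a holomorphic structure $\underline{\nabla}^{D|_{U}\prime\prime}$ on $D|_{U}$, compatible with $v_{0}$, such that conjugation by $K$ brings $A^{E_{0}\prime\prime}|_{U}$ into the form $v_{0}+\underline{\nabla}^{D|_{U}\prime\prime}$. Thus $(D|_{U},v_{0})$ is an honest bounded complex of holomorphic vector bundles, whose cohomology sheaves are coherent and coincide with $\mathscr H \mathscr E|_{U}$ by \eqref{eq:iv9a1}. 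The classical Knudsen-Mumford theorem \cite{KnudsenMumford} then furnishes a canonical isomorphism of holomorphic line bundles $\det D|_{U}\simeq \det\mathscr H\mathscr E|_{U}$, where the left-hand side carries the holomorphic structure induced by $\underline{\nabla}^{D|_{U}\prime\prime}$. By the first step, this structure coincides with $\nabla^{\det D\prime\prime}$, since $\underline{\nabla}^{D|_{U}\prime\prime}$ and $\nabla^{D\prime\prime}$ differ by a term of the form $[\alpha,v_{0}]$ which is killed by $\Trs$.

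The main obstacle is promoting these local isomorphisms to a well-defined global isomorphism. Two local trivializations on an overlap are related by a section of $\Aut^{0}_{0}(E_{0})$; the naturality of the Knudsen-Mumford isomorphism with respect to quasi-isomorphisms, combined with the fact that $\Aut^{0}_{0}(E_{0})$ acts by the identity on $\det D$ (its image under $\Trs$ on $\End_{0}(E_{0})$ vanishes for the same supercommutator reason), should ensure that the local isomorphisms agree on overlaps. An alternative, more K-theoretic route is to observe that by Theorem \ref{thm:cohco} the class $[\mathscr H\mathscr E]\in K(X)$ equals $\sum_{i}(-1)^{i}[D^{i}]$, so both determinants agree in the determinant of $K$-theory, and then refine this equality to a canonical isomorphism of holomorphic line bundles.
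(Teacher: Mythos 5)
Your proposal is correct and matches the paper's approach for all three parts: independence of splitting via \eqref{eq:roug} and vanishing of supertraces on supercommutators, the vanishing of $\n^{\det D\prime\prime,2}$ via the third identity in \eqref{eq:carr1}, and the local reduction to a genuine complex of holomorphic vector bundles via Theorem \ref{thm:Conj} followed by Knudsen--Mumford. Where you differ from the paper is that you make the gluing of the local Knudsen--Mumford isomorphisms explicit, whereas the paper dispatches it with a single sentence (``this is just our original $\n^{\det D\prime\prime}$ whose construction does not depend on the conjugation''), implicitly appealing to the independence just established; your version, invoking functoriality of the determinant under quasi-isomorphism together with the triviality of the $\Aut^{0}_{0}(E_{0})$-action on $D$, spells out what makes the local isomorphisms compatible on overlaps. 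Two small points of imprecision that do not affect the logic: the reason $\Aut^{0}_{0}(E_{0})$ acts by the identity on $\det D$ is simply definitional ($\rho g=1$ forces $g|_{D}=\mathrm{id}$, hence $g|_{\det D}=\mathrm{id}$), not a supertrace-on-supercommutator argument; and the ``alternative K-theoretic route'' at the end is not well-posed as stated, because the individual $D^{i}$ are only smooth vector bundles and do not define classes in $K(X)$---only $\det D$ inherits a holomorphic structure, which is precisely the content being proved, so this detour is circular. The main line of argument, however, is sound.
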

\begin{proof}
We use the notation in  (\ref{eq:carr1a1}), (\ref{eq:roug}). Let 
$\underline{\n}^{\det D \prime \prime }$ denote the antiholomorphic 
connection on $\det D$ associated with $\underline{\n}^{D}$. By 
(\ref{eq:roug}), we obtain
\begin{equation}\label{eq:sq2}
\underline{\n}^{\det D \prime \prime }=\n^{\det D \prime \prime 
}+\Trs\left[\left[\alpha,v_{0}\right]\right].
\end{equation}
Since supertraces vanish on supercommutators, by (\ref{eq:sq2}), we 
get
\begin{equation}\label{eq:sq3}
\underline{\n}^{\det D\prime \prime }=\n^{\det D \prime \prime },
\end{equation}
which proves the first part of our proposition. Also
\begin{equation}\label{eq:sq4}
\n^{\det D \prime \prime ,2}=\Trs\left[\n^{D \prime \prime ,2}\right].
\end{equation}
Using the third identity in (\ref{eq:carr1}), (\ref{eq:sq4}), and the 
fact that supertraces vanish on supercommutators, we get  
	\begin{equation}\label{eq:sq1}
\n^{\det D \prime \prime, 2}=0.
\end{equation}
so that $\n^{\det D \prime \prime }$ is a holomorphic structure on $\det D$. 

Now we use the notation in the proof of Theorem \ref{thm:cohco}. On a small open set $U$ in $X$, after a local conjugation by a smooth 
section of $\Aut^{0}_{0}\left(E_{0}\right)$, we can write $A^{E_{0} 
\prime \prime}$ in the form (\ref{eq:iv8}),  $\n^{D \prime \prime }$ 
being now a 
holomorphic structure on $D\vert_{U}$. By Knudsen-Mumford \cite{KnudsenMumford}, on $U$, 
the holomorphic line bundle $\det \mathscr H \mathscr E$ is just the 
holomorphic line bundle $\det D$ equipped with the holomorphic 
structure induced by this specific $\n^{D \prime  \prime}$. As we saw before, this 
is just our original $\n^{\det D \prime \prime }$ whose construction 
does not depend on the conjugation. The proof of our theorem is completed. 
\end{proof}
\subsection{The case where $\mathscr H \mathscr E=0$}%
\label{subsec:acyc}
We will now give conditions under which $\mathscr H \mathscr E=0$.
\begin{theorem}\label{thm:simp}
	The $\mathcal{O}_{X}$-module $\mathscr H \mathscr E$ vanishes if 
	and only if there exists a smooth section $k_{0}$ of degree $-1$ 
	of $\End\left(D\right)$ such that 
	\begin{equation}\label{eq:locrk1}
\left[v_{0},k_{0}\right]=1.
\end{equation}
An 
	equivalent condition is that 
 for any $x\in X$, the complex 
	$\left(D,v_{0}\right)_{x}$ is exact. 
	
	Another equivalent condition is that there exists a smooth section $k$ of 
	$\End\left(E\right)$ of degree $-1$ such that
	\begin{equation}\label{eq:exa1}
\left[A^{E \prime \prime },k\right]=1.
\end{equation}
\end{theorem}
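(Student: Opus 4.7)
I will prove the chain (d)$\Rightarrow$(a)$\Rightarrow$(c)$\Rightarrow$(b)$\Rightarrow$(d), where (a)--(d) denote the four conditions in the order they appear in the statement. Three implications are essentially free: if $[A^{E\prime\prime},k]=1$, then $k$ acts as a contracting homotopy on the complex of sheaves $\mathscr E$, forcing $\mathscr H \mathscr E=0$, which gives (d)$\Rightarrow$(a); after fixing any splitting $E\simeq E_{0}$, the form-degree-$0$ component of $k\in\End^{-1}(E_{0})=\Lambda(\overline{\TsX})\ho\End(D)$ is an element $k_0 \in \End^{-1}(D)$ with $[v_0,k_0]=1$, giving (d)$\Rightarrow$(b); and evaluating $[v_0,k_0]=1$ at $x$ gives a contracting homotopy for $(D_x,v_{0,x})$, so (b)$\Rightarrow$(c).

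For (c)$\Rightarrow$(b), I fix a pure Hermitian metric $g^{D}$ on $D$, let $v_0^{*}\in\End^{1}(D)$ be its fiberwise adjoint, and set $\Box_{D}=[v_0,v_0^{*}]=v_0v_0^{*}+v_0^{*}v_0$. One checks $[v_0,\Box_D]=0$ using $v_0^{2}=0$. A fiberwise Hodge decomposition identifies $\ker\Box_{D,x}$ with $H(D_x,v_{0,x})$, so hypothesis (c) is equivalent to $\Box_D$ being invertible as a smooth self-adjoint section of $\End(D)$. Setting $k_{0}=v_0^{*}\Box_{D}^{-1}$, the commutation of $v_0$ with $\Box_D^{-1}$ yields $[v_0,k_0]=[v_0,v_0^{*}]\Box_{D}^{-1}=1$.

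For (b)$\Rightarrow$(d), I fix a splitting $E\simeq E_0$ and write $A^{E_0\prime\prime}=v_0+A'$ where $A'=\n^{D\prime\prime}+v_2+v_3+\cdots$ has strictly positive form-degree in $\Lambda(\overline{\TsX})$. View $k_0\in \End^{-1}(E_0)$ as a form-degree-$0$ element, so that $R^{(0)}:=[A^{E_0\prime\prime},k_0]-1=[A',k_0]$ has form-degree $\geq 1$. Define iteratively $k^{(i+1)}=k^{(i)}-k_0 R^{(i)}$ and $R^{(i+1)}=[A^{E_0\prime\prime},k^{(i+1)}]-1$. Using the elementary bracket identity $[v_0,k_0 T]=T-k_0[v_0,T]$ (which follows from $[v_0,k_0]=1$ and the super-Leibniz rule) together with the fact that each $R^{(i)}$ is a cocycle for $[A^{E_0\prime\prime},\cdot]$ (because $(A^{E_0\prime\prime})^{2}=0$ forces $[A^{E_0\prime\prime},[A^{E_0\prime\prime},\cdot]]=0$), a direct computation yields the recursion $R^{(i+1)}=-R^{(0)} R^{(i)}$, so the form-degree of $R^{(i)}$ grows by at least $1$ at each step. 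Since $\Lambda^{p}(\overline{\TsX})=0$ for $p>n=\dim X$, the process terminates after at most $n$ steps, and $k=k^{(n)}$ is a smooth section of $\End^{-1}(E)$ satisfying $[A^{E\prime\prime},k]=1$.

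For (a)$\Rightarrow$(c), I apply Theorem~\ref{thm:Conj} to reduce locally to the normal form $A^{E\prime\prime}\simeq v_{0}'+\underline{\n}^{D\prime\prime}$, where $\underline{\n}^{D\prime\prime}$ is a genuine holomorphic structure on $D\vert_{U}$ commuting with $v_0'$. By Theorem~\ref{thm:cohco}, on $U$ the sheaf $\mathscr H \mathscr E$ coincides with $\mathscr H D$, the cohomology sheaf of the bounded complex $(\mathcal{O}_{U}(D^{\Ou}),v_0')$ of locally free $\mathcal{O}_{U}$-modules. Hypothesis (a) forces this sheaf complex to be acyclic, so its stalk at each $x\in U$ is a bounded acyclic complex of free $\mathcal{O}_{U,x}$-modules, which splits as a direct sum of short exact sequences with free terms; tensoring with the residue field $k(x)=\mathcal{O}_{U,x}/\mathfrak{m}_{x}$ then shows that the fiber complex $(D_{x},v_{0,x}')$ is exact. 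Since the conjugation in Theorem~\ref{thm:Conj} relates $v_0$ to $v_0'$ by an invertible bundle endomorphism of $D\vert_U$, pointwise exactness of $(D_x,v_{0,x}')$ is equivalent to that of $(D_x,v_{0,x})$, yielding (c). The main technical obstacle is the form-degree bookkeeping in (b)$\Rightarrow$(d); the other implications are short applications of the structural results of Sections~\ref{sec:lia}--\ref{sec:ahscn}.
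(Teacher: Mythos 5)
Your proof is correct and establishes a complete cycle of implications, but a few of the steps go by a genuinely different route than the paper's. The most interesting divergence is in (a)$\Rightarrow$(c). The paper reduces $\mathscr H\mathscr E=0$ to the vanishing of the $\mathcal{O}_X^\infty$-module $HD$ (via equation~(\ref{eq:cons2a1}) and flatness), then invokes the softness of $D$ (citing Demailly, Prop.~IV.4.14) to conclude exactness of the global complex $(C^\infty(X,D),v_0)$, and finally uses projectivity of $C^\infty(X,D)$ over $C^\infty(X,\C)$ (Serre--Swan) to produce $k_0$ directly, obtaining (b) before (c). You instead pass through the local normal form of Theorem~\ref{thm:Conj}, take stalks, observe that a bounded acyclic complex of finite free modules over the local ring $\mathcal{O}_{U,x}$ splits and therefore remains acyclic after $\otimes\,k(x)$, and deduce (c) directly. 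Your version is more hands-on and avoids the appeal to softness and Serre--Swan, at the cost of being local; it is essentially a Nakayama-type argument. Your (c)$\Rightarrow$(b) via the fiberwise Hodge Laplacian $\Box_D=[v_0,v_0^*]$ is exactly the "easy" step the paper leaves implicit. In (b)$\Rightarrow$(d) both you and the paper run an induction over form-degree in $\Lambda(\overline{\TsX})$, but the mechanisms differ: you use the clean recursion $R^{(i+1)}=-R^{(0)}R^{(i)}$, so that $k$ is effectively $k_0(1+R^{(0)})^{-1}$ as a terminating Neumann series, whereas the paper builds $k=\sum k_i$ with $k_i$ of homogeneous form-degree $i$, solving the equation to successively higher order. (The paper also offers a second, more structural argument via the adjoint superconnection on $\End(E)$, which you do not reproduce.) Your bracket bookkeeping in the Neumann-series step is correct: $[A^{E_0\prime\prime},R^{(i)}]=0$ because $A^{E_0\prime\prime,2}=0$, and the super-Leibniz rule then gives $[A^{E_0\prime\prime},k_0R^{(i)}]=(1+R^{(0)})R^{(i)}$, yielding the stated recursion.
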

\begin{proof}
By Theorem \ref{thm:cohco}, $\mathscr H \mathscr E=0$ if and only if  
we have the identity of $\mathcal{O}_{X}^{\infty }$-modules 
$HD=0$. Since $D$ is soft, by \cite[Proposition 
IV.4.14]{Demaillylivre}, this is equivalent to the exactness of the 
complex $\left(C^{\infty }\left(X,D\right),v_{0}\right)$. Since 
$C^{\infty }\left(X,D\right)$ is projective over $C^{\infty 
}\left(X,\C\right)$,  this is 
equivalent to the existence of a smooth section $k_{0}$  of 
$\End\left(D\right)$ of degree $-1$ such that 
$\left[v_{0},k_{0}\right]=1$, which implies that for any $x\in X$, 
the complex $\left(D,v_{0}\right)_{x}$ is exact. Conversely, if this last 
assumption is verified, we find easily there is a global smooth 
$k_{0}$ such that $\left[v_{0},k_{0}\right]=1$, from which we deduce 
that $HD=0$.

To complete the proof of our theorem, we  need to show that if $\mathscr H \mathscr E=0$, then $k$ 
exists. We use a splitting as in (\ref{eq:iv4a-1}), (\ref{eq:iv4}), 
we replace $E$ by $E_{0}$, and we write $A^{E_{0} \prime \prime }$ as 
in (\ref{eq:iv36a6}).   We write $k$ in the form
\begin{equation}\label{eq:exa2}
k=\sum_{i\ge 0}^{}k_{i},
\end{equation}
with $k_{i}$ a smooth section of 
$\Lambda^{i}\left(\overline{\TsX}\right)\ho\End\left(D\right)$ of degree $-1$.  We choose 
$k_{0}$ as before. 

If $\alpha$ is a smooth section of 
$\Lambda\left(\overline{\TsX}\right)\ho\End\left(D\right)$, we denote by $\alpha^{(\le i)}$ 
the sum of the components of $\alpha$ whose degree in 
$\Lambda\left(\overline{\TsX}\right)$ is $\le i$, and by $\alpha^{(i)}$ 
the component that lies in 
$\Lambda^{i}\left(\overline{\TsX}\right)\ho \End\left(D\right)$. 

Assume that we 
found $k_{j}, j\le i$ such that
\begin{equation}\label{eq:exa3}
\left[A^{E_{0} \prime \prime },\sum_{j\le i}^{}k_{j}\right]^{(\le i)}=1.
\end{equation}
We will show how to construct $k_{i+1}$ so that (\ref{eq:exa3}) is 
solved with $i$ replaced by $i+1$.  Assuming that 
(\ref{eq:exa3}) has been solved, the corresponding equation with $i$ 
replaced by $i+1$ can be written in the form, 
\begin{equation}\label{eq:exa4}
	\left[v_{0},k_{i+1}\right]+\left[A^{E_{0} \prime \prime },\sum_{j\le 
i}^{}k_{j}\right]^{(i+1)}=0.
\end{equation}
Because of (\ref{eq:exa3}), we get
\begin{equation}\label{eq:exa4a1}
\left[v_{0},\left[A^{E_{0} \prime \prime },\sum_{j\le 
i}^{}k_{j}\right]\right]^{(i+1)}=\left[A^{E_{0}\prime \prime },\left[A^{E_{0} \prime \prime },\sum_{j\le 
i}^{}k_{j}\right]\right]^{(i+1)}=0.
\end{equation}
By (\ref{eq:exa4a1}), to solve (\ref{eq:exa4}), we can take 
\begin{equation}\label{eq:exa4a2}
k_{i+1}=-\left[k_{0},\left[A^{E_{0} \prime \prime },\sum_{j\le 
i}^{}k_{j}\right]^{(i+1)}\right].
\end{equation}
This completes the proof of the existence of $k$. 

Another more direct proof is as follows. The vector bundle 
$\End\left(E\right)$ can be equipped with the antiholomorphic 
superconnection $\ad\left(A^{E \prime \prime}\right)$. Moreover, 
$1$ is a section of $\End\left(E\right)$ such that
\begin{equation}\label{eq:exa4a3}
\left[A^{E \prime \prime },1\right]=0.
\end{equation}
We claim that the complex $\left(C^{\infty 
}\left(X,\End\left(E\right)\right),\ad\left(A^{E \prime \prime 
}\right)\right) $ is exact, 
which implies the existence of $k$.
  Indeed let $L\left(k_{0}\right)$ denote 
left multiplication by $k_{0}$ acting on $\End\left(D\right)$. Then 
\begin{equation}\label{eq:exa4a3-x}
\left[\ad\left(v_{0}\right),L\left(k_{0}\right)\right]=1, 
\end{equation}
so that the  complex 
$\left(\End\left(D\right),\ad\left(v_{0}\right)\right)$ is exact. Consider the global spectral 
sequence associated with the filtration $F$. By (\ref{eq:exa4a3-x}),  the first term of the 
spectral sequence vanishes identically, so that the complex 
$\left(C^{\infty 
}\left(X,\End\left(E\right)\right),\ad\left(A^{E \prime \prime 
}\right)\right) $  is exact.
The proof of our theorem is completed. 
\end{proof}
\subsection{Superconnections, morphisms, and cones}%
\label{subsec:scmoco}
Let $\left(\underline{E},A^{\underline{E} \prime \prime }\right)$ be 
another couple  similar 
to $\left(E,A^{E \prime \prime }\right)$. The objects associated with 
$\underline{E}$ will be underlined. A morphism $\phi: E\to 
\underline{E}$ is a smooth morphism of degree $0$ of 
$\Z$-graded $\Lambda\left(\overline{\TsX}\right)$-vector bundles, 
which is such that $A^{\underline{E} \prime \prime}\phi=\phi A^{E \prime 
\prime }$. Then $\phi$  induces a morphism of 
$\mathcal{O}_{X}$-complexes 
$\mathscr E\to \underline{\mathscr E}$ that commutes with 
multiplication by $\mathcal{O}^{\infty 
}_{X}\left(\Lambda\left(\overline{\TsX}\right)\right)$. Then $\phi$ induces a 
morphism  $E_{0}\to 
\underline{E}_{0}$,  a morphism of $\mathcal{O}_{X}^{\infty }$-complexes 
$\left(D,v_{0}\right)\to \left( \underline{D}, \underline{v}_{0} 
\right) $,  a morphism of $\mathcal{O}_{X}$-complexes
  $\left(HD,\n^{HD \prime \prime }\right)\to 
\left(H\underline{D},\n^{H\underline{D}\prime \prime }\right)$, and 
morphisms
of coherent sheaves 
$\mathscr H\mathscr E\to \mathscr H\underline{\mathscr E}$. 

Here, we follow the conventions of Subsection \ref{subsec:moco}. We form the cone
\begin{equation}\label{eq:hue1}
\mathscr C=\mathrm{cone}\left(\mathscr E, \underline{\mathscr 
E}\right).
\end{equation}
By (\ref{eq:res4a0z1b}), the underlying vector bundle $C$ is such that
\begin{equation}\label{eq:hue2}
C^{\Ou}=E^{\Ou+1} \oplus \underline{E}^{\Ou}.
\end{equation}
Then $C$ is an object similar to $E$. The corresponding 
antiholomorphic superconnection is denoted $A^{C \prime \prime }_{\phi}$, 
and as in (\ref{eq:res4a0z2b}), we have the identity,
\begin{equation}\label{eq:res4a0z2bx}
A^{C \prime \prime }_{\phi}=
\begin{bmatrix}
	A^{E \prime \prime } & 0 \\
	\phi\left(-1\right)^{\deg} & A^{\underline{E}\prime \prime}.
\end{bmatrix}
\end{equation}

As in (\ref{eq:res4a-1b}), we have the exact sequence of complexes,
\begin{equation}\label{eq:loex1}
\xymatrix{&0\ar[r] &\underline{\mathscr E}^{\Ou}\ar[r] &\mathscr 
C^{\Ou}\ar[r] &\mathscr E^{\Ou+1}\ar[r] &0,} 
\end{equation}
from which we get the long exact sequence of 
$\mathcal{O}_{X}$-modules,
\begin{equation}\label{eq:loex}
\xymatrix{
\ldots\ar[r] &\mathscr H^{\Ou}\underline{\mathscr E}\ar[r] &\mathscr H^{\Ou} 
\mathscr C
\ar[r] & \mathscr H^{\Ou+1}\mathscr E\ar[r]^{\phi\left(-1\right)^{\cdot+1}}
&\mathscr H^{\Ou+1}\underline{\mathscr E}\ldots}.
\end{equation}
We have corresponding exact sequences involving the $\mathcal{O}_{X 
}^{\infty }$-modules $HD$ and their pointwise version.

If $\phi:\mathscr H \mathscr E\to \mathscr H\underline{\mathscr 
E}$ is an isomorphism, then $\phi$ is said to be a 
quasi-isomorphism. As we saw in Subsection \ref{subsec:moco},  this 
is equivalent to
\begin{equation}\label{eq:hue3}
\mathscr H \mathscr C=0.
\end{equation}
Using Theorem \ref{thm:simp}, we find  that $\phi$ is 
a quasi-isomorphism if and only if for any $x\in X$, 
$\phi:D_{x}\to \underline{D}_{x}$ is a quasi-isomorphism.

Given $t\in \C$, we can still define $M_{t}\in \End\left(C\right)$ as 
in (\ref{eq:for1}). Namely, $M_{t}$ acts by multiplication by $1$ on 
$E$, and by multiplication by $t$ on $\underline{E}$. As in 
(\ref{eq:for2}),  for $t\neq 0$, we have the identity
\begin{equation}\label{eq:co1}
A^{C \prime \prime }_{t\phi}=M_{t}A^{C \prime \prime 
}_{\phi}M_{t}^{-1}.
\end{equation}
\subsection{Pull-backs}%
\label{subsec:pullten}
Let $Y$ be a compact complex manifold, and let $f:X\to Y$ be a
holomorphic map.  If $H$ is a vector bundle on $Y$, $f^{*}H$ denote the 
pull-back of $H$ to $X$. Then $df^{*}$ maps $f^{*}T^{*}Y$ into $T^{*}X$. 
 Therefore 
$\Lambda \left(\overline{T^{*}X}\right)$  is a 
$\Lambda \left(\overline{f^{*}T^{*}Y}\right)$-module.

We use the notation of Subsection \ref{subsec:def}, except that now 
$\left(F,A^{F \prime \prime}\right)$ is an antiholomorphic 
superconnection on $Y$ with diagonal bundle $D_{F}$. Put
\begin{equation}\label{eq:iv12a-1}
	E=\Lambda \left(\overline{T^{*}X}\right)\ho
	_{\Lambda \left(\overline{f^{*}T^{*}Y}\right)}f^{*}F .
\end{equation}
Then $E$ is a $\Z$-graded vector bundle on $X$, the degree in $E$ 
being the sum of the degrees in the two factors of the right 
hand-side. Also $E$ is   a 
$\Lambda \left(\overline{T^{*}X}\right)$-module, so that it is 
equipped with the corresponding filtration. 

By (\ref{eq:bc12}), 
we get
\begin{equation}\label{eq:iv12a-2}
E_{0}^{p,q}=\Lambda^{p}\left(\overline{T^{*}X}\right) \ho 
f^{*}D_{F}^{q}.
\end{equation}
In particular,  $E$ on $X$ has exactly the same properties as $F$ on 
$Y$, and the associated diagonal bundle is $D_{E}=f^{*}D_{F}$.

Let $\mu$ be the canonical morphism  $f^{*}F\to E$. Then $\mu$ induces a 
corresponding map $C^{\infty }\left(Y,F\right)\to C^{\infty 
}\left(X,E\right)$ which is such that if $\alpha\in 
\Omega^{0,\Ou}\left(Y,\C\right), s\in C^{\infty }\left(Y,F\right)$, then
\begin{equation}\label{eq:iv12a-3}
\mu\left(\alpha s\right)=df^{*}\left(\alpha\right)\mu s.
\end{equation}
\begin{proposition}\label{prop:rest}
	There is a unique antiholomorphic superconnection $A^{E \prime \prime }$  
	on $E$  such that if $s\in 
	C^{\infty }\left(Y,F\right)$, then 
	\begin{equation}\label{eq:iv12a-4}
A^{E \prime \prime }\left( \mu s \right) =\mu\left(A^{F \prime \prime }s\right).
\end{equation}
\end{proposition}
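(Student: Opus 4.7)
The plan is to define $A^{E \prime \prime}$ explicitly by the formula forced by combining the compatibility condition (\ref{eq:iv12a-4}) with the Leibniz rule (\ref{eq:iv5}). Namely, for $\alpha \in \Omega^{0,\cdot}(X, \C)$ and $s \in C^{\infty}(Y, F)$, I set
\begin{equation*}
A^{E \prime \prime}\bigl( \alpha \cdot \mu s \bigr) = \bigl( \overline{\pa}^{X} \alpha \bigr)\, \mu s + (-1)^{\deg \alpha}\, \alpha\, \mu\bigl( A^{F \prime \prime} s \bigr).
\end{equation*}
By the local description $E_{0}^{p,q} = \Lambda^{p}(\overline{T^{*}X}) \ho f^{*} D_{F}^{q}$ from (\ref{eq:iv12a-2}), a local frame of $D_{F}$ yields, via $\mu$ and a local splitting as in (\ref{eq:iv4a-1}), a local frame of $E$ as a $\Lambda(\overline{T^{*}X})$-module; hence sections of the form $\alpha \cdot \mu s$ locally generate $C^{\infty}(X, E)$ over $\Omega^{0,\cdot}(X, \C)$. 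Together with the Leibniz rule (\ref{eq:iv5}), this forces the above formula and proves uniqueness.

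For existence, the point is to check that the formula is consistent with the relation $\mu(\beta s) = df^{*}(\beta)\, \mu s$ imposed by (\ref{eq:iv12a-3}) for $\beta \in \Omega^{0,\cdot}(Y, \C)$, i.e., that
\begin{equation*}
A^{E \prime \prime}\bigl( \alpha \cdot \mu(\beta s) \bigr) = A^{E \prime \prime}\bigl( \alpha\, df^{*}(\beta) \cdot \mu s \bigr).
\end{equation*}
Expanding the left-hand side by applying the proposed formula and then (\ref{eq:iv5}) for $A^{F \prime \prime}$ gives three terms, and expanding the right-hand side by the proposed formula and the ordinary Leibniz rule for $\overline{\pa}^{X}$ gives three terms; the two collections agree term by term, the only nontrivial identification being $\overline{\pa}^{X}\, df^{*}(\beta) = df^{*}(\overline{\pa}^{Y} \beta)$, which holds because $f$ is holomorphic. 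This well-definedness check is the main technical step, though the computation itself is a short exercise in Koszul signs.

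Once $A^{E \prime \prime}$ is defined, the Leibniz property (\ref{eq:iv5}) is built into the formula. For the nilpotency (\ref{eq:iv6}), on generators of the form $\mu s$ I compute
\begin{equation*}
A^{E \prime \prime, 2}(\mu s) = A^{E \prime \prime}\bigl( \mu(A^{F \prime \prime} s) \bigr) = \mu\bigl( A^{F \prime \prime, 2} s \bigr) = 0
\end{equation*}
by (\ref{eq:iv6}) applied to $F$, and a standard argument using (\ref{eq:iv5}) shows that $A^{E \prime \prime, 2}$ is $\Omega^{0,\cdot}(X, \C)$-linear, so it vanishes identically. The main obstacle, as noted, is the well-definedness check over the tensor-product relation encoded by $\mu$; the remaining verifications are formal.
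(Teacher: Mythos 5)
Your proof is correct and follows the same route as the paper's: the key computation is the identity $\overline{\pa}^{X}df^{*}\beta=df^{*}\overline{\pa}^{Y}\beta$ (valid because $f$ is holomorphic), which shows the Leibniz-forced formula is compatible with the relation $\mu(\beta s)=df^{*}(\beta)\,\mu s$ defining the tensor product, and the nilpotency then follows formally. The only minor inefficiency is that uniqueness needs no local splitting or local frame: the fact that elements $\alpha\cdot\mu s$ generate $C^{\infty}(X,E)$ over $\Omega^{0,\cdot}(X,\C)$ is immediate from the definition (\ref{eq:iv12a-1}) of $E$ as a tensor product.
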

\begin{proof}
	Let $\alpha\in \Omega^{0,\Ou}\left(Y,\C\right)$. By 
	(\ref{eq:iv5}), (\ref{eq:iv12a-3}), we get
	\begin{equation}\label{eq:iv12a-5}
\mu\left(A^{F \prime \prime }\alpha 
s\right)=\overline{\pa}^{X}\left( df^{*}\alpha \right)  \mu \left(s\right)+
\left(-1\right)^{\deg\,\alpha} \left( df^{*}\alpha\right)\mu\left(A^{F \prime \prime 
}s\right).
\end{equation}
In particular, if $df^{*}\alpha=0$, (\ref{eq:iv12a-5}) vanishes. This 
completes the proof of our proposition.
\end{proof}

We will use the notation
\index{fb@$f_{b}^{*}$}%
\begin{equation}\label{eq:notz1}
E=f^{*}_{b}F.
\end{equation}
The notation emphasizes the fact that $f^{*}_{b}F$ is not the 
classical pull-back $f^{*}F$.

Similarly, if  $\mathscr E=\left(E,A^{E \prime \prime }\right), 
\mathscr F=\left(F,A^{F \prime \prime}\right)$, we use the notation
\begin{equation}\label{eq:iv12a-7}
\mathscr E=f^{*}_{b} \mathscr F.
\end{equation}
There is an associated morphism of $\Z$-graded 
$\mathcal{O}_{X}$-modules $\mathcal{O}_{X} \otimes 
_{f^{-1}\mathcal{O}_{Y}}f^{-1}\mathscr H \mathscr F 
\to \mathscr H \mathscr E $.

If $\phi:\mathscr F\to \mathscr F'$ is a morphism, it  
induces a morphism $f^{*}_{b}\phi: f^{*}_{b}\mathscr F\to f^{*}_{b} 
\mathscr F'$.

We  fix a smooth splitting as in (\ref{eq:iv4a-1}), that induces  the 
non-canonical identification $F \simeq F_{0}$ in (\ref{eq:iv4}). This choice induces a corresponding 
  identification $E \simeq E_{0}$. Also, $\mu$ induces a canonical map $f^{*}F_{0}\to E_{0}$.
We write 
$A^{F_{0} \prime \prime }$
 as in (\ref{eq:baba2}). 

 The connection 
$\n^{D_{F} \prime \prime }$ induces a connection $\n^{f^{*}D_{F} \prime 
\prime }$ on $f^{*}D_{F}$. Let $\mu B$ be the section of 
 $\Lambda\left(\overline{T^{*}X}\right)\ho\End\left(f^{*}D_{F}\right)$ deduced 
 from $B$ by the above pull-back map.  Then
\begin{equation}\label{eq:iv12a-6}
A^{E_{0} \prime \prime }=\n^{f^{*}D_{F} \prime \prime 
}+\mu B.
\end{equation}

If $\alpha\in\Omega^{0,\scriptsize\bullet}\left(Y,\C\right)$, from 
now on, we will also use the  
notation $f^{*}\alpha$ instead of $df^{*}\alpha$.
\subsection{Tensor products}%
\label{subsec:tenspro}
Let $\mathscr E=\left(E,A^{E \prime \prime }\right), \mathscr F=\left(F,A^{F \prime \prime 
}\right)$ be two antiholomorphic superconnections  on $X$, with associated diagonal vector bundles 
$D_{E},D_{F}$. Put
\index{EbF@$E\ho_{b}F$}%
\begin{equation}\label{eq:redten1}
E\ho_{b}F=E\ho_{\Lambda\left(\overline{\TsX}\right)}F.
\end{equation}
This defines an  antiholomorphic superconnection 
\index{EbF@$\mathscr E\ho_{b} \mathscr F$}%
$\mathscr E\ho_{b} \mathscr F=\left(E\ho_{b} 
F,A^{E\ho_{b} F \prime \prime }\right)$.  The corresponding 
diagonal vector bundle is  $D_{E}\ho D_{F}$.
\section{An equivalence of categories}%
\label{sec:blo}%
The purpose of this Section is to establish a result of Block 
\cite{Block10}, who showed there is an equivalence of categories 
between the homotopy
category of antiholomorphic superconnections on $X$ and 
$\Db\left(X\right)$.

This Section is organized as follows. In Subsection 
\ref{subsec:prodb}, if $\mathscr F$ is an object in 
$\Db\left(X\right)$, we construct a corresponding 
$\mathcal{O}_{X}$-complex of $\mathcal{O}_{X}^{\infty }$-modules  
$\overline{\mathscr F}^{\infty }$, and we show it is 
quasi-isomorphic to $\mathscr F$.

In Subsection \ref{subsec:thrca}, we introduce the $\mathrm{dg}$-category 
$\mathrm{B}_{\mathrm{dg}}\left(X\right)$ of the antiholomorphic 
superconnections $\mathscr E$, the $0$-cycle category 
$\mathrm{B}\left(X\right)$, and the homotopy category 
$\underline{\mathrm{B}}\left(X\right)$. In the next Subsections, we construct 
an equivalence of categories between $\underline{\mathrm{B}}\left(X\right)$ 
and $\Db\left(X\right)$.

In Subsection  \ref{subsec:essur}, we show that the natural functor 
$F_{X}:\mathrm{B}\left(X\right)\to \Db\left(X\right)$ is essentially 
surjective.

In Subsection \ref{subsec:hoca}, we describe in more detail the 
homotopy category $\underline{\mathrm{B}}\left(X\right)$.

In Subsection \ref{subsec:eqcat}, we show that the induced functor 
$\underline{F}_{X}=\underline{\mathrm{B}}\left(X\right)\to\Db\left(X\right)$ is an equivalence of categories.

In Subsection \ref{subsec:puba}, we show the above equivalence of 
categories is compatible with pull-backs.

In Subsection \ref{subsec:tepro}, we prove the compatibility of this 
equivalence with tensor products.

Finally, in Subsection \ref{subsec:dirim}, we consider the case of 
direct images.
\subsection{Some properties of $\Db\left(X\right)$}%
\label{subsec:prodb}
Let $\mathscr  F\in {\rm D_{\mathrm{coh}}^{b}}(X)$, and let $d^{\mathscr F}$ 
be its differential. 
Set 
\index{F@$\mathscr   F^{\infty}$}%
\begin{equation}\label{eq:vl0}
	\mathscr   F^{\infty}=\mathcal O_{X}^{\infty}\otimes_{\mathcal 
	O_{X}}\mathscr  F.
\end{equation} 
Then  $\mathscr F^{\infty}$ is a perfect $\mathcal  
O_{X}^{\infty}$-complex. Let $d^{\mathscr F^{\infty}}$ be the 
differential on $\mathscr F^{\infty }$, and let $H \mathscr F^{\infty 
}$ be the cohomology of $\mathscr F^{\infty }$.
Since $\mathcal 
O_{X}^{\infty}$ is flat over $\mathcal O_{X}$ \cite[Corollary 
VI.1.12]{Malgrange67}, we have 
\begin{equation}\label{eq:cov1}
	H\mathscr{F}^{\infty}=\mathcal O_{X}^{\infty}\otimes_{\mathcal 
	O_{X}}\mathscr{HF}. 
\end{equation} 

 Put
 \index{F@$\overline{\mathscr F}^{\infty}$}%
 \begin{equation}\label{eq:coi0}
\overline{\mathscr F}^{\infty}=\mathcal{O}_{X}^{\infty 
}\left(\Lambda\left(\overline{\TsX}\right)\right)\ho_{\mathcal{O}_{X}} \mathscr 
F.
\end{equation}
Since $\mathscr F$ is a complex of $\mathcal{O}_{X}$-modules, we can 
equip $\overline{\mathscr F}^{\infty }$ with the differential 
\index{dF@$d^{\overline{\mathscr F}^{\infty}}$}%
$d^{\overline{\mathscr F}^{\infty}}$ given by
\begin{equation}\label{eq:exp1}
d^{\overline{\mathscr F}^{\infty}}=\overline{\pa}^{X}+d^{\mathscr 
F}.
\end{equation}

 By Poincaré Lemma and by a theorem of 
Malgrange \cite[Corollary 
VI.1.12]{Malgrange67}, we have a quasi-isomorphism of $\mathcal O_{X}$-complexes,
	\begin{equation}\label{eq:coq1}
	\mathscr F\to 	\overline{\mathscr F}^{\infty }.
	\end{equation}
Equivalently $\mathscr H \mathscr F, \mathscr H \overline{\mathscr 
	F}^{\infty }$ denote the corresponding cohomology sheaves, we 
	have the canonical  isomorphism of $\mathcal{O}_{X}$-modules,
	\begin{equation}\label{eq:coq2}
\mathscr H \mathscr F=\mathscr H \overline{\mathscr F}^{\infty }.
\end{equation}

	Also $ \overline{\mathscr F}^{\infty }$ is equipped with the filtration 
	induced by  $\Lambda\left(\overline{\TsX}\right)$. Let 
	$\overline{\mathscr F}^{\infty }_{r},r\ge 0$ denote the corresponding 
	spectral sequence of $\mathcal{O}_{X}$-modules.
	
	Then
	\begin{equation}\label{eq:do1bis}
\overline{\mathscr F}^{\infty }_{1}=\mathcal{O}_{X}^{\infty 
}\left(\Lambda\left(\overline{\TsX}\right)\right)\ho_{\mathcal{O}^{\infty }_{X}}  H \mathscr F^{\infty}.
\end{equation}
Also $\overline{\mathscr F}^{\infty 
}_{1}$ inherits a differential $d_{1}$, which can be viewed as a holomorphic 
structure $\n^{ H\mathscr F^{\infty 
} \prime \prime }$ on $ H\mathscr F^{\infty 
}$. Let $\mathscr H \overline{\mathscr F}^{\infty 
}_{1}$ denote the cohomology of $\left(\overline{\mathscr F}^{\infty 
}_{1},d_{1}\right)$. Then 
\begin{equation}\label{eq:do1bisa}
\overline{\mathscr F}^{\infty 
}_{2}=\mathscr H \overline{\mathscr F}^{\infty 
}_{1}.
\end{equation}
\begin{proposition}\label{prop:coco}
		The spectral sequence $\overline{\mathscr F}^{\infty }_{r}$ 
		degenerates at $\overline{\mathscr F}^{\infty }_{2}$, and 
		$F^{1} \mathscr H\overline{\mathscr F}^{\infty}=0$.
		
		For $i\ge 1$, 
	\begin{equation}\label{eq:do1bisb}
\mathscr H^{i}\overline{\mathscr F}^{\infty 
}_{1}=0.
\end{equation}
We have the identity of $\Z$-graded coherent sheaves,
\begin{equation}\label{eq:dos1bisc}
\mathscr H\mathscr F = \mathscr H\overline{\mathscr F}^{\infty 
}=\mathscr H^{0} \overline{\mathscr F}^{\infty }_{1}.
\end{equation}
\end{proposition}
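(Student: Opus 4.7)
The plan is to reduce to Theorem \ref{thm:cohco} by locally replacing the complex $\mathscr F$ with the antiholomorphic superconnection arising from a free resolution. All three claims are sheaf-theoretic and compatible with restriction, so it suffices to verify them on a sufficiently small polydisk $U \subset X$. Since $\mathcal O_{X,x}$ is a regular local ring of dimension $n$, Hilbert's syzygy theorem combined with standard gluing allows us, after shrinking $U$, to find a bounded complex $L^{\cdot}$ of free $\mathcal O_{U}$-modules of finite rank and a quasi-isomorphism $L^{\cdot}\to \mathscr F\vert_{U}$.

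Setting $E=\Lambda(\overline{T^{*}U})\otimes L^{\cdot}$ and $A^{E\prime\prime}=\overline{\pa}^{X}+d^{L}$ defines an antiholomorphic superconnection on $U$, with diagonal bundle $D=L^{\cdot}$, $v_{0}=d^{L}$, $\n^{D\prime\prime}=\overline{\pa}^{X}$, and vanishing higher $v_{i}$. The associated sheaf $\mathscr E$ is exactly $\overline L^{\infty}=\mathcal O_{U}^{\infty}(\Lambda(\overline{T^{*}U}))\ho_{\mathcal O_{U}} L^{\cdot}$ endowed with differential $\overline{\pa}^{X}+d^{L}$, and its filtration by powers of $\Lambda^{\ge p}(\overline{T^{*}U})$ coincides with both the filtration of Subsection \ref{subsec:def} and the restriction of the filtration on $\overline{\mathscr F}^{\infty}$. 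By the Malgrange flatness of $\mathcal O_{X}^{\infty}$ over $\mathcal O_{X}$, the quasi-isomorphism $L^{\cdot}\to \mathscr F\vert_{U}$ becomes, after tensoring with $\mathcal O_{U}^{\infty}(\Lambda(\overline{T^{*}U}))$, a filtered quasi-isomorphism $\mathscr E\to \overline{\mathscr F}^{\infty}\vert_{U}$ that is a quasi-isomorphism on every graded piece of the filtration. Hence the induced map of spectral sequences is an isomorphism from the $E_{1}$-page onwards.

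Applying Theorem \ref{thm:cohco} to $\mathscr E$ therefore yields all three claims after transfer: the degeneration at $\overline{\mathscr F}^{\infty}_{2}$, the vanishing $F^{1}\mathscr H\overline{\mathscr F}^{\infty}=0$, and the identity $\mathscr H^{i}\overline{\mathscr F}^{\infty}_{1}=0$ for $i\ge 1$ together with $\mathscr H^{0}\overline{\mathscr F}^{\infty}_{1}=\mathscr H\overline{\mathscr F}^{\infty}$. Combined with the already-noted identification $\mathscr H\mathscr F=\mathscr H\overline{\mathscr F}^{\infty}$ in (\ref{eq:coq2}), this gives (\ref{eq:dos1bisc}). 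The main obstacle is the existence of the local free resolution $L^{\cdot}$ of $\mathscr F\vert_{U}$; once this is granted, the flatness of $\mathcal O^{\infty}$ makes the reduction to Theorem \ref{thm:cohco} automatic.
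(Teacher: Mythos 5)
Your proposal is correct but follows a genuinely different route from the paper's. The paper's proof (which is terse, deferred to the reader) simply re-runs the argument of Theorem \ref{thm:cohco} directly on $\overline{\mathscr F}^{\infty}$: since $d^{\overline{\mathscr F}^{\infty}}=\overline{\pa}^{X}+d^{\mathscr F}$ is already in the ``conjugated'' shape that Theorem \ref{thm:Conj} was used to reach, the conjugation step disappears and one just repeats the Poincar\'e-lemma plus Malgrange-flatness computation with $\mathscr F$ playing the role of $\mathcal O_{X}(D)$. You instead treat Theorem \ref{thm:cohco} as a black box: replace $\mathscr F$ locally by a quasi-isomorphic bounded free complex $L^{\cdot}$, recognize $\overline L^{\infty}$ as the sheaf of a genuine antiholomorphic superconnection $(\Lambda(\overline{T^{*}U})\otimes L^{\cdot},\overline{\pa}^{X}+d^{L})$, and transport the conclusion across the filtered quasi-isomorphism $\overline L^{\infty}\to\overline{\mathscr F}^{\infty}\vert_{U}$. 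This is a clean modularization: the flatness argument you use to get isomorphism of spectral sequences from the $E_{1}$-page on is sound, and filtered quasi-isomorphisms of complexes with bounded filtration indeed identify the induced filtrations on cohomology, so $F^{1}$-triviality and $E_{2}$-degeneration transfer. What your route buys is that nothing inside the proof of Theorem \ref{thm:cohco} needs to be re-inspected; what it costs is an extra input that the paper's route avoids. One remark on that input: ``Hilbert's syzygy theorem combined with standard gluing'' is a slightly thin justification, since $\mathscr F$ is a complex with coherent cohomology, not a single module — what is really needed is the complex-analytic analogue of Illusie's pseudocoherence result \cite[Prop.\ II.2.3.2]{SGA6}, namely that on a small Stein open set any object of $\Db\left(U\right)$ is quasi-isomorphic to a bounded complex of finite free $\mathcal O_{U}$-modules. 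This is a standard fact, proved by an inductive cone argument of exactly the flavor of Proposition \ref{prop:pit-bis}, with partitions of unity replaced by Cartan's Theorem B; stating it that way would close the only real soft spot in your write-up.
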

\begin{proof}
	Using the special form of $d^{\overline{\mathscr F}^{\infty }}$ in 
	(\ref{eq:exp1}), the proof is essentially the same as the proof of 
Theorem \ref{thm:cohco}. It is left to the reader.
\end{proof}
\subsection{Three categories}%
\label{subsec:thrca}
Let $X$ be a compact complex manifold. If $\mathscr E=\left(E,A^{E 
\prime \prime }\right)$ is an antiholomorphic superconnection on $X$, 
put
\index{EX@$\mathscr E_{X}$}%
\begin{equation}\label{eq:kel1}
\mathscr E_{X}=C^{ \infty }\left(X,E\right).
\end{equation}

 Let $\mathscr E=\left(E,A^{E 
\prime \prime}\right), 
\underline{\mathscr E}=\left(\underline{E},A^{\underline{E} \prime \prime }\right)$ be  
antiholomorphic superconnections on $X$.  Let 
$\Hom\left(E,\underline{E}\right)$ be the vector bundle of 
$\Lambda\left(\overline{\TsX}\right)$-morphisms from $E$ into 
$\underline{E}$.  Then $\Hom\left(E,\underline{E}\right)$ is a 
$\Z$-graded vector bundle. If $k\in \Z$, 
$\Hom^{k}\left(E,\underline{E}\right)$ consists of the morphisms 
that increase the degree by $k$. Then $\Hom\left(E,\underline{E}\right)$ is also 
equipped with the induced antiholomorphic superconnection 
$A^{\Hom\left(E,\underline{E}  \right)\prime \prime}$. We use the 
notation $\Hom\left(\mathscr E,\underline{\mathscr E}\right)=
\left(\Hom\left(E,\underline{E}\right),A^{\Hom\left(E,\underline{E}\right)
\prime \prime}\right)$.

Following  \cite[Subsection 2.2]{Keller06}, we define   $\mathrm 
B_{\rm dg}(X)$ to be the $\mathrm{dg}$-category whose objects are the 
$\mathscr E_{X}$, and the morphisms  $\Hom\left(\mathscr 
E,\underline{\mathscr E}\right)_{X}$.

Let 
\index{BX@$\mathrm B(X)$}%
$\mathrm B(X)=Z^{0}\left(\mathrm{B}\left(X\right)\right)$ be the $0$-cycle 
category associated with $\mathrm 
B_{\mathrm{dg}}(X)$. Its objects coincide with 
the objects of $\mathrm B_{\mathrm{dg}}(X)$, and  if 
$\mathscr E, \underline{\mathscr E}$ are taken as before, its 
morphisms are given by
\index{ZH@$Z^{0}\Hom\left(\mathscr E, \underline{\mathscr E}\right)_{X}$}%
\begin{equation}\label{eq:kel2}
Z^{0}\Hom\left(\mathscr E, \underline{\mathscr E}\right)_{X}=\left\{\phi\in \Hom^{0}\left(\mathscr E, 
\underline{\mathscr E}\right)_{X}, A^{\Hom\left(E,\underline{E} 
\right)\prime \prime }\phi=0\right\}.
\end{equation}
Also $\mathscr E, \underline{\mathscr E}$ are said to be isomorphic 
if there exists $\phi\in Z^{0}\Hom\left(\mathscr 
E_{X},\underline{\mathscr E}_{X}\right),\psi\in 
Z^{0}\Hom\left(\underline{\mathscr E}_{X}, \mathscr E_{X}\right)$ such that
\begin{align}\label{eq:lep1}
&\psi\phi=1_{\mathscr E_{X}},&\phi\psi=1_{\underline{\mathscr E}_{X}}.
\end{align}

Let 
\index{BX@$\underline{\mathrm B}(X)$}%
$\underline{\mathrm B}(X)$ be the homotopy category associated 
with 
$\mathrm  
B(X)$. Its objects coincide with 
the objects of $\mathrm B(X)$, and the morphisms are given by 
the morphisms of $H^{0}\Hom\left(\mathscr E, 
\underline{\mathscr E}\right)_{X}$, the cohomology group of 
degree $0$ associated with the complex $\left(\Hom\left(\mathscr 
E, \underline{\mathscr 
E}\right)_{X},A^{\Hom\left(E,\underline{E}\right) \prime \prime}\right)$.

The aim of this section is to establish an equivalence of categories  between $\underline{\mathrm B}(X)$ and ${\rm D_{\mathrm{coh}}^{b}}(X)$. The proof 
consists of  the following three steps:
\begin{enumerate}
	\item  Construct an essentially surjective functor  $F_{X}:\mathrm 
	B(X)\to{\rm D_{\mathrm{coh}}^{b}}(X) $. 

	\item  Prove that the above functor factors though a functor 
	$\underline{F}_{X}:\underline{\mathrm 
	B}(X)\to{\rm D_{\mathrm{coh}}^{b}}(X) $.

	\item Show that $\underline{F}_{X}$ is  fully faithful. 
\end{enumerate} 

\subsection{Essential surjectivity}%
\label{subsec:essur}
Let  $\mathscr E=\left(E,A^{E\prime\prime }\right)$ be an antiholomorphic 
superconnection. By Theorem 
\ref{thm:cohco}, 
$\mathscr E\in  \Db\left(X\right)$.  We obtain this way a 
functor 
\index{FX@$F_{X}$}%
$F_{X}:\mathrm{B}\left(X\right)\to \Db\left(X\right)$.
\begin{theorem}\label{thm:thmesssur}
	The functor $F_{X}$ is essentially 
	surjective, i.e., if $\mathscr F\in {\rm D^{b}_{\rm 
	coh}}(X)$, there is $\mathscr E\in \mathrm B(X)$ 
	and 	an isomorphism  $F_{X}\left( \mathscr E \right) \simeq  \mathscr F  $ in ${\rm D^{b}_{\rm coh}}(X)$. 
\end{theorem}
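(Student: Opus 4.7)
My plan is to construct $\mathscr E$ in three stages: first replace $\mathscr F$ by a smooth model $(D,v_{0})$, second extend $v_{0}$ to an antiholomorphic superconnection, third identify the result with $\mathscr F$ in $\Db(X)$.

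Given $\mathscr F\in \Db(X)$, I would pass to $\mathscr F^{\infty}=\mathcal O_X^{\infty}\otimes_{\mathcal O_X}\mathscr F$, a bounded complex of $\mathcal O_X^{\infty}$-modules with cohomology $\mathcal O_X^{\infty}\otimes_{\mathcal O_X}\mathscr H\mathscr F$ by Malgrange's flatness (\cite[Corollary~VI.1.12]{Malgrange67}). The first stage consists in producing a bounded complex $(D,v_{0})$ of finite-rank smooth $\Z$-graded complex vector bundles on $X$ together with a quasi-isomorphism $(D,v_{0})\to \mathscr F^{\infty}$ of $\mathcal O_X^{\infty}$-complexes. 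Since $X$ is compact and $\mathcal O_X^{\infty}$ is fine with partitions of unity, each term of $\mathscr F^{\infty}$ is globally finitely generated over $\mathcal O_X^{\infty}$; iterating surjections from trivial smooth bundles onto the $\mathscr{H}^{i}\mathscr F^{\infty}$ and truncating beyond the homological dimension yields such a $(D,v_{0})$.

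The second stage is to promote $v_{0}$ to an antiholomorphic superconnection $A^{E_{0}\prime\prime}=v_{0}+\nabla^{D\prime\prime}+\sum_{i\ge 2}v_{i}$ on $E_{0}=\Lambda(\overline{T^{*}X})\ho D$ in the form (\ref{eq:iv36a6}), with $A^{E_{0}\prime\prime,2}=0$. The approach combines the local model of Theorem~\ref{thm:Conj} with a partition of unity. On a finite cover by small open sets $U_{\alpha}$ where $\mathscr F|_{U_{\alpha}}$ admits a holomorphic free resolution, the restriction of the desired superconnection is given locally by $v_{0}^{\alpha}+\nabla^{D\prime\prime,\alpha}$ with $\nabla^{D\prime\prime,\alpha}$ a holomorphic structure satisfying $\nabla^{D\prime\prime,\alpha}v_{0}^{\alpha}=0$. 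Averaging with a partition of unity $\{\chi_{\alpha}\}$ produces a global antiholomorphic partial connection $\nabla^{D\prime\prime}$, and the failure of squaring to zero is a section of a filtered piece of $\End(E_{0})$; the higher-order terms $v_{i}$ are then built by induction on $i$ to cancel the obstructions. At each inductive stage the obstruction $\eta_{i+1}\in C^{\infty}(X,\Lambda^{i+1}(\overline{T^{*}X})\ho \End^{-i}(D))$ is $\ad(v_{0})$-closed and, by the local model, pointwise in the image of $\ad(v_{0})$, so a global smooth solution exists by another partition of unity argument.

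Finally, to identify $F_{X}(\mathscr E)$ with $\mathscr F$ in $\Db(X)$, I would use Theorem~\ref{thm:cohco}: $\mathscr H\mathscr E=\mathscr H HD$ with $HD=\mathcal O_X^{\infty}\otimes_{\mathcal O_X}\mathscr H\mathscr F$ by construction of $(D,v_{0})$, and the induced holomorphic structure $\nabla^{HD\prime\prime}$ on $HD$ agrees with the one coming from $\mathscr H\mathscr F$ because the construction of $\nabla^{D\prime\prime}$ was anchored on the local holomorphic normal forms. Combined with the quasi-isomorphism $\mathscr F\to \overline{\mathscr F}^{\infty}$ of Subsection~\ref{subsec:prodb} and a natural comparison map between $\mathscr E$ and $\overline{\mathscr F}^{\infty}$ coming from the quasi-isomorphism $(D,v_{0})\to \mathscr F^{\infty}$ tensored by $\Lambda(\overline{T^{*}X})$, one assembles a zigzag of quasi-isomorphisms in $\Db(X)$ connecting $\mathscr E$ and $\mathscr F$. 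The hardest step will be the second: the local pointwise solvability of $[v_{0},v_{i+1}]=-\eta_{i+1}$ does not directly give global smooth solutions, since the pointwise complex $(\End(D_{x}),\ad(v_{0,x}))$ has nontrivial cohomology, and one must show the obstructions lie in the image globally, which I expect to handle via Theorem~\ref{thm:Conj} on each chart combined with a self-correcting partition-of-unity iteration that absorbs patching discrepancies into the higher $v_{i}$.
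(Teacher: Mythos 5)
Your first stage matches the paper's Proposition~\ref{prop:pit-bis}: one produces a bounded smooth complex $(Q,d^{Q})=(D,v_{0})$ and a quasi-isomorphism $\phi_{0}:\mathcal O_{X}^{\infty}Q\to\mathscr F^{\infty}$. The difficulty is entirely in your second stage, and you have correctly flagged it at the end — but the fix you sketch does not close the gap.

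The inductive equation $[v_{0},v_{i+1}]=-\eta_{i+1}$ is not solvable by a partition-of-unity patching of local normal forms. The obstruction $\eta_{i+1}$ is a cocycle in the complex $\bigl(C^{\infty}(X,\Lambda(\overline{T^{*}X})\ho\End(D)),\ad(v_{0})\bigr)$, and this complex is \emph{not} exact: its fibrewise cohomology is governed by $\End$ of the fibrewise cohomology of $(D_{x},v_{0,x})$, which is nonzero precisely when $\mathscr H\mathscr F\neq 0$. Local solutions from Theorem~\ref{thm:Conj} cannot be averaged because the averaging error is again an $\ad(v_{0})$-cocycle, not automatically a coboundary, and a ``self-correcting iteration'' amounts to assuming the very vanishing you need to prove. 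One really has to exhibit the obstruction as a coboundary in an exact complex, and on $D$ alone no such complex is available.

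The paper sidesteps this by never separating the construction of the superconnection from the construction of the comparison map. After Proposition~\ref{prop:pit-bis}, it passes to the cone $\overline{\mathscr C}=\mathrm{cone}(\overline{\mathscr Q},\overline{\mathscr F}^{\infty})$, which is \emph{exact} precisely because $\phi_{0}$ is a quasi-isomorphism, and then to the Hom complex $\mathscr R=\Hom(\overline{\mathscr Q}^{\bullet+1},\overline{\mathscr C}^{\bullet})$ and its global sections $\mathscr R_{X}$, which Proposition~\ref{prop:pexab} shows are exact (softness of $\mathscr R$ plus exactness of $\overline{\mathscr C}$ and local freeness of $\overline{\mathscr Q}$). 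The inductive construction in Theorem~\ref{thm:exib} then determines simultaneously the higher components $v_{i}$ of $A^{E''}$ and the higher components $\phi_{i}$ of the comparison map, by solving $d^{\mathscr R}e_{i+1}=-f_{i+1}$ in $\mathscr R_{X}$; the closedness $d^{\mathscr R}f_{i+1}=0$ follows formally from the lower-order equations and from $d^{\overline{\mathscr F}^{\infty},2}=0$. In other words, the cone trades the nontrivial cohomology of $(\End(D),\ad(v_{0}))$ for the exact complex $\mathscr R_{X}$, and that exchange is exactly what makes the obstruction theory trivial. Your third stage, identifying $F_{X}(\mathscr E)$ with $\mathscr F$, then comes for free since the quasi-isomorphism $\phi:\mathscr E\to\overline{\mathscr F}^{\infty}$ was built alongside $A^{E''}$.
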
 
\begin{proof}
	The proof of our theorem is divided into the next four steps.
\end{proof}

The following result is established by Illusie in \cite[Proposition 
II.2.3.2]{SGA6}. We give a proof for completeness.
\begin{proposition}\label{prop:pit-bis}
	There exists a bounded   complex of  finite 
	dimensional complex smooth vector bundles  $\left( Q,d^{Q}\right) $ 
		and a quasi-isomorphism of $\mathcal 
O_{X}^{\infty}$-complexes
$\phi_{0}:\mathcal{O}^{\infty }_{X}Q\to \mathscr F^{\infty}$. 
\end{proposition}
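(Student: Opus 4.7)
The plan is to construct the complex $Q$ of smooth vector bundles inductively, working from top to bottom, using two basic inputs: global finite generation of coherent smooth modules on the compact manifold $X$, and finiteness of projective dimension in the smooth category to terminate the induction.

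First I would establish the auxiliary lemma that every coherent $\mathcal{O}_{X}^{\infty}$-module $\mathscr{G}$ admits a surjection from a trivial smooth vector bundle of finite rank. Coherence provides local finite generation; by compactness of $X$, finitely many sections generate $\mathscr{G}$ on each open set of a finite cover, and a partition of unity subordinate to this cover patches these into a finite global family of generators. Applied to $H^{i}\mathscr{F}^{\infty} = \mathcal{O}_{X}^{\infty}\otimes_{\mathcal{O}_{X}}\mathscr{H}^{i}\mathscr{F}$ (coherent, since $\mathcal{O}_{X}^{\infty}$ is flat over $\mathcal{O}_{X}$ by Malgrange, and $\mathscr{H}^{i}\mathscr{F}$ is $\mathcal{O}_{X}$-coherent), this produces the surjections from trivial bundles that drive the main construction.

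Next, let $[a,b]$ be the range of nonzero cohomology of $\mathscr{F}$. I would apply the lemma to $H^{b}\mathscr{F}^{\infty}$ and lift the resulting surjection to a morphism $\phi^{b}\colon Q^{b}\to Z^{b}\subset \mathscr{F}^{\infty,b}$ into the cycles that induces a surjection on $H^{b}$. The inductive step at degree $j$ takes as input a partial morphism $\phi^{\ge j}\colon Q^{\ge j}\to \mathscr{F}^{\infty}$ inducing isomorphisms on $H^{i}$ for $i>j$ and a surjection on $H^{j}$; I extend by taking $Q^{j-1}$ to be a trivial bundle that simultaneously surjects onto the kernel of $H^{j}(Q^{\ge j})\to H^{j}\mathscr{F}^{\infty}$ and onto $H^{j-1}\mathscr{F}^{\infty}$ (both coherent, so the lemma applies), then defining $d\colon Q^{j-1}\to Q^{j}$ and the lift $\phi^{j-1}\colon Q^{j-1}\to \mathscr{F}^{\infty,j-1}$ so as to preserve the invariant shifted by one.

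The hard part is ensuring termination in finite length, which amounts to showing that after sufficiently many downward steps the residual syzygy becomes a smooth vector bundle. This relies on finiteness of the local projective dimension in the smooth category, namely that sufficiently deep syzygies of finitely presented $\mathcal{O}_{X,x}^{\infty}$-modules are locally free, together with a uniform bound over the compact manifold $X$ so that a single global cutoff works. Once the bottom syzygy is locally free and finitely generated as an $\mathcal{O}_{X}^{\infty}$-module, it is a smooth vector bundle by the classical correspondence on compact manifolds, and we may take it as the final term of $Q$, producing the required bounded quasi-isomorphism $\phi_{0}\colon \mathcal{O}_{X}^{\infty}Q\to \mathscr{F}^{\infty}$.
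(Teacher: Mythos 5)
The construction you sketch (global generators via compactness and partitions of unity, downward inductive extension) mirrors the paper's strategy in outline, but the termination argument is where the proof has a genuine gap. You invoke ``finiteness of the local projective dimension in the smooth category,'' asserting that sufficiently deep syzygies of finitely presented $\mathcal{O}_{X,x}^{\infty}$-modules are locally free, with a uniform bound over $X$. This is not a safe black box. The ring $\mathcal{O}_{X,x}^{\infty}$ of germs of smooth functions is not coherent: for instance, the annihilator of a germ vanishing identically on a half-space but nonzero on the complement is the ideal of flat germs supported on the other side, which is not finitely generated. Consequently, the kernels and subquotients you form at each step of the downward induction --- in particular the kernel of $H^{j}(Q^{\ge j})\to H^{j}\mathscr F^{\infty}$, which involves $\ker d^{Q}_{j}$ inside a free module --- are not automatically finitely generated or finitely presented as $\mathcal{O}_{X}^{\infty}$-modules, so the auxiliary lemma cannot be reapplied, and the standard ``finite Tor-dimension plus coherence gives a free deep syzygy'' argument does not run.

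The paper's proof is engineered precisely to sidestep this. It never appeals to an intrinsic finiteness property of $\mathcal{O}_{X}^{\infty}$. Instead, it performs a double induction: an outer induction on the cohomological width $k$ (reduced via truncation functors $\tau_{\le k-1}$, $\tau_{\ge k}$ and cones), and, once reduced to a single cohomology sheaf, an inner induction on the length $\ell$ of local \emph{holomorphic} free resolutions of the $\mathcal{O}_{X}$-coherent sheaf $\mathscr H^{0}\mathscr F$. The finiteness and uniform bound of $\ell$ come from the Hilbert--Oka syzygy theorem for the Noetherian regular local rings $\mathcal{O}_{X,x}$, not from anything in the smooth category, and compactness of $X$ is used only to pass to a finite cover. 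The cone construction in (\ref{eq:diag1a1})--(\ref{eq:cocoa1}) is then arranged so that one step of the reduction strictly decreases $\ell$, with the key observation that the relevant map $\gamma$ in (\ref{diag3}) is a surjection of smooth \emph{vector bundles} (not just sheaves), so $\ker\gamma$ is a genuine smooth bundle, trivial on the contractible $U$. Coherence is always inherited from $\mathcal O_X$ via Malgrange flatness (\cite[Corollary VI.1.12]{Malgrange67}) rather than asserted over $\mathcal{O}_{X}^{\infty}$. To repair your proof, you would need to show that all intermediate syzygies remain in the essential image of $\mathcal{O}_{X}^{\infty}\otimes_{\mathcal{O}_{X}}(-)$ on coherent sheaves, which is exactly what the paper's bookkeeping through holomorphic resolutions accomplishes.
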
 
\begin{proof}We may and we will assume that for $i<0$, $\mathscr 	
	F^{i}=0$.  Put
	 \begin{equation}\label{eq:roro1}
k=\sup\left\{i\ge 0,\mathscr H^{i}\mathscr F\neq 0\right\}.
\end{equation}
We will show our result by  induction on $k$, while also proving that 
we may take $ Q^{i}=0$ for $i>k$.

\noindent $\bullet$ \underline{The case $k=0$}\newline		
Assume first that $k=0$. Since $\mathscr H^{0} \mathscr F$ is a 
coherent sheaf, if $x\in X$, there exist  an open neighborhood  $U$ 
	of $x$,   a bounded holomorphic complex 
	  $ R_{U}$ of  trivial holomorphic vector bundles in nonpositive 
	  degree on $U$, and a holomorphic morphism 
	  $r_{U}:\mathcal{O}_{U}R_{U}^{0}\to \mathscr 
	  H^{0} \mathscr F\vert_{U}$  such that we have the exact sequence of 
$\mathcal{O}_{U}$-modules,
\begin{equation}\label{eq:blai1}
\xymatrix{0\ar[r] &\mathcal{O}_{U}
	R_{U}^{-\ell_{U}}\ar[r]&\ldots\ar[r]&\mathcal{O}_{U}
	R_{U}^{0}\ar[r]^-{r_{U}}&\mathscr H^{0} 
	\mathscr F |_{U}\ar[r]&0.}
\end{equation}
Since $\mathcal{O}_{U}^{\infty }$ is flat over $\mathcal{O}_{U}$, we get the  exact sequence of 
$\mathcal{O}_{U}^{\infty }$-modules
 \begin{equation}\label{eq:blai2}
\xymatrix{0\ar[r] &\mathcal{O}_{U}^{\infty }
	R_{U}^{-\ell_{U}}\ar[r]&\ldots\ar[r]&\mathcal{O}_{U}^{\infty }	R_{U}^{0}\ar[r]^-{r_{U}}&\mathscr H^{0} 
	\mathscr F^{\infty } |_{U}\ar[r]&0.}
\end{equation} 
	 Since $X$ is compact, there is a finite cover $\mathcal{U}$ of $X$ by such 
	 $U$.   	Set 
		\begin{align}\label{eqr02}
		\ell=\sup_{U\in \mathcal U}\ell_{U}. 
	\end{align} 

We will establish our result by induction on $\ell$. If $\ell=0$, then $ \mathscr H^{0}\mathscr F$ is locally 
free, i.e., there is a smooth holomorphic  bundle $F$ on  $X$ such that 
$ \mathscr H^{0} \mathscr F=\mathcal{O}_{X}F $. We   consider $Q=F$ as a trivial
complex concentrated in degree $0$ and $r_{0}:\mathcal{O}^{\infty }_{X} Q\to 
\mathscr F^{\infty}$ to be the canonical injection. 

Assume now that  $\ell\ge 1$, and  that our proposition holds  if 
the  corresponding length defined in \eqref{eqr02} is  $\le \ell-1$.  
From the exact sequence (\ref{eq:blai2}), we get a morphism of 
complexes $\mathcal{O}_{U}^{\infty }R_{U}\to \mathscr F^{\infty 
}\vert_{U}$.
Put
\begin{equation}\label{eq:diag1a1}
\mathscr C_{U}=\mathrm{cone}\left(\mathcal{O}_{U}^{\infty }
R_{U}, \mathscr F^{\infty }\vert_{U}\right).
\end{equation}
Since  $k=0$, 
$\mathscr C_{U}$ is exact. Observe that $\mathscr C_{U}$ is just 
the complex
\begin{equation}\label{eq:compl1a1}
\xymatrix{0\ar[r] &\mathcal{O}_{U}^{\infty }
	R_{U}^{-\ell_{U}}\ar[r]&\ldots\ar[r]&\mathcal{O}_{U}^{\infty }	R_{U}^{0}\ar[r]^-{r_{U}}& 
	\mathscr F^{0, \infty } \vert_{U}\ar[r]&\mathscr F^{1, \infty 
	}\ar[r]&.}
\end{equation}

Since  $R^{0}_{U}$ is trivial on $U$, we can extend $R^{0}_{U}$ to a trivial  vector bundle on $X$.  Set 
	\begin{align}\label{eqsiza1}
	R=\bigoplus_{U\in \mathcal U} R^{0}_{U}. 
	\end{align} 
Then, $ R$ is a trivial     vector bundle on $X$, which we 
identify with  a trivial complex in 
degree $0$. Also we have an obvious morphism of complexes 
$R\vert_{U}\to R_{U}$.

Let $\left(\varphi_{U}\vert_{U\in \mathcal U}\right)$ be a smooth partition of unity 
subordinated to  $\mathcal U$. Put
\begin{equation}\label{eq:res4}
r=\sum_{U\in \mathcal U}^{}\varphi_{U}r_{U}.
\end{equation}
Then  $r$ defines  a 
morphism of $\mathcal O_{X}^{\infty}$-complexes 
$\mathcal{O}_{X}^{\infty } R\to \mathscr F^{\infty }$. Moreover, for 
any $U$, $r$ induces  surjections $r\vert_{U}:\mathcal{O}_{U}^{\infty }R\to 
\mathscr H^{0} \mathscr F^{\infty }\vert_{U}$.

We claim that  $r\vert_{U}$  lifts to a smooth 
morphism of complexes $ R\vert_{U}\to R_{U}$
such that the following diagram commutes:
\begin{align}\label{diag1}
		\begin{aligned}
		\xymatrix{
  &  \mathcal{O}_{U}^{\infty }  R_{U}\ar[d]^-{r_{U}}\\  
  \mathcal{O}^{\infty }_{U}R
  \ar[r]^-{r\vert_{U}} 
  \ar@{.>}[ru]
  & \mathscr F^{\infty}|_{U}.}
  \end{aligned}
	\end{align} 
Let us prove this claim. Since $\mathscr F^{\infty }\vert_{U}$ is a subcomplex of $\mathscr 
C_{U}$, from the morphism $r\vert_{U}:\mathcal{O}_{U}^{\infty }
R\to \mathscr F^{\infty }\vert_{U}$, we get a corresponding 
morphism $s_{U}:\mathcal{O}^{\infty}_{U}R\to \mathscr C_{U}$, which is 
also clear by (\ref{eq:compl1a1}).

Since $\mathscr C_{U}$ is exact and 
		since $\mathcal{O}_{X}^{\infty }R$ is  free and concentrated 
		in degree $0$,  by \cite[Lemma 
		3.3]{BismutGilletSoule88c}, there is a null-homotopy 
		$h_{U}:\mathcal{O}_{U}^{\infty }R^{\Ou}\to \mathscr 
		C_{U}^{\Ou-1}$ such that
\begin{equation}\label{eq:compl2}
s_{U}=d^{\mathscr C_{U}}h_{U}.
\end{equation}
Inspection of (\ref{eq:compl1a1}) shows that $h_{U}$ provides the 
desired lift in (\ref{diag1}).
 
Since $r_{U}$ is a quasi-isomorphism, 
by \eqref{diag1} and \cite[Proposition 1.4.4]{KashShap90}, we 
get a quasi-isomorphism of $\mathcal O_{U}^{\infty}$-complexes, 
		\begin{equation}\label{eq:deca1}
		\mathcal{O}^{\infty}_{U} {		\rm 
cone}( R|_{U}, R_{U})\to {		\rm cone}(\mathcal{O}^{\infty }_{U}
R,\mathscr  F^{\infty}|_{U}). 
	\end{equation} 
	
	We use the notation $\mathscr R_{U}, \mathscr 
	R\vert_{U}$ to designate the sheaves $\mathcal{O}^{\infty 
	}_{U}R_{U}, \mathcal{O}^{\infty }_{U} R$.
By proceeding as in  (\ref{eq:compl1a1}),  we can rewrite 
(\ref{eq:deca1}) as  a 
quasi-isomorphism of complexes of $\mathcal{O}^{\infty }_{U}$-modules,
\begin{align}\label{diag3}
		\xymatrix{0\ar[r] & \mathscr 
	R_{U}^{-\ell_{U}}\ar[r]\ar[d]&\cdots\ar[r]\ar[d]&\mathscr R^{-1}_{U}\oplus 
	\mathscr R|_{U}\ar[r]^{\gamma}\ar[d]&\mathscr 
	R_{U}^{0}\ar[r]\ar[d]&0 \ar[d]
\ar[r]&\cdots\\ 0\ar[r]
& 0\ar[r]&0\ar[r]&\mathscr R|_{U}\ar[r]&\mathscr 
	F^{0,\infty}|_{U}\ar[r]&\mathscr F^{1,\infty}|_{U} 
\ar[r]&\cdots,}
\end{align}
so that the corresponding bicomplex is exact.
By  construction, the cohomology   of the second row in
\eqref{diag3} is concentrated in degree $-1$, and will be denoted 
$H^{-1}$.  This forces $\gamma$ 
to be a surjective morphism of $\mathcal{O}_{U}^{\infty}$-modules. 
Since $\mathscr R_{U}$ and $\mathscr R$ are free, for any $x\in U$, 
$\gamma_{x}$ is surjective, and $\ker \gamma$ is a 
sheaf defined by a smooth vector bundle $K_{U}$ on 
$U$. Since $U$ is contractible, $K_{U}$ is a trivial vector bundle,   so that  the complex 
\begin{equation}
	\xymatrix{0\ar[r] &
	\mathscr R_{U}^{-\ell_{U}}\ar[r]&\cdots\ar[r]& \mathscr R_{U}^{-2} \ar[r]&\ker \gamma\to 0}
\end{equation} 
is  a resolution of $H^{-1}$. Since  here $\ell$ is replaced here by 
$\ell-1$, by our induction argument, there exists a bounded   complex of  finite 
	dimensional complex vector bundles  $\left( Q',d^{Q'} \right) $ 
 with $
Q^{\prime i}=0$ if $i\ge 0$,  such that if $\mathscr 
Q'=\mathcal{O}^{\infty }_{X}Q'$,  we have a  quasi-isomorphism of 
$\mathcal{O}_{X}^{\infty }$-complexes,
\begin{equation}\label{eqfcoco11}
	\mathscr Q'\to {		\rm cone}(\mathscr 
R,\mathscr  F^{\infty}). 
\end{equation} 
By (\ref{eq:res4a-1b}), we have a canonical morphism $\rm{ 
	cone}^{\scriptscriptstyle\bullet } (\mathscr 
	R,\mathscr  F^{\infty})\to \mathscr R^{\Ou+1}$.
We obtain a tautological diagram 
\begin{equation}\label{diag11}
		\begin{aligned}
		\xymatrix{ \mathscr Q^{\prime \Ou}\ar[r]\ar[dr]
  &  {		\rm cone}^{\Ou}(\mathscr 
R,\mathscr  F^{\infty})\ar[d]\\ 
  & \mathscr R^{\scriptscriptstyle\bullet +1}.}
  \end{aligned}
	\end{equation} 
Since the horizontal 
morphism in \eqref{diag11} is a quasi-isomorphism, by \cite[Proposition 
1.4.4 (TR5)]{KashShap90}, we get a 
quasi-isomorphism of $\mathcal O_{X}^{\infty}$-complexes
\begin{equation}\label{eqfcoco2}
  {		\rm cone}(\mathscr 
Q^{\prime \Ou},\mathscr  R^{\scriptscriptstyle\bullet +1})\to {\rm 
cone}({	\rm cone}^{\Ou}(\mathscr 
R,\mathscr  F^{\infty}),\mathscr R^{\scriptscriptstyle\bullet+1 }). 
\end{equation}

Put
\begin{align}\label{eq:cocoa1}
	&Q^{\Ou}=\mathrm{cone}\left(Q^{\prime \Ou-1},R^{\Ou}\right),
&\mathscr 
Q^{\Ou}={		\rm cone}(\mathscr 
Q'^{\scriptscriptstyle\bullet -1},\mathscr  R^{\Ou}),
\end{align}
so that $\mathscr Q=\mathcal{O}^{\infty }_{X}Q$. Observe 
that for $i>0$, $Q^{i}=0$.

 By  \cite[Proposition 1.4.4 
(TR3)]{KashShap90}, we have a homotopy equivalence of $\mathcal O_{X}^{\infty}$-complexes
\begin{equation}\label{eqfcoco1}
	{\rm cone} ( {		\rm 
	cone}^{\scriptscriptstyle\bullet -1} (\mathscr 
R,\mathscr  F^{\infty}), \mathscr R^{\Ou})\to\mathscr F^{\infty}.
\end{equation} 
By \eqref{eqfcoco2}--\eqref{eqfcoco1}, we get a quasi-isomorphism, 
\begin{equation}\label{eqfcoco3}
	 \mathscr Q\to \mathscr 
F^{\infty}, 
\end{equation} 
which completes the proof of our theorem when $k=0$.

\noindent $\bullet$ \underline{The case $k\ge 1$}\newline
Assume  that $k\ge 1$ and  that our theorem holds for $k'\le 
k-1$.   Consider the truncated complex 
 	\begin{equation}
	\tau_{\le k-1}\mathscr F^{\infty}: \xymatrix{0\ar[r] & \mathscr 
	F^{0,\infty}\ar[r]&\cdots\ar[r]&\mathscr 
	F^{k-2,\infty}\ar[r]&\mathscr 
	\ker\, d^{\mathscr F^{\infty }}\vert_{\mathscr F^{k-1,\infty}}\ar[r]&0}.
\end{equation}
Then $\tau_{\le k-1}\mathscr F^{\infty}$ is a subcomplex of $\mathscr 
F^{\infty }$. 
The cohomology of $\tau_{\le k-1}\mathscr F^{\infty}$ is given by  
 $\mathscr H^{i} \mathscr F^{\infty },0\le i\le k-1$. By 
our induction argument, there exists a complex $Q_{1}$ of complex 
vector bundles on $X$ such that  $Q_{1}^{i}=0$ for $i\ge k$, and that 
if $\mathscr Q_{1}=\mathcal{O}^{\infty}_{X}Q_{1}$,  there exists a
quasi-isomorphism
\begin{equation}
	\mathscr Q_{1}\to \tau_{\le k-1}\mathscr F^{\infty}. 
\end{equation} 
By composition with the inclusion $\tau_{\le k-1}\mathscr F^{\infty}\to \mathscr F^{\infty}$, we get a morphism of complex 
\begin{equation}
		\mathscr Q_{1}\to \mathscr F^{\infty}. 
\end{equation}
The cohomology of ${\rm cone}(\mathscr Q_{1},\mathscr F^{\infty})$ is 
concentrated in degree $k$.

Let $d$ be the differential of $\mathrm{cone(\mathscr Q_{1}}, 
\mathscr F^{\infty })$. Consider the truncated complex 
\begin{equation}
	\tau_{\ge k}{\rm cone}(\mathscr Q_{1},\mathscr F^{\infty}):0\to  {\rm 
	cone}^{k}(\mathscr Q_{1},\mathscr F^{\infty})/d\mathrm{cone}^{k-1}(\mathscr Q_{1},\mathscr F^{\infty})\to {\rm 
	cone}^{k+1}\to \ldots
\end{equation} 
By our  induction  hypothesis, there    exists a smooth complex of vector bundles $Q_{2}$ 
 such that $Q_{2}^{i}=0$ for $i>k$, and that if $\mathscr Q_{2}=\mathcal{O}^{\infty 
}_{X}Q_{2}$, we have a quasi-isomorphism
\begin{align}\label{eqQ2tauk}
	\mathscr Q_{2}\to \tau_{\ge k}{\rm cone}(\mathscr Q_{1},\mathscr F^{\infty}). 
\end{align} 
By proceeding as in  \eqref{diag1}, \eqref{eq:compl2} the morphism 
\eqref{eqQ2tauk} lifts to a quasi-isomorphism 
\begin{align}\label{eqFcc2}
		\mathscr Q_{2}\to {\rm cone}(\mathscr Q_{1},\mathscr 
		F^{\infty}).
\end{align} 

Put
\begin{align}\label{eq:clop1}
&Q=\mathrm{cone}\left(Q_{2}^{\Ou-1},Q_{1}^{\Ou}\right),
&\mathscr Q=\mathrm{cone}\left(\mathscr Q_{2}^{\Ou-1}, \mathscr 
Q_{1}^{\Ou}\right),
\end{align}
so that $\mathscr Q=\mathcal{O}^{\infty }_{X}Q$. Then $\mathscr Q^{i}=0$ for $i>k$. 
By proceeding  as in \eqref{eqfcoco11}-\eqref{eqfcoco3}, from \eqref{eqFcc2}, we get 
a quasi-isomorphism 
\begin{align}
	\mathscr Q\to \mathscr F^{\infty}, 
\end{align} 
which completes the proof of our proposition.
\end{proof}

	We take $ Q,\phi_{0}$  as in Proposition 
\ref{prop:pit-bis}. 
 Put
 \begin{equation}\label{eq:zer1}
\mathscr C=\mathrm{cone}\left(\mathscr Q,\mathscr F^{\infty}\right).
\end{equation}
  As in 
 (\ref{eq:res4a0z1b})--(\ref{eq:res4a-1b}), we have the identity 
 \begin{equation}\label{eq:gina1}
 \mathscr C^{\Ou}=\mathscr Q^{\Ou+1} \oplus \mathscr F^{\infty\Ou},
 \end{equation}
 the differential $d^{\mathscr C}$ is given by
  \begin{equation}\label{eq:gina2}
 d^{\mathscr C}=
 \begin{bmatrix}
 d^{\mathscr Q} & 0 \\
 	\phi_{0}(-1)^{\rm deg} & d^{\mathscr F^{\infty}}
 \end{bmatrix},
 \end{equation}
and we have  the exact sequence of complexes,
 \begin{equation}\label{eq:gina3}
 0\to \mathscr F^{\infty \Ou}\to \mathscr C^{\Ou}\to \mathscr Q^{\Ou+1}\to 0.
 \end{equation}
 Since $\phi_{0}$ is a quasi-isomorphism, $\mathscr C$ is exact.

As in (\ref{eq:coi0}), if $\mathscr A$ is a  $\Z$-graded 
$\mathcal{O}_{X}^{\infty}$-module, we use the notation
\begin{equation}\label{eq:noti1}
\overline{\mathscr A}=\mathcal{O}_{X}^{\infty 
}\left(\Lambda\left(\overline{\TsX}\right)\right)\ho \mathscr A.
\end{equation}

Put
\begin{equation}\label{eq:coi-1}
\overline{Q}=\Lambda\left(\overline{\TsX}\right)\ho Q.
\end{equation}
Then $\overline{\mathscr Q}$ is the $\mathcal{O}_{X}^{\infty 
}$-module of smooth sections of the vector bundle $\overline{Q}$.

Since $\mathcal{O}^{\infty 
}_{X}\left(\Lambda\left(\overline{\TsX}\right)\right)$ is flat over 
$\mathcal{O}_{X}^{\infty }$, 
\begin{align}
\phi_{0}:	\overline{\mathscr Q}	\to \overline{ \mathscr F}^{\infty }  
	\end{align}
is still a  quasi-isomorphism, and  we have the exact sequence
 \begin{equation}\label{eq:gina3bi}
 0\to \overline{\mathscr F}^{\infty \Ou}\to \overline{\mathscr C}^{\Ou}\to \overline{\mathscr 
 Q}^{\Ou+1}\to 0.
 \end{equation}
Also $\overline{\mathscr  C}$ is exact.

Put
\begin{equation}\label{eq:zeph1}
\mathscr R=\Hom\left( \overline{\mathscr Q}^{\Ou+1},\overline{\mathscr 
C}^{\Ou}\right).
\end{equation}
Let $d^{\mathscr R}$ be the differential on $\mathscr R$  which comes 
from $d^{Q}, d^{C}$.  Also  the 
supercommutator with $d^{\mathscr C}$ is a differential on $\End\left(\overline{\mathscr 
C}\right)$.

Let  $A\in \End\left(\overline{\mathscr C}\right)$ be a lower 
triangular.  We 
write $A$ as a matrix with respect to the splitting (\ref{eq:gina1}),
\begin{equation}\label{eq:16a2x1}
A=
\begin{bmatrix}
	\alpha & 0 \\
	\beta & \gamma
\end{bmatrix}.
\end{equation}
Put
\begin{equation}\label{eq:162a2x1z1}
\underline{A}=
\begin{bmatrix}
	\alpha \\
	\beta
\end{bmatrix}.
\end{equation}
Then $\underline{A}\in \mathscr R$.  
\begin{proposition}\label{prop:pdifb}
	The following identity holds:
	\begin{equation}\label{eq:zeph0b}
\underline{\left[d^{\overline{\mathscr C}},A\right]}=d^{\mathscr 
R}\underline{A}-\left(-1\right)^{\mathrm{deg}\left(A\right)}\begin{bmatrix}
	0 \\
	\gamma\phi_{0}\left(-1\right)^{\deg}
\end{bmatrix}.
\end{equation}
\end{proposition}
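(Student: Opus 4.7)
The plan is to prove \eqref{eq:zeph0b} by direct matrix computation using the block decompositions
\[
\overline{\mathscr C}^{\Ou}=\overline{\mathscr Q}^{\Ou+1}\oplus\overline{\mathscr F}^{\infty\Ou},\qquad
d^{\overline{\mathscr C}}=\begin{bmatrix} d^{\overline{\mathscr Q}} & 0 \\ \phi_{0}(-1)^{\deg} & d^{\overline{\mathscr F}^{\infty}} \end{bmatrix},\qquad
A=\begin{bmatrix}\alpha & 0\\ \beta & \gamma\end{bmatrix},
\]
induced by \eqref{eq:gina1}, \eqref{eq:gina2}, and \eqref{eq:16a2x1}. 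The statement depends only on the \emph{first column} of the matrix of $[d^{\overline{\mathscr C}},A]$, so the whole proof reduces to a bookkeeping exercise on how the upper-triangular $0$-entry of $A$ and the off-diagonal entry $\phi_{0}(-1)^{\deg}$ of $d^{\overline{\mathscr C}}$ interact.

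First I would write the supercommutator in block form,
\[
[d^{\overline{\mathscr C}},A]=d^{\overline{\mathscr C}}A-(-1)^{\deg A}\,A\,d^{\overline{\mathscr C}},
\]
multiply out the matrices, and read off the first column. A short calculation shows that
\[
\underline{[d^{\overline{\mathscr C}},A]}=\begin{bmatrix} d^{\overline{\mathscr Q}}\alpha-(-1)^{\deg A}\alpha\,d^{\overline{\mathscr Q}}\\[2pt]
\phi_{0}(-1)^{\deg}\alpha+d^{\overline{\mathscr F}^{\infty}}\beta-(-1)^{\deg A}\beta\,d^{\overline{\mathscr Q}}-(-1)^{\deg A}\gamma\,\phi_{0}(-1)^{\deg}\end{bmatrix}.
\]
The term $\gamma\,\phi_{0}(-1)^{\deg}$ arises solely from the product $Ad^{\overline{\mathscr C}}$, because it requires the off-diagonal entry of $d^{\overline{\mathscr C}}$ to feed into the lower-left block of $A\cdot d^{\overline{\mathscr C}}$, and involves the entry $\gamma$ of $A$ that does \emph{not} appear in $\underline{A}$.

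Next I would compute $d^{\mathscr R}\underline{A}$ directly from the Hom differential on $\mathscr R=\Hom(\overline{\mathscr Q}^{\Ou+1},\overline{\mathscr C}^{\Ou})$. With the convention that $d^{\mathscr R}\underline{A}=d^{\overline{\mathscr C}}\underline{A}-(-1)^{\deg A}\underline{A}\,d^{\overline{\mathscr Q}}$, a one-line matrix multiplication gives
\[
d^{\mathscr R}\underline{A}=\begin{bmatrix} d^{\overline{\mathscr Q}}\alpha-(-1)^{\deg A}\alpha\,d^{\overline{\mathscr Q}}\\[2pt]
\phi_{0}(-1)^{\deg}\alpha+d^{\overline{\mathscr F}^{\infty}}\beta-(-1)^{\deg A}\beta\,d^{\overline{\mathscr Q}}\end{bmatrix}.
\]
Comparing the two expressions column by column, everything cancels except the single term $-(-1)^{\deg A}\,\gamma\,\phi_{0}(-1)^{\deg}$ in the lower entry, which is exactly the correction on the right-hand side of \eqref{eq:zeph0b}.

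The only genuine subtlety is sign tracking: one must remember that the supercommutator sign $(-1)^{\deg A}$ is governed by the total degree of $A$ (not by the individual block degrees of $\alpha,\beta,\gamma$ viewed as maps between the unshifted complexes), and that the Koszul sign $\phi_{0}(-1)^{\deg}$ inside $d^{\overline{\mathscr C}}$ must be preserved when passing through $A$. Once these conventions are fixed, the identity reduces to matrix algebra and there is no analytic content; the proposition is best viewed as the statement that forming the $\underline{\phantom{A}}$-column intertwines the two differentials up to the explicit boundary term produced by the diagonal block $\gamma$.
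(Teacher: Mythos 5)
Your computation is correct and agrees with the paper's intent. The paper's proof is a one-liner that uses linearity in $A$ to reduce to the case $\alpha=\beta=0$, noting that when $\gamma=0$ the first column of the supercommutator is $d^{\mathscr R}\underline{A}$ by definition of the Hom differential, so the correction term can only come from the $\gamma$ block; you instead expand both sides fully and cancel, which is the same block-matrix bookkeeping carried out without the reduction. The two are essentially the same argument, and your remark about tracking the sign $(-1)^{\deg A}$ uniformly across blocks (rather than using the shifted degree of $\beta$ as a map between unshifted complexes) is exactly the point where a careless computation would go wrong.
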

\begin{proof}
	We only need to establish (\ref{eq:zeph0b}) when $\alpha=0,\beta=0$, which 
	is easy.
\end{proof}

As before,  we denote by
\index{RX@$\mathscr R_{X}$}%
$\mathscr R_{X}$ the global 
sections of $\mathscr R$ on $X$. 
\begin{proposition}\label{prop:pexab}
	The complexes $\mathscr R$ and $\mathscr R_{X}$ are 
	exact.
\end{proposition}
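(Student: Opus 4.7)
First I would observe that since $\phi_{0}$ is a quasi-isomorphism of $\mathcal{O}_{X}^{\infty}$-complexes, the cone $\mathscr C$ is exact as a complex of sheaves. Because $\mathcal{O}_{X}^{\infty}(\Lambda(\overline{\TsX}))$ is locally free, and therefore flat, over $\mathcal{O}_{X}^{\infty}$, the functor $\mathcal{O}_{X}^{\infty}(\Lambda(\overline{\TsX}))\ho_{\mathcal{O}_{X}^{\infty}}-$ preserves exactness, so $\overline{\mathscr C}$ is also an exact (hence acyclic) complex of sheaves.

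Next I would exploit that $\overline{Q}^{\Ou}=\Lambda(\overline{\TsX})\ho Q^{\Ou}$ is, by construction, a finite-rank free $\Lambda(\overline{\TsX})$-module bundle, so each $\overline{\mathscr Q}^{\,\Ou+1}$ is a locally free sheaf of finite rank over the sheaf of algebras $\mathcal{O}_{X}^{\infty}(\Lambda(\overline{\TsX}))$. Consequently the functor $\mathcal{H}om_{\Lambda(\overline{\TsX})}(\overline{\mathscr Q}^{\,\Ou+1},-)$ is exact, so, applied term-by-term to $\overline{\mathscr C}$, it yields an acyclic complex of sheaves. Viewing $\mathscr R$ as the total complex of the bicomplex $\mathcal{H}om_{\Lambda(\overline{\TsX})}(\overline{\mathscr Q}^{\,i+1},\overline{\mathscr C}^{\,j})$, which is bounded in both directions (since $Q$ is bounded and $\mathscr F\in\Db(X)$), a standard spectral sequence or dévissage argument on the columns then gives that $\mathscr R$ is exact.

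To pass from $\mathscr R$ to its global sections, I would note that each $\mathscr R^{k}$ is a sheaf of $\mathcal{O}_{X}^{\infty}$-modules, because the $\Lambda(\overline{\TsX})$-linear Homs of $\mathcal{O}_{X}^{\infty}(\Lambda(\overline{\TsX}))$-modules inherit an $\mathcal{O}_{X}^{\infty}$-module structure. Since $\mathcal{O}_{X}^{\infty}$ admits smooth partitions of unity, any sheaf of $\mathcal{O}_{X}^{\infty}$-modules is fine, hence $\Gamma$-acyclic. The hypercohomology principle (a bounded exact complex of $\Gamma$-acyclic sheaves has exact global sections, since $\mathrm R\Gamma$ is computed by the complex of sections and is a morphism of triangulated categories) then yields exactness of $\mathscr R_{X}=\Gamma(X,\mathscr R)$.

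The technical subtlety lies essentially in the bookkeeping between the two algebras $\mathcal{O}_{X}^{\infty}$ and $\mathcal{O}_{X}^{\infty}(\Lambda(\overline{\TsX}))$: we need $\overline{\mathscr Q}^{\,\Ou+1}$ to be locally free over the larger algebra (so that the $\Lambda$-linear Hom functor is exact) while $\mathscr R^{k}$ must be a module over the smaller algebra $\mathcal{O}_{X}^{\infty}$ (so that fineness applies); both are easily verified from the definitions $\overline{Q}=\Lambda(\overline{\TsX})\ho Q$ and $\overline{\mathscr C}=\mathcal{O}_{X}^{\infty}(\Lambda(\overline{\TsX}))\ho_{\mathcal{O}_{X}^{\infty}}\mathscr C$. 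The formula (\ref{eq:zeph0b}) is not directly used here, but will presumably serve in the subsequent step where one lifts specific cocycles in $\mathscr R$ to endomorphisms of $\overline{\mathscr C}$.
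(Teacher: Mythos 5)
Your proof is correct and follows essentially the same route as the paper's: exactness of $\overline{\mathscr C}$ (from $\phi_{0}$ being a quasi-isomorphism plus flatness of $\mathcal{O}_{X}^{\infty}(\Lambda(\overline{\TsX}))$ over $\mathcal{O}_{X}^{\infty}$), local freeness of $\overline{\mathscr Q}$ to get exactness of $\mathscr R$, and softness (fineness) of the sheaves $\mathscr R^{k}$ to pass to global sections. The paper's proof is just more compressed ("it is now easy to see…"), while you have spelled out the Hom-functor/total-complex dévissage and the $\Gamma$-acyclicity step.
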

\begin{proof}
	We know that $\overline{\mathscr C}$ is exact. Also 
	$\overline{\mathscr Q}$ is locally free.  It is now easy to 
	see that $\mathscr R$ is exact. Since $\mathscr R$ is soft,  the functor $\mathscr R \to 
	\mathscr R_{X}$ is exact, and we obtain the corresponding 
	result for $\mathscr R_{X}$. 
\end{proof}
\begin{remark}\label{rem:exi}
	For a related argument, we refer to 
\cite[\href{https://stacks.math.columbia.edu/tag/0647}{Tag
0647}]{stacks-project}.
\end{remark}

We will establish a fundamental result of Block \cite[Lemma 
4.6]{Block10}.
\begin{theorem}\label{thm:exib}
	Given $\mathscr F\in \Db\left(X\right)$, there exists 	$ \mathscr E=\left(E,A^{E
	\prime \prime }\right)\in {\rm B}(X)$ and a morphism of 
	$\mathcal{O}^{\infty 
	}_{X}\left( \Lambda\left(\overline{\TsX}\right)\right) $-modules 
	$\phi:\mathscr E\to \overline{\mathscr F}^{\infty }$,  which is a quasi-isomorphism of 
	$\mathcal O_{X}$-complexes, and induces a quasi-isomorphism of 
	$\mathcal{O}_{X}^{\infty }$-complexes 
	$\left(D,\overline{v}_{0}\right)\to \mathscr F^{\infty 
	}$.
In particular, $\mathscr E$ and $\mathscr F$ are isomorphic in ${\rm 
D^{b}_{\mathrm{coh}}}(X)$. 	
\end{theorem}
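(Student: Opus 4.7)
The plan is to start from the complex $(Q, d^{Q})$ supplied by Proposition~\ref{prop:pit-bis} and deform it in the $\Lambda(\overline{T^{*}X})$-direction until it represents $\overline{\mathscr F}^{\infty}$. Set $E = \Lambda(\overline{T^{*}X})\ho Q$, $D = Q$, and $v_{0} = d^{Q}$, so that by Theorem~\ref{thm:cohco} any antiholomorphic superconnection $A^{E\prime\prime}$ of the form (\ref{eq:iv36a6}) built on these ingredients produces an object of $\mathrm{B}(X)$ with diagonal complex $(Q, d^{Q})$. Giving such an $A^{E\prime\prime}$ together with a $\mathcal{O}_{X}^{\infty}(\Lambda(\overline{T^{*}X}))$-linear morphism $\phi: E \to \overline{\mathscr F}^{\infty}$ extending $\phi_{0}$ is equivalent, via the matrix form (\ref{eq:16a2x1}), to choosing a lower-triangular operator $A$ on $\overline{\mathscr C}$ whose $(1,1)$-entry $\alpha$ encodes the higher pieces of $A^{E\prime\prime}$ beyond $d^{Q}$, whose $(2,1)$-entry $\beta$ encodes $\phi - \phi_{0}$, and whose $(2,2)$-entry is set to zero (since we do not modify $d^{\overline{\mathscr F}^{\infty}}$). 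For such an $A$, Proposition~\ref{prop:pdifb} gives $\underline{[d^{\overline{\mathscr C}}, A]} = d^{\mathscr R}\underline{A}$, and the deformation condition $(d^{\overline{\mathscr C}} + A)^{2} = 0$ becomes the Maurer--Cartan-type equation
\begin{equation*}
d^{\mathscr R}\underline{A} + \underline{A^{2}} = 0,
\end{equation*}
which unpacks into $A^{E\prime\prime, 2} = 0$ together with $\phi A^{E\prime\prime} = d^{\overline{\mathscr F}^{\infty}}\phi$.

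I propose to solve this equation recursively in the $\Lambda(\overline{T^{*}X})$-degree. Write $A = \sum_{k\ge 1} A_{k}$ with $A_{k}$ of pure $\Lambda$-degree $k$; extracting the $\Lambda$-degree-$(k+1)$ component of the Maurer--Cartan equation gives
\begin{equation*}
d^{\mathscr R}\underline{A_{k+1}} = -\sum_{\substack{i+j=k+1\\ i,j\ge 1}} \underline{A_{i}A_{j}} =: R_{k+1},
\end{equation*}
in which the right-hand side involves only the previously constructed pieces $A_{1},\dots, A_{k}$. A Jacobi-type computation using $(d^{\overline{\mathscr C}})^{2} = 0$ together with the inductive hypothesis shows that $R_{k+1}$ is $d^{\mathscr R}$-closed; the exactness of $\mathscr R_{X}$ established in Proposition~\ref{prop:pexab} then produces a global primitive $\underline{A_{k+1}}$. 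Because the $\Lambda(\overline{T^{*}X})$-degree is bounded by $n = \dim X$, the recursion terminates after finitely many steps and yields the desired pair $(A^{E\prime\prime}, \phi)$.

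To finish, it remains to verify that $\phi: \mathscr E \to \overline{\mathscr F}^{\infty}$ is a quasi-isomorphism of $\mathcal O_{X}$-complexes. By construction the induced morphism on diagonal complexes is $\phi_{0}: (D, v_{0}) = (Q, d^{Q}) \to \mathscr F^{\infty}$, which is a quasi-isomorphism by Proposition~\ref{prop:pit-bis}. Since $\phi$ is compatible with the $\Lambda(\overline{T^{*}X})$-filtrations on both sides, and since by Theorem~\ref{thm:cohco} together with Proposition~\ref{prop:coco} the associated spectral sequences on $\mathscr E$ and $\overline{\mathscr F}^{\infty}$ both degenerate at $\mathscr E_{2}$ with $F^{1}\mathscr H = 0$ and are determined by $HD = H\mathscr F^{\infty}$, a standard comparison of spectral sequences shows that $\phi$ is a quasi-isomorphism. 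The claimed isomorphism in $\Db(X)$ then follows by composing with the quasi-isomorphism $\mathscr F \to \overline{\mathscr F}^{\infty}$ of (\ref{eq:coq1}).

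The main technical obstacle will be the Jacobi-type identity asserting that $R_{k+1}$ is $d^{\mathscr R}$-closed at each stage of the recursion. This is an essentially formal computation, but one must carefully track the signs arising from the $\Z_{2}$-grading on $\End(\overline{\mathscr C})$ and exploit the vanishing of the $(2,2)$-entry of $A$; otherwise the correction term $\gamma\phi_{0}(-1)^{\deg}$ appearing in Proposition~\ref{prop:pdifb} would contaminate the closedness of $R_{k+1}$ and obstruct the recursion.
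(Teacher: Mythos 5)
Your overall strategy is the paper's: build $A^{E\prime\prime}$ and $\phi$ simultaneously as a differential on the cone, solve degree by degree in $\Lambda(\overline{T^{*}X})$, and invoke the exactness of $\mathscr R_{X}$ from Proposition~\ref{prop:pexab} to produce primitives. The recursion for degrees $k\ge 2$ goes through essentially as you describe, because there $d_{k}^{\overline{\mathscr F}^{\infty}}=0$ by (\ref{eq:ann1}), so the correction term in Proposition~\ref{prop:pdifb} vanishes.

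However, there is a genuine gap at the degree-$1$ step, and it is exactly the point your last paragraph tries to explain away. The final differential you want is (\ref{eq:gina6}), whose $(2,2)$-entry is $d^{\overline{\mathscr F}^{\infty}}=\overline{\pa}^{X}+d^{\mathscr F}$, which has a nonzero $\Lambda$-degree-$1$ piece $\overline{\pa}^{X}$. If you insist on a perturbation $A$ with zero $(2,2)$-entry, as you do, then either your base differential must already carry $\overline{\pa}^{X}$ in the $(2,2)$-slot — but then the base differential no longer squares to zero (its $(2,1)$-entry at $\Lambda$-degree $1$ involves $\overline{\pa}^{X}\phi_{0}$, which is not zero and indeed not even defined as a supercommutator since $Q$ carries no canonical $\overline{\pa}$-operator) — or you take the base to be the degree-$0$ cone differential, in which case $d^{\overline{\mathscr C}}+A$ has the wrong $(2,2)$-entry $d^{\mathscr F^{\infty}}$ and $\phi$ is not a chain map into the correct $\overline{\mathscr F}^{\infty}$. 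Either way the clean Maurer--Cartan formulation breaks down at degree $1$.

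A second, related, problem: at degree $1$ the $(1,1)$-entry of the perturbation must include an antiholomorphic connection $\n^{Q\prime\prime}$ on $Q$ (the $v_{1}$ of (\ref{eq:iv36a6})), which is a first-order differential operator, not a morphism. Therefore $\underline{A_{1}}$ does not lie in $\mathscr R_{X}=\Hom(\overline{\mathscr Q}^{\cdot+1},\overline{\mathscr C}^{\cdot})_{X}$, and you cannot apply the exactness of $\mathscr R_{X}$ to it directly. The paper resolves both issues in one stroke: it allows $d_{1}^{\overline{\mathscr C}}$ to have $(2,2)$-entry $\overline{\pa}^{X}$, applies Proposition~\ref{prop:pdifb} with the resulting nonzero correction term $d_{1}^{\overline{\mathscr F}^{\infty}}\phi_{0}$, and then subtracts off a chosen antiholomorphic connection $\n^{Q\prime\prime}$ to reduce the unknown to an honest element $e_{1}^{\mathscr R}$ of $\mathscr R_{X}$; see (\ref{eq:senia1})--(\ref{eq:senia3}). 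Only after the substitution does the exactness of $\mathscr R_{X}$ become applicable, and one must also verify (\ref{eq:senia3}) that $f_{1}^{\mathscr R}$ is $d^{\mathscr R}$-closed, which is a genuine computation that uses the specific forms of $d_{1}^{\overline{\mathscr F}^{\infty}}$ and $v_{0}=d^{Q}$. Your proof cannot reach this without abandoning the assumption that the $(2,2)$-entry of $A$ vanishes.
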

\begin{proof}
We use the notation introduced before. 
We will 
reformulate our problem as  being the construction  of a generalized 
antiholomorphic superconnection 
$d^{\overline{\mathscr C}}$ on $\overline{ \mathscr  C}$ which will have the structure
\begin{equation}\label{eq:gina6}
d^{\overline{\mathscr C}}=
\begin{bmatrix}
  A^{\overline{E} \prime \prime } & 0 \\
	\phi (-1)^{\rm deg} & d^{\overline{\mathscr F}^{\infty}}
\end{bmatrix}.
\end{equation}
such that if
\begin{equation}\label{eq:gina7}
d^{\overline{\mathscr C}}=\sum_{k\in\N}^{}d^{\overline{\mathscr C}}_{k}
\end{equation}
is the canonical expansion of $d^{\overline{\mathscr C}}$ with 
respect to $\Lambda\left(\overline{\TsX}\right)$,  then
\begin{equation}\label{eq:gina6a1}
d^{\overline{\mathscr C}}_{0}=d^{\mathscr C}.
\end{equation}

By (\ref{eq:exp1}),  for $k\ge 2$, 
\begin{equation}\label{eq:ann1}
d^{\overline{\mathscr F}^{\infty }}_{k}=0.
\end{equation}
By (\ref{eq:gina6}), we get
\begin{equation}\label{eq:gina9a1}
d^{\overline{\mathscr C}}_{k}=
\begin{bmatrix}
	v_{k} & 0 \\
	\phi_{k}(-1)^{\rm deg} & d^{\overline{\mathscr F}^{\infty }}_{k}
\end{bmatrix}.
\end{equation}
In particular, for $k=0$, we get
\begin{align}\label{eq:cru1-z}
&v_{0}=d^{Q},&d_{0}^{\overline{\mathscr F}^{\infty }}=d^{\mathscr 
F^{\infty }}.
\end{align}

For $i\ge 0$, put
\begin{equation}\label{eq:gina8}
d^{\overline{\mathscr C}}_{\le i}=\sum_{k=0}^{i}d^{\overline{\mathscr C}}_{k}.
\end{equation}
We will use a similar notation for other related expressions.
We will construct  $d^{\overline{\mathscr C}}_{i}$ by induction so 
that for  $i\ge 1$,
\begin{equation}\label{eq:gina9}
\left[d^{\overline{\mathscr C}}_{\le i}\right]^{2}_{\le i}=0.
\end{equation}

For $i=0$, we take $d^{\overline{\mathscr C}}_{0}=d^{\mathscr C}$. For $i\ge 0$, we  will  show how to 
construct $d^{\overline{\mathscr C}}_{\le i+1}$ from $d^{\overline{\mathscr C}}_{\le i}$. We have the 
identity
\begin{equation}\label{eq:gina10}
d^{\overline{\mathscr C}}_{\le i+1}=d^{\overline{\mathscr C}}_{\le i}+d^{\overline{\mathscr C}}_{i+1}.
\end{equation}
Then
\begin{equation}\label{eq:gina11}
\left[d^{\overline{\mathscr C}}_{\le i+1}\right]^{2}_{\le i+1}=\left[d^{\overline{\mathscr C}}_{\le 
i},d^{\overline{\mathscr C}}_{i+1}\right]_{\le i+1}+\left[d^{\overline{\mathscr C}}_{\le 
i}\right]^{2}_{\le i+1}.
\end{equation}
Clearly,
\begin{equation}\label{eq:gina12}
\left[d^{\overline{\mathscr C}}_{\le 
i},d^{\overline{\mathscr C}}_{i+1}\right]_{\le 
i+1}=\left[d^{\mathscr C},d^{\overline{\mathscr C}}_{i+1}\right].
\end{equation}
For (\ref{eq:gina11}) to vanish, using (\ref{eq:gina12}), we should have
\begin{equation}\label{eq:gina16}
\left[d^{\mathscr C},d^{\overline{\mathscr C}}_{i+1}\right]+\left[d^{\overline{\mathscr C}}_{\le 
i}\right]^{2}_{\le i+1}=0.
\end{equation}
By (\ref{eq:gina9}), (\ref{eq:gina16}) can also be written in the form
\begin{equation}\label{eq:gina16a1}
\left[d^{\mathscr C},d^{\overline{\mathscr C}}_{i+1}\right]+\left[d^{\overline{\mathscr C}}_{\le 
i}\right]^{2}_{i+1}=0.
\end{equation}

Since $d^{\overline{\mathscr F}^{\infty},
2}=0$\footnote{By (\ref{eq:ann1}), the expansion of 
$d^{\overline{\mathscr F}^{\infty}}$ terminates at 
$k=1$. Here, this fact will be ignored.}, we have the identity
\begin{equation}\label{eq:gina16a11}
\left[d^{\overline{\mathscr F}^{\infty}}_{\le i+1}\right]^{2}_{\le i+1}=0.
\end{equation}
By (\ref{eq:gina16a1}), (\ref{eq:gina16a11}),  we deduce that 
\begin{equation}\label{eq:gina16a2}
\left[d^{\mathscr F^{\infty}},d^{\overline{\mathscr 
F}^{\infty}}_{i+1}\right]+\left[d^{\overline{\mathscr F}^{\infty} }_{\le i}\right]^{2}_{i+1}=0.
\end{equation}

First, we solve (\ref{eq:gina16a1}) for $i=0$. Namely, we need to find 
$d_{1}^{\overline{\mathscr C}}$ such that
\begin{equation}\label{eq:gina17a1}
\left[d^{\mathscr C},d_{1}^{\overline{\mathscr C}}\right]=0.
\end{equation}
By Proposition \ref{prop:pdifb}, equation (\ref{eq:gina17a1}) can be written in the 
form
\begin{equation}\label{eq:gina18a1}
d^{\mathscr R}\underline{d_{1}^{\overline{\mathscr C}}}+
\begin{bmatrix}
	0 \\
	d_{1}^{\overline{\mathscr F}^{\infty}
	}\phi_{0}\left(-1\right)^{\deg}
\end{bmatrix}=0.
\end{equation}
Recall that by (\ref{eq:cru1-z}), $v_{0}=d^{Q}$. Then
\begin{equation}\label{eq:gina18a2}
d^{\mathscr R}\begin{bmatrix}
	0 \\
	d_{1}^{\overline{\mathscr F}^{\infty}	}\phi_{0}\left(-1\right)^{\deg}
\end{bmatrix}=
\begin{bmatrix}
	0 \\
	d_{0}^{\overline{\mathscr 
	F}^{\infty}}d^{\overline{\mathscr 
	F}^{\infty}
	}_{1}\phi_{0}\left(-1\right)^{\deg}-d_{1}^{\overline{\mathscr 
	F}^{\infty}}\phi_{0}\left(-1\right)^{\deg}v_{0}
\end{bmatrix}.
\end{equation}
Since $\left[d_{0}^{\overline{\mathscr F}^{\infty}}, d_{1}^{ 
\overline{\mathscr F}^{\infty}}\right]=0$, and $d_{0}^{\overline{\mathscr F}^{\infty}}\phi_{0}=\phi_{0}v_{0}$, and since $v_{0}$ is odd, by (\ref{eq:gina18a2}), we get
\begin{equation}\label{eq:gina18a3}
d^{\mathscr R}\begin{bmatrix}
	0 \\
	d_{1}^{\overline{\mathscr F}^{\infty}
	}\phi_{0}\left(-1\right)^{\deg}
\end{bmatrix}=0.
\end{equation}

 Let 
$\n^{Q\prime \prime }$  be an antiholomorphic 
connection on  $Q$.
Put
\begin{align}\label{eq:senia1}
&e_{1}^{\mathscr R}=\underline{d_{1}^{\overline{\mathscr C}}}-
\begin{bmatrix}
	\n^{Q \prime \prime} \\
	0
\end{bmatrix},
&f_{1}^{\mathscr R}=d^{\mathscr R}\begin{bmatrix}
	\n^{Q \prime \prime} \\
	0
\end{bmatrix}+\begin{bmatrix}
	0 \\
	d_{1}^{\overline{\mathscr F}^{\infty}
	}\phi_{0}\left(-1\right)^{\deg}
\end{bmatrix}.
\end{align}
An elementary computation shows that
\begin{equation}\label{eq:senia2}
f_{1}^{\mathscr R}=
\begin{bmatrix}
	 \n^{Q \prime \prime }v_{0} \\
\phi_{0}\left(-1\right)^{\deg}\n^{Q \prime \prime }+d_{1}^{\overline{\mathscr F}^{\infty }}\phi_{0}\left(-1\right)^{\deg}	
\end{bmatrix}.
\end{equation}
Differentiation on $X$ having disappeared, $e_{1}^{\mathscr R}, f_{1}^{\mathscr R}$ lie 
in  $\mathscr R_{X}$.

By (\ref{eq:gina18a3}), (\ref{eq:senia1}), solving equation 
(\ref{eq:gina18a1}) is equivalent to finding $e_{1}^{\mathscr R}\in \mathscr R_{X}$  such that
\begin{equation}\label{eq:senia2a2}
d^{\mathscr R}e_{1}^{\mathscr R}+f_{1}^{\mathscr R}=0.
\end{equation}
By (\ref{eq:gina18a3}), (\ref{eq:senia1}), we get
\begin{equation}\label{eq:senia3}
d^{\mathscr R}f_{1}^{\mathscr R}=0.
\end{equation}
By Proposition \ref{prop:pexab}, the complex $\mathscr 
R_{X}$ is exact. Using (\ref{eq:senia3}), there is a solution 
$e_{1}^{\mathscr R}$ to (\ref{eq:senia2a2}), which gives  the existence of a solution 
for (\ref{eq:gina17a1}).

Now we  assume that $i\ge 1$. We 
will still find a solution for (\ref{eq:gina16a1}).
By Proposition \ref{prop:pdifb}, we get
\begin{equation}\label{eq:gina14}
\underline{\left[d^{\mathscr C},d^{\overline{\mathscr 
C}}_{i+1}\right]}=d^{ \mathscr R}\underline{d^{\overline{\mathscr C}}_{i+1}}+
\begin{bmatrix}
	0 \\
	d_{i+1}^{\overline{\mathscr F}^{\infty}}\phi_{0}
	\left(-1\right)^{\deg}
\end{bmatrix}.
\end{equation}
By (\ref{eq:gina16a1}),   (\ref{eq:gina14}), we obtain,
\begin{equation}\label{eq:gina14a1}
d^{\mathscr R}\underline{d^{\overline{\mathscr C}}_{i+1}}+\underline{\left[d^{\overline{\mathscr C}}_{\le 
i}\right]^{2}_{i+1}}+
\begin{bmatrix}
	0 \\
	d_{i+1}^{\overline{\mathscr 
	F}^{\infty}}\phi_{0}\left(-1\right)^{\deg}
\end{bmatrix}=0.
\end{equation}

Using (\ref{eq:gina9}), we get
\begin{equation}\label{eq:gina15b}
\left[d^{\mathscr C},\left[d^{\overline{\mathscr C}}_{\le 
i}\right]^{2}_{ i+1}\right]=\left[d^{\mathscr C},\left[d^{\overline{\mathscr C}}_{\le 
i}\right]^{2}\right]_{ i+1}=\left[d^{\mathscr C}_{\le i},d^{\overline{\mathscr C},2}_{\le 
i}\right]_{ i+1}=0.
\end{equation}
By Proposition \ref{prop:pdifb} and by (\ref{eq:gina15b}), we get
\begin{equation}\label{eq:gina17b}
d^{\mathscr R}\underline{\left[d^{\overline{\mathscr C}}_{\le 
i}\right]^{2}_{ i+1}}-
\begin{bmatrix}
	0 \\
	\left[d^{\overline{\mathscr F}^{\infty}}_{\le 
	i}\right]^{2}_{i+1}\phi_{0}\left(-1\right)^{\deg}
\end{bmatrix}=0.
\end{equation}
An easy computation shows that
\begin{equation}\label{eq:easy1a}
d^{\mathscr R}\begin{bmatrix}
	0 \\
	d^{\overline{\mathscr F}^{\infty }
	}_{i+1}\phi_{0}\left(-1\right)^{\deg}
\end{bmatrix}=
\begin{bmatrix}
	0 \\
	d^{\mathscr F^{\infty }}d^{\overline{\mathscr F}^{\infty }
	}_{i+1}\phi_{0}\left(-1 \right) ^{\deg}-d^{\overline{\mathscr F}^{\infty }}_{i+1}\phi_{0}\left(-1\right)^{\deg}v_{0}
\end{bmatrix}.
\end{equation}
Since $\phi_{0}$ is a morphism of complexes, and since $v_{0}$ is odd,  using 
(\ref{eq:gina16a2}), we can rewrite (\ref{eq:easy1a}) in the form
\begin{equation}\label{eq:easy2a}
d^{\mathscr R}\begin{bmatrix}
	0 \\
	d^{\overline{\mathscr F}^{\infty } 
	}_{i+1}\phi_{0}\left(-1\right)^{\deg}
\end{bmatrix}=-
\begin{bmatrix}
	0 \\
	\left[d_{\le i}^{\overline{\mathscr F}^{\infty },2}\right]_{i+1}\phi_{0}\left(-1\right)^{\deg}
\end{bmatrix}
\end{equation}

By combining (\ref{eq:gina17b}) and (\ref{eq:easy2a}), we get
\begin{equation}\label{eq:gina18}
d^{\mathscr R}\left[\underline{\left[d^{\overline{\mathscr C}}_{\le 
i}\right]^{2}_{i+1}}+
\begin{bmatrix}
	0 \\
	d_{i+1}^{\overline{\mathscr 
	F}^{\infty}}\phi_{0}\left(-1\right)^{\deg}
\end{bmatrix}\right]=0.
\end{equation}
Since the complex $\mathscr  R_{X}$ is exact, using 
(\ref{eq:gina18}),    equation 
(\ref{eq:gina14a1}) for $\underline{d^{\overline{\mathscr C}}_{i+1}}$ can be 
solved.

In summary, we have constructed   $\mathscr E=(E,A^{E\prime\prime 
})\in {\rm B}(X)$ and a morphism of $\mathcal{O}_{X}$-complexes 
$\phi:\mathscr E\to \overline{\mathscr F}^{\infty }$. Therefore 
$\phi$ induces a morphism
\begin{equation}\label{eq:iso0}
\phi:\mathscr H \mathscr E\to \mathscr H 
\overline{\mathscr F}^{\infty }.
\end{equation}

By construction, $\phi$ induces an isomorphism of 
$\mathcal{O}_{X}^{\infty }$-modules,
\begin{equation}\label{eq:isoa1}
HD \simeq H \mathscr F^{\infty }.
\end{equation}
Both sheaves in (\ref{eq:isoa1}) are equipped with antiholomorphic connections, which 
necessarily correspond by $\phi$.  By combining equation 
(\ref{eq:cons2}) in  Theorem 
\ref{thm:cohco} and equation (\ref{eq:dos1bisc}) in Proposition 
\ref{prop:coco}, we deduce that in (\ref{eq:iso0}), $\phi$ is an 
isomorphism. The proof of our theorem is completed. 
	\end{proof}
\subsection{The homotopy category}%
\label{subsec:hoca}
Let $\mathscr E,\underline{\mathscr E}$ 
be two objects in $\mathrm B(X)$. Note that $\Hom\left(\mathscr 
E,\underline{\mathscr E}\right)$ is an object in $\mathrm{B}\left(X\right)$.  
Recall that 
\index{ZH@$Z^{0}\Hom\left(\mathscr E, \underline{\mathscr E}\right)_{X}$}%
$Z^{0}\Hom\left(\mathscr E,\underline{\mathscr E} 
\right)_{X}$ was defined in (\ref{eq:kel2}). A morphism $\phi\in  Z^{0}\Hom\left(\mathscr E,\underline{\mathscr E} 
\right)_{X}$ is said to be null-homotopic if there exists $\psi\in 
\Hom^{-1}\left(\mathscr E,\underline{\mathscr E}\right)_{X}$ such that
\begin{equation}\label{eq:hoc1}
\phi=A^{\Hom\left(E,\underline{E}\right)\prime \prime }\psi.
\end{equation}
Also $\phi\in Z^{0}\Hom\left(\mathscr E,\underline{\mathscr E} 
\right)_{X}$ is said to be a homotopy equivalence if there is $\psi\in
Z^{0}\Hom\left(\underline{\mathscr E}, \mathscr E
\right)_{X}$ such that $\psi\phi-1\vert_{\mathscr E}$ is null-homotopic 
in $Z^{0}\Hom\left(\mathscr E, \mathscr E\right)_{X}$,  and 
$\phi\psi-1\vert_{\underline{\mathscr E}}$ is null-homotopic in 
$Z^{0}\Hom\left(\underline{\mathscr E},\underline{\mathscr 
E}\right)_{X}$.

If   $\phi\in Z^{0}\Hom\left(\mathscr E,\underline{\mathscr 
E}\right)_{X}$ is  null-homotopic, the image of this morphism in 
$\mathrm{C}^{\mathrm{b}}_{\coh}\left(X\right)$ is also
  null-homotopic. In particular,  the  functor $F_{X}:{\rm B}(X)\to {\rm 
 D_{\mathrm{coh}}^{b}}(X)$ factorizes 
through  a functor
\index{FX@$\underline{F}_{X}$}%
	$\underline{F}_{X}:	\underline{{\rm B}}(X)\to {\rm D_{\mathrm{coh}}^{b}}(X)$. 
	By Theorem	\ref{thm:thmesssur}, $\underline{F}_{X}$ is   essentially surjective. 
\begin{proposition}\label{prop:prophtpeq}
	If $\phi\in Z^{0}\Hom\left(\mathscr E,\underline{\mathscr 
	E}\right)_{X}$,  the 
	following conditions are equivalent: 
	\begin{enumerate}
		\item $\phi$ is a homotopy equivalence,
		\item $\phi$ induces a quasi-isomorphism $  \mathscr E\to 	  \underline{\mathscr E}$,
		
	\item  For any $x\in X$, $\phi$ induces a quasi-isomorphism 	
	$D_{x}\to \underline{D}_{x}$.
	\item  $\phi$ induces a quasi-isomorphism 	$\mathcal{O}^{\infty 
	}_{X}\left(D\right)\to\mathcal{O}^{\infty 
	}_{X}\left(\underline{D}\right)$.	
	\item  $\phi$ induces a homotopy equivalence $C^{\infty}(X,D)\to 	
	C^{\infty}(X,\underline{D})$. 
\end{enumerate} 
\end{proposition}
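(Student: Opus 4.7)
The plan is to reduce all five conditions to Theorem \ref{thm:simp} applied to the cone $\mathscr{C} = \mathrm{cone}(\mathscr{E},\underline{\mathscr{E}})$ of Subsection \ref{subsec:scmoco}, whose diagonal bundle satisfies $D_{C}^{i} = D^{i+1}\oplus\underline{D}^{i}$ and whose degree-$1$ endomorphism $v_{0}^{C}$ encodes $v_{0}$, $\underline{v}_{0}$, and the leading component $\phi_{0}$ of $\phi$. By (\ref{eq:hue3}), condition (2) is equivalent to $\mathscr{H}\mathscr{C}=0$, and by the fiberwise version of the long exact sequence (\ref{eq:loex1}), condition (3) is equivalent to the pointwise exactness of $(D_{C}, v_{0}^{C})_{x}$ at every $x\in X$.

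The implication (1)$\Rightarrow$(2) is immediate from the definitions, since a homotopy equivalence of complexes is a quasi-isomorphism. I would then handle (2)$\Leftrightarrow$(3)$\Leftrightarrow$(4)$\Leftrightarrow$(5) by feeding $\mathscr{C}$ and its diagonal complex $(D_{C}, v_{0}^{C})$ into Theorem \ref{thm:simp}: that theorem identifies $\mathscr{H}\mathscr{C}=0$, the pointwise exactness of $(D_{C}, v_{0}^{C})_{x}$, the vanishing of the $\mathcal{O}_{X}^{\infty}$-cohomology $HD_{C}$, and the existence of a global $k_{0}\in C^{\infty}(X,\End^{-1}(D_{C}))$ with $[v_{0}^{C},k_{0}]=1$. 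Condition (4) is exactly $HD_{C}=0$; and since $D_{C}$ is a finite-rank smooth vector bundle, $C^{\infty}(X,D_{C})$ is a finitely generated projective $C^{\infty}(X,\mathbf{C})$-module, so the existence of such a $k_{0}$ is equivalent to $\phi_{0}:C^{\infty}(X,D)\to C^{\infty}(X,\underline{D})$ admitting a homotopy inverse, which is condition (5).

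It remains to prove (5)$\Rightarrow$(1). Starting from the $k_{0}$ obtained above, I plan to lift it to a global section $k\in C^{\infty}(X,\End^{-1}(C))$ satisfying $[A^{C\prime\prime}_{\phi},k]=1$, by the order-by-order construction along the filtration by degree in $\Lambda(\overline{T^{*}X})$ that appears at the end of the proof of Theorem \ref{thm:simp}; at each stage the obstruction is $[v_{0}^{C},\cdot]$-closed by the Bianchi-type identity $[A^{C\prime\prime}_{\phi},[A^{C\prime\prime}_{\phi},\cdot]]=0$, hence exact thanks to the previous step. Writing the resulting $k$ in block form along $C^{i}=E^{i+1}\oplus\underline{E}^{i}$ and comparing with (\ref{eq:res4a0z2bx}) extracts simultaneously a morphism $\psi\in Z^{0}\Hom(\underline{\mathscr{E}},\mathscr{E})_{X}$ and explicit null-homotopies of $\psi\phi-1_{\mathscr{E}}$ and $\phi\psi-1_{\underline{\mathscr{E}}}$.

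The main obstacle is this last step. Condition (5) is only a statement at the $D$-level, and there is in general no pointwise inverse of $\phi$ from which $\psi$ could be read off algebraically; the genuine content is the recursive construction of the higher corrections $k_{i}$ in the filtration. All the other equivalences are essentially formal consequences of Theorem \ref{thm:simp} once the cone has been introduced.
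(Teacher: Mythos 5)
Your proposal is correct and follows exactly the route the paper intends: pass to the cone $\mathscr C=\mathrm{cone}(\mathscr E,\underline{\mathscr E})$ and read all five conditions off the acyclicity criteria of Theorem~\ref{thm:simp} applied to $\mathscr C$. The only redundancy is in your final step: you do not need to re-run the filtration-by-filtration lift from $k_0$ to $k$, since Theorem~\ref{thm:simp} already records the existence of a global $k$ with $[A^{C\prime\prime}_{\phi},k]=1$ as one of its equivalent conditions; once that $k$ is in hand, the block decomposition you describe extracts $\psi$ and the two null-homotopies directly, completing (5)$\Rightarrow$(1).
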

\begin{proof}
	This is an easy consequence of Theorem \ref{thm:simp} and of the 
	construction of $\mathrm{cone}\left(\mathscr 
	E,\underline{\mathscr E}\right)$.		
\end{proof} 
\subsection{An equivalence of categories}%
\label{subsec:eqcat}
We have a fundamental theorem of Block \cite[Theorem 4.3]{Block10}. 
 \begin{theorem}\label{thm:eqcat}
The functor 
	$\underline{F}_{X}:  \underline{\mathrm B}(X)\to {\rm D}_{\rm coh}^{\rm 
b}(X)$ is	an equivalence of triangulated categories. 
 \end{theorem}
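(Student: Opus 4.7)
The plan is as follows. Essential surjectivity of $\underline{F}_{X}$ has already been established in Theorem \ref{thm:thmesssur} together with the factorization through the homotopy category discussed in Subsection \ref{subsec:hoca}, so what remains is to show that $\underline{F}_{X}$ is fully faithful and compatible with the triangulated structure. The latter is straightforward: the cone construction of Subsection \ref{subsec:scmoco} realizes triangles in $\underline{\mathrm{B}}(X)$ whose image in $\Db(X)$ under $F_X$ gives, via the exact sequence (\ref{eq:loex1}) and long exact sequence (\ref{eq:loex}), the standard distinguished triangles, and the shift is realized at the level of superconnections by degree shift on $E$. Thus the bulk of the work is the fully faithful statement.

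For fully faithfulness, the key claim is that for $\mathscr{E},\underline{\mathscr{E}}\in \mathrm{B}(X)$, the natural map
\begin{equation*}
H^{\bullet}\mathrm{Hom}(\mathscr{E},\underline{\mathscr{E}})_{X}\longrightarrow \mathrm{Ext}^{\bullet}_{\Db(X)}\bigl(F_{X}\mathscr{E},F_{X}\underline{\mathscr{E}}\bigr)
\end{equation*}
is an isomorphism; the case $\bullet=0$ is what is strictly needed. The fundamental observation is that $\mathscr{H}=\mathrm{Hom}(\mathscr{E},\underline{\mathscr{E}})$ is itself an antiholomorphic superconnection, with diagonal bundle $\mathrm{Hom}(D,\underline{D})$, so Theorem \ref{thm:cohco} applies to it. Since the underlying sheaf of $\mathscr{H}$ consists of $C^\infty$ sections of a smooth vector bundle, it is soft as an $\mathcal{O}_X$-module, hence $\Gamma$-acyclic; consequently $H^{\bullet}\mathrm{Hom}(\mathscr{E},\underline{\mathscr{E}})_{X}$ coincides with the hypercohomology $\mathbb{H}^{\bullet}(X,\mathscr{H})$. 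It therefore suffices to identify the cohomology sheaves $\mathscr{H}^{\bullet}\mathscr{H}$ with $\mathcal{E}xt^{\bullet}_{\mathcal{O}_X}(F_{X}\mathscr{E},F_{X}\underline{\mathscr{E}})$.

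For this identification, the strategy parallels the proof of Theorem \ref{thm:cohco}. On a small polydisc $U\subset X$, Theorem \ref{thm:Conj} allows us to replace both $A^{E\prime\prime}$ and $A^{\underline{E}\prime\prime}$ by canonical forms $v_0+\underline{\nabla}^{D|_U\prime\prime}$ and $\underline{v}_0+\underline{\nabla}^{\underline{D}|_U\prime\prime}$, where $D|_U,\underline{D}|_U$ are now bounded complexes of holomorphic vector bundles representing $F_{X}\mathscr{E}|_U,F_{X}\underline{\mathscr{E}}|_U$ in $\mathrm{D^b_{coh}}(U)$. In this gauge, $\mathscr{H}|_U$ becomes the twisted Dolbeault complex $\mathcal{O}_U^{\infty}(\Lambda(\overline{T^*U}))\ho_{\mathcal{O}_U}\mathcal{H}om_{\mathcal{O}_U}(D|_U,\underline{D}|_U)$, whose spectral sequence with respect to the $\Lambda(\overline{T^*U})$-filtration has $E_1$-term computing Dolbeault-resolved $\mathcal{H}om$. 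By the Dolbeault-Poincaré lemma together with Malgrange's flatness of $\mathcal{O}_X^\infty$ over $\mathcal{O}_X$, this identifies $\mathscr{H}^{\bullet}\mathscr{H}|_U=\mathcal{E}xt^{\bullet}_{\mathcal{O}_U}(D|_U,\underline{D}|_U)$, which is the desired local statement. Combining with the hyperext local-to-global spectral sequence gives the $E_2$-degenerating comparison.

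The hardest step is ensuring that the local identifications above are canonical, in the sense of gluing into a well-defined global isomorphism of $\mathcal{O}_X$-modules $\mathscr{H}^\bullet\mathscr{H}\simeq \mathcal{E}xt^\bullet_{\mathcal{O}_X}(F_X\mathscr{E},F_X\underline{\mathscr{E}})$ that is independent of the local trivializations provided by Theorem \ref{thm:Conj}. Concretely, one must check that the comparison between the $\Lambda(\overline{T^*X})$-filtration spectral sequence and the hyperext spectral sequence is natural under conjugation by sections of $\Aut_0^0$ and under morphisms of cone type introduced in Subsection \ref{subsec:scmoco}; the latter is what ensures that the resulting map agrees with the one obtained by representing morphisms in $\Db(X)$ by roofs using the quasi-isomorphisms $\mathscr{E}\to \overline{\mathscr{F}}^{\infty}$ of Theorem \ref{thm:exib}. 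Once this naturality is in place, reflection of isomorphisms (which follows immediately from Proposition \ref{prop:prophtpeq}) upgrades to full faithfulness, completing the proof.
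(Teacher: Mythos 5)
Your strategy is genuinely different from the paper's. The paper never computes $\mathrm{Ext}$-groups via hypercohomology of the internal $\Hom$-sheaf; instead it argues directly on the category side: a morphism in $\Hom_{\Db(X)}(\mathscr E,\underline{\mathscr E})$ is represented by a roof $\mathscr E \xleftarrow{\mathrm{q.i.}} \mathscr F \to \underline{\mathscr E}$, which (via $\overline{\mathscr F}^{\infty}$ and Theorem \ref{thm:exib}) can be replaced by a roof $\mathscr E \xleftarrow{\mathrm{q.i.}} \underline{\underline{\mathscr E}} \to \underline{\mathscr E}$ in $\mathrm{B}(X)$; Proposition \ref{prop:prophtpeq} then turns the left leg into a homotopy equivalence in $\mathrm{B}(X)$, so that composing with its homotopy inverse produces the required morphism in $\underline{\mathrm B}(X)$. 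Injectivity follows by the same maneuver. This sidesteps any discussion of hypercohomology, local-to-global spectral sequences, or a canonical comparison map with $R\mathcal{H}om$.

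That said, your approach has a genuine gap, and it is precisely the step you flag as ``hardest.'' You need a \emph{globally defined, natural} comparison map from the cohomology sheaves of $\Hom(\mathscr E,\underline{\mathscr E})$ to $\mathcal{E}xt^{\bullet}_{\mathcal{O}_X}(\underline{F}_{X}\mathscr E,\underline{F}_{X}\underline{\mathscr E})$ which, under the hypercohomology identifications, induces exactly the map $\underline{F}_X$ on $\Hom$-groups. The local identification via Theorem \ref{thm:Conj} proceeds only after a gauge transformation that is not unique (and in fact depends on the choice of coordinates, the dilation scale $t$, the functions $\phi$, etc.), so one cannot simply glue the local isomorphisms without first exhibiting a global map and then checking locally that it agrees with the ad hoc identification; this is not done. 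Likewise, even granting the global isomorphism of sheaves, you still have to verify that the induced map on $H^0$ is the one coming from the functor $\underline F_X$, which is a separate check. Finally, your closing sentence is a non sequitur: if you have established that the canonical $\Hom$-map is bijective, full faithfulness follows immediately and ``reflection of isomorphisms'' plays no role. In short, your route aims at a stronger DG-level statement, but the crucial naturality arguments are asserted rather than proved, whereas the paper's roof-based argument avoids them entirely.
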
 
\begin{proof}
	Since  $\underline{F}_{X}$ is essentially surjective, we only need 
to show that $\underline{F}_{X}$ is fully faithful. Equivalently,  if $\mathscr E,\underline{\mathscr E}\in 
\underline{\mathrm{B}}\left(X\right)$, 
 we have to show that the corresponding map
	$\Hom_{\underline{\rm B}(X)}( \mathscr E,\underline{ \mathscr E})\to \Hom_{\rm 
		D_{\mathrm{coh}}^{b}(X)}(\mathscr E,\underline{\mathscr E})
$	 
is bijective. 

Let us prove surjectivity. A morphism in $\Hom_{\rm 
		D_{\mathrm{coh}}^{b}(X)}(\mathscr E,\underline{\mathscr E})$ is represented  
		by the diagram of morphisms of $\mathcal{O}_{X}$-complexes,
	\begin{align}\label{diag12}
		\begin{aligned}
		\xymatrix{\mathscr E& \mathscr F\ar[r]\ar[l]_-{\mathrm{q.i.}}
  &  \underline{\mathscr E}},
  \end{aligned}
	\end{align} 
where $\mathscr F\in{\rm 
		D_{\mathrm{coh}}^{b}}(X)$, and `\textrm{q.i.}'  stands for 
		`quasi-isomorphism'.  Also there are obvious maps 
		$\mathcal{O}^{\infty 
		}_{X}\Lambda\left(\overline{\TsX}\right)\ho _{\mathcal{O}_{X}}
		\mathscr E\to \mathscr E,\mathcal{O}^{\infty 
		}_{X}\Lambda\left(\overline{\TsX}\right)\ho_{\mathcal{O}_{X}} 
		\underline{\mathscr E}\to \underline{\mathscr E}$. By 
		(\ref{diag12}), we obtain the commutative diagram,
		\begin{align}\label{diag12a}
	\begin{aligned}
	\xymatrix{
\mathscr E   &\mathscr F \ar[l]_-{\rm 
q.i.}\ar[d]\ar[r]&\underline{\mathscr E} \\
         & \overline{\mathscr F}^{\infty}\ar@{.>}[lu]\ar@{.>}[ur]&}
	\end{aligned}
\end{align}  
By  (\ref{diag12a}),  we 
		get the diagram of $\mathcal{O}_{X}$-complexes,
		\begin{align}\label{diag13}
		\xymatrix{\mathscr E& 
		\overline{\mathscr F}^{\infty}\ar[r]\ar[l]_-{ \mathrm{q.i.}}
  & \underline{\mathscr E}},
	\end{align}
	with $\mathcal{O}_{X}^{\infty 
		}\Lambda\left(\overline{\TsX}\right)$-morphisms.
By Theorem \ref{thm:exib}, there exists 
$\underline{\underline{\mathscr E}}\in {\rm B}(X)$ and a quasi-isomorphism
$\underline{\underline{\mathscr E}}\to 
\overline{\mathscr F}^{\infty}$.   We can replace (\ref{diag13}) by
\begin{align}\label{diag14}
		\xymatrix{\mathscr E & \underline{\underline{\mathscr E}}\ar[r]\ar[l]_-{ \mathrm{q.i.}}
  &  \underline{\mathscr E}}, 
	\end{align} 
the morphisms in (\ref{diag14}) being morphisms of ${\rm B}(X)$. By 
Proposition  \ref{prop:prophtpeq}, the 
first morphism in \eqref{diag14} is a
homotopy equivalence. Using its homotopic inverse, we get  the 
desired  morphism from $\mathscr E\to \underline{\mathscr E}$.

We now prove injectivity. Let $\phi_{1},\phi_{2}:\mathscr E\to\underline{\mathscr E}$ be  two morphisms in 
${\rm B}(X)$, whose images  coincide   in ${\rm D_{\mathrm{coh}}^{b}}(X)$. By definition, 
there is $\mathscr F\in {\rm D_{\mathrm{coh}}^{b}}(X)$ and a quasi-isomorphism 
	$\phi:		\mathscr F\to \mathscr E$,
such that $\phi_{1}\phi=\phi_{2}\phi$ up to   homotopy  in 
$\Hom_{\mathrm{C^{b}_{coh}\left(X\right)}}\left(\mathscr F,\underline{\mathscr E}\right)$.

Using again Theorem \ref{thm:exib}, there exists 
$\underline{\underline{\mathscr E}}\in \mathrm{B}\left(X\right)$ and a 
quasi-isomorphism $\underline{\underline{ \mathscr  E}}\to\overline{ 
\mathscr F}^{\infty }$. Therefore,  in the above 
construction, we may as well replace $\mathscr F$ by 
$\underline{\underline{\mathscr E}}$, so that $\phi$ is a 
quasi-isomorphism $\underline{\underline{ \mathscr E}}\to \mathscr 
E$, and $\phi_{1}\phi=\phi_{2}\phi$ up to homotopy in 
$Z^{0}\Hom\left(\mathscr E,\underline{\underline{\mathscr 
E}}\right)_{X}$. Since $\phi$ is a quasi-isomorphism, this is a 
homotopy equivalence, so  that
$\phi_{1}=\phi_{2}\,\mathrm{in}\,\underline{\mathrm{B}}\left(X\right)$, which completes the proof of our theorem.
\end{proof} 
\subsection{Compatibility with pull-backs}%
\label{subsec:puba}
We make the same assumptions as in Subsection \ref{subsec:pullten} 
and we use the corresponding notation. 
By the results of that Subsection, we have defined a 
pull-back functor $f^{*}_{b}:\mathrm{B}\left(Y\right)\to \mathrm{B}\left(X\right)$. Recall 
that the definition of the left-derived functor 
$Lf^{*}:\Db\left(Y\right)\to\Db\left(X\right)$ was given in 
Subsection \ref{subsec:pubaa}.
\begin{proposition}\label{prop:puba}
		The following diagram commutes up to isomorphism:
		\begin{equation}\label{eq:commu1}
\xymatrix{&\underline{\mathrm{B}}\left(Y\right)\ar[r]^{\underline{F}_{Y}}\ar[d]^{f_{b}^{*}}&\Db\left(Y\right)\ar[d]^{Lf^{*}}\\
&\underline{\mathrm{B}}\left(X\right)\ar[r]^{\underline{F}_{X}}&\Db\left(X\right)}
\end{equation}
	\end{proposition}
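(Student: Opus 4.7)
The plan is to verify commutativity objectwise by constructing an explicit natural quasi-isomorphism, then check functoriality. Fix $\mathscr{G}=(G,A^{G \prime\prime})\in \mathrm{B}(Y)$ with diagonal bundle $D_{G}$. Each term $\mathcal{O}_{Y}^{\infty}(G^{i})$ is locally free over $\mathcal{O}_{Y}^{\infty}$, and by Malgrange's flatness theorem \cite[Corollary VI.1.12]{Malgrange67} $\mathcal{O}_{Y}^{\infty}$ is flat over $\mathcal{O}_{Y}$; therefore $\mathcal{O}_{Y}^{\infty}(G^{i})$ is a flat $\mathcal{O}_{Y}$-module. By (\ref{eqLf}), this yields the identification of $\mathcal{O}_{X}$-complexes
\begin{equation*}
Lf^{*}\underline{F}_{Y}(\mathscr{G})=f^{*}\underline{F}_{Y}(\mathscr{G})=\bigl(f^{*}\mathcal{O}_{Y}^{\infty}(G),\,A^{G\prime\prime}\bigr).
\end{equation*}

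Next I construct a natural morphism of $\mathcal{O}_{X}$-complexes $\Phi:f^{*}\underline{F}_{Y}(\mathscr{G})\to\underline{F}_{X}(f_{b}^{*}\mathscr{G})$. The pullback of smooth sections gives a $f^{-1}\mathcal{O}_{Y}^{\infty}$-linear map $f^{-1}\mathcal{O}_{Y}^{\infty}(G)\to\mathcal{O}_{X}^{\infty}(f^{*}G)$; tensoring with $\mathcal{O}_{X}$ over $f^{-1}\mathcal{O}_{Y}$ and using the multiplication in $\mathcal{O}_{X}^{\infty}$ produces a morphism of $\mathcal{O}_{X}$-modules $f^{*}\mathcal{O}_{Y}^{\infty}(G)\to\mathcal{O}_{X}^{\infty}(f^{*}G)$. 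Composition with the canonical map $\mathcal{O}_{X}^{\infty}(\mu):\mathcal{O}_{X}^{\infty}(f^{*}G)\to\mathcal{O}_{X}^{\infty}(E)$ defines $\Phi$. Equation (\ref{eq:iv12a-4}) ensures $\Phi$ commutes with the differentials.

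To conclude, it suffices to show $\Phi$ is a quasi-isomorphism, which is local on $X$. Fix $x\in X$ and $y=f(x)\in Y$. By Theorem \ref{thm:Conj}, after shrinking to a neighborhood $V$ of $y$ we may assume, in a suitable splitting, $A^{G_{0}\prime\prime}=v_{0}+\underline{\nabla}^{D_{G}\prime\prime}$ where $\underline{\nabla}^{D_{G}\prime\prime}$ is a holomorphic structure on $D_{G}|_{V}$ and $v_{0}$ is holomorphic with $v_{0}^{2}=0$. By Theorem \ref{thm:cohco} and its proof, the natural inclusion of the holomorphic Koszul-type complex induces a quasi-isomorphism of $\mathcal{O}_{V}$-complexes $(\mathcal{O}_{V}(D_{G}),v_{0})\to\underline{F}_{V}(\mathscr{G}|_{V})$. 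Since each $\mathcal{O}_{V}(D_{G}^{i})$ is locally free, we have $Lf^{*}(\mathcal{O}_{V}(D_{G}),v_{0})=(\mathcal{O}_{U}(f^{*}D_{G}),f^{*}v_{0})$ on $U=f^{-1}(V)$. On the other hand, by (\ref{eq:iv12a-6}), the superconnection $A^{E_{0}\prime\prime}$ on $f_{b}^{*}\mathscr{G}|_{U}$ is of the form $\mu v_{0}+\nabla^{f^{*}D_{G}\prime\prime}$ with $\nabla^{f^{*}D_{G}\prime\prime}$ the pullback holomorphic structure; applying Theorem \ref{thm:cohco} again on $U$ gives a quasi-isomorphism $(\mathcal{O}_{U}(f^{*}D_{G}),\mu v_{0})\to\underline{F}_{U}(f_{b}^{*}\mathscr{G}|_{U})$. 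Under the tautological identification $f^{*}v_{0}=\mu v_{0}$, these two quasi-isomorphisms together with $\Phi|_{U}$ fit into a commutative square, forcing $\Phi|_{U}$ to be a quasi-isomorphism. Naturality in $\mathscr{G}$ is immediate from the construction of $\Phi$, which depends functorially on $(G,A^{G\prime\prime})$ through the map $\mu$ and pullback of smooth sections.

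The main obstacle is step three: one needs the local reductions on $Y$ and on $X$ provided by Theorem \ref{thm:Conj} to be compatible under $f$, which requires noting that the holomorphicity of $\underline{\nabla}^{D_{G}\prime\prime}$ and of $v_{0}$ is preserved by pulling back along the holomorphic map $f$, and that the resulting local quasi-isomorphisms on source and target of $\Phi$ match after $\mu$ is identified with the tautological pullback. Once this compatibility is in place, the global conclusion follows because quasi-isomorphisms are detected locally in $\Db(X)$.
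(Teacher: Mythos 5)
Your proof is correct and follows essentially the same route as the paper's: compute $Lf^{*}$ as the ordinary pullback using flatness of $\mathcal{O}_{Y}^{\infty}$ over $\mathcal{O}_{Y}$, localize via Theorem \ref{thm:Conj}, and compare cohomology through the quasi-isomorphism to the diagonal-bundle complex from Theorem \ref{thm:cohco}. Your version is merely a bit more explicit about the construction of the morphism $\Phi$ (the paper's $\mu$) and the commutative square it sits in, where the paper phrases the final step directly as an identification of cohomology sheaves.
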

\begin{proof}
	If $\mathscr E$ is an object in 
	$\underline{\mathrm{B}}\left(Y\right)$, if 
	$\mathcal{E}=\underline{F}_{Y}\left(\mathscr E\right)$, then $\mathcal{E}$ is 
	an object in 
	$\Db\left(Y\right)$. Also $f^{*}\mathcal{E}$ is given by 
	(\ref{eq:pu1x1}).  Since $\mathcal E$ is a bounded 
	complex of flat $\mathcal{O}_{Y}$-modules, by (\ref{eqLf}), we have
\begin{align}\label{eqLfbis}
		Lf^{*}\mathcal{E}=f^{*}\mathcal{E}. 
	\end{align}

By Proposition \ref{prop:rest}, there is a morphism of 
	$\mathcal{O}_{X}$-complexes  $\mu:f^{*}\mathcal{E}\to 
	f^{*}_{b}\mathscr E$. Using (\ref{eqLf}), 
	to establish our proposition,  we only need  to prove  that if  $x\in	X$, the morphism $\mu$ induces a 
	quasi-isomorphism of the corresponding stalks at $x$.
	
Take $x\in X, y=f\left(x\right)\in Y$. By Theorem \ref{thm:Conj},  to establish our theorem, we may and we 
will assume that on a small open neighborhood $V$ of $y$,  $D$ is a 
trivial holomorphic vector bundle, 
$E=\Lambda\left(\overline{T^{*}V}\right)\ho D$, and 
$A^{E \prime \prime }\vert_{V}=v+\n^{D \prime \prime }$. Put 
$U=f^{-1}V$.
By proceeding as in the proof of Theorem \ref{thm:cohco}, we get
\begin{align}\label{eq:pou1}
&\mathscr H  f^{*}\mathcal{E}=\mathscr 
H\left(f^{*}\mathcal{O}_{Y}(D),f^{*}v\right),
&\mathscr H f^{*}_{b}\mathscr 
E\vert_{U}=\mathscr H 
\left(\mathcal{O}_{U}f^{*}D,f^{*}v\vert_{U}\right).
\end{align}

Also note that
\begin{equation}\label{eq:pou2}
f^{-1}\mathcal{O}_{Y} \otimes 
_{f^{-1}O_{Y}}\mathcal{O}_{X}=\mathcal{O}_{X}.
\end{equation}
Since $D$ is trivial on $V$, by (\ref{eq:pou2}), we get
\begin{equation}\label{eq:pou3}
f^{-1}\mathcal{O}_{Y}\left(D\right) \otimes 
_{f^{-1}\mathcal{O}_{Y}}\mathcal{O}_{X}\vert_{U}=\mathcal{O}_{X}\left(f^{*}D\right)\vert_{U}.
\end{equation}

By (\ref{eq:pou1}), (\ref{eq:pou3}), we obtain
\begin{equation}\label{eq:pou4}
 \mathscr H f^{*}\mathcal{E}= \mathscr Hf_{b}^{*} \mathscr E.
\end{equation}
This completes the proof that $\mu$ is a quasi-isomorphism.
\end{proof}
\subsection{Compatibility with tensor products}%
\label{subsec:tepro}
Let $\mathscr E, \mathscr F$ be objects in 
$\underline{\mathrm{B}}\left(X\right)$. We use the notation of Subsections 
\ref{subsec:ten} and \ref{subsec:pullten}.
\begin{proposition}\label{prop:pte}
We have a canonical isomorphism,
\begin{equation}\label{eq:isar1}
\underline{F}_{X}\left(\mathscr E \ho _{b} \mathscr 
F\right) \simeq  \underline{F}_{X} \left( \mathscr E \right)  \ho
_{\mathcal{O}_{X}}^{\mathrm{L}} \underline{F}_{X}\left( \mathscr F \right)
\,\mathrm{in}\,\Db\left(X\right).
\end{equation}
\end{proposition}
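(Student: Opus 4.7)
The plan is to construct a canonical morphism of $\mathcal{O}_{X}$-complexes between the two sides of \eqref{eq:isar1}, and verify that it is a quasi-isomorphism locally, reducing to a Dolbeault computation via Theorem \ref{thm:Conj}.

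First, I would construct a natural morphism of $\mathcal{O}_{X}$-complexes
\begin{equation*}
\Phi : \mathscr E \ho_{\mathcal{O}_{X}} \mathscr F \to \mathscr E \ho_{b} \mathscr F,
\end{equation*}
as the quotient map arising from the factorisation $\mathcal{O}_{X}\subset \mathcal{O}_{X}^{\infty}\subset\mathcal{O}_{X}^{\infty}(\Lambda(\overline{\TsX}))$. By Malgrange's flatness theorem \cite[Corollary VI.1.12]{Malgrange67}, $\mathcal{O}_{X}^{\infty}$ is flat over $\mathcal{O}_{X}$, and since each $\mathscr E^{i}$ is locally free over $\mathcal{O}_{X}^{\infty}$, the complex $\mathscr E$ is a bounded complex of $\mathcal{O}_{X}$-flat modules. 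Invoking \eqref{eq:tenx2}, the left-hand side of \eqref{eq:isar1} is represented by $\mathscr E \ho_{\mathcal{O}_{X}} \mathscr F$ in $\Db(X)$, and the proposition reduces to showing that $\Phi$ is a quasi-isomorphism.

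This is a local statement. Fix $x\in X$; by Theorem \ref{thm:Conj}, on a sufficiently small open neighborhood $U$ of $x$, we may, after replacing $\mathscr E,\mathscr F$ by isomorphic objects of $\underline{\mathrm B}(U)$, assume that $A^{E\prime\prime}|_{U}=v_{0}^{E}+\underline{\nabla}^{D_{E}|_{U}\prime\prime}$ and $A^{F\prime\prime}|_{U}=v_{0}^{F}+\underline{\nabla}^{D_{F}|_{U}\prime\prime}$, with $(D_{E}|_{U},v_{0}^{E})$ and $(D_{F}|_{U},v_{0}^{F})$ bounded complexes of holomorphic vector bundles. The Dolbeault Poincaré lemma, as in the proof of Theorem \ref{thm:cohco}, gives a quasi-isomorphism of $\mathcal{O}_{U}$-complexes
\begin{equation*}
(\mathcal{O}_{U}(D_{E}|_{U}),v_{0}^{E})\to\mathscr E|_{U},
\end{equation*}
via the inclusion of the degree-zero part in $\Lambda(\overline{T^{*}U})$, and similarly for $\mathscr F|_{U}$. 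Since $D_{E}|_{U}$ and $D_{F}|_{U}$ are bounded complexes of locally free $\mathcal{O}_{U}$-modules, \eqref{eq:tenx2} yields
\begin{equation*}
\underline{F}_{X}(\mathscr E)|_{U}\ho^{\mathrm L}_{\mathcal{O}_{U}}\underline{F}_{X}(\mathscr F)|_{U}\simeq\bigl(\mathcal{O}_{U}((D_{E}\otimes D_{F})|_{U}),\,v_{0}^{E}+(-1)^{\deg}v_{0}^{F}\bigr).
\end{equation*}
In the same local model, $(\mathscr E\ho_{b}\mathscr F)|_{U}$ is precisely the Dolbeault complex of $(D_{E}\otimes D_{F})|_{U}$ for the induced holomorphic structure, endowed with the total differential $v_{0}^{E}+(-1)^{\deg}v_{0}^{F}+\underline{\nabla}^{(D_{E}\otimes D_{F})|_{U}\prime\prime}$, hence is quasi-isomorphic to the same complex. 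A direct inspection of $\Phi|_{U}$ then shows that it intertwines these two Dolbeault identifications, so that $\Phi|_{U}$ is a quasi-isomorphism.

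The main obstacle will be the final compatibility check: the local trivialisations provided by Theorem \ref{thm:Conj} are far from canonical, so one must verify that the two Dolbeault-type resolutions of the common complex of holomorphic vector bundles $(\mathcal{O}_{U}((D_{E}\otimes D_{F})|_{U}),v_{0}^{E}+(-1)^{\deg}v_{0}^{F})$ are actually intertwined by the restriction of the globally defined $\Phi$. Once this sign- and grading-sensitive verification is in place, gluing the local quasi-isomorphisms is automatic, and the isomorphism in $\Db(X)$ of \eqref{eq:isar1} follows.
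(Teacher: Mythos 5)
Your proposal is correct and follows essentially the same route as the paper: both proofs introduce the natural morphism $\mathscr E\ho_{\mathcal{O}_X}\mathscr F\to\mathscr E\ho_b\mathscr F$, use Malgrange's flatness of $\mathcal{O}_X^{\infty}$ over $\mathcal{O}_X$ to replace $\ho^{\mathrm L}$ by $\ho$, and then check that the morphism is a quasi-isomorphism locally by invoking Theorem \ref{thm:Conj} to reduce to the holomorphic local model with $A^{E\prime\prime}=v_0+\underline{\nabla}^{D\prime\prime}$ and applying the Dolbeault Poincar\'e lemma. The only cosmetic difference is that you flag the compatibility of the two local Dolbeault identifications as the "main obstacle," whereas the paper handles this more structurally: over $U$ one also has $\mathcal{O}_U(D_E)\ho_{\mathcal{O}_U}\mathcal{O}_U(D_F)=\mathcal{O}_U(D_{E\ho_b F})$ as complexes, and since the quasi-isomorphisms $\mathcal{O}_U(D_E)\to\mathscr E\vert_U$ and $\mathcal{O}_U(D_F)\to\mathscr F\vert_U$ are between flat bounded complexes, their tensor product is still a quasi-isomorphism, from which the commutativity with the canonical map to $\mathscr E\ho_b\mathscr F\vert_U$ is immediate; no delicate sign-chasing or gluing is actually required, since being a quasi-isomorphism is already a local property.
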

\begin{proof}
	We will identify $\mathscr E, \mathscr F$ with their images by 
	$\underline{F}_{X}$ in $\Db\left(X\right)$.
	Since $\mathscr E, \mathscr F$ are flat over 
$\mathcal{O}_{X}$, by (\ref{eq:tenx2}), we get
\begin{equation}\label{eq:rev1}
 \mathscr E\ho^{\mathrm{L}}_{\mathcal{O}_{X}} \mathscr F= \mathscr 
 E\ho_{\mathcal{O}_{X}} \mathscr F.
\end{equation}
There is a natural morphism 
$\phi:\mathscr E\ho_{\mathcal{O}_{X}} \mathscr F\to \mathscr E\ho_{b} 
\mathscr F$ of objects in $\Db\left(X\right)$. We will show that it 
is a quasi-isomorphism.

We write $\mathscr E, \mathscr F$ in the form,
\begin{align}\label{eq:rev2}
&\mathscr E=\left(E,A^{E \prime \prime }\right),&\mathscr 
F=\left(F,A^{F \prime \prime }\right).
\end{align}
Let $D_{E},D_{F}$ be the diagonal vector bundles associated with 
$\mathscr E, \mathscr F$, and let $v_{0,E},v_{0,F}$ be the corresponding 
endomorphisms of $D_{E},D_{F}$.  The objects associated with 
$E\ho_{b}F$ are denoted in the same way. In particular,
\begin{align}\label{eq:rev2a1}
&D_{E\ho_{b}F}=D_{E}\ho D_{F},&v_{0,E\ho_{b}F}=v_{0,E}\ho 1+1\ho 
v_{0,F}.
\end{align}

By Theorem \ref{thm:Conj}, if $U \subset X$ is a small open set, after 
conjugation, we may assume that
\begin{align}\label{eq:rev3}
&A^{E \prime \prime }\vert_{U}=v_{0,E}+\n^{D_{E}\vert_{U} 
\prime \prime},
&A^{F \prime \prime }\vert_{U}=v_{0,F}+\n^{D_{F}\vert_{U} 
\prime \prime}.
\end{align}
In particular $D_{E}\vert_{U}, D_{F}\vert_{U}$ are holomorphic vector 
bundles, so that $D_{E\ho_{b}F} \vert_{U}$ is also a 
holomorphic vector bundle. Let $\n^{D_{E\ho_{b}F}\vert_{U} \prime \prime }$ be the corresponding 
holomorphic structure.  By (\ref{eq:rev3}), we get
\begin{equation}\label{eq:rev4}
A^{E\ho_{b} F \prime \prime 
}\vert_{U}=v_{0,E\ho_{b}F}+\n^{D_{E\ho_{b}F} \vert_{U} \prime 
\prime}.
\end{equation}

By Theorem \ref{thm:cohco},   the morphisms $\left(\mathcal{O}_{U}\left(D_{E} \right) 
,v_{0,E}\right)\to \mathscr E\vert_{U}$,
$\left( \mathcal{O}_{U}\left(D_{F}\right),v_{0,F} 
\right)\to \mathscr F\vert_{U} $ are 
quasi-isomorphisms. Since $\mathscr E\vert_{U}$ and $\mathcal{O}_{U}\left(D_{E}\right)$ are flat 
over $\mathcal{O}_{U}$, and the same is true for the objects 
associated with $F$, the obvious morphism
\begin{equation}\label{eq:rev4a1}
\left( \mathcal{O}_{U}\left(D_{E}\right),v_{0,E} \right)\ho 
_{\mathcal{O}_{U}}
\left(\mathcal{O}_{U}\left(D_{F} \right) ,v_{0,F}\right) \to \mathscr E\ho_{\mathcal{O}_{U}} \mathscr F
\end{equation}
is a quasi-isomorphism.

Observe that
\begin{equation}\label{eq:rev5}
\left(\mathcal{O}_{U}\left(D_{E} \right) ,v_{0,E}\right)\ho_{\mathcal{O}_{U}}
\left(\mathcal{O}_{U}\left(D_{F}\right),v_{0,F}\right)=
\left(\mathcal{O}_{U}\left(D_{E\ho_{b}F}\right),v_{0,E\ho_{b}F}\right).
\end{equation}
  It is now clear that $\phi$ is a quasi-isomorphism of objects in 
$\Db\left(X\right)$. The proof of our proposition is completed. 
\end{proof}
\subsection{Direct images}%
\label{subsec:dirim}
Let $Y$ be a compact complex  manifold, and let $f:X\to Y$ be a 
holomorphic map.

Let  $\mathscr E$ be an object in 
$\underline{\mathrm{B}}\left(X\right)$, so that $\underline{F}_{X}\left(\mathscr 
E\right)$ is an object in $\Db\left(X\right)$. Since $\mathscr E$ is 
a bounded complex of soft $\mathcal{O}_{X}^{\infty}$-modules, by 
(\ref{eq:fon1}), we get
\begin{equation}\label{eq:fon1bis}
Rf_{*} \mathscr E=f_{*} \mathscr E.
\end{equation}
Also $f^{*}$ maps $\mathcal{O}^{\infty }_{Y}\left(\Lambda\left(\overline{T^{*}Y}\right)\right)$ to 
$f_{*}\mathcal{O}^{\infty 
}_{X}\left(\Lambda\left(\overline{\TsX}\right)\right)$. The
 differential of $f_{*} \mathscr E$ verifies Leibniz's rule with 
respect to multiplication by $\mathcal{O}_{Y}^{\infty 
}\left(\Lambda\left(\overline{T^{*}Y}\right)\right)$, so that
$f_{*} \mathscr E$ is a complex of $\mathcal{O}^{\infty 
}_{Y}\Lambda\left(\overline{T^{*}Y}\right)$-modules.
\begin{proposition}\label{prop:exibb}
	There exists $\underline{\mathscr 
	E}=\left(\underline{E},A^{\underline{E} \prime \prime }\right)\in 
	\mathrm{B}\left(Y\right)$ and a morphism of $\mathcal{O}_{Y}^{\infty 
	}\left(\Lambda\left(\overline{T^{*}Y}\right)\right)$-modules
	$\phi:\underline{\mathscr E}\to f_{*} \mathscr E$, which is also 
	a quasi-isomorphism of $\mathcal{O}_{Y}$-complexes.
\end{proposition}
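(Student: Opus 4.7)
The plan is to adapt the proof of Theorem \ref{thm:exib} with $f_{*}\mathscr E$ playing the role of $\overline{\mathscr F}^{\infty}$ throughout. By \eqref{eq:fon1bis} and Grauert's theorem, $f_{*}\mathscr E$ lies in $\Db(Y)$ as an $\mathcal{O}_{Y}$-complex; moreover, it carries the structure of a complex of $\mathcal{O}_{Y}^{\infty}(\Lambda(\overline{T^{*}Y}))$-modules, with differential decomposing in the $\Lambda(\overline{T^{*}Y})$-grading as $f_{*}A^{E\prime\prime} = \sum_{k\ge 0}(f_{*}A^{E\prime\prime})_{k}$. Leibniz rule forces $(f_{*}A^{E\prime\prime})_{1}=\overline{\pa}^{Y}$, but higher-order terms generally do not vanish, in contrast to the situation of $\overline{\mathscr F}^{\infty}$ in Theorem \ref{thm:exib}.

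First I would prove an analogue of Proposition \ref{prop:pit-bis}: there exists a bounded complex $\underline Q$ of smooth complex vector bundles on $Y$ and a morphism of $\mathcal{O}_{Y}^{\infty}$-complexes $\phi_{0}\colon \mathcal{O}_{Y}^{\infty}\underline Q\to f_{*}\mathscr E$ that is a quasi-isomorphism. The proof is the same double induction on the top degree of nonvanishing cohomology and on the local resolution length, using that $\mathscr H(f_{*}\mathscr E)$ is coherent by Grauert, so admits local holomorphic resolutions by trivial bundles, glued via partitions of unity and iterated cone constructions. Next I would set $\underline E=\Lambda(\overline{T^{*}Y})\ho\underline Q$, $\overline{\underline{\mathscr Q}}=\mathcal{O}_{Y}^{\infty}\underline E$, extend $\phi_{0}$ to a quasi-isomorphism of $\mathcal{O}_{Y}^{\infty}(\Lambda(\overline{T^{*}Y}))$-complexes by flatness, form the cone $\overline{\mathscr C}=\mathrm{cone}(\overline{\underline{\mathscr Q}},f_{*}\mathscr E)$ (which is exact), and verify that $\mathscr R=\Hom(\overline{\underline{\mathscr Q}}^{\cdot+1},\overline{\mathscr C}^{\cdot})$ and its global sections $\mathscr R_{Y}$ are exact exactly as in Proposition \ref{prop:pexab}. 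The core step is then to construct higher-order terms $d_{k}^{\overline{\mathscr C}}$, $k\ge 1$, of a generalized differential extending $d^{\mathscr C}$ by induction on $k$, following equations \eqref{eq:gina10}--\eqref{eq:gina18}. The only modification from Theorem \ref{thm:exib} is that the target differential $f_{*}A^{E\prime\prime}$ has nonvanishing higher-order components $(f_{*}A^{E\prime\prime})_{k}$, $k\ge 2$; but expanding $[f_{*}A^{E\prime\prime}]^{2}=0$ in the $\Lambda(\overline{T^{*}Y})$-grading produces exactly the compatibility needed to solve the obstruction equation at each step, namely the analogue of \eqref{eq:gina16a2}. This yields $A^{\underline E\prime\prime}$ and $\phi$; that $\phi$ is a quasi-isomorphism of $\mathcal{O}_{Y}$-complexes follows from the fact that its $\Lambda^{0}$-component is $\phi_{0}$, via the spectral sequence argument used in Theorem \ref{thm:cohco}.

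The main obstacle is the first step: producing a finite-dimensional smooth resolution $\mathcal{O}_{Y}^{\infty}\underline Q\to f_{*}\mathscr E$ \emph{directly}, rather than of the auxiliary $(f_{*}\mathscr E)^{\infty}=\mathcal{O}_{Y}^{\infty}\ho_{\mathcal{O}_{Y}}f_{*}\mathscr E$ as in Proposition \ref{prop:pit-bis}. This requires a careful adaptation that tracks the interplay between the coherent $\mathcal{O}_{Y}$-module structure on the cohomology sheaves coming from Grauert and the native $\mathcal{O}_{Y}^{\infty}$-module structure already present on $f_{*}\mathscr E$. A secondary but purely technical issue is verifying that the inductive obstruction argument in the main construction remains consistent in the presence of nonvanishing higher-order terms in the target differential.
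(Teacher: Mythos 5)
Your plan reconstructs the full inductive machinery of Theorem \ref{thm:exib} with $f_{*}\mathscr E$ as the target, and you have correctly located the step you cannot complete: there is no ready analogue of Proposition \ref{prop:pit-bis} producing a finite-rank smooth resolution mapping \emph{directly} into the infinite-dimensional complex $f_{*}\mathscr E$. The paper sidesteps this obstacle entirely by a short factorization argument. The key observation is that $\mathcal{O}_{Y}^{\infty}(\Lambda(\overline{T^{*}Y}))$ already acts on $f_{*}\mathscr E$ (via $f^{*}$ of antiholomorphic forms), so the complex $\overline{f_{*}\mathscr E}^{\infty}=\mathcal{O}_{Y}^{\infty}(\Lambda(\overline{T^{*}Y}))\ho_{\mathcal{O}_{Y}}f_{*}\mathscr E$ carries a tautological evaluation morphism $\psi\colon\overline{f_{*}\mathscr E}^{\infty}\to f_{*}\mathscr E$ of $\mathcal{O}_{Y}^{\infty}(\Lambda(\overline{T^{*}Y}))$-modules. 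Since the canonical map $f_{*}\mathscr E\to\overline{f_{*}\mathscr E}^{\infty}$ of (\ref{eq:coq1}) is a quasi-isomorphism of $\mathcal{O}_{Y}$-complexes and its composition with $\psi$ is the identity, the commuting triangle (\ref{eq:vl2}) forces $\psi$ to be a quasi-isomorphism. By Grauert's theorem, $f_{*}\mathscr E\in\Db(Y)$, so Theorem \ref{thm:exib} can be applied exactly as stated (no modification to its internal inductive construction is needed) to produce $\underline{\mathscr E}\in\mathrm{B}(Y)$ and a quasi-isomorphism $\rho\colon\underline{\mathscr E}\to\overline{f_{*}\mathscr E}^{\infty}$; then $\phi=\psi\rho$ is the required morphism. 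In other words, you do not need to push the resolution through to the genuinely new target $f_{*}\mathscr E$: you only need a quasi-isomorphism of $\mathcal{O}_{Y}^{\infty}(\Lambda(\overline{T^{*}Y}))$-module complexes from the already-handled object $\overline{f_{*}\mathscr E}^{\infty}$ onto $f_{*}\mathscr E$, and that exists tautologically. Your route might be made to work after substantial extra effort on the resolution step (and careful treatment of the infinite-dimensional terms in the inductive cones), but it would re-derive what a composition of two known quasi-isomorphisms already delivers.
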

\begin{proof}
We use the notation in Subsection \ref{subsec:prodb}. In 
particular, as in (\ref{eq:coi0}), we get
\begin{equation}\label{eq:vl1}
\overline{f_{*}\mathscr E}^{\infty }=\mathcal{O}_{Y}^{\infty 
}\left(\Lambda\left(\overline{T^{*}Y}\right)\right)\ho_{\mathcal{O}_{Y}}f_{*} \mathscr E.
\end{equation}
Also 
$\mathcal{O}_{Y}^{\infty}\left(\Lambda\left(\overline{T^{*}Y}\right)\right)$ acts on $f_{*} \mathscr E$. We get  this way a morphism of complexes $\psi:\overline{f_{*}\mathscr E}^{\infty }\to f_{*} \mathscr E$, which is also a morphism of $\mathcal{O}_{Y}^{\infty }\left(\Lambda\left(\overline{T^{*}Y}\right)\right)$-modules. By (\ref{eq:coq1}), the map $f_{*}\mathscr E\to \overline{f_{*} \mathscr E}^{\infty }$ is a quasi-isomorphism of $\mathcal{O}_{Y}$-complexes. These maps fit into the triangle
\begin{equation}\label{eq:vl2}
\xymatrix{f_{*} \mathscr E\ar[r] \ar[rd]^{\mathrm{id}}&\overline{f_{*} \mathscr E}^{\infty 
}\ar[d]^{\psi}\\
& f_{*} \mathscr E.}
\end{equation}
From (\ref{eq:vl2}), we deduce that  $\psi$ is a quasi-isomorphism.

By a theorem of Grauert \cite[Theorem 10.4.6]{GrauertRemmert84}, 
$f_{*} \mathscr E$ defines an object in $\Db\left(Y\right)$. By 
Theorem \ref{thm:exib}, there exists an object $\underline{\mathscr 
E}=\left(\underline{E},A^{\underline{E} \prime \prime }\right)$ in 
$\mathrm{B}\left(Y\right)$ and a morphism of $\mathcal{O}_{Y}^{\infty 
}\left(\Lambda \left( \overline{T^{*}Y}\right)\right) $-modules $\rho:\underline{\mathscr E}\to 
\overline{f_{*} \mathscr E}^{\infty }$, which is a quasi-isomorphism of 
$\mathcal{O}_{Y}$-complexes. Then
$\phi=\psi\rho:\underline{\mathscr E}\to f_{*} \mathscr E$ is the 
quasi-isomorphism we are looking for, which completes our proof.	
\end{proof}

\section{Antiholomorphic superconnections and generalized metrics}%
\label{sec:ascngm}
In this Section, if $\left(E,A^{E \prime \prime }\right)$ is an 
antiholomorphic superconnection, if $E$ is equipped with a splitting 
so that $E \simeq E_{0}$, if $h$ is a generalized metric on $D$, we 
define the adjoint superconnection $A^{E_{0} \prime }$ of 
$A^{E_{0}\prime \prime }$ with respect 
to $h$. This is just the obvious analogue of the construction of the 
holomorphic part of the Chern connection on a holomorphic Hermitian 
vector bundle, which we briefly reviewed in 
Subsection \ref{subsec:bovb}.

This Section is organized as follows. In Subsection 
\ref{subsec:adjscn}, we construct $A^{E_{0} \prime }$ as the formal 
adjoint of $A^{E_{0} \prime \prime }$ with respect to a 
non-degenerate Hermitian form $\theta_{h}$.

In Subsection \ref{subsec:curva}, we conclude the Section by 
introducing   the curvature of the 
superconnection $A^{E_{0}}=A^{E_{0} \prime \prime }+A^{E_{0} \prime 
}$.
\subsection{The adjoint of an antiholomorphic superconnection}%
\label{subsec:adjscn}
We make the same assumptions as in Subsection \ref{subsec:def} and we 
use the corresponding notation.

We define the antiautomorphism\, 
 \index{ti@$\widetilde{}$}%
 $\widetilde{}$\, of 
 $\Lambda\left(T^{*}_{\C}X\right)$ as in Subsection \ref{subsec:dua}, 
 so that equation (\ref{eq:bc13ax1}) holds. Also we define 
 \index{deg@$\deg_{-}$}%
 $\deg_{-}$ 
 on $\Lambda\left(T^{*}_{\C}X\right)$ as in (\ref{eq:supp3}). If 
 $\Omega\left(X,\C\right)$ denote the vector space of smooth forms on 
 $X$, we denote by 
 \index{OX@$\Omega^{-,i}\left(X,\C\right)$}%
 $\Omega^{-,i}\left(X,\C\right)$ the vector space 
 of the $\alpha\in \Omega\left(X,\C\right)$ such that 
 $\deg_{-}\,\alpha=i$.
 
Here, we follow \cite[Section 6.6]{Bismut10b}. If $\alpha,\alpha'\in 
\Omega\left(X,\C\right)$, 
put
\begin{equation}\label{eq:iv13}
\theta\left(\alpha,\alpha'\right)=\frac{i^{n}}{\left(2\pi\right)^{n}}\int_{X}^{} \widetilde \alpha\we\overline{\alpha'}.
\end{equation}
One verifies easily that $\theta$ defines a non-degenerate Hermitian form on 
$\Omega\left(X,\C\right)$, and that the 
$\Omega^{-,i}\left(X,\C\right)$ are mutually orthogonal.  If $B\in \Omega\left(X,\C\right)$ acts on 
$\Omega\left(X,\C\right)$ by exterior 
multiplication, we denote by $B^{\dag}$ the adjoint of 
$B$ with respect to $\theta$. An easy computation, which also 
follows from \cite[Proposition 6.5.1]{Bismut10b} shows that if $f\in 
T^{*}_{\C}X$, then
\begin{equation}\label{eq:iv14}
f^{\dag}\we=-\overline{f}\we
\end{equation}
By (\ref{eq:iv14}), we deduce that if $B\in 
\Omega\left(X,\C\right)$, $B^{\dag}$ is just the 
`adjoint' $B^{*}$ we 
defined in Subsection \ref{subsec:dua}. 

Since $\Lambda\left(\TsX\right)$ is a $\Z$-graded holomorphic vector bundle on $X$, using 
the results of Subsection \ref{subsec:tenspro},  
$\Lambda\left(\TsX\right)\ho E$ has  the same properties as $E$, i.e., it can be 
equipped with an antiholomorphic superconnection 
$A^{\Lambda\left(\TsX\right)\ho E \prime \prime}$, and it 
is also a $\Lambda\left(T^{*}_{\C}X\right)$-module. If $\alpha\in 
\Omega\left(X,\C\right),s\in C^{\infty 
}\left(X,\Lambda\left(\TsX\right)\ho E\right)$, the obvious extension of (\ref{eq:iv5}) 
holds, i.e., 
\begin{equation}\label{eq:zom1}
A^{\Lambda\left(\TsX\right)\ho E \prime \prime }\left(\alpha 
s\right)=\left(\overline{\pa}^{X}\alpha 
\right)s+\left(-1\right)^{\mathrm{deg}\alpha}\alpha A^{\Lambda\left(\TsX\right)\ho E 
\prime \prime }s.
\end{equation}
In the sequel, we will still use the notation $A^{E 
\prime \prime }$ instead of $A^{\Lambda\left(\TsX\right)\ho E \prime 
\prime}$.

We fix a splitting of $E$ as in (\ref{eq:iv4a-1}),  (\ref{eq:iv4}), 
so that
\begin{equation}\label{eq:iv15}
\Lambda \left(\TsX\right)\ho 
E \simeq \Lambda\left(T^{*}_{\C}X\right)\ho D.
\end{equation}
Put
\begin{equation}\label{eq:iv5a1}
\Omega\left(X,D\right)=C^{\infty 
}\left(X,\Lambda\left(T^{*}_{\C}X\right)\ho D\right).
\end{equation}
By (\ref{eq:iv15}), (\ref{eq:iv5a1}), we deduce that
\begin{equation}\label{eq:iv15a2}
\Omega\left(X,D\right)=C^{\infty }\left(X,\Lambda\left(T^{*}X\right)\ho 
E\right).
\end{equation}

We equip 
$\Omega\left(X,D\right)$ with the total degree 
associated with the degree $\deg_{-}$ on $\Omega\left(X,\C\right)$ and with the 
degree in $D$.  Let 
\index{OXD@$\Omega^{-,i}\left(X,D\right)$}%
$\Omega^{-,i}\left(X,D\right)$ be the vector 
space of the  $s\in \Omega\left(X,D\right)$ such that  $\deg\, s=i$.

In the sequel, we  write $A^{E_{0} \prime \prime }$ instead 
of $A^{\Lambda\left(\TsX\right)\ho E_{0} \prime \prime}$.
\begin{definition}\label{def:iv16}
	Put
	\begin{align}\label{eq:iv17}
&E^{*i}_{0}=\bigoplus_{p+q=i}\Lambda^{p}\left(\overline{\TsX}\right) \otimes 
	D^{q*},&E_{0}^{*}=\bigoplus_{i=r}^{r'}E_{0}^{i*}.
\end{align}
\end{definition}
Then
\begin{equation}\label{eq:iv18-r1}
C^{\infty }\left(X,\Lambda \left(\TsX\right)\ho 
E^{*}_{0}\right)=
\Omega\left(X,D^{*}\right).
\end{equation}

 If $s\in \Omega\left(X,D\right), s'\in 
\Omega\left(X,\overline{D}^{*}\right)$, we can write $s,s'$ in the form
\begin{align}\label{eq:iv14a1}
&s=\sum_{}^{}\alpha_{i}r_{i}, &s'=\sum_{}^{}\beta_{j}t_{j},
\end{align}
with $\alpha_{i},\beta_{j}\in \Omega\left(X,\C\right)$, and 
$r_{i}\in C^{\infty }\left( X,D \right) , t_{j}\in 
C^{\infty }\left(X,\overline{D}^{*}\right)$.
\begin{definition}\label{def:heg}
	Put
	\index{th@$\theta$}%
	\begin{equation}\label{eq:iv14a2}
\theta\left(s,s'\right)=\frac{i^{n}}{\left(2\pi\right)^{n}}\sum_{}^{}\int_{X}^{}\widetilde \alpha_{i}\we\overline{\beta}_{j}\left\langle  r_{i}, \overline{t}_{j}\right\rangle.
\end{equation}
\end{definition}
Note that given $i,j$, the expression in the right-hand side of 
(\ref{eq:iv14a2}) is nonzero if and only if 
$\deg_{-}\alpha_{i}=\deg_{-}\beta_{j}, \deg\, r_{i}=\deg\, t_{j}$, so that 
$\deg_{-}\alpha_{i}\deg\, r_{i}=\deg_{-}\,\beta_{j}\deg\, t_{j}$.

If $s,s'$ are given instead by
\begin{align}\label{eq:sign0}
&s=\sum_{}^{}r_{i}\alpha_{i},&s'=\sum_{}^{}t_{j}\beta_{j},
\end{align}
from (\ref{eq:iv14a2}), and from the previous considerations, we 
deduce that $\theta\left(s,s'\right)$ is still given by the right-hand side 
of (\ref{eq:iv14a2}).

If $k$ is a 
smooth section of $\End\left(D\right)$, let $\widetilde k$ be its 
transpose, which is a
 section of $\End\left(D^{*}\right)$. It follows from the previous 
 considerations that the adjoint of $k$ with respect to $\theta$ is 
 given by $\overline{\widetilde k}$, i.e., it is just the action of 
 the classical expected adjoint.

We define the  smooth bundle  $\mathscr M^{D}$ of generalized metrics 
on $D$ as in Subsection \ref{subsec:dua}.
As in Subsection \ref{subsec:dua}, we will say that a smooth section 
$h$ of $ \mathscr 
M^{D}$ is pure if $h=h_{0}$.  

Let $h$ be a smooth section of $\mathscr M^{D}$. Then $h$ defines an 
invertible 
morphism from $\Omega\left(X,D\right)$ into 
$\Omega\left(X,\overline{D}^{*}\right)$.
\begin{definition}\label{def:hege}
	If $s,s'\in \Omega\left(X,D\right)$, put
	\index{th@$\theta_{h}$}%
	\begin{equation}\label{eq:iv14a3}
\theta_{h}\left(s,s'\right)=\theta\left(s,hs'\right).
\end{equation}
\end{definition}

Then $\theta_{h}$ is a non-degenerate Hermitian form on 
$\Omega\left(X,D\right)$,   and the $\Omega^{-,i}\left(X,D\right)$ 
are mutually orthogonal with respect to $\theta_{h}$.
\begin{definition}\label{def:adjo}
	Let 
	\index{AE@$A^{E_{0} \prime}$}%
	$A^{E_{0} \prime}$ denote the formal adjoint of $A^{E_{0} \prime \prime }$ 
	with respect to  $\theta_{h}$.
\end{definition}

Then $A^{E _{0}\prime }$ verifies Leibniz rule when replacing 
$\overline{\pa}^{X}$ by $\pa^{X}$, and it is of degree $-1$. Also
\begin{equation}\label{eq:iv15y1}
A^{E_{0} \prime, 2}=0.
\end{equation}

Let $A^{\overline{E}^{*}_{0} \prime }$ denote the formal adjoint of 
$A^{E_{0} \prime \prime }$ with respect to $\theta$. 
Then
\begin{equation}\label{eq:iv16}
A^{E_{0}\prime }=h^{-1}A^{\overline{E}_{0}^{*} \prime }h.
\end{equation}

The above constructions are compatible with the conventions in 
\cite[Subsections 1 (c) and 1 (d)]{BismutLott95} and in \cite[Section 
3.5]{Bismut10b}. In \cite[Proposition 2.17]{Qiang16}, Qiang followed the 
same lines to define the adjoint $A^{E_{0}\prime }$ when $h$ is a 
pure metric.

Let 
\index{nD@$\n^{D \prime}$}%
$\n^{D \prime}$ be the adjoint of $\n^{D \prime \prime }$ with 
respect to $\theta_{h}$. If $i=0, \mathrm{or}\,i\ge 2$, let $v_{i}^{*}$ be the adjoint of $v_{i}$ 
with respect to $\theta_{h}$. By (\ref{eq:iv36a6}), we obtain
\begin{equation}\label{eq:coa1}
A^{E_{0} \prime }=v_{0}^{*}+\n^{D \prime}+\sum_{i\ge 
2}^{}v_{i}^{*}.
\end{equation}
Recall that 
\index{B@$B$}%
$B$ was defined in (\ref{eq:baba1}). Its adjoint $B^{*}$ is given by
\begin{equation}\label{eq:baba3}
B^{*}=v_{0}^{*}+\sum_{i\ge 2}^{}v_{i}^{*}.
\end{equation}
Then equation (\ref{eq:coa1}) can be written in the form
\begin{equation}\label{eq:baba4}
A^{E_{0} \prime}=\n^{D \prime }+B^{*}.
\end{equation}

Let 
\index{nD@$\n^{D \prime}_{0}$}%
$\n^{D \prime }_{0}$ be the adjoint of $\n^{D \prime \prime}$ 
when $h$ is replaced by the pure metric $h_{0}$. Then there is a 
smooth section $\gamma$ of total degree $-1$ in 
$\Lambda^{(\ge 2)}\left(T^{*}_{\C}X\right)\ho\End\left(D\right)$ such that
\begin{equation}\label{eq:coa1a1}
\n^{D \prime}=\n^{D \prime}_{0}+\gamma.
\end{equation}
Put
\begin{equation}\label{eq:eqco}
\n^{D}_{0}=\n^{D \prime \prime }+\n^{D \prime}_{0}.
\end{equation}
Then
 $\n^{D}_{0}$ is a classical unitary connection on  $D$ with respect 
 to $h_{0}$\footnote{In general, 
$\n^{D \prime \prime }$ does not define a holomorphic structure on 
$D$, so that $\n^{D}_{0}$ is not a Chern connection.}. 

The Hermitian metric $h_{0}$ on $D$ induces a classical Hermitian metric 
\index{hde@$h^{\det D}$}%
$h^{\det D}$ on $\det D$. Let $\n^{\det D}$ be the connection on 
$\det D$ that is induced by $\n^{D}_{0}$. Then $\n^{\det D}$ is 
a unitary connection with respect to $h^{\det D}$. By Theorem 
\ref{thm:pdet}, $\n^{\det D \prime \prime }$ is a holomorphic 
structure on $\det D$. 
\begin{proposition}\label{prop:pcond}
	The connection $\n^{\det D}$ coincides with the Chern 
	connection on $\left(\det D,h^{\det D}\right)$.
\end{proposition}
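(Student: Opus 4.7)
The plan is to verify that $\nabla^{\det D}$ satisfies the two defining properties of the Chern connection on the holomorphic Hermitian line bundle $(\det D, h^{\det D})$, namely that it is unitary with respect to $h^{\det D}$ and that its $(0,1)$-component equals the holomorphic structure $\nabla^{\det D \prime \prime}$ on $\det D$. Since the Chern connection of any holomorphic Hermitian line bundle is uniquely characterized by these two conditions, this will suffice.

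First I would observe that by construction $\nabla^{D \prime \prime}$ is of pure type $(0,1)$ (it takes values in $\overline{T^{*}X} \otimes D$ and preserves the $\mathbf{Z}$-grading on $D$), and that $\nabla^{D \prime}_{0}$, being the $\theta_{h_{0}}$-adjoint of $\nabla^{D \prime \prime}$ when $h = h_{0}$, is of pure type $(1,0)$. Consequently $\nabla^{D}_{0}$ splits into its $(1,0)$ and $(0,1)$ parts as
\begin{equation*}
\nabla^{D}_{0} = \nabla^{D \prime \prime} + \nabla^{D \prime}_{0},
\end{equation*}
and the induced connection $\nabla^{\det D}$ on $\det D$ likewise splits, with
\begin{equation*}
\nabla^{\det D} = \nabla^{\det D \prime \prime} + \nabla^{\det D \prime}_{0},
\end{equation*}
where $\nabla^{\det D \prime \prime}$ is precisely the antiholomorphic connection on $\det D$ introduced in Subsection \ref{subsec:detli}. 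By Theorem \ref{thm:pdet}, $\nabla^{\det D \prime \prime}$ is the canonical holomorphic structure on $\det D$, so the $(0,1)$-part condition for the Chern connection is immediate.

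Next I would verify unitarity. The statement in the excerpt that $\nabla^{D}_{0}$ is a unitary connection with respect to $h_{0}$ is to be interpreted pointwise, and follows from the fact that $\nabla^{D \prime}_{0}$ is the $\theta_{h_{0}}$-adjoint of $\nabla^{D \prime \prime}$: integration by parts in the definition \eqref{eq:iv14a2}--\eqref{eq:iv14a3} of $\theta_{h_{0}}$, together with the Leibniz rule \eqref{eq:iv5} for $\overline{\partial}^{X}$ and its conjugate for $\partial^{X}$, transfers the adjointness of the superconnections into the pointwise identity
\begin{equation*}
d \langle s,s' \rangle_{h_{0}} = \langle \nabla^{D}_{0} s, s' \rangle_{h_{0}} + \langle s, \nabla^{D}_{0} s' \rangle_{h_{0}}
\end{equation*}
for smooth sections $s, s'$ of $D$ (with the usual graded sign convention). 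Passing to $\det D$ via the Leibniz rule for determinants yields the analogous identity
\begin{equation*}
d \langle \sigma, \sigma' \rangle_{h^{\det D}} = \langle \nabla^{\det D} \sigma, \sigma' \rangle_{h^{\det D}} + \langle \sigma, \nabla^{\det D} \sigma' \rangle_{h^{\det D}}
\end{equation*}
for sections $\sigma, \sigma'$ of $\det D$, so that $\nabla^{\det D}$ preserves $h^{\det D}$.

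Combining these two observations and invoking the uniqueness of the Chern connection on $(\det D, h^{\det D})$ will complete the proof. There is no substantive obstacle here; the content of the proposition is essentially that the formation of $\det$ commutes with the passage to the Chern connection, and the only point requiring any care is the pointwise translation of the $\theta_{h_{0}}$-adjointness into the unitarity of $\nabla^{D}_{0}$, which is a standard integration-by-parts argument.
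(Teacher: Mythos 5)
Your proof takes essentially the same route as the paper's. The paper observes in two sentences that $\n^{\det D \prime}$, the holomorphic connection on $\det D$ induced by $\n^{D\prime}_0$, is by construction the $h^{\det D}$-adjoint of $\n^{\det D \prime\prime}$, which together with Theorem \ref{thm:pdet} identifies $\n^{\det D}$ as the Chern connection; you unfold the same two facts (the $(0,1)$-part is the holomorphic structure, the full connection is unitary) and spell out why unitarity of $\n^{D}_0$ follows from the $\theta_{h_0}$-adjointness defining $\n^{D\prime}_0$. That unitarity of $\n^D_0$ is in any case stated in the paper just after (\ref{eq:eqco}), so the extra integration-by-parts detail you supply is fine but not a different argument.
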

\begin{proof}
	Let $\n^{\det D \prime }$ be the holomorphic connection on $\det 
	D$ that is induced by $\n^{D \prime }_{0}$. By construction, 
	$\n^{\det D \prime }$ is the adjoint of $\n^{\det D \prime \prime 
	}$ with respect to $h^{\det D}$, which gives our proposition.
\end{proof}
\subsection{Curvature}%
\label{subsec:curva}
We make the same assumptions as in Subsection \ref{subsec:adjscn}, 
and  we use again the superconnection formalism of Quillen 
\cite{Quillen85a}.

Let 
\index{AE@$A^{E_{0}}$}%
$A^{E_{0}}$ be the superconnection on $E_{0}$, 
\begin{equation}\label{eq:iv17r1}
A^{E_{0}}=A^{E_{0} \prime\prime}+A^{E_{0} \prime }.
\end{equation}
Set
\index{C@$C$}%
\index{nD@$\n^{D}$}%
\begin{align}\label{eq:dep1}
&\n^{D}=\n^{D \prime \prime }+\n^{D \prime },&C=B+B^{*}.
\end{align}
By (\ref{eq:baba2}), (\ref{eq:baba4}), (\ref{eq:iv17r1}), and 
(\ref{eq:dep1}),  we get
\begin{equation}\label{eq:baba5}
A^{E_{0}}=\n^{D}+C.
\end{equation}

The curvature of $A^{E_{0}}$ is given by
\begin{equation}\label{eq:iv18}
A^{E_{0},2}=\left[A^{E_{0} \prime \prime },A^{E_{0} \prime }\right].
\end{equation}
Then $A^{E_{0},2}$ is a smooth section of 
$\Lambda \left(T^{*}_{\C}X\right)\ho\End\left(D\right)$ of  degree 
$0$. Also, it verifies Bianchi's identities,
\begin{align}\label{eq:iv19}
&\left[A^{E_{0} \prime \prime },A^{E_{0},2}\right]=0,&
\left[A^{E_{0} \prime },A^{E_{0},2}\right]=0.
\end{align}
\begin{proposition}\label{prop:curva}
	The following identity holds:
	\begin{equation}\label{eq:iv20}
\left[A^{E_{0},2}
\right]^{*}=A^{E_{0},2}.
\end{equation}
\end{proposition}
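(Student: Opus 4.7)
The proposition asserts that the curvature $A^{E_{0},2}=[A^{E_{0}\prime\prime},A^{E_{0}\prime}]$ is self-adjoint with respect to $\theta_{h}$. The plan is to derive this from three essentially formal facts: (i) $A^{E_{0}\prime}$ is by construction the formal adjoint of $A^{E_{0}\prime\prime}$ with respect to $\theta_{h}$, so that $(A^{E_{0}\prime\prime})^{*}=A^{E_{0}\prime}$ and, by involutivity of the adjoint on a non-degenerate Hermitian form, $(A^{E_{0}\prime})^{*}=A^{E_{0}\prime\prime}$; (ii) both operators are odd with respect to the $\Z_{2}$-grading induced by $\deg_{-}$ on $\Omega^{\cdot}(X,D)$, since $A^{E_{0}\prime\prime}$ raises $\deg_{-}$ by $1$ and its $\theta_{h}$-adjoint therefore lowers $\deg_{-}$ by $1$; (iii) for odd operators, the supercommutator $[A,B]$ coincides with the anticommutator $AB+BA$.

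Given (i)--(iii), the computation is a one-liner. I would first expand
\begin{equation*}
A^{E_{0},2}=A^{E_{0}\prime\prime}A^{E_{0}\prime}+A^{E_{0}\prime}A^{E_{0}\prime\prime},
\end{equation*}
then use the standard identity $(PQ)^{*}=Q^{*}P^{*}$ for formal adjoints with respect to the non-degenerate Hermitian form $\theta_{h}$, together with (i), to obtain
\begin{equation*}
(A^{E_{0},2})^{*}=(A^{E_{0}\prime})^{*}(A^{E_{0}\prime\prime})^{*}+(A^{E_{0}\prime\prime})^{*}(A^{E_{0}\prime})^{*}=A^{E_{0}\prime\prime}A^{E_{0}\prime}+A^{E_{0}\prime}A^{E_{0}\prime\prime}=A^{E_{0},2}.
\end{equation*}

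The only point that needs care is the verification of $(PQ)^{*}=Q^{*}P^{*}$ in the present graded setting, and in particular the absence of any sign twist coming from the fact that $\theta_{h}$ is defined through the twisted form $\widetilde{\alpha}$ of (\ref{eq:iv14a2}). Since $\theta_{h}$ is an honest sesquilinear non-degenerate form on $\Omega^{\cdot}(X,D)$ (with the $\Omega^{-,i}(X,D)$ mutually orthogonal, so that the adjoint preserves the $\deg_{-}$-grading with the opposite sign), the standard definition $\theta_{h}(Ps,s')=\theta_{h}(s,P^{*}s')$ gives $(PQ)^{*}=Q^{*}P^{*}$ with no graded correction. This can be checked directly on the local generators computed in Subsection~\ref{subsec:adjscn}: the relation (\ref{eq:iv14}) and the discussion after (\ref{eq:sign0}) show that on exterior multiplication operators and on sections of $\End(D)$ the adjoint is the naive one, which is enough to propagate the relation $(PQ)^{*}=Q^{*}P^{*}$ to arbitrary compositions. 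This is the only substantive verification, and it is essentially book-keeping with the conventions already set up in the section.
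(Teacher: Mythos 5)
Your proof is correct and follows the same route as the paper, which simply notes that (\ref{eq:iv20}) is an immediate consequence of $A^{E_{0}\prime}=\left[A^{E_{0}\prime\prime}\right]^{*}$. You have unpacked that one-line observation by expanding the supercommutator and invoking $(PQ)^{*}=Q^{*}P^{*}$ together with involutivity of the $\theta_{h}$-adjoint; the extra discussion of possible sign twists, while harmless, is not needed since the adjoint is defined directly by $\theta_{h}(Ps,s')=\theta_{h}(s,P^{*}s')$ with no graded correction.
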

\begin{proof}
	This is the obvious consequence of the fact that $A^{E_{0},\prime 
	}=\left[A^{E_{0} \prime \prime }\right]^{*}$.
\end{proof}
\section{Generalized metrics and Chern character forms}%
\label{sec:chfoge}
In this Section, given an antiholomorphic superconnection and a 
generalized metric, we construct the corresponding Chern character 
forms, and we prove that their Bott-Chern class does not depend on 
the metric. Also, we show that the Chern character extends to 
$\Db\left(X\right)$.

As we explained in the introduction, some results which are proved in 
the present Section were already stated by Qiang in  
\cite{Qiang16,Qiang17}. We will refer to them in more detail in the 
text.

This Section is organized as follows. In Subsection 
\ref{subsec:chchf}, we construct the Chern character forms, and we 
establish their main properties.

In Subsection \ref{subsec:triex}, we give a trivial example 
involving Kähler forms, that will be of special importance in 
Sections \ref{sec:hypo}--\ref{sec:RBC}.

In Subsection \ref{subsec:pba}, we describe the behavior of the 
Chern character forms under pull-backs.

In Subsection \ref{subsec:chtenso}, we evaluate the Chern character 
of a tensor product.

In Subsection \ref{subsec:lofr}, we consider the case where $\mathscr 
H \mathscr E$ is locally free.

In Subsection \ref{subsec:scame}, we study the behaviour of 
the Chern character form under a suitable scaling of the metric. Also 
we show that in degree $(1,1)$, the Chern character is just the first 
Chern class of the Knudsen-Mumford determinant line bundle.

In Subsection \ref{subsec:chcone}, we evaluate the Chern character of 
a cone.

In Subsection \ref{subsec:und}, using the results of  Section \ref{sec:blo}, 
we show that the Chern character defines a map 
$\cBC:\Db\left(X\right)\to H^{(=)}_{\mathrm{BC}}\left(X,\R\right)$.

In Subsection \ref{subsec:chk}, we show that $\cBC$ factors through 
$K\left(X\right)$, i.e., it induces a map $K\left(X\right)\to 
H^{(=)}_{\mathrm{BC}}\left(X,\R\right)$.

Finally, in Subsection \ref{subsec:trunc}, we introduce spectral 
truncations for antiholomorphic superconnections. The results 
obtained there will be used in an infinite-dimensional context in 
Section \ref{sec:lift}.

We make the same assumptions as in Sections \ref{sec:ahscn} and 
\ref{sec:ascngm}, and we use the corresponding notation. 
\subsection{The Chern character forms}%
\label{subsec:chchf}
We fix a square root  of $i=\sqrt{-1}$. Our formulas will not depend on 
this choice. Let
\index{f@$\varphi$}%
$\varphi$ denote the morphism of 
$\Lambda \left(T^{*}_{\C}X\right)$ that maps $\alpha$ to 
$\left(2i\pi\right)^{-\deg \alpha/2}\alpha$.

Recall that by Proposition \ref{prop:psm}, there is a well defined 
supertrace $\Trs:\Lambda\left(T^{*}X\right)\ho\End\left(E\right)\to 
\Lambda\left(T^{*}_{\C}X\right)$, that vanishes on supercommutators. 
Here, we may as well view $\Trs$ as a map from 
$\Lambda\left(T^{*}_{\C}X\right)\ho\End\left(D\right)$ to 
$\Lambda\left(T^{*}_{\C}X\right)$.

Let $\left(E,A^{E \prime \prime }\right)$ be an antiholomorphic 
superconnection on $X$. We fix splittings as in (\ref{eq:iv4a-1}), 
(\ref{eq:iv4}). 
Let $h\in \mathscr M^{D}$ be a generalized metric, and let 
$A^{E_{0}}$ be the associated superconnection defined in 
(\ref{eq:iv17r1}).
\begin{definition}\label{def:chf}
	Set
	\index{chA@$\ch\left(A^{E_{0} \prime \prime},h\right)$}%
	\begin{equation}\label{eq:iv21}
\ch\left(A^{E_{0} \prime \prime 
},h\right)=\varphi\Trs\left[\exp\left(-A^{E_{0},2}\right)\right].
\end{equation}
\end{definition}
Then $\ch\left(A^{E_{0} \prime \prime },h\right)$ is a smooth form on 
$X$.

Let
\index{dMD@$d^{\mathscr M^{D}}$}%
$d^{\mathscr M^{D}}$ denote the de Rham operator on $\mathscr 
M^{D}$. Then $h^{-1}d^{\mathscr M^{D}}h$ is a $1$-form on $\mathscr 
M^{D}$ with values in  morphisms in 
$\left[\Lambda\left(\TsX\right)\ho \End\left(E_{0}\right)\right]^{0}$ that 
are self-adjoint with respect to $h$.

Recall that the classical metric $h_{0}$ on $D$ was defined in 
(\ref{eq:bc13a2}).

We will extend the result  of Bott-Chern \cite[Proposition 3.28]{BottChern65} 
described in Proposition \ref{prop:pbc}, and results of  Quillen \cite[Section 2]{Quillen85a}, and 
Bismut-Gillet-Soulé \cite[Theorems 1.15  and 
1.24]{BismutGilletSoule88a} on superconnection forms, and reestablish a result
proved earlier by Qiang \cite{Qiang16}. Some formal 
aspects of the proof are closely related to  the proof 
of Proposition \ref{prop:pbc}. 
\begin{theorem}\label{thm:carch}
	The form $\ch\left(A^{E_{0} \prime \prime},h\right)$ lies in 
	$\Omega^{(=)}\left(X,\R\right)$, it is closed, and its Bott-Chern 
	cohomology  class does not depend on $h$.
	More precisely, $\varphi\Trs\left[h^{-1}d^{\mathscr M^{D}}h
	\exp\left(-A^{E_{0},2}\right)\right]$ is a $1$-form on $\mathscr 
	M^{D}$ with values in $\Omega^{(=)}\left(X,\R\right)$, and 
	moreover,
	\begin{equation}\label{eq:iv21a1}
d^{\mathscr M^{D}}\ch\left(A^{E_{0} \prime \prime},h\right)=
-\frac{\overline{\pa}^{X}\pa^{X}}{2i\pi}\varphi\Trs\left[h^{-1}d^{\mathscr M^{D}}h\exp\left(-A^{E_{0},2}\right)\right].
\end{equation}

	If $g$ is a smooth 
	section of $\Aut^{0}\left(E_{0}\right)$, the Bott-Chern  cohomology 
	class  of $\ch\left(A^{E_{0} \prime \prime },h\right)$ is unchanged when 
	replacing $A^{E_{0} \prime \prime }$ by $g A^{E_{0} \prime \prime 
	}g^{-1}$. In particular, the Bott-Chern cohomology class of 
	$\ch\left(A^{E_{0} \prime \prime },h\right)$ does not depend on the 
	splitting in (\ref{eq:iv4a-1}), and depends only 
	on $A^{E \prime \prime }$.
	
	Finally, 
	\begin{equation}\label{eq:tot1}
\ch\left(A^{E_{0} \prime \prime },h\right)^{\le 2}=\ch\left(A^{E_{0} \prime 
\prime },h_{0}\right)^{\le 2}.
\end{equation}
\end{theorem}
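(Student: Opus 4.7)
My plan is to adapt the classical Chern-Weil argument of Proposition \ref{prop:pbc} to the superconnection framework of Quillen, with the Chern connection $\nabla^{F}$ replaced by the superconnection $A^{E_{0}} = A^{E_{0} \prime \prime} + A^{E_{0} \prime}$. The fact that $\ch(A^{E_{0} \prime \prime}, h)$ is a sum of $(p,p)$-forms follows from a degree count in the $\deg_{-}$ grading of Subsection \ref{subsec:dua}: $A^{E_{0} \prime \prime}$ and $A^{E_{0} \prime}$ have total degree $+1$ and $-1$ respectively, so $A^{E_{0}, 2}$ has total degree $0$, and after applying $\Trs$ the resulting form has pure $\deg_{-} = 0$. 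Reality follows from the self-adjointness $(A^{E_{0},2})^{*} = A^{E_{0},2}$ (Proposition \ref{prop:curva}) combined with the $(2i\pi)^{-\deg/2}$ normalization in $\varphi$, by the same mechanism as for real invariant polynomials in the classical case. Closedness is immediate from the identity $d \Trs[\alpha] = \Trs[[A^{E_{0}}, \alpha]]$, valid for form-valued sections $\alpha$ of $\End(E_{0})$ since $A^{E_{0}}$ differs from $\nabla^{D}$ by algebraic terms whose supercommutators vanish under $\Trs$ by Proposition \ref{prop:psm}, applied to $\alpha = \exp(-A^{E_{0},2})$ and combined with Bianchi's identity $[A^{E_{0}}, A^{E_{0},2}] = 0$.

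The heart of the proof is the transgression formula (\ref{eq:iv21a1}). Since $A^{E_{0} \prime \prime}$ does not depend on $h$, differentiating $A^{E_{0} \prime} = h^{-1} A^{\overline{E}^{*}_{0} \prime} h$ on $\mathscr M^{D}$ gives $d^{\mathscr M^{D}} A^{E_{0}} = [A^{E_{0} \prime}, \theta]$ with $\theta = h^{-1} d^{\mathscr M^{D}} h$. The superconnection Duhamel formula combined with the trace identity and Bianchi then yields $d^{\mathscr M^{D}} \Trs[\exp(-A^{E_{0},2})] = -\Trs\bigl[[A^{E_{0}}, \theta \exp(-A^{E_{0},2})]\bigr]$. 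The critical refinement, specific to the bigraded $\bar\partial^{X} + \partial^{X}$ splitting, is to decompose the supercommutator with $A^{E_{0}}$ into the sum of supercommutators with $A^{E_{0} \prime \prime}$ and $A^{E_{0} \prime}$: the refined Bianchi identities in (\ref{eq:iv19}) allow each of these, under $\Trs$, to be interpreted as the action of $\bar\partial^{X}$ and $\partial^{X}$ respectively on $\Trs[\theta \exp(-A^{E_{0},2})]$. Assembling the pieces with the $\varphi$-normalization produces (\ref{eq:iv21a1}); this bigraded refinement is where I expect the main technical obstacle to lie.

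For invariance under conjugation by a smooth section $g$ of $\Aut^{0}(E_{0})$, a direct calculation shows that the $\theta_{g^{\dag} h}$-adjoint of $g A^{E_{0} \prime \prime} g^{-1}$ is $g A^{E_{0} \prime} g^{-1}$ (with $g^{\dag} h$ as in (\ref{eq:bc13a3})), so cyclicity of the supertrace (valid since $g$ is even) gives $\ch(g A^{E_{0} \prime \prime} g^{-1}, g^{\dag} h) = \ch(A^{E_{0} \prime \prime}, h)$; combined with metric independence, this yields invariance of the Bott-Chern class under changes of splitting, since distinct splittings in (\ref{eq:iv4a-1}) correspond to the action of elements of $\Aut_{0}^{0}(E_{0})$ by the description at the end of Subsection \ref{subsec:filal}. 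Finally, for (\ref{eq:tot1}), I write $A^{E_{0}} = \nabla^{D} + C$ with $C = B + B^{*}$ and decompose $A^{E_{0},2} = R^{D} + [\nabla^{D}, C] + C^{2}$. Oddness of $C$ gives $\Trs[C^{2}] = 0$, while $\Trs[[\nabla^{D}, C]] = d \Trs[C]$; a form-type check using that the components of $B$ are $(0,k)$-forms and those of $B^{*}_{(h)}$ are $(i+j, j)$-forms with $i \in \{0\} \cup \{i \ge 2\}$ and $j \ge 0$ shows $\Trs[C]^{\le 1} = 0$, hence $d \Trs[C]^{(1,1)} = 0$. This leaves $\Trs[A^{E_{0},2}]^{(1,1)} = \Trs[R^{D}]^{(1,1)} = R^{\det D}$, which by Proposition \ref{prop:pcond} depends only on $h_{0}$.
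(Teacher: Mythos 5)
Your treatment of the $\Omega^{(=)}$-membership, reality, closedness, the transgression identity \eqref{eq:iv21a1}, and the invariance under conjugation by $g$ follows essentially the same lines as the paper: degree/parity bookkeeping for the $(=)$-property, Bianchi plus vanishing of supertraces on supercommutators for closedness, differentiation of the adjoint superconnection for the transgression, and conjugation of the curvature for the last part. (Two small glitches that don't affect the structure: in the paper's convention one gets $d^{\mathscr M^{D}}A^{E_{0}'}=-[A^{E_{0}'},\theta]$, a sign; and the identity you assert holds for the $\theta_{(g^{-1})^{\dag}h}$-adjoint of $gA^{E_{0}\prime\prime}g^{-1}$, not the $\theta_{g^{\dag}h}$-adjoint, so the paper's eq.\ \eqref{eq:iv36} reads $\ch(gA^{E_{0}\prime\prime}g^{-1},h)=\ch(A^{E_{0}\prime\prime},g^{\dag}h)$.)

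The real divergence, and a genuine gap, is in the proof of \eqref{eq:tot1}. You compute $\Trs[A^{E_{0},2}]^{(1,1)}$ and identify it with $R^{\det D}$. But $\ch(A^{E_{0}\prime\prime},h)^{(1,1)}=\varphi\Trs[\exp(-A^{E_{0},2})]^{(1,1)}$ and, by Duhamel applied to the form-degree decomposition,
\begin{equation}
\Trs\left[\exp\left(-A^{E_{0},2}\right)\right]^{(1,1)}
=-\Trs\left[A^{E_{0},2,(1,1)}\exp\left(-A^{E_{0},2,(0,0)}\right)\right],
\end{equation}
which is \emph{not} a priori equal to $-\Trs[A^{E_{0},2}]^{(1,1)}$, since $A^{E_{0},2,(0,0)}=[v_{0},v_{0}^{*}]$ is nonzero. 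To close this you would need, for instance, the Bianchi identities \eqref{eq:iv19} in form-degree $(1,1)$ to conclude $[v_{0},A^{E_{0},2,(1,1)}]=0$ and $[v_{0}^{*},A^{E_{0},2,(1,1)}]=0$, after which a cyclicity argument shows $\Trs[A^{E_{0},2,(1,1)}e^{-[v_{0},v_{0}^{*}]}]=\Trs[A^{E_{0},2,(1,1)}]$; none of this is present. You also do not treat the $(0,0)$-part of \eqref{eq:tot1} at all (it is a finite-dimensional McKean--Singer identity, $\Trs[\exp(-[v_{0},v_{0}^{*}])]=\chi(D,v_{0})$, but it must be said). The paper avoids these issues entirely by a Chern--Simons transgression along a path from $h$ to $h_{0}$: the decomposition $A^{E_{0}'}=A^{E_{0}'}_{0}+[\delta,v_{0,0}^{*}]+\epsilon$ of \eqref{eq:deg1} and the transgression formulas \eqref{eq:deg3}--\eqref{eq:deg6} show that the $t$-derivative of $\ch(A^{E_{0}\prime\prime},h_{t})$ is $d^{X}$-exact of form-degree $\geq 4$ (resp.\ vanishes), so no term in the exponential needs to be controlled individually. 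Your route can probably be completed, but as written it conflates the Chern form with the supertrace of the curvature in degree $(1,1)$, which is precisely the step that needs proof.
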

\begin{proof}
	Let 
	\index{ND@$N^{D}$}%
	$N^{D}\in \End\left(D\right)$ count the degree in $D$, and 
	let 
	$N_{-}^{\Lambda\left(T^{*}_{\C}X\right)}$ count the degree in 
	$\Lambda\left(T_{\C}^{*}X\right)$ as the difference of the 
	antiholomorphic and the holomorphic degrees. Since $A^{E_{0},2}$ has total degree $0$, 
	for $t>0$, we get
	\begin{equation}\label{eq:tot1a1}
e^{tN_{-}^{\Lambda\left(T^{*}_{\C}X\right)}}\exp\left(-A^{E_{0},2}\right)e^{-tN_{-}^{\Lambda\left(T^{*}_{\C}X\right)}}=e^{-tN^{D}}\exp\left(-A^{E_{0},2}\right)e^{tN^{D}}.
\end{equation}
Since $e^{-tN^{D}}$ is an even operator, and since supertraces vanish 
on supercommutators, by (\ref{eq:iv21}), (\ref{eq:tot1a1}), we get
\begin{equation}\label{eq:xing1}
e^{tN_{-}^{\Lambda\left(T^{*}_{\C}X\right)}}\ch\left(A^{E_{0} \prime 
\prime},h\right)=\ch\left(A^{E_{0} \prime 
\prime},h\right),
\end{equation}
which just says that $\ch\left(A^{E_{0} \prime 
\prime},h\right)$ is a form in $\Omega^{(=)}\left(X,\C\right)$.

 Using the 
	Bianchi identities (\ref{eq:iv19}) and the fact that supertraces 
	vanish on supercommutators, we get
	\begin{align}\label{eq:oong1}
&\overline{\pa}^{X}\Trs\left[\exp\left(-A^{E_{0},2}\right)\right]=0,
&\pa^{X}\Trs\left[\exp\left(-A^{E_{0},2}\right)\right]=0,
\end{align}
i.e., the above form is closed.
	Since 
	$A^{E_{0},2}$  is self-adjoint, it is elementary to verify that 
	$\ch\left(A^{E_{0} \prime \prime },h\right)$ is a real form\footnote{In the 
	proof, we use explicitly the fact that $h$ is of degree $0$.}.

 We denote by $\left[\,\right]_{+}$ the anticommutator. Since 
 $\exp\left(-A^{E_{0},2}\right)$ is even, we get
\begin{equation}\label{eq:iv34a1}
\Trs\left[h^{-1}d^{\mathscr M^{D}}h\exp\left(-A^{E_{0},2}\right)\right]=
\frac{1}{2}\Trs\left[\left[h^{-1}d^{\mathscr M^{D}}h,\exp\left(-A^{E_{0},2}\right)\right]_{+}\right].
\end{equation}
The $1$-form $\left[h^{-1}d^{\mathscr 
M^{D}}h,\exp\left(-A^{E_{0},2}\right)\right]_{+}$ takes its values in  
$h$ self-adjoint sections of 
$\Lambda\left(T^{*}_{\C}X\right)\ho\End\left(D\right)$ and is 
 of degree $0$. By the same considerations as before, we deduce 
that the form obtained from  (\ref{eq:iv34a1}) by normalization by $\varphi$  lies 
in $\Omega^{(=)}\left(X,\R\right)$.

In the sequel, we  use (\ref{eq:iv18}) repeatedly. Also, we make  $1$-forms on 
$\mathscr M^{D}$  to anticommute with odd forms on $X$. Clearly, 
\begin{equation}\label{eq:iv34a2}
\left[d^{\mathscr M^{D}},A^{E_{0} \prime }\right]=-\left[A^{E_{0} \prime 
},h^{-1}d^{\mathscr 
M^{D}}h\right],
\end{equation}
so that
\begin{equation}\label{eq:iv34a2x1}
d^{\mathscr M^{D}}\left[A^{E_{0} \prime \prime },A^{E_{0} \prime }\right]=\left[A^{E_{0} \prime \prime},
\left[A^{E_{0} \prime },h^{-1}d^{\mathscr 
M^{D}}h\right]\right].
\end{equation}
By (\ref{eq:iv34a2x1}), we get
\begin{multline}\label{eq:iv34a3}
d^{\mathscr M^{D}}\Trs\left[\exp\left(-A^{E_{0},2}\right)\right]\\
=-\Trs\left[\left[A^{E_{0} \prime \prime },\left[A^{E_{0} \prime},h^{-1}d^{\mathscr 
M^{D}}h\right]\right]\exp\left(-A^{E_{0},2}\right)\right].
\end{multline}
Using the Bianchi identities  (\ref{eq:iv19}) and (\ref{eq:iv34a3}), 
we get (\ref{eq:iv21a1}), from which we deduce 
that the Bott-Chern cohomology class of $\ch\left(A^{E_{0} \prime \prime },h\right)$ 
does not depend on $h$.

Let $g$ be a smooth section of $\Aut^{0}\left(E_{0}\right)$, and let $h$ 
be a smooth section of $\mathscr M^{D}$. Let $g^{*}$ be the adjoint 
of $g$ with respect to $h$\footnote{Our definition of 
$g^{*}$ is different from the one in (\ref{eq:bc13a3}).}.  The adjoint of $gA^{E_{0}\prime 
\prime }g^{-1}$ with respect to $h$ is given by $g^{*-1}A^{E_{0} \prime 
}g^{*}$, so that the corresponding curvature is given by 
$\left[gA^{E_{0}\prime 
\prime }g^{-1},g^{*-1}A^{E_{0} \prime 
}g^{*}\right]$. Observe that
\begin{equation}\label{eq:iv35}
\left[gA^{E_{0}\prime 
\prime }g^{-1},g^{*-1}A^{E_{0} \prime 
}g^{*}\right]=g\left[A^{E_{0} \prime \prime }, 
\left(g^{*}g\right)^{-1}A^{E_{0} \prime }g^{*}g\right]g^{-1}.
\end{equation}
Also $\left(g^{*}g\right)^{-1}A^{E_{0} \prime }g^{*}g$ is just the 
adjoint of $A^{E_{0} \prime \prime }$ with respect to the metric 
\index{gh@$g^{\dag}h$}%
$g^{\dag}h$ as defined in  (\ref{eq:bc13a3}).  By (\ref{eq:iv35}), 
since $g$ is of degree $0$, we get
\begin{equation}\label{eq:iv36}
	\ch\left(gA^{E_{0} \prime \prime 
	}g^{-1},h\right)=\ch\left(A^{E_{0}\prime \prime 
	},g^{\dag}h\right).
\end{equation}
By (\ref{eq:iv36}), we deduce that the Bott-Chern cohomology class of 
$\ch\left(A^{E_{0} \prime \prime },h\right)$ is unchanged when replacing 
$A^{E_{0} \prime \prime }$ by $gA^{E_{0} \prime \prime }g^{-1}$. 

When changing the splitting in equation (\ref{eq:iv4a-1}), as explained 
at the end of Subsection \ref{subsec:filal},  we obtain a smooth 
section $g$ of $\Aut_{0}^{0}\left(E_{0}\right)$, and the new 
antiholomorphic superconnection on $E_{0}$ associated with the fixed 
$A^{E \prime \prime }$ is just $gA^{E_{0} \prime \prime }g^{-1}$. 
Using the above results, it is now clear that the Bott-Chern 
cohomology class of $\ch\left(A^{E_{0} 
\prime \prime },h\right)$ does not depend on the splitting, and so it 
only depends on $A^{E \prime \prime }$.

Let $v_{0,0}^{*}, A^{E_{0} \prime }_{0},A^{E_{0}}_{0}$ denote the 
analogues of $v^{*}_{0}, A^{E_{0} \prime 
\prime },A^{E_{0}}$ when replacing $h$ by the pure metric $h_{0}$.  
Proceeding as in the proof of (\ref{eq:coa1a1}),  there are
smooth sections $\delta,\epsilon$ of $T^{*}_{\C}X\ho 
\End\left(D\right),\Lambda^{(\ge 
2)}\left(T^{*}_{\C}X\right)\ho\End\left(D\right)$ of total degree $0,-1$ such 
that
\begin{equation}\label{eq:deg1}
A^{E_{0} \prime }=A^{E _{0}\prime 
}_{0}+\left[\delta,v_{0,0}^{*}\right]+\epsilon.
\end{equation}
By (\ref{eq:deg1}), we get
\begin{equation}\label{eq:deg2}
A^{E_{0}}=A^{E_{0}}_{0}+\left[\delta,v_{0,0}^{*}\right]+\epsilon.
\end{equation}

For $\ell\in\R$, let $A^{E_{0}}_{\ell}$ be obtained from $A^{E_{0}}$ 
by replacing $\epsilon$ by $\ell \epsilon$. A Chern-Simons 
transgression shows that
\begin{equation}\label{eq:deg3}
\frac{\pa}{\pa 
\ell}\Trs\left[\exp\left(-A^{E_{0},2}_{\ell}\right)\right]=-d^{X}\Trs\left[\epsilon\exp\left(-A^{E,2}_{\ell}\right)\right].
\end{equation}
Since $\epsilon$ is of degree $\ge 2$ in 
$\Lambda\left(T^{*}_{\C}X\right)$, the right-hand side of 
(\ref{eq:deg3}) is of degree $\ge 4$, so that (\ref{eq:deg3}) 
vanishes in degree $\le 2$. We see that in degree $\le 2$, 
\begin{equation}\label{eq:deg3a1}
\Trs\left[\exp\left(-A^{E_{0},2}\right)\right]=\Trs\left[\exp\left(-A^{E_{0},2}_{\ell}\vert_{\ell=0}\right)\right].
\end{equation}

For $m\in\R$, let $B^{E_{0}}_{m}$ be the analogue of $A^{E_{0}}$ in 
(\ref{eq:deg2}), in which $\delta,\epsilon$ are replaced by 
$m\delta,0$. The same argument as before shows that
\begin{equation}\label{eq:deg4}
\frac{\pa}{\pa 
m}\Trs\left[\exp\left(-B^{E_{0},2}_{m}\right)\right]=-d^{X}\Trs\left[\left[\delta,v_{0,0}^{*}\right]\exp\left(-B^{E,2}_{m}\right)\right].
\end{equation}
By (\ref{eq:deg4}), we deduce that
\begin{equation}\label{eq:deg5}
\frac{\pa}{\pa 
m}\Trs\left[\exp\left(-B^{E,2}_{m}\right)\right]^{(\le 2)}=-d^{X}\Trs
\left[\left[\delta,v_{0,0}^{*}\right]\exp\left(-\left[v_{0},v_{0,0}^{*}\right]\right)\right].
\end{equation}
Since $\left[v_{0,0}^{*},\left[v_{0},v_{0,0}^{*}\right]\right]=0$, we 
get
\begin{equation}\label{eq:deg6}
-\Trs\left[\left[\delta,v_{0,0}^{*}\right]\exp\left(-\left[v_{0},v_{0,0}^{*}\right]\right)\right]=\Trs\left[\left[v_{0,0}^{*},\delta\exp\left(-\left[v_{0},v_{0,0}^{*}\right]\right)\right]\right]=0.
\end{equation}
By (\ref{eq:deg6}), (\ref{eq:deg4}) vanishes. Combining 
(\ref{eq:deg3a1}) and the vanishing of (\ref{eq:deg4}), we get 
(\ref{eq:tot1}).
The proof of 
our theorem is completed. 
\end{proof}
\begin{remark}\label{rem:zeph}
The arguments of deformation over $\mathbf{P}^{1}$ given in 
\cite[Section 1 f)]{BismutGilletSoule88a} and in the proof of 
Proposition \ref{prop:pbc} can also be used to prove 
some of the results established in Theorem \ref{thm:carch}. In 
particular, to prove independence on the splitting of $E$, one can 
instead interpolate between two splittings of $E$ over 
$\mathbf{P}^{1}$ and proceed as in \cite{BismutGilletSoule88a}.

In \cite[Proposition 2.24 and Corollary 3.14]{Qiang16}, Qiang established that in the 
case where $h$ is a pure metric, the form $\ch\left(A^{E_{0} \prime 
\prime },h\right)$ is closed, and that its Bott-Chern class does not 
depend on $h$. In \cite[Subsection 4.18]{Qiang16}, Qiang shows that 
the $\cBC\left(A^{E_{0} \prime \prime }\right)$ is invariant  when 
conjugating $A^{E_{0}\prime \prime }$.
\end{remark}
\begin{definition}\label{def:BCcl}
Let
\index{cBC@$\cBC\left(A^{E^{\prime \prime }}\right)$}%
$\cBC\left(A^{E\prime \prime }\right)\in 
H^{(=)}_{\mathrm{BC}}\left(X,\R\right)$ be 
the common Bott-Chern cohomology class of the forms $\ch\left(A^{E_{0} \prime \prime 
},h\right)$.
\end{definition}
\subsection{A trivial example}%
\label{subsec:triex}
Here, we follow \cite[Remark 4.5.3]{Bismut10b}. We consider the 
trivial $\mathscr C^{X}$ in Example \ref{exa:triva},  with $D=\C$,  
and associated  antiholomorphic superconnection $\overline{\pa}^{X}$.  Let $\omega^{X}$ be a smooth real 
$(1,1)$-form. Multiplication by $i\omega^{X}$ 
is a self-adjoint operator with respect to the Hermitian form 
$\theta$ in (\ref{eq:iv13}). The exponential 
$e^{-i\omega^{X}}$ in $\Lambda\left(T^{*}_{\C}X\right)$ 
is also self-adjoint. In particular 
$h_{\omega^{X}}=\exp\left(-i\omega^{X}\right)$ defines a generalized metric on 
$\C$, i.e., it can be viewed as an element of $\mathscr M^{\C}$. 
Using the notation in (\ref{eq:iv14a3}), if $s,s'\in 
\Omega\left(X,\C\right)$, then
\begin{equation}\label{eq:tri1}
\theta_{h_{\omega^{X}}}\left(s,s'\right)=\theta\left(s,e^{-i\omega^{X}}s'\right).
\end{equation}
The adjoint $A^{\C \prime }$ of $A^{\C \prime \prime }$ is given by
\begin{equation}\label{eq:tri2}
A^{\C \prime }=\pa^{X}-i\pa^{X}\omega^{X}.
\end{equation}
If $A^{\C}=A^{\C \prime \prime }+A^{\C \prime }$, the curvature 
$A^{\C,2}$ is given by
\begin{equation}\label{eq:tri3}
A^{\C,2}=-\overline{\pa}^{X}\pa^{X}i\omega^{X}.
\end{equation}
By (\ref{eq:tri3}), we get
\begin{equation}\label{eq:tri4}
\ch\left(A^{\C \prime \prime 
},h_{\omega^{X}}\right)=\exp\left(-\frac{\overline{\pa}^{X}\pa^{X}i\omega^{X}}{4\pi^{2}}\right).
\end{equation}
If we take $\omega^{X}=0$, we get the trivial form $1$. 

More generally, if we make the same assumptions as in Subsection 
\ref{subsec:chchf}, if $h\in \mathscr M^{D}$, $he^{-i\omega^{X}}$ 
lies also in $\mathscr M^{D}$. Even when $h$ is pure, when 
$\omega^{X}$ is nonzero, $he^{-i\omega^{X}}$ is not pure.

The above example will play a fundamental role in Section 
\ref{sec:hypo}. Indeed 
the theory of the hypoelliptic Laplacian in \cite{Bismut10b} can be 
reinterpreted in terms of the above choice of a generalized metric on 
$\C $. in which it is crucial that $\omega^{X}$ is the Kähler form of 
a Hermitian metric on $X$.
\subsection{The Chern character of pull-backs}%
\label{subsec:pba}
Let $Y$ be a compact complex manifold, and let $f:X\to Y$ be a holomorphic map as in Subsection 
\ref{subsec:pullten}. Let $\left(F,A^{F \prime \prime }\right)$ be an 
antiholomorphic superconnection on $Y$, and let $\left(E,A^{E 
\prime \prime }\right)=
f^{*}_{b}\left( F,A^{F \prime \prime } \right) $ be the 
antiholomorphic superconnection on $X$ that was defined  
in Subsection \ref{subsec:pullten}.

We fix a smooth splitting of $F$ as in (\ref{eq:iv4a-1}), 
(\ref{eq:iv4}), that induces 
a corresponding smooth splitting of $E$ on $X$. Then we have identifications 
$F \simeq F_{0}, E \simeq E_{0}$.
Let $h$ be a smooth section of $\mathscr M^{D_{F}}$ on $Y$. Then 
$f^{*}h$ 
is a smooth section of $\mathscr 
M^{D_{E}}$ on $Y$.
\begin{proposition}\label{prop:idpu}
	The following identity of forms on $Y$ holds:
\begin{equation}\label{eq:pb1}
\ch\left(A^{f_{b}^{*}E_{0} \prime \prime 
},f^{*}h\right)=f^{*}\ch\left(A^{E_{0} \prime \prime },h\right).
\end{equation}
Also
\begin{equation}\label{eq:pb1a1}
\cBC\left(A^{f_{b}^{*}E \prime \prime}\right)=f^{*}\cBC\left(A^{E \prime 
\prime }\right)\,\mathrm{in}\,H^{(=)}_{\mathrm{BC}}\left(Y,\R\right).
\end{equation}
\end{proposition}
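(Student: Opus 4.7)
The plan is to establish the form-level identity \eqref{eq:pb1} on $X$ and then to deduce \eqref{eq:pb1a1} by taking Bott-Chern classes and invoking the functoriality of $f^{*}$ on $H^{(=)}_{\mathrm{BC}}$ recalled in Subsection~\ref{subsec:unpro}. Since $\varphi\,\Trs[\exp(-\cdot)]$ is algebraic in the curvature and commutes with pullback of sections of $\Lambda(T^{*}_{\C}X)\ho\End(f^{*}D_{F})$, the identity \eqref{eq:pb1} reduces to the pointwise identity of curvatures
\begin{equation*}
A^{E_{0},2}=f^{*}A^{F_{0},2}\qquad\text{in }\ \Lambda(T^{*}_{\C}X)\ho\End(f^{*}D_{F}).
\end{equation*}
By \eqref{eq:iv12a-6}, $A^{E_{0}\prime\prime}$ is by construction the natural pullback of $A^{F_{0}\prime\prime}$, so the entire matter reduces to showing that $A^{E_{0}\prime}$ is the natural pullback of $A^{F_{0}\prime}$.

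Writing $A^{F_{0}\prime\prime}=\n^{D_{F}\prime\prime}+B$ and $A^{E_{0}\prime\prime}=\n^{f^{*}D_{F}\prime\prime}+\mu B$, I would split the adjoint into its algebraic piece and its connection piece. The algebraic adjoint $B\mapsto B^{*}$ is defined pointwise from the pairing on $\Lambda(T^{*}_{\C}X)$ encoded by $f^{\dag}\wedge=-\bar f\wedge$ together with the generalized metric; both ingredients are compatible with the pullback morphism $\mu$, so that $(\mu B)^{*}=\mu(B^{*})$ follows immediately. For the connection part, $\n^{D_{F}\prime}$ is determined by a universal local formula in $h$, its first antiholomorphic derivative, and the complex structure, which respects holomorphic pullback; hence the $\theta_{f^{*}h}$-adjoint of $\n^{f^{*}D_{F}\prime\prime}$ coincides with the pullback of $\n^{D_{F}\prime}$.

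A cleaner route to the connection-side identity proceeds via Theorem~\ref{thm:Conj}. On a small neighbourhood $V$ of each $y\in Y$, one conjugates $A^{F_{0}\prime\prime}|_{V}$ by a smooth section of $\Aut^{0}(F_{0})|_{V}$ to the model $v_{0}+\underline{\n}^{D_{F}|_{V}\prime\prime}$ with $\underline{\n}^{D_{F}|_{V}\prime\prime}$ an honest holomorphic structure; on $U=f^{-1}V$ the same conjugation reduces $A^{E_{0}\prime\prime}|_{U}$ to $\mu v_{0}+\n^{f^{*}D_{F}|_{U}\prime\prime}$ with the pullback holomorphic structure. In this reduced setting the connection pieces of the adjoints are perturbations of Chern connections on a Hermitian holomorphic vector bundle, whose compatibility with holomorphic pullback is classical. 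The conjugation invariance guaranteed by Theorem~\ref{thm:carch} ensures that this local reduction leaves the Chern character form unchanged, and gluing local identities yields \eqref{eq:pb1}.

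The main obstacle is articulating the compatibility of the connection-level adjoint with pullback cleanly; I expect the local conjugation via Theorem~\ref{thm:Conj}, combined with the conjugation invariance of Theorem~\ref{thm:carch}, to provide the most transparent argument and to reduce the verification to a classical computation on Hermitian holomorphic vector bundles. The passage from \eqref{eq:pb1} to \eqref{eq:pb1a1} is then immediate from the discussion in Subsection~\ref{subsec:unpro}.
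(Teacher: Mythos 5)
The paper's own proof of this proposition is the one-line remark that \eqref{eq:pb1} is ``easy and is left to the reader'' and \eqref{eq:pb1a1} follows by taking classes, so there is no detailed published argument to compare against. Your proposal fills in precisely what the authors must have in mind, and it is correct. You rightly reduce \eqref{eq:pb1} to the single identity $A^{E_{0},2}=f^{*}A^{F_{0},2}$ of curvatures in $\Lambda(T^{*}_{\C}X)\ho\End(f^{*}D_{F})$, which in view of \eqref{eq:iv12a-6} amounts to $A^{E_{0}\prime}=f^{*}A^{F_{0}\prime}$. Decomposing the adjoint via \eqref{eq:coa1} into the algebraic pieces $v_{i}^{*}$ and the connection piece $\n^{D\prime}$ is the right move: the algebraic adjoints are pointwise in $h$ via the anti-automorphism of Subsection~\ref{subsec:dua}, which manifestly commutes with $\mu$, while $\n^{D\prime}$ obeys the Chern-connection law $h^{-1}\partial h$ (up to the lower-order corrections already contained in \eqref{eq:coa1a1}), which is compatible with holomorphic pullback; equivalently, the formula \eqref{eq:iv16} makes $A^{E_{0}\prime}$ a local operator determined by $A^{E_{0}\prime\prime}$ and $h$. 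Your alternative route through the local normal form of Theorem~\ref{thm:Conj}, combined with the conjugation invariance recorded in Theorem~\ref{thm:carch}, is also sound; it is somewhat heavier than the direct verification, but has the advantage of reducing everything to the textbook case of a holomorphic Hermitian vector bundle. The passage to \eqref{eq:pb1a1} via Subsection~\ref{subsec:unpro} is exactly what the paper does.

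One small point of phrasing: in the reduction step you write that $\varphi\,\Trs[\exp(-\cdot)]$ ``commutes with pullback of sections of $\Lambda(T^{*}_{\C}X)\ho\End(f^{*}D_{F})$''; the pullback goes from sections of $\Lambda(T^{*}_{\C}Y)\ho\End(D_{F})$ over $Y$ to sections of $\Lambda(T^{*}_{\C}X)\ho\End(f^{*}D_{F})$ over $X$, and the compatibility you are using is that $\Trs$ intertwines the two, which holds because $\mu$ is a filtered morphism of $\Lambda(\overline{T^{*}\cdot})$-modules inducing the identity on the diagonal fibre (cf. Proposition~\ref{prop:psm}). With that wording tightened, the argument is complete.
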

\begin{proof}
	The proof of equation (\ref{eq:pb1}) is easy and is left to the 
	reader. By taking the corresponding Bott-Chern cohomology class 
	in (\ref{eq:pb1}), we get (\ref{eq:pb1a1}). 
\end{proof}
\subsection{Chern character and tensor products}%
\label{subsec:chtenso}
Let  $\left(E,A^{E \prime \prime }\right),\left( \underline{E}, 
A^{\underline{E}\prime \prime }\right) $ be antiholomorphic 
superconnections on $X$.  Let $\left( E\ho_{b}\underline{E}, 
A^{E\ho_{b}\underline{E} \prime \prime }\right) $ denote the corresponding tensor 
product, which was defined in  Subsection \ref{subsec:tenspro}. Its diagonal bundle is given by $D\ho\underline{D}$. We fix  
splittings for $E,\underline{E}$ 
similar to (\ref{eq:iv4a-1}), (\ref{eq:iv4}), so that 
$E \simeq E_{0},\underline{E} \simeq \underline{E}_{0}$. These two splittings induce 
a corresponding splitting of $E\ho_{b}\underline{E}$, and 
$\left(E\ho_{b} 
\underline{E}\right)_{0}=E_{0}\ho_{b}\underline{E}_{0}$. If 
$h,\underline{h}$ are 
smooth sections of 
$\mathscr M^{D}, \mathscr M^{\underline{D}}$, then $h\ho \underline{h}$ is a 
smooth section of $\mathscr M^{D\ho\underline{D}}$.
\begin{proposition}\label{prop:ptens}
	The following identity of forms on $X$ holds:
	\begin{equation}\label{eq:ten1}
\ch\left(A^{\left( E\ho_{b}\underline{E}\right)_{0}\prime \prime },h\ho 
\underline{h}\right)=\ch\left(A^{E_{0} \prime \prime 
},h\right)\ch\left(A^{\underline{E}_{0} \prime \prime },\underline{h}\right).
\end{equation}
Also
\begin{equation}\label{eq:ten2}
\cBC\left(A^{E\ho_{b} \underline{E} \prime \prime }\right)=\cBC\left(A^{E 
\prime \prime }\right)\cBC\left(A^{\underline{E} \prime \prime }\right)\ \mathrm{in}\ H^{(=)}_{\mathrm{BC}}\left(X,\R\right).
\end{equation}
\end{proposition}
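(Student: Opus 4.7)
The plan is to reduce the identity (\ref{eq:ten1}) to the standard multiplicativity of the Chern character of a tensor product of superconnections, by checking that the curvature of $A^{(E\ho_b\underline{E})_{0}\prime\prime}$ splits as a sum of commuting pieces acting on the two $\End$-factors of $\End(D\ho\underline{D})=\End(D)\ho\End(\underline{D})$.

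First, I would describe the superconnection on the tensor product as a graded derivation. With the chosen splittings, $(E\ho_{b}\underline{E})_{0}=E_{0}\ho_{b}\underline{E}_{0}$, and from Subsection \ref{subsec:tenspro}, the antiholomorphic superconnection acts on sections of $E\ho_{b}\underline{E}$ by the Leibniz rule with respect to the $\Z$-grading, so that under the identification $\Omega(X,D\ho\underline{D})\simeq \Omega(X,D)\ho_{\Omega(X,\C)}\Omega(X,\underline{D})$, one has
\begin{equation}
A^{(E\ho_{b}\underline{E})_{0}\prime\prime}=A^{E_{0}\prime\prime}\ho 1+(-1)^{\deg}\otimes A^{\underline{E}_{0}\prime\prime}.
\label{eq:planten1}
\end{equation}
Second, I would verify that the adjoint with respect to $\theta_{h\ho\underline{h}}$ is the tensor product of the individual adjoints. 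The metric $h\ho\underline{h}$ on $D\ho\underline{D}$ satisfies $(h\ho\underline{h})(s\ho\underline{s},s'\ho\underline{s}')=h(s,s')\underline{h}(\underline{s},\underline{s}')$ at the pointwise level, so the Hermitian pairing on $\Lambda(T^{*}_{\C}X)\ho(D\ho\underline{D})$ factors as a tensor product of the pairings used in (\ref{eq:iv14a2}). Integration by parts then gives
\begin{equation}
A^{(E\ho_{b}\underline{E})_{0}\prime}=A^{E_{0}\prime}\ho 1+(-1)^{\deg}\otimes A^{\underline{E}_{0}\prime}.
\label{eq:planten2}
\end{equation}

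Third, I would compute the curvature. The two summands in (\ref{eq:planten1}), and likewise those in (\ref{eq:planten2}), supercommute among themselves because each acts on only one tensor factor. Moreover, the cross-supercommutators $[A^{E_{0}\prime\prime}\ho 1,1\ho A^{\underline{E}_{0}\prime}]$ and $[1\ho A^{\underline{E}_{0}\prime\prime},A^{E_{0}\prime}\ho 1]$ also vanish, again because the operators act on disjoint tensor factors. Hence
\begin{equation}
A^{(E\ho_{b}\underline{E})_{0},2}=A^{E_{0},2}\ho 1+1\ho A^{\underline{E}_{0},2}
\label{eq:plantens3}
\end{equation}
as a section of $\Lambda(T^{*}_{\C}X)\ho\End(D)\ho\End(\underline{D})$. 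Since the two summands have even total degree and act on different $\End$-factors, they commute, so that
\begin{equation}
\exp(-A^{(E\ho_{b}\underline{E})_{0},2})=\exp(-A^{E_{0},2})\ho\exp(-A^{\underline{E}_{0},2}).
\label{eq:plantens4}
\end{equation}
Applying the multiplicativity $\Trs[a\ho b]=\Trs[a]\,\Trs[b]$ (valid since the $\Z_{2}$-grading on $D\ho\underline{D}$ is the product grading, so the chirality operator is $\tau_{D}\ho\tau_{\underline{D}}$), followed by $\varphi$ (which respects products of forms), yields (\ref{eq:ten1}). Then (\ref{eq:ten2}) follows by passing to Bott-Chern classes, using that the product on $H^{(=)}_{\mathrm{BC}}(X,\R)$ is represented by products of closed $\Omega^{(=)}(X,\R)$-representatives.

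The principal technical obstacle is the careful bookkeeping of signs and of the $\Lambda(\overline{\TsX})$-module structures in steps (\ref{eq:planten1}) and (\ref{eq:planten2}), since the tensor product $\ho_{b}$ is taken over $\Lambda(\overline{\TsX})$, not over $\C$, and since the elements involved have mixed degrees in $\Lambda(T^{*}_{\C}X)\ho\End(D)$. Once these Leibniz-type formulas are in place, the computation is a routine application of the Quillen superconnection formalism.
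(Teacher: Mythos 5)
Your argument is correct and is essentially the computation the authors must have in mind: the paper gives no details, simply declaring the proof of (\ref{eq:ten1}) ``easy and left to the reader,'' with (\ref{eq:ten2}) as an immediate consequence. Your chain — Leibniz formula (\ref{eq:planten1}) for $A^{(E\ho_b\underline{E})_0\prime\prime}$, factorization (\ref{eq:planten2}) of the adjoint with respect to $\theta_{h\ho\underline{h}}$, vanishing of the cross-supercommutators, additivity (\ref{eq:plantens3}) of the curvature, factorization (\ref{eq:plantens4}) of the exponential, and multiplicativity of $\Trs$ for the product grading $\tau_D\ho\tau_{\underline D}$ — is the standard Quillen superconnection computation, and the Koszul signs you flag work out because only even-parity components of $\End(D)$ and $\End(\underline D)$ survive the supertraces.

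One place worth tightening in a final write-up: the factorization (\ref{eq:planten2}) of the adjoint when $h,\underline{h}$ are genuinely generalized (non-pure) metrics, since then $h$ and $\underline{h}$ carry nontrivial $\Lambda(T^*_\C X)$-components and $h\ho\underline{h}$ involves multiplying those components inside the graded-commutative algebra. It is easiest to observe that by (\ref{eq:iv16}) the adjoint is $A^{E_0\prime} = h^{-1}A^{\overline{E}_0^*\prime}h$ with $A^{\overline{E}_0^*\prime}$ the $\theta$-adjoint; since $\theta$ itself factors as (integral over $X$) $\times$ (pointwise duality pairing), and since $(h\ho\underline{h})^{-1} = h^{-1}\ho\underline{h}^{-1}$ in the graded sense, the adjoint of $A^{E_0\prime\prime}\ho 1 + (-1)^{\deg}\ho A^{\underline{E}_0\prime\prime}$ splits as claimed. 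With that noted, the rest of your reduction to $\Trs[a\ho b]=\Trs[a]\,\Trs[b]$ and the product in $H^{(=)}_{\mathrm{BC}}(X,\R)$ is complete.
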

\begin{proof}
	The proof of (\ref{eq:ten1}) is easy and is left to the reader. 
	Equation (\ref{eq:ten2}) is just a consequence.
\end{proof}
\subsection{The case where $\mathscr H \mathscr E$ is locally free}%
\label{subsec:lofr}
We use the notation of Subsection \ref{subsec:applco}.
We fix a splitting of $E$ as in (\ref{eq:iv4a-1}), (\ref{eq:iv4}). We write 
$A^{E_{0} \prime \prime }$ as in (\ref{eq:iv36a6}). Let $h$ be a 
pure metric on $D$.

In this Subsection, we assume that the $\mathcal{O}_{X}$-module $ \mathscr 
H \mathscr E = \mathscr H HD$ is
locally free. By (\ref{eq:iv11a1}),  $HD$ is a smooth holomorphic vector bundle on $X$.

Put
\begin{equation}\label{eq:hod1}
\mathcal{H}D=\left\{f\in D, v_{0}f=0, v_{0}^{*}f=0\right\}.
\end{equation}
Then $\mathcal{H}$ is a smooth vector 
subbundle of $D$. Using finite-dimensional Hodge theory, we have a 
canonical isomorphism of smooth vector bundles,
\begin{equation}\label{eq:hod2}
\mathcal{H}D \simeq HD.
\end{equation}
As a subbundle of $D$, $\mathcal{H}D$ inherits a Hermitian metric. 
Let $h^{HD}$ denote the corresponding smooth Hermitian metric on $HD$. 
Let $\n^{HD}$ be the corresponding Chern connection on $HD$. 

Let $P:D\to \mathcal{H}D$ denote the orthogonal projection from $D$ 
on $\mathcal{H}D$. Then $P$ is a smooth section of 
$\End\left(D\right)$ that preserves the $\Z$-grading. 
Let $\n^{\mathcal{H}D}$ be the connection on $\mathcal{H}D$, 
\begin{equation}\label{eq:hod3}
\n^{\mathcal{H}D}=P\n^{D}.
\end{equation}
Then $\n^{\mathcal{H}D}$ is a unitary connection on $\mathcal{H}D$.
\begin{proposition}\label{prop:ide}
	Via the identification $\mathcal{H}D \simeq HD$, 
	\begin{equation}\label{eq:hod4}
\n^{\mathcal{HD}}=\n^{HD}.
\end{equation}
\end{proposition}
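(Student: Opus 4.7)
The plan is to verify that $\nabla^{\mathcal{H}D}$ and $\nabla^{HD}$ agree by showing they are both unitary connections on $(HD, h^{HD})$ with the same $(0,1)$-part, and then invoking the uniqueness of the Chern connection. Since $\nabla^{HD}$ is by definition the Chern connection of $(HD, h^{HD})$, and since $\nabla^{\mathcal{H}D}$ is already stated to be unitary with respect to $h^{HD}$, it will suffice to prove the identity
\begin{equation*}
\nabla^{\mathcal{H}D \prime\prime} = \nabla^{HD \prime\prime}
\end{equation*}
of antiholomorphic connections on $HD$.

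First I would identify the right-hand side. By Remark \ref{rem:locf} and Theorem \ref{thm:cohco}, the holomorphic structure $\nabla^{HD \prime\prime}$ is the one induced on cohomology by $\nabla^{D \prime\prime}$, viewed as the differential $d_1$ of the spectral sequence $\mathscr E_r$. Concretely, if $s$ is a smooth section of $HD$, and if $f$ is any smooth section of $D$ with $v_0 f = 0$ whose class modulo $\mathrm{im}\, v_0$ equals $s$, then $\nabla^{HD \prime\prime} s$ is the cohomology class of $\nabla^{D \prime\prime} f$; this is well-defined because $[\nabla^{D \prime\prime}, v_0] = 0$ by the second identity in \eqref{eq:carr1}, and because $\nabla^{D \prime\prime,2}$ takes values in $\mathrm{im}(\ad v_0)$ by the third identity of \eqref{eq:carr1}.

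Next I would use the canonical identification $HD \simeq \mathcal{H}D$ from \eqref{eq:hod2}. Under this identification, a section $s$ of $HD$ is represented by its harmonic representative $f \in C^{\infty}(X, \mathcal{H}D)$, and the identification $HD \to \mathcal{H}D$ is the composition of the inclusion $\mathcal{H}D \hookrightarrow \ker v_0$ with the projection $\ker v_0 \to HD$. Because $\ker v_0 = \mathcal{H}D \oplus \mathrm{im}\, v_0$ (the finite-dimensional Hodge decomposition applied fiberwise), sending a class to its harmonic representative is precisely the composition $P \circ \iota$ with $P:D \to \mathcal{H}D$ the orthogonal projection. Since $v_0 f = 0$ and $[\nabla^{D\prime\prime},v_0]=0$, we obtain $v_0 \nabla^{D\prime\prime} f = 0$, so $\nabla^{D\prime\prime} f$ lies in $\ker v_0$, and its cohomology class corresponds under $HD \simeq \mathcal{H}D$ to
\begin{equation*}
P \nabla^{D\prime\prime} f = \nabla^{\mathcal{H}D \prime\prime} s,
\end{equation*}
where in the last equality we used the definition \eqref{eq:hod3}. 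This is exactly the description of $\nabla^{HD \prime\prime} s$ given in the previous paragraph, which yields the desired equality of $(0,1)$-parts.

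The argument contains no real obstacle; the only point that requires care is checking that the two \emph{a priori} different constructions of the holomorphic structure on $HD$ — the one coming from the spectral sequence differential $d_1 = \nabla^{HD\prime\prime}$ (intrinsic to $D$ and $\nabla^{D\prime\prime}$) and the one coming from harmonic projection (which uses the metric $h$) — produce the same operator. This is a standard fact from finite-dimensional Hodge theory applied fiberwise, and is what makes the identification $\mathcal{H}D \simeq HD$ holomorphic rather than merely smooth.
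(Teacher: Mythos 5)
Your proof is correct and follows essentially the same route as the paper: both arguments reduce the claim to the equality of $(0,1)$-parts $\nabla^{\mathcal{H}D\,\prime\prime}=\nabla^{HD\,\prime\prime}$ (the paper's equation (\ref{eq:ida0})) and then conclude by unitarity. The only difference is that you supply a detailed justification for this equality via the fiberwise Hodge decomposition $\ker v_0=\mathcal{H}D\oplus\operatorname{im}v_0$, whereas the paper states (\ref{eq:ida0}) without proof and completes the argument by taking the adjoint of (\ref{eq:hod5}) rather than explicitly invoking uniqueness of the Chern connection.
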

\begin{proof}
Via the identification $\mathcal{H}D \simeq HD$, we have the identity,
\begin{equation}\label{eq:ida0}
\n^{\mathcal{H}D \prime \prime }=\n^{HD \prime \prime }.
\end{equation}
By (\ref{eq:hod3}), we get
	\begin{equation}\label{eq:hod5}
\n^{\mathcal{H}D \prime \prime }=P\n^{D \prime \prime }.
\end{equation}
By (\ref{eq:hod5}), and taking adjoints, we obtain
\begin{equation}\label{eq:hod6}
\n^{\mathcal{H}D \prime }=P\n^{D \prime }.
\end{equation}
By (\ref{eq:ida0})--(\ref{eq:hod6}), we get (\ref{eq:hod4}). The proof of our proposition is completed. 
\end{proof}
\subsection{The Chern character form and the scaling of the metric}%
\label{subsec:scame}
In this Subsection, we assume that $h\in \mathscr M^{D}$ is a pure metric.
Recall that
\index{ND@$N^{D}$}%
$N^{D}$ is the number operator of $D$, i.e., $N^{D}$ acts by 
multiplication by $k$ on $D^{k}$.
\begin{definition}\label{def:newm}
	For $T>0$, let 
	\index{hT@$h_{T}$}%
	$h_{T}\in \mathscr M^{D}$ be the pure metric,
	\begin{equation}\label{eq:iv36a2}
h_{T}=hT^{N^{D}}.
\end{equation}
Let 
\index{AET@$A^{E_{0} \prime}_{T}$}%
$A^{E_{0} \prime}_{T}$ denote the adjoint of $A^{E_{0} \prime \prime }$ 
with respect to $h_{T}$. Put
\begin{equation}\label{eq:iv36a2x1r}
A^{E_{0}}_{T}=A^{E_{0} \prime \prime }+A^{E_{0} \prime }_{T}.
\end{equation}
\end{definition}

If $A^{E_{0} \prime }$ denote the adjoint of $A^{E_{0} \prime \prime }$ with 
respect to $h$, then
\begin{equation}\label{eq:iv36a3}
A^{E_{0} \prime }_{T}=T^{-N^{D}}A^{E_{0} \prime }T^{N^{D}}.
\end{equation}
Set
\begin{align}\label{eq:iv36a4}
&B^{E_{0} \prime \prime }_{T}=T^{N^{D}/2}A^{E_{0} \prime \prime 
}T^{-N^{D}/2}, \nonumber \\
&B^{E_{0} \prime }_{T}=T^{N^{D}/2}A^{E_{0} \prime }_{T}T^{-N^{D}/2},\\
& B^{E_{0}}_{T}=T^{N^{D}/2}A^{E_{0}}_{T}T^{-N^{D}/2}. \nonumber 
\end{align}
Then
\begin{equation}\label{eq:iv36a5}
B^{E_{0}}_{T}=B^{E_{0} \prime \prime }_{T}+B^{E_{0} \prime}_{T}.
\end{equation}
Moreover, $B^{E_{0} \prime }_{T}$ is the $h$-adjoint of $B^{E_{0} \prime 
\prime }_{T}$.

As we saw in Subsection \ref{subsec:lofr}, if $\mathscr H \mathscr E$ 
is locally free, then $HD$ is a holomorphic vector bundle, and the 
pure metric $h$ induces a Hermitian metric $h^{HD}$ on $HD$. Let 
\index{chHD@$\ch\left(HD,h^{HD}\right)$}%
$\ch\left(HD,h^{HD}\right)$ be the Chern form associated with the 
$\Z$-graded holomorphic Hermitian vector bundle $HD$.  This Chern 
form is defined using the methods of Subsection \ref{subsec:bovb}.

Recall that the metric 
\index{hde@$h^{\det D}$}%
$h^{\det D}$ on $\det D$ associated with $h$ was defined after equation 
(\ref{eq:eqco}).
Let $c_{1}\left(\det D,h^{\det D}\right)$ denote the first Chern form 
associated with the holomorphic Hermitian  line bundle $\left(\det 
D,h^{\det D}\right)$.

The Knudsen-Mumford determinant line bundle $\det 
\mathscr H \mathscr E$ was considered in Subsection 
\ref{subsec:detli}, and  we have the canonical isomorphism in 
(\ref{eq:sq1a1}). 

If $\alpha\in \Omega^{(=)}\left(X,\R\right)$, we denote by 
$\alpha^{(1,1)}$ its component in $\Omega^{(1,1)}\left(X,\R\right)$. 
If $\beta\in H^{(=)}_{\mathrm{BC}}\left(X,\R\right)$, let 
$\beta^{(1,1)}$ be the component of $\beta$ in 
$H^{(1,1)}_{\mathrm{B C}}\left(X,\R\right)$.

If $\alpha_{T}\vert_{T>0}$ is a family of smooth forms on $X$, we will 
say that as $T\to + \infty $, 
$\alpha_{T}=\mathcal{O}\left(1/\sqrt{T}\right)$ if for any $k\in \N$, 
there exists $C_{k}>0$ such that for $T\ge 1$,  the sup of the norm of $\alpha_{T}$ 
and its derivatives of order $\le k$ is dominated by $C_{k}/\sqrt{T}$.
\begin{theorem}\label{thm:conv}
	If $\mathscr H \mathscr E$ is locally free,  as $T\to + \infty $, 
	\begin{equation}\label{eq:iv36a2b}
\ch\left(A^{E_{0} \prime \prime },h_{T}\right)=\ch\left(HD, 
h^{HD}\right)+\mathcal{O}\left(1/\sqrt{T}\right).
\end{equation}
Under the same assumption, 
\begin{equation}\label{eq:iv36a3b}
\ch_{\mathrm{BC}}\left(A^{E \prime \prime 
}\right)=\ch_{\mathrm{BC}}\left(HD\right)\, \mathrm{in}\, 
H^{(=)}_{\mathrm{BC}}\left(X,\R\right).
\end{equation}
 In particular, if $HD=0$, then
 \begin{equation}\label{eq:iv36a4b}
\ch_{\mathrm{BC}}\left(A^{E \prime \prime }\right)=0 \, \mathrm{in}\, 
H^{(=)}_{\mathrm{BC}}\left(X,\R\right).
\end{equation}

As $T\to 0$, 
\begin{equation}\label{eq:iv36a4x1}
\ch^{(1,1)}\left(A^{E_{0} \prime \prime },h_{T}\right)\to c_{1}\left(\det 
D,h^{\det D}\right).
\end{equation}
We have the identity,
\begin{equation}\label{eq:terma1}
\cBC^{(1,1)}\left(A^{E \prime \prime 
}\right)=c_{1,\mathrm{BC}}\left(\det \mathscr H \mathscr 
E\right)\ \mathrm{in}\ H_{\mathrm{BC}}^{(1,1)}\left(X,\R\right).
\end{equation}
\end{theorem}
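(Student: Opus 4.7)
The plan is to split the analysis into the large-$T$ regime, which gives (\ref{eq:iv36a2b})--(\ref{eq:iv36a4b}), and the bidegree-$(1,1)$ component, which handles (\ref{eq:iv36a4x1}) and (\ref{eq:terma1}). The common device is the conjugated superconnection $B_{T}^{E_{0}}$ of (\ref{eq:iv36a4}): since $T^{N^{D}/2}$ is even, $\Trs\left[\exp\left(-A_{T}^{E_{0},2}\right)\right]=\Trs\left[\exp\left(-B_{T}^{E_{0},2}\right)\right]$. A direct bookkeeping of the $\Z$-degree of each piece of (\ref{eq:iv36a6}) and (\ref{eq:coa1}) yields
\begin{align*}
B_{T}^{E_{0}\prime\prime}&=T^{1/2}v_{0}+\n^{D\prime\prime}+\sum_{i\ge 2}T^{(1-i)/2}v_{i},\\
B_{T}^{E_{0}\prime}&=T^{1/2}v_{0}^{*}+\n^{D\prime}+\sum_{i\ge 2}T^{(1-i)/2}v_{i}^{*},
\end{align*}
so that $B_{T}^{E_{0},2}=T[v_{0},v_{0}^{*}]+[\n^{D\prime\prime},\n^{D\prime}]+T^{1/2}\bigl([\n^{D\prime\prime},v_{0}^{*}]+[v_{0},\n^{D\prime}]\bigr)+\mathcal{O}(1)$, where the $\mathcal{O}(1)$ and lower orders are polynomial in $T^{-1/2}$.

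For $T\to+\infty$, the assumption that $\mathscr H\mathscr E$ is locally free ensures, via Theorem \ref{thm:cohco}, that $[v_{0},v_{0}^{*}]$ is a fiberwise Hodge Laplacian on the smooth bundle $D$ with kernel $\mathcal{H}D\simeq HD$ from (\ref{eq:hod2}). A Duhamel expansion of $\exp(-B_{T}^{E_{0},2})$ around the dominant term $\exp(-T[v_{0},v_{0}^{*}])$ shows that on the complement of $\mathcal{H}D$ the contribution is exponentially small in $T$, while on $\mathcal{H}D$ the $T^{1/2}$ cross-terms $[\n^{D\prime\prime},v_{0}^{*}]+[v_{0},\n^{D\prime}]$ contribute only at order $T^{-1/2}$ (their images land in $\mathcal{H}D^{\perp}$ and are damped by the heat semigroup), and the zeroth order term projects to the curvature of $P\n^{D}P=\n^{\mathcal{H}D}$. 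By Proposition \ref{prop:ide}, $\n^{\mathcal{H}D}$ corresponds under $\mathcal{H}D\simeq HD$ to the Chern connection $\n^{HD}$, and the supertrace yields $\ch(HD,h^{HD})$. This proves (\ref{eq:iv36a2b}). Then (\ref{eq:iv36a3b}) follows because by Theorem \ref{thm:carch} the Bott-Chern class of $\ch(A^{E_{0}\prime\prime},h_{T})$ is $T$-independent, hence equal to the limit; (\ref{eq:iv36a4b}) is the special case $HD=0$. The main technical point is the control of the Duhamel remainders uniformly in $x\in X$ with the stated rate, but this is a finite-dimensional fiberwise argument and is entirely standard.

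For the bidegree-$(1,1)$ statement, the approach is a direct algebraic computation rather than an asymptotic one. Since $h_{T}$ is pure, $A_{T}^{E_{0}\prime\prime}$ has form-bidegrees $(0,0),(0,1),(0,i)_{i\ge 2}$ and $A_{T}^{E_{0}\prime}$ has form-bidegrees $(0,0),(1,0),(i,0)_{i\ge 2}$. Inspection of $A_{T}^{E_{0},2}=[A^{E_{0}\prime\prime},A_{T}^{E_{0}\prime}]$ bidegree by bidegree shows that the sole contribution to bidegree $(1,1)$ is $[\n^{D\prime\prime},\n_{T}^{D\prime}]$; all other brackets land in bidegrees with holomorphic or antiholomorphic part $\neq 1$. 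Because $\n^{D\prime\prime}$ preserves the $\Z$-grading of $D$ and $h_{T}=hT^{N^{D}}$ is diagonal in that grading, the $T^{q}$ factors cancel in the adjoint relation, so $\n_{T}^{D\prime}=\n^{D\prime}$, which is $T$-independent. Hence
\[
\ch^{(1,1)}(A^{E_{0}\prime\prime},h_{T})=-\frac{1}{2i\pi}\Trs\bigl[\n^{D,2}\bigr]^{(1,1)},
\]
independent of $T$. Applying $\Trs$ to the curvature of the degree-preserving connection $\n^{D}=\n^{D\prime\prime}+\n^{D\prime}$ gives the curvature of the induced connection on $\det D$, which by Proposition \ref{prop:pcond} is the Chern connection on $(\det D,h^{\det D})$; we therefore obtain $\ch^{(1,1)}(A^{E_{0}\prime\prime},h_{T})=c_{1}(\det D,h^{\det D})$, proving (\ref{eq:iv36a4x1}) in the strong form that the limit equals the form for all $T$. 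Passing to Bott-Chern classes and using the holomorphic identification $\det D\simeq\det\mathscr H\mathscr E$ of Theorem \ref{thm:pdet} yields (\ref{eq:terma1}).
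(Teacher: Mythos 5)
Your treatment of the $T\to+\infty$ limit, (\ref{eq:iv36a2b})--(\ref{eq:iv36a4b}), follows the same BGV-type reduction as the paper (conjugation to $B^{E_{0}}_{T}$, dominance of $T[v_{0},v_{0}^{*}]$ with kernel $\mathcal HD\simeq HD$, Proposition \ref{prop:ide} to identify the projected connection, then \cite[Corollary 9.6]{BerlineGetzlerVergne}), and that part is fine.

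There is a genuine gap in your argument for (\ref{eq:iv36a4x1}) and (\ref{eq:terma1}). You correctly observe that the unique bidegree-$(1,1)$ piece of the \emph{curvature} $A^{E_{0},2}_{T}$ is $[\n^{D\prime\prime},\n^{D\prime}_{T}]=\n^{D,2}$ and that $\n^{D\prime}_{T}=\n^{D\prime}$ is $T$-independent. But the jump to ``$\ch^{(1,1)}(A^{E_{0}\prime\prime},h_{T})=-\tfrac{1}{2i\pi}\Trs[\n^{D,2}]$, independent of $T$'' is false: the $(1,1)$ component of $\exp(-A^{E_{0},2}_{T})$ is \emph{not} $-(A^{E_{0},2}_{T})^{(1,1)}$, because the exponential mixes bidegrees. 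The $(0,0)$ piece $T[v_{0},v_{0}^{*}]$ sits in every Duhamel slot and produces a term $-\Trs\bigl[\n^{D,2}\exp(-T[v_{0},v_{0}^{*}])\bigr]$, and the $(0,1)$ piece $[\n^{D\prime\prime},v_{0}^{*}]$ can pair with the $(1,0)$ piece $[v_{0},\n^{D\prime}]$ to give further $(1,1)$ contributions carrying explicit powers of $T$. These are not $T$-independent. Indeed, specializing (\ref{eq:iv21a1}) to the one-parameter family $h_{T}$, using $h_{T}^{-1}\frac{dh_{T}}{dT}=N^{D}/T$ and the fact that the only $(0,0)$ piece of $\exp(-A^{E_{0},2}_{T})$ is $\exp(-T[v_{0},v_{0}^{*}])$, one gets
\begin{equation*}
\frac{d}{dT}\ch^{(1,1)}\left(A^{E_{0}\prime\prime},h_{T}\right)=
-\frac{\overline{\pa}^{X}\pa^{X}}{2i\pi T}\Trs\bigl[N^{D}\exp(-T[v_{0},v_{0}^{*}])\bigr],
\end{equation*}
and already for a two-step acyclic $D$ with generic metrics the function $\Trs[N^{D}\exp(-T[v_{0},v_{0}^{*}])]$ is nonconstant on $X$, so this derivative is nonzero. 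Thus (\ref{eq:iv36a4x1}) is genuinely a $T\to 0$ statement. The correct reasoning is the paper's: in the conjugated normalization $B^{E_{0}}_{T}$, the $(0,0)$, $(0,1)$, $(1,0)$ pieces carry explicit factors $T$, $\sqrt T$, $\sqrt T$ and hence vanish in the $T\to 0$ limit, so the degree-$2$ component of $\Trs[\exp(-B^{E_{0},2}_{T})]$ converges to $-\Trs[\n^{D,2}]$; the $v_{i}$ with $i\ge 2$ are discarded beforehand via the Chern-Simons transgression as in (\ref{eq:deg3}). Your final identities are true, but your intermediate claim that the $(1,1)$-form $\ch^{(1,1)}(A^{E_{0}\prime\prime},h_{T})$ equals $c_{1}(\det D,h^{\det D})$ for every $T$ is false.
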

\begin{proof}
	By (\ref{eq:iv21}), (\ref{eq:iv36a4}), we have
	\begin{equation}\label{eq:iv36a3z1}
\ch\left(A^{E_{0} \prime \prime 
},h_{T}\right)=\varphi\Trs\left[\exp\left(-B^{E_{0},2}_{T}\right)\right].
\end{equation}

By (\ref{eq:iv36a6}), (\ref{eq:coa1}), and (\ref{eq:iv36a4}), we get
\begin{align}\label{eq:iv36a7}
&B^{E \prime \prime}_{T}=\sqrt{T}v_{0}+\n^{D \prime \prime }+\sum_{i\ge 
2}^{}T^{\left(1-i\right)/2}v_{i},\\
&B^{E \prime }_{T}=\sqrt{T}v_{0}^{*}+\n^{D \prime}+\sum_{i\ge 
2}^{}T^{\left(1-i\right)/2}v_{i}^{*}. \nonumber 
\end{align}
By (\ref{eq:iv36a7}), we get
\begin{equation}\label{eq:iv36a9}
B^{E}_{T}=\sqrt{T}\left(v_{0}+v_{0}^{*}\right)+\n^{D}+\sum_{i\ge 
2}^{}T^{\left(1-i\right)/2}\left(v_{i}+v_{i}^{*}\right).
\end{equation}

Using Proposition \ref{prop:ide}, (\ref{eq:iv36a9}), and proceeding 
as in  \cite[Corollary 9.6]{BerlineGetzlerVergne} in a finite-dimensional context, we get 
(\ref{eq:iv36a2b}). By (\ref{eq:iv36a2b}), we obtain (\ref{eq:iv36a3b}), 
and also (\ref{eq:iv36a4b}). 

Set
\begin{equation}\label{eq:tronc1}
B^{E_{0},\le 1}_{T}=\sqrt{T} \left( v_{0}+v_{0}^{*} \right) +\n^{D}.
\end{equation}
Then $B^{E_{0}}_{T}-B^{E_{0},\le 1}_{T}$ is a smooth section of 
$\Lambda\left(T^{*}_{\C}X\right)\ho\End\left(D\right)$ whose 
degree in $\Lambda\left(T^{*}_{\C}X\right)$ is $\ge 2$. By proceeding as in 
(\ref{eq:deg3}), we deduce that
\begin{equation}\label{eq:tronc2}
\Trs\left[\exp\left(-B^{E_{0},2}_{T}\right)\right]^{(2)}=\Trs\left[\exp\left(-B^{E_{0},\le 1,2}_{T}\right)\right]^{(2)}.
\end{equation}
As $T\to 0$, 
\begin{equation}\label{eq:tronc3}
\Trs\left[\exp\left(-B^{E_{0},\le 1,2}_{T}\right)\right]^{(2)}\to
\Trs\left[\exp\left(-\n^{D,2}\right)\right]^{(2)}=\Trs\left[-\n^{D,2}\right].
\end{equation}
Using Proposition \ref{prop:pcond}, we get
\begin{equation}\label{eq:tronc4}
\varphi\Trs\left[-\n^{D,2}\right]=c_{1}\left(\det D,h^{\det D}\right).
\end{equation}
By (\ref{eq:tronc2})--(\ref{eq:tronc4}), we obtain 
(\ref{eq:iv36a4x1}). Finally, by combining Theorem \ref{thm:pdet} and 
(\ref{eq:iv36a4x1}), we get (\ref{eq:terma1}). 
The proof of our theorem is completed. 
\end{proof}
\begin{remark}\label{rem:bfgs}
	The argument used in the proof of (\ref{eq:tronc2}) is exactly 
	the same as the argument used by Bismut-Freed in a smooth 
	infinite-dimensional context in their proof of 
	\cite[eq. (1.53) in Theorem 1.19]{BismutFreed86b}. This argument 
	reappears in a holomorphic infinite-dimensional context in 
	\cite[Theorems 1.13 and 1.15]{BismutGilletSoule88c}. However, in 
	these references,  the 
	addition of `bad' diverging terms in the considered 
	superconnections plays a key role in the convergence of the 
	superconnection forms as $T\to 0$, while here, the situation  is the 
	opposite. Observe that in \cite[Theorem 4.21]{Qiang16}, when 
	$HD=0$, Qiang had established Theorem \ref{thm:conv} by similar 
	arguments to the ones we just gave.
\end{remark}
\subsection{The Chern character of a cone}%
	\label{subsec:chcone}
	We use the notation of Subsections \ref{subsec:moco} and \ref{subsec:scmoco}. In 
	particular, $\phi$ is a morphism $E\to\underline{E}$ of degree 
	$0$, and $\left(C,A^{C \prime \prime}_{\phi}\right)$ is the 
	corresponding cone. Here we reobtain a result of Qiang 
	\cite[Proposition 4.24]{Qiang16}. 
	\begin{theorem}\label{thm:idc}
	The following identity holds:
	\begin{equation}\label{eq:cor2}
\cBC\left(A^{C \prime 
\prime}_{\phi}\right)=\ch_{\mathrm{BC}}\left(A^{\underline{E} \prime \prime 
}\right)-\ch_{\mathrm{BC}}\left(A^{E \prime \prime }\right)\, 
\mathrm{in}\, H^{(=)}_{\mathrm{BC}}\left(X,\R\right).
\end{equation}
In particular, if $\phi$ is a 
quasi-isomorphism, then
\begin{equation}\label{eq:cor2a1}
\cBC\left(A^{E \prime \prime }\right)=\cBC\left(A^{\underline{E} 
\prime \prime }\right) \, 
\mathrm{in}\, H^{(=)}_{\mathrm{BC}}\left(X,\R\right).
\end{equation}
\end{theorem}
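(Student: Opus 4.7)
The plan is to deform $A^{C \prime \prime}_\phi$ to the block-diagonal superconnection $A^{C \prime \prime}_0$ through the family $A^{C \prime \prime}_{t\phi}$, $t \in \C$, introduced in (\ref{eq:co1}), and to exploit the mutual conjugation of these for $t \neq 0$. Fix splittings $E \simeq E_0$ and $\underline{E} \simeq \underline{E}_0$; these canonically induce $C \simeq C_0$ with diagonal bundle $D_C^{k} = D_E^{k+1} \oplus D_{\underline{E}}^{k}$. Choose pure metrics $h_E \in \mathscr{M}^{D_E}$ and $h_{\underline{E}} \in \mathscr{M}^{D_{\underline{E}}}$, and let $h_C$ denote the associated block-diagonal metric on $D_C$. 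Since $\phi$ is a morphism of antiholomorphic superconnections, $(A^{C \prime \prime}_{t\phi})^2 = 0$ for every $t \in \C$, so $A^{C_0 \prime \prime}_{t\phi}$ is a genuine antiholomorphic superconnection for all $t$, and the form $\ch(A^{C_0 \prime \prime}_{t\phi}, h_C)$ depends smoothly on $t$.

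By (\ref{eq:co1}), for $t \neq 0$ the superconnection $A^{C_0 \prime \prime}_{t\phi}$ is conjugate to $A^{C_0 \prime \prime}_\phi$ via $M_t \in \Aut^0(C_0)$, so the conjugation invariance in Theorem \ref{thm:carch} gives
\begin{equation*}
\cBC(A^{C \prime \prime}_{t\phi}) = \cBC(A^{C \prime \prime}_\phi) \quad \text{for all } t \neq 0.
\end{equation*}
Letting $t \to 0$, the smooth forms $\ch(A^{C_0 \prime \prime}_{t\phi}, h_C)$ converge smoothly (in particular in the sense of currents) to $\ch(A^{C_0 \prime \prime}_0, h_C)$, and Proposition \ref{prop:plim} then yields $\cBC(A^{C \prime \prime}_0) = \cBC(A^{C \prime \prime}_\phi)$. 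At $t = 0$ the superconnection, the $h_C$-adjoint, and hence the curvature are all block diagonal. The degree shift flips the grading involution on the $D_E$-block, so $\Trs_{D_C} = -\Trs_{D_E} + \Trs_{D_{\underline{E}}}$ on block-diagonal operators; therefore
\begin{equation*}
\ch(A^{C_0 \prime \prime}_0, h_C) = -\ch(A^{E_0 \prime \prime}, h_E) + \ch(A^{\underline{E}_0 \prime \prime}, h_{\underline{E}}).
\end{equation*}
Passing to Bott-Chern classes yields (\ref{eq:cor2}).

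For (\ref{eq:cor2a1}), when $\phi$ is a quasi-isomorphism, $\mathscr{H} \mathscr{C} = 0$ by the long exact sequence (\ref{eq:loex}), so by (\ref{eq:cons2a1}) in Theorem \ref{thm:cohco}, the $\mathcal{O}_X^{\infty}$-module $HD_C$ vanishes as well. Equation (\ref{eq:iv36a4b}) in Theorem \ref{thm:conv} then gives $\cBC(A^{C \prime \prime}_\phi) = 0$, and (\ref{eq:cor2}) produces (\ref{eq:cor2a1}). The main subtlety is the continuity of the Bott-Chern class as $t$ crosses $0$, which is handled cleanly by Proposition \ref{prop:plim} once one observes that $A^{C_0}_{t\phi}$ depends smoothly on $t$ and each form $\ch(A^{C_0 \prime \prime}_{t\phi}, h_C)$ is closed by Theorem \ref{thm:carch}.
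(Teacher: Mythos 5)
Your proof is correct and follows essentially the same route as the paper: the deformation $A^{C \prime \prime}_{t\phi}$, the conjugation by $M_t$ for $t \neq 0$ together with the invariance from Theorem \ref{thm:carch}, the passage to $t = 0$, the block-diagonal supertrace computation, and the use of equation (\ref{eq:iv36a4b}) in Theorem \ref{thm:conv} for the quasi-isomorphism case. The only difference is that you make explicit two steps the paper leaves implicit (the sign of the supertrace coming from the degree shift, and the invocation of Proposition \ref{prop:plim} to justify continuity of the class as $t \to 0$), which is fine.
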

\begin{proof}
	 For $t\in \C$, let $\left(C_{t}, A^{C \prime \prime 
	 }_{t\phi}\right)$ be 
	the complex of the cone associated with $t\phi$ as in Subsection 
	\ref{subsec:scmoco}. Recall that 
	$M_{t}$ is given by (\ref{eq:for1}).  If  $t\in \C^{*}$, $M_{t}$ 
	is invertible, and equation (\ref{eq:co1}) holds.	 By Theorem \ref{thm:carch}, if $t\in\C^{*}$,
	\begin{equation}\label{eq:cor2a2}
\cBC\left(A^{C \prime \prime }_{t\phi}\right)=\cBC\left(A^{C \prime 
\prime }_{\phi}\right).
\end{equation}
Moreover, as $t\to 0$, we get
\begin{equation}\label{eq:cor2a3}
\cBC\left(A^{C \prime \prime }_{t\phi}\right)\to\cBC\left(
A^{C \prime \prime }_{0}\right).
\end{equation}
We have the trivial identity
\begin{equation}\label{eq:cor2a4}
\cBC\left(A^{C \prime \prime }_{0}\right)=\cBC\left(A^{\underline{E} 
\prime \prime }\right)-\cBC\left(A^{E \prime \prime }\right).
\end{equation}
By (\ref{eq:cor2a2})--(\ref{eq:cor2a4}), we get (\ref{eq:cor2}).

If $\phi$ is a quasi-isomorphism, using equation (\ref{eq:iv36a4b}) in 
Theorem \ref{thm:conv}, we get
\begin{equation}\label{eq:cor5a1}
\cBC\left(A^{C \prime \prime }_{\phi}\right)=0.
\end{equation}
By combining (\ref{eq:cor2}) and (\ref{eq:cor5a1}), we get 
(\ref{eq:cor2a1}).
The 
proof of our theorem is completed. 
\end{proof}
\subsection{The Chern character  on $\Db\left(X\right)$}%
\label{subsec:und}
By Theorem \ref{thm:exib},  if $\mathscr F$ is an object in $\Db\left(X\right)$, there is 
an antiholomorphic superconnection  $\mathscr E=\left(E,A^{E \prime \prime }\right)$ such 
that $\mathscr E$ is  isomorphic to $\mathscr F$ in 
$\Db\left(X\right)$. 

Here we reobtain a result of Qiang \cite[Theorem 4.25]{Qiang16}.
\begin{theorem}\label{thm:cla}
	The class $\ch_{\mathrm{BC}}\left(A^{E \prime \prime }\right)\in 
	H^{(=)}_{\mathrm{BC}}\left(X,\R\right)$ 
	 depends only on the isomorphism class of $\mathscr F$ in 
	$\Db\left(X\right)$.
\end{theorem}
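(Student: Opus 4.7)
The plan is to reduce the independence statement to the case of a quasi-isomorphism of antiholomorphic superconnections, which has already been handled by equation \eqref{eq:cor2a1} in Theorem \ref{thm:idc}.

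First, suppose $\mathscr E = (E,A^{E\prime\prime})$ and $\mathscr E' = (E',A^{E'\prime\prime})$ are two antiholomorphic superconnections such that $F_{X}(\mathscr E)$ and $F_{X}(\mathscr E')$ are isomorphic in $\Db(X)$. By Theorem \ref{thm:eqcat}, the induced functor $\underline{F}_{X}:\underline{\mathrm B}(X)\to \Db(X)$ is an equivalence of triangulated categories and is in particular fully faithful, so this isomorphism comes from an isomorphism between $\mathscr E$ and $\mathscr E'$ in $\underline{\mathrm B}(X)$. Concretely, this means there exists $\phi\in Z^{0}\Hom(\mathscr E,\mathscr E')_{X}$ representing this isomorphism, and such a $\phi$ is then a homotopy equivalence in the sense defined in Subsection \ref{subsec:hoca}.

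Next, I would invoke Proposition \ref{prop:prophtpeq}: a morphism $\phi$ in $Z^{0}\Hom(\mathscr E,\mathscr E')_{X}$ is a homotopy equivalence if and only if it is a quasi-isomorphism of $\mathcal O_{X}$-complexes. Thus $\phi:\mathscr E\to \mathscr E'$ is a quasi-isomorphism. The content of equation \eqref{eq:cor2a1} in Theorem \ref{thm:idc} is precisely that a quasi-isomorphism between antiholomorphic superconnections induces equality of Bott--Chern Chern characters, so we conclude
\begin{equation*}
\cBC(A^{E\prime\prime})=\cBC(A^{E'\prime\prime}) \quad\text{in}\quad H^{(=)}_{\mathrm{BC}}(X,\R).
\end{equation*}
Since an arbitrary $\mathscr F\in\Db(X)$ can be represented by some $\mathscr E\in\mathrm B(X)$ with $F_{X}(\mathscr E)\simeq \mathscr F$ (by Theorem \ref{thm:thmesssur}, or equivalently Theorem \ref{thm:exib}), the class $\cBC(A^{E\prime\prime})$ is well defined as a function of the isomorphism class of $\mathscr F$ in $\Db(X)$.

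There is essentially no obstacle: the hard work has been done in Theorem \ref{thm:eqcat} (the equivalence of categories) and Theorem \ref{thm:idc} (the cone computation, which in particular yields invariance under quasi-isomorphisms via the scaling trick \eqref{eq:cor2a2}--\eqref{eq:cor2a4} and the vanishing \eqref{eq:iv36a4b}). The only point worth underlining is that Theorem \ref{thm:eqcat} is used in its full strength: we need that $\underline{F}_{X}$ is fully faithful so that an abstract isomorphism in $\Db(X)$ is realized by a concrete morphism $\phi$ in $\mathrm B(X)$ that we can then feed into Theorem \ref{thm:idc}.
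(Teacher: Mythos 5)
Your proof is correct and follows essentially the same route as the paper's: invoke the equivalence of categories from Theorem \ref{thm:eqcat} to produce a quasi-isomorphism $\phi:\mathscr E\to\mathscr E'$, then apply the invariance under quasi-isomorphism from Theorem \ref{thm:idc}. The only difference is that you usefully spell out the intermediate step (full faithfulness gives an isomorphism in $\underline{\mathrm B}(X)$, hence a homotopy equivalence, hence a quasi-isomorphism via Proposition \ref{prop:prophtpeq}) that the paper compresses into a single sentence.
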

\begin{proof}
	Let $\underline{\mathscr F}\in \Db\left(X\right)$ be isomorphic 
	to $\mathscr F$, and let $\underline{\mathscr E}$ be a 
	corresponding antiholomorphic superconnection.  By Theorem 
	\ref{thm:eqcat}, $\underline{F}_{X}$ is an equivalence of 
	categories. Therefore,   there exists a 
	quasi-isomorphism $\phi: \mathscr E\to \underline{\mathscr E}$.
	Our theorem follows from Theorem \ref{thm:idc}.
	\end{proof}
	\begin{definition}\label{def:cDb}
		Let 
		\index{cBCF@$\cBC\left(\mathscr F\right)$}%
		$\cBC\left(\mathscr F\right)\in 
		H^{(=)}_{\mathrm{BC}}\left(X,\R\right)$ 
		be the common Bott-Chern class of the above $\cBC\left(A^{E \prime 
		\prime }\right)$.
	\end{definition}
	
	In the sequel, if $\mathscr E=\left(E,A^{E \prime \prime 
	}\right)$, we will often identify $\mathscr E$ and 
	$\underline{F}_{X}\left(\mathscr E\right)$, and we will also use 
	the notation 
	\index{cBCE@$\cBC\left(\mathscr E\right)$}%
	$\cBC\left(\mathscr E\right)$ instead of 
	$\cBC\left(A^{E \prime \prime }\right)$.

	Let $X,Y$ be compact complex manifolds, and let $f:X\to Y$ be a 
	holomorphic map. If $\mathscr F$ is an object in 
	$\Db\left(Y\right)$, then $Lf^{*} \mathscr F$ is an object in 
	$\Db\left(X\right)$.
	\begin{theorem}\label{thm:pu}
		The following identity holds:
		\begin{equation}\label{eq:pu1a}
\cBC\left(Lf^{*} \mathscr F\right)=f^{*}\cBC\left(\mathscr F\right).
\end{equation}
	\end{theorem}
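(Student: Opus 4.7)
The plan is to combine Theorem \ref{thm:exib} (existence of an antiholomorphic superconnection representing $\mathscr F$), Proposition \ref{prop:puba} (compatibility of $f_b^*$ with $Lf^*$ via the equivalence of categories), and Proposition \ref{prop:idpu} (behavior of the Chern character form under pull-back of antiholomorphic superconnections). The argument is essentially a diagram chase that reduces the statement on $\Db$ to a statement about antiholomorphic superconnections, where the corresponding identity has already been proved.

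First, using Theorem \ref{thm:exib}, I would choose an antiholomorphic superconnection $\mathscr E = (E, A^{E\prime\prime})$ on $Y$ together with an isomorphism $\underline{F}_Y(\mathscr E) \simeq \mathscr F$ in $\Db(Y)$. By Definition \ref{def:cDb}, this yields
\begin{equation*}
\cBC(\mathscr F) = \cBC(A^{E\prime\prime}) \ \mathrm{in}\ H^{(=)}_{\mathrm{BC}}(Y,\R).
\end{equation*}

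Next, I would apply Proposition \ref{prop:puba}. Since the diagram \eqref{eq:commu1} commutes up to canonical isomorphism, the pulled-back superconnection $f_b^* \mathscr E$ on $X$ satisfies
\begin{equation*}
\underline{F}_X(f_b^* \mathscr E) \simeq Lf^* \underline{F}_Y(\mathscr E) \simeq Lf^* \mathscr F\ \mathrm{in}\ \Db(X).
\end{equation*}
By Theorem \ref{thm:cla} (well-definedness of $\cBC$ on $\Db(X)$), this gives
\begin{equation*}
\cBC(Lf^* \mathscr F) = \cBC(A^{f_b^* E \prime\prime}) \ \mathrm{in}\ H^{(=)}_{\mathrm{BC}}(X,\R).
\end{equation*}

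Finally, I would invoke Proposition \ref{prop:idpu}, and specifically the cohomological identity \eqref{eq:pb1a1}, which asserts
\begin{equation*}
\cBC(A^{f_b^* E \prime\prime}) = f^* \cBC(A^{E \prime\prime}).
\end{equation*}
Chaining the three identities then yields $\cBC(Lf^* \mathscr F) = f^* \cBC(\mathscr F)$, as required.

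There is no real obstacle here, as all the substantive work has been done previously: the construction of superconnection representatives (Theorem \ref{thm:exib}), the equivalence of categories and its compatibility with pull-back (Theorem \ref{thm:eqcat} and Proposition \ref{prop:puba}), the invariance of $\cBC$ under quasi-isomorphism (Theorem \ref{thm:cla}, itself relying on Theorem \ref{thm:idc}), and the naturality of the Chern character form under $f^*$ at the level of splittings (Proposition \ref{prop:idpu}). The only minor point to be careful about is that the isomorphism in Proposition \ref{prop:puba} is canonical enough that the identification $\underline{F}_X(f_b^*\mathscr E) \simeq Lf^*\mathscr F$ in $\Db(X)$ produces the same Bott-Chern class on both sides — but this is immediate from Theorem \ref{thm:cla}, since $\cBC$ depends only on the isomorphism class in $\Db(X)$.
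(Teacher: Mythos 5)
Your proof is correct and follows essentially the same route as the paper: replace $\mathscr F$ by a representing antiholomorphic superconnection $\mathscr E$ on $Y$, invoke Proposition \ref{prop:puba} to identify $\underline{F}_X(f_b^*\mathscr E)$ with $Lf^*\mathscr F$, and conclude with Proposition \ref{prop:idpu}. Your explicit appeal to Theorem \ref{thm:cla} to justify the well-definedness of each $\cBC$ along the way is a helpful expansion of what the paper leaves implicit in the phrase ``as before.''
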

	\begin{proof}
		As before, we may and we will assume that instead $\mathscr 
		E$ is an object in $\mathrm{B}\left(Y\right)$.  Our 
		theorem is now a consequence of Propositions \ref{prop:puba} 
		and    
		\ref{prop:idpu}.
	\end{proof}
	
 If $\mathscr F, \underline{\mathscr F}$ are objects in 
	$\Db\left(X\right)$, their derived tensor product $\mathscr 
	F\ho_{\mathcal{O}_{X}}^{\mathrm{L}}\underline{\mathscr F}$ was 
	defined in Subsection \ref{subsec:ten}.
	\begin{theorem}\label{thm:tepr}
		The following identity holds:
		\begin{equation}\label{eq:pu1aZ1}
\cBC\left(\mathscr 
	F\ho_{\mathcal{O}_{X}}^{\mathrm{L}}\underline{\mathscr 
	F}\right)=\cBC\left(\mathscr 
	F\right)\cBC\left(\underline{\mathscr 
	F}\right)\,\mathrm{in}\,H^{(=)}_{\mathrm{BC}}\left(X,\R\right).
\end{equation}
\end{theorem}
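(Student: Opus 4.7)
The plan is to reduce the statement to the multiplicativity of the Chern character at the level of antiholomorphic superconnections, which was already verified in Proposition~\ref{prop:ptens}, and then transport the identity across the equivalence of categories established in Theorem~\ref{thm:eqcat}.

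First, by Theorem~\ref{thm:exib} (essential surjectivity of the functor $\underline{F}_X$), there exist objects $\mathscr E = (E,A^{E\prime\prime})$ and $\underline{\mathscr E} = (\underline{E},A^{\underline{E}\prime\prime})$ in $\underline{\mathrm B}(X)$ together with isomorphisms $\underline{F}_X(\mathscr E) \simeq \mathscr F$ and $\underline{F}_X(\underline{\mathscr E}) \simeq \underline{\mathscr F}$ in $\Db(X)$. By Definition~\ref{def:cDb}, we therefore have
\begin{equation*}
\cBC(\mathscr F) = \cBC(A^{E\prime\prime}), \qquad
\cBC(\underline{\mathscr F}) = \cBC(A^{\underline{E}\prime\prime}).
\end{equation*}

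Next, consider the tensor product $\mathscr E \ho_b \underline{\mathscr E}$ of antiholomorphic superconnections introduced in Subsection~\ref{subsec:tenspro}. By Proposition~\ref{prop:pte}, there is a canonical isomorphism in $\Db(X)$,
\begin{equation*}
\underline{F}_X(\mathscr E \ho_b \underline{\mathscr E}) \simeq \underline{F}_X(\mathscr E) \ho_{\mathcal O_X}^{\mathrm L} \underline{F}_X(\underline{\mathscr E}) \simeq \mathscr F \ho_{\mathcal O_X}^{\mathrm L} \underline{\mathscr F}.
\end{equation*}
Invoking Theorem~\ref{thm:cla}, which asserts that $\cBC$ depends only on the isomorphism class in $\Db(X)$, we deduce
\begin{equation*}
\cBC\left(\mathscr F \ho_{\mathcal O_X}^{\mathrm L} \underline{\mathscr F}\right)
= \cBC\left(A^{E \ho_b \underline E\,\prime\prime}\right).
\end{equation*}

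Finally, applying Proposition~\ref{prop:ptens} yields
\begin{equation*}
\cBC\left(A^{E \ho_b \underline E\,\prime\prime}\right)
= \cBC(A^{E\prime\prime})\,\cBC(A^{\underline E\,\prime\prime})
= \cBC(\mathscr F)\,\cBC(\underline{\mathscr F}),
\end{equation*}
which is the desired identity. No step presents a genuine obstacle: the substantive content, namely the multiplicativity of the Chern character form $\ch(A^{E_0\prime\prime},h)$ under the tensor product of generalized metrics together with the fact that derived tensor product of flat resolutions computes $\ho_{\mathcal O_X}^{\mathrm L}$, has already been absorbed into Propositions~\ref{prop:ptens} and \ref{prop:pte}.
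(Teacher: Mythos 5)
Your proof is correct and follows essentially the same route as the paper: replace the objects of $\Db(X)$ by antiholomorphic superconnections, apply Proposition~\ref{prop:pte} to identify the derived tensor product with $\ho_b$, and conclude via the multiplicativity established in Proposition~\ref{prop:ptens}. The paper's version is just more terse.
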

\begin{proof}
	We may as replace $\mathscr F, \underline{\mathscr F}$ by 
	antiholomorphic superconnections $\mathscr E, \underline{\mathscr 
	E}$. Our Theorem 
	follows from Propositions \ref{prop:pte} and 
	\ref{prop:ptens}. 
\end{proof}
\subsection{The Chern character on $K\left(X\right)$}%
	\label{subsec:chk}
	Let 
	\index{KDb@$K\left(\Db\left(X\right)\right)$}%
	$K\left(\Db\left(X\right)\right)$ denote the $K$-theory of 
	$\Db\left(X\right)$. By definition, this is the Grothendieck group
	generated  by the objects in $\Db\left(X\right)$, with  relations coming from the 
	construction of cones. Namely if $\mathscr F,\underline{\mathscr 
	F} \in \Db\left(X\right)$, if $\phi:\mathscr 
	F\to\underline{\mathscr  F}$ is a morphism of 
	$\mathcal{O}_{X}$-complexes, let $\mathscr C\in 
	\Db\left(X\right)$ be the cone of $\mathscr F,\underline{\mathscr 
	F}$. Then\footnote{This condition is equivalent to the standard 
	condition using instead  distinguished triangles in $\Db\left(X\right)$
	\cite[\href{https://stacks.math.columbia.edu/tag/0FCN}{Tag 0FCN}]{stacks-project}.}
	\begin{equation}\label{eq:form1}
\mathscr C+ \mathscr F=\underline{\mathscr F}\ \mathrm{in}\ 
K\left(\Db\left(X\right)\right).
\end{equation}

Let 
\index{KX@$K\left(X\right)$}%
$K\left(X\right)$ be the Grothendieck group of coherent sheaves 
on $X$. By (\ref{eq:hipi1}), (\ref{eq:hipi2}), we have the identity
\begin{equation}\label{eq:form1a1}
K\left(\Db\left(X\right)\right)=K\left(X\right).
\end{equation}
The derived tensor product on $\Db\left(X\right)$ induces a 
corresponding tensor product on $K\left(X\right)$, so that 
$K\left(X\right)$ is a commutative ring.

By Theorems \ref{thm:idc} and \ref{thm:cla}, and by (\ref{eq:form1a1}), the Chern character 
$\cBC$ can be viewed as a morphism of rings from 
$K\left(X\right)$ into $H^{(=)}_{\mathrm{BC}}\left(X,\R\right)$. 

If $\mathscr F$ is an object in $ \Db\left(X\right)$, we will often identify $\mathscr F$ 
to its image in $K\left(X\right)$. 

Since the $\mathscr H^{i} \mathscr F$ are coherent sheaves, they can 
be viewed as elements of $\Db\left(X\right)$ equipped with the $0$ 
differential.
Put
\index{RHF@$\mathscr R \mathscr H \mathscr F$}%
\begin{equation}\label{eq:dirim}
\mathscr R \mathscr H \mathscr F=\sum_{}^{}\left(-1\right)^{i}\mathscr H^{i} 
\mathscr F,
\end{equation}
so that $\mathscr R \mathscr H \mathscr F\in 
K\left(X\right)$.
\begin{theorem}\label{thm:kth}
		If $\mathscr F\in \Db\left(X\right)$, then
		\begin{equation}\label{eq:iv43}
\ch_{\mathrm{BC}}\left(\mathscr 
F\right)=\ch_{\mathrm{BC}}\left(\mathscr R\mathscr H\mathscr 
F\right).
\end{equation}
	\end{theorem}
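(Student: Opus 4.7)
The plan is to deduce the theorem purely formally from the cone relation (Theorem \ref{thm:idc}) together with the standard identification of $K(\Db(X))$ with $K(X)$ via cohomology, as in equations \eqref{eq:hipi1}--\eqref{eq:hipi2}. Since Theorem \ref{thm:idc} gives $\cBC(\underline{\mathscr F}) = \cBC(\mathscr F) + \cBC(\mathscr C)$ whenever $\mathscr C = \mathrm{cone}(\mathscr F, \underline{\mathscr F})$, the map $\cBC : \Db(X) \to H^{(=)}_{\mathrm{BC}}(X,\R)$ respects exactly the relations defining $K(\Db(X))$, and hence factors through a group homomorphism $K(\Db(X)) \to H^{(=)}_{\mathrm{BC}}(X,\R)$, as already noted in Subsection \ref{subsec:chk}.

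Next, I would show that in $K(\Db(X))$ one has the identity
\begin{equation*}
[\mathscr F] = \sum_{i} (-1)^{i}\, [\mathscr H^{i}\mathscr F\, [-i]],
\end{equation*}
which is the content of the isomorphism \eqref{eq:hipi1}--\eqref{eq:hipi2}. The standard argument uses the canonical truncation functors $\tau_{\le k}\mathscr F$: for each $k$, one has a distinguished triangle
\begin{equation*}
\tau_{\le k-1}\mathscr F \to \tau_{\le k}\mathscr F \to \mathscr H^{k}\mathscr F[-k] \xrightarrow{+1}
\end{equation*}
which, realized at the level of bounded complexes as a genuine short exact sequence, gives rise to a cone representing $\mathscr H^{k}\mathscr F[-k]$. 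Applying Theorem \ref{thm:idc} to this cone, one obtains
\begin{equation*}
\cBC(\tau_{\le k}\mathscr F) = \cBC(\tau_{\le k-1}\mathscr F) + (-1)^{k}\cBC(\mathscr H^{k}\mathscr F),
\end{equation*}
the sign $(-1)^{k}$ coming from the shift $[-k]$ (which, via Proposition \ref{prop:ptens} applied to tensoring by a shift, multiplies the Chern character by $(-1)^{k}$). Since $\mathscr F$ is bounded, an easy induction on the amplitude then yields
\begin{equation*}
\cBC(\mathscr F) = \sum_{i} (-1)^{i}\, \cBC(\mathscr H^{i}\mathscr F)
= \cBC(\mathscr R\mathscr H\mathscr F),
\end{equation*}
where in the last equality each $\mathscr H^{i}\mathscr F$ is viewed as an object of $\Db(X)$ concentrated in degree $0$ with zero differential, and $\cBC$ is additive by another application of Theorem \ref{thm:idc}.

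The only genuine subtlety lies in making sure that the coherent sheaves $\mathscr H^{i}\mathscr F$ really define classes $\cBC(\mathscr H^{i}\mathscr F)$ in the sense of Definition \ref{def:cDb}. But each $\mathscr H^{i}\mathscr F$ is an object of $\Db(X)$ in its own right, so Theorem \ref{thm:exib} provides an antiholomorphic superconnection $\mathscr E_{i}$ quasi-isomorphic to it, and Theorem \ref{thm:cla} ensures that $\cBC(\mathscr H^{i}\mathscr F)$ is well-defined. With these points settled, the proof is complete: the only substantive ingredient is the cone formula of Theorem \ref{thm:idc}, and the rest is the formal compatibility of $\cBC$ with the K-theoretic identification $K(\Db(X))\simeq K(X)$ from \eqref{eq:hipi2}.
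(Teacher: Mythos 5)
Your proposal is correct and takes essentially the same route as the paper: both factor $\cBC$ through the Grothendieck group via the cone relation of Theorem \ref{thm:idc} (Subsection \ref{subsec:chk}) and then invoke the identity $\mathscr F = \mathscr R\mathscr H\mathscr F$ in $K(X)$. The only difference is presentational: the paper cites this $K$-theoretic identity from (\ref{eq:hipi1})--(\ref{eq:hipi2}) (i.e., from the Stacks project), whereas you re-derive it by an induction on canonical truncations, applying the cone formula at each step.
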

	\begin{proof}
		 By (\ref{eq:hipi1}), (\ref{eq:hipi2}), we have the identity,
		 \begin{equation}\label{eq:coc1}
\mathscr F=\mathscr R \mathscr H \mathscr F\ \mathrm{in}\
K\left(X\right).
\end{equation}
Taking the Chern character of (\ref{eq:coc1}), we get 
(\ref{eq:iv43}). The proof of our theorem is completed. 
\end{proof}
\begin{remark}\label{rem:cons}
	By Theorem \ref{thm:kth}, we deduce that if $\mathscr 
	E=\left(E,A^{E \prime \prime }\right)$ is an antiholomorphic 
	superconnection, then
	\begin{equation}\label{eq:coc1a1}
\cBC\left(\mathscr E\right)=\cBC\left(\mathscr R \mathscr H \mathscr 
E\right).
\end{equation}
As we saw in Remark \ref{rem:loccst}, $\mathscr H \mathscr E$ depends 
only on $D,v_{0},\n^{D}$. By (\ref{eq:coc1a1}), the same is true for 
$\cBC\left(\mathscr E\right)$. When $\mathscr H \mathscr E= \mathscr 
H HD$ is locally free, this is also a consequence of Theorem 
\ref{thm:conv}. 
\end{remark}

If $\n^{D \prime \prime }$ defines a holomorphic structure on $D$, i.e., if $\n^{D 
\prime \prime ,2}=0$, we can define $\cBC\left(D\right)$. 
\begin{proposition}\label{prop:phol}
If 	$\n^{D \prime \prime ,2}=0$, then
\begin{equation}\label{eq:coc1a2}
\cBC\left(\mathscr E\right)=\cBC\left(D\right).
\end{equation}
\end{proposition}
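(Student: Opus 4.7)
The plan is to reduce to the case of a pure Dolbeault resolution by discarding the higher-order terms $v_i$ with $i \ge 2$, and then to recognize the resulting antiholomorphic superconnection as the Dolbeault resolution of the holomorphic complex $(D, v_0)$. First, fix a splitting $E \simeq E_0$ as in \eqref{eq:iv4a-1}--\eqref{eq:iv4} and write $A^{E_0 \prime \prime} = v_0 + \n^{D \prime \prime} + \sum_{i \ge 2} v_i$. Under the hypothesis $\n^{D \prime \prime, 2} = 0$, the relations in \eqref{eq:carr1} give $v_0^2 = 0$ and $[\n^{D \prime \prime}, v_0] = 0$; moreover $\n^{D \prime \prime}$ is now a genuine holomorphic structure on the $\Z$-graded vector bundle $D$ with respect to which $v_0$ is a holomorphic endomorphism of degree $1$. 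Hence $(D, v_0)$ is a bounded complex of holomorphic vector bundles and defines an object in $\Db(X)$.

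Next, introduce the truncated antiholomorphic superconnection $\widetilde{\mathscr E} = \left(\Lambda(\overline{\TsX}) \ho D, v_0 + \n^{D \prime \prime}\right)$. The three relations above ensure $(v_0 + \n^{D \prime \prime})^2 = 0$, so $\widetilde{\mathscr E}$ is a bona fide antiholomorphic superconnection with the same diagonal $D$, the same $v_0$, and the same antiholomorphic connection $\n^{D \prime \prime}$ as $\mathscr E$. Invoking Remark \ref{rem:cons}, which asserts that $\cBC$ depends only on this triple, one obtains $\cBC(\mathscr E) = \cBC(\widetilde{\mathscr E})$ in $H^{(=)}_{\mathrm{BC}}(X, \R)$.

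It remains to identify $\cBC(\widetilde{\mathscr E})$ with $\cBC(D)$. By construction, $\widetilde{\mathscr E}$ is the Dolbeault complex $\left(\Lambda(\overline{\TsX}) \ho D^{\Ou}, \overline{\pa}^D + v_0\right)$, and the inclusion of $(\mathcal{O}_X(D), v_0)$ as forms of Dolbeault degree zero is a morphism of $\mathcal{O}_X$-complexes which, by the Dolbeault--Poincaré argument used in the proof of Theorem \ref{thm:cohco}, is a quasi-isomorphism. Via the equivalence of categories $\underline{F}_X$ of Theorem \ref{thm:eqcat}, $\widetilde{\mathscr E}$ thus corresponds to the class of the holomorphic complex $(D, v_0)$ in $\Db(X)$, which by Theorem \ref{thm:kth} equals $\sum_i (-1)^i [D^i]$ in $K(X)$. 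Additivity of $\cBC$ on $K(X)$, together with the supertrace convention of Subsection \ref{subsec:bovb} for the Chern character of a $\Z$-graded holomorphic vector bundle, then yields $\cBC(\widetilde{\mathscr E}) = \sum_i (-1)^i \cBC(D^i) = \cBC(D)$, completing the argument.

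The main point to watch is the invocation of Remark \ref{rem:cons}: it finesses the fact that there is no obvious deformation of $\mathscr E$ to $\widetilde{\mathscr E}$ through antiholomorphic superconnections — a naive linear interpolation such as $v_0 + \n^{D \prime \prime} + t \sum_{i \ge 2} v_i$ fails to preserve $A^{E \prime \prime, 2} = 0$ — by passing instead to the $K$-theoretic identity $\cBC(\mathscr E) = \cBC(\mathscr R \mathscr H \mathscr E)$ of Theorem \ref{thm:kth}, which reduces the problem to comparing cohomology sheaves, a datum manifestly depending only on $(D, v_0, \n^{D \prime \prime})$.
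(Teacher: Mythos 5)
Your proof is correct, and it takes a genuinely different route from the paper's. Both arguments start the same way, invoking Remark \ref{rem:cons}: since $\cBC(\mathscr E)$ depends only on the triple $(D, v_0, \n^{D\prime\prime})$, one may replace $A^{E_0\prime\prime}$ by the truncated operator $\n^{D\prime\prime}+v_0$, which is flat precisely because $\n^{D\prime\prime,2}=0$. From there the paper's proof (eqs.\ \eqref{eq:coc1a3}--\eqref{eq:coc1a5}) introduces the family $A^{E_0\prime\prime}_t=\n^{D\prime\prime}+tv_0$, which stays flat for all $t$ since the cross terms vanish; for $t\neq 0$ the cohomology sheaf and hence $\cBC$ are unchanged, while as $t\to 0$ the Chern character forms converge to $\ch(D,h)$, and continuity of the Bott-Chern class closes the argument. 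You instead recognize the truncated superconnection as the Dolbeault resolution of the bounded holomorphic complex $(D,v_0)$, pass to $\Db(X)$ via $\underline{F}_X$, and invoke the Euler-characteristic identity $[(D,v_0)]=\sum_i(-1)^i[D^i]$ in $K(X)$ together with the factorization of $\cBC$ through $K(X)$ (Subsection \ref{subsec:chk}). Your route is algebraically cleaner and dispenses with the limit, at the cost of running through the equivalence-of-categories machinery. Two small points: the identity $[(D,v_0)]=\sum_i(-1)^i[D^i]$ is a standard $K$-theoretic fact about bounded complexes (via \eqref{eq:hipi1}--\eqref{eq:hipi2} and additivity), not literally the statement of Theorem \ref{thm:kth}, which asserts $\cBC(\mathscr F)=\cBC(\mathscr R\mathscr H\mathscr F)$; and your closing caveat about a naive interpolation $v_0+\n^{D\prime\prime}+t\sum_{i\ge2}v_i$ is accurate but does not apply to the paper's proof, which first discards the $v_i$ for $i\ge2$ via Remark \ref{rem:cons} and only then scales $v_0$ — a deformation that does remain flat.
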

\begin{proof}
	For $t\in \C$, put
	\begin{equation}\label{eq:coc1a3}
A^{E_{0} \prime \prime }_{t}=\n^{D \prime \prime }+tv_{0}.
\end{equation}
Set
\begin{equation}\label{eq:coc1a4}
\mathscr E_{t}=\left(E_{0},A^{E_{0} \prime \prime }_{t}\right).
\end{equation}
By Remarks \ref{rem:loccst} and \ref{rem:cons}, for $t\neq 0$,  $\mathscr H \mathscr E= 
\mathscr H \mathscr E_{t}$. By (\ref{eq:coc1a1}), for $t\neq 0$,  we get
\begin{equation}\label{eq:coc1a4r1}
\cBC\left(\mathscr E\right)=\cBC\left(\mathscr E_{t}\right). 
\end{equation}
Also as $t\in \C^{*}\to 0$, 
\begin{equation}\label{eq:coc1a5}
\cBC\left(\mathscr E_{t}\right)\to\cBC\left(D\right).
\end{equation}
By (\ref{eq:coc1a4}), (\ref{eq:coc1a5}), we get (\ref{eq:coc1a2}). The proof of our proposition is completed. 
\end{proof}

If $\mathscr H \mathscr E$ is locally free, our proposition is 
also a consequence of Theorem \ref{thm:conv}.
\subsection{Spectral truncations}%
\label{subsec:trunc}
We fix a splitting as in (\ref{eq:iv4a-1}), (\ref{eq:iv4}),  and also a Hermitian 
metric $g^{D}$ on $D$.  We write $A^{E_{0}\prime \prime }$ as in 
(\ref{eq:iv36a6}).  In particular, 
\begin{equation}\label{eq:tro0}
A^{E_{0} \prime \prime }=v_{0}+A^{E_{0 }\prime \prime (\ge 1)}.
\end{equation}
Put
\begin{equation}\label{eq:tro1}
	\mathbb A^{E_{0}}=A^{E_{0} \prime \prime }+v_{0}^{*}.
\end{equation}
As in (\ref{eq:iv18}), (\ref{eq:iv19}), we get
\begin{align}\label{eq:tro2}
&\mathbb A^{E_{0},2}=\left[A^{E_{0}, \prime \prime },v_{0}^{*}\right],
&\left[A^{E_{0} \prime \prime },\mathbb A^{E_{0},2}\right]=0,
\qquad \left[v_{0}^{*},\mathbb A^{E_{0},2}\right]=0.
\end{align}
Note that $\mathbb A^{E_{0},2}$ is the piece of $A^{E_{0},2}$ in 
which the terms that contain factors of positive degree in 
$\Lambda\left(T^{*}X\right)$   have been killed.

Put
\begin{equation}\label{eq:tro3}
V_{0}=v_{0}+v_{0}^{*}.
\end{equation}
Then 
\begin{equation}\label{eq:tro4}
V_{0}^{2}=\left[v_{0},v_{0}^{*}\right].
\end{equation}
Also $\mathrm{Sp} V_{0}^{2} \subset \R_{+}$. 

Put
\begin{equation}\label{eq:tro4a1}
J=\left[A^{E_{0} \prime \prime (\ge 
1)},v_{0}^{*}\right].
\end{equation}
Then $J$ is a smooth section of degree $0$ in 
$\Lambda\left(\overline{\TsX}\right)\ho\End\left(D\right)$, which 
only contains terms of positive degrees in 
$\Lambda\left(\overline{\TsX}\right)$.

Then
\begin{equation}\label{eq:tro5}
\mathbb A^{E_{0},2}=V_{0}^{2}+J.
\end{equation}
Since $\Lambda^{(\ge 1)}\left(\overline{\TsX}\right)$ is 
nilpotent, from (\ref{eq:tro5}), we get
\begin{equation}\label{eq:tro6}
\mathrm{Sp}\mathbb A^{E_{0},2}=\mathrm{Sp}  
V_{0}^{2}.
\end{equation}
If $\lambda\in \C,\lambda\notin\mathrm{Sp}\left(V_{0}^{2}\right)$, 
then
\begin{equation}\label{eq:tro7}
\left(\lambda-\mathbb 
A^{E_{0},2}\right)^{-1}=\left(\lambda-V_{0}^{2}\right)^{-1}+
\left(\lambda-V_{0}^{2}\right)^{-1}J\left(\lambda-V_{0}^{2}\right)^{-1}+\ldots
\end{equation}
and the expansion only contains a finite number of terms.

For $a>0$, set
\begin{equation}\label{eq:tro7a}
U_{a}=\left\{x\in X,a\notin \mathrm{Sp}\,V_{0}^{2}\right\}.
\end{equation}
Then $U_{a}$ is an open set in $X$. 
\begin{definition}\label{def:tro8}
	Over $U_{a}$, put
\index{Pa@$P_{a,-}$}%
	\begin{equation}\label{eq:tro9}
P_{a,-}=\frac{1}{2i\pi}\int_{\substack{\lambda\in\C\\\left\vert  \lambda\right\vert=a}}^{}\frac{d\lambda}{\lambda-\mathbb A^{E_{0},2}}.
\end{equation}
\end{definition}
Then $P_{a,-}$ is a projector that acts on $E_{0}$, and 
commutes with the action of $\Lambda\left(\overline{\TsX}\right)$. By 
(\ref{eq:tro2}), we get
\begin{align}\label{eq:tro9a1}
&\left[A^{E_{0} \prime \prime},P_{a,-}\right]=0, 
&\left[v_{0}^{*},P_{a,-}\right]=0.
\end{align}

Put
\begin{equation}\label{eq:tro9a2}
P_{a,+}=1-P_{a,-}.
\end{equation}
Then $P_{a,+}$ is also a projector.

 By (\ref{eq:tro7}), 
(\ref{eq:tro9}), we can write $P_{a,\pm}$ in the form
\begin{equation}\label{eq:tro10}
P_{a,\pm}=P_{a,\pm}^{0}+P_{a,\pm}^{(\ge 1)}.
\end{equation}
Let $D_{a,\pm}$ be the direct sums of the eigenspaces of $V_{0}^{2}$ for 
eigenvalues  $\lambda>a$ or $\lambda<a$. Then $P^{0}_{a,\pm}$ is the 
orthogonal projectors $D\to D_{a,\pm}$. Since $P_{a,\pm}=P_{a,\pm}^{2}$, we get
\begin{equation}\label{eq:tro11}
P_{a,\pm}^{(\ge 1)}=\left[P^{0}_{a,\pm},P_{a,\pm}^{(\ge 1)}\right]_{+}+P_{a,\pm}^{(\ge 1),2}.
\end{equation}
In particular $P^{1}_{a,\pm}$ maps $D_{a,\pm}$ into $\overline{\TsX}\ho 
D_{a,\mp}$.
\begin{definition}\label{def:Ea}
	On $U_{a}$, put
	\index{Ea@$E_{0,a,\pm}$}%
	\begin{equation}\label{eq:tro12}
E_{0,a,\pm}=P_{a,\pm}E_{0}.
\end{equation}
\end{definition}
On $U_{a}$,  $E_{0,a,\pm}$ is a subbundle of $E_{0}$ which is also a 
$\Lambda\left(\overline{\TsX}\right)$-module. Let $i_{a,\pm}$ be 
the corresponding embedding in $E_{0}$. In the sequel, we use the 
notation in Subsection \ref{subsec:filal}, and in particular equation 
(\ref{eq:bc6r1}).
\begin{theorem}\label{thm:pea}
	On $U_{a}$, we have  a splitting of 
	$\Lambda\left(\overline{\TsX}\right)$-modules,
	\begin{equation}\label{eq:tro12a1}
E_{0}=E_{0,a,+} \oplus E_{0,a,-}.
\end{equation}
Moreover, 
\begin{equation}\label{eq:mod1}
F^{p}E_{0,a,\pm}=E_{0,a,\pm}\cap F^{p}E_{0}.
\end{equation}

Also $P_{a,\pm}$ induces a filtered isomorphism of 
$\Lambda\left(\overline{\TsX}\right)$-modules,
\begin{equation}\label{eq:tro12az1}
	 \Lambda\left(\overline{\TsX}\right)\ho D_{a,\pm} \simeq E_{0,a,\pm}.
\end{equation}
As a $\Lambda\left(\overline{\TsX}\right)$-module, $E_{0,a,\pm}$ 
verifies the conditions  in (\ref{eq:bc12}), and the associated diagonal bundle  is 
just $D_{a,\pm}$. Also it is equipped with a splitting as in 
(\ref{eq:iv4a-1}),(\ref{eq:iv4}).

Moreover, 
	$A^{E_{0}\prime \prime }$ preserves the smooth sections of 
	$E_{0,a,\pm}$, and it induces an antiholomorphic 
	superconnection  $A^{E_{0,a,\pm} \prime \prime }$ on 
	$E_{0,a,\pm}$, so that on $U_{a}$, we have the splitting
	\begin{equation}\label{eq:tro13}
\mathscr E_{0}=\mathscr E_{0,a,+} \oplus \mathscr E_{0,a,-}.
\end{equation}
Finally, on $U_{a}$,  $P_{a,-}:\mathscr E_{0}\to \mathscr E_{0,a,-}$ and 
$i_{a,-}:\mathscr E_{0,a,-}\to \mathscr E_{0}$ are quasi-isomorphisms of 
$\mathcal{O}_{U_{a}}$-complexes, and $\mathscr H \mathscr 
E_{0,a,+}=0$.
\end{theorem}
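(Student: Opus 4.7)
The plan is to exploit the fact that $P_{a,\pm}$ is given by a contour integral of the resolvent of $\mathbb{A}^{E_{0},2}$, which lies in $\Lambda(\overline{\TsX})\ho\End(D)$ and is of total degree $0$. Since $\mathbb{A}^{E_{0},2}$ commutes with left multiplication by $\Lambda(\overline{\TsX})$, so does its resolvent, and hence $P_{a,\pm}$ is a $\Lambda(\overline{\TsX})$-linear projector. Moreover, because $[A^{E_{0}\prime\prime},\mathbb{A}^{E_{0},2}]=0$ and $[v_{0}^{*},\mathbb{A}^{E_{0},2}]=0$ by \eqref{eq:tro2}, taking contour integrals yields $[A^{E_{0}\prime\prime},P_{a,\pm}]=0$ and $[v_{0}^{*},P_{a,\pm}]=0$. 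Using $P_{a,+}+P_{a,-}=1$, I immediately get the direct sum splitting \eqref{eq:tro12a1} of $\Lambda(\overline{\TsX})$-modules, and the fact that $A^{E_{0}\prime\prime}$ restricts to $E_{0,a,\pm}$ and defines there an antiholomorphic superconnection $A^{E_{0,a,\pm}\prime\prime}$ (the Leibniz rule \eqref{eq:iv5} and flatness $A^{E_{0,a,\pm}\prime\prime,2}=0$ are inherited).

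Next I will analyze the filtration. Writing $P_{a,\pm}=P^{0}_{a,\pm}+P_{a,\pm}^{(\ge 1)}$ as in \eqref{eq:tro10}, the zero-degree part is the orthogonal projector $D\to D_{a,\pm}$ and the higher-degree part increases the filtration index by at least $1$. Consequently $P_{a,\pm}$ preserves the filtration $F^{p}E_{0}$, which gives \eqref{eq:mod1}. The map
\begin{equation*}
\Phi_{\pm}\colon \Lambda(\overline{\TsX})\ho D_{a,\pm}\longrightarrow E_{0,a,\pm},\qquad \alpha\otimes d\mapsto \alpha\,P_{a,\pm}d,
\end{equation*}
is a morphism of filtered $\Lambda(\overline{\TsX})$-modules whose associated graded is the identity, because $P_{a,\pm}d=d\bmod F^{1}E_{0}$ for $d\in D_{a,\pm}$. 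By Proposition \ref{prop:pis} (applied fiberwise) $\Phi_{\pm}$ is an isomorphism; this yields simultaneously the filtered isomorphism \eqref{eq:tro12az1}, the verification of \eqref{eq:bc12} for $E_{0,a,\pm}$, and the identification of its diagonal bundle with $D_{a,\pm}$, together with the induced splitting playing the role of \eqref{eq:iv4a-1}.

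It remains to establish the quasi-isomorphism assertions. The key step, which I regard as the main point of the statement, is to show $\mathscr H \mathscr E_{0,a,+}=0$. On $U_{a}$, the operator $V_{0}^{2}=[v_{0},v_{0}^{*}]$ is invertible on $D_{a,+}$, and since $V_{0}^{2}$ commutes with $v_{0}$ and with $v_{0}^{*}$, so does its inverse $N$ on $D_{a,+}$. Setting $k_{0}=v_{0}^{*}N$, a direct computation gives
\begin{equation*}
[v_{0},k_{0}]=v_{0}v_{0}^{*}N+v_{0}^{*}Nv_{0}=N\,[v_{0},v_{0}^{*}]=NV_{0}^{2}=1
\end{equation*}
on $D_{a,+}$. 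By Theorem \ref{thm:simp} applied to $\mathscr E_{0,a,+}$, this forces $\mathscr H \mathscr E_{0,a,+}=0$. Finally, the splitting \eqref{eq:tro13} realizes the cone of $i_{a,-}\colon \mathscr E_{0,a,-}\to \mathscr E_{0}$ as $\mathscr E_{0,a,+}$ (up to a shift), hence acyclic, so $i_{a,-}$ is a quasi-isomorphism, and likewise for $P_{a,-}$, since $P_{a,-}\circ i_{a,-}=\mathrm{id}$ and $i_{a,-}\circ P_{a,-}-\mathrm{id}$ factors through the acyclic $\mathscr E_{0,a,+}$.

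The technically delicate point will be the careful verification that the contour integral \eqref{eq:tro9} indeed commutes with the $\Lambda(\overline{\TsX})$-action and that the filtration-graded piece of $P_{a,\pm}$ really is the orthogonal projector $P^{0}_{a,\pm}$; once this bookkeeping is done, all the structural statements follow formally from finite-dimensional Hodge-theoretic identities on $(D,v_{0},v_{0}^{*})$ together with Proposition \ref{prop:pis} and Theorem \ref{thm:simp}.
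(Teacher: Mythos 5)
Your proof is correct and follows the paper's strategy closely, with two points of variation worth flagging. First, for the filtered isomorphism \eqref{eq:tro12az1} you invoke Proposition \ref{prop:pis}, whose statement requires both source and target to already satisfy \eqref{eq:bc12}; since \eqref{eq:bc12} for $E_{0,a,\pm}$ is part of what you are trying to establish, the appeal is slightly circular as written. The clean fix is to apply Proposition \ref{prop:pis} to the assembled map $\Phi_+\oplus\Phi_-\colon\Lambda(\overline{\TsX})\ho D\to E_0$ (both known objects of $\mathscr C_W$, and the induced map on $D$ is the identity), deduce that this is a filtered isomorphism, and then restrict to the $\pm$-summands; alternatively, argue as the paper does by verifying directly via \eqref{eq:tro10} that the graded map $Q_{a,\pm}\colon\Lambda^{p}(\overline{\TsX})\ho D_{a,\pm}\to F^{p}E_{0,a,\pm}/F^{p+1}E_{0,a,\pm}$ is bijective, which is exactly the induction underlying Proposition \ref{prop:pis}. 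Second, your contraction $k_{0}=v_{0}^{*}\bigl(V_{0}^{2}\vert_{D_{a,+}}\bigr)^{-1}\in\End(D_{a,+})$ together with Theorem \ref{thm:simp}, and your cone argument for the quasi-isomorphism claims (the cone of $i_{a,-}$ is homotopy equivalent to $\mathscr E_{0,a,+}$, hence acyclic), replace the paper's direct homotopy $k=v_{0}^{*}[\mathbb A^{E_{0,a,+},2}]^{-1}\in\End(E_{0,a,+})$ satisfying $[A^{E_{0,a,+}\prime\prime},k]=1$ and its appeal to Proposition \ref{prop:prophtpeq} via fiberwise Hodge theory. Both routes are valid; yours works at the diagonal level $D_{a,+}$ and then promotes via Theorem \ref{thm:simp}, while the paper's homotopy acts directly on $E_{0,a,\pm}$ — the difference is cosmetic, and your computation $[v_{0},k_{0}]=1$ is correct given that $V_{0}^{2}$ commutes with both $v_{0}$ and $v_{0}^{*}$.
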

\begin{proof}
Equation (\ref{eq:tro12a1}) is obvious. Since  $P_{a,\pm}$ commutes 
with $\Lambda\left(\overline{\TsX}\right)$, it maps $F^{p}E_{0}$ into 
$F^{p}E_{0,a,\pm}$, from which we get (\ref{eq:mod1}).

 Let $Q_{a,\pm}$ 
be the morphism $ \Lambda^{p}\left(\overline{\TsX}\right)\ho 
D_{a,\pm}\to F^{p}E_{0,a,\pm}/F^{p+1}E_{0,a,\pm}$ 
induced by $P_{a,\pm}$. If $e\in 
\Lambda^{p}\left(\overline{\TsX}\right)\ho D_{a,\pm}$, and  if $P_{a,\pm}e\in 
F^{p+1}E_{0,a,\pm}$,  by (\ref{eq:tro10}), we find that $e=0$, so that $Q_{a,\pm}$ is injective. Let us 
prove $Q_{a,\pm}$ is surjective. If $e\in E_{0,a,\pm}$, by (\ref{eq:tro10}), we 
deduce that
\begin{equation}\label{eq:tro13a}
e=P^{0}_{a,\pm}e+P^{(\ge1)}_{a,\pm}e, 
\end{equation}
so that
\begin{equation}\label{eq:tro14}
e=P_{a,\pm}P^{0}_{a,\pm}e+P_{a,\pm}P^{(\ge 1)}_{a,\pm}e.
\end{equation}
If $e\in F^{p}E_{0,a,\pm}$, by (\ref{eq:tro14}), 
 $e-P_{a,\pm}P^{0}_{a,\pm}e\in F^{p+1}E_{0,a,\pm}$, which shows that 
$Q_{a,\pm}$ is surjective. By (\ref{eq:tro12az1}), we conclude that 
$E_{0,a,\pm}$ verifies the conditions in (\ref{eq:bc12}), and it is 
equipped with a splitting as in (\ref{eq:bc12a1}) which is induced 
by (\ref{eq:tro12az1}).

By the first identity in (\ref{eq:tro9a1}), $A^{E_{0} \prime \prime 
}$ acts on smooth sections of $E_{0,a,\pm}$. Therefore, it induces an 
antiholomorphic superconnection $A^{E_{0,a,\pm}\prime \prime }$ on $E_{0,a,\pm}$, and (\ref{eq:tro13}) 
holds.

Clearly, $\mathbb A^{E_{0,a,+},2}$ acts as an invertible operator on 
$E_{0,a,+}$, $v_{0}^{*}$ acts on $E_{0,a,+}$, and by (\ref{eq:tro2}), 
\begin{equation}\label{eq:acy1}
1\vert_{E_{0,a,+}}=\left[A^{E_{0,a,+}\prime \prime },v_{0}^{*}
\left[\mathbb A^{E_{0,a,+},2}\right]^{-1}\right].
\end{equation}
By (\ref{eq:acy1}), we find that $\mathscr H \mathscr E_{a,+}=0$.

By fiberwise Hodge theory, for any $x\in X$, 
$P_{a,-}^{0}:D\to D_{a,-}$ and $i_{a,-}^{0}: D_{a,-}\to D$ are 
quasi-isomorphisms. By Proposition \ref{prop:prophtpeq}, we conclude 
that $P_{a,-}$ and $i_{a,-}$ are quasi-isomorphisms. 

The proof of our theorem is completed. 
\end{proof}

We will view $A^{E_{0,a,\pm} \prime \prime }$ as a superconnection 
$ \mathsf A^{\prime \prime }_{a,\pm}$  on 
$\Lambda\left(\overline{\TsX}\right)\ho D_{a,\pm}$. By construction, we 
have the identity
\begin{equation}\label{eq:tro15}
P_{a,\pm}\mathsf A^{\prime \prime }_{a,\pm}=A^{E_{0} \prime \prime 
}P_{a,\pm}.
\end{equation}

Let us now express $\mathsf A^{\prime \prime }_{a,\pm}$ as in (\ref{eq:iv36a6}), 
i.e.,
\begin{equation}\label{eq:tro16}
\mathsf A^{\prime \prime }_{a,\pm}=v_{0,a,\pm}+\n^{D_{a,\pm} 
\prime \prime }+\sum_{i\ge 2}^{}v_{i,a,\pm}.
\end{equation}
By  (\ref{eq:tro10}),  (\ref{eq:tro15}), we can determine the various terms in 
(\ref{eq:tro16}) by recursion.
\begin{proposition}\label{prop:rec1}
	The following identities hold:
	\begin{align}\label{eq:tro17}
&v_{0,a,\pm}=v_{0\vert_{D_{a,\pm}}},
&\n^{D_{a,\pm}\prime \prime }=P^{0}_{a,\pm}\n^{D \prime \prime 
}P^{0}_{a,\pm}.
\end{align}
\end{proposition}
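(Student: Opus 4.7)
The plan is to extract the identities \eqref{eq:tro17} by comparing the components of each side of \eqref{eq:tro15} in each exterior degree in $\Lambda(\overline{\TsX})$. Expand $P_{a,\pm} = P^{0}_{a,\pm}+P^{(1)}_{a,\pm}+\cdots$ as in \eqref{eq:tro10}, $\mathsf A^{\prime\prime}_{a,\pm}=v_{0,a,\pm}+\n^{D_{a,\pm}\prime\prime}+\sum_{i\ge2}v_{i,a,\pm}$ as in \eqref{eq:tro16}, and $A^{E_{0}\prime\prime}=v_{0}+\n^{D\prime\prime}+\sum_{i\ge2}v_{i}$ as in \eqref{eq:iv36a6}. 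Both sides of \eqref{eq:tro15} are operators on $\Lambda(\overline{\TsX})\ho D_{a,\pm}\to E_{0}$, and one matches them one degree at a time.

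A preliminary observation is that $v_{0}$ preserves the splitting $D=D_{a,+}\oplus D_{a,-}$: since $v_{0}^{2}=0$, a one-line computation gives $[v_{0},V_{0}^{2}]=[v_{0},[v_{0},v_{0}^{*}]]=0$, so $v_{0}$ commutes with the spectral projectors $P^{0}_{a,\pm}$ of $V_{0}^{2}$. I would then take the degree $0$ component in $\Lambda(\overline{\TsX})$ of \eqref{eq:tro15}: on the left only $P^{0}_{a,\pm}v_{0,a,\pm}$ contributes, and on the right only $v_{0}P^{0}_{a,\pm}$, so as operators $D_{a,\pm}\to D$,
\begin{equation*}
P^{0}_{a,\pm}v_{0,a,\pm}=v_{0}P^{0}_{a,\pm}.
\end{equation*}
Interpreting $P^{0}_{a,\pm}$ restricted to $D_{a,\pm}$ as the canonical inclusion into $D$, this reads $v_{0,a,\pm}=v_{0}|_{D_{a,\pm}}$, yielding the first identity in \eqref{eq:tro17}.

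Next, I would take the degree $1$ component in $\Lambda(\overline{\TsX})$ of \eqref{eq:tro15}, which gives the identity of operators $D_{a,\pm}\to\overline{\TsX}\otimes D$,
\begin{equation*}
P^{(1)}_{a,\pm}v_{0,a,\pm}+P^{0}_{a,\pm}\n^{D_{a,\pm}\prime\prime}=\n^{D\prime\prime}P^{0}_{a,\pm}+v_{0}P^{(1)}_{a,\pm}.
\end{equation*}
By Theorem \ref{thm:pea}, $P^{(1)}_{a,\pm}$ sends $D_{a,\pm}$ into $\overline{\TsX}\otimes D_{a,\mp}$; since $v_{0}$ preserves $D_{a,\mp}$, the same is true of $v_{0}P^{(1)}_{a,\pm}$. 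Applying $P^{0}_{a,\pm}$ (acting trivially on the $\overline{\TsX}$-factor) on the left therefore annihilates the terms $P^{(1)}_{a,\pm}v_{0,a,\pm}$ and $v_{0}P^{(1)}_{a,\pm}$, while $P^{0}_{a,\pm}P^{0}_{a,\pm}\n^{D_{a,\pm}\prime\prime}=\n^{D_{a,\pm}\prime\prime}$ (as the image of $\n^{D_{a,\pm}\prime\prime}$ already lies in $\overline{\TsX}\otimes D_{a,\pm}$). This produces the second identity $\n^{D_{a,\pm}\prime\prime}=P^{0}_{a,\pm}\n^{D\prime\prime}P^{0}_{a,\pm}$.

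The only subtle point, which is the bookkeeping obstacle rather than a deep difficulty, is to keep consistent the two roles of $P^{0}_{a,\pm}$ — as the inclusion $D_{a,\pm}\hookrightarrow D$ when it appears on the right of an operator, and as the orthogonal projector $D\to D_{a,\pm}$ when it appears on the left — and to confirm that the nonvanishing parts of $P^{(1)}_{a,\pm}$ land in the \emph{opposite} eigenbundle, which is precisely the content of the final sentence of the discussion following \eqref{eq:tro11}.
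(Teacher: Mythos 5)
Your proof is correct and follows essentially the same route as the paper: compare coefficients of $\Lambda^{0}$ and $\Lambda^{1}$ in \eqref{eq:tro15}, then use the fact that $P^{1}_{a,\pm}$ maps $D_{a,\pm}$ into $\overline{\TsX}\otimes D_{a,\mp}$ to project away the cross terms. The only addition is your explicit preliminary remark that $[v_{0},V_{0}^{2}]=0$ forces $v_{0}$ to preserve $D_{a,\pm}$, which the paper leaves implicit but which makes the identification $v_{0,a,\pm}=v_{0}\vert_{D_{a,\pm}}$ cleaner.
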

\begin{proof}
	In degree $0$, equation (\ref{eq:tro15}) gives the first equation 
	in (\ref{eq:tro17}). In degree $1$, we get
	\begin{equation}\label{eq:tro18}
\n^{D_{a,\pm} \prime \prime }+P^{1}_{a,\pm}v_{0,a,\pm}=\n^{D \prime 
\prime }P_{a,\pm}^{0}+v_{0}P_{a,\pm}^{1}.
\end{equation}
Since $P^{1}_{a,\pm}$ exchanges $D_{a,+}$ and $D_{a,-}$, from 
(\ref{eq:tro18}), we get the second equation in (\ref{eq:tro17}). The proof of our proposition is completed. 
\end{proof}
\begin{remark}\label{rem:holdet}
	By Theorem \ref{thm:pdet}, we know that $\n^{D \prime \prime 
	},\n^{D_{a,\pm}\prime \prime}$ induce holomorphic structures on the lines 
	$\det D, \det D_{a,\pm}$. Since $D=D_{a,+} \oplus D_{a,-}$, we 
	conclude that we have the smooth isomorphism
	\begin{equation}\label{eq:tro19}
\det D=\det D_{a,+}\ho\det D_{a,-}.
\end{equation}
By (\ref{eq:tro18}), it is elementary to verify that (\ref{eq:tro19}) 
is an isomorphism of holomorphic line bundles on $U_{a}$. As 
explained in \cite{BismutGilletSoule88a}, since 
$\left(D_{a,+},v_{0}\vert_{D_{a,+}}\right)$ is exact, $\det D_{a,+}$ 
has a canonical section $\tau_{a,+}$. Since $\n^{D_{a,+ 
}\prime \prime }v_{0,a,+}=0$, the section $\tau_{a,+}$ is holomorphic. By the above, 
the map $s\in \det D_{a,-}\to s\ho \tau_{a,+}\in \det D$ is a 
holomorphic identification of line bundles. 

The above completely elucidates the arguments given in an infinite 
dimensional context in \cite[Theorem 1.3]{BismutGilletSoule88c},  in 
which $\det D_{a,-}$ was shown directly to carry a holomorphic 
structure, The explanation is now obvious: $E_{a,-}$ carries an 
antiholomorphic superconnection.
\end{remark}

Observe that for $a<b$, on $U_{a}\cap U_{b}$, $E_{0,a,-}\subset 
E_{0,b,-}$, and that
\begin{equation}\label{eq:aga1}
P_{a,-}=P_{a,-}P_{b,-}.
\end{equation}
In particular, on $U_{a}\cap U_{b}$, $P_{a,-}\vert_{\mathscr 
E_{0,b,-}}:\mathscr E_{0,b,-}\to 
\mathscr E_{0,a,-}$, and $i_{a,b}=\mathscr E_{0,a,-}\to \mathscr E_{0,b,-}$ 
are quasi-isomorphisms.

Let $g^{D ,1}$ be another Hermitian metric on $D$. We 
denote with an extra superscript $1$ the objects we just constructed 
that are associated with $g^{D,1}$.
\begin{proposition}\label{prop:proj}
	For  $a>0,a^{1}>0$, on $U_{a}\cap U^{1}_{a_{1}}$, the map 
	$P_{a,-}\vert_{\mathscr E^{1}_{0,a_{1},-}}:\mathscr E^{1}_{0,a_{1},-}\to 
	\mathscr E_{0,a,-}$ is a quasi-isomorphism.
\end{proposition}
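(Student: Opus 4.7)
The plan is to express the map $P_{a,-}\vert_{\mathscr E^{1}_{0,a_{1},-}}$ as a composition of two maps that were already shown to be quasi-isomorphisms in Theorem \ref{thm:pea}, and then invoke the two-out-of-three property. Specifically, the tautological factorization is
\begin{equation*}
P_{a,-}\vert_{\mathscr E^{1}_{0,a_{1},-}}\;=\;P_{a,-}\circ i^{1}_{a_{1},-},
\end{equation*}
where $i^{1}_{a_{1},-}:\mathscr E^{1}_{0,a_{1},-}\hookrightarrow \mathscr E_{0}$ is the inclusion associated with the metric $g^{D,1}$, and $P_{a,-}:\mathscr E_{0}\to \mathscr E_{0,a,-}$ is the spectral projection associated with $g^{D}$. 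Both are morphisms of $\mathcal{O}_{U_{a}\cap U^{1}_{a_{1}}}$-complexes: for $i^{1}_{a_{1},-}$ this is immediate since $\mathscr E^{1}_{0,a_{1},-}$ is stable under $A^{E_{0}\prime\prime}$ by the $g^{D,1}$-version of Theorem \ref{thm:pea}, and for $P_{a,-}$ this follows from the first identity in \eqref{eq:tro9a1}.

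By Theorem \ref{thm:pea} applied to $g^{D,1}$, the inclusion $i^{1}_{a_{1},-}$ is a quasi-isomorphism; and by Theorem \ref{thm:pea} applied to $g^{D}$, the projection $P_{a,-}$ is a quasi-isomorphism. The composition of two quasi-isomorphisms is a quasi-isomorphism, which gives the result directly.

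If instead one prefers the characterization in Proposition \ref{prop:prophtpeq}, the same proof works fiberwise: for any $x\in U_{a}\cap U^{1}_{a_{1}}$, the map on stalks factors as
\begin{equation*}
D^{1}_{a_{1},-,x}\;\xrightarrow{i^{1,0}_{a_{1},-}}\;D_{x}\;\xrightarrow{P^{0}_{a,-}}\;D_{a,-,x},
\end{equation*}
and both arrows are quasi-isomorphisms of the complex $(\cdot,v_{0})$ by finite-dimensional Hodge theory applied to the Laplacians $[v_{0},v_{0}^{*}]$ and $[v_{0},v_{0}^{*,1}]$ respectively. Proposition \ref{prop:prophtpeq}(3) then upgrades this to a quasi-isomorphism of $\mathcal{O}_{U_{a}\cap U^{1}_{a_{1}}}$-complexes. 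There is no real obstacle here: once Theorem \ref{thm:pea} is in hand, the statement is essentially a formal consequence, and the only point to verify carefully is that the two projectors/inclusions are genuinely morphisms of antiholomorphic superconnections on the overlap $U_{a}\cap U^{1}_{a_{1}}$, which is guaranteed by the commutation relations \eqref{eq:tro9a1}.
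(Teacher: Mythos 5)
Your proof is correct and follows exactly the paper's argument: factor $P_{a,-}\vert_{\mathscr E^{1}_{0,a_{1},-}}=P_{a,-}\circ i^{1}_{a_{1},-}$ and apply Theorem \ref{thm:pea} to each factor (for $g^{D,1}$ and $g^{D}$ respectively). The additional fiberwise argument via Proposition \ref{prop:prophtpeq} is a valid but unnecessary elaboration.
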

\begin{proof}
	Clearly,
\begin{equation}\label{eq:acy2}
P_{a,-}\vert_{\mathscr E^{1}_{0,a_{1},-}}=P_{a,-}i^{1}_{a_{1},-}.
\end{equation}
Our proposition follows from  Theorem \ref{thm:pea} and from 
(\ref{eq:acy2}).
\end{proof}
\section{The case of embeddings}%
\label{sec:imm}
The purpose of this Section is to establish our 
Riemann-Roch-Grothendieck theorem in the case of an embedding 
$i_{X,Y}:X\to Y$. Our proof uses all the properties we established 
in the previous Sections on $\cBC$, and also the deformation to the 
normal cone. As a consequence we show that our Chern character 
$\cBC:K\left(X\right)\to H_{\mathrm{BC}}^{(=)}\left(X,\R\right)$ verifies the 
uniqueness conditions stated by Grivaux \cite{Grivaux10}.

This Section is organized as follows. In Subsection 
\ref{subsec:emditr}, we recall elementary facts on embeddings, direct 
images, and transversality.

In Subsection \ref{subsec:defnc}, given an embedding $i_{X,Y}:X\to 
Y$, we describe  the deformation  to the normal cone.

In Subsection \ref{subsec:rrim}, we establish our main theorem for 
the embedding $i_{X,Y}$.

Finally, in Subsection \ref{subsec:uni}, from our 
Riemann-Roch-Grothendieck formula for embeddings, we show that our Chern 
character $\cBC$ verifies the uniqueness conditions of Grivaux 
\cite{Grivaux10}.
\subsection{Embeddings, direct images, and transversality}%
\label{subsec:emditr}
Let $Z$ be a compact complex manifold. Let $i_{X,Z}: X\to Z, 
i_{Y,Z}:Y\to Z$ be two holomorphic embeddings of compact complex 
manifolds.  We assume that $U=Y\cap Z$ is a compact submanifold of 
$Z$, and also that
\begin{equation}\label{eq:tra0}
TU=TX\vert_{U}\cap TY\vert_{U}.
\end{equation}
We denote by $i_{U,X},i_{U,Y}$ the embeddings of $U$ in $X,Y$.

The excess normal bundle $ N$ on $U$ is defined to be
\begin{equation}\label{eq:zeph}
N=TZ\vert_{U}/\left(TX\vert_{U}+TY\vert_{U}\right).
\end{equation}
We have the exact sequence of vector bundles 
\begin{equation}\label{eq:tra1}
0\to N_{U/Y}\to N_{X/Z}\vert_{U}\to 
N\to 0.
\end{equation}
The manifolds $X$ and $Y$ are said to be transverse if $N=0$, which 
is equivalent to
\begin{equation}\label{eq:tra2}
N_{U/Y}=N_{X/Z}\vert_{U}.
\end{equation}

We use the notation of Section \ref{sec:derblo}.
Let $\mathscr F\in \Db\left(X\right)$, and let $i_{X,Z,*}\mathscr 
F\in\Db\left(Z\right)$ be its direct image, which coincides with the 
right derived image $Ri_{X,Z*}\mathscr F$.   Let $Li_{U,X}^{*}\mathscr 
F\in \Db\left(U\right)$ be the left derived pull-back of $\mathscr 
F$. Other similar objects will be denoted in the same way.
\begin{proposition}\label{prop:fund}
	If $X$ and $Y$ are transverse,  there exists an isomorphism 
	in $\Db\left(Y\right)$, 
	\begin{equation}\label{eq:tra3}
Li_{Y,Z}^{*}i_{X,Z,*} \mathscr F \simeq i_{U,Y,*}Li_{U,X}^{*}\mathscr F.
\end{equation}
\end{proposition}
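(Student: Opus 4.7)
The assertion is the classical Tor-independent base-change formula for the Cartesian square with $U=X\cap Y$. My plan is to construct the natural base-change morphism $\beta : Li_{Y,Z}^{*}i_{X,Z,*}\mathscr F \to i_{U,Y,*}Li_{U,X}^{*}\mathscr F$ via the two adjunctions $(Li_{Y,Z}^{*},Ri_{Y,Z,*})$ and $(Li_{U,X}^{*},Ri_{U,X,*})$, and then verify that $\beta$ is a quasi-isomorphism. Since $\beta$ is tautologically an isomorphism away from $U$ (both sides vanish outside $U$), this is a local question on a neighborhood of an arbitrary point $u\in U$ in $Z$.

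Near such a $u$, transversality of $X$ and $Y$ allows me to pick holomorphic coordinates $(z_{1},\ldots,z_{p},w_{1},\ldots,w_{q},t_{1},\ldots,t_{r})$ on $Z$ with $X=\{z=0\}$, $Y=\{w=0\}$, hence $U=\{z=0,w=0\}$, where $p=\operatorname{codim}_{Z}X$, $q=\operatorname{codim}_{Z}Y$, $p+q=\operatorname{codim}_{Z}U$. Using Theorem \ref{thm:eqcat}, I may assume $\mathscr F$ is represented by an antiholomorphic superconnection $\mathscr E=(E,A^{E\prime\prime})$ on $X$. I then realize $i_{X,Z,*}\mathscr E$ explicitly using the Koszul complex $K_{\bullet}(z_{1},\ldots,z_{p})$, which is a locally free resolution of $i_{X,Z,*}\mathcal O_{X}$: concretely, one builds an antiholomorphic superconnection on a tubular neighborhood of $X$ in $Z$ by taking $\Lambda(\overline{T^{*}Z})\ho_{\Lambda(\overline{T^{*}X})}(\Lambda(\overline{N_{X/Z}^{*}})\ho E)$ equipped with $A^{E\prime\prime}$ twisted by the Koszul differential contracted against $z_{1},\ldots,z_{p}$. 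Proposition \ref{prop:exibb} applied to $i_{X,Z}$ guarantees this object is well-defined and represents $i_{X,Z,*}\mathscr F$ in $\Db(Z)$.

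The functor $Li_{Y,Z}^{*}$ is now computed by tensoring with $\mathcal O_{Y}$ over $\mathcal O_{Z}$, which on the Koszul side amounts to restricting to $Y$. The key local input — and exactly where transversality enters — is that the restrictions $z_{1}|_{Y},\ldots,z_{p}|_{Y}$ form a regular sequence on $Y$ cutting out $U$, equivalently that $\operatorname{Tor}_{>0}^{\mathcal O_{Z}}(\mathcal O_{X},\mathcal O_{Y})=0$. Consequently the restricted Koszul complex $K_{\bullet}(z_{1}|_{Y},\ldots,z_{p}|_{Y})$ is still a resolution, now of $i_{U,Y,*}\mathcal O_{U}$ on $Y$. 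Running the same construction of the direct image via Koszul, but with $X,Z$ replaced by $U,Y$, one sees that the resulting antiholomorphic superconnection on $Y$ is a representative of $i_{U,Y,*}Li_{U,X}^{*}\mathscr E$. A direct comparison of the two Koszul descriptions produces the desired isomorphism (\ref{eq:tra3}), and the functoriality of the construction shows it agrees with the canonical base-change morphism $\beta$.

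The main obstacle is not conceptual but technical: carrying out the direct-image and pull-back constructions at the level of antiholomorphic superconnections (rather than at the level of $\mathcal O$-complexes) requires packaging the Koszul resolutions as free $\Lambda(\overline{T^{*}Z})$- and $\Lambda(\overline{T^{*}Y})$-modules with compatible superconnections, and verifying that the comparison map preserves these structures. Once the Koszul picture is in place, however, the geometric content of the proposition reduces to the purely algebraic exactness of the restricted Koszul complex, which is immediate from transversality.
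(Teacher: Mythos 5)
Your overall strategy — construct the natural comparison morphism, then check it is a quasi-isomorphism locally near points of $U$ by realizing the direct image via a Koszul complex and using transversality to conclude the restricted Koszul complex is still exact — is the same as the paper's. The geometric content you identify (the Tor-independence of $\mathcal O_X$ and $\mathcal O_Y$ over $\mathcal O_Z$, equivalently that $z_1|_Y,\dots,z_p|_Y$ remain a regular sequence on $Y$) is exactly the crux of the paper's local argument.

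Where you diverge, and where the gap lies, is the step "one builds an antiholomorphic superconnection on a tubular neighborhood of $X$ in $Z$" by tensoring $E$ with a Koszul complex. This is not merely a matter of "packaging": a general antiholomorphic superconnection $A^{E\prime\prime}=v_0+\n^{D\prime\prime}+\sum_{i\ge 2}v_i$ contains higher Grassmannian terms $v_i$, and there is no canonical way to tensor such an operator with a Koszul differential on a tubular neighborhood so that the result squares to zero and has the expected cohomology. The paper avoids this by never writing down a global Koszul model at all. Instead it (i) invokes Proposition \ref{prop:exibb} to obtain an \emph{abstract} finite-rank $\mathscr E_Z$ on $Z$ together with a quasi-isomorphism $r_{Z,X}:\mathscr E_Z\to i_{X,Z,*}\mathscr E_X$; (ii) obtains $r_{Y,U}$ by restriction; and (iii) only then passes to a small coordinate neighborhood and uses Theorem \ref{thm:Conj} to conjugate $A^{E_X\prime\prime}$ into the split form $\n^{D_X\prime\prime}+v_{X,0}$, after which the Koszul product $\pi_{H_X}^*(\n^{D_X\prime\prime}+v_{X,0})+\pi_K^*(\overline{\pa}^K+i_y)$ trivially squares to zero and can be compared with $\mathscr E_Z$ using the local form of Theorem \ref{thm:eqcat}. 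Your invocation of Proposition \ref{prop:exibb} to justify your explicit tubular construction also misattributes what that result gives: it guarantees the \emph{existence} of some finite-rank representative, not that any particular tensoring construction produces one. Finally, a small but real error: the Koszul factor must be the holomorphic $\Lambda(N_{X/Z}^*)$ (the paper's $\Lambda K^*$), not $\Lambda(\overline{N_{X/Z}^*})$; the antiholomorphic factor $\Lambda(\overline{N_{X/Z}^*})$ comes in separately from the $\Lambda(\overline{T^*Z})$-module structure. So the right picture near a point of $U$ is $\Lambda(\overline{T^*Z})\ho D_X\ho\Lambda(N_{X/Z}^*)$, which is what the paper's $\mathbf E_Z$ in (\ref{eq:mult3-1}) encodes.
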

\begin{proof}
	Let $\mathscr E_{X}=\left( E_{
	X}, A^{E_{X}\prime \prime } \right) , 
	\mathscr E_{Z}=\left(E_{Z},A^{E_{Z} \prime \prime }\right)$ be  antiholomorphic 
	superconnections 
	on $X,Z$ that represent   $\mathscr F$ and $i_{X,Z,*} \mathscr 
	F$.  
	 By  Proposition \ref{prop:puba}, $Li_{U,X}^{*} 
\mathscr F$ is represented by $i^{*}_{U,X,b} \mathscr E_{X}$, 
and $Li_{Y,Z}^{*}i_{X,Z,*} \mathscr F $ is represented by $i_{Y,Z,b}^{*} 
\mathscr E_{Z}$.

	As in Subsection \ref{subsec:dirim}, $C^{\infty }\left(X,E_{X}\right)$ can  be viewed 
	as a 
	$\mathcal{O}_{Z}^{\infty 
	}\left(\Lambda\left(\overline{T^{*}Z}\right)\right)$-module, with 
	the convention that if $\alpha\in \Omega^{0,\Ou}\left(Z,\C\right), s\in C^{\infty }\left(X,E_{X}\right)$, then
	\begin{equation}\label{eq:aja1}
\alpha .s=\left( i_{X,Z}^{*}\alpha \right)s.
\end{equation}
We will denote the corresponding $\mathcal{O}_{Z}$-complex by 
$i_{X,Z,*} \mathscr E_{X}$. As we saw before,    
 $i_{X,Z,*} \mathscr E_{X}$  defines an object in
 $\Db\left(Z\right)$, that 
coincides with its right-derived direct image.

By Proposition \ref{prop:exibb}, there exists a morphism of  
	$\mathcal{O}_{Z}$-complexes $r_{Z,X}:\mathscr E_{Z} \to i_{X,Z,*} \mathscr E_{X} $  which induces also a morphism of 
	$\mathcal{O}_{Z}^{\infty 
	}\Lambda\left(\overline{T^{*}Z}\right)$-modules,
and  $r_{Z,X}$  is a quasi-isomorphism of 
$\mathcal{O}_{Z}$-complexes.

To make our notation simpler, we will use the notation $\mathscr 
E_{X}$ instead of $i_{X,Z,*} \mathscr E_{X}$. A similar notation will 
be used when considering the embedding $i_{U,Y}$.

The above maps can be combined in a commutative diagram\footnote{As 
explained in Subsection \ref{subsec:pullten}, the 
subscript $b$ is used to define the pull-back of antiholomorphic 
superconnections.}
	\begin{equation}\label{eq:commux1}
\xymatrix{&\mathscr 
E_{Z}\ar[d]^{i_{Y,Z,b}^{*}}\ar[r]^{r_{Z,X}}&\mathscr 
E_{X}\ar[d]^{i_{U,X,b}^{*}}\\
&i_{Y,Z,b}^{*}\mathscr E_{Z} \ar[r]^{r_{Y,U}}&
i_{U,X,b}^{*} \mathscr E_{X}.}
\end{equation}
in which  $r_{Y,U}$ is induced by $r_{Z,X}$. The first row consists 
of objects in $\Db\left(Z\right)$, the second row of objects in 
$\Db\left(Y\right)$. Also $r_{Y,U}$ is a morphism of 
$\mathcal{O}^{\infty }_{Y}\Lambda\left(\overline{T^{*}Y}\right)$-modules, 
and of $\mathcal{O}_{Y}$-complexes.

As we saw before,  $r_{Z,X}$ is a quasi-isomorphism of 
$\mathcal{O}_{Z}$-complexes. To establish our 
proposition, we  need to show that $r_{Y,U}$ is a 
quasi-isomorphism of objects in $\Db\left(Y\right)$. The proof will be obtained via local arguments.

 Since $r_{Z,X}$ is a quasi-isomorphism, on $Z\setminus X$, $\mathscr 
 H \mathscr E_{Z}=0$. By a local version of Theorem  \ref{thm:cohco}, on
 $ Z\setminus X$,  $HD_{Z}=0$, so that on $Y\setminus U$, 
 $Hi^{*}_{Y,Z}D_{Z}=0$.    Using again Theorem \ref{thm:cohco}, we 
 find that on $Y\setminus U$, $\mathscr H i_{Y,Z,b}^{*} \mathscr E_{Z}=0$.
 
 Take $x\in U$.  If $V \subset Z$ is a small neighborhood of  $x$, we  choose a holomorphic coordinate 
system  on $V $ such that $x$ is represented by $0\in 
\C^{n}$, and   $X,Y$ are 
two transverse vector subspaces $H_{X}, H_{Y}$ of $\C^{n}$, so that 
$U$ is represented by $H_{X}\cap H_{Y}$. If $K $ is a 
vector subspace of $H_{Y}$  such that $H_{X} \oplus K=\C^{n}$,  then $K$ 
represents both $N_{X/Z}$ and $N_{U/Y}$.  Let $V_{H_{X}}, V_{K}$ be 
open neighborhoods of $0$ in $H_{X},K$ so that $V_{H_{X}}\times V_{K} 
\subset V$.  By Theorem \ref{thm:Conj}, if $V_{H_{X}}$ is small enough, after a 
gauge transformation of total degree $0$, $A^{E_{X} 
\prime \prime }$ can be written in the form
\begin{equation}\label{eq:mult2}
A^{E_{X} \prime \prime }=\n^{D_{X} \prime \prime }+v_{X,0}.
\end{equation}

Let $\pi_{H_{X}},\pi_{K}$ be the projections of $\C^{n}$ on $H_{X}, K$. 
Let $y$ be the generic section of $K$, and let $\left(\Lambda  K^{*}, 
i_{y}\right)$ denote the Koszul complex of $K$. Set
\begin{equation}\label{eq:mult3-1}
\mathbf{E}_{Z}=\Lambda\left(\overline{\C}^{n}\right)\ho\pi_{H_{X}}^{*}D_{X}\ho \pi^{*}_{K}\Lambda K^{*}.
\end{equation}
Let $A^{\mathbf{E}_{Z} \prime \prime}$ be the antiholomorphic superconnection,
\begin{equation}\label{eq:mult3}
A^{\mathbf{E}_{Z} \prime \prime }=\pi_{H_{X}}^{*}\left(\n^{D _{X}\prime \prime 
}+v_{X,0}\right)+\pi_{K}^{*}\left(\overline{\pa}^{K}+i_{y}\right).
\end{equation}
Then $A^{\mathbf{E}_{Z} \prime \prime }$ is an antiholomorphic 
superconnection on $\C^{n}$ near $0$. Let $\mathcal{E}_{Z}$ denote 
the corresponding complex of $\mathcal{O}_{V}$-modules.  Let 
$r_{\C^{n},H_{X}}$ be the
projection $\pi^{*}_{H_{X}}D_{X}\ho\pi^{*}_{K}\Lambda K^{*}\to 
D_{X}$.  Then $r_{\C^{n},H_{X}}$ extends to a morphism 
$\mathcal{E}_{Z}\to \mathscr E_{X}$ which has the same properties as $r_{Z,X}$. 
Because of known properties of the Koszul complex, if the considered 
neighborhoods are small enough, $r_{\C^{n},H_{X}}$ is a 
quasi-isomorphism.

Near $z\in Z$, both $r_{Z,X}$ and $r_{\C^{n},H_{X}}$ provide 
quasi-isomorphisms of $\mathscr E_{Z}, \mathcal{E}_{Z}$ with $\mathscr 
E_{X}$,  so that for $V$ small 
enough, $\mathscr E_{Z}$ and $\mathbf{E}_{Z}$ are isomorphic as 
objects in $\Db\left(V_{H_{X}}\times V_{K}\right)$. By a local 
version of Theorem 
\ref{thm:eqcat}, there is a corresponding quasi-isomorphism 
$\phi:\mathscr E_{Z}\to \mathcal{E}_{Z}$. As we saw in Subsection 
\ref{subsec:pullten}, $\phi$ induces a morphism $i_{Y,Z,b}^{*}\phi: 
i_{Y,Z,b}^{*} \mathscr E_{Z}\to i^{*}_{Y,Z,b}\mathcal{E}_{Z}$.  By  a 
local version of Proposition  \ref{prop:prophtpeq}, 
for $z\in V_{H_{X}}\times V_{K}$, $\phi_{z}: D_{Z,z}\to  
\left(\pi_{H^{X}}^{*}D_{X}\ho \pi_{K}^{*}\Lambda K^{*}\right)_{z}$ is a 
quasi-isomorphism. In particular, this will be true for $z\in \left( 
V_{H_{X}\times }V_{K} \right) \cap Y$. 
This shows that near $z\in Y$, $\phi$ induces a quasi-isomorphism 
$i_{Y,Z,b}^{*} \mathscr E_{Z}\to i_{Y,Z,b}^{*}\mathcal{E}_{Z}$.

Since near $x\in Y$, the restriction of the above Koszul complex to 
$Y$ is still a Koszul complex,  $r_{K,U}: 
i_{Y,Z,b}^{*}\mathcal{E}_{Z}\to  i_{U,X,b}^{*} \mathscr E_{X}$ is a 
quasi-isomorphism. This shows that $r_{Y,U}:i^{*}_{Y,Z,b}\mathscr E_{Z}\to 
i_{U,X,b}^{*} \mathscr E_{X}$ is a quasi-isomorphism. The proof of our proposition is completed. 
\end{proof}
 \subsection{Deformation to the normal cone}%
\label{subsec:defnc}
Let $i_{X,Y }:X\to Y$ be a holomorphic embedding of compact complex 
manifolds. Let $N_{X/Y}$ be the normal bundle to $X$ in $Y$, i.e., 
\begin{equation}\label{eq:coh1}
N_{X/Y}=i^{*}_{X,Y}TY/TX.
\end{equation}

First,  we construct the deformation of $X$ to the normal cone to 
$Y$ as in  \cite[Section 
4]{BismutGilletSoule90a}. 
Let	$W=W_{X/Y}$ be the blow-up of $Y\times 
\mathbf{P}^{1}$ along $X\times \infty $.  Then 
$X\times \mathbf{P}^{1}$ embeds in $W$.  Let $P$ be the exceptional divisor of the blow-up, i.e., 
\begin{equation}\label{eq:bc2}
P=\mathbf{P}\left(N_{X\times \infty /Y\times\mathbf{P}^{1}}\right).
\end{equation}
Let $p_{X},p_{\infty }$ be the projections $X\times \infty \to X, 
X\times \infty \to \infty $. Put
\begin{equation}\label{eq:bc4a}
A=p_{X}^{*}N_{X/Y} \otimes p_{\infty }^{*}N^{-1}_{\infty /\mathbf{P}^{1}}.
\end{equation}
Then
\begin{equation}\label{eq:bc3}
P=\mathbf{P}\left(A
\oplus \C\right),
\end{equation}
so that $P$ is the projective completion of $A $, and its divisor at 
$\infty $ is given by $\mathbf{P}\left(A\right)=\mathbf{P}\left(N_{X/Y}\right)$.

Let $\widetilde Y$ be the blow-up of $Y$ along $X$. Then 
$\mathbf{P}\left(N_{X/Y}\right)$ is the exceptional divisor in 
$\widetilde Y$. Let $q_{W,Y}: W\to Y, q_{W,\mathbf{P}^{1}}:W\to 
\mathbf{P}^{1}$ be the obvious maps. 
For $z\in \mathbf{P}^{1}$, put
\begin{equation}\label{eq:bc4-a-1}
Y_{z}=q^{-1}_{W,\mathbf{P}^{1}}z.
\end{equation}
Then
\begin{align}\label{eq:bc4-a}
Y_{z}=
 &Y\ \mathrm{if}\,z\neq \infty,\\
&P\cup \widetilde Y\ \mathrm{if}\,  z= \infty. \nonumber 
\end{align}
For $z= \infty $, $P$ and $\widetilde Y$ meet transversally along 
$\mathbf{P}\left(N_{X/Y}\right)$. Also the projection 
$q_{W,\mathbf{P}^{1}}$ is a submersion except on  
$\mathbf{P}\left(N_{Y/X}\right)$ where it has ordinary singularities.
\begin{figure}
    \includegraphics[width=3in]{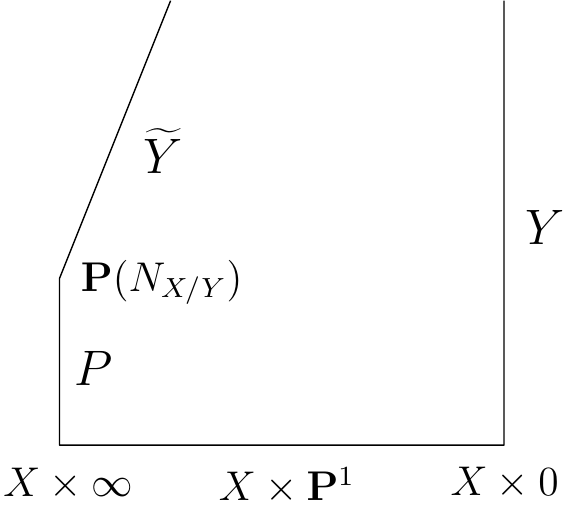}
	\caption
	\centering
    \end{figure}

Let $U=\mathcal{O}_{P}\left(-1\right)$ be the universal line bundle 
on $P$. We have the exact sequence of holomorphic vector bundles 
on $P$,
\begin{equation}\label{eq:bc5r1}
0\to U\to A  \oplus  \C\to \left(A \oplus \C\right)/U\to 0.
\end{equation}
The image $\sigma$ of $1\in \C$ in $\left(A\oplus \C\right)/U$ is a holomorphic section of 
$\left(A \oplus \C\right)/U$  that vanishes exactly on $X\times \infty$. On 
$\mathbf{P}\left(A\right)$, $U$ restricts to the corresponding universal line 
bundle,  the exact sequence (\ref{eq:bc5r1}) restricts to
\begin{equation}\label{eq:bc5a}
0\to U\to A \oplus\C \to A/U \oplus \C\to 0, 
\end{equation}
and $\sigma$ restricts to the section $1$ of $A/U \oplus \C$.

Consider the Koszul complex 
$\left(\Lambda 
\left(  \left(A+\C\right)/U\right)^{*}   ,i_{\sigma}\right)$ on $P$. This complex 
provides a resolution of $i_{X\times \infty ,P,*}\mathcal{O}_{X\times \infty }$. On 
$\mathbf{P}\left(N_{X/Y}\right)$, this complex is just  the split 
complex
$\Lambda \left(A/U\right)^{*}\ho\left(\Lambda\C,i_{1}\right)$.
\subsection{A Riemann-Roch-Grothendieck theorem for embeddings}%
\label{subsec:rrim}
Consider the Bott-Chern class
$\Td_{\mathrm{BC}}\left(N_{X/Y}\right)\in 
H^{(=)}_{\mathrm{BC}}\left(X,\R\right)$.

Let $\mathscr{F}\in \mathrm{D}^{\mathrm{b}}_{\mathrm{coh}}\left(X\right)$, and let 
$i_{X,Y,*}\mathscr F\in 
\mathrm{D}^{\mathrm{b}}_{\mathrm{coh}}\left(Y\right)$ be its direct image.
\begin{theorem}\label{thm:tim}
	The following identity holds:
	\begin{equation}\label{eq:bc1}
\ch_{\mathrm{BC}}\left(i_{X,Y,*}\mathscr 
F\right)=i_{X,Y,*}\frac{\ch_{\mathrm{BC}}\left(\mathscr 
F\right)}{\Td_{\mathrm{BC}}\left(N_{X/Y}\right)}\,\mathrm{in}\,H^{(=)}_{\mathrm{BC}}\left(Y,\R\right).
\end{equation}
\end{theorem}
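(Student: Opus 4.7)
The plan is to use the deformation to the normal cone to reduce the statement to the universal case of the zero-section embedding $j:X\to P=\mathbf{P}(N_{X/Y}\oplus\C)$, and then to verify the formula directly in that case via the explicit Koszul resolution on $P$.

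First, I would handle the zero-section case. Since $\pi\circ j=\mathrm{id}_{X}$ for the bundle projection $\pi:P\to X$, the projection formula in $\Db(P)$ gives $j_{*}\mathscr F\simeq j_{*}\mathcal{O}_{X}\ho^{\mathrm{L}}_{\mathcal{O}_{P}}\pi^{*}\mathscr F$. By Theorems \ref{thm:tepr} and \ref{thm:pu},
\[
\cBC(j_{*}\mathscr F)=\cBC(j_{*}\mathcal{O}_{X})\cdot\pi^{*}\cBC(\mathscr F).
\]
The Koszul complex $(\Lambda((A\oplus\C)/U)^{*},i_{\sigma})$ of Subsection \ref{subsec:defnc} is a locally free resolution of $j_{*}\mathcal{O}_{X}$ on $P$, so Theorem \ref{thm:kth} identifies $\cBC(j_{*}\mathcal{O}_{X})$ with $\cBC(\lambda_{-1}Q^{*})$ for $Q=(A\oplus\C)/U$. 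The standard Chern-root identity $\prod_{i}(1-e^{-x_{i}})=c_{\mathrm{top}}(Q)\,\Td(Q)^{-1}$ holds at the level of closed $(=)$-forms computed from the curvature of $Q$. Combining this with $Q|_{X}=N_{X/Y}$, the Bott-Chern projection formula of Subsection \ref{subsec:unpro}, and the Thom identity $j_{*}(1)=c_{\mathrm{top},\mathrm{BC}}(Q)$ shows that $j_{*}(\cBC(\mathscr F)/\Td_{\mathrm{BC}}(N_{X/Y}))$ equals the same expression, settling the universal case.

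I would then reduce the general embedding $i_{X,Y}$ to this case via the deformation $W=W_{X/Y}$ of Subsection \ref{subsec:defnc}, equipped with its map $q=q_{W,\mathbf{P}^{1}}:W\to\mathbf{P}^{1}$. Let $p:X\times\mathbf{P}^{1}\to X$ be the projection, and set $\mathscr G:=i_{X\times\mathbf{P}^{1},W,*}(Lp^{*}\mathscr F)\in\Db(W)$. For the fiber inclusion $\iota_{0}:Y\simeq Y_{0}\hookrightarrow W$, Proposition \ref{prop:fund} applied to the transverse pair $(X\times\mathbf{P}^{1},Y_{0})$ gives $L\iota_{0}^{*}\mathscr G\simeq i_{X,Y,*}\mathscr F$, so by Theorem \ref{thm:pu} the pullback $\iota_{0}^{*}\cBC(\mathscr G)$ equals $\cBC(i_{X,Y,*}\mathscr F)$. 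At $z=\infty$, since the zero section $X\times\{\infty\}\subset P$ is disjoint from $P\cap\widetilde Y=\mathbf{P}(N_{X/Y})$, the restriction of $\mathscr G$ localizes in the smooth locus $P\setminus\widetilde Y$, where the universal case applies. The Poincaré-Lelong equation \eqref{eq:bc5} on $\mathbf{P}^{1}$, pulled back by $q$, yields $\overline{\pa}\pa\,q^{*}\log|z|^{2}/(2i\pi)=\delta_{Y_{0}}-\delta_{Y_{\infty}}$ as currents on $W$; multiplying a smooth representative of $\cBC(\mathscr G)$ by $q^{*}\log|z|^{2}$ and pushing forward along $q_{W,Y}:W\to Y$ then links the two fiber classes in $H^{(=)}_{\mathrm{BC}}(Y,\R)$ via an explicit $\overline{\pa}\pa$-primitive, with the same construction applied to the right-hand side of the claimed formula.

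The main obstacle will be making the specialization-to-the-normal-cone argument rigorous in Bott-Chern cohomology. Unlike de Rham cohomology, where rational equivalence on $\mathbf{P}^{1}$ would immediately identify the $z=0$ and $z=\infty$ fibers, here one must manipulate explicit smooth representatives on $W$ and track their wave-front sets and pushforward behavior carefully, both across the singular fiber $Y_{\infty}$ and under the bimeromorphic map $q_{W,Y}$. The Hörmander-type analysis recalled in Subsection \ref{subsec:unpro} is designed for exactly this. Once this technical step is secured, the universal Koszul computation on $P$ combined with the functorial properties of $\cBC$ established in Section \ref{sec:chfoge} completes the proof.
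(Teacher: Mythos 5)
Your overall architecture — deformation to the normal cone, Poincar\'{e}--Lelong transgression across the family $W\to\mathbf{P}^{1}$, and the Koszul resolution on $P=\mathbf{P}(N_{X/Y}\oplus\C)$ — matches the paper's proof of Theorem \ref{thm:tim}. The genuine divergence lies in how the computation on $P$ is closed. You isolate the zero-section case as a standalone universal lemma: you express $\cBC(j_{*}\mathscr F)$ via the Koszul complex and the Chern-root identity, and then match it against $j_{*}(\cBC(\mathscr F)/\Td_{\mathrm{BC}}(N_{X/Y}))$ by invoking a Bott--Chern Thom identity $j_{*}(1)=c_{\mathrm{top},\mathrm{BC}}(Q)$. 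The paper never invokes this Thom identity. It instead pushes the $z=\infty$ contribution all the way forward to $Y$ via $q_{P,Y,*}=i_{X,Y,*}\circ q_{P,X\times\infty,*}$ (where $q_{P,X\times\infty}$ is the $\mathbf{P}^{n}$-bundle projection), so the only characteristic-class input needed is the fiber-integral formula $q_{P,X\times\infty,*}\cBC(\Lambda((A\oplus\C)/U)^{*})=\Td^{-1}_{\mathrm{BC}}(A)$, a closed computation in characteristic forms cited from \cite[Theorem 2.5]{BismutGilletSoule90b}, with no currents of integration on $P$ entering.

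This difference is not cosmetic, because your Thom identity $j_{*}(1)=c_{\mathrm{top},\mathrm{BC}}(Q)$ is, by Theorem \ref{thm:kth}, the Chern-root identity, and the projection formula, \emph{logically equivalent} to the case $\mathscr F=\mathcal{O}_{X}$ of the very universal statement you are using it to establish. So as written, your proof of the universal case is circular. To break the circle you would have to prove the Thom identity independently: one must construct an explicit $\overline{\pa}\pa$-primitive on $P$ interpolating between the characteristic form $c_{\mathrm{top}}(Q,h^{Q})$ and the current of integration $\delta_{X}$. This is precisely the nontrivial content of the Bismut--Gillet--Soul\'{e} theory of Bott--Chern currents for complex immersions, and it is a substantially heavier analytic input than the $\mathbf{P}^{n}$-fiber integral the paper uses. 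In de Rham cohomology the Thom identity is standard; in Bott--Chern cohomology $\overline{\pa}\pa$-exactness is a stronger requirement and cannot be taken for granted. If you supply the reference to the Bott--Chern current of the Koszul complex, your route is valid; as written, this step is a gap.

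One further point worth sharpening: the phrase ``localizes in the smooth locus $P\setminus\widetilde Y$'' glosses over the fact that $Y_{\infty}=P\cup\widetilde Y$ meet along $\mathbf{P}(N_{X/Y})$, so the pushforward $q_{Y_{\infty},Y,*}$ of the restricted form genuinely splits as a sum of two contributions as in (\ref{eq:bc8}), and one must separately verify (as the paper does in (\ref{eq:bc9a-1})--(\ref{eq:bc10})) that the $\widetilde Y$ piece contributes zero in Bott--Chern cohomology, using $\widetilde Y\cap(X\times\infty)=\emptyset$ and Theorem \ref{thm:pu}. This is a short argument, but it should be made explicit rather than absorbed into a localization slogan.
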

\begin{proof}
Let $i_{X\times \mathbf{P}^{1},W}: X\times \mathbf{P}^{1}\to W$ be 
the obvious embedding, 
let $\pi_{X\times\mathbf{P}^{1},X}:X\times\mathbf{P}^{1}\to X$ be 
the  natural projection. 
Then $\pi_{X\times \mathbf{P}^{1},X}^{*}\mathscr F\in 
\mathrm{D}^{\mathrm{b}}_{\mathrm{coh}}\left(X\times 
\mathbf{P}^{1}\right)$\footnote{By 
\cite[\href{https://stacks.math.columbia.edu/tag/00R4}{Tag 00R4}]{stacks-project}, $\pi_{X\times \mathbf{P}^{1},X}$ is flat, so 
that $\pi^{*}_{X\times\mathbf{P^{1},X}}\mathscr 
F=L\pi^{*}_{X\times\mathbf{P}^{1},X} \mathscr F$.}, so that 
\begin{equation}\label{eq:bc2r1}
i_{X\times \mathbf{P}^{1},W,*}\pi_{X\times 
\mathbf{P}^{1},X}^{*}\mathscr {F}\in 
\mathrm{D}^{\mathrm{b}}_{\mathrm{coh}}\left(W\right).
\end{equation}

Let 
\begin{align}\label{eq:emb1}
&i_{Y,W}:Y= q^{-1}_{W,\mathbf{P}^{1}}\left(0\right)\to W,
& i_{ P,W}:
P\to W,\\
&i_{\widetilde Y,W}: \widetilde Y\to W, &i_{X\times 
\infty,P}:X\times \infty \to P.\nonumber 
\end{align}
 be the obvious 
embeddings. Other embeddings will be denoted in the same way. 

Note that $\pi_{X\times\mathbf{P}^{1},X}i_{X,X\times \mathbf{P}^{1}}$ 
is the identity of $X$, so that 
\begin{equation}\label{eq:tom1}
Li^{*}_{X,X\times\mathbf{P}^{1}}L\pi^{*}_{X\times\mathbf{P}^{1},X}
\mathscr F \simeq  \mathscr F\ \mathrm{in}\ \Db\left(X\right).
\end{equation}
Also $\pi_{X\times\mathbf{P}^{1},X}i_{X\times \infty ,X\times 
\mathbf{P}^{1}}$ is the identification of $X\times \infty $ with 
$X$. When identifying these two spaces, we get
\begin{equation}\label{eq:tom2}
Li^{*}_{X\times \infty ,X\times 
\mathbf{P}^{1}}L\pi^{*}_{X\times\mathbf{P}^{1},X} \mathscr F \simeq  \mathscr 
F\ \mathrm{in}\ \Db\left(X\right).
\end{equation}

Observe that $Y=q_{W,\mathbf{P}^{1}}^{-1}\left(0\right)$ and $P$ are 
 transverse to $X\times\mathbf{P}^{1}$. Using Proposition 
\ref{prop:fund}, in which $X,Y,Z, \mathscr F$ are replaced by 
$X\times\mathbf{P}^{1}$, $Y$ or $P$, $W$, 
$L\pi^{*}_{X\times\mathbf{P}^{1},X}\mathscr F$, and also 
(\ref{eq:tom1}), (\ref{eq:tom2}),  we get
 \begin{align}\label{eq:bc3a1}
&\left( Li_{Y,W}^{*}\right)i_{X\times\mathbf{P}^{1},W*}\left( L\pi_{X\times 
\mathbf{P}^{1},X}^{*} \right)
\mathscr F \simeq  i_{X,Y,*}\mathscr F\ \mathrm{in}\ \Db\left(Y\right),\\
&\left( Li_{P,W}^{*}\right) i_{X\times \mathbf{P}^{1},W,*}\left( L\pi_{X\times 
\mathbf{P}^{1},X}^{*} \right)
\mathscr F \simeq  i_{X\times \infty,P,*}\mathscr F\ 
\mathrm{in}\ \Db\left(P\right). \nonumber 
\end{align}
 Using Theorem \ref{thm:pu} and 
 (\ref{eq:bc3a1}),we get
\begin{align}\label{eq:bc4}
&i_{Y,W}^{*}\ch_{\mathrm{BC}}\left(i_{X\times 
\mathbf{P}^{1},W,*}\left( L\pi_{X\times \mathbf{P}^{1},X}^{*}\right) \mathscr 
F\right)=\ch_{\mathrm{BC}}\left(i_{X,Y,*} \mathscr F\right)\,\mathrm{in}\,
H^{(=)}_{\mathrm{BC}}\left
(Y,\R\right),  \\
&i_{P,W}^{*}\ch_{\mathrm{BC}}\left(i_{X\times 
\mathbf{P}^{1},W,*}\left( L\pi_{X\times \mathbf{P}^{1},X}^{*}\right) \mathscr 
F\right)=\ch_{\mathrm{BC}}\left(i_{X\times \infty ,P,*} \mathscr F\right)
\mathrm{in}\,
H^{(=)}_{\mathrm{BC}}\left
(P,\R\right). \nonumber 
\end{align}

Let $\alpha\in \Omega^{(=)}\left(W,\R\right)$ be a smooth closed form on $W$ 
representing 
\begin{equation}\label{eq:alph1}
\ch_{\mathrm{BC}}\left(i_{X\times 
\mathbf{P}^{1},W,*}\left(L\pi_{X\times \mathbf{P}^{1},X}^{*}\right)\mathscr 
F\right)\in H^{(=)}_{\mathrm{BC}}\left(W,\R\right).
\end{equation}
 By  
(\ref{eq:bc4}),  $i_{Y,W}^{*}\alpha\in \Omega^{(=)}\left(Y,\R\right)$ represents
 $\ch_{\mathrm{BC}}\left(i_{X,Y,*} \mathscr F\right)\in 
 H_{\mathrm{BC}}^{(=)}\left(Y,\R\right)$,  and $i_{P,W}^{*}\alpha\in \Omega^{(=)}\left(P,\R\right)$ represents
 $\ch_{\mathrm{BC}}\left(i_{X\times \infty ,P,*} 
 \mathscr F\right)\in 
 H_{\mathrm{BC}}^{(=)}\left(P,\R\right)$.

Let $z\in \C$ be the canonical meromorphic coordinate on 
$\mathbf{P}^{1}$ that vanishes at $0$. We have the Poincaré-Lelong 
equation (\ref{eq:bc5}). 

Let $\delta_{Y_{0}}, \delta_{Y_{\infty } }$ be the currents of 
integration on $Y_{0}, Y_{ \infty }$.  Since $q_{W,\mathbf{P}^{1}}$ 
has ordinary singularities near $\mathbf{P}\left(N_{X/Y}\right)$,  
 there is a well-defined  integrable current 
 $q^{*}_{W,\mathbf{P}^{1}}\log\left(\left\vert  z\right\vert^{2}\right)$ 
 on $W$, which is such that
 \begin{equation}\label{eq:bc5a1}
\frac{\overline{\pa}^{W}\pa^{W}}{2i\pi}q^{*}_{W,\mathbf{P}^{1}}\log\left(\left\vert  z\right\vert^{2}\right)=\delta_{Y_{0}}-\delta_{Y_{\infty }}.
\end{equation}
 For $z\in\mathbf{P}^{1}$, let $i_{z}$ be the embedding $Y_{z}\to W$. By (\ref{eq:bc5a1}), we deduce that
\begin{equation}\label{eq:bc6}
\frac{\overline{\pa}^{W}\pa^{W}}{2i\pi}\left[\alpha q^{*}_{W,\mathbf{P}^{1}}\log\left(\left\vert  
z\right\vert^{2}\right)\right]
=i_{0}^{*}\alpha\delta_{Y_{0}} -i_{\infty }^{*}\alpha\delta_{Y_{\infty } }.
\end{equation}
Let $q_{Y_{ \infty },Y}$ be the restriction of $q_{W,Y}$ to $Y_{ 
\infty }$. By (\ref{eq:bc6}), we deduce the identity of currents on $Y$, 
\begin{equation}\label{eq:bc7}
\frac{\overline{\pa}^{Y}\pa^{Y}}{2i\pi}q_{W,Y,*}\left[\alpha q^{*}_{W,\mathbf{P}^{1}}\log\left(\left\vert  
z\right\vert^{2}\right)\right]
=i_{0}^{*}\alpha-q_{Y_{\infty },Y,*}i^{*}_{\infty }\alpha.
\end{equation}

  By the considerations in the paragraph following  (\ref{eq:bc4}) and 
by (\ref{eq:bc7}), we get
\begin{equation}\label{eq:bc7a1}
\ch_{\mathrm{BC}}\left(i_{X,Y,*}\mathscr F\right)=
\left\{q_{Y_{\infty },Y,*}i_{\infty }^{*}\alpha\right\} \,\mathrm{in}\, H^{(=)}_{\mathrm{BC}}\left(Y,\R\right).
\end{equation}

Let $q_{\widetilde Y,Y}, q_{P,Y}$ be the restriction of $q_{Y_{\infty},Y}$ to 
 $\widetilde Y, P$. Note that
\begin{equation}\label{eq:bc8}
q_{Y_{\infty },Y,*}i_{\infty }^{*}\alpha
=q_{\widetilde Y,Y,*}i_{\widetilde Y,W }^{*}\alpha \\
+q_{P,Y,*}i_{P,W}^{*}\alpha.
\end{equation}

Since $\widetilde Y \cap \left( X\times \infty \right) 
=\emptyset$,  we find that
\begin{equation}\label{eq:bc9a-1}
Li_{\widetilde 
Y,W}^{*}i_{X\times\mathbf{P}^{1},W,*}\left( 
L\pi^{*}_{X\times\mathbf{P}^{1},X}\right) 
\mathscr F \simeq 0\ \mathrm{in}\ \Db\left(\widetilde Y\right).
\end{equation}
Using Theorem \ref{thm:pu}, the fact that $\left\{\alpha\right\}$ is 
given by (\ref{eq:alph1}), and (\ref{eq:bc9a-1}), 
we get
\begin{equation}\label{eq:bc9}
\left\{i^{*}_{\widetilde Y,W}\alpha\right\}=0\,\mathrm{in}\, H^{(=)}_{\mathrm{BC}}\left(\widetilde 
Y,\R\right),
\end{equation}
so that
\begin{equation}\label{eq:bc10}
\left\{q_{\widetilde Y,Y,*}i^{*}_{\widetilde Y,W}\alpha\right\}=0\,\mathrm{in}\, H^{(=)}_{\mathrm{BC}}\left(Y,\R\right).
\end{equation}

Using again the considerations in the paragraph following 
(\ref{eq:bc4}), (\ref{eq:bc8}), and (\ref{eq:bc10}), we obtain
\begin{equation}\label{eq:bc10z1}
\cBC\left(i_{X,Y,*} \mathscr F\right)=q_{P,Y*}\cBC\left(i_{X\times 
\infty ,P,*} \mathscr F\right).
\end{equation}
We will now evaluate the right-hand side of (\ref{eq:bc10z1}).

The Koszul complex $\left(\mathcal{O}_{P}\left(\Lambda \left(\left(A \oplus 
\C\right)/U\right)^{*}\right), i_{\sigma}\right)$ is a projective resolution of 
the sheaf $i_{X\times \infty ,P,*}\mathcal{O}_{X\times \infty }$. By 
the projection formula
\cite[\href{https://stacks.math.columbia.edu/tag/0B54}{Tag 
0B54}]{stacks-project}, we get 
\begin{equation}\label{eq:bc5b}
i_{X\times \infty,P,*}\mathscr F \simeq Lq_{P,X\times \infty}^{*} 
\mathscr F \ho_{\mathcal{O}_{P}}^{\mathrm{L}}\left( \mathcal
O_{P}\left(\Lambda \left( \left( A \oplus \C\right)/U \right)^{*}\right)  
,i_{\sigma}\right) \ \mathrm{in}\ \Db\left(P\right).
\end{equation}
By Theorems \ref{thm:pu},  \ref{thm:tepr} and by (\ref{eq:bc5b}), we deduce that
\begin{equation}\label{eq:bc5c}
\ch_{\mathrm{BC}}\left(i_{X\times \infty,P,*} \mathscr 
F\right)=q^{*}_{P,X\times \infty}\ch_{\mathrm{BC}}\left(\mathscr 
F\right)\ch_{\mathrm{BC}}\left(\Lambda \left(\left( A\oplus 
\C\right)/U \right) ^{*}\right).
\end{equation}
By (\ref{eq:bc5c}), we get
\begin{equation}\label{eq:bc5d}
q_{P,Y,*}\ch_{\mathrm{BC}}\left(i_{X\times \infty,P,*} \mathscr 
F\right)=i_{X,Y,*}\left[\ch_{\mathrm{BC}}\left(\mathscr 
F\right)q_{P,X\times \infty, *}\ch_{\mathrm{BC}}\left(\Lambda \left(\left( A 
\oplus \C/U \right) ^{*}\right)\right)\right].
\end{equation}
Observe that
\begin{equation}\label{eq:bc5da1}
N_{\left( X\times \infty \right) /P }=A.
\end{equation}
By \cite[eq. (2.8) in Theorem 2.5]{BismutGilletSoule90b} and by (\ref{eq:bc5da1}), we get
\begin{equation}\label{eq:bc5e}
q_{P,X\times \infty,*}\ch_{\mathrm{BC}}\left(\Lambda \left( \left( A \oplus 
\C\right)/U \right) ^{*}\right)=\Td_{\mathrm{BC}}^{-1}\left(A\right).
\end{equation}
Recall that the choice of the coordinate $z$ gives a canonical 
trivialization of $N_{\infty /\mathbf{P}^{1}}$, so that by 
(\ref{eq:bc4a}), we have the canonical identification
\begin{equation}\label{eq:bc5eb1}
A \simeq N_{X/Y}.
\end{equation}

By (\ref{eq:bc10z1}), (\ref{eq:bc5d}), and (\ref{eq:bc5e}), we 
get (\ref{eq:bc1}).
The 
proof of our theorem is completed.  
\end{proof}
\begin{remark}\label{rem:compa}
	We claim that Proposition \ref{prop:fund} and Theorem 
	\ref{thm:tim} are compatible. Indeed, under the assumptions of 
	Proposition \ref{prop:fund}, by Theorem \ref{thm:tim}, we get
	\begin{equation}\label{eq:nuf1}
\cBC\left(i_{X,Z,*} \mathscr F\right)=i_{X,Z,*}\frac{\cBC\left(\mathscr 
F\right)}{\Td_{\mathrm{BC}}\left(N_{X/Z}\right)}\ \mathrm{in}\ 
H^{(=)}_{\mathrm{BC}}\left(Z,\R\right).
\end{equation}
If $\beta\in \Omega^{(=)}\left(X,\R\right)$ is a smooth form 
representing $\frac{\cBC\left(\mathscr 
F\right)}{\Td_{\mathrm{BC}}\left(N_{X/Z}\right)}$, the right-hand 
side of (\ref{eq:nuf1}) is represented by the current 
$\beta\delta_{X}$ on $Z$. 

If $X$ and $Y$ are transverse, as explained in Subsection 
\ref{subsec:unpro},
$i_{Y,Z}^{*}\beta \delta_{X}$ is a 
well-defined current on $Y$, and it represents $i_{Y,Z}^{*}i_{X,Z,*}\frac{\cBC\left(\mathscr 
F\right)}{\Td_{\mathrm{BC}}\left(N_{X/Z}\right)}$ in 
$H_{\mathrm{BC}}^{(=)}\left(Y,\R\right)$. Moreover, one has the 
equality of currents on $Y$,
\begin{equation}\label{eq:nuf2}
 i_{Y,Z}^{*}\left( \beta  \delta_{X}\right) =\left( i^{*}_{U,X}\beta \right) \delta_{U}.
\end{equation}
By Theorem \ref{thm:pu} and by (\ref{eq:tra2}),  the 
Bott-Chern class of the current in the right-hand side of 
(\ref{eq:nuf2}) is just $i_{U,Y,*}\frac{\cBC\left(Li^{*}_{U,X} \mathscr 
F\right)}{\Td_{\mathrm{BC}}\left(N_{U/Y}\right)}$.
Using again Theorem \ref{thm:tim} for the embedding $i_{U,Y}$,  we 
find that when taking the Chern character of both sides of 
(\ref{eq:tra3}), we get a known equality.
\end{remark}
\subsection{The uniqueness of the Chern character}%
\label{subsec:uni}
First, we recall a version of a result of Grivaux \cite[Theorem 2]{Grivaux10}.
\begin{theorem}\label{thmuni}
	There is at most a unique morphism of groups $\cBC: K(X)\to H^{(=)}_{\rm 
	BC}(X,\mathbf R)$ such that 
		\begin{enumerate}
			\item If $E$ is a holomorphic vector bundle,  
		$\cBC\left(E\right)$ 
		is defined as in Subsection \ref{subsec:bovb}.
		\item  $\cBC$ is functorial under pull-backs.
		\item ${\rm ch_{BC}}$ verifies   Riemann-Roch-Grothendieck 
		with respect to   embeddings.
	\end{enumerate} 
	
	The Chern character $\cBC$ constructed in Definition 
	\ref{def:cDb} verifies the above uniqueness conditions.
\end{theorem}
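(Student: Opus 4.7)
The statement splits into an abstract uniqueness assertion and the verification that the Chern character of Definition \ref{def:cDb} satisfies the three axioms. For the uniqueness, my plan is to invoke Grivaux's argument from \cite[Theorem 2]{Grivaux10}: the key input is that, via Hironaka's resolution of singularities, every $\mathscr F \in K(X)$ can be related, through a finite composition of blow-ups $\pi: \widetilde X \to X$ along smooth centres, to a class coming from locally free sheaves via $L\pi^{*}$; combined with the blow-up formula (which follows from axioms (2) and (3) applied to the exceptional divisor, a projective bundle over the centre, together with the knowledge of $\cBC$ on holomorphic vector bundles from axiom (1)), this forces the value of $\cBC(\mathscr F)$ to be uniquely determined by axioms (1)--(3). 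I would not reproduce Grivaux's argument and would simply cite it.

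For the second assertion, I would check each axiom separately. For axiom (1), let $E$ be a holomorphic vector bundle on $X$, and view it as an antiholomorphic superconnection $\mathscr E = (E, \n^{E \prime \prime})$ concentrated in degree $0$, with diagonal bundle $D = E$ and $v_{0} = 0$. A Hermitian metric $g^{E}$ on $E$ is then a pure generalized metric in the sense of Definition \ref{def:bc13a2}. By construction, the adjoint $A^{E_{0} \prime}$ of $\n^{E \prime \prime}$ with respect to $g^{E}$ is precisely the $(1,0)$-part $\n^{E \prime}$ of the Chern connection of $(E, g^{E})$, so that $A^{E_{0}} = \n^{E}$ is the Chern connection itself and $A^{E_{0},2} = R^{E}$. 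Definition \ref{def:chf} gives
\begin{equation*}
\ch(A^{E_{0} \prime \prime}, g^{E}) = \varphi\, \Trs\bigl[\exp(-R^{E})\bigr],
\end{equation*}
which is exactly the Chern--Weil representative recalled in Subsection \ref{subsec:bovb}. Passing to Bott--Chern classes yields axiom (1).

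Axioms (2) and (3) are the content of Theorems \ref{thm:pu} and \ref{thm:tim} respectively, both of which are already established in the text. The only hard part of the proof is therefore entirely on the uniqueness side and is handled in \cite{Grivaux10}; the verification for the present construction reduces to a direct unfolding of Definition \ref{def:chf} in the vector-bundle case, together with the two compatibility theorems cited above.
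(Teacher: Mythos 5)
The verification half of your argument matches the paper: for axiom (1) you unfold Definition \ref{def:chf} in the vector-bundle case and observe that $A^{E_{0},2}=R^{E}$ reproduces Chern--Weil, and for axioms (2) and (3) you cite Theorems \ref{thm:pu} and \ref{thm:tim}. This is exactly what the paper does, and your spelled-out version of axiom (1) is fine.

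The uniqueness half, however, has a genuine gap. You write that you would ``simply cite'' Grivaux's Theorem 2, but that theorem is stated for a cohomology theory satisfying a list of axioms, and it is precisely those axioms which must be checked for $H^{(=)}_{\mathrm{BC}}(X,\R)$; they are not automatic, and the paper explicitly says it uses a ``modified version'' of Grivaux for this reason. Concretely, the step in your own sketch where the blow-up formula ``forces the value of $\cBC(\mathscr F)$'' only works if $\sigma^{*}:H^{(=)}_{\mathrm{BC}}(X,\R)\to H^{(=)}_{\mathrm{BC}}(\widetilde X,\R)$ is injective for a blow-up $\sigma:\widetilde X\to X$ along a smooth centre; knowing $\sigma^{*}\cBC(\mathscr F)$ does not determine $\cBC(\mathscr F)$ otherwise. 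The paper supplies exactly this missing ingredient: it proves the injectivity by observing that for $\alpha\in\Omega(X,\C)$, the exceptional divisor is negligible so $\int_{\widetilde X}\sigma^{*}\alpha=\int_{X}\alpha$, hence $\sigma_{*}\sigma^{*}\alpha=\alpha$ as currents on $X$, hence $\sigma_{*}\sigma^{*}=\mathrm{id}$ on Bott--Chern cohomology, hence $\sigma^{*}$ is injective. Without this argument (or some substitute), your appeal to Grivaux does not go through. (The paper's Remark \ref{rem:gri} also notes that the original form of Grivaux's theorem would further require injectivity of $\pi^{*}$ on $H^{(=)}_{\mathrm{BC}}$ for projective bundles $\pi:\mathbf{P}(E)\to X$; the modified version used here bypasses this because $\cBC$ is already known on all holomorphic vector bundles, but that is another hypothesis one cannot silently ignore.) You should state and prove the injectivity of $\sigma^{*}$ for blow-ups in Bott--Chern cohomology before invoking the uniqueness machinery.
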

\begin{proof}
	According to a modified version of Grivaux \cite{Grivaux10}, for 
	uniqueness, we only need to check that 
	if $X$ is a compact complex manifold, if $Y$ is a compact complex 
	submanifold of $X$, and if $\sigma:\widetilde X\to X$ is the 
	blow-up of $X$ along $Y$, then 
	$\sigma^{*}:H^{(=)}_{\mathrm{BC}}\left(X,\R\right)\to 
	H^{(=)}_{\mathrm{BC}}\left(\widetilde X,\R\right)$ is injective. Since the exceptional 
	divisor in $\widetilde X$ is negligible, if $\alpha\in 
	\Omega\left(X,\C\right)$,
	\begin{equation}\label{eq:negli1}
\int_{\widetilde X}^{}\sigma^{*}\alpha=\int_{X}^{}\alpha.
\end{equation}
By (\ref{eq:negli1}), we deduce that if $\alpha\in 
\Omega\left(X,\C\right)$, we have the identity of currents on $X$, 
\begin{equation}\label{eq:negli2}
\sigma_{*}\sigma^{*}\alpha=\alpha.
\end{equation}
From (\ref{eq:negli2}), we conclude that on 
$H^{(=)}_{\mathrm{BC}}\left(X,\R\right)$, $\sigma_{*}\sigma^{*}$ is the identity, 
and so $\sigma^{*}$ is injective. This proves the uniqueness of 
$\cBC$. 

By construction, if $E$ is a holomorphic vector bundle on $X$, our 
$\cBC\left(E\right)$ is exactly the classical Chern character 
obtained in Subsection \ref{subsec:bovb}. By Theorems \ref{thm:pu} 
and \ref{thm:tim}, our $\cBC$ verifies the above two remaining 
conditions.

The proof of our theorem is completed.
\end{proof} 
\begin{remark}\label{rem:gri}
	The original unicity theorem of Grivaux would require that we 
	prove if $E$ is a holomorphic vector bundle on $X$, and if 
	$\pi:P\to X$ is the total space of the 
	projectivization $\mathbf{P}\left(E\right)$, then 
	$\pi^{*}:H^{(=)}_{\mathrm{BC}}\left(X,\R\right)\to 
	H^{(=)}_{\mathrm{BC}}\left(P,\R\right)$ is injective. We do not 
	need to check this   (which would be trivial here), because 
	$\cBC$ is known on all holomorphic vector bundles.
	\end{remark}

If $H^{(=)}_{\rm BC}(X,\mathbf Q)$ is the rational Bott-Chern 
cohomology \cite[Section 4.e]{Schweitzer07}, Wu \cite{Wu20} has 
constructed ${\rm ch_{BC,\mathbf Q}}:K(X)\to H^{(=)}_{\rm 
BC}(X,\mathbf Q)$ a rational Chern character. Let $\iota 
:H^{(=)}_{\rm BC}(X,\mathbf Q)\to H^{(=)}_{\rm BC}(X,\mathbf R)$ be 
the canonical map \cite[Section 7.a]{Schweitzer07}.

\begin{cor}
	The following identity holds:
\begin{align}\label{eq:rat}
\iota	{\rm ch_{BC,\mathbf Q}}={\rm ch_{BC}}. 
\end{align} 	
\end{cor}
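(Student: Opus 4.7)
The plan is to deduce the corollary directly from the uniqueness half of Theorem~\ref{thmuni}. Indeed, both $\iota\,\mathrm{ch}_{\mathrm{BC},\mathbf{Q}}$ and $\mathrm{ch}_{\mathrm{BC}}$ are group homomorphisms from $K(X)$ to $H^{(=)}_{\mathrm{BC}}(X,\mathbf{R})$, so it suffices to verify that $\iota\,\mathrm{ch}_{\mathrm{BC},\mathbf{Q}}$ satisfies the three characterizing properties (1)--(3) stated in Theorem~\ref{thmuni}, and then invoke uniqueness.

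First I would recall that $\iota : H^{(=)}_{\mathrm{BC}}(X,\mathbf{Q})\to H^{(=)}_{\mathrm{BC}}(X,\mathbf{R})$ is, by its construction in \cite[Section 7.a]{Schweitzer07}, a ring homomorphism compatible with pull-backs and with proper push-forwards along holomorphic maps. Consequently, composing $\iota$ with any natural transformation preserves all three properties separately. For property~(1), Wu's rational Chern character $\mathrm{ch}_{\mathrm{BC},\mathbf{Q}}$ is, on a holomorphic vector bundle $E$, the class of the usual Chern--Weil form (cf.\ \cite{Wu20}); after applying $\iota$, one recovers the classical Bott-Chern Chern character of Subsection~\ref{subsec:bovb}. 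For property~(2), Wu proves functoriality of $\mathrm{ch}_{\mathrm{BC},\mathbf{Q}}$ under pull-backs along holomorphic maps, and $\iota$ commutes with pull-backs, so $\iota\,\mathrm{ch}_{\mathrm{BC},\mathbf{Q}}$ is functorial as well.

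For property~(3), namely Riemann-Roch-Grothendieck with respect to embeddings $i_{X,Y}:X\to Y$, Wu establishes the corresponding formula in rational Bott-Chern cohomology (this is a special case of his projective Riemann-Roch-Grothendieck statement, embeddings being projective morphisms). Applying $\iota$, which commutes with $i_{X,Y,*}$ and sends $\mathrm{Td}_{\mathrm{BC},\mathbf{Q}}(N_{X/Y})$ to $\mathrm{Td}_{\mathrm{BC}}(N_{X/Y})$, yields exactly (\ref{eq:bc1}) for $\iota\,\mathrm{ch}_{\mathrm{BC},\mathbf{Q}}$.

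With (1)--(3) verified for $\iota\,\mathrm{ch}_{\mathrm{BC},\mathbf{Q}}$, and the same properties already verified for $\mathrm{ch}_{\mathrm{BC}}$ at the end of the proof of Theorem~\ref{thmuni}, the uniqueness clause forces (\ref{eq:rat}). The only mildly subtle point is to confirm that the properties proved by Wu are stated in a form directly compatible with $\iota$; this is routine once one checks that the rational Todd class and the rational direct image used by Wu map under $\iota$ to the real Todd class and real direct image used here, which follows from the very definition of $\iota$ as a morphism of (bi)graded ring-valued functors on compact complex manifolds.
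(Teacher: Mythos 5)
Your proposal is correct and follows essentially the same route as the paper: the paper's proof simply cites \cite[Theorem 2]{Wu20} as the statement that $\iota\,\mathrm{ch}_{\mathrm{BC},\mathbf{Q}}$ satisfies the uniqueness conditions of Theorem~\ref{thmuni}, and then invokes uniqueness. You have merely unpacked that citation into the explicit verification of properties (1)--(3) together with the compatibility of $\iota$ with products, pull-backs, push-forwards, and Todd classes.
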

\begin{proof}
By \cite[Theorem 2]{Wu20}, $\iota {\rm ch_{BC,\mathbf Q}}$ satisfies 
the uniqueness conditions of  Theorem \ref{thmuni}, and so 
(\ref{eq:rat}) holds.
\end{proof} 
\section{Submersions and elliptic superconnections}%
\label{sec:suel}
The purpose of this Section is to state our main result in the case 
of a submersion $p:M=X\times S\to S$, and to define infinite 
dimensional
superconnection forms on $S$, which will be used to establish this main 
result.

This Section is organized as follows. In Subsection \ref{subsec:geo}, 
we state our main result, that involves $\mathscr F\in 
\Db\left(M\right)$.

In Subsection \ref{subsec:rebl}, using the results of Section 
\ref{sec:blo}, we show that in the proof,  $\mathscr F$ can be 
replaced by $\mathscr E\in \mathrm{B}\left(M\right)$. We will then 
view $p_{*} \mathscr E$ as equipped with an antiholomorphic 
superconnection $A^{p_{*}\mathscr E \prime \prime }$.

In Subsection \ref{subsec:adj}, given a splitting $\mathscr E 
\simeq \mathscr E_{0}$, and Hermitian metrics $g^{TX},g^{D}$ on 
$TX,D$, we obtain the adjoint $A^{p_{*} \mathscr E_{0} \prime }$ of 
$A^{p_{*}\mathscr E_{0} \prime \prime}$.

In Subsection \ref{subsec:cox}, we construct natural connections on 
$T_{\R}X$.

In Subsection \ref{subsec:lichbi},  a Lichnerowicz formula is 
established for 
the curvature of $A^{p_{*} \mathscr E_{0}}$, when $S$ is 
reduced to a point.

In Subsection \ref{subsec:curv}, we give a Lichnerowicz formula for 
$A^{p_{*} \mathscr E_{0}, 2}$ by taking a proper adiabatic limit of 
the corresponding formula in the case of a single fiber. This 
curvature is a second order elliptic differential operator along the 
fibers $X$. 

Finally, in Subsection \ref{subsec:elscn}, by imitating the constructions of 
Section \ref{sec:chfoge} in an infinite-dimensional context, we 
obtain infinite-dimensional Chern character forms 
$\ch\left(A^{p_{*} \mathscr E_{0}\prime \prime},\omega^{X},g^{D}\right)$, and we 
show that they have the same formal properties as the forms 
$\ch\left(A^{E_{0}\prime \prime },h\right)$ that were considered in 
Section \ref{sec:chfoge}. In particular, their class in Bott-Chern 
cohomology does not depend on the metric data.
\subsection{A theorem of Riemann-Roch-Grothendieck  for submersions}%
\label{subsec:geo}
Let $X,S$ be compact complex manifolds of dimension $n,n'$. Put
\begin{equation}\label{eq:new1}
M=X\times S. 
\end{equation}
Then $M$ has dimension  $m=n+n'$.
Let $p:M\to S,q:M\to X$ be the   projections.
 Then
\begin{equation}\label{eq:new1a1}
TM=p^{*}TS \oplus q^{*}TX.
\end{equation}

Let $\mathscr F$ be an object in $\Db\left(M\right)$. As we saw in 
Subsection \ref{subsec:dira}, by a theorem by Grauert 
\cite[Theorem 10.4.6]{Grauert60a},  $Rp_{*}\mathscr F\in 
\Db\left(S\right)$. 

The purpose of the next sections is to prove a special case of our general 
result.
\begin{theorem}\label{thm:sub}
	The following identity holds:
	\begin{equation}\label{eq:dir1}
\cBC\left(Rp_{*} \mathscr 
F\right)=p_{*}\left[q^{*}\Td_{\mathrm{BC}}\left(TX\right)\cBC\left(\mathscr F\right)\right]\,\mathrm{in}\, H^{(=)}_{\mathrm{BC}}\left(S,\R\right).
\end{equation}
\end{theorem}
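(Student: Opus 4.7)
The plan is to reduce to the case of antiholomorphic superconnections via Block's equivalence, build infinite-dimensional superconnection forms on $S$ whose Bott-Chern class computes $\cBC(Rp_{*}\mathscr F)$, and then identify that class with the right-hand side of (\ref{eq:dir1}) by a small-time local index theory limit. First, by Theorem \ref{thm:eqcat} I would represent $\mathscr F$ by $\mathscr E=(E,A^{E\prime\prime})\in\mathrm{B}(M)$, so that $\cBC(\mathscr F)=\cBC(\mathscr E)$. The fiberwise sections $p_{*}\mathscr E$ inherit from $A^{E\prime\prime}$ a (necessarily infinite-dimensional) antiholomorphic superconnection $A^{p_{*}\mathscr E \prime\prime}$ over $S$. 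After fixing a splitting $E \simeq E_{0}$, a Hermitian metric $g^{TX}$ on $TX$ with Kähler form $\omega^{X}$, and a Hermitian metric $g^{D}$ on $D$, the constructions of Section \ref{sec:ascngm} can be carried out verbatim relative to the $L_{2}$ pairing $\theta_{h}$ on the fibers, producing the adjoint $A^{p_{*}\mathscr E_{0}\prime}$. A fiberwise Lichnerowicz formula shows that $A^{p_{*}\mathscr E_{0},2}$ is a second-order fiberwise elliptic operator, whose heat semigroup has a smooth fiberwise trace class kernel. This makes the Chern character forms $\ch\left(A^{p_{*}\mathscr E_{0}\prime\prime},\omega^{X},g^{D}\right)$ well-defined as elements of $\Omega^{(=)}(S,\R)$; the formal manipulations of Theorem \ref{thm:carch} show that they are closed, and that their common Bott-Chern class $\cBC(A^{p_{*}\mathscr E \prime\prime})\in H^{(=)}_{\mathrm{BC}}(S,\R)$ does not depend on $(g^{TX},g^{D})$ or on the splitting.

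Second, I would show that $\cBC(A^{p_{*}\mathscr E \prime\prime})=\cBC(Rp_{*}\mathscr E)$. By Proposition \ref{prop:exibb} there exists $\underline{\mathscr E}\in\mathrm{B}(S)$ and a quasi-isomorphism of $\mathcal{O}_{S}$-complexes $\underline{\mathscr E}\to p_{*}\mathscr E$ compatible with the $\mathcal{O}_{S}^{\infty}\left(\Lambda\left(\overline{T^{*}S}\right)\right)$-module structure; one has $\cBC(\underline{\mathscr E})=\cBC(Rp_{*}\mathscr E)$ by Definition \ref{def:cDb}. To compare $\cBC(\underline{\mathscr E})$ with the infinite-dimensional $\cBC(A^{p_{*}\mathscr E\prime\prime})$, I would use the spectral truncation machinery of Subsection \ref{subsec:trunc}: fix $a>0$ and form the spectral projector $P_{a,-}$ on the fiberwise eigenspaces of $V_{0}^{2}=[v_{0},v_{0}^{*}]$ of size $<a$. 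In the infinite-dimensional setting this projector is finite rank and the truncated object $E_{0,a,-}$ is a finite-dimensional antiholomorphic superconnection; by the cone invariance Theorem \ref{thm:idc} combined with Theorem \ref{thm:pea}, the quasi-isomorphisms $P_{a,-}$ and $i_{a,-}$ yield identities of Bott-Chern classes which pin down $\cBC(A^{p_{*}\mathscr E\prime\prime})=\cBC(\underline{\mathscr E})=\cBC(Rp_{*}\mathscr E)$.

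The third and decisive step is the evaluation of $\cBC(A^{p_{*}\mathscr E\prime\prime})$ by a small-time limit. In the Kähler case $\overline{\pa}^{X}\pa^{X}\omega^{X}=0$ I would rescale $\omega^{X}\to\omega^{X}/t$ and let $t\to 0$. Using Theorem \ref{thm:carch}, the Bott-Chern class is invariant in $t$, while Getzler-style rescaling applied to the Lichnerowicz formula for $A^{p_{*}\mathscr E_{0},2}_{t}$ identifies the pointwise limit as
\begin{equation*}
\lim_{t\to 0}\ch\left(A^{p_{*}\mathscr E_{0}\prime\prime},\omega^{X}/t,g^{D}\right)=p_{*}\left[q^{*}\widehat{A}(TX,g^{TX})\,e^{c_{1}(TX,g^{TX})/2}\,\ch\left(A^{E_{0}\prime\prime},h\right)\right],
\end{equation*}
where $h$ is built from $g^{D}$ and the rescaled Kähler form via the trivial example of Subsection \ref{subsec:triex}. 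Combined with $\Td=\widehat A\cdot e^{c_{1}/2}$ (\ref{eq:crisp1}) and the projection formula for $q^{*}\Td_{\mathrm{BC}}(TX)$, this gives (\ref{eq:dir1}).

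The main obstacle is that the Kähler condition $\overline{\pa}^{X}\pa^{X}\omega^{X}=0$ is not generally available, and without it the rescaled forms diverge. To handle the non-Kähler case I would follow the outline of Sections \ref{sec:hypo}--\ref{sec:RBC}: introduce a copy $\widehat{TX}$ of $TX$, work on the total space $\mathcal M$ of $\widehat{T_{\R}X}$, and use the tautological Koszul complex of the embedding $M\hookrightarrow\mathcal M$ together with a nonpositive Hermitian form $\epsilon_{X}$ built from $g^{\widehat{TX}}$ to produce a hypoelliptic superconnection $\mathcal A^{\prime\prime}_{Y}$ over $S$. The Chern character forms $\ch\left(\mathcal A^{\prime\prime}_{Y},\omega^{X},g^{D},g^{\widehat{TX}}\right)$ must be shown to represent the same Bott-Chern class as the elliptic forms, and then a further $Y$-dependent deformation of the $(1,1)$-form (Section \ref{sec:RBC}) is used to extract the small-time limit by local index theory without requiring closedness of $\omega^{X}$. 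The hard part is controlling the Bott-Chern-exactness of all the defect terms that arise: because we work with antiholomorphic superconnections rather than holomorphic vector bundles, the Bianchi identities and transgression formulas acquire extra terms coming from $B=v_{0}+\sum v_{i}$, and one must check that each such term is $\overline{\pa}^{S}\pa^{S}$-exact in order to conclude invariance of $\cBC(A^{p_{*}\mathscr E\prime\prime})$ under the full hypoelliptic deformation.
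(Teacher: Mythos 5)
Your proposal follows the paper's own route essentially step by step: represent $\mathscr F$ by $\mathscr E\in\mathrm B(M)$ via Block's equivalence; build the infinite-dimensional elliptic superconnection forms $\ch\left(A^{p_*\mathscr E_0\prime\prime},\omega^X,g^D\right)$ and their Bott-Chern class (Section \ref{sec:suel}); identify this class with $\cBC(Rp_*\mathscr E)$ (Section \ref{sec:lift}); evaluate by a small-time local index limit under $\overline{\pa}^X\pa^X\omega^X=0$ (Section \ref{sec:prka}); and in general pass through the hypoelliptic and exotic deformations of Sections \ref{sec:hypo}--\ref{sec:RBC}. So there is no divergence of approach, and the structure you give is the right one.

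The one place where the wiring is imprecise is in your second step. The spectral truncations $P_{a,-}$, $i_{a,-}$ are defined only over the open subsets $U_a\subset S$, so they cannot by themselves yield a \emph{global} identity of Bott-Chern classes on $S$; you cannot simply feed ``$P_{a,-}$ and $i_{a,-}$'' into Theorem \ref{thm:idc}, which is moreover a finite-dimensional statement. What the paper actually does is: (i) use Proposition \ref{prop:exibb} to obtain a quasi-isomorphism of $\mathcal O_S$-complexes $\phi:\underline{\mathscr E}\to p_*\mathscr E_0$; (ii) use the spectral truncation (Theorem \ref{thm:peab}) to upgrade this to the statement that $\phi_s:\left(\underline D,\underline v_0\right)_s\to\left(\mathcal D,A^{\mathcal D\prime\prime}\right)_s$ is a quasi-isomorphism for \emph{every} $s\in S$ (Theorem \ref{thm:exi}) --- this is the essential content, not an identity of classes; (iii) form the global cone of $t\phi$, observe that by (ii) its diagonal complex is exact at every $s$, hence its fiberwise Laplacian is uniformly invertible, and run the $T\to\infty$ scaling of Theorem \ref{thm:conv} in this infinite-dimensional setting (Theorem \ref{thm:simb}) to conclude that $\cBC\left(A^{C_{t,0}\prime\prime}\right)=0$ for $t\ne 0$; (iv) let $t\to 0$. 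The point of the spectral truncation is therefore to secure the pointwise acyclicity that makes the $T\to\infty$ limit go to zero, not to compare Bott-Chern classes directly. One further small inaccuracy: the condition $\overline{\pa}^X\pa^X\omega^X=0$ used in Section \ref{sec:prka} is weaker than $\omega^X$ being Kähler, and in that section the relevant citation for invariance of the class under $t$ is the infinite-dimensional Theorem \ref{thm:chca}, not Theorem \ref{thm:carch}.
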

\begin{remark}\label{rem:extsub}
We will exploit as much 
as possible the fact that $M$ is product. However, the methods of 
\cite{Bismut10b} can also be used to give a direct proof of Theorem 
\ref{thm:sub} when $p$ is an arbitrary holomorphic submersion.
\end{remark}
\subsection{Replacing $ \mathscr F$ by $\mathscr E$}%
\label{subsec:rebl}
Let $\mathscr F\in \Db\left(M\right)$. By Theorem \ref{thm:exib},  
there exists $\mathscr 
E=\left(E,A^{E \prime \prime }\right)\in \mathrm{B}\left(M\right)$  and a 
quasi-isomorphism of $\mathcal{O}_{X}$-complexes $\phi:\mathscr E\to 
\overline{\mathscr F}^{\infty}$,  so that 
$\mathscr H \mathscr E= \mathscr H \overline{\mathscr F}^{\infty }=\mathscr H \mathscr F$.  To prove our 
Theorem, we may as well assume that $\mathscr F=\mathscr E$.

We will use the notation of Sections \ref{sec:ahscn} and 
\ref{sec:ascngm}. In particular $D$ denotes the diagonal vector bundle 
on $M$ which is associated with $E$. 

Note that $p_{*} \mathscr E$ is a 
$\Omega^{0,\Ou}\left(S,\C\right)$-module, and  $A^{E 
\prime \prime }$ can be viewed as an antiholomorphic superconnection 
\index{Ape@$A^{p_{*} \mathscr E \prime \prime}$}%
$A^{p_{*} \mathscr E \prime \prime}$ on $p_{*}\mathscr E$.
 The difference with respect to Subsections 
\ref{subsec:def}--\ref{subsec:applco} is that  $p_{*} 
\mathscr E$  is infinite-dimensional.

As we saw in Subsection \ref{subsec:catdb}, a 
theorem of Grauert \cite[Theorem 10.4.6]{GrauertRemmert84} asserts 
that $Rp_{*}\mathscr E$ is an object in 
$\Db\left(S\right)$. Also by (\ref{eq:fon1}), 
\begin{equation}\label{eq:turp1}
Rp_{*}\mathscr E= p_{*}\mathscr E.
\end{equation}
Therefore the $\mathcal{O}_{S}$-complex $\left(p_{*}\mathscr E,A^{p_{*} \mathscr E\prime \prime }
\right)$ defines an object in $\Db\left(S\right)$. This result will 
be given a direct proof in Theorem \ref{thm:exi}.

Let $i$ be the embedding of the fibers $X$ into $M$. By
Propositions \ref{prop:rest} and \ref{prop:puba}, the pull-back 
$i^{*}_{b}\mathscr E=\left(i^{*}_{b}E,A^{i^{*}_{b}E \prime \prime 
}\right)$ is a family of 
antiholomorphic superconnections   along the fibers $X$, whose 
corresponding associated element in $\Db\left(X\right)$ is just 
$Li^{*}\mathscr E$.

Let 
\index{D@$\mathcal{D}$}%
$\mathcal{D}$ be the diagonal bundle  associated with $p_{*} \mathscr 
E$. Then
\begin{equation}\label{eq:solv1}
\mathcal{D}=p_{*} i_{b}^{*}\mathscr 
E,
\end{equation}
and the corresponding 
\index{AD@$A^{\mathcal{D}\prime \prime }$}%
$A^{\mathcal{D}\prime \prime }$ (which is an 
analogue of $v_{0}$ for $p_{*} \mathscr E$) is given by 
\begin{equation}\label{eq:solv2}
A^{\mathcal{D} \prime \prime }=A^{i^{*}_{b}E \prime \prime }.
\end{equation}
The projection $P$ defined in 
Subsection \ref{subsec:def} is given here by $i^{*}_{b}: p_{*}\mathscr 
E\to \mathcal{D}=
p_{*}i^{*}_{b}\mathscr E$.

By (\ref{eq:new1a1}), we get
\begin{equation}\label{eq:didim2}
\Lambda\left(\overline{T^{*}M}\right)=p^{*}\Lambda\left(\overline{T^{*}S}\right)\ho q^{*}\Lambda\left(\overline{\TsX}\right).
\end{equation}
By (\ref{eq:bot1}), (\ref{eq:didim2}), we obtain
\begin{equation}\label{eq:didim2a1}
E_{0} = p^{*}\Lambda\left(\overline{T^{*}S}\right)\ho 
q^{*}\Lambda\left(\overline{\TsX}\right)\ho D.
\end{equation}
Note that
\begin{equation}\label{eq:didim2a2}
i^{*}_{b}E_{0}=q^{*}\Lambda\left(\overline{\TsX}\right)\ho D.
\end{equation}

Over $X\times S$, we fix splittings as in (\ref{eq:iv4a-1}), 
(\ref{eq:iv4}), so that
\begin{equation}\label{eq:dididm2a2}
E \simeq E_{0}.
\end{equation}
These splittings induce corresponding splittings of $i_{b}^{*}E$, and we 
have corresponding isomorphism,
\begin{equation}\label{eq:didim2a3}
i^{*}_{b}E \simeq i^{*}_{b}E_{0}.
\end{equation}
By (\ref{eq:didim2a3}), we get the non-canonical isomorphism
\begin{equation}\label{eq:didim2a4}
\mathcal{D}=\Omega^{0,\Ou}\left(X,D\vert_{X}\right).
\end{equation}

Put
\begin{equation}\label{eq:didim2a4x1}
p_{*} \mathscr E_{0}=\Lambda \left(\overline{T^{*}S}\right)\ho 
p_{*}i_{b}^{*} \mathscr E.
\end{equation}
By (\ref{eq:solv1}), (\ref{eq:didim2a1}), and (\ref{eq:didim2a2}), we 
have the  identification\footnote{It would be more appropriate here 
to introduce the sub-filtration of $E$ associated with 
$p^{*}\Lambda\left(\overline{T^{*}S}\right)$.},
\begin{equation}\label{eq:didim3}
p_{*} \mathscr E \simeq p_{*} \mathscr E_{0}.
\end{equation}
To keep in line with the previous notation, we will denote 
\index{ApE@$A^{p_{*} \mathscr E_{0}}$}%
$A^{p_{*} \mathscr E_{0} \prime \prime}$ the operator corresponding to $A^{p_{*} 
\mathscr E \prime \prime }$ 
via the isomorphism (\ref{eq:didim3}).

Using (\ref{eq:baba2}), we get
\begin{equation}\label{eq:new2a1}
A^{p_{*} \mathscr E_{0} \prime \prime }=\n^{D \prime \prime 
}+B.
\end{equation}
\subsection{The adjoint of $A^{p_{*} \mathscr E_{0} \prime \prime}$}%
\label{subsec:adj}
Since 
$M=X\times S$, then $\n^{D \prime \prime }$ splits as
\begin{equation}\label{eq:coc7a1}
\n^{D \prime \prime }=\n^{D \prime \prime,X}+\n^{D \prime \prime, S},
\end{equation}
where the terms in the right-hand side  of (\ref{eq:coc7a1}) differentiate along 
$X,S$.

Let 
\index{gTX@$g^{TX}$}%
$g^{TX}$ be a Hermitian metric on $TX$, and let 
\index{oX@$\omega^{X}$}%
$\omega^{X}$ 
denote the corresponding Kähler $(1,1)$-form along the 
fibers $X$.\footnote{To simplify the presentation, we 
exploit as much as possible the product structure of $M=X\times S$, 
to avoid the difficulties with the more general situation considered 
in  \cite{Bismut10b}.} Let $g^{T_{\R}X},g^{T^{*}_{\R}X}$ be the 
induced metrics on 
$T_{\R}X,T^{*}_{\R}X$. We denote by $\left\langle  \,\right\rangle$ the 
corresponding scalar product on $T_{\R}X$. If $J^{T_{\R}X}$ denotes the complex structure of 
$T_{\R}X$, if $U,V\in T_{\R}X$, then\footnote{Here, we choose the  
sign
conventions of Kobayashi-Nomizu \cite[Chapter 9.5]{KobayashiNomizu69} 
and of \cite{BismutGilletSoule88b,BismutGilletSoule88c}. The opposite 
sign  is also commonly used.}
\begin{equation}\label{eq:cru1a1}
\omega^{X}\left(U,V\right)=\left\langle  U,J^{T_{\R}X}V\right\rangle.
\end{equation}
Let $g^{\Lambda\left(\overline{\TsX}\right)}$ denote the metric 
induced by $g^{TX}$ on $\Lambda\left(\overline{\TsX}\right)$. Let 
$dx$ be the volume form on $X$ which is induced by $g^{TX}$.

 Let $g^{D}$ be a 
Hermitian metric on the $\Z$-graded vector bundle $D$. Then $g^{D}$ 
defines a pure metric $h$ in $\mathscr M^{D}$.

We equip  $\Lambda \left(\overline{\TsX}\right)\ho D$ with the 
metric $g^{\Lambda \left(\overline{\TsX}\right) } \ho g^{D}$. 
\begin{definition}\label{def:her}
	If $s,s'\in  \mathcal{D}=\Omega^{0,\Ou}\left(X,D\vert_{X}\right)$, put
	\begin{equation}\label{eq:coc7}
\alpha\left(s,s'\right)=\left(2\pi\right)^{-n}\int_{X}^{}\left\langle
s,s'\right\rangle_{g^{\Lambda \left(\overline{\TsX}\right)} \otimes 
g^{D}}dx.
\end{equation}
\end{definition}

Then $\alpha$  is a Hermitian product  on 
$\mathcal{D}$, that defines a pure metric in $\mathscr M^{\mathcal{D}}$.

Here we use the notation of Subsection \ref{subsec:adjscn}, where the adjoint 
of an antiholomorphic superconnection with respect to a Hermitian 
metric was 
defined.
\begin{definition}\label{def:adj}
	Let 
	\index{ApE@$A^{p_{*}\mathscr E_{0}\prime}$}%
	$A^{p_{*}\mathscr E_{0}\prime}$ denote the 
	 adjoint of $A^{p_{*}\mathscr E_{0} \prime \prime }$ with respect to 
	$\alpha$.
\end{definition}

Although $A^{E_{0} \prime \prime 
}$ and $A^{p_{*} \mathscr E_{0}\prime \prime}  $ are essentially the same 
object, their adjoints $A^{E_{0} \prime}$ and $A^{p_{*} \mathscr E_{0} \prime 
}$ are distinct. In particular, the metric $g^{TX}$ plays no role 
in the definition of $A^{E_{0} \prime }$.  Still, we will show how to 
derive $A^{p_{*} \mathscr E_{0}\prime } $ from $A^{E_{0} \prime}$.

If $H$ is a smooth section of 
$\Lambda\left(\overline{T^{*}M}\right)\ho\End\left(D\right)$, 
$H^{*}$ still denotes  the adjoint of $H$
associated with the metric $h=g^{D}$, as  defined in Subsection 
\ref{subsec:dua}.

Then $H$ can be viewed as  a smooth section of 
$$\Lambda\left(\overline{T^{*}S}\right)\ho\End\left(\Lambda\left(\overline{\TsX}\right)\right)\ho\End\left(D\right).$$
\begin{definition}\label{def:adjB}
	Let $H^{*}_{\alpha}$ denote the adjoint of $H$ with respect to the 
	metric $\alpha$ on $\mathcal{D}$. 
\end{definition}

We use the conventions of Subsection \ref{subsec:cli}, with $V=TX, g^{V^{*}}=g^{\TsX}$. 
Then
\index{c@${}^{c}$}%
${}^{c}$ is a map  $\Lambda\left(T^{*}_{\C}M\right)\to
\Lambda \left(T^{*}_{\C}S\right)\ho_{\R} 
c\left(T^{*}_{\R}X\right)$. 
We extend ${}^{\mathrm{c}}$ to a map  
$$\Lambda\left(T^{*}_{\C}M\right)\ho\End\left(D\right)\to \Lambda \left(T^{*}_{\C}S\right)\ho _{\R}
c\left(T^{*}_{\R}X\right)\ho_{\R}\End\left(D\right).$$
Also we use the 
identification in (\ref{eq:bing1}), 
\begin{equation}\label{eq:bing1bi}
c\left(T^{*}_{\R}X\right) 
\otimes _{\R}\C=\End\left(\Lambda \left( \overline{T^{*}X}\right) \right).
\end{equation}
\begin{proposition}\label{prop:psimple}
	If $H$ is a smooth section of 
	$\Lambda\left(\overline{T^{*}M}\right)\ho\End\left(D\right)$, then 
	\begin{equation}\label{eq:gor1}
H^{*}_{\alpha}={}^{c}H^{*}.
\end{equation}
\end{proposition}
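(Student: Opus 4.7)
The plan is to verify (\ref{eq:gor1}) by reducing to a pointwise Clifford-algebraic adjointness identity on $\End(\Lambda(\overline{T^{*}X}))$. By $C^{\infty}(M)$-linearity and the decomposition $\Lambda(\overline{T^{*}M}) = \Lambda(\overline{T^{*}S})\ho \Lambda(\overline{T^{*}X})$ arising from $M = X\times S$, it suffices to treat simple tensors of the form $H = \overline{\mu}_{S}\wedge \overline{\mu}_{X}\otimes A$ with $\overline{\mu}_{S}\in \Lambda(\overline{T^{*}S})$, $\overline{\mu}_{X}\in \Lambda(\overline{T^{*}X})$, and $A\in \End(D)$. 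Moreover, since the image of $*$ on $\Lambda(\overline{T^{*}M})$ lies in $\Lambda(T^{*}M)$, on which the Clifford map ${}^{c}$ is in effect multiplicative (the relevant compositions of wedges and interior multiplications do not trigger any genuine Clifford relation), both $H\mapsto H^{*}_{\alpha}$ and $H\mapsto {}^{c}H^{*}$ are antimultiplicative with the same Koszul signs, and the three factors can be analyzed independently.

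For the external factor $\overline{\mu}_{S}$, I would compute the adjoint in the superconnection sense of Subsection \ref{subsec:adjscn} by extending $\alpha$ to a $\Lambda(T^{*}_{\C}S)$-valued Hermitian pairing; the same computation as (\ref{eq:iv14}) then gives $(\overline{\mu}_{S}\wedge)^{*}_{\alpha} = \overline{\mu}_{S}^{\,*}\wedge$. Since the Clifford quantization leaves $\Lambda(T^{*}_{\C}S)$ untouched by definition (end of Subsection \ref{subsec:cli}), this equals ${}^{c}\overline{\mu}_{S}^{\,*}$. The $\End(D)$ factor is handled pointwise by $g^{D}$: the $\alpha$-adjoint of $A$ is $A^{*}$ in the sense of Subsection \ref{subsec:dua}, and ${}^{c}A^{*}=A^{*}$ since ${}^{c}$ acts trivially on $\End(D)$.

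The essential content is the fiberwise factor $\overline{\mu}_{X}$. Since $\alpha$ is by construction the $L_{2}$-integral against $dx$ of the pointwise Hermitian inner product induced by $g^{\Lambda(\overline{T^{*}X})}\otimes g^{D}$, the $\alpha$-adjoint of a zeroth-order operator coincides with its pointwise Hermitian adjoint on $\Lambda(\overline{T^{*}X})\ho D$. The key pointwise identity I must establish is: for any $B\in \Lambda(T^{*}_{\C}X)$, the Hermitian adjoint of ${}^{c}B\in \End(\Lambda(\overline{T^{*}X}))$ equals ${}^{c}B^{*}$. I would check this first on real generators $f\in T^{*}_{\R}X$ using (\ref{eq:xiao2a1}): a direct computation gives $c(f)^{*} = -c(f)$, which matches ${}^{c}f^{*}$ since $\widetilde{f} = -f$ combined with reality gives $f^{*} = -f$. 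The identity then extends to all $B\in \Lambda(T^{*}_{\C}X)$ because on orthonormal real generators the Clifford product coincides with the wedge product, and both the pointwise Hermitian adjoint and the $*$-operation of Subsection \ref{subsec:dua} are $\C$-antilinear and antimultiplicative with the same sign conventions.

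The main obstacle will be the careful sign bookkeeping required to ensure that the superconnection interpretation of $H^{*}_{\alpha}$ — extending $\alpha$ to a $\Lambda(T^{*}_{\C}S)$-valued Hermitian pairing as in Subsection \ref{subsec:adjscn} — is fully compatible with the purely pointwise Clifford-algebraic interpretation of ${}^{c}H^{*}$, and that the Koszul signs introduced by the $*$-operation (antiautomorphism property of $\widetilde{}$ from (\ref{eq:bc13a1})--(\ref{eq:bc1A3ax2})) combine consistently with those produced by the three independent adjointness computations. Once the conventions are aligned, combining the three factors yields (\ref{eq:gor1}) for simple tensors, and the general case follows by linearity.
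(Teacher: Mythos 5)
Your argument is correct and matches the paper's proof in substance: both reduce to the generator computation that $\overline{f}\wedge$ and $i_{f_*}$ are Hermitian adjoints on $\Lambda(\overline{T^{*}X})$, combined with the Clifford relations (\ref{eq:xiao2a1}) and the sign $\widetilde{f}=-f$, and then extend by (anti)multiplicativity. The paper's proof is terser — it simply says ``we may assume $D$ is trivial'' and invokes the generator adjointness, leaving the $\Lambda(\overline{T^{*}S})$ and $\End(D)$ factors implicit — whereas you spell out all three factors, but there is no difference of method.
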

\begin{proof}
	We may as well assume that $D$ is trivial. We use the notation in 
	(\ref{eq:xiao2a1}). If $f\in \TsX$, $\overline{f}$ and 
	$i_{f_{*}}$ are adjoint to each other with respect to the metric 
	of $\Lambda\left(\overline{T^{*}X}\right)$. From 
	(\ref{eq:xiao2a1}), we get (\ref{eq:gor1}).
\end{proof}

Recall that 
\index{nD@$\n^{D \prime}$}%
$\n^{D \prime}$ was defined after (\ref{eq:iv16}). As in (\ref{eq:coc7a1}),  we have the splitting
\begin{equation}\label{eq:spli1}
\n^{D \prime }=\n^{D \prime ,X}+\n^{D \prime ,S}.
\end{equation}
We can extend 
$\n^{D \prime, S}$  to smooth sections of  
$\Lambda\left(T^{*}_{\C}S\right)\ho 
\Lambda\left(\overline{\TsX}\right)\ho D$. 

Let 
\index{nD@$\n^{D \prime \prime, X*}_{\alpha}$}%
$\n^{D \prime \prime, X*}_{\alpha}$ denote the formal adjoint of $\n^{D 
\prime \prime, X}$ with respect to $\alpha$. Then $\n^{D \prime \prime,X *}_{\alpha}$ is a classical 
Hermitian adjoint. \begin{proposition}\label{prop:padj}
	The following identity holds:
	\begin{equation}\label{eq:coc7a3}
A^{p_{*} \mathscr E_{0} \prime}=\n^{D \prime,S }+\n^{D  
\prime \prime ,X*}_{\alpha}+{}^{c}B^{*}.
\end{equation}
\end{proposition}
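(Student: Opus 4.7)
The strategy is to decompose $A^{p_{*}\mathscr E_{0}\prime\prime}$ into three summands according to direction of differentiation and compute the $\theta_{\alpha}$-adjoint of each separately. By (\ref{eq:new2a1}) combined with (\ref{eq:coc7a1}), we have
\begin{equation*}
A^{p_{*}\mathscr E_{0}\prime\prime} = \n^{D\prime\prime,S} + \n^{D\prime\prime,X} + B,
\end{equation*}
so by linearity of adjunction it suffices to identify the $\theta_{\alpha}$-adjoint of each term.

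The term $\n^{D\prime\prime,S}$ is a first-order operator on $p_{*}\mathscr E_{0}$ that differentiates only along $S$ and treats $\mathcal{D}$-values as scalar coefficients; its $\theta_{\alpha}$-adjoint is obtained by integration by parts on $S$ (converting $\overline{\pa}^{S}$ to $\pa^{S}$ with the signs dictated by the antiautomorphism $\widetilde{\cdot}$ appearing in $\theta$) together with the pointwise $g^{D}$-adjoint on the $\End(D)$-valued one-form entering the local expression of $\n^{D\prime\prime,S}$. Since $\alpha$ is the fiberwise $L^{2}$ pairing built from $g^{D}$, Fubini identifies this adjoint with the restriction to the $S$-direction of the $\theta_{h}$-adjoint $\n^{D\prime}$ of $\n^{D\prime\prime}$ for $h=g^{D}$, that is, with $\n^{D\prime,S}$ from (\ref{eq:spli1}). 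The term $\n^{D\prime\prime,X}$ acts as a fiberwise first-order operator on $\mathcal{D}$ with no $\Lambda(\overline{T^{*}S})$-content, so its $\theta_{\alpha}$-adjoint reduces to the pointwise $\alpha$-adjoint on $\mathcal{D}$, which by Definition \ref{def:adjB} is $\n^{D\prime\prime,X*}_{\alpha}$. Finally, $B$ is zero-order in differentiation and, by Proposition \ref{prop:psimple}, its $\alpha$-adjoint is ${}^{c}B^{*}$; the ${}^{c}$-operation correctly encodes the Clifford absorption of $\Lambda(\overline{T^{*}X})$ into $\mathcal{D}$, while the $\Lambda(\overline{T^{*}S})$-part of $B$ is handled uniformly via the $*$-involution of Subsection \ref{subsec:dua}, consistent with the $\theta$-adjoint on $\Omega(S,\C)$.

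Summing the three adjoints yields (\ref{eq:coc7a3}). The main technical obstacle is sign bookkeeping in the first step: verifying that the $\theta_{\alpha}$-adjoint of $\n^{D\prime\prime,S}$ on the infinite-dimensional $p_{*}\mathscr E_{0}$ matches $\n^{D\prime,S}$ with no stray signs requires careful tracking of $\widetilde{\cdot}$, form parities, and the degree shifts coming from the $\Z_{2}$-grading. This ultimately reduces, via the product structure $M = X \times S$ and Fubini, to the already-established formula $(\n^{D\prime\prime})^{*}_{\theta_{h}} = \n^{D\prime}$ from (\ref{eq:coa1}) with $h = g^{D}$.
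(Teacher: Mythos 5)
Your proof takes essentially the same route as the paper's one-line proof, which simply cites (\ref{eq:new2a1}), (\ref{eq:coc7a1}), and Proposition \ref{prop:psimple}: you perform exactly this three-term decomposition of $A^{p_*\mathscr E_0 \prime\prime}$ and take the $\theta_\alpha$-adjoint termwise, filling in the Fubini/integration-by-parts details that the paper leaves implicit. One small citation slip: for the $\n^{D\prime\prime,X}$ term you point to Definition \ref{def:adjB}, but that definition concerns zero-order operators $H\in \Lambda(\overline{T^*M})\ho\End(D)$; the object $\n^{D\prime\prime,X*}_\alpha$ is introduced in the unnumbered sentence just before the proposition ("Let $\n^{D\prime\prime,X*}_\alpha$ denote the formal adjoint of $\n^{D\prime\prime,X}$ with respect to $\alpha$"), since $\n^{D\prime\prime,X}$ is a genuine first-order differential operator and not a bundle endomorphism.
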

\begin{proof}
	This is a consequence of (\ref{eq:new2a1}), (\ref{eq:coc7a1}), 
	and of Proposition \ref{prop:psimple}.
\end{proof}
\begin{definition}\label{def:comsc}
	Put
	\index{ApE@$A^{p_{*} \mathscr E_{0}}$}%
	\begin{equation}\label{eq:coc4}
A^{p_{*} \mathscr E_{0}}=A^{p_{*} \mathscr E_{0} \prime \prime 
}+A^{p_{*} \mathscr E_{0}\prime}.
\end{equation}
Then $A^{p_{*} \mathscr E_{0}}$ is a superconnection on $p_{*} 
\mathscr E_{0}$. 
\end{definition}

Put
\index{nDS@$\n^{D,S}$}%
\begin{equation}\label{eq:coc4az1}
\n^{D,S}=\n^{D \prime \prime,S}+\n^{D \prime,S}.
\end{equation}
By (\ref{eq:dep1}), (\ref{eq:new2a1}), and (\ref{eq:coc7a3}), we get
\begin{equation}\label{eq:dep2}
A^{p_{*} \mathscr E_{0}}=\n^{D,S}+\n^{D\prime\prime ,X}+\n^{D\prime 
\prime, X*}+{}^{c}C.
\end{equation}

Let 
\index{ADs@$A^{\mathcal{D}\prime \prime *}$}%
$A^{\mathcal{D}\prime \prime *}$  denote the standard fiberwise 
$L_{2}$ adjoint of $A^{\mathcal{D} \prime \prime }$. We use the notation
\index{AD@$A^{\mathcal{D}}$}%
\begin{equation}\label{eq:dep2a1}
A^{\mathcal{D}}=A^{\mathcal{D} \prime \prime }+A^{\mathcal{D} \prime 
\prime *}.
\end{equation}

\subsection{Some connections on $TX$}%
\label{subsec:cox}
Put
\index{b@$\beta$}%
\begin{equation}\label{eq:dep10a1}
\beta=\frac{1}{2}\left(\overline{\pa}^{X}-\pa^{X}\right)i\omega^{X}.
\end{equation}
Then $\beta$ is a smooth section of 
$\Lambda^{3}\left(T^{*}_{\R}X\right)$.

Let 
\index{nTRX@$\n^{T_{\R}X,\mathrm{LC}}$}%
$\n^{T_{\R}X,\mathrm{LC}}$ be the Levi-Civita connection on $T_{\R}X$, and 
let 
\index{nTX@$\n^{TX}$}%
$\n^{TX}$ be the Chern connection on $TX$. Let $\n^{T_{\R}X}$ be 
the connection on $T_{\R}X$ which is induced by $\n^{TX}$. The connection 
$\n^{T_{\R}X,\mathrm{LC}}$ preserves $TX$ if and only if $g^{TX}$ is Kähler, 
and in this case, it coincides with $\n^{T_{\R}X}$.

Let 
\index{nLTX@$\n^{\Lambda\left(\overline{\TsX}\right)}$}%
$\n^{\Lambda\left(\overline{\TsX}\right)}$ be the connection on 
$\Lambda\left(\overline{\TsX}\right)$ which is induced by $\n^{TX}$. 
This connection preserves the $\Z$-grading. 

Let 
\index{th@$\vartheta$}%
$\vartheta$ be the canonical $1$-form on $X$ with values in 
$T_{\R}X$ that corresponds to the identity. Let 
\index{t@$\tau$}%
$\tau$ denote the 
torsion of $\n^{T_{\R}X}$. Then $\tau$ is the sum of a 
$\left(2,0\right)$-form $\tau^{1,0}$ with values in $TX$, and of a 
$\left(0,2\right)$-form $\tau^{0,1}$ with values in $\overline{TX}$. 

Let $\left\langle  \tau\we,\vartheta\right\rangle$ be the 
antisymmetrization of the tensor $\left\langle  
\tau,\vartheta\right\rangle$. By 
\cite[Proposition 2.1]{Bismut89a}, or by (\ref{eq:cla1}), we get
\begin{equation}\label{eq:dep9}
\left\langle  \tau\we_{,}\vartheta\right\rangle=-2\beta.
\end{equation}
Also equation (\ref{eq:dep9}) determines $\tau$. More precisely, 
\begin{align}\label{eq:dep9a1}
&\overline{\pa}^{X}i\omega^{X}=-\left\langle  
\tau^{0,1}\we,\vartheta^{1,0}\right\rangle,
&\pa^{X}i\omega^{X}=\left\langle  
\tau^{1,0}\we,\vartheta^{0,1}\right\rangle.
\end{align}

By (\ref{eq:cla2}),  the connection 	
 $\n^{T_{\R}X,-1/2}$ on $T_{\R}X$ associated with $\n^{TX}$ is such that	if $U,V\in T_{\R}X$, then
	\begin{equation}\label{eq:dee3}
\n^{T_{\R}X,-1/2}_{U}V=\n^{T_{\R}X}_{U}V-\frac{1}{2}\tau\left(U,V\right).
\end{equation}
As we saw at the end of Subsection  \ref{subsec:remdi}, $\n^{T_{\R}X,-1/2}$ induces a connection $\n^{TX,-1/2}$ on $TX$, 
and the analogue of (\ref{eq:clap5}) holds.

Here, we follow \cite[Section 2 b)]{Bismut89a}.
\begin{definition}\label{def:nb}
	Let 
	\index{ntRX@$\overline{\n}^{T_{\R}X}$}%
	\index{nTRX@$\underline{\n}^{T_{\R}X}$}%
	$\overline{\n}^{T_{\R}X},\underline{\n}^{T_{\R}X}$ be the 
	Euclidean connections on 
	$T_{\R}X$ whose torsions $\overline{\tau},\underline{\tau}$ are 
	such that the tensors $\left\langle  
	\overline{\tau},\vartheta\right\rangle,\left\langle  
	\underline{\tau},\vartheta\right\rangle$
	are totally 
	antisymmetric, and moreover, 
	\begin{align}\label{eq:dep10}
&\left\langle  
\overline{\tau}\we_{,}\vartheta\right\rangle=2\beta,
&\left\langle  
\underline{\tau}\we_{,}\vartheta\right\rangle=-2\beta,
\end{align} 
and let 
\index{RTR@$\overline{R}^{T_{\R}X}$}%
\index{RTR@$\underline{R}^{T_{\R}X}$}%
$\overline{R}^{T_{\R}X},\underline{R}^{T_{\R}X}$ 
be their curvatures.
\end{definition}

By \cite[Proposition 2.5]{Bismut89a}, the connection 
$\overline{\n}^{T_{\R}X}$ preserves the complex structure, i.e., it 
induces a corresponding Hermitian connection 
\index{nTX@$\overline{\n}^{TX}$}%
$\overline{\n}^{TX}$ on 
$TX$. 

 Let $A$ 
be the $1$-form on $X$ with values in antisymmetric sections of 
$\End\left(T_{\R}X\right)$  such that if $U,V,W\in T_{\R}X$, then
\begin{equation}\label{eq:dep10a}
\left\langle  A\left(U\right)V,W\right\rangle=-\left\langle  
\tau\left(U,V\right),W\right\rangle+\left\langle  
\tau\left(U,W\right),V\right\rangle.
\end{equation}
From the properties of $\tau$ given before, we find that 
for $U\in T_{\R}X$, $A\left(U\right)$ is a complex endomorphism of 
$T_{\R}X$. By  \cite[Proposition 2.5 and eq. 
(2.37)]{Bismut90d}, we have the identity 
\begin{equation}\label{eq:dep10b}
\overline{\n}^{TX}=\n^{TX}+A.
\end{equation}

For $t>0$, if $\omega^{X}$ is replaced by $\omega^{X}/t$,  $\beta$ 
is replaced by $\beta/t$. However, the above connections are 
unchanged. Also if $\omega^{X}$ is closed, all the 
considered connections on $T_{\R}X$ coincide with $\n^{T_{\R}X}$.

By following \cite[Section 2 b)]{Bismut89a} and \cite[Section 
2.5]{Bismut10b}, let us describe the connection $\underline{\n}^{T_{\R}X}$ in 
more detail.
Put
\begin{equation}\label{eq:sup2}
F=T^{*}X \oplus TX
\end{equation}
Then $F$ is a holomorphic Hermitian vector bundle  on 
$X$. Let $\n^{F}=\n^{\TsX} \oplus \n^{TX}$ denote the corresponding 
Chern connection.    

Let $b$ be the smooth section of $\overline{T^{*}X}\otimes 
\Hom\left(TX,\TsX\right)$ such that if $u,v\in TX$,then
\begin{equation}\label{eq:sup3}
b\left(\overline{u}\right)v=i\pa^{X}\omega^{X}\left(\overline{u},v,\cdot\right).
\end{equation}

Put
\begin{equation}\label{eq:sup4}
\mathfrak 
b=\left(g^{TX}\right)^{-1}\overline{b \left( g^{TX} \right)^{-1}} .
\end{equation}
Then  $\mathfrak b$ is a section  of $\TsX \otimes 
\Hom\left(\TsX,TX\right)$.

Let 
\index{nF@$\underline{\n}^{F}$}%
$\underline{\n}^{F}$ be the connection on $F$,
\begin{equation}\label{eq:sup5}
\underline{\n}^{F}=
\begin{bmatrix}
	\n^{\TsX} & b \\
	\mathfrak b & \n^{TX}
\end{bmatrix}.
\end{equation}

Then $g^{\overline{TX}}$ 
identifies $\overline{TX}$ with $T^{*}X$, so that
\begin{equation}\label{eq:sup2a1}
F \simeq T_{\C}X = \overline{TX} \oplus TX.
\end{equation}

In  \cite[Theorem 2.9]{Bismut89a}, \cite[Theorem 2.5.3]{Bismut10b}, it was shown that via the identification $F \simeq 
T_{\C}X$, the connection $\underline{\n}^{F}$ is real and it induces the Euclidean 
connection $\underline{\n}^{T_{\R}X}$ on $T_{\R}X$. If  $\underline{\n}^{T_{\C}X}$ denote the corresponding 
connection on $T_{\C}X$, by (\ref{eq:sup5}), we get
\begin{equation}\label{eq:sup6}
\underline{\n}^{T_{\C}X}=
\begin{bmatrix}
	\n^{\overline{TX}} & \left(g^{\overline{TX}}\right)^{-1}b \\
\left(g^{TX}\right)^{-1}\overline{b} & \n^{TX}
\end{bmatrix}.
\end{equation}
By (\ref{eq:sup6}), we have the confirmation that 
$\underline{\n}^{T_{\R}X}$ is unchanged when  replacing $\omega^{X}$ 
by $\omega^{X}/t$.

By (\ref{eq:sup5}), we get
\begin{equation}\label{eq:sup6a1}
\underline{\n}^{F \prime \prime}=
\begin{bmatrix}
	\n^{\TsX \prime \prime } &b \\
0 & \n^{TX \prime \prime }
\end{bmatrix}.
\end{equation}

If $\overline{\pa}^{X}\pa^{X}i\omega^{X}=0$,  
by \cite[Theorem 2.7]{Bismut89a}, \cite[Theorem 2.5.3]{Bismut10b},   
$\underline{\n}^{F\prime \prime}$ induces a new holomorphic 
structure on $F$. We have the exact sequence of holomorphic vector 
bundles,
\begin{equation}\label{eq:exi1}
0\to \TsX\to F\to TX\to 0.
\end{equation}
Equivalently,  $\underline{\n}^{T_{\C}X \prime 
\prime }$ defines a holomorphic structure on $T_{\C}X$, and 
the exact sequence (\ref{eq:exi1})
 can  be rewritten in the form,
\begin{equation}\label{eq:exi2}
0\to \left( \overline{TX},\n^{\overline{TX} \prime \prime }\right)  \to 
\left( T_{\C}X,\underline{\n}^{T_{\C}X \prime \prime } \right) \to \left(TX,\n 
^{TX \prime \prime }\right)\to 0.
\end{equation}
In (\ref{eq:exi2}),  $\n^{\overline{TX} \prime \prime 
}=\overline{\n^{TX \prime }}$ depends on the metric $g^{TX}$.

If $\overline{\pa}^{X}\pa^{X}i\omega^{X}=0$, then $\underline{\n}^{T_{\C}X \prime \prime }$ is a holomorphic 
structure,  so that 
$\underline{R}^{T_{\R}X}$ is of type $(1,1)$. 
By \cite[Theorem 1.6]{Bismut89a}, \cite[Theorem 
2.5.6]{Bismut10b}, under the same assumption,  if $A,B,C,D\in T_{\R}X$, then
\begin{equation}\label{eq:sup7}
\left\langle  \overline{R}^{T_{\R}X}\left(A,B\right)C,D\right\rangle=
\left\langle  \underline{R}^{T_{\R}X}\left(C,D\right)A,B\right\rangle.
\end{equation}
Equation (\ref{eq:sup7}) is compatible with the fact that 
$\overline{R}^{T_{\R}X}$ takes its values in complex endomorphisms 
of $T_{\R}X$.

Let 
\index{nLTX@$\overline{\n}^{\Lambda\left(\overline{\TsX}\right)}$}%
$\overline{\n}^{\Lambda\left(\overline{\TsX}\right)}$ be the  connection on $\Lambda\left(\overline{\TsX}\right)$  induced by the unitary connection $ \overline{\n}^{TX}$ on $TX$.

 Assume for the moment that $X$ is spin, or equivalently that the line 
bundle $\det TX$ has a square root $\lambda$. Then $\lambda$ is a 
holomorphic Hermitian line bundle. Let $\n^{\lambda}$ be the 
corresponding Chern connection. Put
\begin{equation}\label{eq:spi1}
S^{T_{\R}X}=\Lambda\left(\overline{\TsX}\right) \otimes \lambda^{-1}.
\end{equation}
Then $S^{T_{\R}X}$ is the Hermitian vector bundle of 
$\left(T_{\R}X,\,g^{T_{\R}X} \right) $-spinors associated with the 
underlying spin structure,  the positive spinors 
 corresponding to even forms, and the negative spinors to odd 
forms. Also $S^{T_{\R}X}$ inherits a  Hermitian 
connection $\n^{S^{T_{\R}X},\mathrm{LC}}$ corresponding to $\n^{T_{\R}X,\mathrm{LC}}$,  that 
preserves its $\Z_{2}$-grading. Let 
$\n^{\Lambda\left(\overline{\TsX} \right),\mathrm{LC}}$ denote the connection on 
$\Lambda\left(\overline{\TsX}\right)$ that is induced by 
$\n^{S^{T_{\R}X}},\n^{\lambda}$. The connection 
$\n^{\Lambda\left(\overline{\TsX}\right),\mathrm{LC}}$ preserves the 
$\Z_{2}$-grading of $\Lambda\left(\overline{\TsX}\right)$.

The above considerations still make sense even if $X$ is not spin, since locally, a square 
root $\lambda$ always exists.

By proceeding the way we did for $\n^{T_{\R}X,\mathrm{LC}}$, we define the connection  
$\overline{\n}_{s}^{\Lambda\left(\overline{\TsX}\right)}$ to  be the 
connection induced by $\overline{\n}^{T_{\R}X},\n^{\lambda}$ on 
$\Lambda\left(\overline{\TsX}\right)$.  

Let $e_{1},\ldots,e_{2n}$ be an orthonormal basis of $T_{\R}X$, and 
let $e^{1},\ldots,e^{2n}$ be the corresponding dual basis of 
$T^{*}_{\R}X$. Let ${}^{c}A$ be the section of 
$\Lambda\left(T^{*}_{\R}X\right)\ho 
c\left(T_{\R}X\right)$ given by
\begin{equation}\label{eq:clic1}
{}^{c}A=\frac{1}{2}\left\langle  Ae_{i},e_{j}\right\rangle 
{}^{c}e_{i}{}^{c}e_{j}.
\end{equation}
By equation 
(\ref{eq:dep10b}), we get
\begin{equation}\label{eq:dep10bx1}
\overline{\n}_{s}^{\Lambda\left(\overline{\TsX}\right)}=\n^{\Lambda\left(\overline{\TsX}\right)}+{}^{c}A.
\end{equation}

Observe that $i_{\cdot}\beta$ is a $1$-form  with values 
in $\Lambda^{2}\left(T^{*}_{\R}X\right)$, so that ${}^{c}i_{\cdot}\beta$ 
is a $1$-form  with values in 
$\End\left(\Lambda\left(\overline{\TsX}\right)\right)$.
By \cite[eq. (2.34)] {Bismut89a} and by (\ref{eq:dep9}),  we 
have the identity\footnote{When $\omega^{X}$ is replaced by 
$\omega^{X}/t$, $\beta$ is replaced by $\beta/t$. However, ${}^{c}$ now depends on $t$. When suitably 
interpreted, (\ref{eq:dep10a1r1}) does not depend on the scaling.} 
\begin{equation}\label{eq:dep10a1r1}
\overline{\n}_{s}^{\Lambda\left(\overline{\TsX}\right)}=\n^{\Lambda\left(\overline{\TsX}\right),\mathrm{LC}}+{}^{c}
\left(i_{\cdot}\beta\right).
\end{equation}

In general,  the connections 
$\overline{\n}_{s}^{\Lambda\left(\overline{\TsX}\right)}$ and 
$\overline{\n}^{\Lambda\left(\overline{\TsX}\right)}$ 
do  not 
coincide. More precisely,  we have the identity\footnote{This point was clear in \cite{Bismut89a}, and was only 
stated implicitly in \cite[eq. (3.72)]{Bismut11a}. Still observe that 
these two connections induce the same connection on 
$\End\left(\Lambda\left(\overline{\TsX}\right)\right)$.}
\begin{equation}\label{eq:dep10bx2}
\overline{\n}^{\Lambda\left(\overline{\TsX}\right)}=\overline{\n}_{s}^{\Lambda\left(\overline{\TsX}\right)}+\frac{1}{2}\Tr^{TX}\left[A\right].
\end{equation}

Let 
\index{nLTX@$\n^{\Lambda\left(\overline{\TsX}\right)\ho D,\mathrm{LC}}$}%
$\n^{\Lambda\left(\overline{\TsX}\right)\ho D,\mathrm{LC}}$ be the 
connection on $\Lambda\left(\overline{\TsX}\right)\ho D$ that is 
induced by $\n^{\Lambda\left(\overline{\TsX}\right),\mathrm{LC}},\n^{D}$.
Let 
\index{nLTX@$\overline{\n}^{\Lambda\left(\overline{\TsX}\right)\ho D}$ }%
$\overline{\n}^{\Lambda\left(\overline{\TsX}\right)\ho D}$ 
denote the connection on $\Lambda\left(\overline{\TsX}\right)\ho 
D$ which is associated with 
$\overline{\n}^{\Lambda\left(\overline{\TsX}\right)},\n^{D}$.
\subsection{A Lichnerowicz formula for $A^{\mathcal{D},2}$}%
\label{subsec:lichbi}
In this Subsection, we assume  that $S$ is reduced to a point. 
Let 
\index{DX@$D^{X,\mathrm{LC}}$}%
$D^{X,\mathrm{LC}}$ denote the classical    Dirac operator acting on 
$C^{\infty }\left(X,\Lambda\left(\overline{\TsX}\right)\ho D\right)$. Namely, if 
$e_{1},\ldots,e_{2n}$ is a basis  $T_{\R}X$, and if 
$e^{1},\ldots,e^{2n}$ is the corresponding dual basis of 
$T^{*}_{\R}X$, then
\begin{equation}\label{eq:zep2}
D^{X,\mathrm{LC}}={}^\mathrm{c}e^{i}\n^{\Lambda 
\left( \overline{\TsX} \right) \ho D,\mathrm{LC}}_{e_{i}}.
\end{equation}

Now we assume that $e_{1},\ldots,e_{2n}$ is an orthonormal basis of 
$T_{\R}X$. Let 
\index{KX@$K^{X}$}%
$K^{X}$ be the scalar curvature of $X$, and let $R^{D}$ be the 
curvature of $\n^{D}$.  By Lichnerowicz formula, we get
\begin{equation}\label{eq:zep3}
D^{X,\mathrm{LC},2}=-\frac{1}{2}\n_{e_{i}}^{\Lambda\left(\overline{\TsX}\right)\ho D,\mathrm{LC},2}+
\frac{1}{8}K^{X}+{}^{\mathrm{c}}\left( 
R^{D}+\frac{1}{2}\Tr\left[R^{TX}\right] \right) .
\end{equation}
In the right-hand side of (\ref{eq:zep3}), 
$\n_{e_{i}}^{\Lambda\left(\overline{\TsX}\right)\ho D,\mathrm{LC},2}$ 
is our notation for the Bochner Laplacian. More precisely if 
$e_{1},\ldots,e_{2n}$ is a locally defined orthonormal basis of 
$T_{\R}X$, then
\begin{equation}\label{eq:zep3r1}
\n_{e_{i}}^{\Lambda\left(\overline{\TsX}\right)\ho 
D,\mathrm{LC},2}=\sum_{}^{}\n_{e_{i}}^{\Lambda\left(\overline{\TsX}\right)\ho D,\mathrm{LC},2}-
\n_{\sum_{}^{}\n^{T_{\R}X,\mathrm{LC}}_{e_{i}}e_{i}}^{\Lambda\left(\overline{\TsX}\right)\ho D,\mathrm{LC}}
\end{equation}
Other Bochner Laplacians will be denoted in the same way, with the 
same correction in the right-hand side.
\begin{theorem}\label{thm:Pide}
	The following identities hold:
	\begin{align}\label{eq:cod2}
&A^{\mathcal{D}}=D^{X,\mathrm{LC}}+{}^{\mathrm{c}} \left( \beta+C \right), \nonumber \\
&A^{\mathcal{D},2}
=-\frac{1}{2}\left(\n^{\Lambda\left(\overline{\TsX}\right)\ho 
D,\mathrm{LC}}_{e_{i}}+{}^{\mathrm{c}}\left(i_{e_{i}}\left( \beta+C \right) \right)\right)^{2}+\frac{K^{X}}{8} \nonumber \\
&+^{\mathrm{c}}\left( 
A^{E,2}+\frac{1}{2}\Tr\left[R^{TX}\right]-\overline{\pa}^{X}\pa^{X}i\omega^{X}\right)+\left(^{\mathrm{c}}\left(\beta+C\right) \right) ^{2}\nonumber \\
&+\frac{1}{2}\sum_{}^{}\left(^{\mathrm{c}}\left(i_{e_{i}}\left(\beta+C\right)\right)\right)^{2}- {}^{\mathrm{c}}\left( \left(\beta+C\right)^{2}\right) ,\\
&\left(^{\mathrm{c}}\left(\beta+C\right)\right)^{2}+\frac{1}{2}\sum_{}^{}\left(^{\mathrm{c}}\left(i_{e_{i}}\left(\beta+C\right)\right)\right)^{2}-^{\mathrm{c}}\left(\left(\beta+C\right)^{2}\right) \nonumber \\
&=\sum_{\substack{i_{1<\ldots<i_{k}}\\k\ge 
2}}^{}\left(-1\right)^{k\left(k+1\right)/2}\frac{\left(1-k\right)}{2^{k/2}} {}^{\mathrm{c}}\left( \left( i_{e_{i_{1}}}\ldots i_{e_{i_{k}}}\left(\beta+C\right) \right) ^{2} \right) . \nonumber 
\end{align}
\end{theorem}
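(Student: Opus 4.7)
The plan is to derive the first identity from Proposition \ref{prop:psimple} combined with the connection comparison (\ref{eq:dep10a1r1}); to obtain the second by squaring the first and substituting the classical Lichnerowicz formula (\ref{eq:zep3}); and to verify the third as a purely algebraic identity inside the Clifford algebra.

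First, I would restrict $A^{E_{0}\prime\prime}=\n^{D\prime\prime}+B$ to a fibre $X$ and identify $\mathcal{D}=\Omega^{0,\bullet}(X,D\vert_{X})$. The component $\n^{D\prime\prime,X}$ is the Dolbeault operator $\overline{\pa}^{X}$ twisted by the antiholomorphic connection $\n^{D\prime\prime}\vert_{X}$, while $B\vert_{X}$ acts as a $\Lambda(\overline{\TsX})$-valued endomorphism. Proposition \ref{prop:psimple} converts its $\alpha$-adjoint $B^{*}_{\alpha}$ into ${}^{\mathrm{c}}B^{*}$, so that $A^{\mathcal{D}}=\overline{\pa}^{X}+\overline{\pa}^{X*}+{}^{\mathrm{c}}C$. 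The remaining identity $\overline{\pa}^{X}+\overline{\pa}^{X*}=D^{X,\mathrm{LC}}+{}^{\mathrm{c}}\beta$ is the standard comparison between the Dolbeault Dirac operator and the spin Dirac of the Levi-Civita connection: by (\ref{eq:dep10a1r1}), the Dirac operator of $\overline{\n}_{s}^{\Lambda(\overline{\TsX})\ho D}$ equals $\overline{\pa}^{X}+\overline{\pa}^{X*}$, and its difference with $D^{X,\mathrm{LC}}$ is exactly ${}^{\mathrm{c}}e^{i}\cdot{}^{\mathrm{c}}(i_{e_{i}}\beta)={}^{\mathrm{c}}\beta$. A parallel use of (\ref{eq:dep10bx2}) ensures that the discrepancy between $\overline{\n}_{s}^{\Lambda(\overline{\TsX})}$ and $\overline{\n}^{\Lambda(\overline{\TsX})}$ is absorbed into the trace term ${}^{\mathrm{c}}(\tfrac{1}{2}\Tr[R^{TX}])$ in the second identity.

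Next, I would square $A^{\mathcal{D}}=D^{X,\mathrm{LC}}+{}^{\mathrm{c}}(\beta+C)$ and obtain
\begin{equation*}
A^{\mathcal{D},2}=D^{X,\mathrm{LC},2}+[D^{X,\mathrm{LC}},{}^{\mathrm{c}}(\beta+C)]+({}^{\mathrm{c}}(\beta+C))^{2}.
\end{equation*}
Substituting (\ref{eq:zep3}) for $D^{X,\mathrm{LC},2}$ and using both the Clifford compatibility $[\n_{e_{i}}^{\Lambda(\overline{\TsX})\ho D,\mathrm{LC}},{}^{\mathrm{c}}e^{j}]={}^{\mathrm{c}}(\n_{e_{i}}^{T_{\R}X,\mathrm{LC}}e^{j})$ and the Clifford relation (\ref{eq:xiao2}), one reorganises the Bochner Laplacian and the cross commutator into
\begin{equation*}
-\tfrac{1}{2}\bigl(\n_{e_{i}}^{\Lambda(\overline{\TsX})\ho D,\mathrm{LC}}+{}^{\mathrm{c}}(i_{e_{i}}(\beta+C))\bigr)^{2}
\end{equation*}
modulo explicit Clifford-algebraic corrections and a ${}^{\mathrm{c}}(d^{X}(\beta+C))$-type contribution. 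The curvature ${}^{\mathrm{c}}R^{D}$ inside ${}^{\mathrm{c}}A^{E,2}$ enters through $A^{E,2}=[\n^{D\prime\prime}+B,\n^{D\prime}+B^{*}]$, and the relation $d^{X}\beta=-\overline{\pa}^{X}\pa^{X}i\omega^{X}$ (from (\ref{eq:dep10a1}) together with $\pa\overline{\pa}+\overline{\pa}\pa=0$) yields the contribution $-\overline{\pa}^{X}\pa^{X}i\omega^{X}$. The scalar $K^{X}/8$ is unchanged, and all leftover Clifford squared pieces reassemble into the last two lines of (\ref{eq:cod2}).

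Finally, the third identity is purely algebraic inside $\Lambda(T^{*}_{\C}X)\ho c(T^{*}_{\R}X)\ho_{\R}\End(D)$. For a form $\alpha$ of homogeneous degree $k\ge 2$, iterated use of the Clifford relation (\ref{eq:xiao2}) expands ${}^{\mathrm{c}}\alpha\cdot{}^{\mathrm{c}}\alpha$ as ${}^{\mathrm{c}}(\alpha\wedge\alpha)$ plus successive multi-contraction terms with explicit combinatorial coefficients. A parallel expansion of $\tfrac{1}{2}\sum_{i}{}^{\mathrm{c}}(i_{e_{i}}\alpha)\cdot{}^{\mathrm{c}}(i_{e_{i}}\alpha)$ cancels the top wedge-square against ${}^{\mathrm{c}}(\alpha^{2})$ and leaves the $k$-fold contractions with coefficient $(-1)^{k(k+1)/2}(1-k)/2^{k/2}$. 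Summing over the homogeneous components of $\beta+C$ and checking that the cross-degree contributions vanish by the same argument delivers the stated identity. The main obstacle will be the bookkeeping in the second step: with the non-standard Clifford normalization $ff'+f'f=-\langle f,f'\rangle$ and the fact that $\beta+C$ has homogeneous components of mixed parity, aligning the sign and $2^{-k/2}$ conventions between the cross commutator $[D^{X,\mathrm{LC}},{}^{\mathrm{c}}(\beta+C)]$ and the expansion of $\bigl(\n_{e_{i}}+{}^{\mathrm{c}}(i_{e_{i}}(\beta+C))\bigr)^{2}$, and ensuring the residual Clifford pieces collapse exactly onto the combinatorial expression of the third line, requires careful and systematic sign tracking.
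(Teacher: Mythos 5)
Your plan is a self-contained reconstruction of the content of the cited reference \cite[Theorems 1.1, 1.3, 2.2]{Bismut89a}, whereas the paper's own proof is simply a citation to those theorems together with equation (\ref{eq:dep2}), plus a remark that the different Clifford normalization $ff'+f'f=-\langle f,f'\rangle$ and the presence of $\End(D)$ do not affect the outcome. Your overall architecture — (i) identify $A^{\mathcal{D}}$ with a Dolbeault-type Dirac operator via (\ref{eq:dep2}) and Proposition \ref{prop:psimple}, (ii) compare it to $D^{X,\mathrm{LC}}$, (iii) square and apply Lichnerowicz, (iv) treat the last identity as pure Clifford algebra — is exactly the architecture of Bismut's 1989 argument, so in that sense there is no new idea, only an attempt to fill in what the paper leaves to the reference.

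There is, however, a concrete gap in your handling of the first identity. You assert that ``by (\ref{eq:dep10a1r1}), the Dirac operator of $\overline{\n}_{s}^{\Lambda(\overline{\TsX})\ho D}$ equals $\overline{\pa}^{X}+\overline{\pa}^{X*}$, and its difference with $D^{X,\mathrm{LC}}$ is exactly ${}^{\mathrm{c}}e^{i}\cdot{}^{\mathrm{c}}(i_{e_{i}}\beta)={}^{\mathrm{c}}\beta$.'' Both halves are wrong. In the paper's Clifford normalization one has, for any $3$-form $\beta$, the identity $\sum_{i}{}^{\mathrm{c}}e^{i}\cdot{}^{\mathrm{c}}(i_{e_{i}}\beta)=3\,{}^{\mathrm{c}}\beta$: the wedge part $\sum_{i}e^{i}\wedge i_{e_{i}}\beta$ contributes $3\beta$ (degree count), and the contraction correction $\sum_i i_{e_i}i_{e_i}\beta$ vanishes. (Check it on $\beta=e^{1}\wedge e^{2}\wedge e^{3}$: $e^{1}(e^{2}e^{3})-e^{2}(e^{1}e^{3})+e^{3}(e^{1}e^{2})=3e^{1}e^{2}e^{3}$.) Consequently the Dirac operator of $\overline{\n}_{s}^{\Lambda(\overline{\TsX})\ho D}$ is $D^{X,\mathrm{LC}}+3\,{}^{\mathrm{c}}\beta$; if that operator were literally $\overline{\pa}^{X}+\overline{\pa}^{X*}$, the first identity of the theorem would come out with a coefficient $3$ on $\beta$ rather than $1$. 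In fact $\overline{\pa}^{X}+\overline{\pa}^{X*}$ is \emph{not} the Dirac operator of $\overline{\n}_{s}$; there is a residual Clifford correction of the order $-2\,{}^{\mathrm{c}}\beta$, and establishing the exact identity $\overline{\pa}^{X}+\overline{\pa}^{X*}=D^{X,\mathrm{LC}}+{}^{\mathrm{c}}\beta$ is precisely the non-trivial content of \cite[Theorem 2.2]{Bismut89a}. Your step~1 therefore assumes a version of what it is trying to prove and simultaneously makes an arithmetic error that only accidentally produces the stated formula; the actual argument has to go through the explicit expansion of $\overline{\pa}^{X}+\overline{\pa}^{X*}$ in a local unitary frame, keeping the full torsion contributions, rather than a connection comparison at the level of Dirac operators. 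Once the first identity is in place, your sketches for the second and third identities (Lichnerowicz plus Clifford expansion by homogeneous degree) are compatible with the cited Theorems 1.3 and 1.1, though the sign and coefficient bookkeeping you flag as the main risk is indeed where the work lies.
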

\begin{proof}
	We  use  \cite[Theorems 1.1, 1.3,  and 2.2]{Bismut89a} and 
	(\ref{eq:dep2}). 
As explained in Subsection \ref{subsec:cli},  our normalization on the Clifford algebra 
 is different from the one \cite{Bismut89a}. Also  $\beta+C$ is an odd 
	section of $\Lambda\left(T^{*}_{\C}X\right)\ho 
	\End\left(D\right)$, while $\End \left( D \right) $ was absent in \cite[Theorem 2.3]{Bismut89a}.  
	Still, for formal reasons, it is easy to check that the  result of 
	\cite{Bismut89a} remains correct. The proof of our theorem is completed.  
\end{proof}
\begin{remark}\label{rem:Hodge}
	Observe that $A^{\mathcal{D},2}=\left[A^{\mathcal{D} \prime 
	\prime },A^{\mathcal{D} \prime }\right]$. The standard 
	consequences of  Hodge theory hold true for 
	$A^{\mathcal{D},2}$.
	
	By (\ref{eq:dep10a1r1}), (\ref{eq:dep10bx2}),  we get
	\begin{multline}\label{eq:sup1x-1}
-\frac{1}{2}\left(\n^{\Lambda\left(\overline{\TsX}\right)\ho 
D,\mathrm{LC}}_{e_{i}}+{}^{\mathrm{c}}\left(i_{e_{i}}\left( \beta+C \right) 
\right)\right)^{2}\\
=-\frac{1}{2}\left(\overline{\n}^{\Lambda\left(\overline{\TsX}\right)\ho 
D}_{e_{i}}-\frac{1}{2}\Tr^{TX}\left[A\left(e_{i}\right)\right]+{}^{\mathrm{c}}\left(i_{e_{i}}C
\right)\right)^{2}.
\end{multline}
Equation (\ref{eq:sup1x-1}) makes clear that as it should be, the 
operator (\ref{eq:sup1x-1}) is of total degree $0$. Equations 
(\ref{eq:cod2}), (\ref{eq:sup1x-1}) give a false sense of symmetry 
between the roles of $\beta$ and $C$. In particular both ${}^{c}$ and 
$\beta$ do depend on $g^{TX}$. 
\end{remark}
\subsection{A Lichnerowicz formula for 
$A^{p_{*}\mathscr E_{0},2}$}%
\label{subsec:curv}
Since $M=X\times S\to S$ is product, the connections on $TX$ defined in 
Subsection \ref{subsec:lichbi} lift to $q^{*}TX$. Also the form 
$\beta$ in (\ref{eq:dep10a1}) lifts to the form $q^{*}\beta$ on $M$. 

Let $A^{\mathcal{D}, \mathrm{LC}}$ denote the Levi-Civita 
superconnection in the sense of \cite{Bismut86d}, \cite[Section 2 
a)]{BismutGilletSoule88b},  
\begin{equation}\label{eq:dep13}
A^{\mathcal{D},\mathrm{LC}}=\n^{D,S}+D^{X,\mathrm{LC}}.
\end{equation}
\begin{theorem}\label{thm:lichg}
	The following identities hold:
	\begin{align}\label{eq:dep15}
&A^{p_{*}\mathscr 
E_{0}}=A^{\mathcal{D},\mathrm{\mathrm{LC}}}+{}^{\mathrm{c}}\left( 
q^{*}\beta+C \right) , \nonumber\\
&A^{p_{*}\mathscr E_{0},2}
=-\frac{1}{2}\left(\n^{\Lambda\left(\overline{\TsX}\right)\ho 
D,\mathrm{LC}}_{e_{i}}+{}^{\mathrm{c}}\left(i_{e_{i}}\left(q^{*}\beta+C\right) \right) \right)^{2}+\frac{K^{X}}{8} \nonumber \\
&+^{\mathrm{c}}\left( 
A^{E_{0},2}+\frac{1}{2}\Tr\left[R^{TX}\right]-q^{*}\overline{\pa}^{X}\pa^{X}i\omega^{X}\right)+\left(^{\mathrm{c}}\left(q^{*}\beta+C\right) \right) ^{2}\nonumber \\
&+\frac{1}{2}\sum_{}^{}\left(^{\mathrm{c}}\left(i_{e_{i}}\left(q^{*}\beta+C\right)\right)\right)^{2}- {}^{\mathrm{c}}\left( \left(q^{*}\beta+C\right)^{2}\right) ,\\
&\left(^{\mathrm{c}}\left(q^{*}\beta+C\right)\right)^{2}+\frac{1}{2}\sum_{}^{}\left(^{\mathrm{c}}\left(i_{e_{i}}\left(q^{*}\beta+C\right)\right)\right)^{2}-^{\mathrm{c}}\left(\left(q^{*}\beta+C\right)^{2}\right) \nonumber \\
&=\sum_{\substack{i_{1<\ldots<i_{k}}\\k\ge 
2}}^{}\left(-1\right)^{k\left(k+1\right)/2}\frac{\left(1-k\right)}{2^{k/2}} {}^{\mathrm{c}}\left( \left( i_{e_{i_{1}}}\ldots i_{e_{i_{k}}}\left(q^{*}\beta+C\right) \right) ^{2} \right) . \nonumber 
\end{align}
\end{theorem}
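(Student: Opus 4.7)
The plan is to reduce to Theorem \ref{thm:Pide} by exploiting the product structure $M = X \times S$ and the superconnection formalism of Quillen and Bismut. First, I would establish the first identity in (\ref{eq:dep15}). Combining (\ref{eq:dep2}) with the identification, in the fiberwise case, of $\n^{D\prime\prime, X} + \n^{D\prime\prime, X *}_{\alpha} + {}^{c}C$ with $D^{X,\mathrm{LC}} + {}^{c}(\beta + C)$ given in (\ref{eq:cod2}), and noting that on the product $M = X\times S$ the form $\beta$ lifts to $q^{*}\beta$ uniformly in $s\in S$, one obtains
\[
A^{p_{*}\mathscr E_{0}} = \n^{D,S} + D^{X,\mathrm{LC}} + {}^{c}(q^{*}\beta + C) = A^{\mathcal{D},\mathrm{LC}} + {}^{c}(q^{*}\beta + C).
\]

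For the Lichnerowicz formula, I would square this odd operator. Writing $\mathbb{B} = A^{\mathcal{D},\mathrm{LC}} + {}^{c}(q^{*}\beta + C)$, we have
\[
\mathbb{B}^{2} = A^{\mathcal{D},\mathrm{LC},2} + [A^{\mathcal{D},\mathrm{LC}}, {}^{c}(q^{*}\beta + C)] + ({}^{c}(q^{*}\beta + C))^{2}.
\]
The decisive observation is that the proof of the fiberwise formula (\ref{eq:cod2}) in Theorem \ref{thm:Pide} is purely algebraic-local: it consists of a Bochner-Lichnerowicz computation for the twisted Dirac operator, followed by Clifford-algebra manipulations of the perturbation ${}^{c}(\beta+C)$. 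In the family setting with $M = X\times S$ product, the Bochner-Lichnerowicz computation from \cite[Theorems 1.1, 1.3]{Bismut89a} extends verbatim to the family Bochner Laplacian $-\tfrac{1}{2}(\n^{\Lambda(\overline{\TsX})\ho D,\mathrm{LC}}_{e_{i}})^{2}$ acting on sections of the infinite-dimensional bundle $\mathcal{D}$ over $S$. The cross-bracket $[\n^{D,S}, D^{X,\mathrm{LC}}]$, the $S$-curvature $(\n^{D,S})^{2}$, and the mixed $(p,q)$-components of $A^{E_{0},2}$ on $M$ all reassemble, via the global identity $A^{E_{0},2} = [A^{E_{0}\prime\prime}, A^{E_{0}\prime}]$ read on all of $M$, into the single term ${}^{c}(A^{E_{0},2})$ appearing in (\ref{eq:dep15}); the remaining fiberwise contributions are the Chern-curvature term $\tfrac{1}{2}\Tr[R^{TX}]$ and the K\"ahler obstruction $-q^{*}\overline{\pa}^{X}\pa^{X}i\omega^{X}$, exactly as in Theorem \ref{thm:Pide}. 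The quartic Clifford identity in the last two lines of (\ref{eq:dep15}) is a purely algebraic relation in $\Lambda(T^{*}_{\C}M)\ho c(T^{*}_{\R}X)\ho\End(D)$ that coincides with its counterpart in (\ref{eq:cod2}) with $\beta$ replaced by $q^{*}\beta$ throughout, and so requires no separate argument.

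The main technical obstacle will be the precise bookkeeping of cross terms mixing $\n^{D,S}$ with both the fiberwise Dirac operator $D^{X,\mathrm{LC}}$ and the Clifford perturbation ${}^{c}(q^{*}\beta+C)$, and verifying that they reassemble exactly into the single term ${}^{c}(A^{E_{0},2})$ of the stated formula. The product structure $M = X\times S$ is essential here: it kills the second fundamental form of the horizontal distribution, so the Bismut Levi-Civita superconnection curvature reduces to the fiberwise Levi-Civita curvature plus the pure $S$-curvature of $\n^{D}$, with no further torsion or twisting terms such as would appear in the general submersion setting of \cite{Bismut10b}. This reduction is precisely what Remark \ref{rem:extsub} alludes to when it notes that the product case is being exploited as much as possible.
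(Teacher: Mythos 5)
Your proposal is mathematically sound in outline, but it takes a genuinely different route from the paper. You propose a \emph{direct} Bochner--Lichnerowicz computation in the family setting: square the superconnection, apply \cite[Theorems~1.1, 1.3]{Bismut89a} to the fiberwise Dirac operator, and then argue by hand that the cross-brackets $[\n^{D,S}, D^{X,\mathrm{LC}}]$, the base curvature $(\n^{D,S})^{2}$, and the mixed components of $[A^{E_{0}\prime\prime},A^{E_{0}\prime}]$ reassemble into the single term ${}^{\mathrm c}(A^{E_{0},2})$. This is essentially the classical approach of \cite{Bismut86d,BismutGilletSoule88b}, and it does work here precisely because the product structure kills the second fundamental form and the torsion tensor $T$ of the fibration. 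The paper does something sharper: rather than redoing the family Lichnerowicz computation from scratch, it treats $U\times X$ (with $U\subset\C^{n'}$ a coordinate chart in $S$) as a \emph{single} compact complex manifold equipped with the product metric $p^{*}g^{T\C^{n'}}/\epsilon\oplus q^{*}g^{TX}$, applies the already-established single-fiber Theorem~\ref{thm:Pide} to get a closed formula for $A^{\mathcal D,2}_{\epsilon}$, conjugates by $e^{-i\omega^{\C^{n'}}/\epsilon}$ (which turns $\epsilon\,i_{\overline f_{\alpha}}$ into $\epsilon\,i_{\overline f_{\alpha}}-f^{\alpha}\wedge$), and lets $\epsilon\to 0$. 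In this adiabatic limit the single-manifold Dirac operator becomes exactly the Levi-Civita superconnection $A^{\mathcal D,\mathrm{LC}}$ of (\ref{eq:dep13}), and the Lichnerowicz identity passes to the limit term by term, giving (\ref{eq:dep15}) without any separate reassembly argument. What your approach buys is a proof that works uniformly with the general submersion case; what the paper's approach buys is economy — it reuses Theorem~\ref{thm:Pide} instead of reproving it in families, and completely sidesteps the cross-term bookkeeping that you correctly identify as the delicate step. If you do pursue the direct route, the reassembly of the base-curvature and mixed-curvature contributions into ${}^{\mathrm c}(A^{E_{0},2})$ must be verified carefully; the assertion that it ``requires no separate argument'' is exactly the point where a detailed proof is needed, and this is the part the adiabatic limit lets the paper avoid.
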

\begin{proof}
	By  
	(\ref{eq:dep2}), by the first identity in (\ref{eq:cod2}), and by
	(\ref{eq:dep13}), we get the first identity in (\ref{eq:dep15}).
	
Let $U$ be a small open set in $S$. We will view $U$ as an open ball 
in $\C^{n'}$, and we replace $S\times X$ by $U\times X$.
	 Given $\epsilon>0$, we  equip $T\C^{n'}$ with the constant metric  
	$g^{T\C^{n'}}$ which is 
	Kähler,  and $TM$ with the metric $p^{*}\frac{g^{T\C^{n'}}}{\epsilon}\oplus 
	q^{*}g^{TX}$. Let $\omega^{\mathbf C^{n'}}$ be the Kähler form of 
	$\C^{n'}$. If $f_{1},\ldots,f_{n'}$ is an orthonormal basis of 
	$\C^{n'}$, and if $f^{1},\ldots,f^{n'}$ is the corresponding dual 
	basis, then
	\begin{equation}\label{eq:co1x1}
\omega^{\mathbf C^{n'}}=-i\fa\overline{f}^{\alpha}.
\end{equation}
Also $\sqrt{\epsilon}f_{1},\ldots,\sqrt{\epsilon}f_{n'}$ is an 
orthonormal basis of $T\C^{n'}$ for the metric 
$g^{T\C^{n'}}/\epsilon$.
 
Let $A^{\mathcal{D}}_{\epsilon}$ be the operator on $U\times X$, 
which is  analogue of the operator 
 $A^{\mathcal{D}}$
considered in Theorem \ref{thm:Pide}. 	 Since $\omega^{\mathbf C^{n'}}$ is closed, the analogue 
	of $\beta$ on $U\times X$ is just $q^{*}\beta$. Using Theorem 
	\ref{thm:Pide}, we get a Lichnerowicz formula for 
	$A^{\mathcal{D},2}_{\epsilon}$. In this formula, ${}^{c}$ now 
	depends on $\epsilon$, and will instead be denoted 
	${}^{c_{\epsilon}}$. 	Also the adjoint of $\fa$ is now 
	$\epsilon i_{f_{\alpha}}$. We will take a proper limit of this formula as 
	$\epsilon\to 0$. 
	
Set
\begin{equation}\label{eq:co2}
\mathsf A^{\mathcal{D}}_{\epsilon}=e^{-i\omega^{\mathbf C^{n'}}/\epsilon}A^{\mathcal{D}}_{\epsilon}e^{i\omega^{\mathbf C^{n'}}/\epsilon}.
\end{equation}
The effect of the conjugation is to change $\epsilon 
i_{\overline{f}_{\alpha}}$ into $\epsilon 
i_{\overline{f}_{\alpha}}-\fa$. In particular, as $\epsilon\to 0$, 
after conjugation, we have the uniform convergence together with all the derivatives,
\begin{equation}\label{eq:co2a1}
{}^{c_{\epsilon}}\left(q^{*}\beta+C\right)\to {}^{c}\left(q^{*}\beta+C\right).
\end{equation}
By  (\ref{eq:zep2}), by the first equation in (\ref{eq:cod2}), by 
(\ref{eq:dep13}), and by (\ref{eq:co2a1}), we find that  as 
$\epsilon\to 0$, 
\begin{equation}\label{eq:co3}
\mathsf A^{\mathcal{D}}_{\epsilon}\to A^{p_{*} \mathscr  E_{0}}, 
\end{equation}
in the sense that the coefficients of the considered differential 
operators and their derivatives of any order converge uniformly over 
compact subsets. 

Now we take the limit as $\epsilon\to 0$ of the analogue of the 
equation for $\mathsf 
A^{\mathcal{D},2}_{\epsilon}$  one derives from 
 (\ref{eq:cod2}).  It is now easy to take the limit as $\epsilon\to 
 0$ of our equation for $\mathsf 
A^{\mathcal{D},2}_{\epsilon}$ and to obtain (\ref{eq:dep15}). The proof of our theorem is completed. 
\end{proof}
\begin{remark}\label{rem:prod}
	Since the fibration $\pi:M\to S$ is product, none of the 
	subtleties contained in 
	\cite{Bismut86d,BismutGilletSoule88b,Bismut11a} appears in our 
	formula.
\end{remark}
\subsection{The elliptic superconnection forms}%
\label{subsec:elscn}
By equation (\ref{eq:dep15}), the operator $A^{p_{*}\mathscr E_{0},2}$ is elliptic along the 
fibers $X$, so that $\exp\left(-A^{p_{*}\mathscr E_{0},2}\right)$ is a 
fiberwise trace class operator.  

We will now imitate Definition \ref{def:chf} in an infinite 
dimensional context.
\begin{definition}\label{def:Dscf}
	Put
	\index{chA@$\ch\left(A^{p_{*}\mathscr E_{0}\prime \prime 
	},\omega^{X},g^{D}\right)$}%
	\begin{equation}\label{eq:zu1}
\ch\left(A^{p_{*}\mathscr E_{0}\prime \prime },\omega^{X},g^{D}\right)=\varphi\Trs\left[\exp\left(-A^{p_{*}\mathscr E_{0},2}\right)\right].
\end{equation}
\end{definition}
Then $\ch\left(A^{p_{*}\mathscr E_{0}\prime \prime 
},\omega^{X},g^{D}\right)$ is a smooth even form on $S$.

Let 
\index{P@$\mathscr P$}%
$\mathscr P$ be the collection of parameters $\omega^{X},g^{D}$. Let $d^{\mathscr P}$ be the de Rham operator on $\mathscr P$.

The metric $\alpha$ in (\ref{eq:coc7}) depends on $\omega^{X},g^{D}$. 
Then $\alpha^{-1} d^{\mathscr P}\alpha $ is a $1$-form with values in 
$\alpha$-self-adjoint endomorphisms.

Let $w_{1},\ldots,w_{n}$ be an orthonormal basis of $TX$ with respect 
to $g^{TX}$. Using (\ref{eq:coc7}), and proceeding as in 
	\cite[eq. (1.109) and Proposition 1.19]{BismutGilletSoule88c},  we get
\begin{equation}\label{eq:zu3}
\alpha^{-1} d^{\mathscr P}\alpha  =-{}^{\mathrm{c}}
d^{\mathscr P}i
\omega^{X}+\frac{1}{2}d^{\mathscr P}i\omega^{X}\left(w_{i},\overline{w}_{i}\right)+\left(g^{D}\right)^{-1}d^{\mathscr P}g^{D}.
\end{equation}

Now we have the obvious analogue of Theorem \ref{thm:carch}.
\begin{theorem}\label{thm:chca}
	The form $\ch\left(A^{p_{*}\mathscr E_{0}\prime \prime },\omega^{X},g^{D}\right)$ lies in $\Omega^{(=)}\left(S,\R\right)$, it is 
	closed, and its Bott-Chern cohomology class does not depend on 
	$\omega^{X},g^{D}$, or on the splitting of $E$.  
Also
 $\varphi\Trs \left[  \alpha^{-1}d^{\mathscr P}\alpha\exp\left(A^{p_{*}\mathscr E_{0},2}\right)\right]$ is a 
	$1$-form on $\mathscr P$ with values in
	$\Omega^{(=)}\left(S,\R\right)$, and moreover, 
	\begin{equation}\label{eq:zu2}
d^{\mathscr P}\ch\left(A^{p_{*}\mathscr E_{0}\prime \prime 
},\omega^{X},g^{D}\right)\\
=-\frac{\overline{\pa}^{S}\pa^{S}}{2i\pi}\varphi\Trs \left[  
\alpha^{-1} d^{\mathscr 
P}\alpha \exp\left(-A^{p_{*}\mathscr E_{0},2}\right)\right].
\end{equation}
\end{theorem}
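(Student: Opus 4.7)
The plan is to mimic the proof of Theorem \ref{thm:carch} in the infinite-dimensional fiberwise-elliptic setting, exploiting the Lichnerowicz formula of Theorem \ref{thm:lichg} to justify the trace-class manipulations. By that formula, $A^{p_{*}\mathscr E_{0},2}$ is a fiberwise second-order elliptic operator whose leading symbol is $\tfrac12|\xi|^{2}$, so for each $s\in S$ the heat operator $\exp(-A^{p_{*}\mathscr E_{0},2})$ has a smooth fiberwise kernel and is trace class. The fiberwise supertrace $\Trs$ then vanishes on fiberwise supercommutators by standard local-index-theory arguments (see \cite{BismutGilletSoule88b}), which is the only property of $\Trs$ used in the finite-dimensional proof of Theorem \ref{thm:carch}.

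First I would verify that $\ch\left(A^{p_{*}\mathscr E_{0}\prime\prime},\omega^{X},g^{D}\right)\in \Omega^{(=)}\left(S,\R\right)$. The total-degree argument in equations (\ref{eq:tot1a1})--(\ref{eq:xing1}) applies verbatim: conjugation by $e^{tN_{-}^{\Lambda(T^{*}_{\C}S)}}$ equals conjugation by $e^{-tN^{\mathcal D}}$ on $\exp(-A^{p_{*}\mathscr E_{0},2})$ since $A^{p_{*}\mathscr E_{0},2}$ has total degree $0$, and $e^{-tN^{\mathcal D}}$ is even so cycles through the fiberwise supertrace. Reality follows from Proposition \ref{prop:curva}, i.e.\ from $(A^{p_{*}\mathscr E_{0},2})^{*_{\alpha}}=A^{p_{*}\mathscr E_{0},2}$, combined with the fact that the generalized metric $\alpha$ is of degree zero. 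Closedness of the form on $S$ follows from the Bianchi identities $[A^{p_{*}\mathscr E_{0}\prime\prime},A^{p_{*}\mathscr E_{0},2}]=0$ and $[A^{p_{*}\mathscr E_{0}\prime},A^{p_{*}\mathscr E_{0},2}]=0$ together with the vanishing of $\Trs$ on supercommutators, exactly as in (\ref{eq:oong1}).

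Next I would establish the transgression formula (\ref{eq:zu2}). Computing $\alpha^{-1}d^{\mathscr P}\alpha$ directly from (\ref{eq:coc7}) using the Kähler form $\omega^{X}$ and the Hermitian metric $g^{D}$ gives (\ref{eq:zu3}); this is a finite-dimensional computation on each fiber. Since $A^{p_{*}\mathscr E_{0}\prime}$ is the $\alpha$-adjoint of $A^{p_{*}\mathscr E_{0}\prime\prime}$, one has
\begin{equation}
\left[d^{\mathscr P},A^{p_{*}\mathscr E_{0}\prime}\right]=-\left[A^{p_{*}\mathscr E_{0}\prime},\alpha^{-1}d^{\mathscr P}\alpha\right],
\end{equation}
the analogue of (\ref{eq:iv34a2}). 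Applying $d^{\mathscr P}$ to $A^{p_{*}\mathscr E_{0},2}=[A^{p_{*}\mathscr E_{0}\prime\prime},A^{p_{*}\mathscr E_{0}\prime}]$, using Duhamel's formula to differentiate $\exp(-A^{p_{*}\mathscr E_{0},2})$, and then using the Bianchi identities together with the vanishing of $\Trs$ on supercommutators yields (\ref{eq:zu2}), reproducing verbatim the manipulations (\ref{eq:iv34a1})--(\ref{eq:iv34a3}). That the form $\varphi\Trs[\alpha^{-1}d^{\mathscr P}\alpha\,\exp(-A^{p_{*}\mathscr E_{0},2})]$ lies in $\Omega^{(=)}(S,\R)$ follows by the same $N^{\mathcal D}$-grading and self-adjointness arguments as above applied to the anticommutator $[\alpha^{-1}d^{\mathscr P}\alpha,\exp(-A^{p_{*}\mathscr E_{0},2})]_{+}$. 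Formula (\ref{eq:zu2}) then shows that the Bott-Chern class is independent of $(\omega^{X},g^{D})$.

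For independence under change of splitting $E\simeq E_{0}$, I would reproduce the argument following (\ref{eq:iv35})--(\ref{eq:iv36}): a change of splitting corresponds to conjugation by $g\in \Aut^{0}_{0}(E_{0})$, and conjugating $A^{p_{*}\mathscr E_{0}\prime\prime}$ by $g$ while keeping $\alpha$ amounts to changing $\alpha$ to $g^{\dag}\alpha$ in the new splitting; invariance then follows from independence of $\alpha$. The main obstacle is purely technical: justifying trace-class and Duhamel manipulations in the infinite-dimensional setting. This is standard once one knows (Theorem \ref{thm:lichg}) that $A^{p_{*}\mathscr E_{0},2}$ is a fiberwise elliptic second-order operator of the shape $-\tfrac12\Delta+(\text{lower order with form coefficients})$, so uniform heat-kernel bounds on compact $X$ give smooth convergence of all relevant sums, and the formal algebraic identities used in Theorem \ref{thm:carch} go through on the nose; these points are by now routine in the superconnection literature \cite{Bismut86d,BismutGilletSoule88b}.
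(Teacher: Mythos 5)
Your treatment of the degree computation, reality, closedness, and the transgression formula \eqref{eq:zu2} matches the paper's approach: the paper simply says these are "formally the same as the proof of Theorem \ref{thm:carch}" and your outline correctly identifies the relevant algebraic ingredients (total degree $0$ of $A^{p_{*}\mathscr E_{0},2}$, self-adjointness from Proposition \ref{prop:curva}, Bianchi identities, and the vanishing of the fiberwise supertrace on supercommutators of trace-class operators). The Duhamel-plus-heat-kernel justification is also correct and standard.

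The gap is in your argument for independence of the splitting. You claim that conjugating $A^{p_{*}\mathscr E_{0}\prime\prime}$ by $g\in\Aut^{0}_{0}(E_{0})$ amounts to replacing $\alpha$ by $g^{\dag}\alpha$, and that "invariance then follows from independence of $\alpha$." But the first part of the theorem --- the transgression formula \eqref{eq:zu2} --- only establishes independence of the Bott-Chern class for the \emph{restricted} family $\mathscr P$ of pure $L_{2}$-metrics of the form \eqref{eq:coc7}, parametrized by $(\omega^{X},g^{D})$. The metric $g^{\dag}\alpha$ obtained from a nontrivial change of splitting is a genuine \emph{generalized} metric on $\mathcal{D}$, involving terms of positive degree in $\Lambda(T^{*}_{\C}S)$, and is not of the pure form \eqref{eq:coc7} for any choice of $(\omega^{X'},g^{D'})$. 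So the independence you have established does not apply; you would first need to extend the whole transgression setup to a larger family $\mathscr M^{\mathcal D}$ of generalized metrics on $\mathcal D$, analogous to the finite-dimensional $\mathscr M^{D}$ used in Theorem \ref{thm:carch}. The paper avoids this altogether: it constructs a splitting of $p_{2}^{*}\mathscr E$ over $M\times\mathbf{P}^{1}$ interpolating between $\mathscr E_{0}$ at $z=0$ and $\mathscr E_{0}'$ at $z=\infty$, applies the already-established results over the base $S\times\mathbf{P}^{1}$ with the \emph{fixed} pure metric $p_{2}^{*}g^{D}$, and then uses the Poincar\'e--Lelong equation \eqref{eq:bc5} to obtain the transgression \eqref{eq:pole} between the two pull-backs. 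This is a genuinely different mechanism and you should replace that paragraph accordingly.
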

\begin{proof}
	The proof of the fact that our forms are closed and lie in 
	$\Omega^{(=)}\left(S,\R\right)$ is formally the same 
	as the proof of Theorem \ref{thm:carch}. By proceeding as in the 
	proof of Theorem \ref{thm:carch}, we get (\ref{eq:zu2}). 
	
	Let us now prove that the Bott-Chern class of 
	$\ch\left(A^{p_{*}\mathscr E_{0}\prime \prime 
	},\omega^{X},g^{D}\right)$ does not depend on the splitting of 
	$E$. Let $\mathscr E'_{0}=\left( E'_{0},A^{E'_{0} \prime \prime 
	}\right) $ be another splitting of $E$. If $M'=M\times 
	\mathbf{P}^{1}$, and let $p_{1},p_{2},p_{3}$ be the projections $M'\to S, M'\to 
	M,M'\to \mathbf{P}^{1}$. Consider  
	$\mathscr E_{M'}=p^{*}_{2,b} \mathscr E$, so that $\mathscr 
	E_{M'}=\left(E_{M'},A^{E_{M'} \prime \prime }\right)$. The diagonal bundle 
	associated with $\mathscr E_{M'}$ is $p_{2}^{*}D$. Given $z\in 
	\mathbf{P}^{1}$, let $j_{z}$ denote the embedding 
	$M\times\left\{z\right\}\to M'$. For any $z\in \mathbf{P}^{1}$,  
	then
	$j_{z,b}^{*}\mathscr 
	E_{M'}=\mathscr E$. Let $\mathscr E_{M',0}=\left(E_{M',0}, A^{E_{M',0} \prime \prime }\right)$ be a splitting of 
	$\mathscr E_{M'}$ such that $j^{*}_{0,b}\mathscr E'_{0}=\mathscr 
	E_{0}$, and $j^{*}_{\infty ,b}\mathscr E_{M',0}=\mathscr E'_{0}$.    Let $q$ be the projection 
	$M'\to S'= S\times \mathbf{P}^{1}$. We equip $p_{2}^{*}D$ with the 
	metric $p_{2}^{*}g^{D}$. We will use  our previous results on the form
 $\ch\left(A^{\mathscr E_{M',0}\prime \prime 
 },\omega^{X},p_{2}^{*}g^{D}\right)$. The pull-backs of this form 
 to $S\times 0$, $S\times \infty $ are just 
 $\ch\left(A^{p_{*}\mathscr E_{0}\prime \prime 
 },\omega^{X},g^{D}\right), \ch\left(A^{p_{*}\mathscr E'_{0}\prime 
 \prime },\omega^{X},g^{D}\right)$.  Using the Poincaré-Lelong equation 
 (\ref{eq:bc5}), we get
 \begin{multline}\label{eq:pole}
\frac{\overline{\pa}^{S}\pa^{S}}{2i\pi}p_{1,*}\left[\ch\left(A^{q_{*}\mathscr E_{M',0}\prime \prime 
 },\omega^{X},p_{2}^{*}g^{D}\right)p^{*}_{3}\log\left(\left\vert  
 z\right\vert^{2}\right)\right]\\
 =\ch\left(A^{p_{*}\mathscr E_{0}\prime \prime 
 },\omega^{X},g^{D}\right)-\ch\left(A^{p_{*}\mathscr E'_{0}\prime \prime 
 },\omega^{X},g^{D}\right),
\end{multline}
from which we get the last statement in our theorem.
\end{proof}
\begin{definition}\label{def:ellscn}
Let
\index{cBC@$\cBC\left(A^{p_{*} \mathscr E \prime \prime }\right)$}%
$\cBC\left(A^{p_{*} \mathscr E \prime \prime }\right)\in 
H_{\mathrm{BC}}^{(=)}\left(S,\R\right)$ be the common Bott-Chern cohomology 
class of the forms $\ch\left(A^{p_{*} \mathscr E_{0}\prime \prime 
},\omega^{X},g^{D}\right)$.	
\end{definition}
\section{Elliptic superconnection forms and direct images}%
\label{sec:lift}
The purpose of this Section is to prove that, with the notation of 
Section \ref{sec:suel}, $\cBC\left(A^{p_{*}\mathscr E\prime \prime}\right)$ 
coincides with $\cBC\left(Rp_{*} \mathscr E\right)$.

This Section is organized as follows.  In Subsection 
\ref{subsec:sptrin}, we apply the technique of spectral truncation 
of Subsection \ref{subsec:trunc} to the infinite-dimensional $p_{*} 
\mathscr E_{0}$. This will permit us to extend to $p_{*} \mathscr 
E_{0}$ results we had established before only for ordinary 
antiholomorphic superconnections.

In Subsection \ref{subsec:existence}, 
we prove the critical result that there exists a classical 
antiholomorphic superconnection $\underline{\mathscr E}$ on $S$ such 
that $p_{*} \mathscr E$ and $\underline{\mathscr E}$ are 
quasi-isomorphic, the main point being that $\underline{\mathscr E}$ is 
finite-dimensional.

In Subsection \ref{subsec:diri}, we prove our main identity.

Finally, in Subsection \ref{subsec:lofra}, we briefly consider the 
case where $\mathscr HRp_{*} \mathscr E$ is locally free, in which 
case our main result has a much simpler proof.

We make the same assumptions, and we use the same notation as in 
Section \ref{sec:suel}.
\subsection{Spectral truncations in infinite dimensions}%
\label{subsec:sptrin}
We will now proceed in infinite dimensions as in Subsection 
\ref{subsec:trunc}. Recall that $A^{\mathcal{D} \prime \prime *}$ is  
the standard $L_{2}$-adjoint of $A^{\mathcal{D} \prime \prime }$. Put
\begin{equation}\label{eq:train1}
\mathbb A^{p_{*} \mathscr E_{0}}=A^{p_{*} \mathscr E_{0} \prime 
\prime }+A^{\mathcal{D} \prime\prime*}.
\end{equation}
Equation (\ref{eq:train1}) is the analogue of equation 
(\ref{eq:tro1}) for $\mathbb A^{E_{0}}$.

As in (\ref{eq:tro2}), we get
\begin{align}\label{eq:tro2b}
&\mathbb A^{p_{*} \mathscr E_{0},2}=\left[A^{p_{*} \mathscr E_{0} 
\prime \prime },A^{\mathcal{D}\prime \prime * }\right],
&\left[A^{p_{*} \mathscr E_{0} \prime \prime },\mathbb A^{p_{*} \mathscr E_{0},2}\right]=0,
\qquad \left[A^{\mathcal{D} \prime\prime*},\mathbb A^{p_{*} \mathscr E_{0},2}\right]=0.
\end{align}

The operator $A^{\mathcal{D},2}$ being a second order elliptic 
operator along the fibers $X$, it has discrete spectrum. As in 
(\ref{eq:tro6}), we get
\begin{equation}\label{eq:train2}
\mathrm{Sp}\,\mathbb 
A^{p_{*} \mathscr 
E_{0},2}=\mathrm{Sp}\,A^{\mathcal{D},2}.
\end{equation}

As in (\ref{eq:tro7a}), for $a>0$, set
\begin{equation}\label{eq:train3}
U_{a}=\left\{s\in S,a\notin \mathrm{Sp}\,A^{\mathcal{D},2}\right\}.
\end{equation}
Then $U_{a}$ is open in $S$. 
\begin{definition}\label{def:tro8b}
	Over $U_{a}$, put
	\index{Pa@$P_{a,-}$}%
	\begin{equation}\label{eq:tro9b}
P_{a,-}=\frac{1}{2i\pi}\int_{\substack{\lambda\in\C\\\left\vert  
\lambda\right\vert=a}}^{}\frac{d\lambda}{\lambda-\mathbb A^{p_{*} \mathscr E_{0},2}}.
\end{equation}
\end{definition}
Then $P_{a,-}$ is a fiberwise smoothing projector acting on $p_{*} 
\mathscr E_{0}$.

By 
(\ref{eq:tro2}), we get
\begin{align}\label{eq:tro9a1b}
&\left[A^{p_{*}\mathscr E_{0} \prime \prime},P_{a,-}\right]=0, 
&\left[A^{\mathcal{D} \prime\prime*},P_{a,-}\right]=0.
\end{align}

Put
\begin{equation}\label{eq:tro9a2b}
P_{a,+}=1-P_{a,-}.
\end{equation}
Then $P_{a,+}$ is also a projector. 

As in (\ref{eq:tro10}), we get
\begin{equation}\label{eq:tro10b}
P_{a,\pm}=P_{a,\pm}^{0}+P_{a,\pm}^{(\ge 1)}.
\end{equation}
Let 
\index{Da@$\mathcal{D}_{a,\pm}$}%
$\mathcal{D}_{a,\pm}$ be the direct sum of the eigenspaces of 
$A^{\mathcal{D},2}$ for eigenvalues $\lambda>a$ or $\lambda<a$.  Then  $P^{0}_{a,\pm}$ is the 
orthogonal projectors $\mathcal{D}\to \mathcal{D}_{a,\pm}$. Also the 
 analogue of (\ref{eq:tro11}) holds. Also $\mathcal{D}_{a,-}$ is a 
finite-dimensional vector bundle on $U_{a}$.

\begin{definition}\label{def:Eab}
	Over $U_{a}$, put
	\index{pEa@$ p_{*} \mathscr E_{0,a,\pm}$}%
	\begin{equation}\label{eq:tro12b}
 p_{*} \mathscr E_{0,a,\pm}=P_{a,\pm}p_{*}\mathscr E_{0}.
\end{equation}
\end{definition}
Then $ p_{*} \mathscr E_{0,a,\pm}$ is a subbundle of $p_{*} \mathscr E_{0}$ which is also a 
$\Lambda\left(\overline{T^{*}S}\right)$-module. Let $i_{a,\pm}$ be 
the corresponding embedding in $p_{*} \mathscr E_{0}$. 

We use the 
notation of Subsection \ref{subsec:filal}. We will establish an 
analogue of Theorem \ref{thm:pea}. The main difference is that here, 
$p_{*}\mathscr E_{0}$ is infinite-dimensional.
\begin{theorem}\label{thm:peab}
	On $U_{a}$, we have  a splitting of 
	$\Lambda\left(\overline{T^{*}S}\right)$-modules,
	\begin{equation}\label{eq:tro12a1b}
p_{*}\mathscr E_{0}=p_{*}\mathscr E_{0,a,+} \oplus p_{*}\mathscr E_{0,a,-}.
\end{equation}
Moreover, 
\begin{equation}\label{eq:mod1b}
F^{p}p_{*}\mathscr E_{0,a,\pm}=p_{*}\mathscr E_{0,a,\pm}\cap F^{p}p_{*}\mathscr E_{0}.
\end{equation}

Also $P_{a,\pm}$ induces a filtered isomorphism of 
$\Lambda\left(\overline{T^{*}S}\right)$-modules,
\begin{equation}\label{eq:tro12az1b}
	 \Lambda\left(\overline{T^{*}S}\right)\ho \mathcal{D}_{a,\pm} \simeq p_{*}\mathscr E_{0,a,\pm}.
\end{equation}
As a $\Lambda\left(\overline{T^{*}S}\right)$-module, $p_{*}\mathscr 
E_{0,a,-}$ 
verifies the conditions  in (\ref{eq:bc12}), and the associated diagonal bundle 
 is 
just $\mathcal{D}_{a,-}$. Also it is equipped with a splitting as in 
(\ref{eq:iv4a-1}), (\ref{eq:iv4}).

Moreover, 
	$A^{p_{*}\mathscr E_{0}\prime \prime }$ preserves the smooth sections of 
	$p_{*}\mathscr E_{0,a,\pm}$, and it induces an antiholomorphic 
	superconnection  $A^{p_{*}\mathscr E_{0,a,\pm} \prime \prime }$ on 
	$p_{*}\mathscr E_{0,a,\pm}$.
	
On $U_{a}$,  $P_{a,-}:\mathscr E_{0}\to \mathscr E_{0,a,-}$ and 
$i_{a,-}=\mathscr E_{0,a,-}\to \mathscr E_{0}$ are quasi-isomorphisms of 
$\mathcal{O}_{U_{a}}$-complexes, and $\mathscr H \mathscr 
E_{0,a,+}=0$. Finally, for $s\in U_{a}$, the complex 
$\mathcal{D}_{a,+,s}$ is exact.
\end{theorem}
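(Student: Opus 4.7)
The plan is to follow the proof of Theorem \ref{thm:pea} step by step, adapting each argument to the present infinite-dimensional setting. The starting point is that, by Theorem \ref{thm:lichg}, $A^{\mathcal{D},2}$ is a fiberwise elliptic second-order operator along $X$, so its spectrum is discrete on each fiber; for $s \in U_a$ the finite-dimensional eigenbundle $\mathcal{D}_{a,-}$ assembles (by analytic perturbation theory) into a smooth finite-rank complex vector bundle on $U_a$, and $P^0_{a,-}$ is a smooth fiberwise orthogonal projector. The full projector $P_{a,-}$ of (\ref{eq:tro9b}) admits the expansion (\ref{eq:tro10b}), where $P_{a,-}^{(\ge 1)}$ is obtained from a finite (because $\Lambda^{(\ge 1)}(\overline{T^{*}S})$ is nilpotent of order $n'$) geometric series in the resolvent $(\lambda - V_0^2)^{-1}$, where $V_0^2 = A^{\mathcal{D},2}$. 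In particular $P_{a,-}$ is a smooth fiberwise smoothing operator of finite rank on $U_a$.

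First, the identities (\ref{eq:tro9a1b}) are immediate from (\ref{eq:tro9b}) and (\ref{eq:tro2b}). Since $A^{p_{*}\mathscr E_{0} \prime \prime}$ satisfies Leibniz rule with respect to $\Lambda(\overline{T^{*}S})$ and $A^{\mathcal{D}\prime\prime*}$ is $\Lambda(\overline{T^{*}S})$-linear, $\mathbb A^{p_{*}\mathscr E_{0},2}$ commutes with the left action of $\Lambda(\overline{T^{*}S})$, and therefore so does $P_{a,-}$. This immediately gives the module splitting (\ref{eq:tro12a1b}) and the filtration compatibility (\ref{eq:mod1b}). The isomorphism (\ref{eq:tro12az1b}) is then proved exactly as in Theorem \ref{thm:pea}: injectivity of the morphism $Q_{a,\pm}: \Lambda^p(\overline{T^{*}S}) \ho \mathcal{D}_{a,\pm} \to F^p p_{*}\mathscr E_{0,a,\pm}/F^{p+1}p_{*}\mathscr E_{0,a,\pm}$ follows from the non-degeneracy of $P^0_{a,\pm}$ in (\ref{eq:tro10b}), and surjectivity is obtained by the recursion $e = P_{a,\pm}P^0_{a,\pm}e + P_{a,\pm}P^{(\ge 1)}_{a,\pm}e$ combined with the finite-length filtration. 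This also provides the required splitting as in (\ref{eq:iv4a-1}), (\ref{eq:iv4}).

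Next, the first identity in (\ref{eq:tro9a1b}) shows that $A^{p_{*}\mathscr E_{0}\prime \prime}$ preserves smooth sections of $p_{*}\mathscr E_{0,a,\pm}$, hence induces antiholomorphic superconnections $A^{p_{*}\mathscr E_{0,a,\pm}\prime \prime}$. For the acyclicity statements, the key point is that $\mathbb A^{p_{*}\mathscr E_{0,a,+},2}$ is invertible on $p_{*}\mathscr E_{0,a,+}$ by construction of the spectral cutoff, so the explicit homotopy
\begin{equation*}
1\vert_{p_{*}\mathscr E_{0,a,+}} = \left[A^{p_{*}\mathscr E_{0,a,+}\prime \prime}, A^{\mathcal{D}\prime\prime*}\bigl[\mathbb A^{p_{*}\mathscr E_{0,a,+},2}\bigr]^{-1}\right]
\end{equation*}
witnesses $\mathscr H \mathscr E_{0,a,+} = 0$. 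Applying the same construction fiberwise at $s \in U_a$ to the Laplace-type operator $A^{\mathcal{D},2}_s$ shows that $(\mathcal{D}_{a,+,s}, A^{\mathcal{D}\prime\prime})$ is exact.

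Finally, for the quasi-isomorphism assertions, the cone of $P_{a,-}: p_*\mathscr E_0 \to p_*\mathscr E_{0,a,-}$ is, by the splitting (\ref{eq:tro12a1b}), quasi-isomorphic to $p_*\mathscr E_{0,a,+}$, which is acyclic by the previous step; the same holds for $i_{a,-}$. The main obstacle will be confirming that the infinite-dimensional analogue of Proposition \ref{prop:prophtpeq} is available: one must check that acyclicity of $\mathscr H \mathscr E_{0,a,+}$ as a complex of $\mathcal{O}_S$-modules genuinely follows from the fiberwise-smoothing null-homotopy above, and propagates to a quasi-isomorphism at the level of $\mathcal{O}_S$-complexes rather than only at individual stalks. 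This is the one place where the finite-dimensional proof of Theorem \ref{thm:pea} does not copy verbatim and requires a separate argument exploiting the fact that the homotopy is fiberwise smoothing and globally smooth on $U_a$.
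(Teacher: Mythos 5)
Your proof is correct and parallels the paper's, which simply declares the argument ``essentially the same as Theorem~\ref{thm:pea}'' and then concentrates on the one new claim, the fiberwise exactness of $\mathcal{D}_{a,+,s}$. One place where you actually improve on the paper: for the quasi-isomorphism assertions, the proof of Theorem~\ref{thm:pea} invokes Proposition~\ref{prop:prophtpeq}, which is stated for finite-dimensional antiholomorphic superconnections, and your unease about transferring it to the infinite-dimensional diagonal bundle $\mathcal{D}$ is legitimate. But your own cone/splitting argument makes that worry moot: since $P_{a,\pm}$ commute with $A^{p_*\mathscr E_0\prime\prime}$, the splitting (\ref{eq:tro12a1b}) is a splitting of $\mathcal{O}_{U_a}$-complexes, and once $\mathscr H\mathscr E_{0,a,+}=0$ is established, $P_{a,-}$ and $i_{a,-}$ are quasi-isomorphisms with no appeal to fiberwise cohomology whatsoever. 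Moreover $\mathscr H\mathscr E_{0,a,+}=0$ really does follow from the global contraction $k = A^{\mathcal D\prime\prime*}\bigl[\mathbb A^{p_*\mathscr E_{0,a,+},2}\bigr]^{-1}$: this $k$ is $\mathcal{O}_S$-linear, because $A^{\mathcal D\prime\prime*}$ is $\Lambda(\overline{T^*S})$-linear and a direct computation with Leibniz's rule shows $\mathbb A^{p_*\mathscr E_0,2}$ commutes with multiplication by $\Omega^{0,\bullet}(S)$, so the null-homotopy kills the cohomology sheaf directly. Your concluding paragraph should therefore be presented as a verification, not as an unresolved caveat.

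There is one step you leave implicit that the paper makes explicit via the analogue of Proposition~\ref{prop:rec1}: to argue that $(\mathcal{D}_{a,+,s}, A^{\mathcal D\prime\prime})$ is exact, you need to know that the degree-$0$ piece (over $\Lambda(\overline{T^*S})$) of the superconnection $A^{p_*\mathscr E_{0,a,+}\prime\prime}$, transported to $\Lambda(\overline{T^*S})\ho\mathcal{D}_{a,+}$ through the filtered isomorphism (\ref{eq:tro12az1b}), really is the restriction of $A^{\mathcal D\prime\prime}$ to $\mathcal{D}_{a,+}$ and not some perturbation of it. This holds because $P^0_{a,+}$ is assembled from spectral projectors of $A^{\mathcal D,2}$, which commute with $A^{\mathcal D\prime\prime}$, but it needs to be stated rather than silently assumed.
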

\begin{proof}
	The proof is essentially the same as the proof of Theorem 
	\ref{thm:pea}. With respect to that Theorem, here, only 
	$D_{a,-}$ is finite-dimensional. Let us now prove that if
	$s\in U_{a}$, $\mathcal{D}_{a,+}$ is exact. By proceeding 
	as in the proof of Proposition \ref{prop:rec1}, i.e., using the 
	analogue of 
	(\ref{eq:tro15}),  the part of degree $0$ over 
	$\Lambda\left(\overline{T^{*}S}\right)$ of $A^{p_{*}\mathscr 
	E}_{0,a,+}$ is just the restriction of $A^{\mathcal{D} \prime 
	\prime }$ to $\mathcal{D}_{a,+}$. By Hodge theory, the complex 
	$\mathcal{D}_{a,+,s}$ is exact. The proof of our theorem is completed. 
\end{proof}

When considering another pair of metrics $g^{TX}, g^{D}$ on $TX,D$, 
the obvious analogue of Proposition \ref{prop:proj} still holds. 
Details are left to the reader.

As a consequence of Theorem \ref{thm:peab}, we give another proof of 
a theorem of Grauert \cite[Theorem 10.4.6]{GrauertRemmert84}.
\begin{theorem}\label{thm:dbc}
	The complex of $\mathcal{O}_{S}$-modules $p_{*} \mathscr E_{0}$ 
	defines an element in $\Db\left(S\right)$. Moreover, the Leray 
	spectral sequence associated with the filtration of $p_{*} 
	\mathscr E_{0}$ by $\Lambda\left(\overline{T^{*}S}\right)$ 
	degenerates at $E_{2}$.
\end{theorem}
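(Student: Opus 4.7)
The plan is to deduce both assertions from the spectral truncation machinery of Theorem \ref{thm:peab}, reducing each one to a local statement on the members of a finite open cover of $S$, and then invoking the finite-dimensional result Theorem \ref{thm:cohco} on each piece. Fix the metric data $g^{TX},g^{D}$ used in Subsection \ref{subsec:sptrin}. Since $A^{\mathcal{D},2}_{s}$ is elliptic along each fiber, its spectrum is discrete at every $s\in S$; hence for every $s_{0}\in S$ there is $a>0$ with $s_{0}\in U_{a}$. By compactness of $S$, we may choose finitely many $a_{1},\ldots,a_{N}>0$ such that $S=\bigcup_{i=1}^{N}U_{a_{i}}$.

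On each $U_{a}$, Theorem \ref{thm:peab} gives a splitting $p_{*}\mathscr E_{0}=p_{*}\mathscr E_{0,a,+}\oplus p_{*}\mathscr E_{0,a,-}$ of $\Lambda(\overline{T^{*}S})$-modules compatible with the filtration (equation (\ref{eq:mod1b})), such that $i_{a,-}:p_{*}\mathscr E_{0,a,-}\to p_{*}\mathscr E_{0}$ is a quasi-isomorphism of $\mathcal{O}_{U_{a}}$-complexes, $p_{*}\mathscr E_{0,a,+}$ is acyclic, and crucially the diagonal bundle $\mathcal{D}_{a,-}$ is \emph{finite-dimensional}. Thus $p_{*}\mathscr E_{0,a,-}$ is a genuine antiholomorphic superconnection on $U_{a}$ in the sense of Definition \ref{def:scn}, and Theorem \ref{thm:cohco} applies: its cohomology sheaf is $\Z$-graded coherent, and the spectral sequence associated with its $\Lambda(\overline{T^{*}S})$-filtration degenerates at $E_{2}$.

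Since $i_{a,-}$ is a quasi-isomorphism, we obtain an isomorphism of $\Z$-graded $\mathcal{O}_{U_{a}}$-modules $\mathscr H (p_{*}\mathscr E_{0})\vert_{U_{a}}\simeq \mathscr H (p_{*}\mathscr E_{0,a,-})$, so $\mathscr H(p_{*}\mathscr E_{0})\vert_{U_{a}}$ is coherent. Coherence being local, the cohomology of $p_{*}\mathscr E_{0}$ is coherent on all of $S$, and $p_{*}\mathscr E_{0}\in \Db(S)$. For the Leray spectral sequence assertion: the identity (\ref{eq:mod1b}) shows that $i_{a,-}$ is a morphism of filtered complexes, and because $p_{*}\mathscr E_{0,a,+}$ is acyclic (and its own filtration by (\ref{eq:mod1b}) splits off similarly), a standard comparison argument shows that $i_{a,-}$ induces an isomorphism of the associated spectral sequences from $E_{1}$ onward on $U_{a}$. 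Therefore the spectral sequence of $p_{*}\mathscr E_{0}$ degenerates at $E_{2}$ locally on each $U_{a}$, and hence globally.

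The main obstacle is the compatibility step underlying the last sentence: one must verify that the quasi-isomorphism $i_{a,-}$ is a filtered quasi-isomorphism, and that the $E_{1}$-sheaves of the truncated and full complexes agree under $P_{a,-}$. This will follow from Theorem \ref{thm:peab} together with the fact that, by Proposition \ref{prop:rec1} applied in the infinite-dimensional setting (which was already invoked in the proof of Theorem \ref{thm:peab} to show acyclicity of $p_{*}\mathscr E_{0,a,+}$), the degree-zero part of $A^{p_{*}\mathscr E_{0,a,\pm}\prime\prime}$ along $\Lambda(\overline{T^{*}S})$ is simply the restriction of $A^{\mathcal{D}\prime\prime}$ to $\mathcal{D}_{a,\pm}$, so that Hodge theory along the fibers identifies the $E_{1}$ sheaf with the harmonic subbundle $\mathcal{D}_{a,-}$ equipped with the induced antiholomorphic connection. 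Once this identification is in place, the degeneration at $E_{2}$ and the coherence pass from $U_{a}$ to $S$ without further work.
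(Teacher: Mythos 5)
Your proof follows essentially the same route as the paper: cover $S$ by the sets $U_{a}$, invoke Theorem \ref{thm:peab} to split $p_{*}\mathscr E_{0}$ as the direct sum of a finite-dimensional piece $p_{*}\mathscr E_{0,a,-}$ and an acyclic piece $p_{*}\mathscr E_{0,a,+}$, then apply Theorem \ref{thm:cohco} to the finite-dimensional piece for both coherence and $E_{2}$-degeneration. The one place where your write-up is looser than the paper's concerns the spectral sequence of the $+$ piece.

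The paper finishes by observing that on $\mathcal{D}_{a,+}$ one has the identity $1=\bigl[A^{\mathcal{D}\prime\prime},\,A^{\mathcal{D}\prime\prime*}\bigl(A^{\mathcal{D},2}\bigr)^{-1}\bigr]$, and since $A^{\mathcal{D}\prime\prime}\vert_{\mathcal{D}_{a,+}}$ is exactly the $d_{0}$-differential of the filtered complex, this is a contracting homotopy at the $E_{0}$ level; so the $E_{1}$ sheaf of $p_{*}\mathscr E_{0,a,+}$ vanishes outright and its spectral sequence contributes nothing from $E_{1}$ onward. Your first pass at this step, the claim that acyclicity of $p_{*}\mathscr E_{0,a,+}$ plus compatibility with the filtration gives ``by a standard comparison argument'' an isomorphism of spectral sequences from $E_{1}$ on, is not a valid inference: a filtered complex with zero total cohomology can perfectly well have nonzero $E_{1}$. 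You do acknowledge this as ``the main obstacle'' and then gesture at the right fix via fiberwise Hodge theory and Proposition \ref{prop:rec1}, but the phrase ``the harmonic subbundle $\mathcal{D}_{a,-}$'' is a slip: $\mathcal{D}_{a,-}$ is the whole sub-$a$ eigenbundle, not the harmonics. What you actually need, and what the displayed homotopy above delivers in one line, is that the fiberwise complex $\bigl(\mathcal{D}_{a,+},\,A^{\mathcal{D}\prime\prime}\bigr)$ is exact, so the $E_{1}$ sheaf of the $+$ summand is zero. With that stated explicitly your argument closes and coincides with the paper's.
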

\begin{proof}
	By Theorem \ref{thm:peab}, over $U_{a}$, $p_{*} \mathscr E_{0}$ and $p_{*} 
	\mathscr E_{0,a,-}$ are quasi-isomorphic. By Theorem 
	\ref{thm:cohco}, $\mathscr H p_{*} \mathscr E_{0,a,-}$ is a 
	coherent sheaf. This shows that $p_{*} \mathscr E_{0}$ defines an 
	object in $\Db\left(S\right)$. By Theorem \ref{thm:cohco}, the 
	spectral sequence associated with $p_{*}\mathscr E_{0,a,-}$ 
	degenerates at $E_{2}$. Also on $D_{a,+}$, we have the identity
	\begin{equation}\label{eq:trem1}
1=\left[A^{\mathcal{D} \prime \prime }, A^{\mathcal{D}\prime \prime *}\left[A^{\mathcal{D},2}\right]^{-1}\right].
\end{equation}
Equation (\ref{eq:trem1}) shows that on $p_{*} \mathscr E_{0,a,+}$, 
the spectral sequence stops at $E_{1}$. This concludes the proof of 
our theorem.
\end{proof}
\begin{remark}\label{rem:bgs}
	In the context of the theory of determinant bundles of direct 
	images, Theorems \ref{thm:peab} and \ref{thm:dbc} can be used as 
	a substitute of many arguments in 
	\cite[Section 3]{BismutGilletSoule88c}, where non-projective 
	Kähler manifolds are also considered. 
\end{remark}
\subsection{The existence theorem}%
\label{subsec:existence}
Recall that (\ref{eq:didim2a4x1}), (\ref{eq:didim3}) 
hold, and that $A^{p_{*} \mathscr E_{0} \prime \prime}$ is given by 
(\ref{eq:new2a1}).

Recall that  if $\mathscr F\in \Db\left(M\right)$, 
\index{RHF@$\mathscr R \mathscr H \mathscr F$}%
$\mathscr R \mathscr H 
\mathscr F\in K\left(M\right)$ was defined in (\ref{eq:dirim}).

As we saw in Theorem \ref{thm:dbc},  $p_{*}\mathscr E_{0}$ is an object in  $\Db\left(S\right)$.
\begin{theorem}\label{thm:exi}
	There exists a finite-dimensional antiholomorphic superconnection 
	$\underline{\mathscr E}=\left(\underline{E},A^{\underline{E} 
	\prime \prime }\right)$ on $S$, and a morphism of 
	$\mathcal{O}^{\infty 
	}_{S}\left(\Lambda\left(\overline{T^{*}S}\right)\right)$-modules 	
	$\phi:\underline{\mathscr E}\to p_{*}\mathscr E_{0}$  which is  a 
	quasi-isomorphism of $\mathcal{O}_{S}$-complexes,   such that
	for any $s\in S$, 
	$\phi_{s}:\left(\underline{D},\underline{v}_{0}\right)_{s}\to\left(\mathcal{D},A^{\mathcal{D}\prime \prime }\right)_{s} $ is a quasi-isomorphism.
	
We have the identity
	\begin{equation}\label{eq:carb1}
\underline{\mathscr E}\simeq Rp_{*}\mathscr E\ 
\mathrm{in}\,\Db\left(S\right).
\end{equation}
In particular, we have the identity,
\begin{equation}\label{eq:dirim2}
\mathscr R \mathscr H \underline{\mathscr E}=p_{!} \left[\mathscr R 
\mathscr H \mathscr E\right]\ \mathrm{in}\ K\left(S\right).
\end{equation}
\end{theorem}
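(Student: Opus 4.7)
The plan is to combine the abstract existence result of Proposition~\ref{prop:exibb} with the spectral truncation machinery of Subsection~\ref{subsec:sptrin}. First I would apply Proposition~\ref{prop:exibb} to the holomorphic map $p\colon M\to S$ and the complex $\mathscr E$. This produces a finite-dimensional object $\underline{\mathscr E}=(\underline{E},A^{\underline{E}\prime\prime})\in\mathrm{B}(S)$ together with a morphism of $\mathcal{O}^\infty_S(\Lambda(\overline{T^*S}))$-modules $\phi\colon\underline{\mathscr E}\to p_*\mathscr E$ which is a quasi-isomorphism of $\mathcal{O}_S$-complexes. This delivers the first half of the statement, except for the fiberwise quasi-isomorphism assertion on the diagonal bundles.

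To establish the fiberwise quasi-isomorphism claim, I would argue pointwise by inserting spectral truncations as finite-dimensional bridges. Fix $s_0\in S$ and pick $a>0$ with $a\notin\mathrm{Sp}\,A^{\mathcal{D},2}_{s_0}$, so that $s_0\in U_a$ in the notation of \eqref{eq:train3}. By Theorem~\ref{thm:peab} and \eqref{eq:tro9a1b}, on $U_a$ the projector $P_{a,-}$ of \eqref{eq:tro9b} is a morphism of $\mathcal{O}^\infty_{U_a}(\Lambda(\overline{T^*U_a}))$-modules, defines a quasi-isomorphism $p_*\mathscr E|_{U_a}\to p_*\mathscr E_{0,a,-}|_{U_a}$ of $\mathcal{O}_{U_a}$-complexes, and lands in a finite-dimensional antiholomorphic superconnection $p_*\mathscr E_{0,a,-}\in\mathrm{B}(U_a)$. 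Consequently the composite $P_{a,-}\circ\phi|_{U_a}\colon\underline{\mathscr E}|_{U_a}\to p_*\mathscr E_{0,a,-}|_{U_a}$ is a morphism in $\mathrm{B}(U_a)$ and a quasi-isomorphism of $\mathcal{O}_{U_a}$-complexes. A local version of Proposition~\ref{prop:prophtpeq} (applied on a small polydisc around $s_0$) then gives that the induced map on diagonals $(P_{a,-}\circ\phi)_{s_0}\colon\underline{D}_{s_0}\to\mathcal{D}_{a,-,s_0}$ is a quasi-isomorphism. On the other hand, by the last statement of Theorem~\ref{thm:peab} the complex $\mathcal{D}_{a,+,s_0}$ is exact, so the orthogonal projection $P^0_{a,-,s_0}\colon\mathcal{D}_{s_0}\to\mathcal{D}_{a,-,s_0}$ is itself a quasi-isomorphism. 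Since the diagonal of $P_{a,-}$ is precisely $P^0_{a,-}$, the two-out-of-three property forces $\phi_{s_0}\colon\underline{D}_{s_0}\to\mathcal{D}_{s_0}$ to be a quasi-isomorphism, as required.

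The remaining identifications are then formal. For \eqref{eq:carb1}, equation \eqref{eq:fon1bis} gives $Rp_*\mathscr E=p_*\mathscr E$ in $\Db(S)$, and the quasi-isomorphism $\phi$ exhibits $\underline{\mathscr E}\simeq p_*\mathscr E=Rp_*\mathscr E$ there. For \eqref{eq:dirim2}, applying the isomorphism \eqref{eq:hipi1}--\eqref{eq:hipi2} to $\underline{\mathscr E}\simeq Rp_*\mathscr E$, together with the definition \eqref{eq:fon2} of $p_!$, yields $\mathscr R\mathscr H\underline{\mathscr E}=p_!(\mathscr R\mathscr H\mathscr E)$ in $K(S)$.

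The main obstacle is the second step. Proposition~\ref{prop:exibb} provides $\underline{\mathscr E}$ and $\phi$ by an abstract recursive construction that gives no direct information about the diagonal map $\phi_s$, precisely because $p_*\mathscr E$ is infinite-dimensional so that Proposition~\ref{prop:prophtpeq} cannot be invoked on it. The critical ingredient is therefore the spectral truncation: the key commutation identity \eqref{eq:tro9a1b} ensures that $P_{a,-}\circ\phi$ lives inside the finite-dimensional category $\mathrm{B}(U_a)$, where the equivalence between global and fiberwise diagonal quasi-isomorphisms becomes available. Without this finite-dimensional bridge, passing from a global quasi-isomorphism of $\mathcal{O}_S$-complexes to a pointwise quasi-isomorphism of diagonals would be out of reach.
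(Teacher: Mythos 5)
Your proposal is correct and follows essentially the same route as the paper: apply Proposition~\ref{prop:exibb} for the abstract existence, compose with the spectral truncation $P_{a,-}$ to land in the finite-dimensional category $\mathrm{B}(U_a)$ where Proposition~\ref{prop:prophtpeq} applies, and then use the fiberwise exactness of $\mathcal D_{a,+,s}$ to pass from the truncated diagonal to the full diagonal. The only cosmetic difference is that you make the final two-out-of-three step explicit, whereas the paper states it more compactly.
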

\begin{proof}
	By Proposition \ref{prop:exibb}, there exists an object
	$\underline{\mathscr E}$ in $\mathrm{B}\left(S\right)$ and a morphism of 
	$\mathcal{O}^{\infty 
	}_{S}\left(\Lambda\left(\overline{T^{*}S}\right)\right)$-modules 
	$\phi:\underline{\mathscr E}\to p_{*} \mathscr E_{0}$, which is also 
	a quasi-isomorphism of $\mathcal{O}_{S}$-complexes. Using Theorem 
	\ref{thm:peab}, over $U_{a}$, if $\phi_{a}=P_{a,-}\phi$, 
	$\phi_{a}$ induces a quasi-isomorphism $\underline{\mathscr E}\to 
	p_{*}\mathscr E_{0,a,-}$.  
	
	Over $U_{a}$, we now deal with finite 
	dimensional antiholomorphic superconnections. By Proposition 
	\ref{prop:prophtpeq},  for  $s\in U_{a}$, 
	$\phi_{a}$ induces a quasi-isomorphism $\underline{D}_{s}\to 
	\mathcal{D}_{a,-,s}$. By Theorem \ref{thm:peab}, for $s\in U_{a}$,
	the complex $\mathcal{D}_{a,+,s}$ is exact. 	Therefore, for 
	any $s\in U_{a}$, $\phi_{s}:\underline{D}_{s}\to\mathcal{D}_{s}$ 
	is a quasi-isomorphism. 
	
	Since $\phi$ is a quasi-isomorphism, we get (\ref{eq:carb1}),  from which (\ref{eq:dirim2}) follows. The proof of our theorem is completed. 
	\end{proof}

\begin{remark}\label{rem:uniq}
By equation (\ref{eq:carb1}) in Theorem \ref{thm:exi},  if $\underline{E}'$ has the same properties as 
	$\underline{E}$, then 
	\begin{equation}\label{eq:confi1}
\underline{\mathscr E}\simeq \underline{\mathscr 
E}'\ \mathrm{in}\ \Db\left(S\right).
\end{equation}
Using Theorem \ref{thm:eqcat} and (\ref{eq:confi1}), we deduce that 
there exists a quasi-isomorphism $\psi:\underline{\mathscr E}\to 
\underline{\mathscr E}'$. By Proposition \ref{prop:prophtpeq}, this 
just says that if $\underline{D},\underline{D'}$ are the 
corresponding diagonal bundles on $S$, for any $s\in S$, 
$\psi_{s}:\underline{D}_{s}\to \underline{D}'_{s}$ is a 
quasi-isomorphism.
\end{remark}
\subsection{The Chern character of the direct 
image}%
\label{subsec:diri}
We use the notation of Theorem \ref{thm:exi}. In particular 
$\underline{\mathscr E}$ is taken as in this Theorem. 
\begin{theorem}\label{thm:simb}
		The following identity holds:
		\begin{equation}\label{eq:coc15}
\cBC\left(A^{p_{*} \mathscr E \prime \prime 
}\right)=\cBC\left(\underline{\mathscr E}\right)\ \mathrm{in}\ 
H^{(=)}_{\mathrm{BC}}\left(S,\R\right).
\end{equation}
	\end{theorem}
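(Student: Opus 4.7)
The plan is to compare the elliptic Chern character form from Section~\ref{sec:suel} with the finite-dimensional one from Section~\ref{sec:chfoge}, by using the spectral truncation of Subsection~\ref{subsec:sptrin} to localize on the open sets $U_{a}$ and reduce to finite-dimensional data, and then by invoking the cone argument of Theorem~\ref{thm:idc}.

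First, I would work locally over $U_{a}\subset S$. By Theorem~\ref{thm:peab}, the projector $P_{a,-}$ gives an $A^{p_{*}\mathscr E_{0}\prime \prime}$-stable splitting $p_{*}\mathscr E_{0}=p_{*}\mathscr E_{0,a,+}\oplus p_{*}\mathscr E_{0,a,-}$ whose negative part is a genuine \emph{finite-dimensional} antiholomorphic superconnection with diagonal bundle $\mathcal D_{a,-}$. Moreover, the proof of Theorem~\ref{thm:exi} shows that $\phi_{a}=P_{a,-}\phi:\underline{\mathscr E}\to p_{*}\mathscr E_{0,a,-}$ is a quasi-isomorphism of finite-dimensional antiholomorphic superconnections on $U_{a}$. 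Applying Theorem~\ref{thm:idc} to $\phi_{a}$, with the pure metric on $\mathcal D_{a,-}$ inherited by restriction from $\alpha$, yields
$$\cBC(\underline{\mathscr E})=\cBC(A^{p_{*}\mathscr E_{0,a,-}\prime\prime})\ \text{in}\ H^{(=)}_{\mathrm{BC}}(U_{a},\mathbf R),$$
where the right-hand side is the Section~\ref{sec:chfoge} Bott-Chern class of the truncation.

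Second, I would identify the Section~\ref{sec:chfoge} Chern character of $p_{*}\mathscr E_{0,a,-}$ with the restriction of the elliptic Chern character form $\ch(A^{p_{*}\mathscr E_{0}\prime\prime},\omega^{X},g^{D})$ to this truncated subcomplex. The key observation is that, by Proposition~\ref{prop:padj} combined with Proposition~\ref{prop:psimple}, the elliptic adjoint $A^{p_{*}\mathscr E_{0}\prime}$ is obtained from the naive $L_{2}$ adjoint by the Clifford-ization ${}^{c}$, which is precisely the operation converting the Hermitian form of Subsection~\ref{subsec:adjscn} into the fiberwise $L_{2}$ pairing $\alpha$. Consequently, restricted to the finite-dimensional $p_{*}\mathscr E_{0,a,-}$ equipped with the restricted pure metric, the curvature $A^{p_{*}\mathscr E_{0},2}$ reduces to the Section-\ref{sec:chfoge} curvature, so the two Chern character forms coincide on this piece.

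Third, the remaining task is to show that the contribution of the acyclic piece $p_{*}\mathscr E_{0,a,+}$ vanishes in Bott-Chern cohomology. Over $U_{a}$, the fibrewise spectrum of $A^{\mathcal D,2}$ on $\mathcal D_{a,+}$ is bounded below by $a>0$, and the contracting homotopy $v_{0}^{*}[\mathbb A^{p_{*}\mathscr E_{0,a,+},2}]^{-1}$ exhibited in (\ref{eq:acy1}) is globally defined and bounded. Following the scaling argument of Theorem~\ref{thm:conv}, and in particular the reasoning leading to (\ref{eq:iv36a4b}), I would rescale the metric on $\mathcal D_{a,+}$ by $T^{N^{\mathcal D_{a,+}}}$ and let $T\to+\infty$: the Bott-Chern class of the resulting form is $T$-independent by Theorem~\ref{thm:chca}, while the heat-kernel form tends to $0$ because the uniform spectral gap forces localization on the (empty) fiberwise cohomology. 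Assembling the three steps yields the desired identity on each $U_{a}$; compatibility as $a$ varies (via the projection statements of Proposition~\ref{prop:proj} extended to the infinite-dimensional setting) then globalizes it to $S$.

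The main obstacle is the third step: making the scaling/vanishing argument rigorous in the infinite-dimensional elliptic setting. One must control uniformly, as $T\to+\infty$, both the heat kernel $\exp(-A^{p_{*}\mathscr E_{0,a,+},2})$ and the off-diagonal coupling between $\mathcal D_{a,+}$ and the full spectral decomposition, while exploiting the spectral gap $a>0$ together with fiberwise ellipticity to obtain the required uniform trace-class estimates. Once this analytic control is in place, the algebraic/cohomological part of the argument reduces to the finite-dimensional cone and scaling techniques already developed in Section~\ref{sec:chfoge}.
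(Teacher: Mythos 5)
Your proposal has a genuine structural gap at the globalization step. Bott--Chern cohomology is not a sheaf: it does not glue from an open cover, and there is no Mayer--Vietoris sequence in general. Over each $U_{a}$ (which is an \emph{open, noncompact} subset of $S$), the local $\overline{\pa}\pa$-lemma makes the statement $\cBC(\underline{\mathscr E})=\cBC(A^{p_{*}\mathscr E_{0,a,-}\prime\prime})$ in $H^{(=)}_{\mathrm{BC}}(U_{a},\R)$ either trivial or, at best, far weaker than the global identity you need, because a $d$-closed $(p,p)$-form on $S$ that is locally $\overline{\pa}\pa$-exact need not be globally so. Worse, the entire Bott--Chern machinery you invoke --- Theorem~\ref{thm:idc}, the scaling limit of Theorem~\ref{thm:conv}, the heat-kernel arguments --- is developed in this paper only for \emph{compact} complex manifolds; applying them on $U_{a}$ would require a separate (and substantial) extension that you do not supply. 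The phrase ``compatibility as $a$ varies \ldots then globalizes it to $S$'' is not a proof; it is precisely the step that cannot be carried out for Bott--Chern cohomology.

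There is also a subtlety in your second step. The projectors $P_{a,\pm}$ commute with $A^{p_{*}\mathscr E_{0}\prime\prime}$ and with $A^{\mathcal D\prime\prime*}$ (equation (\ref{eq:tro9a1b})), but not with the full elliptic adjoint $A^{p_{*}\mathscr E_{0}\prime}$ of (\ref{eq:coc7a3}), and moreover the higher-order components $P^{(\ge 1)}_{a,\pm}$ are not $\alpha$-orthogonal. Consequently the curvature $A^{p_{*}\mathscr E_{0},2}$ does not block-diagonalize under the spectral splitting, and the elliptic Chern form does not simply restrict to the Section~\ref{sec:chfoge} Chern form of the truncation plus a separately vanishing $a,+$ piece, as your steps 2 and 3 presuppose.

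The paper avoids both problems by working globally over the compact $S$ from the start: it forms the infinite-dimensional antiholomorphic superconnection $\mathscr C_{t}=\mathrm{cone}(\underline{\mathscr E},p_{*}\mathscr E)$ over $S$, observes that by Theorem~\ref{thm:exi} the diagonal complex $\mathbf D_{t}$ is fiberwise exact for $t\ne 0$, equips it with a pure metric, verifies the scaling identity $g^{\mathbf D}_{T}=T^{-n}g^{\mathbf D}T^{N^{\mathbf D}}$, and then runs the $T\to+\infty$ limit \emph{globally on $S$} using a uniform spectral gap, to get $\cBC(A^{C_{t,0}\prime\prime})=0$ for $t\ne 0$; the result then follows by letting $t\to 0$ and using the identity $\cBC(A^{C_{0,0}\prime\prime})=\cBC(A^{p_{*}\mathscr E_{0}\prime\prime})-\cBC(\underline{\mathscr E})$. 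This is exactly the global analogue of the cone argument of Theorem~\ref{thm:idc}, and the spectral truncations of Subsection~\ref{subsec:sptrin} enter the paper only earlier (in the proof of Theorem~\ref{thm:exi}) to establish the existence of the quasi-isomorphism $\phi$, not to compute the Bott--Chern class on patches.
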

	\begin{proof}
		The proof is  an adaptation of the proof of Theorems 
		\ref{thm:conv} and \ref{thm:idc}. We will use the notation in
		Theorem \ref{thm:exi}. 		For $t\in \C$, we  form the cone 
	$\mathscr C_{t}=\mathrm{cone}\left(\underline{\mathscr E}, 
		p_{*} \mathscr E\right)$ associated with the 
		morphism $t\phi$, with underlying infinite-dimensional vector bundle $C_{t}$ on 
		$S$. Note that $\phi$ induces a morphism $\underline{D}\to 
		\mathcal{D}$. The  diagonal bundle $\mathbf{D}$ associated 
		with $\mathscr C_{t}$ is given 
		$\mathbf{D}_{t}=\mathrm{cone}\left(\underline{D},\mathcal{D}\right)$, where the morphism is induced by $t\phi_{0}$. Also
		\begin{equation}\label{eq:sum1}
\mathbf{D}^{\Ou}_{t}=\underline{D}^{\Ou+1} \oplus \mathcal{D}^{\Ou}.
\end{equation}

By  Theorem \ref{thm:exi}, for any  $s\in S$, when $t\neq 0$, $\mathbf{D}_{t}$ is 
 exact.

We fix splittings of $E,\underline{E}$ as in (\ref{eq:iv4a-1}), 
(\ref{eq:iv4}). In the sequel, we may as well replace 
$\underline{\mathscr E},\mathscr E$ by $\underline{\mathscr E}_{0}, 
\mathscr E_{0}$. In particular $\mathscr C_{t}$ also splits and can 
be replaced by $\mathscr C_{t,0}$.

		 Let 
$g^{\underline{D}}$ be 
		a Hermitian metric on $\underline{D}$.  Let 
$g^{\mathbf{D}}$ denote the direct sum 
		 Hermitian metric on $\mathbf{D}$ associated with 
		 $g^{\underline{D}}, g^{\mathcal{D}}$. We equip $C_{t,0}$ 
		 with the pure metric $g^{\mathbf{D}}$.  Let 
		 $\square^{\mathbf{D}}_{t}$ denote 
		the corresponding  fiberwise Hodge Laplacian acting on 
		$\mathbf{D}$.  Since for any $s\in S$, $\mathbf{D}_{t}$ is 
		exact, 
		$\square^{\mathbf{D}}_{t,s}$ is invertible, and it has a uniform positive 
		lower bound over $S$. Its heat kernel is again fiberwise 
		trace class. 
		
		Let $A^{C_{t,0} \prime \prime }$ denote the antiholomorphic 
		superconnection on $C_{t}$ that is associated with $A^{\underline{E}_{0} \prime 
		\prime }, A^{p_{*} \mathscr E_{0} \prime \prime }, t\phi$. 
		
		Given $t\in \C$, we can still define the Chern form
		$\ch\left(A^{C_{t,0} \prime \prime },\omega^{X}, 
		g^{D},g^{\underline{D}}\right)$, for which the analogue of 
		Theorem \ref{thm:chca} holds, with a similar proof. Let 
		$\cBC\left(A^{C_{t,0} \prime \prime }\right)\in 
		H_{\mathrm{BC}}^{(=)}\left(S,\R\right)$ denote the 
		corresponding common Bott-Chern cohomology class.

For $T>0$, let $g_{T}^{D},g_{T}^{\underline{D}}$ be the 
metrics on $D,\underline{D}$ constructed as in (\ref{eq:iv36a2}).  Let 
$g^{\mathbf{D}}_{T}$ be the metric on $\mathbf{D}$ that is associated with the 
metrics $\omega^{X}/T, g^{D}_{T}, g^{\underline{D}}_{T}T^{-n-1}$. If 
$N^{\mathbf{D}}$ is the number operator of $\mathbf{D}$, then
\begin{equation}\label{eq:extra2}
g^{\mathbf{D}}_{T}=T^{-n}g^{\mathbf{D}}T^{N^{\mathbf{D}}}.
\end{equation}
The proof of (\ref{eq:extra2}) is elementary. The extra factor 
$T^{-n}$ comes from the contribution of the volume form on $X$ to 
the  Hermitian product on $\mathcal{D}$.
This normalizing factor does not contribute to the computation of the 
adjoint $A^{C _{t,0}\prime }_{T}$ of $A^{C_{t,0}\prime \prime }$.
By (\ref{eq:extra2}), if $\square^{\mathbf{D}}_{t,T}$ denotes the 
associated fiberwise Laplacian,  then
\begin{equation}\label{eq:seni-4a1}
\square^{\mathbf{D}}_{t,T}=T\square^{\mathbf{D}}_{t}.
\end{equation}

 We claim 
that if $t\neq 0$, as $T\to + \infty $, 
\begin{equation}\label{eq:seni-4}
\ch\left(A^{C_{t,0} \prime \prime 
},\omega^{X}/T,g_{T}^{D}, g_{T}^{\underline{D}}T^{-n-1}\right)\to 0.
\end{equation}
The proof is an infinite-dimensional analogue of equation 
(\ref{eq:iv36a2b}) in  Theorem 
\ref{thm:conv}, with $HD=0$ which can be established by the methods 
of  \cite[Theorems 9.19 and 9.23]{BerlineGetzlerVergne},  \cite[Theorems 9.5 and 9.6]{Bismut97a}. 

By (\ref{eq:seni-4}), we deduce that for $t\neq 0$, 
\begin{equation}\label{eq:coc22}
\cBC\left(A^{C_{t,0} \prime \prime }\right)=0.
\end{equation}
By taking $t\to 0$ in (\ref{eq:coc22}), we get 
\begin{equation}\label{eq:coc23}
\cBC\left(A^{C_{0,0} \prime \prime }\right)=0.
\end{equation}

As in (\ref{eq:cor2a4}), we have the identity 
\begin{equation}\label{eq:coc24}
\cBC\left(A^{C_{0,0} \prime \prime }\right)=\cBC\left(A^{p_{*} 
\mathscr E_{0} \prime \prime 
}\right)-\cBC\left(\underline{\mathscr E}\right).
\end{equation}
By (\ref{eq:coc23}), (\ref{eq:coc24}), we get (\ref{eq:coc15}). The proof of our theorem is completed. 
\end{proof}
\begin{remark}\label{rem:clev}
	By   Theorem \ref{thm:cla} and Remark \ref{rem:uniq}, we already 
	know that $\cBC \left(\underline{\mathscr E}\right)$ does not 
	depend on the choice of  $\underline{E}$. Theorem 
	\ref{thm:simb} gives a direct proof of this fact.
\end{remark}
\begin{theorem}\label{thm:Tes}
	The following identity holds:
	\begin{equation}\label{eq:cox1}
\cBC\left(A^{p_{*} \mathscr E \prime \prime 
}\right)=\cBC\left(Rp_{*}\ \mathscr E\right)\ \mathrm{in}\ 
H^{(=)}_{\mathrm{BC}}\left(S,\R\right).
\end{equation}
\end{theorem}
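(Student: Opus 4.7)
The plan is to observe that Theorem \ref{thm:Tes} is essentially an immediate corollary of the two major results just proved, namely Theorem \ref{thm:exi} and Theorem \ref{thm:simb}, together with the well-definedness of $\cBC$ on $\Db(S)$ established in Theorem \ref{thm:cla} and Definition \ref{def:cDb}. Since all the technical work has been done upstream, essentially nothing new needs to be verified here; the proof is a matter of stringing together three identifications.

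First, I would invoke Theorem \ref{thm:exi} to produce a finite-dimensional antiholomorphic superconnection $\underline{\mathscr E} = (\underline{E}, A^{\underline{E}\prime\prime})$ on $S$ together with a morphism of $\mathcal{O}^\infty_S(\Lambda(\overline{T^*S}))$-modules $\phi: \underline{\mathscr E} \to p_* \mathscr E_0$ that is a quasi-isomorphism of $\mathcal{O}_S$-complexes. In particular, this theorem gives the isomorphism
\begin{equation*}
\underline{\mathscr E} \simeq Rp_* \mathscr E \ \mathrm{in}\ \Db(S).
\end{equation*}

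Next, since $\cBC: \Db(S) \to H^{(=)}_{\mathrm{BC}}(S,\R)$ is well-defined on isomorphism classes in $\Db(S)$ by Theorem \ref{thm:cla}, and $\cBC(\mathscr F)$ for an object of $\Db(S)$ is by Definition \ref{def:cDb} computed on any representative antiholomorphic superconnection, the previous isomorphism yields
\begin{equation*}
\cBC(Rp_* \mathscr E) = \cBC(\underline{\mathscr E}) \ \mathrm{in}\ H^{(=)}_{\mathrm{BC}}(S,\R).
\end{equation*}

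Finally, Theorem \ref{thm:simb} provides the key analytic identity
\begin{equation*}
\cBC(A^{p_* \mathscr E \prime \prime}) = \cBC(\underline{\mathscr E}) \ \mathrm{in}\ H^{(=)}_{\mathrm{BC}}(S,\R),
\end{equation*}
comparing the Bott-Chern class of the infinite-dimensional elliptic superconnection forms with the class of the finite-dimensional representative. Combining the last two displays produces the desired equality. There is no real obstacle left to overcome: the genuine work — constructing $\underline{\mathscr E}$ via spectral truncation (Theorem \ref{thm:peab} and Theorem \ref{thm:exi}), and verifying that the cone of $\phi$ has vanishing Bott-Chern Chern character via the large-$T$ scaling argument in the proof of Theorem \ref{thm:simb} — has already been carried out in the preceding subsections. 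Thus the proof reduces to the two-line chain of equalities above.
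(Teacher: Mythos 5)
Your proposal is correct and follows exactly the paper's own route: the paper's proof also simply combines equation (\ref{eq:carb1}) from Theorem \ref{thm:exi} with Theorem \ref{thm:simb}, using the well-definedness of $\cBC$ on $\Db(S)$ implicitly. Your slightly more explicit unpacking of the role of Theorem \ref{thm:cla} and Definition \ref{def:cDb} does not change the argument in any substantive way.
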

\begin{proof}
	Using equation (\ref{eq:carb1}) in Theorem \ref{thm:exi} and 
	Theorem \ref{thm:simb}, we get (\ref{eq:cox1}). 
	\end{proof}
\subsection{The case where $\mathscr HRp_{*}\mathscr E$ is locally free}%
\label{subsec:lofra}
By Theorem \ref{thm:kth}, we know that
\begin{equation}\label{eq:lofr1}
\cBC\left(\underline{\mathscr E}\right)=\cBC\left(\mathscr R\mathscr 
H\underline{\mathscr E}\right) \ \mathrm{in}\ 
H^{(=)}_{\mathrm{BC}}\left(S,\R\right).
\end{equation}
Also $\mathscr H Rp_{*}\mathscr E$ is a $\Z$-graded coherent sheaf on 
$S$.  Using  equation (\ref{eq:coc15}) in  Theorem \ref{thm:Tes} and 
(\ref{eq:lofr1}),  we get\begin{equation}\label{eq:lofr2}
\cBC\left(A^{p_{*} \mathscr E \prime \prime 
}\right)=\cBC\left(   \mathscr H Rp_{*}\mathscr E\right)\ \mathrm{in}\ 
H^{(=)}_{\mathrm{BC}}\left(S,\R\right).
\end{equation}

When $\mathscr H Rp_{*}\mathscr 
E$ is locally free, there is a   simpler proof of Theorem 
\ref{thm:Tes}   not relying on Theorem 
\ref{thm:exi}. In this  case, the same arguments as in Subsection 
\ref{subsec:lofr} show that the cohomology of the complex 
$\left(\mathcal{D}, A^{\mathcal{D} \prime \prime }\right)$ is a 
finite-dimensional holomorphic vector bundle $H$ on $S$. Using 
fiberwise Hodge theory, we can identify $H$ with the corresponding 
fiberwise harmonic sections $\mathcal{H}$.  Let $g^{H}$ denote the 
corresponding   Hermitian metric on $H$. 

Under the above assumption, a strict analogue of Theorem \ref{thm:conv} can be proved. 
More precisely, by proceeding as in  Berline-Getzler-Vergne \cite[Theorems 9.19 and 
9.23]{BerlineGetzlerVergne}, \cite[Theorems 9.5 and 
9.6]{Bismut97a}, and using the obvious analogue of (\ref{eq:extra2}), 
one can show that as $T\to + \infty $, we have the uniform 
convergence of forms on $S$, 
\begin{equation}\label{eq:lofr3}
\ch\left(A^{p_{*} \mathscr E_{0} \prime \prime }, \omega^{X}/T, g_{T}^{D}\right)\to
\ch\left(H,g^{H}\right).
\end{equation}
By (\ref{eq:lofr3}), we deduce that
\begin{equation}\label{eq:lofr4}
\cBC\left(A^{p_{*} \mathscr E \prime \prime }\right)=\cBC\left(H\right) \ \mathrm{in}\ 
H^{(=)}_{\mathrm{BC}}\left(S,\R\right).
\end{equation}
Moreover, 
\begin{equation}\label{eq:lofr5}
\mathscr H Rp_{*} \mathscr E=H\, \mathrm{in}\, K\left(S\right).
\end{equation}
Using (\ref{eq:lofr4}), (\ref{eq:lofr5}), we reobtain Theorem 
\ref{thm:Tes}.

The above strategy  was used  in Bismut \cite[Theorems 
4.10.4 and 4.11.2]{Bismut10b} in 
the case where $D$ is a holomorphic vector bundle on $M$, and 
$\left(E,A^{E \prime \prime }\right)=\left(D,\n^{D \prime \prime 
}\right)$, in order to 
establish Theorem \ref{thm:Tes}.
\section{A proof of Theorem \ref{thm:sub} when 
$\overline{\pa}^{X}\pa^{X}\omega^{X}=0$.}%
\label{sec:prka}
We make the same assumptions as in Section \ref{sec:suel}.  We will
prove   Theorem \ref{thm:sub} under the assumption that 
$\overline{\pa}^{X}\pa^{X}\omega^{X}=0$,  by extending  \cite[Theorem 
5.1.2]{Bismut10b}. Under this assumption, by the results of 
Subsections \ref{subsec:bovb} and  \ref{subsec:cox}, 
$\underline{R}^{T_{\R}X}$ is a $(1,1)$ form, so that
$\widehat{A}\left(T_{\R}X,\underline{\n}^{T_{\R}X}\right)$ is a 
closed form in $\Omega^{(=)}\left(X,\R\right)$.
\begin{theorem}\label{thm:clo}
If $\overline{\pa}^{X}\pa^{X}\omega^{X}=0$, as $t\to 
0$, we have the uniform convergence of forms on $S$, 
\begin{multline}\label{eq:coc25a1}
\ch\left(A^{p_{*} \mathscr E_{0} \prime 
\prime},\omega^{X}/t,g^{D}\right)\\
\to p_{*}
\left[q^{*}\left[\widehat{A}\left(T_{\R}X,\underline{\n}^{T_{\R}X}\right)e^{c_{1}\left(TX,g^{TX}\right)/2}\right]\ch\left(A^{E_{0} \prime \prime 
},g^{D}\right)\right].
\end{multline}
In particular, if $\omega^{X}$ is closed, as $t\to 0$, 
	\begin{equation}\label{eq:coc25}
\ch\left(A^{p_{*} \mathscr E_{0} \prime \prime},\omega^{X}/t,g^{D}\right)\to p_{*}
\left[q^{*}\Td\left(TX,g^{TX}\right)\ch\left(A^{E_{0} \prime \prime 
},g^{D}\right)\right].
\end{equation}

If $\overline{\pa}^{X}\pa^{X}\omega^{X}=0$,  then
\begin{equation}\label{eq:dir1bi}
\cBC\left(Rp_{*} \mathscr 
E\right)=p_{*}\left[q^{*}\Td_{\mathrm{BC}}\left(TX\right)\cBC\left(\mathscr E\right)\right]\,\mathrm{in}\, H^{(=)}_{\mathrm{BC}}\left(S,\R\right).
\end{equation}
\end{theorem}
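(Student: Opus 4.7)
The plan is to prove the convergence \eqref{eq:coc25a1} by local index theory, deduce \eqref{eq:coc25} as a direct specialization, and obtain the Riemann-Roch-Grothendieck identity \eqref{eq:dir1bi} by passing to Bott-Chern cohomology and invoking Theorem \ref{thm:Tes}.

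First, I would exploit the Lichnerowicz formula of Theorem \ref{thm:lichg}, with $\omega^X$ replaced by $\omega^X/t$. Under this rescaling, $\beta$ becomes $\beta/t$, while $\underline{\nabla}^{T_\R X}$ is invariant and (by the hypothesis $\overline{\pa}^X\pa^X\omega^X=0$) has curvature $\underline{R}^{T_\R X}$ of type $(1,1)$; moreover the term $q^*\overline{\pa}^X\pa^X i\omega^X$ in the Lichnerowicz formula vanishes. I would rewrite the Dirac-type square using the reformulation \eqref{eq:sup1x-1} of Remark \ref{rem:Hodge}, so that the Laplacian piece is expressed through the Bismut-type connection $\overline{\nabla}^{\Lambda(\overline{T^*X})\ho D}$, the correction $-\frac{1}{2}\Tr^{TX}[A(e_i)]$, and the Clifford contribution $^{\mathrm{c}}(i_{e_i}C)$; the singular $\beta/t$ is absorbed into the connection.

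Next, I would apply the Getzler rescaling to the heat kernel of $A^{p_*\mathscr E_0,2}$ at a fiberwise diagonal point $x\in X$: rescale the Clifford variables by $\sqrt{t}$, pass to normal coordinates for $g^{T_\R X}$ at $x$, and let $t\to 0$. The rescaled Laplacian part converges to a generalized harmonic oscillator whose curvature is precisely $\underline{R}^{T_\R X}$ (and it is essential here that $\underline{\nabla}^{T_\R X}$ yields a $(1,1)$-type curvature). By a Mehler formula computation, the supertrace of the heat kernel of this oscillator produces the density $\widehat{A}(T_\R X,\underline{\nabla}^{T_\R X})$. The correction $\frac{1}{2}\Tr^{TX}[A]$, combined with the $\frac{1}{2}\Tr[R^{TX}]$ appearing in Theorem \ref{thm:lichg}, reduces in the limit to $\frac{1}{2}\Tr[R^{\overline{TX}}]$, whose exponential produces the factor $e^{c_1(TX,g^{TX})/2}$. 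The remaining Clifford terms involving $C$, together with the $^{\mathrm{c}}(A^{E_0,2})$ contribution, combine under the rescaling into $\exp(-A^{E_0,2})$ acting on $D$, whose $\varphi\Trs$ is by definition $\ch(A^{E_0\prime\prime},g^D)$. Fiber integration over $X$ assembles these three ingredients into the right-hand side of \eqref{eq:coc25a1}. The argument is a direct extension of \cite[Theorem 5.1.2]{Bismut10b} from holomorphic Hermitian vector bundles to antiholomorphic superconnections.

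Once \eqref{eq:coc25a1} is proved, the special case \eqref{eq:coc25} is immediate: when $\omega^X$ is closed, $\beta=0$, hence $\underline{\nabla}^{T_\R X}=\nabla^{T_\R X}$, and by \eqref{eq:crisp1} we recover $\widehat{A}(T_\R X,\nabla^{T_\R X})e^{c_1(TX,g^{TX})/2}=\Td(TX,g^{TX})$. For \eqref{eq:dir1bi}, take Bott-Chern cohomology classes of both sides of \eqref{eq:coc25a1}. The left-hand side represents $\cBC(A^{p_*\mathscr E\prime\prime})$ by Theorem \ref{thm:chca}, and this class equals $\cBC(Rp_*\mathscr E)$ by Theorem \ref{thm:Tes}. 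On the right-hand side, since $\underline{R}^{T_\R X}$ is of type $(1,1)$, the form $\widehat{A}(T_\R X,\underline{\nabla}^{T_\R X})e^{c_1(TX,g^{TX})/2}$ lies in $\Omega^{(=)}(X,\R)$; a Bott--Chern transgression argument along a path of Hermitian connections (of the type used to prove Proposition \ref{prop:pbc}) identifies its Bott-Chern class with $\Td_{\mathrm{BC}}(TX)$. The factor $\ch(A^{E_0\prime\prime},g^D)$ represents $\cBC(\mathscr E)$ by Definition \ref{def:BCcl}, and $p_*$ on smooth forms descends to the push-forward on Bott-Chern cohomology by Subsection \ref{subsec:unpro}. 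Combining these identifications yields \eqref{eq:dir1bi}.

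The main technical obstacle will be the Getzler rescaling in the presence of the antiholomorphic superconnection: unlike the Dolbeault case, $C=B+B^{*}$ contains Clifford-multiplication components of arbitrary degree in $\Lambda(T^*_\C X)$, and the last identity of Theorem \ref{thm:lichg} generates cubic and higher Clifford terms. One must verify that, relative to the Getzler filtration, all these extra terms have filtration degree strictly larger than the leading harmonic oscillator plus fiberwise endomorphism, so that they contribute nothing to the $t\to 0$ limit. This requires a careful bookkeeping of the Getzler orders of the quasi-nilpotent factors $B$ and $B^{*}$, extending the corresponding analysis in \cite{Bismut10b}.
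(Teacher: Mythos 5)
Your overall strategy---local index theory via Getzler-type rescaling of the Lichnerowicz formula (Theorem \ref{thm:lichg}), limit to a generalized harmonic oscillator with curvature $\underline{R}^{T_{\R}X}$ (here the hypothesis $\overline{\pa}^{X}\pa^{X}\omega^{X}=0$ enters through the symmetry (\ref{eq:sup7}) converting $\overline{R}^{TX}$ into $\underline{R}^{TX}$), Mehler's formula, and passage to Bott-Chern cohomology via Theorem \ref{thm:Tes}---matches the paper's, and your identification of the obstacle posed by the higher Clifford monomials in $\beta+C$ is exactly right; the paper disposes of them by observing that the $k\ge 3$ contributions die after rescaling and the $k=2$ term is the square of a $1$-form. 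Two points are misdescribed. First, the Getzler rescaling is implemented not by rescaling Clifford variables by $\sqrt{t}$ but by conjugating the $F_{\sqrt{t}}$-rescaled operator by $e^{-i\omega^{X}_{x}/t}$; this sends $i_{t\overline{w}_{i}}$ to $i_{t\overline{w}_{i}}-w^{i}$ and extracts the top exterior power, which is the correct analogue of spinor rescaling in this Dolbeault setting. Second, and more substantively, your account of the $e^{c_{1}(TX,g^{TX})/2}$ factor does not hold up: the $\frac{1}{2}\Tr^{TX}[A]$ correction in (\ref{eq:sup1x-1}) lives inside the Bochner Laplacian with an overall factor $\sqrt{t}$ after rescaling and hence vanishes in the limit; the $c_{1}$-factor comes from $\frac{1}{2}\Tr[R^{TX}]$ alone, which survives unchanged in $M_{0,s,x}$ and yields $e^{c_{1}(TX,g^{TX})/2}$ after applying $\varphi$ to the exponential. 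Your proposed ``combination'' into $\frac{1}{2}\Tr[R^{\overline{TX}}]$ would moreover give the wrong sign, since $\Tr[R^{\overline{TX}}]=-\Tr[R^{TX}]$ for a unitary connection. Finally, for the Bott-Chern identification $\{\widehat{A}(T_{\R}X,\underline{\n}^{T_{\R}X})\}=\widehat{A}_{\mathrm{BC}}(TX)$, the paper does not use a transgression along a path of connections but rather the exact sequence (\ref{eq:exi1}) of holomorphic bundles together with the Bismut--Gillet--Soul\'e theory of Bott-Chern classes of exact sequences; your path-based alternative does work (scaling $\omega^{X}$ by $d\in[0,1]$, as noted in Remark \ref{rem:locind}), but be aware it is not the route the paper takes.
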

\begin{proof}
	In the proof, we will assume that 
	$\overline{\pa}^{X}\pa^{X}\omega^{X}=0$. Also, we fix $s\in S$. 
	The analysis will be done in the fiber $X\times\left\{s\right\}$.
	
	Given $t>0, x,x'\in X$, let $P_{t}\left(x,x'\right)$ denote the 
	smooth kernel for $\exp\left(-A^{p_{*} \mathscr 
	E_{0},2}_{t}\right)$ along the fiber $X\times\{s\}$ with respect 
	to the volume  $dx'/\left(2\pi\right)^{n}$ \footnote{In the 
	proof, the parameter $s$ will be usually omitted.}. Then 
	\begin{equation}\label{eq:dirbic}
\ch\left(A^{p_{*} \mathscr E_{0} \prime 
\prime},\omega^{X}/t,g^{D}\right)=
\varphi\int_{X}^{}\Trs\left[P_{t}\left(x,x\right)\right]\frac{dx}{\left(2\pi\right)^{n}}.
\end{equation}
To calculate the asymptotics of (\ref{eq:dirbic}) as $t\to 0$, we will use  the local index 
	theoretic methods of \cite{Bismut86d,BismutGilletSoule88c,Bismut11a}.

We denote by 
\index{ct@${}^{\mathrm{c}_{t}}$}%
	${}^{\mathrm{c}_{t}}$ the map ${}^{c}$ associated with 
the metric $g^{TX}/t$ on $TX$, that induces the metric $tg^{\TsX}$ on 
$\TsX$. By (\ref{eq:xiao2a1}), if $f\in \TsX$, if $f_{*}\in 
\overline{TX}$ corresponds to $f$ by the metric $g^{TX}$, then
\begin{align}\label{eq:roa1}
&c_{t}\left(\overline{f}\right)=\overline{f}\we,
&c_{t}\left(f\right)=-i_{tf_{*}}.
\end{align}

Let $A_{t}^{p_{*} \mathscr E_{0}}$ denote the superconnection 
$A^{p_{*} \mathscr E_{0}}$ associated with $\omega^{X}/t,g^{D}$. In  the right-hand 
	side of the  equation  for $A^{p_{*} \mathscr E_{0},2}_{t}$ in  
	(\ref{eq:dep15}),  by    (\ref{eq:sup1x-1}), in the first row 
	of the right-hand side, the first term is now 
	\begin{equation}\label{eq:coc26}
-\frac{1}{2}t \left( 
\overline{\n}_{e_{i}}^{\Lambda\left(\overline{T^{*}X}\right)\otimes 
D}-\frac{1}{2}\Tr^{TX}\left[A\left(e_{i}\right)\right]+{}^{\mathrm{c}_{t}}\left(i_{e_{i}}C\right) \right) ^{2}.
\end{equation}
The second term is also scaled by the 
	factor $t$.
	The term containing
	$\overline{\pa}^{X}\pa^{X}i\omega^{X}$ 
	is absent in the right-hand side.  Also the $e_{i}$ are replaced 
	by $\sqrt{t}e_{i}$.
	
	We will compute the limit as $t\to 0$ of 
	$\Trs\left[P_{t}\left(x,x\right)\right]$. 	We fix $x\in X$. We take  geodesic 
	coordinates on $X$ near $x$ with respect to the metric 
	$g^{TX}$. This way, for $\epsilon>0$ small enough,  for any $x\in 
	X$, we identify the open ball 
	$B^{T_{\R,x}X}\left(0,\epsilon\right)$ in $T_{\R,x}X$ to a corresponding open
	geodesic ball $B^{X}\left(x,\epsilon\right)$ in $X$.	Given $Y\in 
	T_{\R,x}X$, along the geodesic $u\in \R\to 
	\exp_{x}\left(uY\right)\in X$,  we 
	trivialize $TX,D$ by parallel transport with respect to the 
	connections  $\overline{\n}^{TX},\n^{D}$.  Also we trivialize $ 
    \Lambda\left(\overline{T^{*}X}\right)$ via the 
	connection 
	$\overline{\n}^{\Lambda\left(\overline{\TsX}\right)}$.  If 
	$w_{1},\ldots,w_{n}$ is an orthonormal basis of $T_{x}X$, 
	it can also be viewed as an orthonormal 
	trivialization of $TX$ near $x$. In this trivialization, we have 
	the identity
	\begin{equation}\label{eq:sup9}
\omega^{X}=-iw^{i}\overline{w}^{i}.
\end{equation}
If $e_{1},\ldots,e_{2n}$ is an orthonormal basis of $T_{\R,x}X$, via 
the parallel transport   with respect to $\overline{\n}^{T_{\R}X}$ along the above geodesics, we obtain an 
orthonormal basis of $T_{\R}X$ near $x$. 

Using the above trivialization, near
	$x$, the operator $A^{p_{*} \mathscr E_{0},2}_{t}$ 
	 acts on 
	 $$H_{x,\epsilon}= C^{\infty 
	}\left(B^{T_{\R,x}\left(0,\epsilon\right)},
	\Lambda\left(T^{*}_{\C,s}S\right)\ho\Lambda\left(\overline{T^{*}_{x}X}\right) \ho D_{s,x}\right).$$
	
	A first standard step is to construct an elliptic operator acting 
	on
	$$H_{x}=C^{\infty 
	}\left(T_{\R,x}X,\Lambda\left(T^{*}_{\C,s}S\right)\ho\Lambda\left(\overline{T^{*}_{x}X}\right) \ho D_{s,x} \right),$$
	that coincides with $A^{p_{*} \mathscr E_{0},2}_{t}$ on 
	$B^{T_{\R,x}X}\left(0,\epsilon/2\right)$, the point being that 
	the asymptotics of $\Trs\left[P_{t}\left(x,x\right)\right]$ as 
	$t\to 0$ can be computed using this new operator. This step being 
	standard, we will skip it, and pretend that $A^{p_{*} \mathscr 
	E_{0},2}_{t}$ acts on $H_{x}$, and that $P_{t}\left(Y,Y'\right)$ 
	is now a kernel on $T_{\R,x}X$.

	For $a>0,u\in H_{x}$,  put
	\begin{equation}\label{eq:coc26a1}
F_{a}u\left(Y\right)=u\left(aY\right).
\end{equation}
Set
\begin{equation}\label{eq:coc26a2}
L_{t,x}=F_{\sqrt{t}}A^{p_{*} \mathscr E_{0},2}_{t}F_{1/\sqrt{t}}.
\end{equation}
Let $Q_{t,x}\left(Y,Y'\right),Y,Y'\in T_{\R,x}X$ be the smooth kernel for 
$\exp\left(-L_{t,x}\right)$ with respect to  the volume form 
$\frac{dY'}{\left(2\pi\right)^{n}}$. Clearly
\begin{equation}\label{eq:coc26a1z1}
P_{t}\left(x,x\right)=t^{-n}Q_{t,x}\left(0,0\right).
\end{equation}

Among the monomials in the $\overline{w}^{i}, i_{\overline{w}_{j}}$, 
observe that up to obvious permutations, the only monomial whose 
supertrace on $\Lambda\left(\overline{T^{*}_{x}X}\right)$ is nonzero 
is $\prod_{i=1}^{n}\overline{w}^{i}i_{\overline{w}_{i}}$, and 
moreover
\begin{equation}\label{eq:glig1a1}
\Trs^{\Lambda\left(\overline{T^{*}_{x}X}\right)}\left[\prod_{i=1}^{n}\overline{w}^{i}i_{\overline{w}_{i}}\right]=\left(-1\right)^{n}.
\end{equation}
If $I$ is a strictly ordered multi-index with values in $1,\ldots,n$, 
put 
$\overline{w}^{I}=\prod_{i\in 
I}^{}\overline{w}^{i},i_{\overline{w}_{I}}=\prod_{i\in I}^{}i_{\overline{w}_{i}}$. Any $T\in\End\left(\Lambda\left(\overline{T^{*}_{x}X}\right)\right)$ can be written uniquely  as a linear combination 
of operators of the form $\overline{w}^{I}i_{\overline{w}_{J}}$. Put
\begin{equation}\label{eq:glig1}
T_{t}=e^{-i\omega^{X}_{x}/t}Te^{i\omega^{X}_{x}/t}.
\end{equation}
Conjugation by the form $e^{-i\omega_{x}^{X}/t}$  leaves unchanged the operators
$\overline{w}^{i}\we$ and replaces the operators 
$i_{t\overline{w}_{i}}$ by $i_{t\overline{w}_{i}}-w^{i}\we$. 
Let $T_{t}^{\max}\in \C$ be the coefficient of 
$\left(-i\right)^{n}\prod_{1}^{n}\overline{w}^{i}w^{i}$ in the obvious expansion of 
$T_{t}$.  By (\ref{eq:glig1a1}), (\ref{eq:glig1}), we get
\begin{equation}\label{eq:glig2}
\Trs^{\Lambda\left(\overline{T^{*}_{x}X}\right)}\left[T\right]=\frac{t^{n}}{i^{n}}T_{t}^{\max}.
\end{equation}
If $T$ is instead an element of 
$\End\left(\Lambda\left(\overline{T_{x}^{*}X}\right)\right)\ho 
\End\left(D_{s,x}\right)$, we still define $T_{t}$ as in (\ref{eq:glig1}). 
Equation (\ref{eq:glig2}) is replaced by 
\begin{equation}\label{eq:glig2bi}
\Trs^{\Lambda\left(\overline{T^{*}_{x}X}\right)\ho 
D_{s,x}}\left[T\right]=\frac{t^{n}}{i^{n}}\Trs^{D_{s,x}}\left[T_{t}\right]^{\max}.
\end{equation}

When viewing $w^{i}, \overline{w}^{i}$  as differential forms on $X$ at 
$x$, then $\left(-i\right)^{n}\prod_{1}^{n}\overline{w}^{i}w^{i}$ is 
just the canonical volume $2n$-form.

In $L_{t,x}$, no matrix operator of the form $w^{i}\we 
$ appears. 
Put
\begin{equation}\label{eq:coc28}
M_{t,x}=e^{-i\omega_{x}^{X}/t}L_{t,x}e^{i\omega_{x}^{X}/t}.
\end{equation}
The above conjugation will play the role of 
Getzler rescaling \cite{Getzler86} in local index theory.

Let $R_{t,x}\left(Y,Y'\right),Y,Y'\in T_{\R,x}X$ be the smooth kernel 
for $\exp\left(-M_{t,x}\right)$ with respect to the 
volume $\frac{dY'}{\left(2\pi\right)^{n}}$.
By (\ref{eq:coc26a1z1}), (\ref{eq:glig2bi}), we get
\begin{equation}\label{eq:glig2bi1}
\Trs\left[P_{t}\left(x,x\right)\right]=\frac{1}{i^{n}}\Trs^{D_{s,x}}\left[R_{t,x}\left(0,0\right)\right]^{\max}.
\end{equation}

To compute the asymptotics of (\ref{eq:glig2bi1}) as $t\to 0$, we   use the second 
equation in (\ref{eq:dep15}) for $A^{p_{*} \mathscr E_{0},2}_{t}$ 
while  implementing  the above trivializations and conjugations.  
We will not write the conjugations explicitly.

We view $\overline{R}^{TX}_{x}$ as a section of 
$\Lambda^{2}\left(T^{*}_{\R,x}X\right) \otimes 
\End\left(T_{x}X\right)$. Let $\overline{\Gamma}^{TX}$ be the 
connection form for $\overline{\n}^{TX}$ in the above parallel 
transport trivialization. It is  well-known  that for $Y\in 
T_{\R,x}X$,
\begin{equation}\label{eq:coq1r1}
\overline{\Gamma}^{TX}_{Y}=\frac{1}{2}\overline{R}_{x}^{TX}\left(Y,\cdot\right)+\mathcal{O}\left(\left\vert  Y\right\vert^{2}\right).
\end{equation}
By (\ref{eq:coq1r1}), we deduce that if 
$\overline{\Gamma}^{\Lambda\left(\overline{\TsX}\right)}$ is the 
corresponding connection form for 
$\overline{\n}^{\Lambda\left(\overline{\TsX}\right)}$, then
\begin{equation}\label{eq:coq2r1}
	\overline{\Gamma}^{\Lambda\left(\overline{\TsX}\right)}_{Y}=-\frac{1}{2}\left\langle  \overline{R}_{x}^{TX}\left(Y,\cdot\right)\overline{w}_{i},w_{j}\right\rangle\overline{w}^{i}i_{\overline{w}^{j}}+\mathcal{O}\left(\left\vert  Y\right\vert^{2}\right),
\end{equation}
with $\mathcal{O}\left(\left\vert  Y\right\vert^{2}\right)$ still 
being linear combinations of the 
$\overline{w}^{i}i_{\overline{w}_{j}}$.

Let 
$\Gamma^{D}$ be the connection form in the considered trivialization 
of $D$. Then
\begin{multline}\label{eq:coc29a1}
-F_{\sqrt{t}}\frac{1}{2}t \left( 
\overline{\n}_{e_{i}}^{\Lambda\left(\overline{\TsX}\right) \ho 
D}-\frac{1}{2}\Tr^{TX}\left[A\left(e_{i}\right)\right]+{}^{\mathrm{c}_{t}}i_{e_{i}}C \right) ^{2}F_{1/\sqrt{t}}\\
=
-\frac{1}{2}\Biggl(\n_{e_{i}\left(\sqrt{t}Y\right)}+\sqrt{t}\left(\overline{\Gamma}^{\Lambda\left(\overline{\TsX}\right)}+\Gamma^{D} \right) _{\sqrt{t}Y}\left(e_{i}(\sqrt{t}Y)\right) \\
-\frac{\sqrt{t}}{2}\Tr^{TX}\left[A\left(e_{i}\right)\right]_{_{\sqrt{t}Y}}+\sqrt{t}{}^{c_{t}}i_{e_{i}}C_{\sqrt{t}Y}\Biggr)^{2}.
\end{multline}
We conjugate (\ref{eq:coc29a1}) by $e^{-i\omega_{x}^{X}/t}$. Using 
(\ref{eq:coq2r1}), we find that as $t\to 0$, 
\begin{multline}\label{eq:coc29a3}
\n_{e_{i}\left(\sqrt{t}Y\right)}+\sqrt{t}\left(\overline{\Gamma}^{\Lambda\left(\overline{\TsX}\right)}+\Gamma^{D} \right) _{\sqrt{t}Y}\left(e_{i}(\sqrt{t}Y)\right)\\
\to
\n_{e_{i}}+\frac{1}{2}\left\langle  
\overline{R}^{TX}_{x}\left(Y,e_{i}\right)\overline{w}_{i},w_{j}\right\rangle
\overline{w}^{i}w^{j}.
\end{multline}

Note that  ${}^{c_{t}}B=B$, and ${}^{c_{t}}B^{*}$ is obtained from 
$B^{*}$ by replacing the exterior variables $w^{i}$ by 
$-i_{t\overline{w}_{i}}$. By the considerations that follow 
(\ref{eq:glig1a1}), when conjugating $-i_{t\overline{w}_{i}}$ by 
$e^{-i\omega^{X}_{x}/t}$, we 
obtain $w^{i}-i_{t\overline{w}_{i}}$, so that as $t\to 0$,  after conjugation,
\begin{equation}\label{eq:coc29a4}
{}^{c_{t}}i_{e_{i}}C_{\sqrt{t}Y}\to i_{e_{i}} C_{x}.
\end{equation}
By (\ref{eq:coc29a4}), 
we deduce that
\begin{equation}\label{eq:coc29a4r1}
\sqrt{t}{}^{c_{t}}i_{e_{i}}C_{\sqrt{t}Y}\to 0.
\end{equation}

By (\ref{eq:coc29a3}), (\ref{eq:coc29a4r1}), we find that after 
conjugation, as $t\to 0$,
\begin{multline}\label{eq:coc29a5}
-\frac{1}{2}t \left( \overline{\n}_{e_{i}}^{\Lambda\left(\overline{\TsX}\right)\ho 
D}-\frac{1}{2}\Tr^{TX}\left[A\left(e_{i}\right)\right]+{}^{\mathrm{c}_{t}}i_{e_{i}}C \right) ^{2}\\
\to
-\frac{1}{2}\left(\n_{e_{i}}+\frac{1}{2}\left\langle  
\overline{R}_{x}^{TX}\left(Y,e_{i}\right)\overline{w}_{i},w_{j}\right\rangle \overline{w}^{i}w^{j}\right)^{2}.
\end{multline}
We will use the fact that $\overline{\pa}^{X}\pa^{X}\omega^{X}=0$, and also 
(\ref{eq:sup7}). Recall that  $w^{i}, \overline{w}^{j}$ are considered as 
standard differential forms on $X$ at $x$. Then (\ref{eq:coc29a5}) 
can be rewritten in the form
\begin{multline}\label{eq:coc29a6}
-\frac{1}{2}t \left( 
\overline{\n}_{e_{i}}^{\Lambda\left(\overline{\TsX} \right) \ho 
D}-\frac{1}{2}\Tr^{TX}\left[A\left(e_{i}\right)\right]+{}^{\mathrm{c}_{t}}i_{e_{i}}C \right) ^{2}\\
\to
-\frac{1}{2}\left(\n_{e_{i}}+\frac{1}{2}\left\langle  
\underline{R}_{x}^{TX}Y,e_{i}\right\rangle \right)^{2}.
\end{multline}
  
Consider now the remaining terms in the right-hand side of 
(\ref{eq:dep15}), while still using the fact that 
$\overline{\pa}^{X}\pa^{X}\omega^{X}=0$.  By proceeding 
as before, we find that after conjugation,
\begin{equation}\label{eq:roa2}
{}^{\mathrm{c}_{t}}\left(A^{E_{0},2}+\frac{1}{2}\Tr\left[R^{TX}\right]\right)_{\sqrt{t}Y}\to
\left( A^{E_{0},2}+\frac{1}{2}\Tr\left[R^{TX}\right] \right) _{s,x}.
\end{equation}

To obtain the behavior as $t\to 0$ of the remaining terms in the 
right-hand side of the second equation in (\ref{eq:dep15}), we will 
use the third identity with $\beta$  replaced by $\beta/t$, 
${}^{c}$ by ${}^{c_{t}}$, and  $e_{i}$ by $\sqrt{t}e_{i}$. After 
conjugation,  for $k\ge 3$, the limit of the corresponding terms 
vanishes. Since $\beta$ is a $3$-form, 
$i_{e_{i_{1}}}i_{e_{i_{2}}}\beta$ is a $1$-form, whose square 
vanishes. Ultimately, after conjugation, the limit of all the  terms 
vanishes.

Put
\begin{equation}\label{eq:roa3}
M_{0,s,x}=-\frac{1}{2}\left(\n_{e_{i}}+\frac{1}{2}\left\langle  
\underline{R}_{x}^{TX}Y,e_{i}\right\rangle\right)^{2}+A^{E_{0},2}_{s,x}+\frac{1}{2}\Tr\left[R^{TX}_{x}\right].
\end{equation}
By (\ref{eq:dep15}), (\ref{eq:coc29a6}), (\ref{eq:roa2}), and by the 
considerations that follow, we see that as $t\to 0$, 
\begin{equation}\label{eq:roa4}
M_{t,x}\to M_{0,s,x}, 
\end{equation}
in the sense that we have a  uniform convergence of the coefficients 
of the considered operators together with all their derivatives  over compact 
subsets of $T_{\R,x}X$.

Let $R_{0,s,x}\left(Y,Y'\right),Y,Y'\in T_{\R,x}X$ denote the smooth 
kernel associated with the operator $\exp\left(-M_{0,s,x}\right)$ with 
respect  to $\frac{dY'}{\left(2\pi\right)^{n}}$.  By proceeding as in 
\cite[Theorem 4.12]{Bismut86d}, \cite[Theorem 10.23]{BerlineGetzlerVergne} using (\ref{eq:roa4}), as $t\to 0$, we have the 
uniform convergence,
\begin{equation}\label{eq:roa4a1}
R_{t,x}\left(0,0\right)\to  R_{0,s,x}\left(0,0\right).
\end{equation}

Note that 
$\Trs^{D_{s,x}}\left[ R_{0,x}\left(0,0\right)\right]$ takes its values in 
$\Lambda\left(T^{*}_{\C}S\right)\ho\Lambda\left(T^{*}_{\C}X\right)$, 
to which the full operator $\varphi$ now applies.
Let $\left\{\varphi\Tr^{D_{s,x}}\left[
Q_{0,s,x}\left(0,0\right)\right]\right\}^{\max}\in 
\Lambda\left(T^{*}_{\C}S\right)$ be the coefficient of the volume 
form $dx$ viewed as a $2n$ form on $T_{\R,x}X$.
Using (\ref{eq:glig2bi1}), (\ref{eq:roa4a1}),   as $t\to 
0$, we have the uniform convergence over $X$, 
\begin{equation}\label{eq:roa4a1r1}
\varphi\left(2\pi\right)^{-n}\Trs\left[P_{t}\left(x,x\right)\right]\to 
\left\{\varphi\Tr^{D_{s,x}}\left[
R_{0,s,x}\left(0,0\right)\right]\right\}^{\max}.
\end{equation}

In the right-hand side of (\ref{eq:roa4}) one  recognizes the 
classical harmonic oscillator in index theory \cite{Getzler86}, 
\cite[Proposition 4.19]{BerlineGetzlerVergne}. Using Mehler's formula 
\cite[Theorem 1.5.10]{GlimmJaffe87}  and 
(\ref{eq:iv21}), we get
\begin{equation}\label{eq:roa5}
\varphi\Tr^{D_{s,x}}\left[R_{0,s,x}\left(0,0\right)\right]=
q^{*}\left[\widehat{A}\left(T_{\R}X,\underline{\n}^{T_{\R}X}\right)
e^{c_{1} \left(TX,g^{TX}\right)/2}\right]\ch\left(A^{E_{0} \prime \prime 
},g^{D}\right).
\end{equation}

Using (\ref{eq:dirbic}), (\ref{eq:roa4a1r1}), (\ref{eq:roa5}), and 
dominated convergence, we get (\ref{eq:coc25a1}). If $\omega^{X}$ is 
closed, $\overline{\n}^{T_{\R}X}=\n^{T_{\R}X}$, so that 
(\ref{eq:coc25a1}) gives (\ref{eq:coc25}). 

If $\omega^{X}$ is closed, (\ref{eq:dir1bi}) follows from 
(\ref{eq:coc25}). We will now prove (\ref{eq:dir1bi}) when we only 
assume that
$\overline{\pa}^{X}\pa^{X}\omega^{X}=0$. 
Recall that if $\alpha\in \Omega^{(=)}\left(X,\R\right)$ is closed, 
\index{a@$\left\{\alpha\right\}$}%
$\left\{\alpha\right\}$ denotes its Bott-Chern cohomology class in 
$H^{(=)}_{\mathrm{BC}}\left(X,\R\right)$. By Theorem \ref{thm:Tes} and by (\ref{eq:coc25a1}), we get
\begin{equation}\label{eq:roai1}
\cBC\left(Rp_{*} \mathscr E\right)=p_{*}\left[q^{*}\left\{\widehat{A}\left(T_{\R}X,\underline{\n}^{T_{\R}X}\right)
\right\}q^{*}e^{c_{1,\mathrm{BC}}\left(TX\right)}\cBC\left(\mathscr E\right)\right].
\end{equation}
Now we 
proceed as in  \cite[Theorem 5.2.1]{Bismut10b}. We have the 
equivalent exact sequence of holomorphic vector bundles on $X$ in 
(\ref{eq:exi1}), (\ref{eq:exi2}). Using the results of 
Bismut-Gillet-Soulé
\cite[Theorem 1.29]{BismutGilletSoule88a} on Bott-Chern classes, from 
the exact sequence in (\ref{eq:exi1}), using the notation in 
(\ref{eq:cw6}), we get
\begin{equation}\label{eq:coai2}
\left\{\widehat{A}\left(T_{\R}X,\underline{\n}^{T_{\R}X}\right)\right\}=\widehat{A}_{\mathrm{BC}}\left(TX\right).
\end{equation}
By (\ref{eq:crisp1}), we obtain
\begin{equation}\label{eq:coai3}
\Td_{\mathrm{BC}}\left(TX\right)=\widehat{A}_{\mathrm{BC}}\left(TX\right)
e^{c_{1,\mathrm{BC}}\left(TX\right)/2}.
\end{equation}

By (\ref{eq:roai1})--(\ref{eq:coai3}), we get (\ref{eq:dir1bi}).
The proof of our theorem is completed. 
\end{proof}
\begin{remark}\label{rem:locind}
	It is not possible to deform geometrically the exact 
	sequence (\ref{eq:exi2}) by scaling $\omega^{X}$. However, on the exact sequence (\ref{eq:exi1}), 
	the deformation arguments of \cite{BismutGilletSoule88a} can 
	still be used. More precisely, that if $d\in \C^{*}$,   replacing 
	$\omega^{X}$ by $d\omega^{X}$ in the construction of 
	$\underline{\n}^{F \prime \prime }$ in (\ref{eq:sup6a1})
	does not change the Bott-Chern class, which ultimately allows us 
	to make $d=0$, which is equivalent to what we did in the proof of 
	Theorem \ref{thm:clo}. Similar ideas will be used in a more sophisticate 
	context in the rest of the paper.
\end{remark}
\section{The hypoelliptic superconnections}%
\label{sec:hypo}
The purpose of this Section is to   construct an antiholomorphic superconnection over $S$ 
with fiberwise hypoelliptic curvature. This superconnection is a 
deformation of the elliptic superconnections of Section 
\ref{sec:suel}. Our constructions extend what was done in 
\cite[Chapter 6]{Bismut10b} for holomorphic vector bundles on $M$ to the case of antiholomorphic 
superconnections $A^{E \prime \prime}$. 

This Section is organized as follows. In Subsection \ref{subsec:tot}, 
we introduce the total space $\mathcal{X}$ of an extra copy 
$\widehat{TX}$ of $TX$, and also the antiholomorphic superconnection 
$\mathscr N_{X}=\pi_{*} \mathscr C^{\mathcal{X}}$ on $X$.

In Subsection \ref{subsec:hemer}, given a Hermitian metric 
$g^{\widehat{TX}}$ on $\widehat{TX}$, we obtain a corresponding 
splitting of $\mathscr N_{X}$.

In Subsection \ref{subsec:lift}, we introduce the total space 
$\underline{\pi}:\mathcal{M}\to M$ of 
$q^{*}\widehat{TX}$,  and we construct a natural 
infinite-dimensional antiholomorphic superconnection $\mathscr 
E_{M}=\mathscr E\ho_{b}\underline{\pi}_{*} \mathscr C^{M}$ 
over $M$.

In Subsection \ref{subsec:metde}, we introduce the superconnection 
$\mathcal{A}^{\prime \prime }$ that corresponds to $A^{E_{M} \prime 
\prime }$ via a natural 
splitting of $E_{M}$.

In Subsection \ref{subsec:enls}, given metrics 
$g^{\widehat{TX}},g^{D}$ on $\widehat{TX},D$, we construct a 
nonpositive Hermitian form on the diagonal bundle associated with 
$E_{M}$, and the corresponding adjoint superconnection  $\mathcal{A}^{\prime }$ of 
$\mathcal{A^{\prime \prime }}$.

In Subsection \ref{subsec:amz}, if $\underline{q}:\mathcal{M}\to X$ 
is the obvious projection, given a holomorphic section $z$ of 
$\underline{q}^{*}TX$ on $\mathcal{M}$, we construct an 
antiholomorphic superconnection $A^{E_{M} \prime \prime }_{z}$ on 
$\Lambda\left(T^{*}M\right)\ho \mathscr E_{M}$.

In Subsection \ref{subsec:adaz}, still using the above splittings, we 
obtain a superconnection $\mathcal{A}^{\prime \prime }_{Z}$ and its 
companion $\mathcal{A}'_{Z}$, which also depends on the choice of a 
Kähler form $\omega^{X}$ on $X$. 

In Subsection \ref{subsec:enlsb}, we construct a Hermitian form 
$\epsilon_{X}$ along the fibers $\mathcal{X}$.

In Subsection \ref{subsec:sadj}, we compute the proper adjoint of 
$\mathcal{A}_{Z}$.

In Subsection \ref{subsec:coc}, we give a Lichnerowicz formula for 
the curvature
$\mathcal{A}_{Z}^{2}$.

In Subsection \ref{subsec:hypcurv}, we specialize our constructions 
with $z$ is the canonical section $y$, in which case 
$\mathcal{A}_{Z}$ is self-adjoint, and its curvature is fiberwise 
hypoelliptic.

In Subsection \ref{subsec:sca}, we study the behavior of 
$\mathcal{A}_{Y}$ under the scaling of $g^{\widehat{TX}}$.

Finally, in Subsection \ref{subsec:hypel}, we give the arguments 
which show that when replacing 
$g^{\widehat{TX}}$ by $b^{4}g^{\widehat{TX}}$, if 
$g^{TX}=g^{\widehat{TX}}$, as $b\to 0$, the hypoelliptic superconnection 
$\mathcal{A}_{Y,b}$ is a deformation of $A^{p_{*} \mathscr E_{0}}$.

In this Section, we make the same assumptions as in Section 
\ref{sec:suel}, and we use the corresponding notation. 
\subsection{The total space of $\widehat{TX}$}%
\label{subsec:tot}
Let $X$ be a compact complex manifold. Let
\index{X@$\mathcal{X}$}%
 $\pi:\mathcal{X}\to X$ be the total space of $TX$, the fiber being  denoted 
 \index{TX@$\widehat{TX}$}%
 $\widehat{TX}$
 to distinguish it from the 
 usual tangent bundle $TX$, so that $TX$ and $\widehat{TX}$ are 
 canonically isomorphic.  The conjugate bundle to 
$\widehat{TX}$ will be denoted   
$\overline{\widehat{TX}}$  and its dual  $\overline{\widehat{\TsX}}$. The real bundle associated with $\widehat{TX}$ is 
denoted by
$\widehat{T_{\R}X}$.

We have the exact sequences of holomorphic vector bundles on $\mathcal{X}$,
\begin{align}\label{eq:exa1b}
&0\to \pi^{*}\widehat{TX}\to T\mathcal{X}\to \pi^{*}TX\to 0,\\
&0\to \pi^{*}\TsX\to T^{*}\mathcal{X}\to 
\pi^{*}\widehat{T^{*}X}\to 0. \nonumber 
\end{align}

 Recall that the trivial antiholomorphic superconnection 
 \index{CX@ $\mathscr C^{X}$}%
 $\mathscr C^{X}$ was defined 
 in (\ref{eq:triva1}). We define $\mathscr C^{\mathcal{X}}$ on 
 $\mathcal{X}$ in the same way. Then
\begin{equation}\label{eq:triva1x1}
\mathscr C^{\mathcal{X}}=\pi^{*}_{b} \mathscr C^{X}.
\end{equation}
Put
\index{NX@$\mathscr N_{X}$}%
\begin{equation}\label{eq:triva2}
\mathscr N_{X}=\pi_{*}\mathscr C^{\mathcal{X}}.
\end{equation}
Then $\mathscr N_{X}$ is a 
$\Lambda\left(\overline{\TsX}\right)$-module. Let 
\index{NX@$N_{X}$}%
$N_{X}$ 
be the  infinite-dimensional vector bundle associated with $\mathscr N_{X}$. The operator 
$\overline{\pa}^{\mathcal{X}}$ can be viewed as an antiholomorphic 
superconnection $A^{N_{X} \prime \prime  }$ on 
$N_{X}$, so that $\mathscr N_{X}=\left(N_{X},A^{N_{X} \prime 
\prime }\right)$.

Put 
\index{I@$\mathbf{I}$}%
\begin{equation}\label{eq:defa1a}
\mathbf{I}=C^{\infty }\left(\widehat{TX}, 
\pi^{*}\Lambda\left(\overline{\widehat{\TsX}}\right)\right).
\end{equation}
Then $\mathbf{I}$  is the diagonal vector bundle 
associated with $N_{X}$. Also 
$\mathbf{I}^{0}$ is the 
vector bundle on $X$ of smooth complex functions along the fibre 
$\widehat{TX}$, and 
\begin{equation}\label{eq:bim1}
\mathbf{I}=\Lambda\left(\overline{\widehat{T^{*}X}}\right) \otimes 
\mathbf{I}^{0}.
\end{equation}
\subsection{A Hermitian metric on $\widehat{TX}$}%
\label{subsec:hemer}
Here, we follow \cite[Section 3.12]{Bismut06d} and \cite[Section 
6.1]{Bismut10b}. The constructions given in these references will 
permit us to construct a splittings of $\mathscr 
N_{X}$.

Let 
$g^{\widehat{TX}}$ be a Hermitian metric on $\widehat{TX}$, let 
\index{nTX@$\n^{\widehat{TX}}$}%
$\n^{\widehat{TX}}$ be the holomorphic Hermitian connection on 
$\left(\widehat{TX},g^{\widehat{TX}}\right)$, and let 
$R^{\widehat{TX}}$
be its curvature. Let $\n^{\Lambda\left(\overline{\widehat{\TsX}}\right)}$ 
be the induced connection on 
$\Lambda\left(\overline{\widehat{\TsX}}\right)$.

Let 
\index{THX@$T^{H}\mathcal{X}$}%
$T^{H}\mathcal{X} \subset  T\mathcal{X}$ 
be the horizontal subbundle  associated with the 
connection $\n^{\widehat{TX}}$.  We have the identification of smooth 
vector bundles on $\mathcal{X}$,
\begin{equation}\label{eq:club-1}
T^{H}\mathcal{X}=\pi^{*}TX,
\end{equation}
and also
identification of smooth vector bundles on 
$\mathcal{X}$,
\begin{equation}\label{eq:club0}
T\mathcal{X}=T^{H}\mathcal{X} \oplus \pi^{*}\widehat{TX}.
\end{equation}
By (\ref{eq:club-1}),  (\ref{eq:club0}), we get  the  smooth
identification,
\begin{equation}\label{eq:club-2}
T\mathcal{X}=\pi^{*} \left(TX \oplus  \widehat{TX} \right).
\end{equation}
By   (\ref{eq:club-2}), we get the  
identification of smooth vector bundles,
\begin{equation}\label{eq:club1}
\Lambda\left(\overline{T^{*}\mathcal{X}}\right) 
    =\pi^{*} \left( \Lambda\left(\overline{T^{*}X}\right) \ho
    \Lambda\left(\overline{\widehat{\TsX}}\right)\right).
\end{equation}
	
	By (\ref{eq:club1}), we get the identification of smooth vector 
	bundles on $X$, 
	\begin{equation}\label{eq:clib1}
N_{X}=\Lambda\left(\overline{\TsX}\right)\ho\mathbf{I}.
\end{equation}

By (\ref{eq:club1}), we have the identity 
\begin{equation}\label{eq:club2}
\Omega^{0,\scriptsize\bullet}\left(\mathcal{X},\C\right)=\Omega^{0,\scriptsize\bullet}\left(X,\mathbf{I}^{\Ou}\right).
\end{equation}

Let 
\index{Ic@$\mathbf{I}^{\mathrm{c}}$}%
$\mathbf{I}^{\mathrm{c}}$ denote the vector bundle of elements of 
$\mathbf{I}$ which have compact support. Then 
$\mathbf{I}^{\mathrm{c}}$ inherits a $L_{2}$ Hermitian metric 
$g^{\mathbf{I}^{\mathrm{c}}}$ from the metric $g^{\widehat{TX}}$, 
such that if $r,t\in\mathbf{I}^{\mathrm{c}}$, then
\begin{equation}\label{eq:harm1}
\left\langle  
r,t\right\rangle_{g^{\mathbf{I}^{\mathrm{c}}}}=\int_{\widehat{T_{\R}X}}^{}
\left\langle  
r,t\right\rangle_{g^{\Lambda\left(\overline{\widehat{\TsX}}\right)}}\frac{dY}{\left(2\pi\right)^{n}}.
\end{equation}

If $U\in T_{\R}X$, let 
	\index{UH@$U^{H}$}%
	$U^{H}\in T_{\R}^{H}\mathcal{X}$ denote 
	the horizontal lift of $U$.

If $U\in T_{\R}X$,  if $s$ is a 
smooth section of $\mathbf{I}$ on $X$, set
\begin{equation}
    \n^{\mathbf{I}}_{U}s=\n^{\Lambda\left(\overline{\widehat{\TsX}}\right)}_{U^{H}}s.
    \label{eq:3.10az1}
\end{equation}
Then $\n^{\mathbf{I}}$ induces a unitary connection $\n^{\mathbf{I}^{\mathrm{c}}}$ on 
$\mathbf{I}^{\mathrm{c}}$ that preserves its $\Z$-grading. Let 
$\n^{\mathbf{I}^{0}}$ be the restriction of $\n^{\mathbf{I}}$ to 
$\mathbf{I}^{0}$. By 
(\ref{eq:bim1}), the connection $\n^{\mathbf{I}}$ is just the 
connection on $\mathbf{I}$ that is induced by 
$\n^{\Lambda\left(\overline{\widehat{\TsX}}\right)},\n^{\mathbf{I}^{0}}$.

Let 
\index{dV@$\overline{\pa}^{V}$}%
$\overline{\pa}^{V}$ denote the $\overline{\pa}$-operator acting 
along the fibers $\widehat{TX}$ of $\pi$. Using 
(\ref{eq:club2}), we 
see that $\n^{\mathbf{I} \prime \prime }$ and $\overline{\pa}^{V}$ 
act on $\Omega^{0,\scriptsize\bullet}\left(\mathcal{X},\C\right)$.

Let
\index{y@$\widehat{y}$}%
$\widehat{y}$ be the tautological section of 
$\pi^{*}\widehat{TX}$ on $\mathcal{X}$, let $\overline{\widehat{y}}$ 
be
the conjugate section of $\pi^{*}\overline{\widehat{TX}}$, and let 
$\widehat{Y}=\widehat{y}+\overline{\widehat{y}}$ be the tautological 
section of $\pi^{*}\widehat{T_{\R}X}$, so that 
$\left\vert  \widehat{Y}\right\vert^{2}_{g^{\widehat{T_{\R}X}}}=2\left\vert  
\widehat{y}\right\vert^{2}_{g^{\widehat{TX}}}$.

Let $w_{1},\ldots,w_{n}$ be a basis of $TX$, let $w^{1},\ldots,w^{n}$ 
be
the corresponding dual basis of $T^{*}X$. We denote with a hat the 
corresponding objects associated with $\widehat{TX}, 
\widehat{T^{*}X}$. 
 We have   results established in \cite[Theorem 
2.8]{BismutGilletSoule88b}, \cite[Propositions 6.1.2, 6.1.4, and 6.1.6]{Bismut10b}.
\begin{proposition}\label{prop:Pcurvni}
The following identities hold:
\begin{align}\label{eq:curvi}
&\n^{\mathbf{I}\prime \prime, 2}=0,\ 
    \n^{\mathbf{I} \prime,2}=0,\\
    &\n^{\mathbf{I},2}=-\n^{V}_{R^{\widehat{TX}}\widehat{Y}}-
\left\langle  
R^{\widehat{TX}}\overline{\widehat{w}}_{i},\overline{\widehat{w}}^{j}\right\rangle\overline{\widehat{w}}^{i}
i_{\overline{\widehat{w}}_{j}}. \nonumber 
\end{align}
Also
\begin{align}\label{eq:coop1}
&\left[\n^{\mathbf{I}},\overline{\pa}^{V}\right]=0,
&\left[\n^{\mathbf{I}},\overline{\pa}^{V*}\right]=0.
\end{align}
Finally, we have the identity of operators acting on 
$\Omega^{0,\Ou}\left(\mathcal{X},\C\right)$,
\begin{equation}\label{eq:coop1a1}
\overline{\partial}^{\mathcal{X}}=\n^{\mathbf{I}
\prime \prime }+\overline{\partial}^{V}.
\end{equation}
\end{proposition}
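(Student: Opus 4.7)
The strategy is to derive everything from the master decomposition $\overline{\pa}^{\mathcal{X}}=\n^{\mathbf{I}\prime\prime}+\overline{\pa}^{V}$ and from $\overline{\pa}^{\mathcal{X},2}=0$, plus a standard horizontal-lift computation for the curvature of $\n^{\mathbf{I}}$.

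First I would establish the decomposition of $\overline{\pa}^{\mathcal{X}}$. Since $\n^{\widehat{TX}}$ is a \emph{holomorphic} Hermitian connection, the horizontal subbundle $T^{H}\mathcal{X}$ is a complex subbundle, and the splittings (\ref{eq:club-1})--(\ref{eq:club1}) are splittings of complex vector bundles. In particular, the decomposition $\overline{T^{*}\mathcal{X}}=\pi^{*}\overline{T^{*}X}\oplus\pi^{*}\overline{\widehat{T^{*}X}}$ is compatible with the $(0,1)$-structure. Writing a section of $\Omega^{0,\cdot}(\mathcal{X},\C)$ as an element of $\Omega^{0,\cdot}(X,\mathbf{I})$ via (\ref{eq:club2}) and unfolding the definition of $\n^{\mathbf{I}}_{U}$ as the horizontal lift of the Chern connection on $\Lambda(\overline{\widehat{T^{*}X}})$, one checks directly that the piece of $\overline{\pa}^{\mathcal{X}}$ of horizontal antiholomorphic form degree $(1,0)$ is $\n^{\mathbf{I}\prime\prime}$ and the piece of bidegree $(0,1)$ is $\overline{\pa}^{V}$. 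This proves (\ref{eq:coop1a1}).

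Next, I would square the identity (\ref{eq:coop1a1}). Writing the bigrading on $\Lambda^{0,2}\overline{T^{*}\mathcal{X}}$ according to horizontal versus vertical antiholomorphic form degree, the three components of $\overline{\pa}^{\mathcal{X},2}=0$ are $\n^{\mathbf{I}\prime\prime,2}$ of bidegree $(2,0)$, $[\n^{\mathbf{I}\prime\prime},\overline{\pa}^{V}]$ of bidegree $(1,1)$, and $\overline{\pa}^{V,2}$ of bidegree $(0,2)$. Each must vanish separately, giving $\n^{\mathbf{I}\prime\prime,2}=0$ and the first identity in (\ref{eq:coop1}); the analogous argument with $\pa^{\mathcal{X}}$ and the splitting $T^{*}\mathcal{X}=\pi^{*}T^{*}X\oplus\pi^{*}\widehat{T^{*}X}$ gives $\n^{\mathbf{I}\prime,2}=0$. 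For the second identity in (\ref{eq:coop1}), I would take the formal $L_{2}$-adjoint of $[\n^{\mathbf{I}},\overline{\pa}^{V}]=0$ along the fibers: since $\n^{\mathbf{I}}$ is the unitary connection on $\mathbf{I}^{c}$ induced by the unitary $\n^{\widehat{TX}}$, it preserves the fiberwise $L_{2}$ inner product, and the divergence-type correction terms that appear in passing to the adjoint are a horizontal operator that commutes with the vertical $\overline{\pa}^{V*}$. Concretely, $\overline{\pa}^{V*}$ is constructed from $g^{\widehat{TX}}$ and the fiberwise holomorphic structure, both parallel under $\n^{\widehat{TX}}$; hence the construction is horizontal-parallel and $[\n^{\mathbf{I}},\overline{\pa}^{V*}]=0$.

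The main step is the curvature formula for $\n^{\mathbf{I},2}$. For $U,V\in T_{\R}X$, definition (\ref{eq:3.10az1}) gives
\begin{equation*}
\n^{\mathbf{I},2}(U,V)=[\n^{\Lambda(\overline{\widehat{\TsX}})}_{U^{H}},\n^{\Lambda(\overline{\widehat{\TsX}})}_{V^{H}}]-\n^{\Lambda(\overline{\widehat{\TsX}})}_{[U,V]^{H}}.
\end{equation*}
Rewriting this as $R^{\Lambda(\overline{\widehat{\TsX}})}(U^{H},V^{H})+\n^{\Lambda(\overline{\widehat{\TsX}})}_{[U^{H},V^{H}]-[U,V]^{H}}$ and invoking the standard identity
\begin{equation*}
[U^{H},V^{H}]-[U,V]^{H}=-\bigl(R^{\widehat{TX}}(U,V)\widehat{Y}\bigr)^{V}
\end{equation*}
for the horizontal lift with respect to the connection $\n^{\widehat{TX}}$ identifies the second summand with $-\n^{V}_{R^{\widehat{TX}}(U,V)\widehat{Y}}$. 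The first summand is just the pullback of the curvature of the induced connection on $\Lambda(\overline{\widehat{T^{*}X}})$, namely $-\langle R^{\widehat{TX}}(U,V)\overline{\widehat{w}}_{i},\overline{\widehat{w}}^{j}\rangle\overline{\widehat{w}}^{i}i_{\overline{\widehat{w}}_{j}}$. Combining gives the second line of (\ref{eq:curvi}).

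\textbf{Main obstacle.} The delicate point is keeping the horizontal-lift identity for $[U^{H},V^{H}]-[U,V]^{H}$ aligned with the sign and index conventions used in Proposition \ref{prop:Pcurvni}; the rest is bookkeeping. A secondary subtlety is making the argument that $[\n^{\mathbf{I}},\overline{\pa}^{V*}]=0$ rigorous on $\mathbf{I}^{c}$, since $\overline{\pa}^{V*}$ is a fiberwise adjoint while $\n^{\mathbf{I}}$ involves horizontal differentiation; this is where invariance of the fiberwise Hermitian structure under horizontal parallel transport is essential.
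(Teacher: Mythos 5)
The paper itself offers no proof: Proposition \ref{prop:Pcurvni} is simply cited from \cite[Theorem 2.8]{BismutGilletSoule88b} and \cite[Propositions 6.1.2, 6.1.4, 6.1.6]{Bismut10b}, so your argument is a genuinely new derivation rather than a rediscovery of the paper's own route. The horizontal-lift computation for the curvature formula is correct, and the strategy of squaring (\ref{eq:coop1a1}) to extract the first identity of (\ref{eq:coop1}) is sound. However, there are two places where the argument is incomplete.

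The main gap is in your derivation of (\ref{eq:coop1a1}). You verify that the horizontal-bidegree $(1,0)$ piece of $\overline{\pa}^{\mathcal{X}}$ is $\n^{\mathbf{I}\prime\prime}$ and the $(0,1)$ piece is $\overline{\pa}^{V}$, but you never address why there is no piece of bidegree $(2,-1)$ — that is, no $v_{2}$-term in the expansion (\ref{eq:iv36a6}) for the antiholomorphic superconnection $\mathscr N_{X}$. The vanishing of such a term is precisely the nontrivial content of (\ref{eq:coop1a1}), as the paper itself points out in Remark \ref{rem:inte}. It is not a consequence of $T^{H}\mathcal{X}$ being a complex subbundle alone (which holds for any $\C$-linear connection); it requires the vanishing of the $(2,0)$-component of $R^{\widehat{TX}}$. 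In a local holomorphic frame with connection form $\Gamma$ this is the cancellation $\pa\Gamma + \Gamma\wedge\Gamma = 0$, equivalently $\overline{\pa}\,\overline{\theta}^{i} = -\,\overline{\Gamma}^{\bar{i}}_{\ \bar{k}}\wedge\overline{\theta}^{k}$ for the covariant vertical coframe $\overline{\theta}^{i}$, showing the would-be horizontal $(0,2)$-piece drops out. This is a Chern-connection identity and must be cited or checked explicitly.

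A secondary issue is the "analogous argument with $\pa^{\mathcal{X}}$" for $\n^{\mathbf{I}\prime,2}=0$. Under the decomposition you invoke, $\pa^{\mathcal{X}}$ acts on $\Omega^{\scriptscriptstyle\bullet,0}(\mathcal{X},\C)$, which corresponds not to $\Omega^{\scriptscriptstyle\bullet,0}(X,\mathbf{I})$ but to $\Omega^{\scriptscriptstyle\bullet,0}(X,\widetilde{\mathbf{I}})$ for the different fiberwise bundle $\widetilde{\mathbf{I}} = C^{\infty}(\widehat{TX},\pi^{*}\Lambda(\widehat{T^{*}X}))$; the analogy therefore does not transport directly. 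The cleanest fix is to note that your own curvature formula exhibits $\n^{\mathbf{I},2}$ as built entirely from $R^{\widehat{TX}}$, which is of type $(1,1)$, whence both $\n^{\mathbf{I}\prime,2}$ and $\n^{\mathbf{I}\prime\prime,2}$ vanish at once. Alternatively, unitarity of $\n^{\mathbf{I}^{\mathrm{c}}}$ identifies $\n^{\mathbf{I}\prime,2}$ with the adjoint of $\n^{\mathbf{I}\prime\prime,2}$ up to sign, so one vanishing gives the other. With these repairs the argument is complete.
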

\begin{remark}\label{rem:inte}
	The decomposition (\ref{eq:coop1a1}) can be viewed as a special 
	case of equation (\ref{eq:iv36a6}) when $\mathscr E$ is replaced 
	by $\mathscr N_{X}$. In (\ref{eq:coop1a1}), $\overline{\pa}^{\mathcal{X}}$ does not 
	depend on $g^{\widehat{TX}}$, but the splitting in the right-hand 
	depends on $g^{\widehat{TX}}$.
\end{remark}

By the above, it follows that $\left(\mathbf{I}^{\mathrm{c}}, 
\n^{\mathbf{I}^{\mathrm{c}}
\prime \prime }\right)$ is a holomorphic Hermitian  vector bundle\footnote{Here, `holomorphic vector bundle' is taken in the naive 
sense.}. The holomorphic structure depends explicitly on the metric 
$g^{\widehat{TX}}$.
\subsection{The antiholomorphic superconnection $\mathscr 
E_{M}$}%
\label{subsec:lift}
We make the same assumptions as in Subsections \ref{subsec:geo},
\ref{subsec:rebl}, and \ref{subsec:tot}. Put
\begin{equation}\label{eq:lav1}
\mathcal{M}=S\times \mathcal{X}.
\end{equation}
Let $\underline{\pi}$ be the projection $\mathcal{M}\to M$, let 
$\underline{p},\underline{q}$ be the obvious maps $\mathcal{M}\to 
S,\mathcal{M}\to X$. 

As in (\ref{eq:triva1x1}), we have
\begin{equation}\label{eq:trova1}
\mathscr C^{\mathcal{M}}=\underline{\pi}^{*}_{b} \mathscr C^{M}.
\end{equation}
Put
\index{NM@$\mathscr N_{M}$}%
\begin{equation}\label{eq:stac2}
\mathscr N_{M}=\underline{\pi}_{*} \mathscr C^{\mathcal{M}}.
\end{equation}
Then
\begin{equation}\label{eq:stac1}
\mathscr N_{M}=\underline{q}^{*}_{b}\mathscr N_{X}.
\end{equation}
Let $N_{M}$ be the infinite-dimensional vector bundle on $M$ that is associated with $\mathscr 
N_{M}$.

The associated diagonal vector bundle is just $q^{*}\mathbf{I}$.

Let $\mathscr E=\left(E,A^{E \prime \prime }\right)$ be an 
antiholomorphic superconnection on $M$. Put
\index{EM@$\mathscr E^{\mathcal{M}}$}%
\begin{equation}\label{eq:pupa1}
\mathscr E^{\mathcal{M}}=\underline{\pi}^{*}_{b} \mathscr E.
\end{equation}
Then $\mathscr E^{\mathcal{M}}=\left(E^{\mathcal{M}}, 
A^{E^{\mathcal{M}} \prime \prime }\right)$ is an antiholomorphic 
superconnection on $\mathcal{M}$, and the associated diagonal bundle 
is just $\underline{\pi}^{*}D$. Also $\mathscr E^{\mathcal{M}}$ is 
a $\Lambda\left(\overline{T^{*}\mathcal{M}}\right)$-module.

Put
\index{EM@$ \mathscr E_{M}$}%
\begin{equation}\label{eq:pupa2}
\mathscr E_{M}=\underline{\pi}_{*} \mathscr E^{\mathcal{M}}.
\end{equation}
Recall that the tensor product 
\index{ob@$\ho_{b}$}%
$\ho_{b}$ was defined in 
(\ref{eq:redten1}). Then
\begin{equation}\label{eq:pupa2b1}
\mathscr E_{M}=\mathscr E\ho_{b} \mathscr N_{M}.
\end{equation}
Observe that 
\index{EM@$E_{M}$}%
$\mathscr E_{M}=\left(E_{M},A^{E_{M} \prime \prime }\right)$ is 
an 
antiholomorphic superconnection on $M$, with an infinite-dimensional 
$E_{M}$, and the associated diagonal bundle is $D\ho q^{*}\mathbf{I}$. 
\subsection{A metric description of $A^{E_{M} \prime \prime 
}$}%
\label{subsec:metde}
We fix a splitting of $E$ as in (\ref{eq:iv4a-1}), (\ref{eq:iv4}), so 
that $E \simeq E_{0}$. 

We proceed as in Subsections 
\ref{subsec:hemer} and \ref{subsec:lift}. We fix a metric $g^{\widehat{TX}}$ on the vector bundle  
$\widehat{TX}$ over $X$.   This induces a 
corresponding splitting of $\mathscr N_{X}$, and also of 
$\mathscr N_{M}$. 

The corresponding vector bundles $E_{0}, N_{M,0}$ are given by 
\begin{align}\label{eq:cloc1}
&E_{0}=\Lambda\left(\overline{T^{*}M}\right)\ho D,
&N_{M,0}=\Lambda\left(\overline{T^{*}M}\right)\ho q^{*}\mathbf{I}.
\end{align}

The above  two splittings induce a corresponding splitting of 
$\mathscr E_{M}$, so that
\begin{equation}\label{eq:spla1}
E_{M,0} =\Lambda\left(\overline{T^{*}M}\right)\ho D\ho q^{*}\mathbf{I}.
\end{equation}
Equation (\ref{eq:spla1}) can also be written in the form
\begin{equation}\label{eq:spla2}
E_{M,0}=E_{0}\ho q^{*}\mathbf{I}.
\end{equation}

\begin{definition}\label{def:app}
	Let 
	\index{A@$\mathcal{A}''$}%
	$\mathcal{A}''$ be the antiholomorphic 
	superconnection on 
	$E_{M,0}$ that corresponds to $A^{E_{M}\prime \prime 
	}$.
\end{definition}

We write $A^{E_{0} \prime \prime }$ as in (\ref{eq:baba2}). In 
particular, $D$ is equipped with an antiholomorphic connection $\n^{D 
\prime \prime }$. 

As we saw in Subsection \ref{subsec:hemer}, $\mathbf{I}$ is equipped 
with a holomorphic structure $\n^{\mathbf{I} \prime \prime}$.
\begin{definition}\label{def:nDI}
	 Let
\index{nDI@$\n^{D\ho q^{*}\mathbf{I} \prime \prime }$}%
$\n^{D\ho q^{*}\mathbf{I} \prime \prime }$ be the antiholomorphic 
connection on $D\ho q^{*}\mathbf{I}$ associated with $\n^{D \prime 
\prime },\n^{\mathbf{I} \prime \prime}$.
\end{definition}
\begin{proposition}\label{prop:pideA}
	The following identity holds:
	\begin{equation}\label{eq:ama1}
\mathcal{A}''=\n^{D\ho q^{*}\mathbf{I} \prime \prime }+B+\overline{\pa}^{V}.
\end{equation}
\end{proposition}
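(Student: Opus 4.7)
The plan is to exploit the tensor product decomposition $\mathscr E_M=\mathscr E\ho_b \mathscr N_M$ from equation (\ref{eq:pupa2b1}) together with the metric-induced splittings of both factors. Specifically, the splitting of $E$ from (\ref{eq:iv4a-1}) identifies $E\simeq E_0$ and writes $A^{E_0\prime\prime}=\n^{D\prime\prime}+B$ by (\ref{eq:baba2}), while the metric $g^{\widehat{TX}}$ on $\widehat{TX}$ induces (via the horizontal lift from Subsection \ref{subsec:hemer}) a splitting of $\mathscr N_X$, and hence of $\mathscr N_M=\underline{q}^*_b\mathscr N_X$, for which Proposition \ref{prop:Pcurvni} (specifically equation (\ref{eq:coop1a1})) reads
\begin{equation*}
A^{N_{M,0}\prime\prime}=\n^{q^{*}\mathbf{I}\prime\prime}+\overline{\pa}^{V}.
\end{equation*}
These two splittings together produce the splitting (\ref{eq:spla2}) of $E_M$, and our task is to check that under this splitting the antiholomorphic superconnection decomposes as the sum of the individual pieces.

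First I would verify that the tensor product splitting is the correct one. Given smooth splittings $E\simeq E_0$ and $N_M\simeq N_{M,0}$ in the sense of (\ref{eq:iv4a-1})--(\ref{eq:iv4}), there is an induced smooth splitting of $E\ho_b N_M$ whose diagonal bundle is $D\ho q^{*}\mathbf{I}$. The point here is that the tensor product $\ho_b$ is taken over $\Lambda(\overline{T^{*}M})$, so that
\begin{equation*}
\left(E_0\ho_b N_{M,0}\right)_0=\Lambda(\overline{T^{*}M})\ho D\ho q^{*}\mathbf{I}=E_{M,0},
\end{equation*}
which matches (\ref{eq:spla1}).

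Next I would apply the general formula for the antiholomorphic superconnection on a tensor product in the spirit of Subsection \ref{subsec:tenspro}: on smooth sections of $E_0\ho_b N_{M,0}$, the operator acts by Leibniz rule as
\begin{equation*}
A^{E_{M,0}\prime\prime}(s\ho t)=(A^{E_0\prime\prime}s)\ho t+(-1)^{\deg s}s\ho(A^{N_{M,0}\prime\prime}t).
\end{equation*}
Substituting $A^{E_0\prime\prime}=\n^{D\prime\prime}+B$ and $A^{N_{M,0}\prime\prime}=\n^{q^{*}\mathbf{I}\prime\prime}+\overline{\pa}^{V}$, the two connection pieces combine into the tensor product connection $\n^{D\ho q^{*}\mathbf{I}\prime\prime}$, while $B$ and $\overline{\pa}^{V}$ act on their respective factors, yielding (\ref{eq:ama1}).

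The main obstacle is the consistency of splittings: one needs to check that the splitting $\mathscr E_M\simeq \mathscr E\ho_b\mathscr N_M$ coming from (\ref{eq:pupa2b1}) is indeed compatible with viewing $\mathscr E_M=\underline{\pi}_*\underline{\pi}^*_b\mathscr E$ and pushing forward the natural splitting of $\underline{\pi}^*_b\mathscr E$ induced by the splittings of $\mathscr E$ and of $\mathscr C^{\mathcal{M}}$. This is essentially the statement that pullback and pushforward along the vector bundle projection $\underline{\pi}$ commute with the chosen splittings of $\Lambda(\overline{T^{*}\mathcal{M}})$ given by (\ref{eq:club1}), which in turn reduces to the content of Proposition \ref{prop:Pcurvni}. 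Once this compatibility is in place, the identity (\ref{eq:ama1}) follows by direct substitution; no new analytic input is required.
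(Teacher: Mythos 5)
Your proof is correct and follows essentially the same route as the paper, which simply cites equations (\ref{eq:baba2}) and (\ref{eq:coop1a1}) as the two ingredients. You have written out the intermediate tensor-product Leibniz argument more explicitly, but the underlying mechanism — decompose $\mathscr E_M=\mathscr E\ho_b\mathscr N_M$, apply the metric splittings to each factor, combine the connection pieces into $\n^{D\ho q^{*}\mathbf{I}\prime\prime}$ and keep $B$ and $\overline{\pa}^{V}$ from the respective factors — is exactly the one the paper intends.
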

\begin{proof}
	Our proposition follows from equations (\ref{eq:baba2}), 
	(\ref{eq:coop1a1}).
\end{proof}
\subsection{The adjoint of $\mathcal{A}''$}%
\label{subsec:enls}
Let $g^{D}$ be a Hermitian metric on $D$. Recall that 
$g^{\widehat{TX}}$ induces a Hermitian metric on 
$\mathbf{I}^{\mathrm{c}}$. Therefore $D\ho 
q^{*}\mathbf{I}^{\mathrm{c}}$ is equipped with a Hermitian metric $g^{D\ho 
q^{*}\mathbf{I}^{\mathrm{c}}}$.

We are now in a situation formally similar to what was done in a 
finite-dimensional context in Section \ref{sec:ascngm}, and  in an 
infinite-dimensional context in  \cite{BismutGilletSoule88b}, in 
\cite[Chapter 4]{Bismut10b}, and in Section \ref{sec:suel}, with 
$M,S$ replaced by $\mathcal{M},M$, and  $p$ replaced by $\underline{\pi}$. We 
briefly explain the construction of the adjoint of $\mathcal{A}''$.

Let $s,s'\in 
\Omega\left(M,D\ho q^{*}\mathbf{I}^{\mathrm{c}}\right)$. We can write $s,s'$ 
in the form
\begin{align}\label{eq:iv14a1bi}
&s=\sum_{}^{}\alpha_{i}r_{i},
&s'=\sum_{}^{}\beta_{j}t_{j},
\end{align}
with $\alpha_{i},\beta_{j}\in \Omega\left(M,\C\right)$ and 
$r_{i},t_{j}\in C^{\infty}\left(M,D\ho 
q^{*}\mathbf{I}^{\mathrm{c}}\right)$. As in (\ref{eq:iv14a2}),
(\ref{eq:iv14a3}), put
\begin{equation}\label{eq:bim2}
\theta_{g^{D\ho 
q^{*}\mathbf{I}^{\mathrm{c}}}}\left(s,s'\right)=\frac{i^{m}}{\left(2\pi\right)^{m}}\sum_{}^{}\int_{M}^{} 
\widetilde \alpha_{i}\we\overline{\beta}_{j}\left\langle  
r_{i},t_{j}\right\rangle_{g^{D\ho q^{*}\mathbf{I}^{\mathrm{c}}}}.
\end{equation}
\begin{definition}\label{def:adscn}
Let 
\index{Ap@$\mathcal{A}'$}%
$\mathcal{A}'$ be the formal adjoint of $\mathcal{A}''$ with 
respect to $\theta_{g^{D\ho q^{*}\mathbf{I}^{\mathrm{c}}}}$.	
\end{definition}

Then $\mathcal{A}'$ is exactly the adjoint of the antiholomorphic superconnection 
$\mathcal{A}^{\prime \prime }$ in the sense of Definition 
\ref{def:adjo} with respect to the pure metric 
$g^{D\ho q^{*}\mathbf{I}^{\mathrm{c}}}$.

Recall that the holomorphic connection $\n^{D \prime }$ on $D$ was 
defined in (\ref{eq:coa1}). Since $g^{D}$ is a pure metric in 
$\mathscr M^{D}$, $\n^{D \prime}$ is just a standard holomorphic 
connection. 
\begin{definition}\label{def:nhol}
	Let 
	\index{nDi@ $\n^{D\ho q^{*}\mathbf{I} \prime}$}%
	$\n^{D\ho q^{*}\mathbf{I} \prime}$ be the holomorphic 
	connection on $D\ho q^{*}\mathbf{I}$ induced by $\n^{D \prime 
	},\n^{\mathbf{I} \prime }$.
\end{definition}

We use the notation in (\ref{eq:baba4}). Let $\overline{\pa}^{V*}$ 
denote the formal adjoint of $\overline{\pa}^{V}$ with respect to the 
metric $g^{\mathbf{I}^{\mathrm{c}}}$.
\begin{proposition}\label{prop:pap}
	The following identity holds:
	\begin{equation}\label{eq:pap1}
\mathcal{A}'=\n^{D\ho q^{*}\mathbf{I} \prime }+B^{*}+\overline{\pa}^{V*}.
\end{equation}
\end{proposition}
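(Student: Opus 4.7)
The plan is to verify equation (\ref{eq:pap1}) by computing the formal adjoint of each of the three summands of $\mathcal{A}''$ appearing in Proposition \ref{prop:pideA} separately. Since the text immediately after Definition \ref{def:adscn} already observes that $\mathcal{A}'$ is just the adjoint of $\mathcal{A}''$ in the sense of Definition \ref{def:adjo} with respect to the pure metric $g^{D\ho q^{*}\mathbf{I}^{\mathrm{c}}}$, the entire matter reduces to unpacking the formalism of Subsection \ref{subsec:adjscn} in the present infinite-dimensional context, with $X,D$ replaced by $M, D\ho q^{*}\mathbf{I}^{\mathrm{c}}$.

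First I would observe that the metric $g^{D\ho q^{*}\mathbf{I}^{\mathrm{c}}}$ is the tensor product of the pure metric $g^{D}$ on $D$ and the fiberwise $L_{2}$ metric $g^{\mathbf{I}^{\mathrm{c}}}$ on $q^{*}\mathbf{I}^{\mathrm{c}}$ defined by (\ref{eq:harm1}). Therefore $\theta_{g^{D\ho q^{*}\mathbf{I}^{\mathrm{c}}}}$ factors through these two metric structures in the exact pattern prescribed by (\ref{eq:iv14a2})--(\ref{eq:iv14a3}). For the Chern-type piece, the definition of $\n^{D\prime}$ in Subsection \ref{subsec:adjscn} makes it the adjoint of $\n^{D\prime\prime}$ with respect to $g^{D}$, while by Proposition \ref{prop:Pcurvni} together with the discussion at the end of Subsection \ref{subsec:hemer}, $(\mathbf{I}^{\mathrm{c}},\n^{\mathbf{I}^{\mathrm{c}}\prime\prime})$ is a formal holomorphic Hermitian bundle whose Chern-type holomorphic partial is $\n^{\mathbf{I}\prime}$. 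Combining these via the tensor product, the $\theta_{g^{D\ho q^{*}\mathbf{I}^{\mathrm{c}}}}$-adjoint of $\n^{D\ho q^{*}\mathbf{I}\prime\prime}$ is precisely $\n^{D\ho q^{*}\mathbf{I}\prime}$ as given in Definition \ref{def:nhol}.

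Second, for the endomorphism piece $B$, which is a smooth section of $\Lambda(\overline{T^{*}M})\ho \End(D)$ acting on $E_{M,0}$ via $B\ho 1$ on the $q^{*}\mathbf{I}^{\mathrm{c}}$-factor, I would invoke the formalism of Subsection \ref{subsec:dua} and the observation following (\ref{eq:iv14a2}) that the adjoint of such an endomorphism with respect to $\theta$ coincides with the algebraic adjoint $*$. This yields $B^{*}$ tensored with the identity, where $B^{*}$ is defined in (\ref{eq:baba3}). Third, for $\overline{\pa}^{V}$: this operator acts only along the fibers of $\pi:\mathcal{X}\to X$ and is multiplication-compatible with $\Omega(M,\C)\ho D$ factored out. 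Using Fubini to split the integral in (\ref{eq:bim2}) into a fiberwise integral over $\widehat{T_{\R}X}$ and an integral over $M$, and then integrating by parts only in the fiber variables, the adjoint reduces to the fiberwise $L_{2}$-adjoint of $\overline{\pa}^{V}$ against $g^{\mathbf{I}^{\mathrm{c}}}$, which is by definition $\overline{\pa}^{V*}$. Summing the three adjoints gives (\ref{eq:pap1}).

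The main technical point to watch is that all three operators must be shown to have well-defined formal adjoints on the same dense domain inside $\Omega(M, D\ho q^{*}\mathbf{I}^{\mathrm{c}})$, so that the decomposition (\ref{eq:ama1}) survives adjoint-taking term by term. The compact-support hypothesis in the definition of $g^{\mathbf{I}^{\mathrm{c}}}$ is exactly what is needed to legitimize the fiberwise integrations by parts underlying the $\overline{\pa}^{V*}$-piece, while the fact that $B$ and $\n^{D\ho q^{*}\mathbf{I}\prime\prime}$ act fiberwise trivially (or as first-order horizontal differential operators) means that no additional boundary contributions arise from the $\widehat{TX}$-directions for those terms. I do not expect any genuine obstacle: this is a formal adjoint computation whose content is entirely packaged in the definitions laid out in Subsections \ref{subsec:dua}, \ref{subsec:adjscn}, and \ref{subsec:hemer}.
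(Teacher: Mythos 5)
Your proof is correct and follows the same route as the paper: the paper's proof is simply the one-line observation ``This is a consequence of Proposition \ref{prop:pideA},'' and you are filling in exactly that step by taking the $\theta_{g^{D\ho q^{*}\mathbf{I}^{\mathrm{c}}}}$-adjoint term by term of the decomposition $\mathcal{A}''=\n^{D\ho q^{*}\mathbf{I}\prime\prime}+B+\overline{\pa}^{V}$, identifying the three adjoints as $\n^{D\ho q^{*}\mathbf{I}\prime}$, $B^{*}$, and $\overline{\pa}^{V*}$ using the unitarity of $\n^{\mathbf{I}^{\mathrm{c}}}$, the algebraic adjoint formalism of Subsection~\ref{subsec:dua}, and the fiberwise $L_{2}$ structure of $g^{\mathbf{I}^{\mathrm{c}}}$ respectively.
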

 \begin{proof}
 	This is a consequence of Proposition \ref{prop:pideA}.
 \end{proof}
 
 Then $\mathcal{A}^{\prime \prime },\mathcal{A}'$ act as differential operators on 
$\Omega\left(M,D\ho q^{*}\mathbf{I} \right)$, and 
\begin{align}\label{eq:nota3}
&\mathcal{A}^{\prime \prime 2}=0,&\mathcal{A}^{\prime 2}=0.
\end{align}
\subsection{The antiholomorphic superconnection $A^{E_{M} \prime \prime }_{z}$}%
 \label{subsec:amz}
Recall that $A^{E^{\mathcal{M}} \prime \prime }, A^{E_{M} \prime 
\prime }$ are the same operator, that  acts  on $C^{\infty 
}\left(\mathcal{M},E^{\mathcal{M}}\right)=C^{\infty }\left(M, 
E_{M}\right)$. We  extend these operators to operators acting on 
$C^{\infty 
}\left(\mathcal{M},\underline{\pi}^{*}\Lambda\left(T^{*}M\right)\ho 
E^{\mathcal{M}}\right)=C^{\infty 
}\left(M,\Lambda\left(T^{*}M\right)\ho E_{M}\right)$, that still 
verify Leibniz rule with respect to multiplication by smooth forms 
in $\underline{\pi}^{*}\Lambda\left(T^{*}M\right), 
\Lambda\left(T^{*}M\right)$.

Let $z$ be a holomorphic section of $\underline{q}^{*}TX$ on 
$\mathcal{M}$. 
Since $q^{*}TX \subset TM$, the contraction operator $i_{z}$ acts on the above 
vector spaces.
\begin{definition}\label{def:Az}
	Put
	\index{AEMz@$A^{E^{\mathcal{M}}\prime \prime}_{z}$}%
	\index{AEMz@$A^{E_{M} \prime \prime}_{z}$}%
	\begin{align}
&A^{E^{\mathcal{M}}\prime \prime }_{z}=A^{E^{\mathcal{M}}\prime \prime }+i_{z},
&A^{E_{M} \prime \prime }_{z}=A^{E_{M} \prime \prime }+i_{z}.
\end{align}
\end{definition}

Both operators have total degree $+1$.   Since $z$ is holomorphic, we get
\begin{align}\label{eq:pap2}
&A^{E^{\mathcal{M}} \prime \prime,2}_{z}=0,
&A^{E_{M} \prime \prime,2}_{z}=0.
\end{align}
Also $A^{E^{\mathcal{M}} \prime \prime}_{z}$ is an antiholomorphic 
superconnection on $\underline{\pi}^{*}\Lambda\left(T^{*}M\right)\ho 
\mathscr E^{\mathcal{M}}$, and $A^{E_{M} \prime \prime}_{z}$ is an 
antiholomorphic superconnection on $\Lambda\left(T^{*}M\right)\ho 
\mathscr E_{M}$. These two operators are in fact the same operator. 
In the sequel, we will deal mostly with the second one.

\subsection{The superconnection $\mathcal{A}_{Z}$}%
\label{subsec:adaz}
We now make the same assumptions as in Subsections \ref{subsec:metde} 
and \ref{subsec:amz}.

As in Subsection \ref{subsec:adj}, let 
\index{gTX@$g^{TX}$}%
$g^{TX}$ be a Hermitian metric 
on $TX$ with Kähler form 
\index{oX@$\omega^{X}$}%
$\omega^{X}$.  The exponential  
$e^{-i\omega^{X}}$ is taken in the exterior algebra 
$\Lambda\left(T^{*}_{\C}X\right)$. 

Let $z$ denote a holomorphic section of $\underline{q}^{*}TX$ on 
$\mathcal{M}$.  Put
\index{Z@$Z$}%
\begin{equation}\label{eq:sec1}
Z=z+\overline{z}.
\end{equation}
Then $Z$ is a smooth section of $\underline{q}^{*}T_{\R}X$.
Let $\overline{z}_{*}\in \underline{q}^{*}\TsX$ correspond to 
$\overline{z}\in \underline{q}^{*}\overline{TX}$ by the metric 
$g^{TX}$. By (\ref{eq:didim2}), $\overline{z}_{*}$ can also be viewed 
as a section of $T^{*}M$.

Note that $TX \subset TM$, and so  the contraction operators  
$i_{z},i_{\overline{z}}$ act as operators of degree 
$1,-1$ on 
$\Lambda\left(T^{*}_{\C}M\right)$, and so  they  act on 
$\Omega\left(M, D\ho q^{*}\mathbf{I}\right)$. 

Now we follow \cite[Section 6.3]{Bismut10b}.
\begin{definition}\label{def:bln}
	Put
\index{AZ@$\mathcal{A}^{\prime \prime }_{Z}$}%
\index{AZ@$\mathcal{A}'_{Z}$}%
\index{AZ@$\mathcal{A}_{Z}$}%
	\begin{align}\label{eq:coro4a2}
&\mathcal{A}^{\prime \prime }_{Z}=\mathcal{A}^{\prime \prime }+i_{z},
&\mathcal{A}'_{Z}=e^{q^{*}i\omega^{X}}\left( \mathcal{A}'+i_{\overline{z}} \right)
e^{-q^{*}i\omega^{X}},\quad \mathcal{A}_{Z}=\mathcal{A}^{\prime \prime 
}_{Z}+\mathcal{A}'_{Z}.
\end{align}
\end{definition}
Since $z$ is holomorphic, 
\begin{align}\label{eq:coro8}
&\mathcal{A}_{Z}^{\prime \prime 2}=0,&\mathcal{A}^{\prime 2}_{Z}=0.
\end{align}
By (\ref{eq:coro4a2}), we get
\begin{equation}\label{eq:coro9}
\mathcal{A}'_{Z}=\mathcal{A}' 
-q^{*}\pa^{X}i\omega^{X}+i_{\overline{z}}+\overline{z}_{*}\we.
\end{equation}
Also
\begin{equation}\label{eq:corobeb1}
\mathcal{A}^{2}_{Z}=\left[\mathcal{A}''_{Z},\mathcal{A}'_{Z}\right].
\end{equation}

Let $\n^{D\ho q^{*}\mathbf{I}}$ be the connection on $D\ho q^{*}\mathbf{I}$ which 
is induced by $\n^{D},\n^{\mathbf{I}}$. Recall that 
\index{C@$C$}%
$C$ was defined 
in (\ref{eq:dep1}).  By  (\ref{eq:ama1}), (\ref{eq:pap1}),   (\ref{eq:coro4a2}), and 
(\ref{eq:coro9}), we get
\begin{align}\label{eq:coro9a1}
&\mathcal{A}^{\prime \prime }_{Z}=\n^{D\ho q^{*}\mathbf{I} \prime \prime 
}+B+\overline{\pa}^{V}+i_{z}, \nonumber \\
&\mathcal{A}'_{Z}=\n^{D\ho q^{*}\mathbf{I} \prime }+B^{*}
 -q^{*}\pa^{X}i\omega^{X}+\overline{\pa}^{V*}+i_{\overline{z}}+\overline{z}_{*}\we,\\
&\mathcal{A}_{Z}=\n^{D\ho q^{*}\mathbf{I}}+C-q^{*}\pa^{X}i\omega^{X}+\overline{\pa}^{V}+i_{z}+\overline{\pa}^{V*}+i_{\overline{z}}+\overline{z}_{*}\we. \nonumber 
\end{align}

We denote by $L$ the operator of multiplication by $\omega^{X}$ and 
by $\Lambda$ its adjoint with respect to $g^{TX}$. Let 
$w_{1}\ldots,w_{n}$ be an orthonormal basis of $\left(TX,g^{TX}\right)$. Then
\begin{align}\label{eq:ret2}
&L=-iw^{i}\we \overline{w}^{i}, &\Lambda=ii_{\overline{w}_{i}}i_{w_{i}}.
\end{align}
Conjugation by $\exp\left(i\Lambda\right)$ 
changes $w^{i},\overline{w}^{i}$ into $w^{i}-i_{\overline{w}_{i}}, 
\overline{w}^{i}+i_{w_{i}}$.

Put
\index{BZ@$\mathcal{B}^{ \prime \prime }_{Z}$}%
\index{BZ@$\mathcal{B}'_{Z}$}%
\index{BZ@$\mathcal{B}_{Z}$}%
\begin{align} \label{eq:conjugrep}
&\mathcal{B}^{ \prime \prime }_{Z}=\exp\left(i\Lambda\right)\mathcal{A}''_{Z}\exp\left(-i\Lambda\right), \nonumber \\
&\mathcal{B}'_{Z}=\exp\left(i\Lambda\right)\mathcal{A}'_{Z}\exp\left(-i\Lambda\right),\\
&\mathcal{B}_{Z}=\exp\left(i\Lambda\right)\mathcal{A}_{Z}\exp\left(-i\Lambda\right). \nonumber 
\end{align}
Then
\begin{equation}\label{eq:conjugrepx1}
\mathcal{B}_{Z}=\mathcal{B}''_{Z}+\mathcal{B}'_{Z}.
\end{equation}

Set
\index{E@$\mathcal{E}$}%
\begin{equation}
    \mathcal{E}=\exp\left(i\Lambda\right)\left(\n^{D\ho q^{*}\mathbf{I}}+C-q^{*}\pa^{X}i\omega^{X}\right)\exp\left(-i
    \Lambda\right).
    \label{eq:club45}
\end{equation}
By (\ref{eq:coro9a1}), and by the considerations that follow 
(\ref{eq:ret2}),  we get
\begin{equation}
    \mathcal{B}_{Z}=\mathcal{E}+\overline{\pa}^{V}+i_{z}+\overline{\pa}^{V*}+\overline{z}_{*}.
    \label{eq:club46}
\end{equation}

We still count the degree in $\Lambda\left(T^{*}_{\C}M\right)$ as the 
difference of the antiholomorphic and of the holomorphic degree. Then $A^{\prime \prime }_{Z},B''_{Z}$ are 
operators of total degree $1$, and $A'_{Z},B'_{Z}$ are operators of 
total degree $-1$.

Observe that  the above operators can be 
considered as standard differential operators on $\mathcal{M}$. 
 \subsection{Two Hermitian forms}%
\label{subsec:enlsb}
Here we follow \cite[Sections 6.5 and 6.6]{Bismut10b}. We will 
slightly modify the construction of the Hermitian form 
$\theta_{g^{D\ho q^{*}\mathbf{I}}}$.

Set
\index{F@$\mathbb F$}%
\begin{equation}
    \mathbb F=q^{*}\left( \Lambda\left(T^{*}_{\C}X
\right)\ho\Lambda\left(\overline{\widehat{\TsX}}\right) \right)  \ho D.
    \label{eq:club19}
\end{equation}
Then $\mathbb F$ is a vector bundle on $M$. Recall that $\Lambda\left(T^{*}_{\C}X\right)$ is $\Z$-graded by the 
difference of the antiholomorphic  and the holomorphic degrees. Also 
$\Lambda\left(\overline{\widehat{\TsX}}\right)$ and $D$ are 
$\Z$-graded.  Then $\mathbb F$ inherits a corresponding $\Z$-grading. 
Similarly $\Lambda\left(T^{*}_{\C}S\right)$ inherits a similar 
grading.

As in \cite[Section 6.5]{Bismut10b}, we will use  the 
identification
\begin{equation}\label{eq:cret1}
\Lambda\left(\widehat{T^{*}_{\C}X}\right)=\Lambda\left(\TsX\right)
\ho\Lambda\left(\overline{\widehat{\TsX}}\right).
\end{equation}
By (\ref{eq:club19}), (\ref{eq:cret1}), we get
\begin{equation}\label{eq:cret1a1}
\mathbb F=q^{*}\left(\Lambda\left(\overline{T^{*}X}\right)\ho \Lambda\left(\widehat{T^{*}_{\C}X}\right) 
\right)\ho D.
\end{equation}

Let
\index{s@$\sigma$}%
$\sigma$ be the involution of $\mathcal{X}$ given by 
$\left( x,\widehat{y}\right) \to \left(  x, -\widehat{y}\right) $. Then $\sigma$ induces an involution of 
$\mathcal{M}$. 

Let
\index{s@$\sigma^{*}$}%
$\sigma^{*}$ denote the obvious action of $\sigma$ on 
$\Lambda\left(\widehat{T^{*}_{\C}X}\right)$. Namely, if $N^{\Lambda\left(\widehat{T^{*}_{\C}X}\right)}$
is the number operator of 
$\Lambda\left(\widehat{T^{*}_{\C}X}\right) $, then $\sigma^{*}$ acts like 
$\left(-1\right)^{N^{\Lambda\left(\widehat{T^{*}_{\C}X}\right)}}$.
Also, we make $\sigma^{*}$  act trivially  on   $D$.  Then $\sigma^{*}$ 
acts on $\mathbb F$ in (\ref{eq:cret1a1}).

We fix one fiber $X$ of the projection  $p$, and the 
corresponding fiber $\mathcal{X}$ of 
$\underline{p}$. Note that
\begin{equation}
    \Omega\left(X, D\vert_{X}\ho \mathbf{I}\right)=C^{\infty }\left(\mathcal{X},\underline{\pi}^{*}\mathbb 
    F\vert_{\mathcal{X}}\right).
    \label{eq:club21}
\end{equation}

Let 
\index{dvX@$dv_{\mathcal{X}}$}%
$dv_{\mathcal{X}}$ denote the volume form on $\mathcal{X}$ which 
is associated with the metrics $g^{TX}, g^{\widehat{TX}}$. If 
$s,s'\in  C^{\infty,\mathrm{c} }\left(\mathcal{X},\underline{\pi}^{*}\mathbb 
    F\vert_{\mathcal{X}}\right)$, set
	\begin{equation}\label{eq:crip1}
\left\langle  
s,s'\right\rangle_{L_{2}}=\left(2\pi\right)^{-2n}\int_{\mathcal{X}}^{}\left\langle  s,s'\right\rangle_{g^{\mathbb F}}dv_{\mathcal{X}}.
\end{equation}
\begin{definition}\label{def:other}
	If $s,s'\in C^{\infty,\mathrm{c} }\left(\mathcal{X},\underline{\pi}^{*}\mathbb 
	F\vert_{\mathcal{X}}\right)$, put 
\index{etaX@$\eta_{X}$}%
\begin{equation}\label{eq:cret2}
\eta_{X}\left(s,s'\right)=\left\langle  
\sigma^{*}s,s'\right\rangle_{L_{2}}.
\end{equation}
\end{definition}
Then $\eta_{X}$  is a Hermitian form on $C^{\infty,\mathrm{c} }\left(\mathcal{X},\underline{\pi}^{*}\mathbb 
	F\vert_{\mathcal{X}}\right)$. Note that $\eta_{X}$ is non-degenerate, 
	it is positive on 
	$\sigma$-invariant forms, and negative on $\sigma$-anti-invariant 
	forms. Also the $C^{\infty,\mathrm{c} }\left(\mathcal{X},\underline{\pi}^{*}\mathbb 
	F^{i}\vert_{\mathcal{X}}\right)$ are mutually orthogonal with respect to 
	$\left\langle  \,\right\rangle_{L_{2}}$ and $\eta_{X}$.
	
Let 
\index{s@$\underline{\sigma}^{*}$}%
$\underline{\sigma}^{*}$ be the  restriction of 
$\sigma^{*}$ to $\Lambda\left(\overline{\widehat{T^{*}X}}\right)$. 
We make $\underline{\sigma}^{*}$ act trivially on 
$\Lambda\left(T^{*}_{\C}X\right)$ and on $D$. By (\ref{eq:club19}),  $\underline{\sigma}^{*}$ also 
acts on $\mathbb F$. The  difference with $\sigma^{*}$ is that 
$\underline{\sigma}^{*}$ acts trivially on $\Lambda\left(\TsX\right)$.
Then $\underline{\sigma}^{*}$ acts on $D\vert_{X}\ho \mathbf{I}$. Let $s,s'\in 
\Omega\left(X,D\vert_{X}\ho\mathbf{I}^{\mathrm{c}}\right)$. Then 
$s,s'$ can be written in the form
\begin{align}\label{eq:bim3}
&s=\sum_{}^{}\alpha_{i}r_{i},&s'=\sum_{}^{}\beta_{j}t_{j},
\end{align}
with $\alpha_{i},\beta_{j}\in\Omega\left(X,\C\right)$, and 
$r_{i},t_{j}\in C^{\infty 
}\left(X,D\vert_{X}\ho\mathbf{I}^{\mathrm{c}}\right)$.

If $\alpha\in \Lambda\left(T^{*}_{\C}X\right)$, we define 
$\widetilde \alpha$ as in (\ref{eq:bc13ax1}).  Also we use the same 
notation as in Subsection \ref{subsec:triex}. As we observed in that 
Subsection, multiplication by $i\omega^{X}$ is a self-adjoint 
operator with respect to the form $\theta$ in (\ref{eq:iv13}). Also the 
exponential $e^{-i\omega^{X}}$ is taken in 
$\Lambda\left(T^{*}_{\C}X\right)$.
\begin{definition}\label{def:goug}
Put
\index{eX@$\epsilon_{X}$}%
\begin{equation}\label{eq:retpek0}
\epsilon_{X}\left(s,s'\right)
=\frac{i^{n}}{\left(2\pi\right)^{n}}\int_{X}^{} 
\widetilde\alpha_{i}\we \overline{ e^{-i\omega^{X}}\beta}_{j}
\left\langle  \underline{\sigma}^{*}r,t\right\rangle_{g^{D\vert_{X}\ho 
\mathbf{I}^{\mathrm{c}}}}.
\end{equation}
\end{definition}

Equation (\ref{eq:retpek0}) can be rewritten in the form,
\begin{equation}\label{eq:retpek0r1}
\epsilon_{X}\left(s,s'\right)
=\frac{i^{n}}{\left(2\pi\right)^{n}}\int_{\mathcal{X}}^{} 
\widetilde\alpha_{i}\we \overline{ e^{-i\omega^{X}}\beta}_{j}
\left\langle  \underline{\sigma}^{*}r,t\right\rangle_{g^{D\vert_{X}\ho 
\Lambda\left(\overline{\widehat{\TsX}}\right)}}\frac{dY}{\left(2\pi\right)^{n}}.
\end{equation}
By \cite[Section 6.5 and Theorem 6.6.1]{Bismut10b}, 
$\epsilon_{X}$ 
is  a Hermitian form on $C^{\infty,\mathrm{c} 
}\left(\mathcal{X},\underline{\pi}^{*}\mathbb F\vert_{\mathcal{X}}\right)=\Omega\left(X, 
\mathbf{I}^{\mathrm{c}}\ho 
D\vert_{X}\right)$. As explained in \cite[Section 6.6]{Bismut10b},
if instead, we replace integration over $X$ by integration over $M$, 
while replacing $\frac{i^{n}}{\left(2\pi\right)^{n}}$ by 
$\frac{i^{m}}{\left(2\pi\right)^{m}}$, we obtain a  
Hermitian form
\index{eM@$\epsilon_{M}$}%
$\epsilon_{M}$ on $\Omega\left(M,D\ho 
q^{*}\mathbf{I}^{\mathrm{c}}\right)$.
\begin{remark}\label{rem:genme}
	As explained in Subsection \ref{subsec:triex},  the term 
$e^{-i\omega^{X}} $ in (\ref{eq:retpek0}) can be interpreted as a 
generalized metric on the trivial line $\C$ over $M$. Equivalently, 
$e^{-i\omega^{X}}g^{D}\vert_{X}$ can be viewed as a generalized 
metric on $D\vert_{X}$.
\end{remark}

By \cite[eq. (6.5.10) and Theorem 6.6.1]{Bismut10b}, if 
$$s,s'\in 
\Omega\left(X,D\vert_{X}\ho q^{*}\mathbf{I}^{\mathrm{c}}\right)=C^{\infty 
}\left(\mathcal{X},\underline{\pi}^{*}\mathbb 
F\vert_{\mathcal{X}}\right),$$
we get
\begin{equation}\label{eq:cret3}
\epsilon_{X}\left(s,s'\right)=\eta_{X}\left(e^{i\Lambda}s, 
e^{i\Lambda}s'\right).
\end{equation}
Then  $\epsilon_{X}$ is also a non-degenerate Hermitian form, 
which can also be proved directly.
By (\ref{eq:cret3}), or by direct inspection, the 
$C^{\infty,\mathrm{c} }\left(\mathcal{X},\underline{\pi}^{*}\mathbb 
F^{i}\vert _{\mathcal{X}}\right)$ are  mutually orthogonal with respect to 
$\epsilon_{X}$. 

We will denote with an upper $\dag$ the adjoint of an operator with 
respect to $\epsilon_{X}$ or $\eta_{X}$, the corresponding Hermitian form 
being explicitly mentioned.

We have the following result established in  \cite[Proposition 6.5.1 
and Theorem 6.6.1]{Bismut10b}.
\begin{proposition}\label{prop:Padje}
 If adjoints are taken with respect to 
$\epsilon_{X}$, if $e\in TX, f\in T^{*}X$, then
\begin{align}\label{eq:club32x1}
    &i_{e}^{\dag}=-\overline{e}_{*}\we -i_{\overline{e}},
    &i_{\overline{e}}^{\dag}=e_{*}\we -i_{e},\\
&\left(f\we\right)^{\dag}=-\overline{f}\we 
,&\left(\overline{f}\we\right) ^{\dag}=-f \we. \nonumber 
\end{align}
If $e\in TX,E=e+\overline{e}\in T_{\R}X$, then 
$e_{*}\we-i_{E},\overline{e}_{*}\we+i_{E}$ are skew-adjoint with respect to $\epsilon_{X}$.
\end{proposition}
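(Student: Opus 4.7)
The plan is to exploit the key identity $\epsilon_X(s,s') = \eta_X(e^{i\Lambda}s, e^{i\Lambda}s')$ of equation (\ref{eq:cret3}), which converts an $\epsilon_X$-adjoint into an $\eta_X$-adjoint via
\begin{equation*}
A^{\dag_{\epsilon_X}} = e^{-i\Lambda}\bigl(e^{i\Lambda}Ae^{-i\Lambda}\bigr)^{\dag_{\eta_X}}e^{i\Lambda}.
\end{equation*}
Since $\eta_X(s,s') = \langle \sigma^{*}s, s'\rangle_{L_{2}}$, the $\dag_{\eta_X}$-adjoint of a $\sigma^{*}$-homogeneous operator equals $\pm$ its standard $L_{2}$-adjoint, the sign being dictated by its $\sigma^{*}$-parity. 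The required conjugation rule was recalled after (\ref{eq:ret2}): $w^{i}\wedge$ becomes $w^{i}\wedge - i_{\overline{w}_i}$, $\overline{w}^{i}\wedge$ becomes $\overline{w}^{i}\wedge + i_{w_i}$, while the contractions $i_{w_i}$ and $i_{\overline{w}_i}$ commute with $\Lambda$ (since contractions anticommute amongst themselves and $\Lambda = i\,i_{\overline{w}_j}i_{w_j}$).

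Each of the four adjoint identities is then produced by the same three-step procedure. Take $i_e$ with $e \in TX$ as the representative case: because $i_e$ commutes with $\Lambda$, the outer conjugations collapse; because $i_e$ reverses $\sigma^{*}$-parity, its $\dag_{\eta_X}$-adjoint equals $-(i_e)^{*_{L_{2}}}= -\overline{e}_{*}\wedge$ with $\overline{e}_{*}\in T^{*}X$; conjugating back by $e^{-i\Lambda}$ adds precisely the contraction $-i_{\overline{e}}$, yielding $(i_e)^{\dag_{\epsilon_X}} = -\overline{e}_{*}\wedge - i_{\overline{e}}$. The formula for $i_{\overline{e}}$ is derived identically, except that $i_{\overline{e}}$ commutes with $\sigma^{*}$ so the sign pattern flips. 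For $f\wedge$ with $f\in T^{*}X$, conjugation by $e^{i\Lambda}$ first turns it into $f\wedge - i_{f_{*}}$, whose $\dag_{\eta_X}$-adjoint involves both an interior product and an exterior form; conjugating back by $e^{-i\Lambda}$ the stray contraction terms cancel, leaving $(f\wedge)^{\dag_{\epsilon_X}} = -\overline{f}\wedge$. The case $\overline{f}\wedge$ is symmetric.

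Once the four identities are in hand, the skew-adjointness assertion is a single line of algebra. Applying $\dag_{\epsilon_X}$ to $e_{*}\wedge - i_E = e_{*}\wedge - i_{e} - i_{\overline{e}}$ and using $(e_{*}\wedge)^{\dag_{\epsilon_X}} = -\overline{e}_{*}\wedge$ (which is the case $\overline{f}\wedge$ applied to $\overline{f} = e_{*} \in \overline{T^{*}X}$), the $\overline{e}_{*}\wedge$ terms cancel and what remains is exactly $-(e_{*}\wedge - i_E)$; the claim for $\overline{e}_{*}\wedge + i_E$ follows by complex conjugation.

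The main point to handle with care is the bookkeeping of the four metric dualities in play: the $*$ map of Subsection \ref{subsec:cli} and its companions $T^{*}X \to \overline{TX}$, $\overline{T^{*}X}\to TX$, $TX\to\overline{T^{*}X}$, $\overline{TX}\to T^{*}X$, all induced by $g^{TX}$ but taking values in different spaces, must be correctly matched with the antilinear maps that appear in the $L_{2}$-adjoints. The proof is most transparent when carried out in an orthonormal frame $(w_i)$ of $(TX,g^{TX})$, after which every step reduces to the elementary identities $(w^{i}\wedge)^{*_{L_{2}}} = i_{w_i}$, $(i_{\overline{w}_i})^{*_{L_{2}}} = \overline{w}^{i}\wedge$ and their conjugates. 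No analytic difficulty intervenes beyond this algebraic bookkeeping.
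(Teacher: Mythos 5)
Your overall strategy — pushing the $\epsilon_X$-adjoint through the identity $\epsilon_X(s,s')=\eta_X(e^{i\Lambda}s,e^{i\Lambda}s')$ of (\ref{eq:cret3}), reducing $\eta_X$-adjoints to $L_2$-adjoints up to a sign read off from the $\sigma^*$-grading, and then conjugating back with $e^{\pm i\Lambda}$ — is correct and essentially the argument the paper delegates to \cite[Proposition~6.5.1 and Theorem~6.6.1]{Bismut10b}; I checked the representative computation $(i_e)^{\dag}=-\overline{e}_*\wedge-i_{\overline{e}}$ and the case $f\wedge$ in an orthonormal frame, and the signs and cancellations work out exactly as you describe.

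The one place you should fix is the last line. You claim the skew-adjointness of $\overline{e}_*\wedge+i_E$ ``follows by complex conjugation'' from that of $e_*\wedge-i_E$; but complex conjugation of $e_*\wedge-i_E$ produces $\overline{e}_*\wedge-i_E$ (since $i_E$ is real), not $\overline{e}_*\wedge+i_E$, so the argument does not transfer. The simplest correct route is to observe that both operators are of the form $A-A^{\dag}$ up to an overall sign: from your four identities one reads off $i_{\overline{e}}-(i_{\overline{e}})^{\dag}=-\left(e_*\wedge-i_E\right)$ and $i_e-(i_e)^{\dag}=\overline{e}_*\wedge+i_E$, and an operator of the form $A-A^{\dag}$ is automatically $\epsilon_X$-skew-adjoint. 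Alternatively, apply $\dag$ termwise to $\overline{e}_*\wedge+i_e+i_{\overline{e}}$ using $(\overline{e}_*\wedge)^{\dag}=-e_*\wedge$ and the two contraction formulas; the wedge terms cancel just as in the case you treated and one lands on $-\left(\overline{e}_*\wedge+i_E\right)$. Either repair is one line and leaves the rest of your argument intact.
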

\subsection{Self-adjointness  of $\mathcal{A}_{Z}, 
\mathcal{B}_{Z}$}%
\label{subsec:sadj}
Note that
$p_{*}\left(\Lambda\left(T^{*}M\right)\ho E_{M}\right)$
is a $\Lambda\left(T_{\C}^{*}S\right)$-module.

We will  use the conventions of 
Subsection \ref{subsec:adj} when taking adjoints of antiholomorphic 
superconnections over the base $S$.  These adjoints will be taken with respect to the Hermitian forms 
$\epsilon_{X}$ or $\eta_{X}$, and will be denoted 
with a ${}^{\dag}$.  The adjoints of $\mathcal{A}_{Z}$ and of its 
components $\mathcal{A}^{ \prime \prime }_{Z},\mathcal{A}'_{Z}$  are 
taken with respect to $\epsilon_{X}$, and the adjoints of 
$\mathcal{B}_{Z}$ and of its components $\mathcal{B}^{\prime \prime 
}_{Z}, \mathcal{B}'_{Z}$ are taken with respect to 
$\eta_{X}$. In the case of $\mathcal{A}_{Z}$ or of its components, the 
adjoint could be taken as well with respect to the standard Hermitian 
form $\epsilon_{M}$.

Let 
\index{z@$z_{-}$}%
$z_{-}$ be the section of $\underline{q}^{*}TX$ given by
\begin{equation}\label{eq:club31-1}
z_{-}\left(x,Y\right)=-z\left(x,-Y\right).
\end{equation}
Then $z_{-}$ is still holomorphic.
Let 
\index{Z@$Z_{-}$}%
$Z_{-}$ be the corresponding section of 
$\underline{q}^{*}T_{\R}X$.
\begin{theorem}\label{thm:Tcadjoingt}
    The operators $\mathcal{A}^{ \prime \prime }_{Z},\mathcal{B}''_{Z}$
    are of total degree $1$, and the operators
    $\mathcal{A}^{\prime }_{Z},\mathcal{B}'_{Z}$ are of 
    total degree $-1$. Moreover, we have the identities
\begin{align}\label{eq:dadjoint}
    &\mathcal{A}^{ \prime \prime \dag}_{Z}=\mathcal{A}^{ \prime 
    }_{Z_{-}},
&\mathcal{B}^{ \prime \prime \dag}_{Z}=\mathcal{B}^{ \prime }_{Z_{-}},\\
&\mathcal{A}^{\dag}_{Z}=\mathcal{A}_{Z_{-}},& \mathcal{B}^{\dag}_{Z}=\mathcal{B}_{Z_{-}}. \nonumber 
\end{align}
\end{theorem}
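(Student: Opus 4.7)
\textbf{Plan for the proof of Theorem \ref{thm:Tcadjoingt}.} The statements about total degree are immediate: inspecting (\ref{eq:coro9a1}), every summand of $\mathcal{A}''_Z$ raises the total degree by $1$ and every summand of $\mathcal{A}'_Z$ lowers it by $1$. Since $e^{i\Lambda}$ preserves total degree (as $\Lambda$ has total degree $0$), the analogous assertions for $\mathcal{B}''_Z,\mathcal{B}'_Z$ follow from (\ref{eq:conjugrep}).

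Next, observe that once $\mathcal{A}^{\prime\prime\dag}_Z=\mathcal{A}'_{Z_-}$ is proved, the identity $\mathcal{A}^{\prime\dag}_Z=\mathcal{A}''_{Z_-}$ follows automatically: $\epsilon_X$ is a non-degenerate Hermitian form, so $\dag$ is an involution, and the elementary verification $(Z_-)_-=Z$ using (\ref{eq:club31-1}) gives $(\mathcal{A}'_{Z_-})^\dag=\mathcal{A}''_Z$, hence $\mathcal{A}^{\prime\dag}_Z=\mathcal{A}''_{Z_-}$. Adding both then yields $\mathcal{A}^\dag_Z=\mathcal{A}_{Z_-}$. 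Finally, the $\mathcal{B}$-version is equivalent to the $\mathcal{A}$-version: by (\ref{eq:cret3}) and (\ref{eq:conjugrep}), one gets the transport relation $L^\dag_{\epsilon_X}=e^{-i\Lambda}(e^{i\Lambda}L e^{-i\Lambda})^\dag_{\eta_X}e^{i\Lambda}$, so $\mathcal{B}^\dag_Z=e^{i\Lambda}\mathcal{A}^\dag_Z e^{-i\Lambda}$, and moreover conjugation by $e^{i\Lambda}$ commutes with the substitution $z\mapsto z_-$. Thus it suffices to establish the single identity $\mathcal{A}^{\prime\prime\dag}_Z=\mathcal{A}'_{Z_-}$ with respect to $\epsilon_X$.

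For this, I compute term-by-term along the decomposition (\ref{eq:coro9a1}) of $\mathcal{A}''_Z=\n^{D\ho q^{*}\mathbf{I}\prime\prime}+B+\overline{\pa}^{V}+i_z$. By Definition \ref{def:adjo} and Proposition \ref{prop:pap}, the $\theta_{g^{D\ho q^{*}\mathbf{I}^{\mathrm{c}}}}$-adjoint of $\n^{D\ho q^{*}\mathbf{I}\prime\prime}+B+\overline{\pa}^{V}$ is exactly $\n^{D\ho q^{*}\mathbf{I}\prime}+B^{*}+\overline{\pa}^{V*}$. Passing from $\theta_{g^{D\ho q^{*}\mathbf{I}^{\mathrm{c}}}}$ to $\epsilon_X$ amounts to replacing the pure metric by the generalized one containing the extra factor $e^{-i\omega^{X}}$; by the trivial model of Subsection \ref{subsec:triex} (compare (\ref{eq:tri2})), this modification converts the adjoint of $\n^{D\ho q^{*}\mathbf{I}\prime\prime}$ into $\n^{D\ho q^{*}\mathbf{I}\prime}-q^{*}\pa^{X}i\omega^{X}$, while leaving the adjoints of $B$ and $\overline{\pa}^{V}$ unchanged (since $B$ is $Y$-independent and $\overline{\pa}^{V}$ is vertical while $\omega^{X}$ is pulled back from the base). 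For the remaining summand $i_z$, Proposition \ref{prop:Padje} yields $i_e^\dag=-\overline{e}_{*}\we-i_{\overline{e}}$ for a constant vector $e\in TX$. When $e$ is replaced by the holomorphic section $z(x,Y)$ of $\underline{q}^{*}TX$, the presence of $\underline{\sigma}^{*}$ in (\ref{eq:retpek0}) produces the extra $Y\to -Y$ symmetry: the $\epsilon_X$-adjoint of multiplication by $z^i(x,Y)$ picks up a sign flip in $Y$. By the definition (\ref{eq:club31-1}) of $z_-$, the combined effect is exactly to replace $z$ by $z_-$ in the right-hand side, giving $i_z^\dag=-(\overline{z_-})_{*}\we-i_{\overline{z_-}}$. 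Summing all four contributions and comparing with (\ref{eq:coro9a1}) produces $\mathcal{A}'_{Z_-}$.

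The main obstacle is the careful bookkeeping in the $i_z$ step: the adjoint formulas in Proposition \ref{prop:Padje} must be combined with the action of $\underline{\sigma}^{*}$ on the $Y$-dependence of the coefficients of $z$, and the Koszul signs coming from the $\Z$-grading on $\mathbb F$ must be tracked consistently. Once the sign accounting is organized via the involution $Y\mapsto -Y$ encoded in $z_-$, the identification with $\mathcal{A}'_{Z_-}$ becomes a direct verification.
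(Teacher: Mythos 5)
Your argument is, in spirit, the one the paper intends: it rests on the same two ingredients, namely the explicit description of $\epsilon_{X}$ in (\ref{eq:retpek0}) and the adjoint formulas of Proposition \ref{prop:Padje}, plus the observation that the conjugation $e^{i\Lambda}$ transports $\epsilon_{X}$-adjoints to $\eta_{X}$-adjoints (via (\ref{eq:cret3}) and (\ref{eq:conjugrep})). Your reduction to the single identity $\mathcal{A}^{\prime\prime\dag}_{Z}=\mathcal{A}'_{Z_{-}}$, using that $\dag$ is an involution and $(Z_-)_-=Z$, is a clean way to organize what the paper leaves implicit, and the treatment of the first three summands of $\mathcal{A}''_{Z}$ (via Proposition \ref{prop:pap} and the trivial model of Subsection \ref{subsec:triex}) is correct.

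However, there is a sign error in the last step, and it is not benign: with the formula you state the sum does not reproduce $\mathcal{A}'_{Z_{-}}$. You write $i_{z}^{\dag}=-(\overline{z_{-}})_{*}\we-i_{\overline{z_{-}}}$, obtained by ``substituting $z\mapsto z_{-}$'' in Proposition \ref{prop:Padje}. That substitution conflates two different things. Proposition \ref{prop:Padje} treats a $Y$-independent $e$; for a $Y$-dependent holomorphic section $z(x,Y)$, the $\epsilon_{X}$-adjoint of multiplication by a scalar coefficient $z^{i}(x,Y)$ is multiplication by $\overline{z^{i}(x,-Y)}$ (the $\underline{\sigma}^{*}$ twist contributes only the $Y$-reflection here, not an extra sign), so
\begin{equation*}
i_{z}^{\dag}=-\overline{z(x,-Y)}_{*}\we-i_{\overline{z(x,-Y)}}.
\end{equation*}
Since by (\ref{eq:club31-1}) one has $\overline{z_{-}}(x,Y)=-\overline{z(x,-Y)}$, this is
\begin{equation*}
i_{z}^{\dag}=+\overline{z_{-}}{}_{*}\we+i_{\overline{z_{-}}},
\end{equation*}
which is consistent with (\ref{eq:retour6}) under the substitution $z\mapsto z_{-}$ and $(z_{-})_{-}=z$, and which, when added to the other three adjoints, does reproduce the terms $i_{\overline{z_{-}}}+\overline{z_{-}}{}_{*}\we$ appearing in $\mathcal{A}'_{Z_{-}}$ in (\ref{eq:coro9a1}). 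With your stated sign, the $i_{\overline{z_{-}}}$ and $\overline{z_{-}}{}_{*}\we$ terms of the computed adjoint would have the wrong sign, so the final claim ``summing all four contributions produces $\mathcal{A}'_{Z_{-}}$'' would fail. The remedy is to keep the $-$ sign from Proposition \ref{prop:Padje} and the $Y$-reflection as two separate effects, and only afterwards repackage $-\overline{z(x,-Y)}$ as $\overline{z_{-}}(x,Y)$; the extra minus in the definition of $z_{-}$ is absorbed by the minus already present in $i_{e}^{\dag}=-\overline{e}_{*}\we-i_{\overline{e}}$, rather than stacking with it.
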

\begin{proof}
	The proof is  the same as in \cite[Theorem 6.5.2]{Bismut10b}. For 
	$\mathcal{A}_{Z}$ and its components, it is an easy consequence of 
	(\ref{eq:retpek0}) and of Proposition \ref{prop:Padje}.
\end{proof}
\begin{remark}\label{rem:adj}
	When considering $\mathcal{A}_{Z}$ and its components, we may 
	take adjoints with respect to $\epsilon_{M}$, and the above 
	results would still hold. Adjoints  with respect to 
	$\epsilon_{M}$ are simply classical adjoints. Using integration 
	along the fiber $p_{*}$ to evaluate $\epsilon_{M}$ is enough to 
	explain the coincidence of the adjoints.
\end{remark}
\subsection{A formula for the  curvature of $\mathcal{A}_{Z}$}%
\label{subsec:coc}
When identifying $TX$ and $\widehat{TX}$, we denote by  
\index{gTX@$\widehat{g}^{TX}$ }%
$\widehat{g}^{TX}$  
the metric on $TX$ corresponding to $g^{\widehat{TX}}$, and by
\index{nTX@$\widehat{\n}^{TX}$}%
$\widehat{\n}^{TX}$  the connection on $TX$ that corresponds to 
$\n^{\widehat{TX}}$.  Equivalently, $\widehat{\n}^{TX}$ is the Chern 
connection on $\left(TX,\widehat{g}^{TX}\right)$. Let 
\index{t@$\widehat{\tau}$}%
$\widehat{\tau}$ be the 
torsion of $\widehat{\n}^{TX}$. Then $\widehat{\tau}$ has the same 
properties as $\tau$. \footnote{The torsion $\widehat{\tau}$ takes 
its values in $TX$ and not in $\widehat{TX}$.}

 Recall that $Z=z+\overline{z}$ is a smooth section on $\mathcal{M}$ 
of $\underline{q}^{*}T_{\R}X$. Let $\rho:\mathcal{M}\to\mathcal{X}$ 
be the obvious projection. We  identify $Z$ with its horizontal lift 
$Z^{H}$ in $\rho^{*}T^{H}_{\R}\mathcal{X} \subset T_{\R}\mathcal{M}$. 

 We denote by $\widehat{\n}^{T_{\R}X,H}Z$ the  section on $\mathcal{M}$ of
 $\underline{\pi}^{*} \left( T^{*}_{\R}M \otimes q^{*}T_{\R}X \right)  $ that is obtained by taking the covariant 
derivative of $Z$ with respect to the pull-back of  $\widehat{\n}^{T_{\R}X}$ along horizontal 
directions in $\mathcal{M}$.  Other horizontal covariant derivatives 
will be denoted in a similar way.

Let 
$\Delta^{V}_{g^{\widehat{TX}}}$ be the Laplacian along the 
fibers $\widehat{TX}$ with respect to the metric  $g^{\widehat{TX}}$.

In this Subsection, $\dag$ refers to the adjoint with respect to the form 
$\epsilon_{X}$ in Definition \ref{def:goug}. By Proposition 
\ref{prop:Padje} and by (\ref{eq:club31-1}), we get
\begin{equation}\label{eq:retour6}
 i_{z_{-}}^{\dag}=\overline{z}_{*}\we + i_{\overline{z}}.
\end{equation}

Now we  follow Subsection \ref{subsec:remdi}. For $a\in \R$, we 
introduce the connection 
\index{nTX@$\widehat{\n}^{TX,a}$}%
$\widehat{\n}^{TX,a}$ on $TX$ associated 
with 
 $\widehat{\n}^{TX}$\footnote{In \cite{Bismut10b}, the connection 
 $\n^{TX}$ on $TX$ was used instead. This accounts for minor 
 differences with respect to \cite{Bismut10b}.}. If 
 \index{nLTX@$\widehat{\n}^{\Lambda\left(T^{*}_{\R}X\right),a}$}%
$\widehat{\n}^{\Lambda\left(T^{*}_{\R}X\right),a}$ is the induced connection on 
$\Lambda\left(T^{*}_{\R}X\right)$, by (\ref{eq:cla2a1}), if 
$U\in T_{\R}X$, then
\begin{equation}\label{eq:cup7}
\widehat{\n}^{\Lambda\left(T^{*}_{\R}X\right),a}_{U}=\widehat{\n}_{U}^{\Lambda\left(T^{*}_{\R}X\right)}-ai_{\widehat{\tau}\left(U,\cdot\right)}.
\end{equation}
The connection $\widehat{\n}^{\Lambda\left(T^{*}_{\R}X\right),a}$ preserves 
the complex type of the forms. In particular, by (\ref{eq:cla4b1}), 
we get
\begin{equation}\label{eq:cupi1}
d^{X}=\widehat{\n}_{\mathrm{a}}^{\Lambda\left(T^{*}_{\R}X\right),-1/2},
\end{equation}
and (\ref{eq:cupi1}) splits into formulas for 
$\overline{\pa}^{X},\pa^{X}$. Equation (\ref{eq:cupi1}) can be 
extended to an equation for $d^{M}$,
\begin{equation}\label{eq:cupi2}
d^{M}=\widehat{\n}_{\mathrm{a}}^{\Lambda\left(T^{*}_{\R}X\right),-1/2}.
\end{equation}
In the right-hand side of (\ref{eq:cupi2}), exterior differentiation  
on $S$
is done implicitly, and does not require the choice of a connection on 
$TS$.

Recall that 
\index{F@$\mathbb F$}%
$\mathbb F$ was defined in (\ref{eq:club19}). Let 
$\widehat{\n}^{\mathbb F}$ be 
the connection on $\mathbb F$ induced by
$\widehat{\n}^{\Lambda\left(T^{*}_{\C}X\right)}, 
\n^{D},\n^{\Lambda\left(\overline{\widehat{T^{*}X}}\right)}$.

For $a\in \R$,  let 
\index{nF@$\widehat{\n}^{\mathbb F, a}$}%
$\widehat{\n}^{\mathbb F, a}$ be the connection on $\mathbb F$ 
induced by 
$\widehat{\n}^{\Lambda\left(T^{*}_{\C}X\right),a},\n^{D},\n^{\Lambda\left(\overline{\widehat{T^{*}X}}\right)}$. 
By (\ref{eq:cup7}), we get
\begin{equation}\label{eq:cup7a1}
\widehat{\n}^{\mathbb F,a}_{U}=\widehat{\n}^{\mathbb F}_{U}-ai_{\widehat{\tau}\left(U,\cdot\right)}.
\end{equation}
By  (\ref{eq:cla4}), we obtain
\begin{equation}\label{eq:cup7a1r}
\widehat{\n}^{\mathbb F,-1/2}_{\mathrm{a}}=\widehat{\n}^{\mathbb F}_{\mathrm{a}}+i_{\widehat{\tau}}.
\end{equation}
As explained after (\ref{eq:cupi2}), equation 
(\ref{eq:cup7a1r}) can be viewed as an identity of operators acting 
on $\Omega\left(M, q^{*}\Lambda\left(\overline{\widehat{\TsX}}\right)\ho D\right)$. 
Equation (\ref{eq:cup7a1r}) can be rewritten in the form
\begin{equation}\label{eq:cupi3}
\widehat{\n}^{\mathbb 
F,-1/2}_{\mathrm{a}}=\n^{q^{*}\Lambda\left(\overline{\widehat{\TsX}}\right)\ho D},
\end{equation}
where the right-hand side is simply the natural extension of a 
connection on the vector bundle $q^{*}\Lambda\left(\overline{\widehat{\TsX}}\right)\ho D$ to an operator acting on 
$\Omega\left(M,q^{*}\Lambda\left(\overline{\widehat{\TsX}}\right)\ho D\right)$.

\begin{definition}\label{def:Dnfbr}
    Let
	\index{nF@${}^{0}\widehat{\n}^{\mathbb F}$}%
${}^{0}\widehat{\n}^{\mathbb F}$ be the fibrewise connection  on $\mathbb F$ such that if 
$U\in T_{\R}X$, $U=u+\overline{u}, u\in TX$, 
\begin{equation}\label{eq:deffbr}
    {}^{0}\widehat{\n}^{\mathbb F}_{U}=
\widehat{\n}^{\mathbb F,-1}_{U}
-i_{u}q^{*}\pa^{X}i\omega^{X}.
\end{equation}
\end{definition}

Put
\index{F@$\mathbf{F}$}%
\begin{equation}\label{eq:cups1}
\mathbf{F}=q^{*}\Lambda\left(T^{*}_{\C}X\right)\ho D\ho q^{*}\mathbf{I}.
\end{equation}
By (\ref{eq:bim1}), (\ref{eq:club19}), we get
\begin{equation}\label{eq:cups2}
\mathbf{F}= \mathbb F\ho q^{*}\mathbf{I}^{0}. 
\end{equation}
Equation (\ref{eq:cups2}) can be rewritten in the form,
\begin{equation}\label{eq:cups3}
\mathbf{F}=C^{\infty }\left(\widehat{T_{\R}X}, 
\underline{\pi}^{*}\mathbb F\vert_{\widehat{T_{\R}X}}\right).
\end{equation}

 Let
\index{nF@$\widehat{\n}^{\mathbf{F}}$}%
$\widehat{\n}^{\mathbf{F}}$ be the connection on $\mathbf{F}$ 
associated with 
$\widehat{\n}^{\Lambda\left(T^{*}_{\C}X\right)},\n^{D},\n^{\mathbf{I}}$. Equivalently, $\widehat{\n}^{\mathbf{F}}$ is the connection on $\mathbf{F}$ induced by $\widehat{\n}^{\mathbb F},\n^{\mathbf{I}^{0}}$. More generally, when replacing $\mathbb F$ by $\mathbf{F}$, we obtain corresponding objects associated with $\mathbf{F}$. 

As in Subsection \ref{subsec:remdi}, 
\index{nF@$\widehat{\n}^{\mathbf{F},a}_{\mathrm{a}}$}%
$\widehat{\n}^{\mathbf{F},a}_{\mathrm{a}}$ denotes the antisymmetrization of 
$\widehat{\n}^{\mathbf{F},a}$. 
By (\ref{eq:coro9a1}), (\ref{eq:cupi3}) can be rewritten in the form
\begin{align}\label{eq:coro9a1bi}
&\mathcal{A}^{\prime \prime }_{Z}=\widehat{\n}^{\mathbf{F}, -1/2 \prime \prime 
}_{\mathrm{a}}+B+\overline{\pa}^{V}+i_{z}, \nonumber \\
&\mathcal{A}'_{Z}=\widehat{\n}^{\mathbf{F},-1/2\prime}_{\mathrm{a}}+B^{*}-q^{*}\pa^{X}i\omega^{X}+\overline{\pa}^{V*}+i_{\overline{z}}+\overline{z}_{*}\we,\\
&\mathcal{A}_{Z}=\widehat{\n}^{\mathbf{F},-1/2}_{\mathrm{a}}+C
-q^{*}\pa^{X}i\omega^{X} +\overline{\pa}^{V}+i_{z}+\overline{\pa}^{V*}+i_{\overline{z}}+\overline{z}_{*}\we. \nonumber 
\end{align}

Recall that the curvature $A^{E_{0},2}$ was defined in Subsection \ref{subsec:curva}. 
It is a section of degree $0$ of
$\Lambda\left(T^{*}_{\C}M\right)\ho\End\left(D\right)$.

Let $w_{1}, \ldots ,w_{n}$ be a 
basis of $TX$. The corresponding basis of $\widehat{TX}$ is
denoted  $\widehat{w}_{1}, \ldots ,\widehat{w}_{n}$. In the sequel, we 
assume that $\widehat{w}_{1},\ldots,\widehat{w}_{n}$  is an 
orthonormal basis of $\left(\widehat{TX},g^{\widehat{TX}}\right)$. As 
before, if $e\in T_{\R}X$, $e_{*}\in T^{*}_{\R}X$ corresponds to $e$ via 
the metric $g^{T_{\R}X}$.

We establish an extension of \cite[Theorem 6.8.1]{Bismut10b}.
\begin{theorem}\label{thm:Tcurvhyp}
The following identity holds:
\begin{multline}\label{eq:curbhypx0}
\mathcal{A}^{2}_{Z}=\left[\overline{\pa}^{V}+i_{z},\pa^{V\dag}+ 
i_{z_{-}}^{\dag}\right]
+\left[{}\widehat{\n}^{\mathbf{F},-1/2 \prime \prime}_{\mathrm{a}}+B, 
i_{z_{-}}^{\dag}\right]\\
+\left[\widehat{\n}^{\mathbf{F},-1/2 
\prime}_{\mathrm{a}}+B^{*}-q^{*}\pa^{X}i\omega^{X},i_{z}\right]
-q^{*}\overline{\pa}^{X}\pa^{X}i\omega^{X}\\
-\n^{V}_{R^{\widehat{TX}}\widehat{Y}} 
-\left\langle  
R^{\widehat{TX}}\overline{\widehat{w}}_{i},\overline{\widehat{w}}^{j}\right\rangle\overline{\widehat{w}}^{i}
i_{\overline{\widehat{w}}_{j}} 
+A^{E_{0},2}.
\end{multline}
Equation (\ref{eq:curbhypx0}) can be rewritten in the form, 
\begin{multline}\label{eq:curbhyp}
\mathcal{A}^{2}_{Z}=\left[\overline{\pa}^{V}+i_{z},\pa^{V\dag}+
i_{z_{-}} ^{\dag}\right]+{}^{0}\widehat{\n}^{\mathbf{F}}_{Z}+i_{Z}C
-q^{*}\overline{\pa}^{X}\pa^{X}i\omega^{X}\\
-\n^{V}_{R^{\widehat{TX}}\widehat{Y}}-
\left\langle  
R^{\widehat{TX}}\overline{\widehat{w}}_{i},\overline{\widehat{w}}^{j}\right\rangle\overline{\widehat{w}}^{i}
i_{\overline{\widehat{w}}_{j}}+A^{E_{0},2}
+i_{\widehat{\n}^{T_{\R}X,H}Z}+\n^{TX \prime  \prime ,H}\overline{z}_{*}.
\end{multline}

Moreover, 
\begin{multline}\label{eq:curbhypa1}
\left[\overline{\pa}^{V}+i_{z},\pa^{V\dag}+ 
i_{z_{-}} ^{\dag}\right]=\frac{1}{2}\left(-\Delta^{V}_{g^{\widehat{TX}}}
+\left\vert  
Z\right\vert^{2}_{g^{TX}} \right)\\
+\overline{\widehat{w}}^{i}\we\left(i_{\n_{\overline{\widehat{w}}_{i}}
\overline{z}}+\n_{\overline{\widehat{w}}_{i}}\overline{z}_{*} \right) 
-i_{\overline{\widehat{w}}_{i}}i_{\n_{\widehat{w}_{i}}z}.
\end{multline}
\end{theorem}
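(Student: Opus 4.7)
The plan is to expand $\mathcal{A}^{2}_{Z} = [\mathcal{A}''_{Z}, \mathcal{A}'_{Z}]$ using (\ref{eq:corobeb1}) together with the explicit expressions (\ref{eq:coro9a1}) and (\ref{eq:coro9a1bi}), splitting each factor into a piece that lives along the fibre $\widehat{TX}$ and a piece that is horizontal on $M$. Concretely, one decomposes
\begin{equation*}
\mathcal{A}''_{Z} = (\overline{\pa}^{V} + i_{z}) + (\n^{D\ho q^{*}\mathbf{I}\prime \prime} + B),
\end{equation*}
and similarly for $\mathcal{A}'_{Z}$, the vertical piece $\pa^{V\dag} + i_{z_{-}}^{\dag}$ being the $\epsilon_{X}$-adjoint of $\overline{\pa}^{V}+i_{z}$ by Theorem \ref{thm:Tcadjoingt} together with (\ref{eq:retour6}). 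The curvature then splits into four brackets whose individual contributions will match the four rows of (\ref{eq:curbhypx0}).

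First I would evaluate the purely vertical bracket $[\overline{\pa}^{V}+i_{z},\, \pa^{V\dag}+i_{z_{-}}^{\dag}]$, from which (\ref{eq:curbhypa1}) is obtained by a direct fibrewise computation. Unfolding $i_{z_{-}}^{\dag} = \overline{z}_{*}\we + i_{\overline{z}}$, the kinetic piece $[\overline{\pa}^{V},\pa^{V\dag}]$ yields $-\tfrac{1}{2}\Delta^{V}_{g^{\widehat{TX}}}$, the potential pieces $[i_{z},\overline{z}_{*}\we] + [i_{\overline{z}},z_{*}\we]$ yield $\tfrac{1}{2}|Z|^{2}_{g^{TX}}$ since $\overline{z}_{*}$ is $g^{TX}$-dual to $\overline{z}$, and the mixed pieces $[\overline{\pa}^{V},\overline{z}_{*}\we + i_{\overline{z}}] + [i_{z},\pa^{V\dag}]$ produce the terms involving the vertical derivatives $\n_{\overline{\widehat{w}}_{i}}\overline{z}$, $\n_{\overline{\widehat{w}}_{i}}\overline{z}_{*}$, and $\n_{\widehat{w}_{i}}z$, which encode the dependence of the holomorphic section $z$ on the fibre coordinate.

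Next, the purely horizontal bracket $[\n^{D\ho q^{*}\mathbf{I}\prime \prime}+B,\, \n^{D\ho q^{*}\mathbf{I}\prime}+B^{*}]$ decomposes, by Subsection \ref{subsec:curva} applied to $D$ together with Proposition \ref{prop:Pcurvni} applied to $\mathbf{I}$, into $A^{E_{0},2}$ plus the $\mathbf{I}$-curvature $-\n^{V}_{R^{\widehat{TX}}\widehat{Y}} - \langle R^{\widehat{TX}}\overline{\widehat{w}}_{i},\overline{\widehat{w}}^{j}\rangle \overline{\widehat{w}}^{i} i_{\overline{\widehat{w}}_{j}}$. The commutator $[\n^{D\ho q^{*}\mathbf{I}\prime \prime},-q^{*}\pa^{X}i\omega^{X}]$ supplies $-q^{*}\overline{\pa}^{X}\pa^{X}i\omega^{X}$; the brackets of $B,B^{*}$ with $q^{*}\pa^{X}i\omega^{X}$ vanish by graded commutativity of $\Lambda(\overline{T^{*}X})$-valued and $\Lambda^{1,1}$-valued quantities. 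Finally, the two cross brackets of type ``horizontal $\times$ vertical'' produce the two commutator terms in the second row of (\ref{eq:curbhypx0}), and assembling the four pieces gives (\ref{eq:curbhypx0}).

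To pass to the compact form (\ref{eq:curbhyp}), one expands the cross commutators by Cartan's formula: $[\widehat{\n}^{\mathbf{F},-1/2\prime \prime}_{\mathrm{a}},i_{z}]$ produces $i_{\widehat{\n}^{T_{\R}X,H}z}$, its conjugate counterpart produces $\n^{TX\prime \prime,H}\overline{z}_{*}$, and together they combine into $i_{\widehat{\n}^{T_{\R}X,H}Z}+\n^{TX\prime \prime,H}\overline{z}_{*}$; the brackets of $B$ and $B^{*}$ with the contractions $i_{z},i_{\overline{z}}$ and the exterior multiplication $\overline{z}_{*}\we$ pack into $i_{Z}C$; and the torsion-shifted horizontal piece $\widehat{\n}^{\mathbf{F},-1/2}_{Z}$ together with $-i_{z}q^{*}\pa^{X}i\omega^{X}$ reassembles as ${}^{0}\widehat{\n}^{\mathbf{F}}_{Z}$ by Definition \ref{def:Dnfbr}. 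The main obstacle will be the sign bookkeeping in this reassembly: one must verify that the $a$-torsion correction contained in $\widehat{\n}^{\mathbf{F},-1/2}_{\mathrm{a}}$ (via (\ref{eq:cup7a1})) conspires with $-q^{*}\pa^{X}i\omega^{X}$ through the identity (\ref{eq:dep9}) for $\tau$ to produce ${}^{0}\widehat{\n}^{\mathbf{F}}$ exactly, and that no residual Clifford contributions survive. The adjoint formalism of Proposition \ref{prop:Padje} is tailored to make these cancellations automatic, and once they are checked the formulas (\ref{eq:curbhypx0}), (\ref{eq:curbhyp}), and (\ref{eq:curbhypa1}) follow from the four-term decomposition above.
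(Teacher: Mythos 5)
Your overall approach matches the paper's: expand $\mathcal{A}^{2}_{Z}=[\mathcal{A}''_{Z},\mathcal{A}'_{Z}]$ using the decomposition (\ref{eq:coro9a1})/(\ref{eq:coro9a1bi}), split into vertical, horizontal and cross pieces, feed in Proposition \ref{prop:Pcurvni} for the $\mathbf{I}$-curvature and the known expression for $A^{E_{0},2}$, and use the $\epsilon_X$-adjoint identities (\ref{eq:retour6}), (\ref{eq:ovb1}) and the Cartan identity (\ref{eq:clap4ax1}) to repackage (\ref{eq:curbhypx0}) into (\ref{eq:curbhyp}). This is exactly what the paper does, only written out more explicitly, so the proof is correct in outline.

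A few of your supporting citations and bookkeeping details are off, though none of them invalidates the argument. The passage from $\widehat{\n}^{\mathbf{F},-1/2}_{\mathrm{a}}$ to $\widehat{\n}^{\mathbf{F},-1}$ and then to ${}^{0}\widehat{\n}^{\mathbf{F}}$ is governed entirely by the Cartan/Lie-derivative identities (\ref{eq:clap3})--(\ref{eq:clap4ax1}) together with Definition \ref{def:Dnfbr}, and does not invoke (\ref{eq:dep9}), which concerns the torsion $\tau$ of $\n^{TX}$ rather than the torsion $\widehat{\tau}$ of $\widehat{\n}^{TX}$; the $-q^{*}\pa^{X}i\omega^{X}$ term is absorbed directly by the $-i_{u}q^{*}\pa^{X}i\omega^{X}$ correction built into ${}^{0}\widehat{\n}^{\mathbf{F}}$, not via a torsion-versus-$\omega^X$ identity. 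The "potential pieces" of the vertical bracket come from the single commutator $[i_{z},\overline{z}_{*}\we]$; there is no $z_{*}\we$ in $\mathcal{A}'_{Z}$ and hence no second term $[i_{\overline{z}},z_{*}\we]$. The cross bracket producing $i_{\widehat{\n}^{T_{\R}X,H}Z}$ pairs $\widehat{\n}^{\mathbf{F},-1/2\prime}_{\mathrm{a}}$ with $i_{z}$ and $\widehat{\n}^{\mathbf{F},-1/2\prime\prime}_{\mathrm{a}}$ with $i_{\overline{z}}$, rather than $\widehat{\n}^{\mathbf{F},-1/2\prime\prime}_{\mathrm{a}}$ with $i_{z}$. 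And $\pa^{X}i\omega^{X}$ is a $(2,1)$-form, not $(1,1)$; the vanishing of $[B,q^{*}\pa^{X}i\omega^{X}]$ and $[B^{*},q^{*}\pa^{X}i\omega^{X}]$ is correct but comes from the Koszul sign rule for the $\Z_{2}$-graded tensor product $\Lambda(T^{*}_{\C}M)\ho\End(D)$, not from both factors landing in $\Lambda^{1,1}$.
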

\begin{proof}
	Since $\widehat{\sigma}^{*}$ commutes with $\overline{\pa}^{V}$, 
	we get
	\begin{equation}\label{eq:ovb1}
\overline{\pa}^{V*}=\overline{\pa}^{V\dag}.
\end{equation}
	Equation (\ref{eq:curbhypx0}) follows from Proposition 
	\ref{prop:Pcurvni} and from equations (\ref{eq:corobeb1}),  
	(\ref{eq:retour6}),   
	(\ref{eq:coro9a1bi}), and 
	  (\ref{eq:ovb1}). Equation (\ref{eq:curbhyp}) is an easy 
	  consequence. The only mysterious point is the that in 
	  ${}^{0}\widehat{\n}^{\mathbf{F}}$, the connection $\widehat{\n}^{\mathbf{F},-1}$ 
	  appears, while there was $\widehat{\n}^{\mathbf{F},-1/2}$ in equation 
	  (\ref{eq:curbhypx0}). But this is entirely explained in 
	  (\ref{eq:clap3})--(\ref{eq:clap4ax1}).
	  
	 The proof of (\ref{eq:curbhypa1}) is left to the reader, which 
	 completes the proof of our theorem.
 \end{proof}
\subsection{The hypoelliptic curvature}%
\label{subsec:hypcurv}
Among the sections $z$, we will  distinguish the 
tautological section 
\index{y@$y$}%
$y$ of $\underline{q}^{*}TX$ on $\mathcal{M}$  
corresponding to the tautological section $\widehat{y}$ of 
$\widehat{TX}$. Note that $y,\widehat{y}$ already appeared  in 
Subsection \ref{subsec:hemer}. The section $y$
is such that
\begin{equation}\label{eq:ega}
y_{-}=y.
\end{equation}
We use the notation
\begin{equation}\label{eq:ega1}
Y=y+\overline{y}.
\end{equation}

By (\ref{eq:coro9a1bi}),  we get
\begin{align}\label{eq:tot1b}
	&\mathcal{A}^{\prime \prime }_{Y}=\widehat{\n}^{\mathbf{F}, -1/2 \prime \prime 
}_{\mathrm{a}}+B+\overline{\pa}^{V}+i_{y}, \nonumber \\
&\mathcal{A}'_{Y}=\widehat{\n}^{\mathbf 
F,-1/2\prime}_{\mathrm{a}}+B^{*}-q^{*}\pa^{X}i\omega^{X}+\overline{\pa}^{V*}+i_{\overline{y}}+\overline{y}_{*}\we,\\
&\mathcal{A}_{Y}=\widehat{\n}^{\mathbf{F},-1/2}_{\mathrm{a}}+C
-q^{*}\pa^{X}i\omega^{X} 
+\overline{\pa}^{V}+i_{y}+\overline{\pa}^{V*}+i_{\overline{y}}+\overline{y}_{*}\we. \nonumber 
\end{align}
By Theorem \ref{thm:Tcadjoingt}, we get
\begin{align}\label{eq:tot1z1}
&\mathcal{A}^{\prime \prime \dag}_{Y}=\mathcal{A}^{\prime }_{Y},
&\mathcal{A}_{Y}^{\dag}=\mathcal{A}_{Y}.
\end{align}

Put
\index{ga@$\gamma$}%
\begin{equation}\label{eq:veau0}
\gamma=\n^{TX}-\widehat{\n}^{TX}.
\end{equation}
Then $\gamma$ is a section of $\TsX \otimes \End\left(TX\right)$. If 
$u\in TX$, then $\overline{\gamma u_{*}}$ is a $(1,1)$-form on $X$. 
\begin{definition}\label{def:fbc}
	Let 
\index{nF@${}^{1}\n^{\mathbb F}$}%
${}^{1}\widehat{\n}^{\mathbb F}$ be the fiberwise connection on $\mathbb F$, 
which is such that if $u\in TX, U=u+\overline{u}\in T_{\R}X$, then
\begin{equation}\label{eq:sup1}
{}^{1}\widehat{\n}^{\mathbb F}_{U}={}^{0}\widehat{\n}^{\mathbb F}_{U}+\overline{\gamma u_{*}}.
\end{equation}
\end{definition}
As before, we may as well replace $\mathbb F$ by $\mathbf{F}$ and 
obtain a fiberwise connection ${}^{1}\widehat{\n}^{\mathbf{F}}$ on $\mathbf{F}$.

\begin{theorem}\label{thm:Tcurvhypbi}
The following identity holds:
\begin{multline}\label{eq:curbhypx0bi}
\mathcal{A}^{2}_{Y}=\left[\overline{\pa}^{V}+i_{y},\pa^{V\dag}+ 
i_{y} ^{\dag}\right]
+\left[{}\widehat{\n}^{\mathbf{F},-1/2 \prime \prime}_{\mathrm{a}}+B,
i_{y} ^{\dag}\right]\\
+\left[\widehat{\n}^{\mathbf{F},-1/2 
\prime}_{\mathrm{a}}+B^{*}-q^{*}\pa^{X}i\omega^{X},i_{y}\right]
-q^{*}\overline{\pa}^{X}\pa^{X}i\omega^{X}\\
-\n^{V}_{R^{\widehat{TX}}\widehat{Y}} 
-\left\langle  
R^{\widehat{TX}}\overline{\widehat{w}}_{i},\overline{\widehat{w}}^{j}\right\rangle\overline{\widehat{w}}^{i}
i_{\overline{\widehat{w}}_{j}} 
+A^{E_{0},2}.
\end{multline}
Equation (\ref{eq:curbhypx0bi}) can be rewritten in the form 
\begin{multline}\label{eq:curbhypbibi}
\mathcal{A}^{2}_{Y}=\left[\overline{\pa}^{V}+i_{y},\pa^{V\dag}+
i_{y} ^{\dag}\right]+{}^{1}\widehat{\n}^{\mathbf{F}}_{Y}+i_{Y}C
-q^{*}\overline{\pa}^{X}\pa^{X}i\omega^{X}\\
-\n^{V}_{R^{\widehat{TX}}\widehat{Y}}-
\left\langle  
R^{\widehat{TX}}\overline{\widehat{w}}_{i},\overline{\widehat{w}}^{j}\right\rangle\overline{\widehat{w}}^{i}
i_{\overline{\widehat{w}}_{j}}+A^{E_{0},2}.
\end{multline}

Moreover, 
\begin{multline}\label{eq:curbhypbe}
\left[\overline{\pa}^{V}+i_{y},\pa^{V\dag}+
i_{y} ^{\dag}\right]=\frac{1}{2}\left(-\Delta^{V}_{g^{\widehat{TX}}}
+\left\vert  
Y\right\vert^{2}_{g^{TX}} \right)\\
+\overline{\widehat{w}}^{i}\we\left(i_{\overline{w}_{i}}+\overline{w}_{i*}\right) 
-i_{\overline{\widehat{w}}_{i}}i_{w_{i}}.
\end{multline}
\end{theorem}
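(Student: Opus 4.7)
The plan is to specialize Theorem \ref{thm:Tcurvhyp} to the tautological section $z = y$, for which $y_{-} = y$ by (\ref{eq:ega}) and $Z_{-} = Z = Y$. The three displayed identities (\ref{eq:curbhypx0bi}), (\ref{eq:curbhypbibi}), (\ref{eq:curbhypbe}) will follow from (\ref{eq:curbhypx0}), (\ref{eq:curbhyp}), (\ref{eq:curbhypa1}) respectively, once the geometric content of the tautological section and of the two connections $\n^{TX}$, $\widehat{\n}^{TX}$ is correctly unpacked.

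First, I would dispatch (\ref{eq:curbhypx0bi}) as a direct transcription of (\ref{eq:curbhypx0}) with $z = y$, using only $i^{\dag}_{z_{-}} = i^{\dag}_{y}$. Next, for (\ref{eq:curbhypbe}), I would specialize (\ref{eq:curbhypa1}) to $z = y$. The defining property of the tautological section is that along the fibers of $\pi \colon \mathcal{X} \to X$, in the canonical vertical trivialization, $\widehat{y}$ reads as the identity; consequently $\n_{\widehat{w}_{i}} y = w_{i}$ and, by conjugation, $\n_{\overline{\widehat{w}}_{i}} \overline{y} = \overline{w}_{i}$. Applying the $g^{TX}$-duality operation $\cdot_{*}$ (which is parallel for $\n^{TX}$, but the vertical derivatives in question are along fiber directions where the distinction between $\n^{TX}$ and $\widehat{\n}^{TX}$ does not enter), one obtains $\n_{\overline{\widehat{w}}_{i}} \overline{y}_{*} = \overline{w}_{i*}$. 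Substitution yields (\ref{eq:curbhypbe}).

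The bulk of the work goes into establishing (\ref{eq:curbhypbibi}). Starting from (\ref{eq:curbhyp}) with $Z = Y$, and comparing with the desired formula, it suffices to prove the identity
\begin{equation*}
i_{\widehat{\n}^{T_{\R}X,H}Y} + \n^{TX \prime \prime, H}\overline{y}_{*} = \overline{\gamma y_{*}},
\end{equation*}
since by Definition \ref{def:fbc} this is precisely the difference ${}^{1}\widehat{\n}^{\mathbf{F}}_{Y} - {}^{0}\widehat{\n}^{\mathbf{F}}_{Y}$ evaluated on $Y = y + \overline{y}$. The horizontal subbundle $T^{H}\mathcal{X}$ is defined using $\n^{\widehat{TX}}$, so the tautological section is horizontally parallel with respect to that connection; under the identification $\widehat{TX} \simeq TX$, this reads $\widehat{\n}^{T_{\R}X,H} Y = 0$, killing the first term. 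The second term then encodes the discrepancy between $\n^{TX}$ and $\widehat{\n}^{TX}$ acting on the tautological section: by (\ref{eq:veau0}), $(\n^{TX,H} - \widehat{\n}^{TX,H})y = \gamma y$ in the horizontal direction, and transporting through the $(0,1)$-component combined with the $g^{TX}$-duality gives exactly $\overline{\gamma y_{*}}$.

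The principal obstacle is bookkeeping: two connections $\n^{TX}$ and $\widehat{\n}^{TX}$ on the same bundle $TX$ coexist, each with its own horizontal distribution and torsion, and one must carefully track which connection enters each factor (the horizontal lift, the covariant derivative of the fiber variable, the duality $\cdot_{*}$ through $g^{TX}$) in order to see that passing from the arbitrary holomorphic $z$ to the tautological $y$ converts the extra terms $i_{\widehat{\n}^{T_{\R}X,H}Z} + \n^{TX \prime \prime, H}\overline{z}_{*}$ in (\ref{eq:curbhyp}) exactly into the connection shift from ${}^{0}\widehat{\n}^{\mathbf{F}}$ to ${}^{1}\widehat{\n}^{\mathbf{F}}$, with all other terms of (\ref{eq:curbhyp}) transferring verbatim to (\ref{eq:curbhypbibi}).
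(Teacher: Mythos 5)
Your proof is correct and follows the same route as the paper's: specialize Theorem \ref{thm:Tcurvhyp} to $z=y$, observe that $\widehat{\n}^{T_{\R}X,H}Y=0$ because the horizontal distribution is defined via $\n^{\widehat{TX}}$, and identify $\n^{TX\prime\prime,H}\overline{y}_{*}$ with $\overline{\gamma y}_{*}$ using $\gamma=\n^{TX}-\widehat{\n}^{TX}$ together with Definition \ref{def:fbc}. The paper's own proof simply states these two facts without elaboration, so your version supplies slightly more detail but is not a different argument.
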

\begin{proof}
	This is a consequence of Theorem \ref{thm:Tcurvhyp}. In the proof 
	of (\ref{eq:curbhypbibi}), we have exploited the fact that 
	$\widehat{\n}^{T_{\R}X,H}Y=0$, and the related equation
	\begin{equation}\label{eq:sur1}
\n^{TX \prime \prime ,H}\overline{y}_{*}=\overline{\gamma y}_{*}.
\end{equation}
\end{proof}

By  Theorem \ref{thm:Tcurvhyp} and  by H\"ormander \cite{Hormander67},  
 $\mathcal{A}^{2}_{Y}$ is a hypoelliptic operator 
along the fibers $\mathcal{X}$. More precisely,  $\mathcal{A}^{2}_{Y}$ 
is a
hypoelliptic Laplacian in the sense of
\cite{Bismut05a,Bismut06d, Bismut11a}. 
\begin{remark}\label{rem:nec}
	In  (\ref{eq:curbhypbibi}), we may 
	as well replace ${}^{1}\widehat{\n}^{\mathbf{F}}_{Y} $ by 
	${}^{1}\widehat{\n}^{\mathbb F}_{Y^{H}}$.
\end{remark}
\subsection{Scaling the metric $g^{\widehat{TX}}$}%
\label{subsec:sca}
Here we follow \cite[Section 7.1]{Bismut10b}.
For $b>0$, when  
$g^{\widehat{TX}}$   is replaced by 
$b^{4}g^{\widehat{TX}}$, let
\index{eXb@$\epsilon_{X,b}$}%
\index{eMb@$\epsilon_{M,b}$}%
\index{AYb@$\mathcal{A}'_{Y,b}$}%
\index{AYb@$\mathcal{A}_{Y,b}$}%
$\epsilon_{X,b},\epsilon_{M,b},\mathcal{A}'_{Y,b}, \mathcal{A}_{Y,b}$ be the  analogues 
of 
$\epsilon_{X},\epsilon_{M},\mathcal{A}'_{Y}, \mathcal{A}_{Y}$.
\begin{definition}\label{Dda}
For $a>0$, let 
\index{da@$\delta_{a}$}%
$\delta_{a}$ be the dilation of $\mathcal{M}$: $\left(x,Y\right)\to \left(x,aY\right)$.
For $a>0$, let
\index{da@$\underline{\delta}^{*}_{a}$}%
$\underline{\delta}^{*}_{a}$ be the  action of 
$\delta_{a}$ on smooth sections of 
$\Lambda\left(\overline{\widehat{\TsX}}\right)$. The  action of 
$\underline{\delta}^{*}_{a}$ extends to $\mathbb F$.
\end{definition}

Also $y/b^{2}$ is a holomorphic section on $\mathcal{M}$ of 
$\underline{q}^{*}TX$.
One verifies easily that
\begin{equation}
   A_{Y/b^{2}}= \underline{\delta}^{*}_{1/b^{2}} \mathcal  
    {A}_{Y,b}\underline{\delta}^{*-1}_{1/b^{2}}.
    \label{eq:triv-6}
\end{equation}
 This identity is  obvious because $\delta^{*}_{1/b^{2}}$ maps the 
metric $b^{4}g^{\widehat{TX}}$ to $g^{\widehat{TX}}$, and $Y$ 
to $Y/b^{2}$. 

Now we follow \cite[Section 6.9]{Bismut10b}. For $a>0$, put
\index{Ka@$K_{a}$}%
\begin{equation}\label{eq:club41}
K_{a}s\left(x,Y\right)=s\left(x,aY\right).
\end{equation}
\begin{definition}\label{DAZb}
Put
\index{CY@$\mathcal{C}_{Y,b}$}%
\index{DY@$\mathcal{D}_{Y,b}$}%
\begin{align}\label{eq:club43}
&\mathcal{C}_{Y,b}=K_{b}\mathcal{A}_{Y/b^{2}}K_{b}^{-1},
&\mathcal{D}_{Y,b}=K_{b}\mathcal{B}_{Y/b^{2}}K_{b}^{-1}.
\end{align}
By (\ref{eq:conjugrep}), (\ref{eq:club43}), we get
\begin{equation}
   \mathcal{D}_{Y,b}=\exp\left(i\Lambda\right)\mathcal{C}_{Y,b}\exp\left(-i\Lambda\right).
    \label{eq:club47}
\end{equation}
\end{definition}

By (\ref{eq:coro9a1}), (\ref{eq:club46}), we get 
\begin{align}\label{eq:retpek6x1}
&\mathcal{C}_{Y,b}=\n^{D\ho q^{*}\mathbf{I}}+C-q^{*}\pa^{X}i\omega^{X}+
\frac{1}{b}\left(\overline{\pa}^{V}+i_{y}+\overline{\pa}^{V*}+i_{\overline{y}}+\overline{y}_{*}\right),\\
&\mathcal{D}_{Y,b}=\mathcal{E}+\frac{1}{b}\left(\overline{\pa}^{V}+i_{y}+\overline{\pa}^{V*}
    +\overline{y}_{*}\right). \nonumber 
\end{align}
\subsection{Hypoelliptic and elliptic superconnections}%
\label{subsec:hypel}
In this Subsection,  we assume that $\widehat{g}^{TX}=g^{TX}$. Let 
$w_{1},\ldots,w_{n}$ be an orthonormal basis of $TX$ with respect to 
$g^{TX}$, let $\widehat{w}_{1},\ldots,\widehat{w}_{n}$ be the 
corresponding basis of $\widehat{TX}$. We denote the dual basis in 
the usual way.

We use the identification 
in (\ref{eq:cret1}).  Set 
\index{L@$\widehat{L}$}%
\index{L@$\widehat{\Lambda}$}%
\begin{align}\label{eq:cohypoz0}
&\widehat{L}=-iw^{j}\we\overline{\widehat{w}}^{j},
&\widehat{\Lambda}=ii_{\overline{\widehat{w}}_{j}}i_{w_{j}}.
\end{align}
In (\ref{eq:cohypoz0}), $\widehat{L}$ is just the multiplication 
operator by the fibrewise K\"ahler form 
\index{oV@$\widehat{\omega}^{\mathcal{X},V}$}%
$\widehat{\omega}^{\mathcal{X},V}$.
As in \cite[eq. (6.9.8)]{Bismut10b}, we get 
\begin{equation}
    \left[\overline{\pa}^{V}+i_{y},\overline{\pa}^{V*}+\overline{y}_{*}\we\right]=
    \frac{1}{2}\left(-\Delta^{V}_{g^{TX}}+\left\vert  
    Y\right\vert_{g^{TX}}^{2}\right)-i\left(\widehat{L}-\widehat{\Lambda}\right).
    \label{eq:cohhypoz1}
\end{equation}
The right-hand side of (\ref{eq:cohhypoz1}) is a harmonic oscillator.

Let $\mathcal{S}^{(0,\Ou)}\left(\widehat{TX},\pi^{*}\Lambda\left(\TsX\right)
\right)$ be the Schwartz space of rapidly decreasing sections of 
$\pi^{*} \left( \Lambda\left(\overline{\widehat{\TsX}}\right)\ho 
\Lambda\left(T^{*}X\right) \right) $ along the  fibre $\widehat{TX}$.
 The operator in 
(\ref{eq:cohhypoz1}) is essentially self-adjoint on 
$\mathcal{S}^{(0,\Ou)}\left(\widehat{TX},\pi^{*}\Lambda\left(\TsX\right)
\right)$. By \cite[Proposition 1.5 and Theorem 
1.6]{Bismut90d}\footnote{In \cite{Bismut90d}, the operator 
$\overline{\pa}^{V}+i_{\sqrt{-1}y}$ and its adjoint were considered 
instead, which explains 
the minor discrepancies with respect to what is done here.},
\cite[Section 6.9]{Bismut10b},  its spectrum is 
$\N$, and its kernel is $1$-dimensional and spanned by 
\index{d@$\delta$}%
\begin{equation}
    \delta=\exp\left(i\widehat{\omega}^{\mathcal{X},V}-\left\vert  
    Y\right\vert^{2}_{g^{TX}}/2\right).
    \label{eq:forbeta}
\end{equation}
Recall that $dY$ is the volume form along the fibers 
$\widehat{TX}$ with respect to $g^{\widehat{TX}}$. Note  that
    \begin{align}
       &\sigma^{*}\delta=\delta, &\left(2\pi\right)^{-n} \int_{\widehat{TX}}^{}\left\vert  
       \delta\right\vert^{2}dY=1.
	\label{eq:bebe14}
    \end{align}
    Let  
	\index{P@$P$}%
	$P$ be the $L_{2}$
    orthogonal projection operator on vector space spanned by 
	$\delta$.
    
    We embed 
	\index{D@$\mathcal{D}$}%
	$\mathcal{D}=\Omega^{0,\Ou}\left(X,D\vert_{X}\right)$ into 
    $\Omega\left(X,D\vert_{X}\ho \mathbf{I}\right)$ by the  
    embedding 
    $\alpha\to \underline{\pi}^{*}\alpha\we \delta$, so that
    $\mathcal{D}$ 
	is  just  the $L_{2}$ kernel of  $
    \overline{\pa}^{V}+i_{y}+\overline{\pa}^{V*}+\overline{y}_{*}\we$.
	More generally, we embed $p_{*} \mathscr E_{0}$ into $p_{*}\left[\mathscr 
	E_{0}\ho q^{*}\left(\Lambda\left(\TsX\right)\ho 
	\mathbf{I}\right)\right]$ by the same formula.

	Recall that the elliptic superconnection  
	\index{ApE@$A^{p_{*} \mathscr E_{0}}$}%
	$A^{p_{*}\mathscr E_{0}}$  
 was defined in Definition \ref{def:comsc}, and that the differential 
 operator 
 \index{E@$\mathcal{E}$}%
 $\mathcal{E}$ was 
 defined in (\ref{eq:club45}). Now we give an extension of  \cite[Theorem 
6.9.2]{Bismut10b}.
\begin{theorem}\label{Tcompidebis}
    The following identity holds:
    \begin{equation}\label{eq:3.15}
	P\mathcal{E}P=A^{p_{*} \mathscr E_{0}}.
    \end{equation}
    \end{theorem}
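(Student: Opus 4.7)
The plan is to follow the template of \cite[Theorem 6.9.2]{Bismut10b}, adapted to cover the extra $B, B^{*}$ contributions that are present because we deal with an antiholomorphic superconnection $\mathscr E$ rather than a holomorphic vector bundle.

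First, I would exploit the explicit description of $P$. Under the embedding $\alpha \in \mathcal{D} \hookrightarrow \Omega(X,D\vert_{X}\ho\mathbf{I})$ given by $\alpha \mapsto \underline{\pi}^{*}\alpha \wedge \delta$, with $\delta$ as in (\ref{eq:forbeta}), the operator $P$ acts fibrewise as orthogonal projection onto this one-dimensional subspace. Since the identity (\ref{eq:3.15}) is between operators acting on $\mathcal{D}$, it suffices to compute $\mathcal{E}(\underline{\pi}^{*}\alpha \wedge \delta)$, extract its $\delta$-coefficient, and show that the resulting $\beta \in \mathcal{D}$ equals $A^{p_{*}\mathscr E_{0}}\alpha$ as given by (\ref{eq:dep2}).

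Next, I would decompose $\mathcal{E}$ according to (\ref{eq:club45}) as
\begin{equation*}
\mathcal{E} = \mathcal{E}_{1} + \mathcal{E}_{2} + \mathcal{E}_{3}, \qquad \mathcal{E}_{1} = e^{i\Lambda}\n^{D\ho q^{*}\mathbf{I}}e^{-i\Lambda}, \quad \mathcal{E}_{2} = e^{i\Lambda}Ce^{-i\Lambda}, \quad \mathcal{E}_{3} = -e^{i\Lambda}q^{*}\pa^{X}i\omega^{X}e^{-i\Lambda},
\end{equation*}
and analyze each piece using the substitution rule following (\ref{eq:ret2}): $e^{i\Lambda}w^{i}e^{-i\Lambda} = w^{i} - i_{\overline{w}_{i}}$ and $e^{i\Lambda}\overline{w}^{j}e^{-i\Lambda} = \overline{w}^{j} + i_{w_{j}}$. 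For $\mathcal{E}_{1}$, I would split $\n^{D\ho q^{*}\mathbf{I}} = \n^{D,S} + \n^{D,X} + \n^{q^{*}\mathbf{I}}$. The $\n^{q^{*}\mathbf{I}}$ piece preserves $\delta$ in the antiholomorphic directions but the horizontal derivatives in fiber directions kill $\delta$ modulo exact terms that are $P$-orthogonal to $\delta$; hence it contributes nothing after $P\cdot P$. The $\n^{D,S}$ piece commutes with $\Lambda$ and passes through the projection to give $\n^{D,S}$. The $\n^{D\prime\prime,X}$ piece, written in terms of $\overline{w}^{i}\n^{D\prime\prime}_{\overline{w}_{i}}$, survives after conjugation (the extra $i_{w_{i}}$-terms acting on $\underline{\pi}^{*}\alpha \wedge \delta$ yield pieces orthogonal to the range of $P$) and contributes $\n^{D\prime\prime,X}$. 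The $\n^{D\prime,X}$ piece together with $\mathcal{E}_{3}$ combine: the conjugation converts $\pa^{X}$-type raising operators into contraction operators on the antiholomorphic side that, after pairing with $\delta$ and using that $\delta$ is the Gaussian associated with $g^{TX}$, precisely realize the $L_{2}$-adjoint $\n^{D\prime\prime,X*}_{\alpha}$ of $\n^{D\prime\prime,X}$ with respect to the pairing (\ref{eq:coc7}). For $\mathcal{E}_{2}$, the key identity is that for $f \in T^{*}X$ and $f_{*} \in \overline{TX}$ its dual under $g^{TX}$, we have $\overline{f} - i_{f_{*}} = c(\overline{f})$ and $f + i_{\overline{f}_{*}}$-type combinations give $-i_{f_{*}} = c(f)$ in the Clifford convention of (\ref{eq:xiao2a1}); thus $e^{i\Lambda}Ce^{-i\Lambda}$ projects to the Clifford action ${}^{c}C$ of Subsection \ref{subsec:cli}.

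The main obstacle is the clean book-keeping required in the combined treatment of $\mathcal{E}_{1}$ and $\mathcal{E}_{3}$: the $-q^{*}\pa^{X}i\omega^{X}$ term is not a ``lower-order correction'' but is precisely what forces the holomorphic part of $\n^{D\ho q^{*}\mathbf{I}}$ to convert into the $L_{2}$-adjoint $\n^{D\prime\prime,X*}_{\alpha}$ (rather than the $\theta$-adjoint $\n^{D\prime,X}$ that appears in $A^{E_{0}\prime}$). This is entirely parallel to the phenomenon in \cite[Section 6.9]{Bismut10b}, but requires verifying that the new terms $B,B^{*}$ (absent in the holomorphic vector bundle setting of \cite{Bismut10b}) interact with the conjugation $e^{i\Lambda}$ in exactly the same Clifford fashion as the Chern connection terms, which is guaranteed because $B$ and $B^{*}$ are sections of $\Lambda(T^{*}_{\C}X)\ho\End(D)$ of total degree $\pm 1$ and Proposition \ref{prop:Padje} applies to them verbatim. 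Reassembling the three contributions then yields $\n^{D,S} + \n^{D\prime\prime,X} + \n^{D\prime\prime,X*}_{\alpha} + {}^{c}C$, which is exactly $A^{p_{*}\mathscr E_{0}}$ by (\ref{eq:dep2}), completing the proof.
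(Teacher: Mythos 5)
Your proposal is correct and takes essentially the same route as the paper's proof: decompose $\mathcal{E}$ via (\ref{eq:club45}), conjugate by $\exp\left(i\Lambda\right)$, use that $\delta$ is of total degree $0$ and is annihilated by $\n^{\Lambda\left(\TsX\right)\ho \mathbf{I}}$ (the point where $g^{TX}=\widehat{g}^{TX}$ enters, cf.\ (\ref{eq:rom1})), then sandwich with $P$ and assemble via (\ref{eq:coc7a3}) and (\ref{eq:dep2}). One small slip: the Clifford identity you display for $\mathcal{E}_{2}$ misstates (\ref{eq:xiao2a1}), which reads $c\left(\overline{f}\right)=\overline{f}\we$ and $c\left(f\right)=-i_{f_{*}}$; the intended mechanism -- that after conjugating ($w^{i}\mapsto w^{i}-i_{\overline{w}_{i}}$, $\overline{w}^{j}\mapsto\overline{w}^{j}+i_{w_{j}}$) and compressing by $P$ the surviving pieces are exactly $-i_{\overline{w}_{i}}$ and $\overline{w}^{j}\we$, giving ${}^{c}C$ -- is nevertheless correct and is precisely what the paper's passage from $B^{*}$ to ${}^{c}B^{*}$ in (\ref{eq:rims2}) relies on.
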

\begin{proof}
Observe that (\ref{eq:3.15}) is an identity of differential operators 
over $M$, which we will prove by local methods on $M$. We will use the considerations that follow (\ref{eq:ret2})   to evaluate $\mathcal{E}$. 

Let $\n^{\Lambda\left(\TsX\right)}$ be the connection induced by 
$\n^{TX}$ on $\Lambda\left(\TsX\right)$. Since $g^{TX}=\widehat{g}^{TX}$, we get
\begin{equation}\label{eq:rom1}
\n^{\Lambda\left(\TsX\right)\ho \mathbf{I}}\delta=0.
\end{equation}

Recall that $C=B+B^{*}$. 
As explained 
after equation (\ref{eq:ret2}), conjugation by 
$\exp\left(i\Lambda\right)$ changes $\overline{w}^{i}$ into 
$\overline{w}^{i}+i_{w_{i}}$.   Recall that  $\delta$ in 
(\ref{eq:forbeta}) is of total degree $0$. The effect of the 
composition of the $i_{w_{j}}$ is to map  $\delta$ into forms of total
positive degree\footnote{Recall that the total degree is the 
difference of the antiholomorphic and of the holomorphic degree.}, 
which are orthogonal to $\delta$. Using (\ref{eq:rom1}) and proceeding as in the proof of \cite[Theorem 6.9.2]{Bismut10b}, we 
find  that
\begin{equation}\label{eq:rims2-}
P\exp\left(i\Lambda\right)\left( \n^{D\ho q^{*}\mathbf{I} \prime \prime 
}+B\right)\exp\left(-i\Lambda\right)  P=\n^{D \prime \prime }+B.
\end{equation}
In \cite{Bismut10b}, the proof of (\ref{eq:rims2-}) uses equations 
(\ref{eq:cup7a1r}), (\ref{eq:tot1b}).

Using (\ref{eq:new2a1}), we can rewrite (\ref{eq:rims2-}) in the form
\begin{equation}\label{eq:rims4z1}
P\exp\left(i\Lambda\right)\left( \n^{D\ho q^{*}\mathbf{I} \prime \prime 
}+B\right)\exp\left(-i\Lambda\right)  P=A^{p_{*}\mathscr E_{0} \prime 
\prime}.
\end{equation}

Conjugation by $\exp\left(i\Lambda\right)$ changes 
$w^{i}$ into $w^{i}-i_{\overline{w}_{i}}$.
By proceeding again as in \cite[Theorem 6.9.2]{Bismut10b}, we 
 get
\begin{multline}\label{eq:rims2}
P\exp\left(i\Lambda\right)\left( \n^{ D \ho q^{*}\mathbf{I}\prime  
}+B^{*}-q^{*}\pa^{X}i\omega^{X} \right) 
\exp\left(-i\Lambda\right)P\\
=\n^{D \prime,S}+\n^{D \prime \prime, X 
*}_{\alpha}+{}^{c}B^{*}.
\end{multline}
By  (\ref{eq:coc7a3}), we can rewrite (\ref{eq:rims2}) in the form,
\begin{equation}\label{eq:rims7}
P\exp\left(i\Lambda\right)\left( \n^{D\ho q^{*}\mathbf{I} \prime  
}+B^{*}-q^{*}\pa^{X}i\omega^{X} \right) 
\exp\left(-i\Lambda\right)P=A^{p_{*} \mathscr E_{0}\prime}.
\end{equation}

By (\ref{eq:rims4z1}), (\ref{eq:rims7}), we get (\ref{eq:3.15}). The proof of our theorem is completed. 
\end{proof}
\begin{remark}\label{Rdirect}
	Recall that 
\index{DY@$\mathcal{D}_{Y,b}$}%
$\mathcal{D}_{Y,b}$ was defined in (\ref{eq:club47}), and 
that it splits as 
\begin{equation}\label{eq:spli1bi}
\mathcal{D}_{Y,b}=\mathcal{D}^{\prime \prime 
}_{Y,b}+\mathcal{D}'_{Y,b}.
\end{equation}
By (\ref{eq:dadjoint}), when taking adjoints with respect to 
$\eta_{X}$, then
\begin{equation}\label{eq:retpek13}
\mathcal{D}^{ \prime  \prime \dag}_{Y,b}=\mathcal{D}'_{Y,b}.
\end{equation}
By (\ref{eq:cret2}),  (\ref{eq:bebe14}), on 
$\Omega^{0,\Ou}\left(X,D\vert_{X}\right)$,  
$\eta_{X}$ restrict to the Hermitian product  $\left\langle  \,\right\rangle
_{L_{2}}$.
Since  the contribution of $\n^{D\ho q^{*}\mathbf{I}\prime \prime 
}+B$ to $P\mathcal{E}P$ is $A^{p_{*} \mathscr E_{0}\prime \prime}$, the 
contribution of its adjoint 
$\n^{D\ho q^{*}\mathbf{I}\prime
}+B^{*}-q^{*}\pa^{X}i\omega^{X}$ has to be  
$A^{p_{*} \mathscr E_{0} \prime }$. This explains some aspects of 
the proof of Theorem \ref{Tcompidebis}.
\end{remark}
\section{The hypoelliptic superconnection forms}%
\label{sec:hypofo}
The purpose of this Section is to construct the superconnection forms 
associated with the hypoelliptic superconnections  of Section 
\ref{sec:hypo}, and to prove that their class in Bott-Chern 
cohomology coincides with the class of the elliptic superconnection 
forms.

This Section is organized as follows. In Subsection 
\ref{subsec:scnform}, we construct the hypoelliptic superconnection 
forms, that depend on metrics $g^{TX},g^{\widehat{TX}},g^{D}$.

In Subsection \ref{subsec:depb}, when $g^{TX}=\widehat{g}^{TX}$, we 
show that when replacing $g^{\widehat{TX}}$ by 
$b^{4}g^{\widehat{TX}}$, as $b\to 0$, the hypoelliptic superconnection 
 forms converge to the elliptic superconnection forms, which gives the main result of 
 this Section.
 
 In this Section, we make the same assumptions as in Sections 
 \ref{sec:suel} and  \ref{sec:hypo},  and we use the 
 corresponding notation.
 \subsection{The  hypoelliptic  forms}%
\label{subsec:scnform}
We will say that two metrics $g^{\widehat{TX}},g^{\widehat{TX}\prime 
}$ lie in the same projective class if there exists a constant $c>0$ 
such that $g^{\widehat{TX} \prime }=c g^{\widehat{TX}}$. Let 
$\mathbf{p}$ be such a projective class. Once we fix 
$g^{\widehat{TX}}\in \mathbf{p}$, then $\mathbf{p} \simeq \R_{+}^{*}$.

We fix $\mathbf{p}$.
Let 
 \index{Qp@ $\mathscr Q_{\mathbf{p}}$}%
$\mathscr Q_{\mathbf{p}}$ be the collection of parameters 
$\omega^{X},g^{D},g^{\widehat{TX}}$, with $g^{\widehat{TX}}\in 
\mathbf{p}$. We denote by $d^{\mathscr Q_{\mathbf{p}}}$ 
the de Rham operator on $\mathscr Q_{\mathbf{p}}$. A key fact is that 
when $g^{\widehat{TX}}\in \mathbf{p}$, then $\n^{\mathbf{I}}$ does 
not depend on $g^{\widehat{TX}}$.

By
\cite[Section 3.3]{BismutLebeau06a},  the operator 
$\exp\left(-\mathcal{A}^{2}_{Y}\right)$ is trace class, and is 
given by a smooth kernel along the fibers $\mathcal{X}$ that is 
rapidly decreasing along the fibre $\widehat{TX}$ together with its 
derivatives, and this uniformly over  $S$. 

\begin{definition}\label{def:DChernob}
Put
\begin{equation}\label{eq:7.8a}
\ch\left(\mathcal{A}^{\prime \prime}_{Y}, \omega^{X},g^{D},g^{\widehat{TX}}\right)=\varphi\Trs\left[\exp\left(- 
\mathcal{A}^{2}_{Y}\right)\right].
\end{equation}
\end{definition}

The Hermitian form $\epsilon_{X}$ was defined in Definition 
\ref{def:goug}, and depends on  the splitting of $E$, and 
also on $\omega^{X},g^{D},g^{\widehat{TX}}$. 

 Given $\mathbf{p}$, 
$\epsilon_{X}^{-1}d^{\mathscr Q_{\mathbf{p}}}\epsilon_{X}$ is a $1$-form on $\mathscr 
Q_{\mathbf{p}}$ with values in 
$\epsilon_{X}$-self-adjoint endomorphisms. It can be easily computed from 
equation (\ref{eq:retpek0}). 

We establish an extension of \cite[Theorem 7.3.2]{Bismut10b}.
\begin{theorem}\label{thm:Tdouble}
The form 
$ \ch\left(\mathcal{A}^{ \prime \prime}_{Y}, \omega^{X},g^{D},g^{\widehat{TX}}\right)$ 
lies in 
$\Omega^{(=)}\left(S,\R\right)$, it is closed,  and its  Bott-Chern cohomology 
class does not depend on $\omega^{X},g^{D},g^{\widehat{TX}}$, or on 
the splitting of $E$. 

Given $\mathbf{p}$, then
\begin{equation}\label{eq:spr1}
d^{\mathscr Q_{\mathbf{p}}}\ch\left(\mathcal{A}^{ \prime \prime}_{Y}, 
\omega^{X},g^{D},g^{\widehat{TX}}\right)=-\frac{\overline{\pa}^{S}\pa^{S}}{2i\pi}\Trs\left[\epsilon_{X}^{-1}d^{\mathscr Q_{\mathbf{p}}}\epsilon_{X}\exp\left(- 
\mathcal{A}^{2}_{Y}\right)\right].
\end{equation}
\end{theorem}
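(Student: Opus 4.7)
The plan is to adapt the proofs of Theorems \ref{thm:carch} and \ref{thm:chca} to the hypoelliptic setting, handling the non-compactness of the fibers $\widehat{TX}$ via the trace-class estimates of \cite[Section 3.3]{BismutLebeau06a}, which guarantee that $\exp(-\mathcal{A}^{2}_{Y})$ is fiberwise trace class with smooth Schwartz-class kernel, so that $\varphi\Trs[\exp(-\mathcal{A}^{2}_{Y})]$ is a well-defined smooth form on $S$.

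First I establish the basic properties. Closedness follows from the Bianchi identities $[\mathcal{A}''_{Y},\mathcal{A}^{2}_{Y}]=0=[\mathcal{A}'_{Y},\mathcal{A}^{2}_{Y}]$, which are consequences of (\ref{eq:coro8}) together with the fact that $\varphi\Trs$ vanishes on supercommutators. The bidegree property, placing the form in $\Omega^{(=)}(S,\C)$, is obtained by the same scaling argument as in the proof of Theorem \ref{thm:carch}: conjugating $\exp(-\mathcal{A}^{2}_{Y})$ by $\exp(tN^{\Lambda(T^{*}_{\C}S)}_{-})$ and using that $\mathcal{A}^{2}_{Y}$ has total degree zero. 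Reality follows from the self-adjointness $\mathcal{A}^{\dag}_{Y}=\mathcal{A}_{Y}$ in (\ref{eq:tot1z1}): since $\epsilon_{X}$ is Hermitian and $\mathcal{A}^{2}_{Y}$ is $\epsilon_{X}$-self-adjoint, taking the complex conjugate of the trace via adjunction recovers the same form, exactly as in the final paragraph of the proof of Theorem \ref{thm:carch}.

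Second, for the variation formula (\ref{eq:spr1}) within a fixed projective class $\mathbf{p}$, the key observation is that $\mathcal{A}''_{Y}$ in (\ref{eq:tot1b}) depends on $g^{\widehat{TX}}$ only through $\n^{\mathbf{I}\prime\prime}$, which is invariant under scaling within $\mathbf{p}$, and does not depend on $\omega^{X}$ or $g^{D}$; hence $\mathcal{A}''_{Y}$ is constant on $\mathscr Q_{\mathbf{p}}$, and the entire $d^{\mathscr Q_{\mathbf{p}}}$-variation is carried by its $\epsilon_{X}$-adjoint $\mathcal{A}'_{Y}$, giving $[d^{\mathscr Q_{\mathbf{p}}},\mathcal{A}'_{Y}]=-[\mathcal{A}'_{Y},\epsilon_{X}^{-1}d^{\mathscr Q_{\mathbf{p}}}\epsilon_{X}]$ in analogy with (\ref{eq:iv34a2}). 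Plugging this into the differentiated heat form and using Bianchi to move an $\mathcal{A}''_{Y}$ past $\exp(-\mathcal{A}^{2}_{Y})$, then converting the residual pair $(\mathcal{A}''_{Y},\mathcal{A}'_{Y})$ into $(\overline{\pa}^{S},\pa^{S})$ via the same trace identity as in the proof of (\ref{eq:iv21a1}), yields (\ref{eq:spr1}) and invariance of the Bott-Chern class on $\mathscr Q_{\mathbf{p}}$.

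Third, for independence across projective classes I exploit the scaling identity (\ref{eq:triv-6}): replacing $g^{\widehat{TX}}$ by $b^{4}g^{\widehat{TX}}$, the operator $\mathcal{A}_{Y,b}$ is conjugate to $\mathcal{A}_{Y/b^{2}}$ formed with the original $g^{\widehat{TX}}$, so
\begin{equation*}
\ch(\mathcal{A}''_{Y},\omega^{X},g^{D},b^{4}g^{\widehat{TX}})=\varphi\Trs[\exp(-\mathcal{A}^{2}_{Y/b^{2}})].
\end{equation*}
Since $(ty)_{-}=ty$ for any $t\in\R_{+}^{*}$ by (\ref{eq:club31-1}), the family $\{\mathcal{A}_{tY}\}_{t>0}$ consists of $\epsilon_{X}$-self-adjoint operators with holomorphic $ty$, and repeating the Chern--Simons argument of step two with $t$ as parameter shows that $\partial/\partial t$ of the corresponding Chern character form is $\overline{\pa}^{S}\pa^{S}$-exact, giving invariance under the scaling of $g^{\widehat{TX}}$ and, combined with step two, full invariance in $g^{\widehat{TX}}$. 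Independence of the splitting of $E$ is finally obtained by the $\mathbf{P}^{1}$ interpolation argument at the end of the proof of Theorem \ref{thm:chca}, applying the Poincaré--Lelong equation (\ref{eq:bc5}) to the interpolated family over $S\times\mathbf{P}^{1}$. The main obstacle is ensuring these Chern--Simons and $\mathbf{P}^{1}$ manipulations remain rigorous given the hypoellipticity and non-compact fibers, which requires the family version of the trace-norm estimates of \cite{BismutLebeau06a} uniformly in the interpolation parameter.
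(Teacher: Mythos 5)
Your steps one and two correctly follow the paper's approach: closedness and bidegree via the Bianchi argument, reality via $\epsilon_X$-self-adjointness (with the non-positivity of $\epsilon_X$ being irrelevant, as in the paper), and the transgression formula (\ref{eq:spr1}) on $\mathscr Q_{\mathbf p}$ via the observation that $\mathcal A''_Y$ is constant there. This much matches what the paper does.

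The problem is in your step three. Replacing $g^{\widehat{TX}}$ by $b^4 g^{\widehat{TX}}$ stays \emph{within} the projective class $\mathbf p$ (a projective class is by definition the set of positive constant multiples of a given metric), so this variation is already entirely covered by your step two, and the detour through the scaling identity (\ref{eq:triv-6}) is vacuous. Worse, after conjugation the family becomes $\mathcal A_{Y/b^2}$, whose antiholomorphic part $\mathcal A''_{Y/b^2}=\mathcal A''+i_{y/b^2}$ now does depend on $b$, so the Chern--Simons argument of step two (which crucially used that $\mathcal A''_Y$ is constant over $\mathscr Q_{\mathbf p}$) no longer transfers; your phrase ``repeating the Chern--Simons argument of step two with $t$ as parameter'' is not justified. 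Meanwhile, the genuine case --- two Hermitian metrics on $\widehat{TX}$ that are not proportional, for which $\n^{\widehat{TX}}$ and hence $\n^{\mathbf I}$ change, so that $\mathcal A''_Y$ itself changes --- is never addressed. This is precisely the reason the paper proves all of the parameter-independence (not just independence of the splitting of $E$) by the $\mathbf P^1$-interpolation and Poincar\'e--Lelong argument of Theorem \ref{thm:chca}, reserving the direct transgression (\ref{eq:spr1}) for the restricted variation in $\mathscr Q_{\mathbf p}$. You should extend your $\mathbf P^1$ argument to interpolate between two arbitrary metrics $g^{\widehat{TX}}$, $g^{\widehat{TX}\prime}$ as well, and drop the scaling detour.
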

\begin{proof}
The proof  that the above forms are closed   is formally the same 
as the proof of Theorem \ref{thm:chca}.   Since $\mathcal{A}_{Y}$ is self-adjoint with 
respect to $\epsilon_{X}$,  the same argument as in 
the proof of Theorem \ref{thm:carch} shows that
 the above forms are real.  The fact that $\epsilon_{X}$ is non-positive is irrelevant.  By proceeding as in the proof of Theorem 
 \ref{thm:chca}, i.e., by using deformation over $\mathbf{P}^{1}$, one 
 can prove that their corresponding class in Bott-Chern cohomology 
 does not depend on the parameters.  Finally, if one restricts 
 $g^{\widehat{TX}}$ to vary in a class $\mathbf{p}$, the proof 
 of (\ref{eq:spr1}) is the same as the proof of  Theorems 
 \ref{thm:carch} and \ref{thm:chca}.  
\end{proof}

\begin{definition}\label{def:cobc}
	Let 
	\index{cAY@$\cBC\left(\mathcal{A}''_{Y}\right)$}
	$\cBC\left(\mathcal{A}''_{Y}\right)\in 
	H_{\mathrm{BC}}^{(=)}\left(S,\R\right)$ denote the common 
	classes in Bott-Chern cohomology of the forms 
	$\ch\left(\mathcal{A}^{\prime \prime}_{Y}, \omega^{X},g^{D},g^{\widehat{TX}}\right)$.
\end{definition}
\subsection{The limit as $b\to 0$ of the hypoelliptic superconnection 
forms}%
\label{subsec:depb}
In this Subsection, we fix $\omega^{X},g^{D}$, and we take 
$g^{\widehat{TX}}=b^{4}g^{TX}$.Then $\mathcal{A}''_{Y}$  does not depend on $b>0$.

Recall that the elliptic superconnection forms 
\index{chA@$\ch\left(A^{p_{*}\mathscr E_{0}\prime \prime 
	},\omega^{X},g^{D}\right)$}%
$\ch\left(A^{p_{*}\mathscr E_{0} \prime 
\prime},\omega^{X},g^{D}\right)$ were defined in Definition 
\ref{def:Dscf}, and their Bott-Chern class 
\index{cBC@$\cBC\left(A^{p_{*} \mathscr E \prime \prime }\right)$}%
$\cBC\left(A^{p_{*} 
\mathscr E \prime \prime}\right)$ in Definition \ref{def:ellscn}.
\begin{theorem}\label{thm:lim}
As $b\to 0$, 
\begin{equation}\label{eq:lim1}
\ch\left(\mathcal{A}^{\prime \prime 
}_{Y},\omega^{X},g^{D},b^{4}g^{TX}\right)\to 
\ch\left(A^{p_{*}\mathscr E_{0} \prime 
\prime },\omega^{X},g^{D}\right).
\end{equation}
\end{theorem}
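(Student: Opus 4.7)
The plan is to reduce the hypoelliptic supertrace to a supertrace of the rescaled operator $\mathcal{C}_{Y,b}$ of (\ref{eq:retpek6x1}), and then extract the $b\to 0$ limit by exploiting the harmonic oscillator structure of the leading term and Theorem \ref{Tcompidebis}. The conjugations (\ref{eq:triv-6}) and (\ref{eq:club43}) relating $\mathcal{A}_{Y,b}$ to $\mathcal{A}_{Y/b^{2}}$ and then to $\mathcal{C}_{Y,b}$ are implemented by the invertible fibrewise maps $\underline{\delta}^{*}_{1/b^{2}}$ and $K_{b}$; both act by a dilation on $\widehat{T_{\mathbf{R}}X}$, so after adjusting for the fibrewise volume form the supertraces transform trivially. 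First I would verify that
\begin{equation*}
\varphi\Trs\!\left[\exp\!\left(-\mathcal{A}^{2}_{Y,b}\right)\right] = \varphi\Trs\!\left[\exp\!\left(-\mathcal{C}^{2}_{Y,b}\right)\right].
\end{equation*}

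Next, using (\ref{eq:retpek6x1}), I would split $\mathcal{C}_{Y,b} = \frac{1}{b}\mathcal{L}+\mathcal{E}_{0}$ with $\mathcal{L}=\overline{\pa}^{V}+i_{y}+\overline{\pa}^{V*}+i_{\overline{y}}+\overline{y}_{*}\wedge$ and $\mathcal{E}_{0}=\nabla^{D\ho q^{*}\mathbf{I}}+C-q^{*}\pa^{X}i\omega^{X}$, so that
\begin{equation*}
\mathcal{C}^{2}_{Y,b} = \tfrac{1}{b^{2}}\mathcal{L}^{2} + \tfrac{1}{b}[\mathcal{L},\mathcal{E}_{0}] + \mathcal{E}_{0}^{2}.
\end{equation*}
Since we have taken $\widehat{g}^{TX}=g^{TX}$, the leading term $\mathcal{L}^{2}$ is, up to a bounded degree-shift operator, the fibrewise harmonic oscillator analysed in (\ref{eq:cohhypoz1}); by the discussion following (\ref{eq:cohhypoz1}), its spectrum is contained in $\mathbf{N}$ with a one-dimensional kernel spanned by $\delta$ of (\ref{eq:forbeta}). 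Let $P$ denote the fibrewise $L_{2}$ projector onto $\mathbf{C}\delta$. The structural identity of Theorem \ref{Tcompidebis}, namely $P\mathcal{E}P = A^{p_{*}\mathscr{E}_{0}}$ (after the auxiliary conjugation by $\exp(i\Lambda)$ relating $\mathcal{C}_{Y,b}$ with $\mathcal{D}_{Y,b}$ in (\ref{eq:club47})), is exactly what identifies the would-be limit.

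The heart of the argument is to prove the following convergence in fibrewise trace norm, uniformly on $S$:
\begin{equation*}
\exp\!\left(-\mathcal{C}^{2}_{Y,b}\right) \;\longrightarrow\; P\exp\!\left(-(P\mathcal{E}_{0}P)^{2}\right)P\quad \text{as}\ b\to 0,
\end{equation*}
with a corresponding convergence for the diagonal of the smooth Schwartz kernel, strong enough to pass to the limit in the fibrewise supertrace. Granting this, $\varphi\Trs$ of the right-hand side, after undoing the $\exp(i\Lambda)$ conjugation and using Theorem \ref{Tcompidebis}, produces exactly $\ch\!\left(A^{p_{*}\mathscr{E}_{0}\prime\prime},\omega^{X},g^{D}\right)$, yielding (\ref{eq:lim1}).

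The hard part is the semiclassical analysis justifying the limit of the hypoelliptic heat kernel as $b\to 0$. The natural approach, following Bismut--Lebeau, is to write $\mathcal{C}^{2}_{Y,b}$ as a block matrix with respect to the decomposition $L_{2}=\mathrm{Im}\,P\oplus\mathrm{Im}\,(1-P)$, exploit the uniform spectral gap of $\mathcal{L}^{2}$ on $\mathrm{Im}\,(1-P)$, and expand the resolvent via a Schur complement; the $1/b$ cross terms are harmless on $\mathrm{Im}\,P$ because $P\mathcal{L}=\mathcal{L}P=0$, and the cancellation that makes $P\mathcal{E}P$ a genuine elliptic superconnection is exactly the mechanism already used in the proof of Theorem \ref{Tcompidebis}. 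The principal obstacle is establishing the uniform-in-$b$ trace-class bounds on $\exp(-\mathcal{C}^{2}_{Y,b})$ with rapid Gaussian decay along the fibre $\widehat{T_{\mathbf{R}}X}$, so that one may conclude pointwise convergence of Schwartz kernels upgrades to convergence of fibrewise supertraces; this requires propagating hypoelliptic estimates (à la Hörmander), semiclassical Weyl-type bounds for the harmonic oscillator, and careful tracking of the non-self-adjointness of $\mathcal{C}_{Y,b}$ with respect to the $L_{2}$ structure versus its self-adjointness with respect to $\eta_{X}$.
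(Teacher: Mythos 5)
Your proposal follows essentially the same route as the paper: conjugate $\mathcal{A}_{Y,b}$ to a rescaled operator whose singular ($1/b$) part is a fibrewise harmonic oscillator, project onto the kernel spanned by $\delta$, invoke Theorem~\ref{Tcompidebis} to identify $P\mathcal{E}P$ with $A^{p_{*}\mathscr E_{0}}$, and then justify the collapse of the heat kernel by the Bismut--Lebeau localization scheme. This is exactly what the paper does in (\ref{eq:lim2})--(\ref{eq:lim3}), citing \cite{BismutLebeau06a}, \cite{Bismut06d}, and \cite{Bismut10b} for the heavy lifting that your Schur-complement sketch spells out.

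One wrinkle worth straightening: you set things up with $\mathcal{C}_{Y,b}$ and its fibrewise piece $\mathcal{L}=\overline{\pa}^{V}+i_{y}+\overline{\pa}^{V*}+i_{\overline{y}}+\overline{y}_{*}\wedge$, postponing the $\exp\left(i\Lambda\right)$ conjugation to the very end. The paper performs that conjugation first, working with $\mathcal{D}_{Y,b}$ whose fibrewise piece is $\overline{\pa}^{V}+i_{y}+\overline{\pa}^{V*}+\overline{y}_{*}\wedge$; this operator is self-adjoint with respect to $\eta_{X}$, its square is exactly the harmonic oscillator of (\ref{eq:cohhypoz1}), and $\delta$ genuinely spans its $L_{2}$ kernel. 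Your $\mathcal{L}$ carries the extra $i_{\overline{y}}$; its square differs from (\ref{eq:cohhypoz1}) by a nontrivial degree-zero term (roughly $\overline{\widehat{w}}^{j}i_{\overline{w}_{j}}$ coming from $\left[\overline{\pa}^{V},i_{\overline{y}}\right]$), $\mathcal{L}$ is not self-adjoint for the relevant form, and the $L_{2}$ projector onto $\mathbf{C}\delta$ is not the spectral projector associated with $\mathcal{L}^{2}$. If you insist on $\mathcal{C}_{Y,b}$ you would have to re-identify the correct (non-orthogonal) rank-one projector, whereas working with $\mathcal{D}_{Y,b}$ from the outset removes the ambiguity and lets you quote (\ref{eq:cohhypoz1}) and (\ref{eq:forbeta}) verbatim. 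With that adjustment, your sketch matches the paper's argument.
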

 \begin{proof}
 We use the notation of Subsection \ref{subsec:sca}. By definition, 
 \begin{equation}\label{eq:lim2}
\ch\left(\mathcal{A}^{\prime \prime 
}_{Y},\omega^{X},g^{D},b^{4}g^{TX}\right)=\varphi\Trs\left[\exp\left(-\mathcal{A}^{2}_{Y,b}\right)\right].
\end{equation}
By (\ref{eq:triv-6})--(\ref{eq:club47}), we can rewrite 
(\ref{eq:lim2}) in the form
\begin{equation}\label{eq:lim3}
\ch\left(\mathcal{A}^{\prime \prime 
}_{Y},\omega^{X},g^{D},b^{4}g^{TX}\right)=\varphi\Trs\left[\exp\left(-\mathcal{D}^{2}_{Y,b}\right)\right].
\end{equation}
Using (\ref{eq:retpek6x1}), Theorem \ref{Tcompidebis}, and by 
proceeding exactly as in the proofs of  \cite[Theorem 
5.2.1]{BismutLebeau06a}, \cite[Theorem 7.13]{Bismut06d},  and 
\cite[Theorem 7.6.2]{Bismut10b}, we get (\ref{eq:lim1}). The proof of our theorem is completed. 
\end{proof}
\begin{theorem}\label{thm:fun1}
	The following identities hold:
	\begin{equation}\label{eq:lim4}
\ch_{\mathrm{BC}}\left(\mathcal{A}^{\prime \prime 
}_{Y}\right)=\ch_{\mathrm{BC}}\left(A^{p_{*} \mathscr E_{0}\prime \prime 
}\right)=
\ch_{\mathrm{BC}}\left(Rp_{*}\mathscr 
E\right)\,\mathrm{in}\,H^{(=)}_{\mathrm{BC}}\left(S,\R\right).
\end{equation}
\end{theorem}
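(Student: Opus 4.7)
The plan is to combine three results that have already been established in the excerpt: Theorem \ref{thm:Tdouble} (independence of the Bott-Chern class of the hypoelliptic superconnection form on all metric data), Theorem \ref{thm:lim} (pointwise convergence of the hypoelliptic forms to the elliptic forms as $b\to 0$ when $\hat g^{TX}=b^{4}g^{TX}$), and Theorem \ref{thm:Tes} (identification of $\cBC(A^{p_{*}\mathscr E\prime\prime})$ with $\cBC(Rp_{*}\mathscr E)$). The second equality in \eqref{eq:lim4} is exactly the content of Theorem \ref{thm:Tes}, so nothing further is required there; the real content is the first equality.

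To prove $\cBC(\mathcal{A}''_{Y})=\cBC(A^{p_{*}\mathscr E_{0}\prime\prime})$, I would fix $\omega^{X}$ and $g^{D}$, and exploit the freedom in Theorem \ref{thm:Tdouble} to choose the Hermitian metric on $\widehat{TX}$ within a given projective class. For each $b>0$, take $g^{\widehat{TX}}=b^{4}g^{TX}$; by Theorem \ref{thm:Tdouble},
\begin{equation*}
\cBC(\mathcal{A}''_{Y})=\{\ch(\mathcal{A}''_{Y},\omega^{X},g^{D},b^{4}g^{TX})\}\ \text{in}\ H^{(=)}_{\mathrm{BC}}(S,\R).
\end{equation*}
By Theorem \ref{thm:lim}, the smooth closed forms in the right-hand side converge uniformly as $b\to 0$ to the elliptic superconnection form $\ch(A^{p_{*}\mathscr E_{0}\prime\prime},\omega^{X},g^{D})$. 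Since these are closed forms in $\Omega^{(=)}(S,\R)\subset \mathscr D(S,\C)$, Proposition \ref{prop:plim} guarantees that their Bott-Chern classes converge, yielding
\begin{equation*}
\cBC(\mathcal{A}''_{Y})=\{\ch(A^{p_{*}\mathscr E_{0}\prime\prime},\omega^{X},g^{D})\}=\cBC(A^{p_{*}\mathscr E_{0}\prime\prime}),
\end{equation*}
where the last equality is Definition \ref{def:ellscn}. Combining with Theorem \ref{thm:Tes} gives \eqref{eq:lim4}.

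In this argument, the two hard ingredients, Theorems \ref{thm:Tdouble} and \ref{thm:lim}, have already been stated and proved; the only thing that might look like an obstacle here is the continuity of the passage to the Bott-Chern class, but this is handled directly by Proposition \ref{prop:plim} once one observes that the forms in question are smooth, closed, and converge in $\Omega^{(=)}(S,\R)$ (which is stronger than convergence in $\mathscr D(S,\C)$). The genuine analytic difficulty is concentrated in Theorem \ref{thm:lim}, where the adiabatic limit $b\to 0$ requires the hypoelliptic localisation techniques of \cite{BismutLebeau06a} together with the key algebraic identity $P\mathcal{E}P=A^{p_{*}\mathscr E_{0}}$ of Theorem \ref{Tcompidebis}; but since we are allowed to quote Theorem \ref{thm:lim} as given, the present theorem reduces to a short assembly argument.
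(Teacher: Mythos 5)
Your proposal is correct and follows essentially the same route as the paper, which simply cites Theorems \ref{thm:Tes}, \ref{thm:Tdouble}, and \ref{thm:lim} as giving the result immediately. Your extra step of invoking Proposition \ref{prop:plim} to justify that the Bott-Chern classes of the converging smooth closed forms themselves converge is a sensible way to make explicit the continuity the paper treats as obvious.
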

\begin{proof}
	This is an obvious consequence of Theorems \ref{thm:Tes}, 
	\ref{thm:Tdouble}, and \ref{thm:lim}.
\end{proof}
\section{The hypoelliptic superconnection forms when 
$\overline{\pa}^{X}\pa^{X}\omega^{X}=0$}%
\label{sec:hypodb}
The purpose of this Section is to prove Theorem \ref{thm:sub} when 
$\overline{\pa}^{X}\pa^{X}\omega^{X}=0$ using the hypoelliptic 
superconnection forms of Section \ref{sec:hypofo}. This result had 
already been proved in Section \ref{sec:prka} using elliptic 
superconnection forms. While the hypoelliptic superconnection forms 
do not give a better result, the techniques used in the present Section 
prepare for Section \ref{sec:RBC}, where Theorem \ref{thm:sub} will 
be established in full generality using deformations of our 
hypoelliptic superconnections.

This Section is organized as follows. In Subsection 
\ref{subsec:lialge}, we compute finite and infinite-dimensional 
supertraces on certain model operators. These computations will later 
be used in a local index theoretic context.

In Subsection \ref{subsec:sepr}, given $t>0$, when replacing 
$\left(\omega^{X},g^{\widehat{TX}},g^{D}\right)$ by 
$\left(\omega^{X}/t,g^{\widehat{TX}}/t^{3},g^{D}\right)$, we compute 
the corresponding hypoelliptic curvature, and we establish various 
scaling identities.

Finally, in Subsection \ref{subsec:cava}, when 
$\overline{\pa}^{X}\pa^{X}\omega^{X}=0$, we obtain the limit as 
$t\to 0$ of our hypoelliptic superconnections forms, and we prove 
Theorem \ref{thm:sub}. The local index theoretic techniques of this 
Subsection will be used again in Subsection \ref{subsec:exo}.
\subsection{Finite and infinite-dimensional traces}%
\label{subsec:lialge}
Let $W$ be a complex vector space of dimension $n$, and let $W_{\R}$ be the corresponding real vector space. If
 $W_{\C}=W_{\R} \otimes _{\R}\C$ is its complexification, 
 then $W_{\C}=W \oplus \overline{W}$. 

Let $\mathfrak W$ be 
another copy of $W$. Observe that
\begin{equation}\label{eq:lwa}
\Lambda\left(\overline{\mathfrak W} \oplus \overline{\mathfrak 
W}^{*}\right)=\Lambda\left(\overline{\mathfrak 
W}\right)\ho\Lambda\left(\overline{\mathfrak W}^{*}\right).
\end{equation}

 Let 
$w_{1},\ldots, w_{n}$ be a basis of $W$, let $w^{1},\ldots,w^{n}$ be 
the associated dual basis of $W^{*}$. Let  $\mathfrak w_{1},\ldots, 
\mathfrak w_{n}$ be the  corresponding basis of $\mathfrak W$, and let
$\mathfrak w^{1},\ldots, \mathfrak w^{n}$ the associated dual basis of 
$\mathfrak W^{*}$. The supertrace maps $\Lambda\left(\overline{\mathfrak 
W} \oplus \overline{\mathfrak W}^{*}\right)\ho \End \left( \Lambda\left(\overline{W}^{*}\right) 
\right) $ into $\Lambda\left(\overline{\mathfrak W}\oplus 
\overline{\mathfrak W}^{*}\right)$ and vanishes 
on supercommutators.

Note that
$\Lambda^{n}\left(\overline{\mathfrak 
W}^{*}\right)\ho\Lambda^{n}\left(\overline{\mathfrak W}\right)$ is canonically trivial, and  $\overline{\alpha}=\prod_{1}^{n}\overline{\mathfrak w}^{i}\overline{\mathfrak 
w}_{i}$ is the canonical section. If $\theta\in 
\Lambda\left(\overline{\mathfrak 
W}^{*}\right)\ho\Lambda\left(\overline{\mathfrak W}\right)$, let $\theta^{\max}\in\C$ 
be  the coefficient of $\overline{\alpha}$ in the  expansion of $\theta$.

If $A\in \End\left(W\right)$, then we still denote by $A$ the 
conjugate action of $\overline{A}$ on $\overline{W}$.

We establish a form of a result of Mathai-Quillen \cite[eq. 
(2.13)]{MathaiQuillen86}, \cite[Proposition 6.10]{Bismut06d}.
\begin{proposition}\label{prop:gra}
	If $A\in \End\left(W\right)$, then
	\begin{equation}\label{eq:dee-1}
\Trs\left[\exp\left(-\overline{w}^{i}\overline{\mathfrak w}_{i}+i_{\overline{w}_{j}}
\overline{\mathfrak  
w}^{j}+\left\langle  
A\overline{w}_{i},\overline{w}^{j}\right\rangle\overline{w}^{i}i_{\overline{w}_{j}}\right)\right]^{\max}
=\Td^{-1}\left(-\overline{A}\right).
\end{equation}
\end{proposition}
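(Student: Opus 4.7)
The plan is to reduce to $\dim W = 1$ by a multiplicativity argument and then verify the identity by explicit matrix computation in the one-dimensional case.

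First, both sides of \eqref{eq:dee-1} are multiplicative under direct sum decompositions $W = W_{1}\oplus W_{2}$ with $A$ block-diagonal. The operator
\begin{equation*}
T = -\overline{w}^{i}\overline{\mathfrak w}_{i}+i_{\overline{w}_{j}}\overline{\mathfrak w}^{j}+\langle A\overline{w}_{i},\overline{w}^{j}\rangle\overline{w}^{i}i_{\overline{w}_{j}}
\end{equation*}
splits as a sum $T = T_{1}+T_{2}$ of even commuting operators acting on the tensor factors $\Lambda(\overline{\mathfrak W_{\alpha}}\oplus\overline{\mathfrak W_{\alpha}}^{*})\ho\End(\Lambda(\overline{W_{\alpha}}^{*}))$, so $\exp(-T) = \exp(-T_{1})\exp(-T_{2})$. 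The supertrace factorizes over the tensor product of endomorphism algebras, and the top-degree extraction $(\cdot)^{\max}$ on $\Lambda(\overline{\mathfrak W}\oplus\overline{\mathfrak W}^{*})$ factorizes into a product of the corresponding extractions on the two pieces. Multiplicativity of $\Td^{-1}(-\overline{A})$ under block-diagonal $A$ is built into the determinantal definition.

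Since both sides are polynomial in the matrix entries of $A$, it suffices to treat $A$ diagonalizable; a change of basis and the multiplicativity above then reduce the problem to $\dim W = 1$. Letting $\lambda = \langle A\overline{w}, \overline{w}^{*}\rangle$ denote the eigenvalue of $\overline{A}$ in that case, and using the basis $\{1,\overline{w}^{*}\}$ of $\Lambda(\overline{W}^{*})$, I represent $\overline{w}^{*}$, $i_{\overline{w}}$, and $N = \overline{w}^{*}i_{\overline{w}}$ as elementary $2\times 2$ matrices and write $T$ as
\begin{equation*}
T = \begin{pmatrix} 0 & \overline{\mathfrak w}^{*} \\ -\overline{\mathfrak w} & \lambda \end{pmatrix}
\end{equation*}
with entries in the four-dimensional nilpotent algebra spanned by $\{1,\overline{\mathfrak w},\overline{\mathfrak w}^{*},a\}$, where $a = \overline{\mathfrak w}^{*}\overline{\mathfrak w} = \overline{\alpha}$. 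A short recursion gives a closed form for the powers $(-T)^{k}$ for $k\ge 2$, from which $\exp(-T)$ is computed by summing elementary exponential series termwise. Taking the supertrace (the difference of diagonal entries) and the coefficient of $a$ yields a rational expression in $\lambda$ built from $e^{\pm\lambda}$, which one matches with $\Td^{-1}(-\lambda)$ to complete the one-dimensional case.

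The main technical point of care is the $\Z_{2}$-graded sign bookkeeping in computing the powers $(-T)^{k}$, owing to the anticommutation of the Grassmann generators $\overline{\mathfrak w},\overline{\mathfrak w}^{*}$ with the odd operators $\overline{w}^{i},i_{\overline{w}_{j}}$ in the super-tensor-product $\Lambda(\overline{\mathfrak W}\oplus\overline{\mathfrak W}^{*})\ho\End(\Lambda(\overline{W}^{*}))$; in particular the identities $\overline{\mathfrak w}\,\overline{\mathfrak w}^{*} = -a$, $i_{\overline{w}}\overline{w}^{*} = 1 - N$, and the anticommutation rules must be applied consistently in the matrix product. Once those conventions are pinned down, the recursion, the series summation, and the identification with the Todd genus are routine, and the general statement follows from the multiplicativity already established.
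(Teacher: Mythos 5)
Your route is genuinely different from the paper's. The paper conjugates the exponent by $\exp(M)$ with $M=-\overline{\mathfrak w}^{i}i_{A^{-1}\overline{w}_{i}}-\overline{\mathfrak w}_{i}[\widetilde A^{-1}\overline{w}^{i}]$, a Mathai--Quillen-type transgression which decouples the three summands into supercommuting pieces in every dimension at once; invariance of $\Trs$ under conjugation then factors the left-hand side as $\exp\left(-\left\langle A^{-1}\overline{w}_{i},\overline{w}^{j}\right\rangle\overline{\mathfrak w}^{i}\overline{\mathfrak w}_{j}\right)\det\left(1-e^{\overline{A}}\right)$, and the $\max$-coefficient of the Grassmann Gaussian gives $\det\left(-\overline{A}\right)^{-1}$. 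You instead reduce to the diagonalizable case by multiplicativity and then compute directly in one dimension. The multiplicativity step is sound: the $T_{1},T_{2}$ over disjoint blocks are even and supercommute, $\Trs$ factors over the graded tensor product, and $\overline{\alpha}=\overline{\alpha}_{1}\overline{\alpha}_{2}$ so $[\,\cdot\,]^{\max}$ factors. The reduction to diagonal $A$ is legitimate because both sides are entire functions of $A$ (not polynomial, as you write, but density of diagonalizable matrices plus continuity does the job). What your route buys is elementariness --- no conjugation formula to guess; the cost is a hands-on power-series computation in rank one.

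The one step that would actually fail as written is a sign. With your $T=-\overline{w}^{i}\overline{\mathfrak w}_{i}+i_{\overline{w}_{j}}\overline{\mathfrak w}^{j}+\left\langle A\overline{w}_{i},\overline{w}^{j}\right\rangle\overline{w}^{i}i_{\overline{w}_{j}}$, the proposition asks for $\Trs[\exp(T)]^{\max}$, not $\Trs[\exp(-T)]^{\max}$. If you push your $2\times 2$ recursion through for $\exp(-T)$ you obtain $(1-e^{-\lambda})/\lambda$, which misses $\Td^{-1}(-\lambda)=(e^{\lambda}-1)/\lambda$ by the factor $e^{\lambda}=\det e^{\overline{A}}$ in general; this is not a harmless normalization. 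With the correct sign the recursion closes: in rank one, the Grassmann-degree-two part of $T^{k}$ is $\lambda^{k-2}(kN-1)\,\overline{\mathfrak w}\,\overline{\mathfrak w}^{*}$ for $k\ge 2$, where $N=\overline{w}^{*}i_{\overline{w}}$; dividing by $k!$, summing, applying $\Trs[N]=-1$, $\Trs[1]=0$, and extracting the coefficient of $\overline{\alpha}=\overline{\mathfrak w}^{*}\overline{\mathfrak w}=-\overline{\mathfrak w}\,\overline{\mathfrak w}^{*}$ gives $(e^{\lambda}-1)/\lambda$, which is $\Td^{-1}(-\lambda)$ as required. So the plan is correct once the spurious overall minus is removed.
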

\begin{proof}
	If $h\in \overline{W}$, let $\left[h\right]$ 
	denote the element that corresponds in $\overline{\mathfrak  
	W}$. A similar  
	 notation is used for elements in $\overline{W}^{*}$.
	 
	First, we assume  that $A$ is invertible. Let $\widetilde A\in 
	\End\left(W^{*}\right)$ be 
	the transpose of $A$. Put
\begin{equation}\label{eq:dee20}
M=-\overline{\mathfrak 
w}^{i}i_{A^{-1}\overline{w}_{i}}-\overline{\mathfrak 
w}_{i}\left[\widetilde A^{-1}\overline{w}^{i}\right].
\end{equation}
If $f\in W,u\in W^{*}$, then
\begin{align}\label{eq:dee21}
&\exp\left(M\right)\overline{u}\exp\left(-M\right)=\overline{u}-\left[\widetilde A^{-1}\overline{u}\right],\\
&\exp\left(M\right)i_{\overline{f}}\exp\left(-M\right)=i_{\overline{f}}-\left[A^{-1}\overline{f}\right]. \nonumber 
\end{align}
By (\ref{eq:dee21}), we deduce that
\begin{multline}\label{eq:dee22}
\exp\left(M\right)\left\langle  
A\overline{w}_{i},\overline{w}^{j}\right\rangle\overline{w}^{i}i_{\overline{w}_{j}}\exp\left(-M\right)=\left\langle  
A\overline{w}_{i},\overline{w}^{j}\right\rangle\overline{w}^{i}i_{\overline{w}_{j}}
\\-\overline{w}^{i}\overline{\mathfrak w}_{i}+i_{\overline{w}_{j}}\overline{\mathfrak  
w}^{j}+\left\langle  
A^{-1}\overline{w}_{i},\overline{w}^{j}\right\rangle 
\overline{\mathfrak w}^{i}\overline{\mathfrak w}_{j}.
\end{multline}
Since supertraces vanish on supercommutators, we deduce from 
(\ref{eq:dee22}) that
\begin{multline}\label{eq:dee23}
\Trs\left[\exp\left(-\overline{w}^{i}\overline{\mathfrak w}_{i}+i_{\overline{w}_{j}}\overline{\mathfrak  
w}^{j}+\left\langle  
A\overline{w}_{i},\overline{w}^{j}\right\rangle\overline{w}^{i}i_{\overline{w}_{j}}\right)\right]\\
=\exp\left(-\left\langle  
A^{-1}\overline{w}_{i},\overline{w}^{j}\right\rangle 
\overline{\mathfrak w}^{i}\overline{\mathfrak w}_{j}\right)
\Trs\left[\exp\left(\left\langle  
A\overline{w}_{i},\overline{w}^{j}\right\rangle 
\overline{w}^{i}i_{\overline{w}^{j}}\right)\right].
\end{multline}
Also, 
\begin{equation}\label{eq:dee24}
\Trs\left[\exp\left(\left\langle  
A\overline{w}_{i},\overline{w}^{j}\right\rangle 
\overline{w}^{i}i_{\overline{w}_{j}}\right)\right]=\det\left(1-\exp\left(\overline{A}\right)\right).
\end{equation}
If $A$ is diagonalizable, we get 
\begin{equation}\label{eq:dee25}
\left[\exp\left(-\left\langle  
A^{-1}\overline{w}_{i},\overline{w}^{j}\right\rangle 
\overline{\mathfrak w}^{i}\overline{\mathfrak 
w}_{j}\right)\right]^{\max}=\left[\det\left(-\overline{A}\right)\right]^{-1}.
\end{equation}
Equation (\ref{eq:dee25}) extends by continuity when $A$ is only 
supposed to be invertible.
By (\ref{eq:dee23})--(\ref{eq:dee25}), we get  (\ref{eq:dee-1}) when 
$A$ is invertible. This equation extends by continuity to arbitrary 
$A$. The proof of our proposition is completed. 
\end{proof}

Let $\widehat{W}$ be another copy of $W$. Let $g^{\widehat{W}}$ be a Hermitian metric on 
$\widehat{W}$, and let 
$\widehat{g}^{W}$ be the corresponding metric on $W$. Let $\left(U,Y\right)$ be the generic element of $W_{\R} 
\oplus \widehat{W}_{\R}$. Let $dUdY$ be 
the associated volume form on
$W_{\R}\oplus \widehat{W}_{\R}$. Let $\Delta^{\widehat{W}_{\R}}$ be the Laplacian on 
$\widehat{W}_{\R}$.    Let $F\in \End\left(\widehat{W}\right)$ be 
skew-adjoint. Then $F$ acts as antisymmetric endomorphism of 
$\widehat{W}_{\R}$. Let 
$\n^{\widehat{W}_{\R}}_{FY}$ denote differentiation along the vector 
field $FY$.  Let $\n^{W_{\R}}_{Y}$ denote differentiation along 
$W_{\R}$ in the direction $Y$.   

Let 
\index{p@$\mathfrak p_{0}$}%
$\mathfrak p_{0}$ be the  scalar operator,
\begin{equation}\label{eq:dee13}
\mathfrak 
p_{0}=-\frac{1}{2}\Delta^{\widehat{W}_{\R}}+
\n^{\widehat{W}_{\R}}_{FY}+\n^{W_{\R}}_{Y}.
\end{equation}
Let $s_{0}\left(\left(U,Y\right),\left(U',Y'\right)\right)$ denote 
the smooth kernel associated with $\exp\left(-\mathfrak 
p_{0}\right)$ with respect to the volume 
$\frac{dU'dY'}{\left(2\pi\right)^{2n}}$.

Now we state a result taken from  \cite[Theorem 6.8]{Bismut06d}. 
\begin{theorem}\label{thm:tracl}
	If $\left\vert  F\right\vert<2\pi$,  the function 
	$s_{0}\left(\left(0,Y\right),\left(0,Y\right)\right)$ is 
	integrable, and we have the identity,
\begin{equation}\label{eq:dee14}
\int_{\widehat{W}_{\R}}^{}s_{0}\left(\left(0,Y\right),\left(0,Y\right)\right)
\frac{dY}{\left(2\pi\right)^{n}}=\widehat{A}^{2}\left(F\vert_{W_{\R}}\right)=\Td\left(F\vert _{W}\right)\Td\left(-F\vert_{W}\right).
\end{equation}
\end{theorem}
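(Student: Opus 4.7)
The plan is to establish (\ref{eq:dee14}) by a direct Gaussian computation of the heat kernel of $\mathfrak{p}_0$, exploiting the fact that $\mathfrak{p}_0$ is scalar with affine coefficients, so the associated semigroup is a Gaussian semigroup admitting closed-form evaluation. First, I would reduce to a one-dimensional model. Since $F$ is skew-adjoint on $(\widehat{W},g^{\widehat{W}})$, choose a $g^{\widehat{W}}$-orthonormal basis of $\widehat{W}$ diagonalizing $F$, with eigenvalues $i\theta_1,\ldots,i\theta_n$, $\theta_j \in \mathbf{R}$, and the corresponding dual basis of $W$. Under the resulting splitting of $W \oplus \widehat{W}$, the operator $\mathfrak{p}_0$ decomposes as a sum of commuting operators on each $F$-invariant complex line. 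Both the integral on the left-hand side of (\ref{eq:dee14}) and $\Td(F|_W)\Td(-F|_W)$ are multiplicative under this decomposition, so it suffices to prove the identity when $n = 1$ with $F = i\theta$, $|\theta| < 2\pi$.

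Next, in the one-dimensional case, take the partial Fourier transform in $U \in W_{\mathbf{R}}$ with dual variable $\xi \in W_{\mathbf{R}}^{*}$. The operator $\mathfrak{p}_0$ becomes the family of quadratic operators on $\widehat{W}_{\mathbf{R}}$
\begin{equation*}
\widehat{\mathfrak{p}}_{0,\xi} = -\tfrac{1}{2}\Delta^{\widehat{W}_{\mathbf{R}}} + \nabla^{\widehat{W}_{\mathbf{R}}}_{FY} + i\langle Y, \xi\rangle,
\end{equation*}
for which a Mehler-type formula produces the time-one kernel $k_{\xi}(Y,Y';1)$ as an explicit Gaussian whose covariance is a rational function of $\sinh(\theta/2),\cosh(\theta/2),\theta$ and $\xi$. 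The inverse Fourier transform then gives
\begin{equation*}
s_{0}((0,Y),(0,Y)) = \int_{W_{\mathbf{R}}^{*}} \frac{d\xi}{(2\pi)^{n}}\, k_{\xi}(Y,Y;1),
\end{equation*}
and integrating over $Y$ yields
\begin{equation*}
\int_{\widehat{W}_{\mathbf{R}}} s_{0}((0,Y),(0,Y))\frac{dY}{(2\pi)^{n}} = \int_{W_{\mathbf{R}}^{*}} \frac{d\xi}{(2\pi)^{n}}\int_{\widehat{W}_{\mathbf{R}}} k_{\xi}(Y,Y;1)\frac{dY}{(2\pi)^{n}},
\end{equation*}
a double Gaussian integral in $Y$ and then $\xi$. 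The condition $|\theta| < 2\pi$ precisely ensures that the quadratic form in $(Y,\xi)$ appearing in the exponent stays positive definite, hence the integrability assertion.

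Finally, I would carry out the two Gaussian integrations in closed form. By general principles, the result must equal the square root of the determinant of the quadratic form divided, giving per complex eigendirection the factor
\begin{equation*}
\left[\frac{\theta/2}{\sinh(\theta/2)}\right]^{2},
\end{equation*}
which is exactly the one-dimensional contribution to $\widehat{A}(F|_{W_{\mathbf{R}}})^{2}$ in the normalization (\ref{eq:crisp2}). The equality with $\Td(F|_{W})\Td(-F|_{W})$ then follows from the purely algebraic identity $(1-e^{-B})(e^{B}-1) = 4\sinh^{2}(B/2)$ applied to $B = i\theta$, together with the definitions in (\ref{eq:cw6}).

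The main obstacle will be the bookkeeping in the Mehler formula step: keeping track of signs, normalization of $2\pi$ factors, and the asymmetry between the variables $U$ and $Y$ (only $Y$ carries the second-order diffusion, while $U$ is transported along $Y$). An alternative that avoids the Mehler formula is to solve the underlying Gaussian system of linear SDEs $dY_{t} = FY_{t}\,dt + dB_{t}$, $dU_{t} = Y_{t}\,dt$ with initial condition $(U_{0},Y_{0}) = (0,Y)$, and read off the Gaussian density of $(U_{1},Y_{1})$ at $(0,Y)$ directly from the mean and covariance of the explicit solution; this converts Step 2 into a pure linear algebra computation in $F$ and reduces the whole proof to the identity $[\theta^{2}/(2 - 2\cosh\theta + \theta\sinh\theta - 2\cdots)]$-type closed form matching $\widehat{A}^{2}$.
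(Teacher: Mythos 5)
Your proposal is correct in substance and reaches the same result, but it takes a different route from the paper. You diagonalize $F$ over $\C$ into complex eigendirections, reduce to a one-dimensional (complex) model, Fourier-transform in $U$, and then invoke a Mehler-type computation per eigenvalue. The paper also Fourier-transforms in $U$ but then keeps $F$ intact: assuming $F$ invertible, it conjugates $\mathfrak p_{0,\xi}$ by $e^{2i\pi\langle\xi,F^{-1}Y\rangle}$, which (using skew-adjointness of $F$) absorbs the linear potential $2i\pi\langle\xi,Y\rangle$ into the drift and produces a constant $2\pi^{2}\left\vert F^{-1}\xi\right\vert^{2}$; a further translation $Y\to Y+2i\pi F^{-2}\xi$ then cleans up the drift. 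The resulting operator $-\frac{1}{2}\Delta+\nabla_{FY}+2\pi^{2}\left\vert F^{-1}\xi\right\vert^{2}$ has commuting pieces (since $FY$ generates a one-parameter group of isometries), so its time-one kernel is an explicit Gaussian with no need for a Mehler formula; the general case follows by continuity in $F$. The advantage of the paper's conjugation/translation trick is that it is coordinate-free and stays elementary; the advantage of your decomposition is that the final reduction of each $2\times 2$ block to $\left[\theta/2\sinh(\theta/2)\right]^{2}$ is very transparent, and your stochastic alternative (solving $dY_{t}=FY_{t}\,dt+dB_{t}$, $dU_{t}=Y_{t}\,dt$ and reading off the Gaussian density) is yet a third legitimate route that avoids the Fourier transform entirely. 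One terminological caveat: the operator $\widehat{\mathfrak p}_{0,\xi}$ has a drift $\nabla_{FY}$ and a \emph{linear} complex potential, not a quadratic one, so the closed form you need is the Gaussian kernel of an Ornstein--Uhlenbeck-type semigroup (completing the square), rather than the harmonic-oscillator Mehler formula proper; once that is corrected, your outline goes through.
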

\begin{proof}
	We will give a self-contained proof of (\ref{eq:dee14}). Let 
	$ \mathfrak p_{0,\xi}$ be the Fourier transform of $\mathfrak p_{0}$ 
	in the variable $U$. If $\xi\in W^{*}_{\R} \simeq W_{\R}$, then
	\begin{equation}\label{eq:dee14a1}
 \mathfrak 
 p_{0,\xi}=-\frac{1}{2}\Delta^{\widehat{W}_{\R}}+\n^{\widehat{W}_{\R}}_{FY}+
2i\pi\left\langle \xi,Y \right\rangle.
\end{equation}
Assume that $F$ is invertible. Put
\begin{equation}\label{eq:dee14a2}
\mathfrak q_{0,\xi}=e^{2i\pi \left\langle \xi,F^{-1}Y 
\right\rangle}\mathfrak p_{0,\xi}e^{-2i\pi\left\langle  
\xi,F^{-1}Y\right\rangle}.
\end{equation}
Since $F$ is skew-adjoint, we get
\begin{equation}\label{eq:dee14a3}
\mathfrak 
q_{0,\xi}=-\frac{1}{2}\Delta^{\widehat{W}_{\R}}+\n^{\widehat{W}_{\R}}_{FY-2i\pi 
 F^{-1}\xi}+2\pi^{2}\left\vert  
F^{-1}\xi\right\vert^{2}
\end{equation}

Let $\mathfrak r_{0,\xi}$ be the operator deduced from $\mathfrak q_{0,\xi}$ 
by the translation $Y\to Y+2i\pi F^{-2}\xi$. 
Then
\begin{equation}\label{eq:dee14a4}
\mathfrak 
r_{0,\xi}=-\frac{1}{2}\Delta^{\widehat{W}_{\R}}+\n^{\widehat{W}_{\R}}_{FY}+2\pi^{2}\left\vert   F^{-1}\xi\right\vert^{2}.
\end{equation}
Since $F$ is skew-adjoint, the operators in the right-hand side of (\ref{eq:dee14a4})   are 
commuting, and the smooth kernel $\mathbf{r}_{0,\xi}\left(Y,Y'\right)$ for 
$\exp\left(-\mathfrak r_{0,\xi}\right)$ with respect to 
$\frac{dY'}{\left(2\pi\right)^{n}}$ is given by
\begin{equation}\label{eq:dee14a5}
\mathbf{r}_{0,\xi}\left(Y,Y'\right)=\exp\left(-2\pi^{2}\left\vert  
F^{-1}\xi\right\vert^{2}-\frac{1}{2}\left\vert  
e^{-F}Y-Y'\right\vert^{2}\right).
\end{equation}
By (\ref{eq:dee14a5}), if $\mathbf{q}_{0,\xi}\left(Y,Y'\right)$ is the smooth 
kernel for $\exp\left(-\mathfrak q_{0,\xi}\right)$ with respect to 
$\frac{dY'}{\left(2\pi\right)^{n}}$, then
\begin{multline}\label{eq:dee14a6}
\mathbf{q}_{0,\xi} \left(Y,Y'\right)=\exp\left(-2\pi^{2}\left\vert  
F^{-1}\xi\right\vert^{2} \right) \\
\exp \left(  -\frac{1}{2}\left\vert  e^{-F}\left( Y-2i\pi F^{-2}\xi 
\right) -\left( Y'-2i\pi F^{-2}\xi \right) \right\vert^{2} \right) .
\end{multline}

By (\ref{eq:dee14a6}), we deduce that if $1-e^{-F}$ is invertible, 
then
\begin{equation}\label{eq:dee14a7}
\int_{\widehat{W}_{\R}}^{}\mathbf{q}_{0,\xi}\left(Y,Y\right)\frac{dY}{\left(2\pi\right)^{n}}=\frac{\exp\left(-2\pi^{2}\left\vert  F^{-1}\xi\right\vert^{2}\right)
}{\det\left(1-e^{-F}\right)\vert _{W_{\R}}}.
\end{equation}
Using (\ref{eq:dee14a7}), we deduce that if $1-e^{-F}$ is invertible, 
then
\begin{equation}\label{eq:dee14a8}
\int_{\widehat{W}_{\R}}^{}s_{0}\left(\left(0,Y\right),\left(0,Y\right)\right)
\frac{dY}{\left(2\pi\right)^{n}}=\det\left(\frac{F}{1-e^{-F}}\right)\vert_{W_{\R}},
\end{equation}
which is equivalent to (\ref{eq:dee14}). The proof of our theorem is completed. 
\end{proof}
\begin{remark}\label{rem:calc}
	Using (\ref{eq:dee14a6}) and inverting the Fourier transform, one 
	can give an exact formula for the kernel 
	$s_{0}\left(\left(U,Y\right),\left(U',Y'\right)\right)$.
\end{remark}
\subsection{The time parameter}%
\label{subsec:sepr}
Let $\mathsf A_{Y}$ be the superconnection
\begin{equation}\label{eq:coir1}
\mathsf{A}_{Y}=\widehat{\n}^{\mathbf{F},-1/2}_{\mathrm{a}}
-q^{*}\pa^{X}i\omega^{X} 
+\overline{\pa}^{V}+i_{y}+\overline{\pa}^{V*}+i_{\overline{y}}+\overline{y}_{*}\we.
\end{equation}
By (\ref{eq:tot1b}), we get
\begin{equation}\label{eq:coeq3}
\mathcal{A}_{Y}=\mathsf A_{Y}+C.
\end{equation}

First, we extend the constructions of \cite[Section 7.1]{Bismut10b}.
\begin{definition}\label{def:dAt}
	Let 
\index{AYt@$\mathcal{A}_{Y,t}$}%
	$\mathcal{A}_{Y,t}$ be the superconnection $\mathcal{A}_{Y}$ 
 associated with the metrics 
$\left( \omega^{X}/t,g^{D},g^{\widehat{TX}}/t^{3}\right) $. 
\end{definition}

 In the sequel,  $N,N^{V},N^{H},N^{\widehat{V}}$ denote the total 
number operators\footnote{Here, the number operators are the 
classical sums of antiholomorphic and holomorphic degrees.} of $\Lambda\left(T^{*}_{\C}M\right), 
\Lambda\left(T^{*}_{\C}X\right), \Lambda\left(T^{*}_{\C}S\right), 
\Lambda\left(\overline{\widehat{T^{*}X}}\right)$.  Recall that for 
$a>0$,
\index{Ka@$K_{a}$}%
$K_{a}$ was defined in (\ref{eq:club41}).
We will establish an extension of \cite[Proposition 
7.1.5]{Bismut10b}.
\begin{proposition}\label{prop:coeq}
	For $t>0$, the following identity holds:
	\begin{multline}\label{eq:coeq3a}
t^{3N^{\widehat{V}}/2+N^{V}/2}K_{t}\mathcal{A}_{Y,t}
K_{t}^{-1}t^{-3N^{\widehat{V}}/2-N^{V}/2}\\
=t^{-N^{H}/2}\left(\sqrt{t}\mathsf 
A_{Y}+t^{N/2}Ct^{-N/2}\right)t^{N^{H}/2}.
\end{multline}
\end{proposition}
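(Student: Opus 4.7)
The plan is to verify the identity term-by-term using the explicit decomposition of $\mathcal A_Y$ given in (\ref{eq:coro9a1bi}) together with its twist by the metric rescaling. Recall that $\mathcal{A}_{Y,t}$ is obtained from $\mathcal{A}_Y$ by replacing $(\omega^X, g^{\widehat{TX}})$ by $(\omega^X/t, g^{\widehat{TX}}/t^3)$. By (\ref{eq:coro9a1bi}), it splits as the sum of four distinguished pieces: the horizontal connection piece $\widehat{\n}^{\mathbf F,-1/2}_{\mathrm a} - q^*\partial^X i\omega^X$ (with $\omega^X$ now $\omega^X/t$), the curvature piece $C$, the half $\overline\partial^V + i_y$, and the half $\overline\partial^{V*} + i_{\overline y} + \overline{y}_*\we$ (with the adjoint $\overline\partial^{V*}$ and $\overline y_*$ computed with respect to the rescaled metrics). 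I would track the scaling of each piece under the conjugation operator $\Phi_t := t^{3N^{\widehat V}/2 + N^V/2} K_t$.

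First I would record the basic commutation rules. The dilation $K_t$ implements $Y \mapsto tY$, so it sends the multiplication operator by $y$ or $\overline y$ (tautological sections of $\underline q^* TX$) to $t$ times itself, and sends a vertical derivative $\n^V$ to $t^{-1}\n^V$. Conjugation by $t^{aN^V}$ multiplies multiplication by $\overline w^i \in \Lambda(\overline{\TsX})$ by $t^a$ and multiplies contraction $i_{\overline w_i}$ by $t^{-a}$; the same for $N^{\widehat V}$ on $\Lambda(\overline{\widehat{\TsX}})$, and for $N^H$ on $\Lambda(\overline{T^*S})$. The weights $3N^{\widehat V}/2$ and $N^V/2$ in $\Phi_t$ are chosen precisely so that the rescalings of $g^{\widehat{TX}}$ by $t^{-3}$ and of $g^{TX}$ by $t^{-1}$ are absorbed: an orthonormal frame of $\widehat{TX}$ for $g^{\widehat{TX}}/t^3$ carries a factor $t^{3/2}$, and similarly $t^{1/2}$ on $TX$. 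After combining these effects, routine but careful bookkeeping as in \cite[Proposition 7.1.5]{Bismut10b} yields
\[
\Phi_t(\overline\partial^V + i_y)\Phi_t^{-1} = \sqrt t\,(\overline\partial^V + i_y), \qquad \Phi_t(\overline\partial^{V*} + i_{\overline y} + \overline y_*\we)\Phi_t^{-1} = \sqrt t\,(\overline\partial^{V*} + i_{\overline y} + \overline y_*\we),
\]
where on the right-hand sides the operators are computed in the reference $t=1$ metrics. These four summands assemble to $\sqrt t\,\mathsf A_Y$ modulo the horizontal piece.

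For the curvature contribution $C = B + B^*$, the decomposition $B = v_0 + \sum_{i\ge 2} v_i$ with $v_i$ living in $\Lambda^i(\overline{\TsX})\otimes \End^{1-i}(D)$ shows that $v_i$ has total $\Lambda(T^*_\C M)$-degree $i$, so conjugation by $t^{N/2}$ reproduces exactly $t^{N/2} C t^{-N/2}$, matching the factor that arises from the metric rescaling entering into the $*$-adjoint $B^*$. Thus $C$ contributes precisely the second summand on the right of (\ref{eq:coeq3a}).

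The main obstacle will be the horizontal connection term $\widehat{\n}^{\mathbf F,-1/2}_{\mathrm a}$: its antisymmetrization mixes horizontal forms on $M$ with vertical contraction/multiplication operators through the connection form, which, in the parallel-transport trivialization, depends linearly on $\widehat Y$ (cf.\ (\ref{eq:coq2r1})). One must check that the linear-in-$\widehat Y$ correction, together with the $q^*\partial^X i\omega^X/t$ piece, conjugates to an expression sandwiched between $t^{-N^H/2}$ and $t^{N^H/2}$. This is where the overall $N^H$-weight on the right-hand side of (\ref{eq:coeq3a}) originates, and verifying its compatibility with the other pieces is the heart of the computation. Once this bookkeeping is carried out, the claimed identity (\ref{eq:coeq3a}) follows by summing the contributions.
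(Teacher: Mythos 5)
Your overall strategy — split $\mathcal{A}_{Y,t}$ into pieces and track the conjugation $\Phi_t := t^{3N^{\widehat V}/2 + N^V/2}K_t$ through each — is the same as the paper's (which splits as $\mathcal{A}_{Y,t}=\mathsf A_{Y,t}+C$, cites \cite[Proposition 7.1.5]{Bismut10b} for the $\mathsf A_{Y,t}$-piece, and handles $C$ separately), and your computation for the $\overline\pa^V + i_y$ half is correct. But there are two genuine gaps.

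\textbf{The argument for $C$ is wrong.} You claim the factor $t^{N/2}Ct^{-N/2}$ arises by "matching the factor that arises from the metric rescaling entering into the $*$-adjoint $B^*$". But the pure metric $g^D$ on $D$ is \emph{not} rescaled in the definition of $\mathcal{A}_{Y,t}$ (only $\omega^X$ and $g^{\widehat{TX}}$ are), and $B^*$ is taken with respect to the $t$-independent $\theta_{g^{D\ho q^*\mathbf I^{\mathrm c}}}$; since $B$ does not touch the $\mathbf I$-factor, $B^*$ and hence $C$ are $t$-independent. Moreover, you misplace $v_i$ in $\Lambda^i(\overline{T^*X})\ho\End^{1-i}(D)$: in fact $v_i\in\Lambda^i(\overline{T^*M})\ho\End^{1-i}(D)$, so $C$ carries nonzero $N^H$-degree. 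This is precisely the point one must address: since $C$ does not involve $Y$ or $\Lambda(\overline{\widehat{\TsX}})$, one gets $\Phi_t C\Phi_t^{-1}=t^{N^V/2}Ct^{-N^V/2}$, and the equality with the asserted right-hand side $t^{-N^H/2}\bigl(t^{N/2}Ct^{-N/2}\bigr)t^{N^H/2}$ holds only because $N=N^H+N^V$ with $N^H,N^V$ commuting. Your "tautology" remark skips exactly this step, which is the entire content of the $C$-part of the proof.

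\textbf{The horizontal term is not the obstacle you describe.} You appeal to the linear-in-$\widehat Y$ dependence of the connection form in the parallel-transport trivialization (\ref{eq:coq2r1}); but that expansion is a local-index-theory tool used later for the $t\to 0$ asymptotics and is irrelevant to the scaling identity here, which is an exact global statement. What one actually needs is: (i) the connection $\widehat\n^{\mathbf F,-1/2}_{\mathrm a}$ is unchanged by the rescaling $g^{\widehat{TX}}\to g^{\widehat{TX}}/t^3$ (the Chern connection, its torsion, and the induced horizontal distribution are projective invariants), (ii) $K_t$ commutes with it, because linear connections are equivariant under fiberwise dilations $Y\mapsto tY$, and (iii) the $X$- and $S$-directional parts of $\widehat\n^{\mathbf F,-1/2}_{\mathrm a}$ (raising $N^V$, resp.\ $N^H$, by $1$) acquire the weights $t^{1/2}$, resp.\ $t^0$, which match the right-hand side after sandwiching by $t^{\pm N^H/2}$. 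The paper deals with all of this by citing \cite[Proposition 7.1.5]{Bismut10b}; your proposal instead leaves it as unspecified "bookkeeping" and misidentifies what the bookkeeping is.
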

\begin{proof}
	We split $\mathcal{A}_{Y,t}$ as in (\ref{eq:coeq3}), i.e., 
	\begin{equation}\label{eq:coeq4}
\mathcal{A}_{Y,t}=\mathsf A_{Y,t}+C.
\end{equation}
The same arguments as in \cite[Proposition 7.1.5]{Bismut10b}, or an easy explicit 
computation using (\ref{eq:tot1b}) show that
\begin{equation}\label{eq:coeq5}
t^{3N^{\widehat{V}}/2+N^{V}/2}K_{t}\mathsf{A}_{Y,t}K_{t}^{-1}
t^{-3N^{\widehat{V}}/2-N^{V}/2}=t^{-N^{H}/2}\sqrt{t}\mathsf 
A_{Y}t^{N^{H}/2}.
\end{equation}
By (\ref{eq:coeq3}), (\ref{eq:coeq5}),  since $N^{H}+N^{V}=N$, we get 
 (\ref{eq:coeq3a}). The proof of our proposition is completed. 
\end{proof}

Let 
\index{AtY@$\mathcal{A}^{t}_{Y}$}%
$\mathcal{A}^{t}_{Y}$ be the superconnection 
$\mathcal{A}_{Y}$ for the metrics 
$\left(\omega^{TX}/t, g^{D},g^{\widehat{TX}}\right)$. The same 
argument as in the proof of (\ref{eq:triv-6}) shows that
\begin{equation}\label{eq:bor1}
\mathcal{A}^{t}_{t^{3/2}Y}=\underline{\delta}^{*}_{t^{3/2}}\mathcal{A}_{Y,t}
\underline{\delta}^{*-1}_{t^{3/2}}.
\end{equation}
By (\ref{eq:coeq3a}), (\ref{eq:bor1}), we obtain
\begin{equation}\label{eq:bor1a1}
K_{1/\sqrt{t}}t^{N^{V}/2}\mathcal{A}^{t}_{t^{3/2}Y}t^{-N^{V}/2}K_{\sqrt{t}}=
t^{-N^{H}/2}\left( \sqrt{t}\mathsf A_{Y}+t^{N/2}Ct^{-N/2} \right) 
t^{N^{H}/2}.
\end{equation}

Set
\index{Mt@$\mathfrak M_{t}$}%
\begin{equation}\label{eq:bor2}
\mathfrak M_{t}=\mathcal A^{t,2}_{t^{3/2}Y}.
\end{equation}

When $\omega^{X}$ is replaced by $\omega^{X}/t$, the connections 
${}^{0}\widehat{\n}^{\mathbb F},{}^{1}\widehat{\n}^{\mathbb F}$ 
defined in (\ref{eq:deffbr}), (\ref{eq:sup1})  depend on $t$, and will be denoted 
\index{nFt@${}^{0}\widehat{\n}^{\mathbb F}_{t}$}%
\index{nFt@${}^{1}\widehat{\n}^{\mathbb F}_{t}$}%
${}^{0}\widehat{\n}^{\mathbb F}_{t},{}^{1}\widehat{\n}^{\mathbb F}_{t}$. A similar 
notation will be used for the connections on $\mathbf{F}$.

By  Theorems \ref{thm:Tcurvhyp} and \ref{thm:Tcurvhypbi}, we get
\begin{multline}\label{eq:curbhypbis1}
    \mathfrak M_{t}=-\frac{1}{2}\Delta^{V}_{g^{\widehat{TX}}}
+\frac{t^{2}}{2}\left\vert  
Y\right\vert^{2}_{g^{TX}}+\overline{\widehat{w}}^{i}\we\left(t^{3/2}i_{\overline{w}_{i}}
+t^{1/2}\overline{w}_{i*}\right)
-t^{3/2}i_{\overline{\widehat{w}}_{i}}i_{w_{i}}\\
+t^{3/2}{}^{1}\widehat{\n}^{\mathbf{F}}_{t,Y}+t^{3/2}i_{Y}C
-q^{*}\overline{\pa}^{X}\pa^{X}i\omega^{X}/t\\
-\n^{V}_{R^{\widehat{TX}}\widehat{Y}}-
\left\langle  
R^{\widehat{TX}}\overline{\widehat{w}}_{i},\overline{\widehat{w}}^{j}\right\rangle\overline{\widehat{w}}^{i}
i_{\overline{\widehat{w}}_{j}}+A^{E_{0},2}.
\end{multline}

Observe that $\underline{\delta}^{*-1}_{t}\mathcal 
A^{t}_{t^{3/2}Y}\underline{\delta}^{*}_{t}$ is the superconnection 
$\mathcal{A}_{\sqrt{t}Y}$ associated with the metrics 
$\left(\omega^{X}/t,g^{D},g^{\widehat{TX}}/t^{2}\right)$.

Set
\index{Mt@$\mathfrak  M'_{t}$}%
\begin{equation}\label{eq:cup1}
\mathfrak  M'_{t}=\underline{\delta}^{*-1}_{t}\mathfrak M_{t} 
\underline{\delta}^{*}_{t}.
\end{equation}
By (\ref{eq:curbhypbis1}), we get
\begin{multline}\label{eq:cup2}
    \mathfrak M'_{t}=-\frac{t^{2}}{2}\Delta^{V}_{g^{\widehat{TX}}}
+\frac{1}{2}\left\vert  
Y\right\vert^{2}_{g^{TX}}+\overline{\widehat{w}}^{i}\we\left(t^{1/2}i_{\overline{w}_{i}}
+t^{-1/2}\overline{w}_{i*}\right)
-t^{5/2}i_{\overline{\widehat{w}}_{i}}i_{w_{i}}\\
+ t^{1/2}{}^{1}\widehat{\n}^{\mathbf{F}}_{t,Y}+
t^{1/2}i_{Y}C
-q^{*}\overline{\pa}^{X}\pa^{X}i\omega^{X}/t\\
-\n^{V}_{R^{\widehat{TX}}\widehat{Y}}-
\left\langle  
R^{\widehat{TX}}\overline{\widehat{w}}_{i},\overline{\widehat{w}}^{j}\right\rangle\overline{\widehat{w}}^{i}
i_{\overline{\widehat{w}}_{j}}+A^{E_{0},2}.
\end{multline}

Recall that  $N^{V}$ is the total number operator of 
$\Lambda\left(T^{*}_{\C}X\right)$. Put
\index{Mt@$\mathfrak M''_{t}$}%
\index{At@$A_{t}^{E_{0},2}$}%
\index{nFt@${}^{2}\n^{\mathbb F}_{t}$}%
\begin{align}\label{eq:cup3}
&\mathfrak M^{\prime \prime }_{t}= t^{N^{V}/2} \mathfrak M'_{t}
   t^{-N^{V}/2},
&A_{t}^{E_{0},2}=t^{N^{V}/2}A^{E_{0},2}t^{-N^{V}/2},\\
&{}^{2}\widehat{\n}^{\mathbb F}_{t}=t^{N^{V}/2}{}^{1}\widehat{\n}^{\mathbb F}_{t}
t^{-N^{V}/2},
&C_{t}=t^{N^{V}/2}Ct^{-N^{V}/2}. \nonumber 
\end{align}
Again, in (\ref{eq:cup3}), we may as well obtain a connection 
\index{nFt@${}^{2}\widehat{\n}^{\mathbf{F}}_{t}$}%
${}^{2}\widehat{\n}^{\mathbf{F}}_{t}$ on $\mathbf{F}$.

Using (\ref{eq:deffbr}), (\ref{eq:sup1}), and (\ref{eq:cup3}), we get
\begin{equation}\label{eq:cup3a1}
{}^{2}\widehat{\n}^{\mathbb F}_{t,U}=\widehat{\n}^{\mathbb 
F,-1}_{U}-i_{u}\pa^{X}i\omega^{X}+\overline{\gamma u_{*}}.\end{equation}
Since ${}^{2}\widehat{\n}^{\mathbb F}_{t}$ does not depend on $t$, we will 
write instead ${}^{2}\widehat{\n}^{\mathbb F}$, and also obtain a corresponding 
connection  
\index{nF@${}^{2}\widehat{\n}^{\mathbf{F}}$}%
${}^{2}\widehat{\n}^{\mathbf{F}}$ on $\mathbf{F}$.

By (\ref{eq:cup2}), we obtain
\begin{multline}\label{eq:cup3a2}
   \mathfrak M^{\prime \prime }_{t}=-\frac{t^{2}}{2}\Delta^{V}_{g^{\widehat{TX}}}
+\frac{1}{2}\left\vert  
Y\right\vert^{2}_{g^{TX}}+\overline{\widehat{w}}^{i}\we\left(i_{\overline{w}_{i}}
+\overline{w}_{i*}\right)
-t^{2}i_{\overline{\widehat{w}}_{i}}i_{w_{i}}\\
+t^{1/2}{}^{2}\widehat{\n}^{\mathbf{F}}_{Y}-i_{Y}C_{t}
-tq^{*}\overline{\pa}^{X}\pa^{X}i\omega^{X}\\
-t\n^{V}_{R^{\widehat{TX}}\widehat{Y}}-
t\left\langle  
R^{\widehat{TX}}\overline{\widehat{w}}_{i},\overline{\widehat{w}}^{j}\right\rangle\overline{\widehat{w}}^{i}
i_{\overline{\widehat{w}}_{j}}+A_{t}^{E_{0},2}.
\end{multline}
Observe that in (\ref{eq:cup3a2}), there are no longer diverging 
terms as $t\to 0$.

Let
\index{dx@$\widehat{dx}$}%
$\widehat{dx}$ be the volume form on $X$ with respect to the 
metric $g^{\widehat{TX}}$. Given $s\in 
	S,t>0$, let 
	\index{Pt@$P_{t}\left(z,z'\right)$}%
	\index{Pt@$P''_{t}\left(z,z'\right)$}%
	$P_{t}\left(z,z'\right), 
	P''_{t}\left(z,z'\right)$\footnote{Again, we do not write 
	explicitly the 
	dependence of these kernels on $s$.} be the smooth kernel 
	for $\exp\left(-\mathfrak  M_{t}\right),\exp\left(-\mathfrak  M''_{t}\right)$ with respect to the volume 
	$dv_{\mathcal{X}}=\widehat{dx}dY/\left(2\pi\right)^{2n}$ on $\mathcal{X}_{s}$. 
	
	Let 
	\index{dx@$d\left(x,x'\right)$}%
	$d\left(x,x'\right)$ denote the Riemannian distance on $X$ with 
	respect to $\widehat{g}^{TX}$.
	
	Here is a result inspired by \cite[Proposition 
	4.7.1]{BismutLebeau06a}.
	\begin{proposition}\label{prop:estf}
	There exist $m\in\N,c>0,C>0$ such that for $s\in S,t\in]0,1]$, if 
	$z=\left(x,Y\right),z'=\left(x',Y'\right)$, then
	\begin{equation}\label{eq:cup4}
\left\vert  
P''_{t}\left(z,z'\right)\right\vert\le\frac{C}{t^{m}}\exp\left(-c\left(\left\vert  Y\right\vert^{2}+\left\vert  Y'\right\vert^{2}+d^{2}\left(x,x'\right)/t\right) \right) .
\end{equation}
	\end{proposition}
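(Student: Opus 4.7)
The plan is to adapt the Gaussian kernel estimates for hypoelliptic Fokker--Planck type operators developed in \cite[Chapter 4]{BismutLebeau06a}, which cover closely analogous operators. The key features of $\mathfrak{M}''_t$ in (\ref{eq:cup3a2}) that we would exploit are: (i) the diffusive term $-\tfrac{t^2}{2}\Delta^V_{g^{\widehat{TX}}}$ in the fiber variable $Y$, (ii) the harmonic oscillator potential $\tfrac{1}{2}|Y|^2_{g^{TX}}$ which confines $Y$, (iii) the horizontal transport $t^{1/2}\,{}^{2}\widehat{\n}^{\mathbf{F}}_Y$ which drives the displacement in $x$ at speed $\propto \sqrt{t}|Y|$, and (iv) the fact that all the remaining terms are either scalar bounded perturbations or matrix-valued endomorphisms of $\mathbf{F}$ that are uniformly bounded in $t \in ]0,1]$.

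First, I would split $\mathfrak{M}''_t = \mathfrak{L}_t + \mathcal{R}_t$, where $\mathfrak{L}_t$ retains the scalar principal part responsible for the hypoellipticity, namely
\begin{equation*}
\mathfrak{L}_t = -\frac{t^2}{2}\Delta^V_{g^{\widehat{TX}}} + \frac{1}{2}|Y|^2_{g^{TX}} + t^{1/2}\,{}^{2}\widehat{\n}^{\mathbf{F}}_Y,
\end{equation*}
and $\mathcal{R}_t$ collects the matrix terms $\overline{\widehat{w}}^i(i_{\overline{w}_i} + \overline{w}_{i*}) - t^2 i_{\overline{\widehat{w}}_i}i_{w_i} - i_Y C_t - t q^*\overline{\pa}^X\pa^X i\omega^X - t \n^V_{R^{\widehat{TX}}\widehat{Y}}$ together with the curvature and endomorphism contributions. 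All pieces of $\mathcal{R}_t$ are bounded as $t \to 0$, with the single exception of $t\n^V_{R^{\widehat{TX}}\widehat{Y}}$, which is first-order in $Y$ but carries a factor of $t$ and has antisymmetric principal symbol, so it can be absorbed in a Grönwall-type argument without spoiling the Gaussian bound.

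Second, for the scalar operator $\mathfrak{L}_t$, I would use the probabilistic representation via the Langevin diffusion whose generator is the $Y$-part of $\mathfrak{L}_t$, and use the Feynman--Kac formula to express the scalar heat kernel as an expectation over paths $s \mapsto (x_s, Y_s)$. The Mehler formula (compare Theorem \ref{thm:tracl} and (\ref{eq:dee14a6})) provides the exact Gaussian decay $\exp(-c(|Y|^2+|Y'|^2))$ in the $Y$-variable for the frozen-coefficient model, while the $\exp(-c\,d^2(x,x')/t)$ factor emerges from large deviation estimates for the horizontal displacement $x_s - x_0 = t^{1/2}\!\int_0^s Y_u\,du$, since the Gaussian concentration of $Y_u$ with variance $O(1)$ forces a displacement of size $\sqrt{t}$ per unit time. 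A localization with respect to the geodesic distance on $X$, together with finite-propagation arguments, transfers this to the curved setting and produces the stated constants $c,C$, uniform in $s \in S$ by compactness.

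The main obstacle will be the third step: incorporating the matrix perturbation $\mathcal{R}_t$ into the kernel estimate without losing the Gaussian decay. The cleanest route is to iterate Duhamel's formula, writing
\begin{equation*}
e^{-\mathfrak{M}''_t} = e^{-\mathfrak{L}_t} + \sum_{k\ge 1}(-1)^k\!\!\int_{\Delta_k}\!\! e^{-(1-s_1)\mathfrak{L}_t}\mathcal{R}_t e^{-(s_1-s_2)\mathfrak{L}_t}\cdots\mathcal{R}_t e^{-s_k\mathfrak{L}_t}\,ds,
\end{equation*}
and observing that each insertion of $\mathcal{R}_t$ contributes at most a polynomial loss $t^{-\ell}$ for some fixed $\ell$, arising from the unbounded but well-controlled term $t\n^V_{R^{\widehat{TX}}\widehat{Y}}$; because $\mathcal{R}_t$ is at most first-order in $Y$ with subunit weight in $t$, the series truncates at finite order after taking supertrace or, more robustly, converges absolutely in the Gaussian-weighted norm $L^\infty(e^{c(|Y|^2+|Y'|^2) + c\,d^2/t})$. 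The accumulation of constants over finitely many Duhamel insertions gives the prefactor $C/t^m$, completing the estimate (\ref{eq:cup4}).
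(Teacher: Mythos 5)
The paper's own proof is a one-liner: it observes that $\mathfrak{M}''_t$ in (\ref{eq:cup3a2}) has exactly the same structure as the operator in \cite[eq.\ (4.7.4)]{BismutLebeau06a}, differing only by matrix terms that remain uniformly bounded as $t\to 0$, and then delegates entirely to \cite[Proposition 4.7.1]{BismutLebeau06a}. Your proposal instead attempts to reconstruct that Bismut--Lebeau estimate from scratch by a probabilistic route (Feynman--Kac, large deviations) plus a Duhamel expansion. That is a genuinely different and much more ambitious strategy; Bismut--Lebeau's proof is purely PDE-theoretic, resting on weighted hypoelliptic a~priori estimates that are proved \emph{uniform in} $t$, and no probabilistic representation appears there.

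Two steps in your sketch would not close as written. First, the Duhamel iteration: you acknowledge a loss of $t^{-\ell}$ per insertion of $\mathcal{R}_t$ but then claim the series either truncates after taking supertraces or converges absolutely in a Gaussian-weighted norm. Truncation is false here, because $A_t^{E_0,2}$ and the scalar parts of $\mathcal{R}_t$ do not raise Grassmann degree, so arbitrarily long Duhamel strings survive. Absolute convergence with a $t^{-\ell}$ loss per insertion yields $\sum_k t^{-\ell k}/k! = \exp(t^{-\ell})$, which is not of the form $C/t^m$. To avoid this you would need to show that each insertion costs only an $O(1)$ factor, and that is equivalent to having uniform-in-$t$ hypoelliptic resolvent bounds in the Gaussian-weighted norm --- which is precisely the hard content of Bismut--Lebeau's proof and is not supplied by the Duhamel formalism. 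Second, the probabilistic framing mislabels the structure of $\mathfrak{L}_t$: the term $\frac{1}{2}|Y|^2_{g^{TX}}$ enters as a multiplicative potential (a Feynman--Kac killing rate), not as a drift, so the operator is not the generator of a Langevin diffusion without a ground-state transform; and the displacement $x_s-x_0 = t^{1/2}\int_0^s Y_u\,du$ is driven by a \emph{conditioned} process whose law is not simply that of an Ornstein--Uhlenbeck path, so the large-deviation step needs real work to produce the factor $\exp(-c\,d^2(x,x')/t)$ with uniform constants.

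There is also a small but real inaccuracy: you assert that all pieces of $\mathcal{R}_t$ other than $t\n^V_{R^{\widehat{TX}}\widehat{Y}}$ are uniformly bounded endomorphisms, but $-i_Y C_t$ is a multiplication operator of norm $\sim |Y|$ and is not bounded on $\widehat{T_{\R}X}$; it is merely dominated by the confining potential. The deeper point the paper exploits --- and which your sketch does not address --- is that the diffusion coefficient $t^2$ in the fiber and the H\"ormander bracket constant both degenerate as $t\to 0$, so obtaining (\ref{eq:cup4}) with a \emph{uniform} exponent $c$ and a merely polynomial prefactor is exactly what \cite[Chapter 4]{BismutLebeau06a} was built to do. Citing that result, as the paper does, is the efficient path; reproducing it by the route you sketch would require supplying those uniform weighted estimates explicitly.
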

	\begin{proof}
		Comparing equation (\ref{eq:cup3a2}) for $\mathfrak M''_{t}$ 
		with \cite[eq. (4.7.4)]{BismutLebeau06a}, and using the fact 
		the above matrix terms are uniformly bounded,  the proof is 
		  the 
		same as the proof of \cite[Proposition 
		4.7.1]{BismutLebeau06a}.
	\end{proof}
	
	Observe that
	\begin{equation}\label{eq:cup5}
\ch\left(\mathcal{A}''_{Y},\omega^{X}/t,g^{D},g^{\widehat{TX}}/t^{3}\right)=\varphi\int_{\mathcal{X}_{s}}^{}\Trs\left[P_{t}\left(z,z\right)\right]dv_{\mathcal{X}}\left(z\right).
\end{equation}
In (\ref{eq:cup5}), $P_{t}$ can be replaced by 
$P''_{t}$.

By proceeding as in \cite[Proposition 4.8.2]{BismutLebeau06a} and in 
\cite[Proposition 6.3]{Bismut06d}, one 
essential consequence of Proposition \ref{prop:estf} is that to 
evaluate the asymptotics of (\ref{eq:cup5}) as $t\to 0$, we may 
proceed \textit{locally} near any $x\in X$, and this uniformly on $M=X\times 
S$. This means that given $x\in X$, the asymptotics of 
$\int_{\widehat{T_{\R,x}X}}^{}\Trs\left[P_{t}\left(\left(x,Y\right),\left(x,Y\right)\right)\right]dY$ can be evaluated locally near $x$, and ultimately that $X$ can be suitably replaced by $T_{\R,x}X$. This shows that the asymptotics of (\ref{eq:cup5}) as $t\to 0$ can ultimately be obtained locally over $X$. 
\subsection{The case when $\overline{\pa}^{X}\pa^{X}\omega^{X}=0$}%
\label{subsec:cava}
\begin{theorem}\label{thm:sepr}
	If $\overline{\pa}^{X}\pa^{X}\omega^{X}=0$,  then
	\begin{equation}\label{eq:lim5-a}
\ch\left(\mathcal{A}^{\prime \prime }_{Y},\omega^{X}/t,g^{D}, 
g^{\widehat{TX}}/t^{3}\right)\to 
p_{*}\left[q^{*}\Td\left(TX,\widehat{g}^{TX}\right)\ch\left(A^{E_{0} 
\prime \prime },g^{D}\right)\right].
\end{equation}

	If $\overline{\pa}^{X}\pa^{X}\omega^{X}=0$, then
\begin{equation}\label{eq:lim5-b}
\cBC\left(Rp_{*}\mathscr 
E\right)=p_{*}\left[q^{*}\Td_{\mathrm{BC}}\left(TX\right)\cBC\left(\mathscr E\right)\right]\,\mathrm{in}\,H_{\mathrm{BC}}^{(=)}\left(S,\R\right).
\end{equation}
\end{theorem}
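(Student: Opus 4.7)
The strategy is to take the limit as $t\to 0$ of the hypoelliptic superconnection forms via local index theory on the total space $\mathcal{X}$, exploiting the fact that after the conjugations and rescalings built into $\mathfrak{M}''_t$ in (\ref{eq:cup3}), (\ref{eq:cup3a2}), the curvature no longer has diverging coefficients. Starting from (\ref{eq:cup5}), I would replace $P_t$ by $P''_t$, and use the Gaussian bound of Proposition \ref{prop:estf} to localize the integrand near an arbitrary fixed point $x\in X$, reducing the computation to the analysis of a heat kernel on $T_{\R,x}X\oplus\widehat{T_{\R,x}X}$ with $x$ held as a parameter and the coefficients of $\mathfrak{M}''_t$ frozen/Taylor-expanded at $x$. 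This localization step is standard and essentially identical to what is done in \cite[Section 4.8]{BismutLebeau06a} and \cite[Section 6.3]{Bismut06d}.

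Next, I would compute the limit $\mathfrak{M}''_0$ obtained by letting $t\to 0$ in (\ref{eq:cup3a2}), using the hypothesis $\overline{\partial}^X\pa^X\omega^X=0$ to discard the term $tq^{*}\overline{\partial}^X\pa^X i\omega^X$ (which does not vanish trivially otherwise, so this is where the hypothesis enters) and using (\ref{eq:sup7}) to symmetrize the curvature contributions from $\overline{R}^{TX}$ and $\underline{R}^{T_{\R}X}$. After appropriate parallel transport trivializations (along $\overline{\n}^{TX}$ for the $T_{\R}X$ variables and along $\widehat{\n}^{TX}$ for $\widehat{T_{\R}X}$), the limit $\mathfrak{M}''_0$ becomes a sum of three commuting pieces: a matrix Koszul piece of the form $\overline{\widehat{w}}^i(i_{\overline{w}_i}+\overline{w}_{i*})-i_{\overline{\widehat{w}}_i}i_{w_i}+\langle R^{\widehat{TX}}\cdots\rangle$; a scalar harmonic-oscillator piece of the form $-\tfrac12\Delta^V+\tfrac12|Y|^2+\n^V_{R^{\widehat{TX}}\widehat{Y}}+(\text{covariant drift in }Y)$; and the matrix piece $A^{E_0,2}_{s,x}$.

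The trace is then a product of three factors. The matrix Koszul piece, evaluated by Proposition \ref{prop:gra} with $A=-R^{\widehat{TX}}/(2i\pi)$ after the normalization by $\varphi$, produces $\Td^{-1}(\overline{R^{\widehat{TX}}}/(2i\pi))$ (equivalently, $\Td^{-1}$ of the conjugate bundle). The scalar harmonic-oscillator piece, integrated in $(U,Y)$ along the diagonal, contributes $\widehat{A}^{2}(R^{\widehat{TX}}/(2i\pi))=\Td(R^{\widehat{TX}}/(2i\pi))\Td(-R^{\widehat{TX}}/(2i\pi))$ by Theorem \ref{thm:tracl}. Multiplying these two, the unwanted conjugate factor cancels, leaving $\Td(TX,\widehat{g}^{TX})$. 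The remaining $A^{E_0,2}_{s,x}$ factor yields $\ch(A^{E_0''},g^D)$ after the $\varphi$-normalization. Integration over $x\in X$ then reproduces the fibre integral $p_{*}[q^{*}\Td(TX,\widehat{g}^{TX})\ch(A^{E_0''},g^D)]$, proving (\ref{eq:lim5-a}). For (\ref{eq:lim5-b}), I would take the Bott-Chern class of (\ref{eq:lim5-a}), apply Theorem \ref{thm:fun1} to identify the left-hand limit with $\cBC(Rp_{*}\mathscr{E})$, and use (\ref{eq:coai2})--(\ref{eq:coai3}) (valid because $\underline{\n}^{T_{\R}X}$ is of type $(1,1)$ under our hypothesis) to convert $\{\widehat{A}(T_{\R}X,\widehat{\n}^{T_{\R}X})\}e^{c_{1,\mathrm{BC}}(TX)/2}$ into $\Td_{\mathrm{BC}}(TX)$.

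The main obstacle will be the careful bookkeeping in the limit $\mathfrak{M}''_t\to\mathfrak{M}''_0$: the matrix entries live simultaneously in $\Lambda(T^{*}_{\C}S)$, $\Lambda(T^{*}_{\C}X)$, $\Lambda(\overline{\widehat{T^{*}X}})$ and $\End(D)$, and the correct identification of which Clifford/exterior variables correspond to $R^{\widehat{TX}}$ versus $R^{TX}$ (modulo the symmetry (\ref{eq:sup7}), which is available precisely because $\overline{\partial}^X\pa^X\omega^X=0$) is what makes the cancellation of the conjugate $\Td$ work and produces the Todd class rather than $\widehat{A}^{2}$. The hypoelliptic Gaussian estimate of Proposition \ref{prop:estf} is what legitimates passing to the limit and to the local model, but the content of the theorem is entirely in the algebraic matching of (\ref{eq:dee-1}) and (\ref{eq:dee14}).
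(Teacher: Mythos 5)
Your high-level plan (localize via the hypoelliptic Gaussian estimate, take a rescaling limit, apply Proposition \ref{prop:gra} and Theorem \ref{thm:tracl}, multiply out to get $\Td$) is the right one, and the use of the hypothesis $\overline{\pa}^{X}\pa^{X}\omega^{X}=0$ to discard $q^{*}\overline{\pa}^{X}\pa^{X}i\omega^{X}/t$ is exactly where it enters. However, there is a genuine gap in the limit computation: you propose to let $t\to 0$ directly in $\mathfrak M''_{t}$ from (\ref{eq:cup3a2}) and claim the limit decomposes into three commuting pieces, but this is not what the naive limit of $\mathfrak M''_{t}$ produces, and it could not be. The conjugation $t^{N^{V}/2}(\cdot)t^{-N^{V}/2}$ built into $\mathfrak M''_{t}$ multiplies each $\Lambda^{p,q}(T^{*}_{\C}X)$-component of $A^{E_{0},2}$ by $t^{(p+q)/2}$, so as $t\to 0$ everything of positive $\Lambda(T^{*}_{\C}X)$-degree in the curvature is killed, and you would end up with only the $\Lambda(T^{*}_{\C}S)$-part of $A^{E_{0},2}$ rather than the full form whose supertrace gives $\ch(A^{E_{0}\prime\prime},g^{D})$. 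Moreover the ``Koszul piece'' you write, $\overline{\widehat w}^{i}(i_{\overline w_{i}}+\overline w_{i*})-i_{\overline{\widehat w}_{i}}i_{w_{i}}+\langle R^{\widehat{TX}}\cdots\rangle$, contains genuine contraction operators on $\Lambda(T^{*}_{\C}X)$, which do \emph{not} commute with the wedge-valued terms of $A^{E_{0},2}$, so the supertrace does not factor as you claim, and Proposition \ref{prop:gra} does not directly apply in that form (it requires the $\mathfrak w_{i}, \overline{\mathfrak w}_{i}$ to be auxiliary Grassmann variables commuting with $\End(\Lambda(\overline{W}^{*}))$).

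What is missing is the Getzler-type conjugation by $\exp(-\mathfrak o/t^{3/2})$ (see (\ref{eq:froid3})--(\ref{eq:froid4})), applied to $\mathfrak M_{t}$ together with the rescaling $I_{t^{3/2}}$ of the base variable $U$. This conjugation introduces a genuinely new Grassmann algebra $\Lambda(\mathfrak W_{\C,x})$, replaces $t^{3/2}i_{w_{i}}$ by $t^{3/2}i_{w_{i}}+\mathfrak w_{i}\to\mathfrak w_{i}$ while leaving the wedge products $w^{i},\overline w^{i}$ (and hence the whole $A^{E_{0},2}$) untouched, and produces the limit $\mathfrak P_{0,s,x}$ of (\ref{eq:froid4x1}) whose pieces genuinely do commute. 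The operator $\mathfrak M''_{t}$ serves only to establish the uniform Gaussian estimate (Proposition \ref{prop:estf}); the actual limit computation must go through $\mathfrak M_{t}$ and the Getzler rescaling. Two smaller slips: the trivialization of $TX$ is along $\widehat\n^{TX,-1}$ (not $\overline\n^{TX}$, which belongs to the elliptic proof of Theorem \ref{thm:clo}), and the symmetry (\ref{eq:sup7}) relating $\overline R^{T_{\R}X}$ and $\underline R^{T_{\R}X}$ is not used here at all --- only $R^{\widehat{TX}}$ appears in the limit --- while the final $\Td(TX,\widehat g^{TX})$ comes from the identity $\widehat A^{2}\Td^{-1}=\Td(-\,\cdot)$ in (\ref{eq:12a5}). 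Consequently the passage to (\ref{eq:lim5-b}) is immediate from (\ref{eq:lim5-a}) and Theorem \ref{thm:fun1}, with no need to invoke (\ref{eq:coai2})--(\ref{eq:coai3}).
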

\begin{proof}
	We proceed as in \cite[Theorem 6.1]{Bismut06d} and  \cite[Theorem 9.1.1]{Bismut10b}. 	
	We start from equation (\ref{eq:cup5}) which we write in the 
	form,
	\begin{multline}\label{eq:form1x1}
\ch\left(\mathcal{A}''_{Y},\omega^{X}/t,g^{D},g^{\widehat{TX}}/t^{3}\right)\\
=
\varphi\int_{X}^{}\left[\int_{\widehat{T_{\R,x}X}}^{}\Trs\left[P_{t}\left(
\left(x,Y\right),\left(x,Y\right)\right)\right]\frac{dY}{\left(2\pi\right)^{n}}\right]
\frac{\widehat{dx}}{\left(2\pi\right)^{n}}.
\end{multline}

Put
\begin{equation}\label{eq:form2}
m_{t}\left(x\right)=\varphi \frac{1}{\left(2\pi\right)^{n}} \int_{\widehat{T_{\R,x}X}}^{}
\Trs\left[P_{t}\left( \left(x,Y\right),\left(x,Y\right) \right) 
\right]\frac{dY}{\left(2\pi\right)^{n}},
\end{equation}
so that
\begin{equation}\label{eq:form3x-1}
\ch\left(\mathcal{A}''_{Y},\omega^{X}/t,g^{D},g^{\widehat{TX}}/t^{3}\right)=
\int_{X}^{}m_{t}\left(x\right)\widehat{dx}.
\end{equation}

	For $\epsilon>0$ small enough, given  $x\in X$, 	 using geodesic coordinates with respect to the 
	metric $\widehat{g}^{TX}$ centered at $x$,  we can identify the open ball 
	$B^{T_{\R}X}\left(0,\epsilon\right)$ with the corresponding open 
	ball $B^{X}\left(x,\epsilon\right)$ in $X$. Also along the 
	geodesics based at $x$, we trivialize the vector bundle 
	$\widehat{TX}$ by parallel transport with respect to 
	$\n^{\widehat{TX}}$. We identify the total space of 
	$\mathcal{X}$ over $B^{X}\left(x,\epsilon\right)$ with 
	$B^{T_{\R,x}X}\left(0,\epsilon\right)\times \widehat{T_{\R,x}X}$. 
	Near $x$, we trivialize $\mathbb F$ by parallel transport along 
	geodesics with respect to the connection $\widehat{\n}^{\mathbb 
	F,-1}$. This connection is associated with the connections 
	$\widehat{\n}^{TX,-1},\n^{\widehat{TX}},\n^{D}$ on 
	$TX,\widehat{TX},D$. Our operator  
	$\mathfrak M_{t}$ acts now on sections of $\mathbb F_{x}$ over 
	$B^{T_{\R,x}X}\left(0,\epsilon\right)\times \widehat{T_{\R}X}$. 
	
	In principle, we should extend the restriction of the  operator 
	$\mathfrak  M_{t}$ to 
	$B^{T_{\R,x}X}\left(0,\epsilon/2\right)\times 
	\widehat{T_{\R,x}X}$ to the full $\mathbb H_{x}=T_{\R,x}X\times 
	\widehat{T_{\R,x}X}$. Since our coordinate system on $X$ near $x$ is not 
	holomorphic, we cannot simply view $\mathbb H_{x}$ as a complex manifold. Still because of the 
	spinor interpretation of $\Lambda\left(\overline{\TsX}\right)$, 
	we can construct this extension using the methods of 
	\cite[Section 6.6]{Bismut06d}. Ultimately, we get an operator 
	$\mathfrak N_{t,x}$
	 acting on $C^{\infty }\left(\mathbb 
	H_{x},\Lambda\left(T^{*}_{\C,s}S \right) \ho \mathbb F_{x}\right)$ which is still hypoelliptic, and which 
	coincides with $\mathfrak M_{t}$ over 
	$B^{T_{\R,x}X}\left(0,\epsilon/2\right)\times 
	\widehat{T_{\R,x}X}$. Let
	$Q_{t,x}\left(\left(U,Y\right),\left(U',Y'\right)\right)$ denote 
	its smooth kernel on $\mathbb H_{x}$
	 with respect to the volume 
	$\frac{dU'dY'}{\left(2\pi\right)^{2n}}$  By proceeding as in \cite[Proposition 
	4.8.2]{BismutLebeau06a} and in \cite[Theorem 6.1]{Bismut06d}, 
	from the structure of $\mathfrak M''_{t}$, when studying the 
	asymptotics as $t\to 0$ of $m_{t}\left(x\right)$, we may as well 
	replace $\mathfrak M_{t}$ by $\mathfrak N_{t,x}$. This means that 
	given $x\in X$, we may as well replace $m_{t}\left(x\right)$ by 
	\begin{equation}\label{eq:form4}
n_{t}\left(x\right)=\varphi\frac{1}{\left(2\pi\right)^{n}}\int_{\widehat{T_{\R,x}X}}^{}\Trs\left[Q_{t,x}\left(\left(0,Y\right),\left(0,Y\right)\right)\right]\frac{dY}{\left(2\pi\right)^{n}}.
\end{equation}

	For $a>0$, let 
$I_{a}$ be the map acting on smooth functions on 
$\mathbb H_{x}=T_{\R,x}X \times \widehat{T_{\R,x}X}$ with values in $ 
 p^{*}\Lambda\left(T^{*}_{\C,s}S\right) \ho \mathbb   F_{s,x}$ that is 
 given by
\begin{equation}
    I_{a}s\left(U,Y\right)=s\left(aU,Y\right).
    \label{eq:froid1}
\end{equation}
Set
\begin{equation}
    \mathfrak O_{t,x}=I_{t^{3/2}} \mathfrak N_{t,x}I_{t^{-3/2}}.
    \label{eq:froid2}
\end{equation}

Let $R_{t,x}\left(\left(U,Y\right),\left(U',Y'\right)\right)$ be the 
smooth kernel on $\mathbb H_{x}$ associated with the operator 
$\exp\left(-\mathfrak O_{t,x}\right)$ with respect to the volume 
$\frac{dU'dY'}{\left(2\pi\right)^{2n}}$. By (\ref{eq:froid2}), we 
deduce that
\begin{equation}\label{eq:veau1}
Q_{t,x}\left(\left(0,Y\right),\left(0,Y\right)\right)=t^{-3n}R_{t,x}\left(\left(0,Y\right),\left(0,Y\right)\right).
\end{equation}
By (\ref{eq:form4}), (\ref{eq:veau1}), we obtain
\begin{equation}\label{eq:veau2}
n_{t}\left(x\right)=\varphi\frac{1}{\left(2\pi\right)^{n}t^{3n}}\int_{\widehat{T_{\R,x}X}}^{}\Trs\left[R_{t,x}\left(\left(0,Y\right),\left(0,Y\right)\right)\right]\frac{dY}{\left(2\pi\right)^{n}}.
\end{equation}

We will use the same notation as in Subsection \ref{subsec:lialge}. 
Let $\mathfrak W$ be another copy of $TX$. Recall that  $w_{1},\ldots,w_{n}$ is an orthonormal basis  of 
$T_{x}X$ with respect to $g^{\widehat{TX}}$. Let $\mathfrak 
w_{1},\ldots,\mathfrak w_{n}$ 
be the corresponding  basis of $\mathfrak W_{x}$. Then $\mathfrak w_{i},\overline{ \mathfrak w}_{i},1\le 
i\le n$ 
generate the algebra $\Lambda\left(\mathfrak W_{\C,x}\right)$.

Note that
\begin{equation}\label{eq:veau3}
\Trs^{\Lambda\left(T^{*}_{\C,x}X\right)}\left[\prod_{1}^{n}w^{i}i_{w_{i}}\overline{w}^{i}i_{\overline{w}_{i}}\right]=1,
\end{equation}
and that the supertrace of the monomials in the above operators 
having length less than $2n$ vanishes. If $T\in\End \left( 
\Lambda\left(T^{*}_{\C,x}X\right) \right) $, $T$ can be 
written as a linear combination of normally ordered 
products\footnote{This means that the products of operators  
$i_{w_{i}},i_{\overline{w}_{j}}$ appear to the right of the products 
of operators
$w^{i},\overline{w}^{j}$.} of the 
$w^{i},i_{w_{j}},\overline{w}^{k},i_{\overline{w}_{\ell}}$. 
If $T$ is a normally ordered monomial, its action on $1\in 
\Lambda\left(T^{*}_{\C,x}\right)$ is trivial unless it does not 
contain operators $i_{w_{i}},i_{\overline{w}_{j}}$, in which case it is multiplication by 
a form in $\Lambda\left(T^{*}_{\C,x}X\right)$.

Set
\index{o@$\mathfrak o$}%
\begin{equation}
    \mathfrak o=\sum_{i=1}^{n}\left(w^{i}\mathfrak 
    w_{i}+\overline{w}^{i} \overline{\mathfrak w}_{i}\right).
    \label{eq:froid3}
\end{equation}
Then $\mathfrak o$ does not depend on the 
choice of the base $w_{1}\ldots,w_{n}$.

 Let 
$T_{t}\in 
\Lambda\left(\mathfrak 
W_{\C,x}\right)\ho\End\left(\Lambda\left(T^{*}_{\C,x}X\right)\right)$ be given by
\begin{equation}\label{eq:veau5}
T_{t}=\exp\left(-\mathfrak o/t^{3/2}\right)T\exp\left(\mathfrak 
o/t^{3/2}\right).
\end{equation}
Then $T_{t}$ is obtained from 
   $T$ by replacing
the operators $i_{w_{i}},i_{\overline{w}_{i}}$ by 
	$i_{w_{i}}+\mathfrak 
    w_{i}/t^{3/2},i_{\overline{w}_{i}}+\overline{ \mathfrak 
    w}_{i}/t^{3/2}$, while leaving unchanged the 
	$w^{i},\overline{w}^{i}$. Note that $T_{t}1\in 
	\Lambda\left(\mathfrak W_{\C,x}\right)\ho\Lambda\left(T^{*}_{\C,x}X\right)$.	
	
	Note that $\beta=\prod_{1}^{n}w^{i} \mathfrak w_{i}$ is a canonical 
	section  of the trivial line bundle 
	$\lambda=\Lambda^{n}\left(T^{*}X\right)\ho\Lambda^{n}\left(\mathfrak W\right)$, 
	and that $\overline{\beta}=\prod_{1}^{n}\overline{w}^{i} \overline{\mathfrak w}_{i}$ is the 
	corresponding conjugate section of $\overline{\lambda}$.
	
	Let $\left[T_{t}1\right]^{\max}\in\C$ be the coefficient of $\beta\overline{\beta}$ in the 
	expansion of $T_{t}1$. By (\ref{eq:veau3}), using the previous 
	considerations, we conclude that
	\begin{equation}\label{eq:veau6}
\Trs^{\Lambda\left(T^{*}_{\C,x}X\right)}\left[T\right]=t^{3n}\left[T_{t}1\right]^{\max}.
\end{equation}

If instead, $T$ is a section of 
$\End\left(\Lambda\left(T^{*}_{\C,x}X\right)\right)\ho\End\left(\Lambda\left(\overline{\widehat{T^{*}X_{x}}}\right)\ho D_{s,x}\right)$, we  define $T_{t}$ exactly as before. Equation (\ref{eq:veau6}) takes the form
\begin{equation}\label{eq:veau7}
\Trs^{\Lambda\left(T^{*}_{\C,x}X\right)\ho\Lambda\left(\overline{\widehat{T^{*}_{x}X}}\right)\ho D_{s,x}}\left[T\right]=
t^{3n}\Trs^{\Lambda\left(\overline{\widehat{T^{*}_{x}X}}\right)\ho 
D_{s,x}}\left[\left[T_{t}1\right]^{\max}\right].
\end{equation}

	Put
\begin{equation}\label{eq:froid4}
    \mathfrak P_{t,x}=\exp\left(-\mathfrak o/t^{3/2}\right) 
    \mathfrak 
    O_{t,x}\exp\left(\mathfrak 
    o/t^{3/2}\right).
    \end{equation}
	Let $S_{t,x}\left(\left(U,Y\right),\left(U',Y'\right)\right)$ be 
	the smooth kernel for the operator $\exp\left(-\mathfrak 
	P_{t,x}\right)$ with respect to the volume 
	$\frac{dU'dY'}{\left(2\pi\right)^{2n}}$.
	By (\ref{eq:veau2}), (\ref{eq:veau7}), and (\ref{eq:froid4}), we 
	get
	\begin{equation}\label{eq:veau8}
n_{t}\left(x\right)=\varphi\frac{1}{\left(2\pi\right)^{n}}\int_{\widehat{T_{\R,x}X}}^{}\Trs^{\Lambda\left(\overline{\widehat{T^{*}_{x}X}}\right)\ho 
D_{s,x}}\left[\left[S_{t,x}\left(\left(0,Y\right),\left(0,Y\right)\right)1\right]^{\max}\right]\frac{dY}{\left(2\pi\right)^{n}}.
\end{equation}

		Let 
      $\Delta^{V}_{g^{\widehat{TX}}}$ be the 
      Laplacian along the fibre $\widehat{T_{\R,x}X}$ with respect 
      to the metric $g^{\widehat{TX}}$.
       Let 
      $\n_{Y}^{T_{\R,x}X}$ 
     be the obvious  differentiation operator along the fibre
     $T_{\R,x}X$.

Put
\index{Po@$\mathfrak P_{0,s,x}$}%
    \begin{multline}\label{eq:froid4x1}
\mathfrak P_{0,s,x}= 
-\frac{1}{2}\Delta^{V}_{g^{\widehat{TX}}}+
\overline{\widehat{w}}^{i} \overline{\mathfrak 
w}_{i}-i_{\overline{\widehat{w}}_{i}} \mathfrak w_{i}-
\n^{V}_{R^{\widehat{TX}}Y}\\
-\left\langle  
R_{x}^{\widehat{TX}}\overline{\widehat{w}}_{i},\overline{\widehat{w}}^{j}\right\rangle\overline{\widehat{w}}^{i}
i_{\overline{\widehat{w}}_{j}}+\n^{T_{\R,x}X}_{Y}+A^{E_{0},2},
\end{multline}
the tensors are evaluated at $\left(s,x\right)$. 

We claim that as in \cite[Theorem 6.7]{Bismut06d} and in 
\cite[Theorem 9.1.1]{Bismut10b}, as $t\to 0$, 
\begin{equation}
    \mathfrak P_{t,x}\to \mathfrak P_{0,s,x},
    \label{eq:froid5}
\end{equation}
in the sense that the coefficients of the operator $\mathfrak P_{t,x}$ 
converge uniformly together with all
their derivatives uniformly over compact sets towards the 
corresponding coefficients of $\mathfrak P_{0,x}$. To establish 
(\ref{eq:froid5}), we will use equation (\ref{eq:curbhypbis1}) for 
$\mathfrak M_{t}$, and we will exploit the fact that 
$\overline{\pa}^{X}\pa^{X}\omega^{X}=0$.

The first two terms in the right-hand side of (\ref{eq:curbhypbis1}) 
do not cause any problem. Using the considerations that follow 
(\ref{eq:veau5}), the limit as $t\to 0$ of the contribution of the 
last two terms in the first line of
(\ref{eq:curbhypbis1}) is given by $\overline{\widehat{w}}^{i} \overline{\mathfrak 
w}_{i}-i_{\overline{\widehat{w}}_{i}} \mathfrak w_{i}$.	

By (\ref{eq:deffbr}), (\ref{eq:sup1}), we get
\begin{equation}\label{eq:dee2}
t^{3/2}{}^{1}\widehat{\n}^{ \mathbf{F}}_{t,Y}=t^{3/2}\widehat{\n}^{\mathbf{F},-1}_{Y}+t^{1/2}\left(- i_{y}\pa^{X}i\omega^{X}+\overline{\gamma y_{*}} 
\right) .
\end{equation}

In our given trivialization, at $\left(U,Y\right)\in \mathbb H_{x}$, the section of $T_{\R,x}X$ that 
corresponds to the section $Y$ of $T_{\R}X$ is just the tautological 
section $Y\in T_{\R,x}X$. Let $\Gamma^{\widehat{TX}}$ denote the 
connection form for $\widehat{\n}^{TX}$ in the given trivialization. 
In the above coordinate system the vector field $Y^{H}$, as a section 
of $\mathbb H_{x}$,  splits as 
\begin{equation}\label{eq:vf1}
Y^{H}\left(U,Y\right)=Y \oplus \left( 
-\Gamma_{U}^{\widehat{TX}}\left(Y\right)Y \right).
\end{equation}

Recall that $TX,\widehat{TX}$ have been trivialized by parallel 
transport with respect to $\widehat{\n}^{TX,-1},\n^{\widehat{TX}}$. Let 
$\widehat{\Gamma}^{TX,-1}$ denote the connection form 
for  $\widehat{\n}^{TX,-1}$ in  this trivialization of $TX$. The  
lift $\widehat{\Gamma}^{\Lambda\left(T^{*}_{\R}X\right),-1}$ of  $\widehat{\Gamma}^{TX,-1}$ to 
$\Lambda\left(T^{*}_{\R}X\right)$ is given by
\begin{equation}\label{eq:dee3a1}
\widehat{\Gamma}^{\Lambda\left(T^{*}_{\R}X\right),-1}=
-\left\langle  \widehat{\Gamma}^{TX,-1}w_{i},w^{j}\right\rangle w^{i} i_{w_{j}}
-\left\langle  
\overline{\widehat{\Gamma}}^{TX,-1}\overline{w}_{i},\overline{w}^{j}\right\rangle\overline{w}^{i}i_{\overline{w}_{j}}.
\end{equation}
Also, near $U=0$, 
\begin{equation}\label{eq:dee4}
\widehat{\Gamma}^{TX,-1}_{U}=\mathcal{O}\left(U\right).
\end{equation}

Let $d^{T_{\R,x}X}$ be the de Rham operator on $T_{\R,x}X$. Let 
$\Gamma^{\Lambda\left(\overline{\widehat{\TsX}}\right)},\Gamma^{D}$ 
be the connection forms associated with 
$\n^{\Lambda\left(\overline{\widehat{\TsX}}\right)},\n^{D}$ in the 
given trivializations. By (\ref{eq:dee3a1}), we get
\begin{multline}\label{eq:dee5}
\widehat{\n}^{\mathbb F,-1}=d^{T_{\R,x}X}-\left\langle \widehat{\Gamma}^{TX,-1}w_{i},w^{j}\right\rangle w^{i} i_{w_{j}}
-\left\langle  
\overline{\widehat{\Gamma}}^{TX,-1}\overline{w}_{i},\overline{w}^{j}\right\rangle\overline{w}^{i}i_{\overline{w}_{j}}\\
+\Gamma^{\Lambda\left(\overline{\widehat{\TsX}}\right)}+\Gamma^{D}.
\end{multline}

By (\ref{eq:vf1}), (\ref{eq:dee5}), we get
\begin{multline}\label{eq:dee5ab1}
\widehat{\n}^{\mathbf{F},-1}_{Y}=\widehat{\n}^{\mathbb 
F,-1}_{Y^{H}}=\n^{T_{\R,x}X}_{Y}-\n^{\widehat{T_{\R,x}X}}_{\Gamma_{U}^{\widehat{TX}}\left(Y\right)Y}\\
-\left\langle \widehat{\Gamma}^{TX,-1}_{U}\left(Y\right)w_{i},w^{j}\right\rangle w^{i} i_{w_{j}}
-\left\langle  
\overline{\widehat{\Gamma}}^{TX,-1}_{U}\left(Y\right)\overline{w}_{i},\overline{w}^{j}\right\rangle\overline{w}^{i}i_{\overline{w}_{j}}\\
+\Gamma^{\Lambda\left(\overline{\widehat{\TsX}}\right)}_{U}\left(Y\right)+\Gamma^{D}_{U}\left(Y\right).
\end{multline}

By (\ref{eq:dee5}), we get
\begin{multline}\label{eq:dee6}
I_{t^{3/2}}t^{3/2}\widehat{\n}^{\mathbf{F},-1}_{Y}I_{t^{-3/2}}=\n^{T_{\R,x}X}_{Y}-t^{3/2}
\n^{\widehat{T_{\R,x}X}}_{\Gamma_{t^{3/2}U}^{\widehat{TX}}\left(Y\right)Y}\\
-\left\langle  \widehat{\Gamma}_{t^{3/2}U}^{TX,-1}\left(Y\right)w_{i},w_{j}\right\rangle 
t^{3/2} w^{i} i_{w_{j}}
-\left\langle  
\overline{\widehat{\Gamma}}_{t^{3/2}U}^{TX,-1}\left(Y\right)\overline{w}_{i},\overline{w}_{j}\right\rangle t^{3/2}\overline{w}^{i}i_{\overline{w}_{j}}\\
+t^{3/2}\Gamma_{t^{3/2}U}^{\Lambda\left(\overline{\widehat{\TsX}}\right)}\left(Y\right)+t^{3/2}\Gamma^{D}_{t^{3/2}U}\left(Y\right).
\end{multline}

Let us now consider the effect on (\ref{eq:dee6}) of the conjugation 
by $\exp\left(-\mathfrak o/t^{3/2}\right)$, that only affects the 
third  and fourth terms in the right-hand side. As mentioned before, 
the terms 
$t^{3/2}w^{i}i_{w_{j}},t^{3/2}\overline{w}^{i}i_{\overline{w}_{j}}$ 
are replaced by $w^{i}\left(t^{3/2}i_{w_{j}}+ \mathfrak w_{j}\right),
\overline{w}^{i}\left(t^{3/2}i_{\overline{w}_{j}}+ 
\overline{\mathfrak w}_{j} \right) $. Because of 
(\ref{eq:dee4}), in  (\ref{eq:dee6}), these terms do not survive as 
$t\to 0$. 

Ultimately, we find that after all the conjugations and rescalings, as 
$t\to 0$, 
\begin{equation}\label{eq:dee7}
t^{3/2}\widehat{\n}^{\mathbf{F},-1}_{Y}\to 
\n^{T_{\R,x}X}_{Y}.
\end{equation}

Let us now consider the second term in the right-hand side of 
(\ref{eq:dee2}). This term is a section of 
$\Lambda\left(T^{*}_{\C}X\right)$. In the parallel transport 
trivialization with respect to $\widehat{\n}^{TX,-1}$, it remains so. Also it 
is unaffected by the conjugation by $\exp\left(-\mathfrak 
o/t^{3/2}\right)$. Combining (\ref{eq:dee2}) and (\ref{eq:dee7}), we 
find that as $t\to 0$, after conjugations and rescalings, 
\begin{equation}\label{eq:dee8}
t^{3/2}{}^{1}\widehat{\n}^{\mathbf{F}}_{t,Y}\to \n^{T_{\R,x}X}_{Y}.
\end{equation}

For the same reasons as before, as $t\to 0$, in 
(\ref{eq:curbhypbis1}),  the term $t^{3/2}i_{Y}C$ 
disappears. Also we made the assumption that 
$\overline{\pa}^{X}\pa^{X}\omega^{X}=0$. The last three terms in 
the right-hand side of (\ref{eq:curbhypbis1}) can be handled exactly 
as before. 

Putting together the previous considerations gives a proof of 
(\ref{eq:froid5}). 

Let 
\index{Sx@$S_{0,s,x}\left(\left(U,Y\right), \left(U',Y'\right)\right)$}%
$S_{0,s,x}\left(\left(U,Y\right), \left(U',Y'\right)\right)$ be the 
smooth kernel associated with $\exp\left(-\mathfrak P_{0,s,x}\right)$ 
with respect to the volume $\frac{dU'dY'}{\left(2\pi\right)^{2n}}$. 
By proceeding as in \cite[Theorem 4.10.1]{BismutLebeau06a}, 
there exist $C>0,c>0$ such that for $t\in]0,1],x\in X$,
\begin{equation}\label{eq:dee9}
\left\vert  
S_{t,x}\left(\left(0,Y\right),\left(0,Y\right)\right)\right\vert\le 
C\exp\left(-c\left\vert  Y\right\vert^{2/3}\right).
\end{equation}
By proceeding as in the same reference, and using (\ref{eq:froid5}), 
we find that as $t\to 0 $, 
\begin{equation}\label{eq:dee10}
S_{t,x}\left(\left(U,Y\right),\left(U',Y'\right)\right)\to 
S_{0,s,x}\left(\left(U,Y\right),\left(U',Y\right)\right).
\end{equation}

By (\ref{eq:veau8}), (\ref{eq:dee9}), and (\ref{eq:dee10}), we find 
that as $t\to 0$,  
\begin{multline}\label{eq:dee11}
n_{t}\left(x\right)\to 
n_{0,s}\left(x\right)\\
=\varphi\frac{1}{\left(2\pi\right)^{n}}
\int_{\widehat{T_{\R,x}X}}^{}\Trs^{\Lambda\left(\overline{\widehat{T^{*}_{x}X}}\right)\ho D_{s,x}}\left[\left[S_{0,s,x}\left(\left(0,Y\right),\left(0,Y\right)\right)1\right]^{\max}\right]\frac{dY}{\left(2\pi\right)^{n}}.
\end{multline}
By (\ref{eq:dee9}), there is $C>0$ such that for $x\in X,t\in 
]0,1]$, 
\begin{equation}\label{eq:dee12}
\left\vert  n_{t}\left(x\right)\right\vert\le C.
\end{equation}

By (\ref{eq:form3x-1}), (\ref{eq:dee11}), and (\ref{eq:dee12}), we 
conclude that as $t\to 0$,
\begin{equation}\label{eq:dee13a1}
\ch\left(\mathcal{A}^{\prime \prime }_{Y}, 
\omega^{X}/t,g^{\widehat{TX}}/t^{3}\right)\to 
\int_{X}^{}n_{0,s}\left(x\right)\widehat{dx}.
\end{equation}

We will evaluate the integrand in the right-hand side of 
(\ref{eq:dee11}). Observe that
\begin{equation}\label{eq:dee12a1}
\beta\overline{\beta}=\left(-1\right)^{n}\prod_{1}^{n}w^{i}\overline{w}^{i}
\prod_{1}^{n}\mathfrak w_{i}\overline{\mathfrak w}_{i}.
\end{equation}

From now on, we will view the
 $w^{i},\overline{w}^{i}$ as sections of  $\Lambda\left(T^{*}_{\C,x}X\right)$. Then
 \begin{equation}\label{eq:dee12a2}
\prod_{1}^{n}w^{i}\overline{w}^{i}=\left(-i\right)^{n}\widehat{dx}.
\end{equation}

If $\epsilon\in \Lambda\left(T^{*}_{\C}M\right)$,  let 
$\epsilon^{(2n)}\in \Lambda\left(T^{*}_{\C}M\right)$  be the component 
of $\epsilon$ which is of top vertical degree $2n$. Let 
$\epsilon^{\max}\in \Lambda\left(T^{*}_{\C}S\right)$ be  such that\begin{equation}\label{eq:dee12a3}
\epsilon^{(2n)}=\epsilon^{\max}\widehat{dx}.
\end{equation}
We can identity $\mathfrak w_{i}$ and 
$\overline{\mathfrak w}^{i}$ by the metric $\widehat{g}^{T_{x}X}$. 
Since $R^{\widehat{TX}}$ takes its values in skew-adjoint 
endomorphisms of $\widehat{TX}$, 
using Proposition \ref{prop:gra} and Theorem \ref{thm:tracl}, by 
(\ref{eq:dee11}), we get
\begin{equation}\label{eq:dee12a4}
n_{0,s}\left(x\right)=\varphi\frac{1}{\left(2i\pi\right)^{n}}\left[\widehat{A}^{2}\left(R^{\widehat{TX}}\right)\Td^{-1}\left(R^{\widehat{TX}}\right)
\Trs\left[\exp\left(-A^{E_{0},2}\right)\right]\right]_{s,x}^{\max}.
\end{equation}
From the last identity in (\ref{eq:dee14}), we get
\begin{equation}\label{eq:12a5}
\widehat{A}^{2}\left(R^{\widehat{TX}}\right)\Td^{-1}\left(R^{\widehat{TX}}\right)
=\Td\left(-R^{\widehat{TX}}\right).
\end{equation}

By (\ref{eq:iv21}), (\ref{eq:dee12a4}), and (\ref{eq:12a5}), we obtain
\begin{equation}\label{eq:12a6}
\int_{X}^{}n_{0}\left(x\right)\widehat{dx}=
p_{*}\left[q^{*}\Td \left( TX,\widehat{g}^{TX}\right)\ch\left(A^{E_{0} \prime \prime },g^{D}\right)\right].
\end{equation}
By combining (\ref{eq:dee13a1}) and (\ref{eq:12a6}), we get 
(\ref{eq:lim5-a}). Using Theorem \ref{thm:fun1} 
and (\ref{eq:lim5-a}), we get (\ref{eq:lim5-b}). The proof of our theorem is completed.  
\end{proof}
\section{Exotic superconnections and  Riemann-Roch-Grothendieck}%
\label{sec:RBC}
The purpose of this Section is to establish Theorem \ref{thm:sub}, 
which is the final step in the proof of our main result. More 
precisely, we define deformed hypoelliptic superconnections, which 
when introducing a scaling parameter $t>0$, have the proper asymptotics 
when $t\to 0$, and this without any assumption on the Kähler form. Most of 
the technical tools of local index theory which are needed were 
already developed in Section \ref{sec:hypodb}.

This Section is organized as follows. In Subsection 
\ref{subsec:exot}, following \cite[Chapter 11]{Bismut10b}, we 
introduce a deformation $\omega^{X}_{\theta}$ of the Kähler form 
$\omega^{X}$, that depends on $\theta=\left(c,d\right)$, with $c\in 
\left[0,1\right], d>0$, and coincides with $\omega^{X}$ for $c=0$.  This deformation $\omega^{X}_{\theta}$ also depends on 
the coordinate $Y\in \widehat{T_{\R}X}$. Given a holomorphic section 
$z$ of $\underline{q}^{*}TX$, we construct a superconnection 
$A_{\theta,Z}$.

In Subsection \ref{subsec:exocurv}, we give a Lichnerowicz formula 
for $A^{2}_{\theta,Z}$.

In Subsection \ref{subsec:scnAt}, we take $z=y$, in which case the 
curvature is still hypoelliptic. We define corresponding deformed 
hypoelliptic superconnection forms, and we show that their Bott-Chern 
class is the same as the class of our previous hypoelliptic 
superconnection forms.

In Subsection \ref{subsec:unib}, we introduce the parameter $t>0$, and 
we establish various scaling identities.

In Subsection \ref{subsec:unies}, we make $c=1$, and we state uniform 
estimates on the heat kernels associated with the curvature.

In Subsection \ref{subsec:exo}, we recall a result of 
\cite{Bismut10b} on the explicit construction   of hypoelliptic 
superconnection forms of vector bundles.

In Subsection \ref{subsec:limtbe}, when $c=1$ and with $d$ 
now depending explicitly on $t$, we compute the asymptotics of the 
deformed hypoelliptic superconnection forms.

Finally, in Subsection \ref{subsec:finsu}, we prove Theorem 
\ref{thm:sub}.

We make the same assumptions as in Sections \ref{sec:hypofo} and 
\ref{sec:hypodb}, and we 
use the corresponding notation.
\subsection{A  deformation of the K\"{a}hler form $\omega^{X}$}%
\label{subsec:exot}
Here, we follow \cite[Chapter 11]{Bismut10b}.
\begin{definition}\label{Dexokahl}
If  
\index{th@$\theta$}%
$\theta=\left(c,d\right),0\le c\le 1,d> 0$, put
\index{ot@$\omega^{X}_{\theta}$}%
\begin{equation}\label{eq:andr1}
\omega^{X}_{\theta}=\left( 1-c+\frac{cd}{2}\left\vert  
Y\right\vert_{g^{\widehat{TX}}}^{2}
\right) \omega^{X}.
\end{equation}
Equivalently,
\begin{equation}\label{eq:andr1x1}
\omega^{X}_{\theta}=\omega^{X}+c\left(\frac{d}{2}\left\vert  
Y\right\vert^{2}_{g^{\widehat{TX}}}-1\right)\omega^{X}.
\end{equation}
\end{definition}

We repeat the constructions of Section 
\ref{sec:hypo}, except that $\omega^{X}$ is now replaced by 
$\omega^{X}_{\theta}$. An extra subscript $\theta$ will be introduced to 
distinguish the objects  constructed here from the ones in 
 Section \ref{sec:hypo}.
Instead of (\ref{eq:coro4a2}), we have 
\begin{align}\label{eq:andr2}
&\mathcal{A}''_{Z}=\mathcal{A}''+i_{z}, 
&\mathcal{A}'_{Z,\theta}=e^{q^{*}i\omega^{X}_{\theta}}\left(\mathcal{A}'+i_{\overline{z}}\right)e^{-q^{*}i\omega^{X}_{\theta}},
\qquad \mathcal{A}_{Z,\theta}=\mathcal{A}''_{Z}+\mathcal{A}'_{Z,\theta}.
\end{align}

The following result was established in \cite[Theorem 
11.1.2]{Bismut10b}.
\begin{theorem}\label{Tformbis}
The following identities hold:
\begin{align}\label{eq:club28bis}
&\mathcal{A}'_{Z,\theta}=\mathcal{A}'_{Z}+c\left(\frac{d}{2}\left\vert  
Y\right\vert_{g^{\widehat{TX}}}^{2}-1\right)\left( 
\overline{z}_{*}-q^{*}\pa^{X}i\omega^{X} \right) 
+cdq^{*}i\omega^{X}i_{\overline{\widehat{y}}},\\
&\mathcal{A}_{Z,\theta}=\mathcal{A}_{Z}+c\left(\frac{d}{2}\left\vert  
Y\right\vert_{g^{\widehat{TX}}}^{2}-1\right)\left( 
\overline{z}_{*}-q^{*}\pa^{X}i\omega^{X} 
\right)+cdq^{*}i\omega^{X}i_{\overline{\widehat{y}}}.
 \nonumber 
\end{align}
\end{theorem}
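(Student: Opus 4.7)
The identities are of a purely algebraic nature: both $\mathcal{A}'_{Z}$ and $\mathcal{A}'_{Z,\theta}$ are obtained from the same operator $\mathcal{A}'+i_{\overline{z}}$ by conjugation by $e^{q^{*}i\omega^{X}}$ and $e^{q^{*}i\omega^{X}_{\theta}}$ respectively, so I will compute the difference directly via a commutator expansion. First I would write $\omega^{X}_{\theta}=\omega^{X}+\alpha$ with $\alpha=c\left(\tfrac{d}{2}\vert Y\vert^{2}_{g^{\widehat{TX}}}-1\right)\omega^{X}$, and observe that the two $(1,1)$-forms $q^{*}i\omega^{X}$ and $q^{*}i\alpha$ are even, hence commute; therefore $e^{q^{*}i\omega^{X}_{\theta}}=e^{q^{*}i\omega^{X}}e^{q^{*}i\alpha}$ and
\begin{equation*}
\mathcal{A}'_{Z,\theta}=e^{q^{*}i\omega^{X}}\left(e^{q^{*}i\alpha}(\mathcal{A}'+i_{\overline{z}})e^{-q^{*}i\alpha}\right)e^{-q^{*}i\omega^{X}}.
\end{equation*}
So it suffices to understand $e^{q^{*}i\alpha}(\mathcal{A}'+i_{\overline{z}})e^{-q^{*}i\alpha}$, and then push the result through the outer conjugation by $e^{q^{*}i\omega^{X}}$.

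Using the decomposition $\mathcal{A}'=\nabla^{D\ho q^{*}\mathbf{I}\prime}+B^{*}+\overline{\pa}^{V*}$ from Proposition \ref{prop:pap}, the plan is to compute, term by term, the supercommutator $[q^{*}i\alpha,\cdot]$ and observe that in every case the bracket reduces to a form of lower degree, so all higher-order nested brackets vanish and the BCH series terminates after one step. The computations are: (i) $[q^{*}i\alpha,B^{*}]=0$ because even forms commute with arbitrary form-valued endomorphisms; (ii) $[q^{*}i\alpha,i_{\overline{z}}]=-i_{\overline{z}}(q^{*}i\alpha)$, and since $i_{\overline{z}}(i\omega^{X})=-\overline{z}_{*}$ in an orthonormal frame, this gives $c(\tfrac{d}{2}\vert Y\vert^{2}-1)\overline{z}_{*}$; (iii) $[q^{*}i\alpha,\nabla^{D\ho q^{*}\mathbf{I}\prime}]=-\nabla^{D\ho q^{*}\mathbf{I}\prime}(q^{*}i\alpha)$, and since $\vert Y\vert^{2}$ is horizontally parallel (the connection $\nabla^{\widehat{TX}}$ is metric), this contributes $-c(\tfrac{d}{2}\vert Y\vert^{2}-1)\,q^{*}\pa^{X}i\omega^{X}$.

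The only nontrivial piece is the commutator $[q^{*}i\alpha,\overline{\pa}^{V*}]$, which is the main obstacle. In a local orthonormal frame $\widehat{w}_{1},\ldots,\widehat{w}_{n}$ of $\widehat{TX}$, $\overline{\pa}^{V*}$ carries a vertical differentiation $\partial/\partial\widehat{y}^{k}$ paired with $-i_{\overline{\widehat{w}}_{k}}$, and $\vert Y\vert^{2}=2\widehat{y}^{j}\overline{\widehat{y}}^{j}$ so that $\partial_{\widehat{y}^{k}}\vert Y\vert^{2}=2\overline{\widehat{y}}^{k}$. A short computation then gives $[q^{*}i\alpha,\overline{\pa}^{V*}]=cd\,q^{*}i\omega^{X}\,i_{\overline{\widehat{y}}}$, where $i_{\overline{\widehat{y}}}=\overline{\widehat{y}}^{k}i_{\overline{\widehat{w}}_{k}}$. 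Here one uses that $q^{*}i\omega^{X}$, being a form pulled back from $X$, commutes with $i_{\overline{\widehat{w}}_{k}}$ (these contract against $\widehat{T^{*}X}$-factors that are absent in $q^{*}i\omega^{X}$); once again, the result is a form-valued operator, so the second-order bracket $[q^{*}i\alpha,[q^{*}i\alpha,\overline{\pa}^{V*}]]$ vanishes.

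Summing the four contributions, one obtains
\begin{equation*}
e^{q^{*}i\alpha}(\mathcal{A}'+i_{\overline{z}})e^{-q^{*}i\alpha}=\mathcal{A}'+i_{\overline{z}}+c\left(\tfrac{d}{2}\vert Y\vert^{2}-1\right)(\overline{z}_{*}-q^{*}\pa^{X}i\omega^{X})+cd\,q^{*}i\omega^{X}\,i_{\overline{\widehat{y}}}.
\end{equation*}
The outer conjugation by $e^{q^{*}i\omega^{X}}$ converts the first two terms into $\mathcal{A}'_{Z}$, and acts trivially on the correction terms: the scalar forms $\overline{z}_{*}$ and $q^{*}\pa^{X}i\omega^{X}$ commute with $q^{*}i\omega^{X}$, and so does $i_{\overline{\widehat{y}}}$ (which contracts against $\overline{\widehat{T^{*}X}}$-factors not present in $q^{*}i\omega^{X}$). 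This yields the first identity in \eqref{eq:club28bis}. The second identity follows immediately by adding $\mathcal{A}''_{Z}$, which is unchanged in the deformation. The truly delicate bookkeeping is the sign and normalization in step (iv); otherwise the proof is a straightforward once-through application of the BCH formula.
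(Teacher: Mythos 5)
Your proof is correct and follows essentially the same route as the paper's: the paper's terse proof cites (\ref{eq:pap1}), (\ref{eq:coro9}), and (\ref{eq:andr1x1}) and leaves the term-by-term commutator expansion of the conjugation by $e^{q^{*}i\omega^{X}_{\theta}}$ to the reader, which is precisely what you carry out. Your factorization $e^{q^{*}i\omega^{X}_{\theta}}=e^{q^{*}i\omega^{X}}e^{q^{*}i\alpha}$ is a harmless presentational variant (the two even forms commute), and the four commutators (i)--(iv), together with the observation that all nested commutators vanish once a multiplication operator appears, reconstruct the paper's intended argument in full.
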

\begin{proof}
This is a  consequence of (\ref{eq:pap1}), (\ref{eq:coro9}), and 
(\ref{eq:andr1x1}).
\end{proof}

As in (\ref{eq:retpek0r1}), by replacing $\omega^{X}$ by 
$\omega^{X}_{\theta}$, we can define  a new Hermitian form
\index{eXt@$\eta_{X,\theta}$}%
$\epsilon_{X,\theta}$
associated with $\omega^{X}_{\theta}$.  We obtain an analogue of 
Theorem \ref{thm:Tcadjoingt}. 
\subsection{A formula for $\mathcal{A}^{2}_{Z,\theta}$}
\label{subsec:exocurv}
We use the  notation of Section \ref{subsec:coc}. In 
particular $\widehat{w}_{1},\ldots,\widehat{w}_{n}$ denotes 
an orthonormal basis of  $\left(\widehat{TX},g^{\widehat{TX}}\right)$. Let
$\widehat{y}_{*}\in 
\overline{\widehat{\TsX}}$ be dual to $\widehat{y}$ with respect to the 
metric $g^{\widehat{TX}}$.

Now, we give a form of \cite[Theorem 11.2.1]{Bismut10b}. 
\begin{theorem}\label{Tcurvhypex}
The following identity holds:
\begin{multline}\label{eq:curbhypx0bis}
\mathcal{A}^{2}_{Z,\theta}=\mathcal{A}^{2}_{Z}+c\left(\frac{d}{2}\left\vert  Y\right\vert_{g^{\widehat{TX}}}^{2}
-1\right)\left( \frac{1}{2}
\left\vert  Z\right\vert^{2}_{g^{TX}}-i_{z}q^{*}\pa^{X}i\omega^{X} \right) 
+cdz_{*}i_{\overline{\widehat{y}}}\\
+c\left(\frac{d}{2}\left\vert  Y\right\vert^{2}_{g^{\widehat{TX}}}-1\right)
\left( \n^{TX \prime \prime ,H}\overline{z}_{*}-q^{*}\overline{\pa}^{X}\pa^{X}
i\omega^{X} \right) 
+cd \left( q^{*}\overline{\pa}^{X}i\omega^{X}\right) 
i_{\overline{\widehat{y}}}\\
+c\Biggl(d\widehat{y}_{*}\left(\overline{z}_{*}-q^{*}\pa^{X}i\omega^{X}\right)
+\left(\frac{d}{2}\left\vert  
Y\right\vert^{2}_{g^{\widehat{TX}}}-1\right)\overline{\widehat{w}}^{i}
\overline{\n_{\widehat{w}_{i}}z}_{*}\\
+dq^{*}i\omega^{X}\left(\n^{V}_{\overline{\widehat{y}}}+ 
\overline{\widehat{w}}^{i}i_{\overline{\widehat{w}}_{i}}\right) \Biggr) .
\end{multline}
\end{theorem}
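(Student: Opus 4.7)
\medskip

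The plan is to exploit the splitting $\mathcal{A}_{Z,\theta}=\mathcal{A}''_{Z}+\mathcal{A}'_{Z,\theta}$ in \eqref{eq:andr2} together with the square zero relations \eqref{eq:coro8} (whose analogue for $\mathcal{A}_{Z,\theta}$ still holds since $\mathcal{A}''_{Z,\theta}=\mathcal{A}''_{Z}$ is unchanged, and conjugation of $\mathcal{A}'+i_{\overline{z}}$ by the even form $e^{q^{*}i\omega^{X}_{\theta}}$ preserves the vanishing of its square). Thus $\mathcal{A}^{2}_{Z,\theta}=[\mathcal{A}''_{Z},\mathcal{A}'_{Z,\theta}]$, and writing $T=\mathcal{A}'_{Z,\theta}-\mathcal{A}'_{Z}$ we obtain
\begin{equation*}
\mathcal{A}^{2}_{Z,\theta}=\mathcal{A}^{2}_{Z}+[\mathcal{A}''_{Z},T],
\end{equation*}
where $T$ is given explicitly by the first equation in \eqref{eq:club28bis}. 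All that remains is to expand $[\mathcal{A}''_{Z},T]$ using $\mathcal{A}''_{Z}=\n^{D\ho q^{*}\mathbf{I}\prime\prime}+B+\overline{\pa}^{V}+i_{z}$.

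The computation would proceed by splitting $T$ into the three natural pieces
\begin{equation*}
T_{1}=c\bigl(\tfrac{d}{2}|Y|^{2}_{g^{\widehat{TX}}}-1\bigr)\overline{z}_{*},\quad T_{2}=-c\bigl(\tfrac{d}{2}|Y|^{2}_{g^{\widehat{TX}}}-1\bigr)q^{*}\pa^{X}i\omega^{X},\quad T_{3}=cdq^{*}i\omega^{X}i_{\overline{\widehat{y}}},
\end{equation*}
and treating each supercommutator with $\mathcal{A}''_{Z}$ in turn. The key elementary identities I will need are $[\overline{\pa}^{V},|Y|^{2}_{g^{\widehat{TX}}}]=\widehat{y}_{*}$ (since $|Y|^{2}=\langle y,\overline{y}\rangle_{g^{\widehat{TX}}}$ and the fibre is flat Hermitian), $[i_{z},\overline{z}_{*}]=0$ (type mismatch, as $z$ is of type $(1,0)$ and $\overline{z}_{*}$ of type $(0,1)$ along $X$), together with the holomorphicity of $z$ giving $[\mathcal{A}''_{Z},\overline{z}_{*}]=\n^{TX\prime\prime,H}\overline{z}_{*}-\overline{\widehat{w}}^{i}\we\overline{\n_{\widehat{w}_{i}}z}_{*}$ type contributions via the connection form, and $[i_{z},q^{*}\pa^{X}i\omega^{X}]=-i_{z}q^{*}\pa^{X}i\omega^{X}$ (anticommutator with a $(2,0)+(1,1)$ form) together with $[\overline{\pa}^{V}+\mathcal{A}'',q^{*}\pa^{X}i\omega^{X}]=q^{*}\overline{\pa}^{X}\pa^{X}i\omega^{X}$. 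Finally, the commutator with $T_{3}$ produces the terms involving $\overline{\widehat{w}}^{i}i_{\overline{\widehat{w}}_{i}}$ and $\n^{V}_{\overline{\widehat{y}}}$ from $[\overline{\pa}^{V},i_{\overline{\widehat{y}}}]$ and $[q^{*}\overline{\pa}^{X}i\omega^{X},i_{\overline{\widehat{y}}}]=q^{*}\overline{\pa}^{X}i\omega^{X}\cdot i_{\overline{\widehat{y}}}$, while $[i_{z},q^{*}i\omega^{X}i_{\overline{\widehat{y}}}]$ produces the mixed term $dz_{*}i_{\overline{\widehat{y}}}$ up to a normalization from the metric pairing $\langle z,\overline{\widehat{y}}\rangle$.

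The bookkeeping of signs and of the interaction between the scalar factor $(\tfrac{d}{2}|Y|^{2}-1)$ and the differential operators in $\mathcal{A}''_{Z}$ (which produces both the ``undifferentiated'' contribution multiplying $\mathcal{A}^{2}_{Z}$-type terms and the ``differentiated'' contribution $d\widehat{y}_{*}(\overline{z}_{*}-q^{*}\pa^{X}i\omega^{X})$ in the last line) is the principal source of technical difficulty. This is the step where mistakes are easiest to make; however, since the formula is a direct extension to antiholomorphic superconnections of \cite[Theorem 11.2.1]{Bismut10b} and none of the new ingredients (namely $B$ and the curvature $A^{E_{0},2}$) interact with the Kähler deformation $\omega^{X}\to\omega^{X}_{\theta}$, the additional terms beyond $\mathcal{A}^{2}_{Z}$ are forced to agree with those of the reference. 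Matching against that reference after collecting terms provides both a sanity check and the required verification, completing the proof.
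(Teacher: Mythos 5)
Your high-level strategy coincides with the paper's: the verification ultimately rests on citing \cite[Theorem 11.2.1]{Bismut10b} combined with the already-established formula (\ref{eq:curbhypx0}) for $\mathcal{A}^{2}_{Z}$, together with the observation that the deformation terms $T=\mathcal{A}'_{Z,\theta}-\mathcal{A}'_{Z}$ do not interact with the new antiholomorphic superconnection data. The paper phrases the last point slightly more precisely — the extra terms in (\ref{eq:curbhypx0bis}) cannot depend on $(E_{0},A^{E_{0}\prime\prime})$ because $C=B+B^{*}$ contains only exterior products — and it also explicitly notes that the tensor $\sigma$ from \cite[Definition 2.2.3]{Bismut10b} vanishes identically because the fibration is a product; you omit this point, and it is needed to reconcile the reference formula with the simpler present setting.

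However, one of the intermediate identities you cite as a ``key elementary identity'' is wrong in a way worth flagging. You assert $[i_{z},\overline{z}_{*}]=0$ by a ``type mismatch,'' saying $\overline{z}_{*}$ is of type $(0,1)$. In fact the paper defines $\overline{z}_{*}\in\underline{q}^{*}T^{*}X$ as the image of $\overline{z}\in\underline{q}^{*}\overline{TX}$ under the metric isomorphism $g^{TX}:\overline{TX}\simeq T^{*}X$, so $\overline{z}_{*}$ is a $(1,0)$-form. Consequently the anticommutator $\left[i_{z},\overline{z}_{*}\wedge\right]_{+}=\langle z,\overline{z}_{*}\rangle=\left|z\right|^{2}_{g^{TX}}=\tfrac{1}{2}\left|Z\right|^{2}_{g^{TX}}$ is nonzero, and this is precisely the origin of the first extra summand $c\left(\tfrac{d}{2}\left|Y\right|^{2}_{g^{\widehat{TX}}}-1\right)\tfrac{1}{2}\left|Z\right|^{2}_{g^{TX}}$ in (\ref{eq:curbhypx0bis}). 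Your description attributes this term to a different mechanism (``undifferentiated contribution multiplying $\mathcal{A}^{2}_{Z}$-type terms''), which is not accurate. Because you ultimately discharge the detailed computation by appealing to the reference, the proof still goes through, but the faulty intermediate step should be corrected if you want the direct computation to be self-contained.
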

\begin{proof}
	Since our fibration is product, and the metric $g^{TX}$ is 
	constant over $S$, the tensor $\sigma$ defined in 
	\cite[Definition 2.2.3]{Bismut10b} vanishes identically.  Using \cite[Theorem 
	11.2.1]{Bismut10b} and equation (\ref{eq:curbhypx0}), we get 
	(\ref{eq:curbhypx0bis}). The extra terms in the right-hand side 
	of (\ref{eq:curbhypx0bis}) do not depend on $\left(E_{0},A^{E_{0} 
	\prime \prime}\right)$, because $C=B+B^{*}$ contains only exterior 
	products. The proof of our theorem is completed. 
\end{proof}
\subsection{The superconnection $\mathcal{A}_{Y,\theta}$}%
\label{subsec:scnAt}
Here we take $z=y$. Using    (\ref{eq:sur1}), (\ref{eq:curbhypx0bis}),   we get 
\begin{multline}\label{eq:curbhypbis1x-1a}
    \mathcal{A}^{2}_{Y,\theta}= \mathcal{A}^{2}_{Y}
+c\left(\frac{d}{2}\left\vert  Y\right\vert_{g^{\widehat{TX}}}^{2}
-1\right)\left( \frac{1}{2}
\left\vert  
Y\right\vert^{2}_{g^{TX}}-i_{y}q^{*}\pa^{X}i\omega^{X} 
\right)
+cdy_{*}i_{\overline{\widehat{y}}}\\ 
+c\left(\frac{d}{2}\left\vert  Y\right\vert^{2}_{g^{\widehat{TX}}}-1\right)
\left(\overline{\gamma y}_{*} 
-q^{*}\overline{\pa}^{X}\pa^{X}i\omega^{X} \right) 
+cd \left(q^{*} \overline{\pa}^{X}i\omega^{X} \right) 
i_{\overline{\widehat{y}}}\\
+c\left(d\widehat{y}_{*}\left(\overline{y}_{*}-q^{*}\pa^{X}i
\omega^{X}\right)+\left(\frac{d}{2}\left\vert  Y\right\vert^{2}_{g^{\widehat{TX}}}
-1\right)\overline{\widehat{w}}^{i}\overline{w}_{i*}
\right)\\
+cdq^{*}i\omega^{X}\left( \n^{V}_{\overline{\widehat{y}}}+
\overline{\widehat{w}}^{i}i_{\overline{\widehat{w}}_{i}} \right).
\end{multline}
By (\ref{eq:curbhypbis1x-1a}), $\mathcal{A}^{2}_{Y,\theta}$ is  
fibrewise hypoelliptic. 

Set
    \begin{equation}
        V_{\theta}\left(Y\right)=
	\left(1-c\right)\frac{1}{2}\left\vert  
	Y\right\vert^{2}_{g^{TX}}+\frac{cd}{4}\left\vert  
	Y\right\vert^{2}_{g^{TX}}\left\vert  
	Y\right\vert^{2}_{g^{\widehat{TX}}}.
        \label{eq:wolf5}
    \end{equation}
	By (\ref{eq:curbhypbibi}), (\ref{eq:curbhypbe}), and (\ref{eq:curbhypbis1x-1a}),     the potential $V_{\theta}$ 
	appears in $\mathcal{A}^{2}_{Y,\theta}$. 
    Given $d>0$, there 
    exists $C_{d}>0$ such that for $c\in 
    \left[0,1\right]$,
    \begin{equation}
        V_{\theta}\left(Y\right)\ge 
	C_{d}\left(\left\vert  Y\right\vert^{2}_{g^{TX}}-1\right).
        \label{eq:wolf6x-1}
    \end{equation}
    The condition $d>0$ is 
    crucial for (\ref{eq:wolf6x-1}) to hold also at $c=1$.

 The  same arguments as in \cite[Section 11.3]{Bismut10b} and 
   (\ref{eq:curbhypbis1x-1a}), (\ref{eq:wolf6x-1}) show that   $\exp\left(-\mathcal{A}
	^{2}_{{Y,\theta}}\right)$ is fibrewise trace class. 

We use the same notation as in Subsection \ref{subsec:scnform}. Given 
$\mathbf{p}$, let  
\index{Rp@$\mathscr R_{\mathbf{p}}$}%
$\mathscr R_{\mathbf{p}}$ be the collection of parameters 
$\omega^{X},g^{D},g^{\widehat{TX}},\theta$ with $g^{\widehat{TX}}\in 
\mathbf{p}$. Let $d^{\mathscr R_{\mathbf{p}}}$ be 
the de Rham operator on $\mathscr R_{\mathbf{p}}$. Then 
$\epsilon_{X,\theta}^{-1}d^{\mathscr R_{\mathrm{p}}}\epsilon_{X,\theta}$ is a $1$-form 
on $\mathscr R_{\mathbf{p}}$ with values in $\epsilon_{X,\theta}$-self-adjoint 
endomorphisms.
\begin{definition}\label{def:scnnew}
	For $\theta=\left(c,d\right)$, $c\in \left[0,1\right],d>0$, put
	\begin{equation}\label{eq:hyne}
\ch\left(\mathcal{A}^{\prime \prime 
}_{Y},\omega^{X},g^{D},g^{\widehat{TX}},\theta\right)=\varphi\Trs\left[\exp\left(-\mathcal{A}^{2}_{Y,\theta}\right)\right].
\end{equation}
\end{definition}
\begin{theorem}\label{thm:Tdoublebibi}
The form 
$ \ch\left(\mathcal{A}^{ \prime \prime}_{Y}, 
\omega^{X},g^{D},g^{\widehat{TX}},\theta\right)$ 
lies in 
$\Omega^{(=)}\left(S,\R\right)$, it is closed,  and its  Bott-Chern cohomology 
class does not depend on $\omega^{X},g^{D},g^{\widehat{TX}},\theta$, 
or on the splitting of $E$. 

Given $\mathbf{p}$, $\varphi\Trs\left[\epsilon_{X,\theta}^{-1}d^{\mathscr R_{\mathrm{p}}}\epsilon_{X,\theta}\exp\left(- 
\mathcal{A}^{2}_{Y,\theta}\right)\right]$ is a $1$-form on $\mathscr 
R_{\mathbf{p}}$ with values in $\Omega^{(=)}\left(S,\R\right)$, and 
moreover,
\begin{equation}\label{eq:spr1a1}
d^{\mathscr R_{\mathrm{p}}}\ch\left(\mathcal{A}^{ \prime \prime}_{Y}, 
\omega^{X},g^{D},g^{\widehat{TX}},\theta\right)=-\frac{\overline{\pa}^{S}\pa^{S}}{2i\pi}\varphi\Trs\left[\epsilon_{X,\theta}^{-1}d^{\mathscr R_{\mathrm{p}}}\epsilon_{X,\theta}\exp\left(- 
\mathcal{A}^{2}_{Y,\theta}\right)\right].
\end{equation}
The  class of the forms $\ch\left(\mathcal{A}^{ \prime \prime}_{Y}, 
\omega^{X},g^{D},g^{\widehat{TX}},\theta\right)$ in Bott-Chern cohomology coincides with 
$\ch_{\mathrm{BC}}\left(\mathcal{A}^{\prime \prime }_{Y}\right)$.
\end{theorem}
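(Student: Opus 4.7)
The plan is to follow the template established in Theorem \ref{thm:Tdouble}, treating $\theta = (c,d)$ as an additional parameter alongside $\omega^X, g^D, g^{\widehat{TX}}$, and then to connect to $\cBC(\mathcal{A}''_Y)$ by specializing at $c=0$. First, I would verify that $\mathcal{A}_{Y,\theta}$ is self-adjoint with respect to $\epsilon_{X,\theta}$: this is a strict analogue of Theorem \ref{thm:Tcadjoingt}, obtained by rerunning the proof of Proposition \ref{prop:Padje} and its consequences with $\omega^X$ replaced throughout by $\omega^X_\theta$ in Definition \ref{def:goug} of $\epsilon_{X,\theta}$, noting that the construction (\ref{eq:andr2}) of $\mathcal{A}'_{Z,\theta}$ is conjugation by $e^{q^*i\omega^X_\theta}$ which is precisely the change needed to match the new Hermitian form.

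Next, I would establish closedness, reality, and the fact that $\ch(\mathcal{A}''_Y,\omega^X,g^D,g^{\widehat{TX}},\theta) \in \Omega^{(=)}(S,\R)$ by the standard superconnection argument: since $\mathcal{A}^2_{Y,\theta}=[\mathcal{A}''_Y,\mathcal{A}'_{Y,\theta}]$, the Bianchi identities and vanishing of supertraces on supercommutators give closedness; self-adjointness with respect to $\epsilon_{X,\theta}$ together with the grading argument in the proof of Theorem \ref{thm:carch} (formulas (\ref{eq:xing1}) and the lines following it) gives the bidegree and reality. The trace-class property and absolute convergence required to make this rigorous follow from the lower bound (\ref{eq:wolf6x-1}) for $V_\theta$ together with the Lichnerowicz-type formula (\ref{eq:curbhypbis1x-1a}) and the estimates in \cite[Section 11.3]{Bismut10b}, which is already cited in the excerpt as applicable to $\exp(-\mathcal{A}^2_{Y,\theta})$.

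For the transgression formula (\ref{eq:spr1a1}), I would restrict to varying parameters in $\mathscr R_{\mathbf p}$, so that $\n^{\mathbf{I}}$ remains fixed, and reproduce mutatis mutandis the computation in the proof of Theorem \ref{thm:chca} (or equivalently the last step in the proof of Theorem \ref{thm:Tdouble}): differentiating $\Trs[\exp(-[\mathcal{A}''_Y,\mathcal{A}'_{Y,\theta}])]$ produces an $\mathcal{A}''_Y$-exact term which, via the Bianchi identity for $\mathcal{A}''_Y$ and the fact that $d^{\mathscr R_{\mathbf p}}\mathcal{A}'_{Y,\theta} = -[\mathcal{A}'_{Y,\theta},\epsilon_{X,\theta}^{-1}d^{\mathscr R_{\mathbf p}}\epsilon_{X,\theta}]$ (the variation of the adjoint with respect to the underlying Hermitian form), rearranges into $-\frac{\overline{\pa}^S\pa^S}{2i\pi}$ times the asserted expression. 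This variational identity for $\mathcal{A}'_{Y,\theta}$ is the exact analogue of (\ref{eq:iv34a2}) and is formally routine given self-adjointness. Independence on $\omega^X, g^D, g^{\widehat{TX}}$ of the Bott-Chern class then follows within each $\mathbf p$; independence across projective classes is obtained, as in Theorem \ref{thm:chca}, by a $\mathbf P^1$-deformation argument using Poincaré-Lelong.

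The final statement, identification with $\cBC(\mathcal{A}''_Y)$, is essentially immediate: setting $c=0$ gives $\omega^X_\theta = \omega^X$, so by Theorem \ref{Tformbis} we have $\mathcal{A}'_{Y,\theta}=\mathcal{A}'_Y$ and $\mathcal{A}_{Y,\theta}=\mathcal{A}_Y$, hence $\ch(\mathcal{A}''_Y,\omega^X,g^D,g^{\widehat{TX}},(0,d))=\ch(\mathcal{A}''_Y,\omega^X,g^D,g^{\widehat{TX}})$ in the sense of Definition \ref{def:DChernob}. Combined with the established independence on $\theta$, this yields equality of Bott-Chern classes with $\cBC(\mathcal{A}''_Y)$ of Definition \ref{def:cobc}. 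The main technical obstacle will be verifying the variational formula for $\mathcal{A}'_{Y,\theta}$ with respect to $\theta$ itself (not just $\omega^X, g^D, g^{\widehat{TX}}$): one must check that differentiating in $c$ or $d$ produces terms absorbable into $\epsilon_{X,\theta}^{-1}d^{\mathscr R_{\mathbf p}}\epsilon_{X,\theta}$, which requires exploiting the explicit form (\ref{eq:club28bis}) together with the fact that $\epsilon_{X,\theta}$ differs from $\epsilon_X$ precisely by replacing $e^{-i\omega^X}$ by $e^{-i\omega^X_\theta}$ in (\ref{eq:retpek0}).
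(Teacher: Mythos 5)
Your proposal is correct and follows essentially the same approach as the paper's proof, which simply says to argue as in Theorem \ref{thm:Tdouble} and \cite[Theorem 11.3.1]{Bismut10b} and then specializes to $c=0$ via (\ref{eq:club28bis}) to identify with $\cBC(\mathcal{A}''_Y)$. You have supplied the details the paper leaves implicit — self-adjointness with respect to $\epsilon_{X,\theta}$, trace-class from (\ref{eq:wolf6x-1}), the Bianchi/supertrace argument, the variational formula including variation in $\theta$, and the $\mathbf{P}^1$-deformation for changes of projective class — all consistent with the machinery developed in the preceding sections.
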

\begin{proof}
The proof follows the same lines as the proof of \cite[Theorem 
11.3.1]{Bismut10b} and of Theorem 	\ref{thm:Tdouble}. The arguments 
of \cite{Bismut10b} show that they depend smoothly on all the 
parameters. Also for $c=0$, by (\ref{eq:club28bis}), we get
\begin{equation}\label{eq:spe1}
\ch\left(\mathcal{A}^{ \prime \prime}_{Y}, 
\omega^{X},g^{D},g^{\widehat{TX}},\theta\right)=\ch\left(\mathcal{A}^{ \prime \prime}_{Y}, 
\omega^{X},g^{D},g^{\widehat{TX}}\right),
\end{equation}
which gives the last statement in our theorem, and concludes its 
proof. 
\end{proof}
\subsection{The scaling identities}%
\label{subsec:unib}
For $c\in \left[0,1\right], d>0,t>0$, put
\index{t@$\theta_{t}$}%
\begin{equation}\label{eq:spli0}
\theta_{t}=\left(c,dt\right).
\end{equation}

As in (\ref{eq:coeq3}), by (\ref{eq:club28bis}), we have a splitting
\begin{equation}\label{eq:spli1r1}
\mathcal{A}_{Y,\theta}=\mathsf A_{Y,\theta}+C.
\end{equation}

Let 
\index{AYt@$\mathcal{A}_{Y,\theta,t}$}%
$\mathcal{A}_{Y,\theta,t}$ be the superconnection $\mathcal{A}_{Y,\theta}$ 
 associated with 
$$\omega^{X}/t,g^{D},g^{\widehat{TX}}/t^{3},\omega^{X}/t, \theta.$$

We have an analogue of \cite[eq. 
(11.3.12)]{Bismut10b} and of Proposition \ref{prop:coeq}.
\begin{proposition}\label{prop:idebis}
	The following identity holds:
	\begin{multline}\label{eq:joue0}
t^{3N^{\widehat{V}}/2+N^{V}/2}K_{t}\mathcal{A}_{Y,\theta_{t},t}K_{t}^{-1}
t^{-3N^{\widehat{V}}/2-N^{V}/2}\\
=t^{-N^{H}/2}\left(\sqrt{t}\mathsf 
A_{Y,\theta}+t^{N/2}C t^{-N/2}\right)t^{N^{H}/2}.
\end{multline}
\end{proposition}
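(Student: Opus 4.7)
The strategy parallels \cite[eq.~(11.3.12)]{Bismut10b}: split off the $\theta$-dependent correction and invoke Proposition~\ref{prop:coeq} for the remainder. Applying Theorem~\ref{Tformbis} with the rescaled metrics $(\omega^X/t, g^D, g^{\widehat{TX}}/t^3)$ and parameter $\theta_t = (c, dt)$, one obtains the splitting
\begin{equation*}
\mathcal{A}_{Y,\theta_t,t} = \mathcal{A}_{Y,t} + R_{\theta_t,t},
\end{equation*}
where, after simplification,
\begin{equation*}
R_{\theta_t,t} = c\Bigl(\tfrac{d}{2t^2}|Y|^2_{g^{\widehat{TX}}} - 1\Bigr)\tfrac{1}{t}\bigl(\overline{y}_* - q^*\partial^X i\omega^X\bigr) + cd\, q^*i\omega^X\, i_{\overline{\widehat{y}}}.
\end{equation*}
The specific rule $d \mapsto dt$ encoded in $\theta_t$ is dictated by the identity $\tfrac{dt}{2}|Y|^2_{g^{\widehat{TX}}/t^3} = \tfrac{d}{2t^2}|Y|^2_{g^{\widehat{TX}}}$; combined with $K_t Y = tY$, it collapses the prefactor onto $c(\tfrac{d}{2}|Y|^2_{g^{\widehat{TX}}} - 1)$ upon conjugation.

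For the first summand, Proposition~\ref{prop:coeq} supplies directly
\begin{equation*}
t^{3N^{\widehat{V}}/2 + N^V/2} K_t\, \mathcal{A}_{Y,t}\, K_t^{-1} t^{-3N^{\widehat{V}}/2 - N^V/2} = t^{-N^H/2}\bigl(\sqrt{t}\,\mathsf A_Y + t^{N/2}C t^{-N/2}\bigr) t^{N^H/2}.
\end{equation*}
Since by Theorem~\ref{Tformbis} one has $\mathsf A_{Y,\theta} = \mathsf A_Y + R_{\theta,1}$, where $R_{\theta,1}$ is the correction for the original metrics and the original $\theta$, and since $R_{\theta,1}$ is built from forms pulled back from $X$ together with the scalar $|Y|^2$, hence has $N^H$-degree zero, it will suffice to prove the identity
\begin{equation*}
t^{3N^{\widehat{V}}/2 + N^V/2} K_t\, R_{\theta_t,t}\, K_t^{-1} t^{-3N^{\widehat{V}}/2 - N^V/2} = \sqrt{t}\, R_{\theta,1}.
\end{equation*}

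The last identity reduces to a pointwise weight count on each of the three elementary symbols appearing in $R_{\theta_t,t}$. For the $\overline{y}_*/t$-piece, the factor $t^{-1}$ from the metric rescaling, $t$ from $K_t$ acting on $\overline{y}$, and $t^{1/2}$ from $t^{N^V/2}$ (since $\overline{y}_* \in \TsX$ has $N^V = 1$) combine to $t^{1/2}\,\overline{y}_*$. For the $q^*\partial^X i\omega^X/t$-piece, the $t^{-1}$ from rescaling and $t^{3/2}$ from $t^{N^V/2}$ (as $\partial^X i\omega^X$ is of total degree three) again give $t^{1/2}$. For $cd\, q^*i\omega^X\, i_{\overline{\widehat{y}}}$, the contributions $t$ from $t^{N^V/2}$ on $q^*i\omega^X$, $t$ from $K_t$ on $\overline{\widehat{y}}$, and $t^{-3/2}$ from $t^{3N^{\widehat{V}}/2}$ acting on the contraction $i_{\overline{\widehat{y}}}$ sum once more to $t^{1/2}$. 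Assembling these, $R_{\theta_t,t}$ conjugates into $\sqrt{t}\,R_{\theta,1}$, and addition with the identity from Proposition~\ref{prop:coeq} yields (\ref{eq:joue0}). The only real obstacle is the uniform bookkeeping across these three distinct weight contributions; the choice $\theta_t = (c,dt)$ is precisely what causes them to align, encoding the balance between the $|Y|^2$-dilation from $K_t$, the cubic rescaling of $g^{\widehat{TX}}$, and the linear rescaling of $\omega^X$.
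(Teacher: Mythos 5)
Your proof is correct and follows the paper's own route: the paper's proof consists of the single line ``By combining Proposition~\ref{prop:coeq} and Theorem~\ref{Tformbis}, we get (\ref{eq:joue0})''. You have simply carried out explicitly the weight bookkeeping under $K_t$, $t^{N^V/2}$, and $t^{3N^{\widehat V}/2}$ for each of the three summands in the $\theta$-correction, which checks out.
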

\begin{proof}
By combining Proposition 	\ref{prop:coeq} and Theorem 
\ref{Tformbis}, we get (\ref{eq:joue0}). The proof of our proposition is completed.  
\end{proof}

Let 
\index{AtY@$\mathcal{A}^{t}_{Y,\theta}$}%
$\mathcal{A}^{t}_{Y,\theta}$  be the superconnection 
$\mathcal{A}_{Y,\theta}$ associated with 
$\omega^{X}/t,g^{D},g^{\widehat{TX}},\theta$. By combining  
(\ref{eq:bor1}) and (\ref{eq:club28bis}), we get
\begin{equation}\label{eq:clip1}
\mathcal{A}^{t}_{t^{3/2}Y,\theta}=\underline{\delta}^{*}_{t^{3/2}}\mathcal{A}_{Y,\theta,t}\underline{\delta}^{*-1}_{t^{3/2}}.
\end{equation}

By (\ref{eq:joue0}), (\ref{eq:clip1}),  we get the analogue of 
\cite[eq. (11.5.2)]{Bismut10b},
(\ref{eq:bor1a1}),
\begin{equation}\label{eq:clip2}
K_{1/\sqrt{t}}t^{N^{V}/2}\mathcal{A}^{t}_{t^{3/2}Y,\theta_{t}}t^{-N^{V}/2}K_{\sqrt{t}}=t^{-N^{H}/2}\left(\sqrt{t}\mathsf A_{Y,\theta}+t^{N/2}Ct^{-N/2}\right)t^{N^{H}/2}.
\end{equation}

We use the notation, 
\begin{equation}\label{eq:clip3}
\mathfrak  M_{\theta,t}=\mathcal{A}^{t,2}_{t^{3/2}Y,\theta}.
\end{equation}
Recall that $\mathfrak M_{t}$ was defined in (\ref{eq:bor2}) and 
is given by (\ref{eq:curbhypbis1}). Using 
(\ref{eq:curbhypx0bis}), we get
\begin{multline}\label{eq:clip4}
\mathfrak M_{\theta,t}=\mathfrak M_{t}
+c\left(\frac{d}{2}\left\vert  Y\right\vert_{g^{\widehat{TX}}}^{2}
-1\right)\left( \frac{t^{2}}{2}
\left\vert  
Y\right\vert^{2}_{g^{TX}}-t^{1/2}i_{y}q^{*}\pa^{X}i\omega^{X} \right) 
+cdt^{1/2}y_{*}i_{\overline{\widehat{y}}}\\
+c\left(\frac{d}{2}\left\vert  Y\right\vert^{2}_{g^{\widehat{TX}}}-1\right)
\left(t^{1/2}\overline{\gamma y}_{*} 
-q^{*}\overline{\pa}^{X}\pa^{X}i\omega^{X}/t \right) 
+cd \left(q^{*} \overline{\pa}^{X}i\omega^{X} \right) 
i_{\overline{\widehat{y}}}/t\\
+c\left(d\widehat{y}_{*}\left(t^{1/2}\overline{y}_{*}-q^{*}\pa^{X}i
\omega^{X}/t\right)+\left(\frac{d}{2}\left\vert  Y\right\vert^{2}_{g^{\widehat{TX}}}
-1\right)t^{1/2}\overline{\widehat{w}}^{i}\overline{w}_{i*}
\right)\\
+\frac{cd}{t}q^{*}i\omega^{X}\left( \n^{V}_{\overline{\widehat{y}}}+
\overline{\widehat{w}}^{i}i_{\overline{\widehat{w}}_{i}} \right).
\end{multline}
\subsection{The uniform estimates}%
\label{subsec:unies}
In the sequel, we will make $c=1$, so that for $d>0,t>0$, 
\begin{equation}\label{eq:sca1}
\theta_{t}=\left(1,dt\right).
\end{equation}

Put
\begin{equation}\label{eq:clip3r1}
\mathcal{M}_{Y,\theta,t}=K_{\left(dt\right)^{-1/4}}\left(\sqrt{t}\mathsf A_{Y,\theta}+t^{N/2}Ct^{-N/2}\right)^{2}K_{\left(dt\right)^{1/4}}.
\end{equation}

Recall that 
\index{dx@$d\left(x,x'\right)$}%
$d\left(x,x'\right)$ denotes the Riemannian distance on 
$X$ with respect to $\widehat{g}^{TX}$. 
Here is an important result,  which is an analogue of \cite[Theorem 
11.5.1]{Bismut10b}.
\begin{theorem}\label{thm:Tunif}
Given $k>0$, there exist $m\in \N,c>0,C>0$ such that for $ t\in 
]0,1],d\in \left[kt^{3},1\right], 
z=\left(x,Y\right),z'=\left(x',Y'\right)\in \mathcal{X}$, we have 
\begin{multline}\label{eq:wolf16}
\left\vert  \exp\left(- 
\mathcal{M}_{Y,\theta,t}\right)\left(z,z'\right)\right\vert\\
\le \frac{C}{t^{m}}
\exp\left(-c\left(\left\vert  Y\right\vert_{g^{\widehat{TX}}}^{2}
+\left\vert  Y'\right\vert_{g^{\widehat{TX}}}^{2}+\left(d/t^{3}\right)^{1/2}d^{2}\left(x,x'\right)\right)\right).
\end{multline}
\end{theorem}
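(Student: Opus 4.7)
The plan is to adapt the strategy of \cite[Theorem 11.5.1]{Bismut10b}, which established the analogous uniform Gaussian bound in the case where $\mathscr{E}$ is reduced to a holomorphic vector bundle. The essential new feature here is the presence of the antiholomorphic superconnection contribution $C=B+B^{*}$, which will have to be accommodated as a bounded matrix-valued perturbation of the principal operator.

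First I would write $\mathcal{M}_{Y,\theta,t}$ explicitly. Starting from equation (\ref{eq:clip3r1}), one expands
$$\mathcal{M}_{Y,\theta,t}=K_{(dt)^{-1/4}}\bigl(t\mathsf A_{Y,\theta}^{2}+\sqrt{t}[\mathsf A_{Y,\theta},t^{N/2}Ct^{-N/2}]+t^{N/2}C^{2}t^{-N/2}\bigr)K_{(dt)^{1/4}},$$
and then uses (\ref{eq:clip4}) together with Theorem \ref{Tcurvhypex} to identify $t\mathsf A_{Y,\theta}^{2}$ with $\mathfrak M_{\theta,t}$ minus the $C$-dependent pieces. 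The dilation $K_{(dt)^{1/4}}$ rescales the fiber variable as $Y\mapsto (dt)^{1/4}Y$, which converts the confining quartic piece $\frac{tdt}{4}|Y|^{2}_{g^{TX}}|Y|^{2}_{g^{\widehat{TX}}}$ coming from $V_{\theta}$ in (\ref{eq:wolf5}) into a term comparable to $|Y|^{2}|Y|^{2}$, bounded below by a constant multiple of $|Y|^{2}-1$ thanks to (\ref{eq:wolf6x-1}). The fibrewise Laplacian $-\frac{t^{2}}{2}\Delta^{V}_{g^{\widehat{TX}}}$ becomes $-\frac{t^{2}(dt)^{1/2}}{2}\Delta^{V}$, and the transport term $t^{3/2}\widehat{\n}^{\mathbf{F},-1}_{Y}$ becomes, after rescaling, of order $t^{3/2}(dt)^{1/4}\cdot(dt)^{-1/4}\n^{T_{\R}X,H}_{Y}$ plus a vertical piece. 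Identifying the scaling parameter $b=(d/t^{3})^{1/2}$ that governs the effective horizontal diffusion reproduces exactly the decay $\exp(-cb\,d^{2}(x,x'))$ demanded by (\ref{eq:wolf16}).

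Next I would treat the principal part as a Kolmogorov-type hypoelliptic operator with a quartic confining potential, a linear drift $\n^{T_{\R}X,H}_{Y}$, the curvature term $-\n^{V}_{R^{\widehat{TX}}\widehat{Y}}$, and uniformly bounded matrix operators. The Gaussian estimates of \cite[Sections 11.4--11.5]{Bismut10b}, themselves based on Malliavin calculus and on the pseudodifferential calculus for Fokker--Planck equations, apply verbatim to the scalar principal part; they yield the bound (\ref{eq:wolf16}) with $c,C,m$ locally uniform in the parameters. The constraint $d\ge kt^{3}$ is exactly what is needed to keep $(dt)^{1/4}\ge k^{1/4}t$, so that the rescaled vertical Laplacian is uniformly elliptic on a scale where the drift and potential also have uniformly controlled coefficients; this is the analogue of \cite[eq.~(11.5.5)]{Bismut10b}.

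The main obstacle will be to control the contributions from the $C$-dependent pieces and from the apparently singular terms $cd\,q^{*}\overline{\pa}^{X}\pa^{X}i\omega^{X}/t$, $cd\,q^{*}\overline{\pa}^{X}i\omega^{X}\,i_{\overline{\widehat{y}}}/t$ and $\frac{cd}{t}q^{*}i\omega^{X}(\n^{V}_{\overline{\widehat{y}}}+\overline{\widehat{w}}^{i}i_{\overline{\widehat{w}}_{i}})$ in (\ref{eq:clip4}). For the $C$ pieces, one observes that $C$ acts only through multiplication by a smooth section of $\Lambda(T^{*}_{\C}M)\ho\End(D)$, commutes with the horizontal--vertical decomposition, and under $t^{N/2}\cdot t^{-N/2}$ contributes only factors that are bounded uniformly in $t\in]0,1]$; hence all $C$-terms enter as a nilpotent bounded perturbation whose exponential is dominated by a polynomial factor in $1/t$ absorbed into $t^{-m}$. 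For the apparently singular $1/t$ terms, the prefactor $cd$ combined with $d\le 1$ keeps them bounded after multiplication by $|Y|^{2}_{g^{\widehat{TX}}}$ (which is rescaled to $(dt)^{1/2}|Y|^{2}\le t^{2}|Y|^{2}$), so they are controlled against the confining potential via a standard absorption argument. Once these perturbations are verified to be subordinate to the principal part, the Gaussian bound (\ref{eq:wolf16}) follows from the analogue of \cite[Theorem 11.5.1]{Bismut10b}, concluding the proof.
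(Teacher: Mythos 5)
Your overall strategy — adapt the estimates of \cite[Theorem 11.5.1]{Bismut10b} and treat the superconnection tensor $C=B+B^{*}$ as a perturbation — is the right one, and the paper does exactly this. But the crucial point of the paper's actual argument is missing from your write-up, and without it the claim that the $C$-pieces enter as a ``bounded perturbation'' is simply not true.

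When one expands the square in (\ref{eq:clip3r1}), the cross term that needs to be controlled is, after conjugation by $K_{(dt)^{-1/4}}$, of the form
\[
\left(t/d\right)^{1/4}\,i_{Y}\,t^{N/2}Ct^{-N/2},
\]
because the operator $i_{Y}$ picks up the factor $(dt)^{-1/4}$ from the $K$-rescaling of the fibre variable and carries an extra $\sqrt{t}$ from the $\sqrt{t}\mathsf A_{Y,\theta}$ factor. You correctly note that $t^{N/2}Ct^{-N/2}$ is uniformly bounded on $]0,1]$, but the prefactor $(t/d)^{1/4}$ is \emph{not}: under the hypothesis $d\ge kt^{3}$ it can be as large as $(kt^{2})^{-1/4}$, and its size depends on $d$ as well as $t$, so it cannot simply be absorbed into the overall $t^{-m}$. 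The point you overlook — and which is the heart of the paper's proof — is that when $i_{Y}$ acts, the component of $C$ of degree $0$ in $\Lambda(T^{*}_{\C}X)$ is annihilated; the surviving components have form degree $\ge 1$ in $\Lambda(T^{*}_{\C}X)$, hence the conjugation $t^{N/2}\cdot t^{-N/2}$ supplies an extra factor of at least $\sqrt{t}$. This promotes the prefactor to $(t^{3}/d)^{1/4}$, which \emph{is} uniformly bounded precisely under the constraint $d\ge kt^{3}$, so the whole cross term is $O(|Y|)$ and is dominated by the quartic confining potential $|Y|^{4}$ that survives the $K$-rescaling. Without this cancellation, there is no reason the cross term should be controllable.

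Two secondary issues: the $C$-pieces are not nilpotent — $C$ has a degree-zero component $v_{0}+v_{0}^{*}$ that is a genuine (non-nilpotent) endomorphism of $D$, so ``nilpotent bounded perturbation'' mischaracterizes it; and in your absorption estimate the $K_{(dt)^{-1/4}}$ conjugation sends $|Y|^{2}_{g^{\widehat{TX}}}$ to $(dt)^{-1/2}|Y|^{2}$, not $(dt)^{1/2}|Y|^{2}$ — the sign of the exponent matters, as it is what makes the potential grow rather than shrink. You should rework the cross-term step: isolate $(t/d)^{1/4}i_{Y}t^{N/2}Ct^{-N/2}$, observe the degree-zero cancellation of $C$ under $i_{Y}$, derive the improved factor $(t^{3}/d)^{1/4}$, and then appeal to the quartic potential to absorb the resulting $O(|Y|)$ term.
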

\begin{proof}
	If we make $C=0$ \footnote{As the reader may have guessed, here 
	$C$ refers to the tensor defined in (\ref{eq:dep1}), and not  to the 
	constant appearing in our theorem.}, the proof of our theorem was already given in 
	\cite[Theorem 11.5.1]{Bismut10b}. Let us explain why the 
	introduction of the extra term containing $t^{N/2}Ct^{-N/2}$ in 
	(\ref{eq:clip3r1}) does not change 
	anything to the estimates.  Indeed this term remains uniformly 
	bounded. When taking the square in the right-hand side of 
	(\ref{eq:clip3r1}), this introduces, after scaling, a term which is 
	of the form  
	$\left(t/d\right)^{1/4}i_{Y}t^{N/2}Ct^{-N/2}$.  
In $C$, the component of total  degree $0$ in 
$\Lambda\left(T^{*}_{\C}X\right)$ is 
	necessarily disregarded, which introduces an extra factor which 
	is at least 
	$\sqrt{t}$. The factor that appears is then of the 
	order at most $\left(t^{3}/d\right)^{1/4}Y$. Under our 
	assumptions on $d$, $t^{3}/d$ remains uniformly bounded, so that 
	this term is of order $\left\vert  Y\right\vert$.  
	Under the indicated rescaling, a potential of the order of 
	$\left\vert  Y\right\vert^{4}$ appears in 
	$K_{\left(dt\right)^{-1/4}}t\mathsf 
	A^{2}_{Y,\theta}K_{\left(dt\right)^{1/4}}$. The term $\left\vert  
	Y\right\vert^{4}$ being much bigger than $\left\vert  
	Y\right\vert$, there is no difficulty in extending these 
	estimates to the present situation.
\end{proof}
\begin{remark}\label{rem:loca}
	Using (\ref{eq:clip2}) and (\ref{eq:clip3r1}), Theorem 
	\ref{thm:Tunif} will play  the role of Proposition 
	\ref{prop:estf}. 
\end{remark}
\subsection{The exotic  superconnection forms of 
vector bundles}%
\label{subsec:exo}
Let $Z$ be a complex manifold, and let $F$ be a holomorphic vector 
bundle on $Z$. 

Let $g^{F}$ be a Hermitian metric on $F$, let 
$\omega^{Z}$ be a  smooth real $(1,1)$-form on $Z$. 
In \cite[Definition 10.2.1]{Bismut10b},  hypoelliptic superconnection forms
$a\left(F,\omega^{Z},g^{F}\right)$\footnote{With the 
notation of \cite{Bismut10b}, these forms are instead 
denoted $a_{g}\left(b,d,g^{\widehat{F}}\right)$. Here, we make 
$g=1,b=1$. Also $d$ stands for a factor multiplying $\omega^{Z}$.} on 
$Z$ were defined,   that have the following properties:
\begin{itemize}
	\item  They depend smoothly on the parameters.

	\item  They lie in $\Omega^{(=)}\left(Z,\R\right)$, they are 
	closed, and their Bott-Chern cohomology class  does not depend on 
	the parameters.
	
	\item They are unchanged by constant rescaling of 
	$g^{F}$.
	\item The following identity holds:
	\begin{equation}\label{eq:exo1}
a\left(F,0,g^{F}\right)=\Td\left(F,g^{F}\right).
\end{equation}
\end{itemize}
If $a_{\mathrm{BC}}\left(F\right)$ is the common 
Bott-Chern class of the forms $a\left(F,\omega^{Z},g^{F}\right)$, then 
\begin{equation}\label{eq:exo2}
a_{\mathrm{BC}}\left(F\right)=\Td_{\mathrm{BC}}\left(F\right).
\end{equation}
\subsection{The limit as $t\to 0$ of the forms 
$\ch\left(\mathcal{A}^{\prime \prime 
}_{Y},\omega^{X}/t,g^{D},g^{\widehat{TX}}/t^{3},\theta_{t}\right)$}%
\label{subsec:limtbe}
Using  (\ref{eq:curbhypbis1}), 
(\ref{eq:clip4}),  we obtain a formula closely related to \cite[eq. 
(11.5.1)]{Bismut10b},  
\begin{multline}\label{eq:curbhypbis1x1}
     \mathfrak M_{\theta_{t},t}= 
   -\frac{1}{2}\Delta^{V}_{g^{\widehat{TX}}}+\frac{dt^{3}}{4}\left\vert  
    Y\right\vert^{2}_{g^{TX}}\left\vert  
    Y\right\vert^{2}_{g^{\widehat{TX}}}
    +t^{3/2}\left(\overline{\widehat{w}}^{i}\we 
    i_{\overline{w}_{i}}-i_{\overline{\widehat{w}}_{i}}i_{w_{i}}\right)\\
	+t^{3/2}{}^{1}\widehat{\n}^{\mathbf{F}}_{t,Y}+t^{3/2}i_{Y}C-\n^{V}_{R^{\widehat{TX}}\widehat{Y}}-
\left\langle  
R^{\widehat{TX}}\overline{\widehat{w}}_{i},\overline{\widehat{w}}^{j}\right\rangle\overline{\widehat{w}}^{i}
i_{\overline{\widehat{w}}_{j}}\\
+\frac{dt^{3/2}}{2}\left\vert  
    Y\right\vert^{2}_{g^{\widehat{TX}}}\overline{\widehat{w}}^{i}\overline{w}_{i*}-\frac{1}{2}\left\vert  
    Y\right\vert^{2}_{g^{\widehat{TX}}}q^{*}\overline{\pa}^{X}\pa^{X}id\omega^{X}\\
    +t^{1/2}\left(\frac{dt}{2}\left\vert  
    Y\right\vert^{2}_{g^{\widehat{TX}}}-1\right)\left(\overline{\gamma y}_{*}
    -i_{y}q^{*}\pa^{X}i\omega^{X}\right)+
    dt^{3/2}\left( 
    y_{*}i_{\overline{\widehat{y}}}+\widehat{y}_{*}\overline{y}_{*} 
    \right) \\
    + \left( q^{*} \overline{\pa}^{X}id\omega^{X} \right) 
    i_{\overline{\widehat{y}}}
    -\widehat{y}_{*}q^{*}\pa^{X}id\omega^{X} 
    + q^{*}id\omega^{X}\left( \n^{V}_{\overline{ \widehat{y}}}
    +\overline{\widehat{w}}^{i}i_{\overline{\widehat{w}}_{i}} 
	\right)+A^{E_{0},2} .
\end{multline}

Given $\rho\in]0,3[$, put
\index{qt@$\underline{\theta}_{t}$}%
\begin{equation}\label{eq:sca2}
\underline{\theta}_{t}=\left( 1,dt^{\rho+1}\right) .
\end{equation}

We establish an extension of \cite[Theorem 11.6.1]{Bismut10b}. We use 
the notation of Subsection \ref{subsec:exo}, with $Z=X$,  
$F=TX$, $\omega^{Z}=\omega^{X}, g^{F}=\widehat{g}^{TX}$.
\begin{theorem}\label{thm:fina1}
	As $t\to 0$, then
	\begin{align}\label{eq:co1bi}
&\ch\left(\mathcal{A}^{\prime \prime }_{Y}, 
\omega^{X}/t,g^{D},g^{\widehat{TX}}/t^{3},\theta_{t}\right)\to 
p_{*}\left[q^{*}a\left(TX,d\omega^{X},\widehat{g}^{TX}\right)\ch\left(A^{E_{0} \prime 
\prime},g^{D}\right)\right],\\
&\ch\left(\mathcal{A}^{\prime \prime }_{Y}, 
\omega^{X}/t,g^{D},g^{\widehat{TX}}/t^{3},\underline{\theta}_{t}\right)\to 
p_{*}\left[q^{*}\Td\left(TX,\widehat{g}^{TX}\right)\ch\left(A^{E_{0} \prime 
\prime},g^{D}\right)\right].\nonumber 
\end{align}
\end{theorem}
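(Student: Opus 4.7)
The plan is to adapt the local index strategy used in the proof of Theorem \ref{thm:sepr}, replacing the role of the Kähler condition $\overline{\pa}^{X}\pa^{X}\omega^{X}=0$ by the exotic deformation $\omega^{X}_{\theta_t}$, which was designed precisely so that the non-closed terms $\overline{\pa}^{X}\pa^{X}\omega^{X}$ appearing in the curvature are absorbed into a bounded $Y$-dependent factor rather than surviving as diverging $1/t$ terms. Concretely, I begin from formula \eqref{eq:curbhypbis1x1} for $\mathfrak M_{\theta_{t},t}=\mathcal{A}^{t,2}_{t^{3/2}Y,\theta_t}$ and carry out exactly the localization and rescaling performed in Subsection \ref{subsec:cava}: Theorem \ref{thm:Tunif} together with the scaling identity \eqref{eq:clip2} gives Gaussian-type off-diagonal bounds strong enough to replace $\mathfrak M_{\theta_t,t}$ near any $x\in X$ by the same model operator $\mathfrak N_{t,x}$ on $\mathbb H_x=T_{\R,x}X\times\widehat{T_{\R,x}X}$ constructed in Subsection \ref{subsec:cava}. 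Next I apply the dilation $I_{t^{3/2}}$ to the $T_{\R,x}X$ variable and the Getzler-type conjugation by $\exp(-\mathfrak o/t^{3/2})$ introduced in \eqref{eq:froid3}--\eqref{eq:froid4}, obtaining a rescaled operator $\mathfrak P_{\theta_t, t,x}$.

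The central computation is then to take the pointwise limit of $\mathfrak P_{\theta_t,t,x}$ as $t\to 0$. The rescaling arguments of Subsection \ref{subsec:cava} — in particular the analysis of $t^{3/2}{}^{1}\widehat{\n}^{\mathbf F}_{t,Y}$, of $t^{3/2}i_YC$, and of the curvature terms — carry over verbatim to the first seven lines of \eqref{eq:curbhypbis1x1}. The essential new point is the fate of the last three lines, which contain the $\theta$-dependent contributions. Because we have kept the scaling $\theta_t=(1,dt)$, the factors $\tfrac{dt}{2}|Y|^2_{g^{\widehat{TX}}}-1$ and $q^*id\omega^X/t$ are combined in such a way that each factor of $1/t$ is precisely cancelled by the Getzler rescaling, so that in the limit we obtain a fibrewise hypoelliptic harmonic oscillator operator, call it $\mathfrak P^{\theta}_{0,s,x}$, which by inspection is exactly (the local model of) the hypoelliptic operator whose supertrace of heat kernel is the Mathai--Quillen type form $a(TX,d\omega^X,\widehat g^{TX})$ defined in Subsection \ref{subsec:exo} and studied in \cite[Chapter 10]{Bismut10b}. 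Applying Proposition \ref{prop:gra} and Theorem \ref{thm:tracl} to compute the finite- and infinite-dimensional supertraces, and tensoring by $\ch(A^{E_0\prime\prime},g^D)$ coming from the decoupled factor $A^{E_0,2}$ in the last line of \eqref{eq:curbhypbis1x1}, gives the pointwise limit and, by dominated convergence (justified by the Gaussian estimates of Theorem \ref{thm:Tunif}), the first limit in \eqref{eq:co1bi}.

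For the second identity with $\underline{\theta}_t=(1,dt^{\rho+1})$, $\rho\in\,]0,3[$, the same localization and rescaling apply; however, the extra factor $t^\rho$ in $d$ means that every term in the last three lines of \eqref{eq:curbhypbis1x1} that was marginal under $\theta_t$ now acquires a strictly positive power of $t$ and therefore vanishes in the limit. The remaining operator is exactly the limit operator $\mathfrak P_{0,s,x}$ of Subsection \ref{subsec:cava} (formula \eqref{eq:froid4x1}), and the computation of its fibrewise supertrace via Proposition \ref{prop:gra}, Theorem \ref{thm:tracl} and identity \eqref{eq:12a5} reproduces $\Td(TX,\widehat g^{TX})\ch(A^{E_0\prime\prime},g^D)$ fibrewise, whence the second limit by dominated convergence.

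The main obstacle I anticipate is the uniform control needed to justify dominated convergence once $c=1$: at $c=1$ the potential $V_\theta$ in \eqref{eq:wolf5} degenerates to $\tfrac{dt}{2}|Y|^2_{g^{TX}}|Y|^2_{g^{\widehat{TX}}}$, so the confining quadratic term is lost and replaced by a quartic-but-$t$-small term. This is precisely the regime in which Theorem \ref{thm:Tunif} was tailored to operate (it allows $d\in[kt^3,1]$ and in particular the range $d\sim t$ of $\theta_t$ and $d\sim t^{\rho+1}$ of $\underline\theta_t$); verifying that the matrix-valued corrections coming from $C=B+B^*$ (absent in \cite[Theorem 11.5.1]{Bismut10b}) do not spoil these estimates was the content of the remark in the proof of Theorem \ref{thm:Tunif}, and applying it carefully to the localized operator $\mathfrak P_{\theta_t,t,x}$ is the delicate step. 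Once the estimate is in hand, the convergence of coefficients together with their derivatives on compact subsets of $\mathbb H_x$ is standard, as in \cite[Theorem 4.10.1]{BismutLebeau06a}, and the proof concludes as in Theorem \ref{thm:sepr}.
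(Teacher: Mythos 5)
Your proposal follows the same strategy as the paper: localize via Theorem \ref{thm:Tunif}, rescale by $I_{t^{3/2}}$ and conjugate by $\exp(-\mathfrak o/t^{3/2})$, take pointwise limits of the coefficients, then evaluate the resulting model-operator supertraces. The overall architecture is right and the proof would go through. Two conceptual points should, however, be corrected.

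First, the claim that ``each factor of $1/t$ is precisely cancelled by the Getzler rescaling'' misattributes the mechanism. The Getzler-type conjugation $\exp(-\mathfrak o/t^{3/2})$ acts only on the Clifford variables $i_{w_i},i_{\overline w_i}$; it does nothing to scalar prefactors such as $q^{*}\overline\pa^{X}\pa^{X}i\omega^{X}/t$. What actually kills the diverging terms is the purely algebraic combination of the two scalings $\omega^{X}\mapsto\omega^{X}/t$ and $d\mapsto dt$: on the one hand the $(-1)$ in $\tfrac{dt}{2}|Y|^{2}_{g^{\widehat{TX}}}-1$, paired with $-q^{*}\overline\pa^{X}\pa^{X}i\omega^{X}/t$, produces $+q^{*}\overline\pa^{X}\pa^{X}i\omega^{X}/t$, which cancels the same divergent term already present in $\mathfrak M_{t}$; on the other hand the remaining $\theta$-terms carry exactly one $d$ and one $1/t$, so $d\mapsto dt$ renders them finite. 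The only relevance of the Getzler conjugation is the subsequent observation, made explicitly in the paper, that the surviving $\theta$-terms contain no $i_{w_i},i_{\overline w_i}$ and hence pass unchanged through the conjugation, which is what allows one to take their naive limit.

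Second, Proposition \ref{prop:gra} and Theorem \ref{thm:tracl} are used to evaluate the supertrace of the heat kernel of the limit operator $\mathfrak P_{0,s,x}$, i.e.\ the $d=0$ model arising in the $\underline\theta_{t}$ case. For the first limit in \eqref{eq:co1bi}, the limit operator $\mathfrak P_{0,d,s,x}$ with $d>0$ retains the nontrivial $\theta$-terms of \eqref{eq:wolf3}, and its heat-kernel supertrace is not computed by a Mehler/Mathai--Quillen formula of the kind in Proposition \ref{prop:gra}; rather, one recognizes $\mathfrak P_{0,d,s,x}$ as the local model of the hypoelliptic operator that defines $a(TX,d\omega^{X},\widehat g^{TX})$ in \cite[Chapter 10]{Bismut10b}, and the identification is then a matter of definition (plus the sign conventions of \cite[eq.\ (10.2.2)]{Bismut10b}). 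Your proposal already gestures at this identification, but then incorrectly invokes Proposition \ref{prop:gra} and Theorem \ref{thm:tracl} as if they produced a closed-form answer for $d>0$. Everything else — the localization, the estimate \eqref{eq:parjer14} replacing the Kähler hypothesis, the treatment of the extra bounded term $t^{N/2}Ct^{-N/2}$ in the uniform estimates, and the reduction to $\mathfrak P_{0,s,x}$ when $\rho\in\,]0,3[$ — matches the paper.
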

\begin{proof} 
	By (\ref{eq:hyne}), (\ref{eq:clip1}), and (\ref{eq:clip3}),   we get
\begin{equation}\label{eq:nank1}
\ch\left(\mathcal{A}^{\prime \prime }_{Y}, 
\omega^{X}/t,g^{D},g^{\widehat{TX}}/t^{3},\theta\right)=
\varphi\Trs\left[\exp\left(-\mathfrak M_{\theta,t}\right)\right].
\end{equation}
Let $P_{\theta,t}\left(\left(x,Y\right),\left(x',Y'\right)\right)$ be 
the smooth kernel for $\exp\left(-\mathfrak M_{\theta,t}\right)$ with 
respect to $\frac{dx'dY'}{\left(2\pi\right)^{2n}}$. By 
(\ref{eq:nank1}), as in (\ref{eq:form1x1}), we get
\begin{multline}\label{eq:nak2a1}
\ch\left(\mathcal{A}^{\prime \prime }_{Y}, 
\omega^{X}/t,g^{D},g^{\widehat{TX}}/t^{3},\theta\right)\\
=\varphi\int_{X}^{}\left[\int_{\widehat{T_{\R,x}X}}^{}\Trs\left[P_{\theta,t}\left(
\left(x,Y\right),\left(x,Y\right)\right)\right]\frac{dY}{\left(2\pi\right)^{n}}\right]
\frac{\widehat{dx}}{\left(2\pi\right)^{n}}.
\end{multline}
Put
\begin{equation}\label{eq:form2r1}
m_{\theta,t}\left(x\right)=\varphi \frac{1}{\left(2\pi\right)^{n}} \int_{\widehat{T_{\R,x}X}}^{}
\Trs\left[P_{\theta,t}\left( \left(x,Y\right),\left(x,Y\right) \right) 
\right]\frac{dY}{\left(2\pi\right)^{n}},
\end{equation}
so that
\begin{equation}\label{eq:form3}
\ch\left(\mathcal{A}''_{Y},\omega^{X}/t,g^{D},g^{\widehat{TX}}/t^{3},\theta\right)=
\int_{X}^{}m_{\theta,t}\left(x\right)\widehat{dx}.
\end{equation}

We fix $d>0$. In (\ref{eq:form3}), we replace $\theta$ by $\theta_{t}$. As explained before, 
given $x\in  X$, 
 Theorem \ref{thm:Tunif} takes care of localization as $t\to 0$, and 
 this  uniformly in $\left(s,x\right)\in M$. If instead $\theta_{t}$ 
 is replaced by $\underline{\theta}_{t}$, i.e., we substitute $d$ by 
 $dt^{\rho}$ in $\theta$, since $\rho\in ]0,3[$, Theorem 
 \ref{thm:Tunif} also takes care of the localization.
 
 To proceed, we make exactly the same choice of coordinates and the 
 same choice of trivialization as in the proof of Theorem 
 \ref{thm:sepr}, as well as the same rescalings and conjugations. We introduce 
 corresponding rescaled or conjugated operators with the extra subscript 
 $\theta_{t}$, or $\underline{\theta}_{t}$. We construct an operator 
 $\mathfrak N_{\theta,t,x}$ on $\mathbb H_{x}$ exactly as in the proof of Theorem 
 \ref{thm:sepr}, that coincides with $\mathfrak M_{\theta,t}$ on 
 $B^{T_{\R,x}X}\left(0,\epsilon/2\right)\times \widehat{T_{\R,x}X}$. 
 Let $Q_{\theta,t,x}\left(\left(U,Y\right),\left(U',Y'\right)\right)$ 
 be the smooth kernel for $\exp\left(-\mathfrak 
 N_{\theta,t,x}\right)$ with respect to the volume 
 $\frac{dU'dY'}{\left(2\pi\right)^{2n}}$. When studying the 
 asymptotics of (\ref{eq:form3}) as $t\to 0$, we may as well 
 replace $m_{\theta,t}\left(x\right)$ by
 \begin{equation}\label{eq:fatR1}
n_{\theta,t}\left(x\right)=\varphi\frac{1}{\left(2\pi\right)^{n}}
\int_{\widehat{T_{\R,x}X}}^{}\Trs\left[Q_{\theta,t,x}\left(\left(0,Y\right),\left(0,Y\right)\right)\right]\frac{dY}{\left(2\pi\right)^{n}}.
\end{equation}

 Recall that 
 \index{Po@$\mathfrak P_{0,s,x}$}%
 $\mathfrak P_{0,s,x}$ is given by (\ref{eq:froid4x1}). Given $d\ge 0$, put
 \index{Pod@$ \mathfrak P_{0,d,x}$}%
 \begin{multline}\label{eq:wolf3}
      \mathfrak P_{0,d,s,x}=\mathfrak P_{0,s,x}     	  -\frac{1}{2}\left\vert  
      Y\right\vert^{2}_{g^{\widehat{TX}}}q^{*}\overline{\pa}^{X}\pa^{X}id\omega^{X}
      -\widehat{y}_{*}q^{*}\pa^{X}id\omega^{X}\\
       +\left( q^{*}\overline{\pa}^{X}id\omega^{X} \right) 
      i_{\overline{\widehat{y}}}
	   +q^{*}id\omega^{X}\left( 
	   \n_{\overline{\widehat{y}}}+
      \overline{\widehat{w}}^{i}i_{\overline{\widehat{w}}_{i}} 
	  \right),
      \end{multline}
the tensors in (\ref{eq:wolf3}) being evaluated at $\left(s,x\right)$.

	 We claim that as $t\to 0$,  
	 instead of (\ref{eq:froid5}), we have
	 \begin{align}\label{eq:gnan1}
&\mathfrak P_{\theta_{t},t,x}\to \mathfrak P_{0,d,s,x},
&\mathfrak P_{\underline{\theta}_{t},t,x}\to \mathfrak P_{0,s,x}.
\end{align}
To establish (\ref{eq:gnan1}), we will refer to the proof of 
(\ref{eq:froid5}), and also to equation (\ref{eq:clip4}), with $c=1$, 
and $d$ replaced by $dt$.

In the proof of (\ref{eq:froid5}), we made essential use of the fact 
$\overline{\pa}^{X}\pa^{X}i\omega^{X}=0$. In the sequel, we will view 
$\mathfrak P_{0,s,x}$ as the contribution to the limit of  the terms 
in $\mathfrak M_{t}$ in (\ref{eq:curbhypbis1}), with the exception of the term 
$-q^{*}\overline{\pa}^{X}\pa^{X}\omega^{X}/t$. However, in equation 
(\ref{eq:clip4}) where we replace $\theta$ by $\theta_{t}$, so that 
$c=1$, this term 
goes away.  Since the other terms in (\ref{eq:clip4}) do not contain 
operators like $i_{w_{i}}, i_{\overline{w}_{i}}$, they are  
unaffected by the conjugation by $\exp\left(-\mathfrak 
o/t^{3/2}\right)$. Therefore, while replacing $d$ by $dt$ in the 
right-hand side of (\ref{eq:clip4}), we can take their naive limit as 
$t\to 0$, and we get (\ref{eq:gnan1}). We can instead use equation 
(\ref{eq:curbhypbis1x1}) for $\mathfrak M_{\theta_{t},t}$ and obtain 
(\ref{eq:gnan1}).

 Let 
$S_{\theta_{t},t,x}\left(\left(U,Y\right),\left(U',Y'\right)\right)$ be 
the smooth kernel associated with $\exp\left(-\mathfrak 
P_{\theta_{t},t}\right)$ with respect to the volume 
$\frac{dU'dY'}{\left(2\pi\right)^{2n}}$. We claim that we can use the same arguments 
as in  \cite[eq. 
(11.6.9)]{Bismut10b} to control $\left\vert 
S_{\theta_{t},t,x}\left(\left(0,Y\right),\left(0,Y\right)\right) 
\right\vert $ when $t\in ]0,1]$. Indeed when comparing equation 
(\ref{eq:curbhypbis1x1}) for $\mathfrak M_{\theta_{t},t}$ with the 
corresponding equation \cite[eq. (11.5.1)]{Bismut10b}, we see that 
the main difference lies in the presence of the term $t^{3/2}i_{Y}C$. However note 
that given $c>0$, there is $c'>0$ such that for $a\in\R$, 
\begin{equation}\label{eq:est1}
\left\vert  a\right\vert\le c'+c a^{4}.
\end{equation}
By (\ref{eq:est1}), we deduce that
\begin{equation}\label{eq:est2}
\left\vert  t^{3/2}Y\right\vert_{g^{\widehat{TX}}}\le c'+ct^{6}\left\vert  
Y\right\vert^{4}_{g^{\widehat{TX}}}.
\end{equation}
By (\ref{eq:est2}), we deduce that given $\rho\in \left[0,3\right], d_{0}>0$, if $d\ge 
d_{0}t^{\rho}$, then
\begin{equation}\label{eq:est3}
\left\vert  t^{3/2}Y\right\vert_{g^{\widehat{TX}}}\le c'+cdt^{3}\left\vert  
Y\right\vert^{4}_{g^{\widehat{TX}}}.
\end{equation}

Using (\ref{eq:curbhypbis1x1}), (\ref{eq:est3}), the same arguments as in \cite[eq. 
(11.6.9)]{Bismut10b} show that
for  $d_{0}>0,\rho\in [0,3[$, there exist 
$c>0,C>0$ such that
for 
$ t\in ]0,1], 1\ge d\ge d_{0}
t^{\rho},x\in X, Y\in \widehat{T_{\R,x}X}$, \footnote{In 
\cite{Bismut10b}, the condition $d\le 1$ was omitted. This is an 
irrelevant oversight.}
\begin{equation}\label{eq:parjer14}
\left\vert S_{\theta_{t},t,x}\left(\left(0,Y\right),\left(0,Y\right)\right) \right\vert 
\le C
\exp\left(-c d_{0}\left\vert  
Y\right\vert_{g^{\widehat{TX}}}^{2\left(1-\rho/3\right)}\right).
\end{equation}
The estimate (\ref{eq:parjer14}) also gives a corresponding estimate 
for 
$S_{\underline{\theta}_{t},t}\left(\left(0,Y\right),\left(0,Y\right)\right)$.

Recall that the kernel 
\index{Sx@$S_{0,s,x}\left(\left(U,Y\right), \left(U',Y'\right)\right)$}%
$S_{0,s,x}\left(\left(U,Y\right), \left(U',Y'\right)\right)$ for 
$\exp\left(-\mathfrak P_{0,s,x}\right)$
was defined in the proof of Theorem \ref{thm:sepr}. Let $S_{0,d,s,x}\left(\left(U,Y\right),\left(U',Y'\right)\right)$ be the 
smooth kernel associated with $\exp\left(-\mathfrak P_{0,d,s,x}\right)$ 
with respect to the volume $\frac{dU'dY'}{\left(2\pi\right)^{2n}}$. 
Using the uniform estimates (\ref{eq:parjer14}) and proceeding as in 
the proof of
\cite[Theorem 11.6.1]{Bismut10b}, we find that for $\rho\in 
]0,3[$,  as $t\to 0$, 
\begin{align}\label{eq:est4}
&S_{\theta_{t},t,x}\left(\left(U,Y\right),\left(U',Y'\right)\right)\to 
S_{0,d,s,x}\left(\left(U,Y\right),\left(U',Y'\right)\right),\\
&S_{\underline{\theta}_{t},t,x}\left(\left(U,Y\right),\left(U',Y'\right)\right)\to 
S_{0,s,x}\left(\left(U,Y\right),\left(U',Y'\right)\right). \nonumber 
\end{align}

By proceeding as in the proof of (\ref{eq:dee11}), using 
(\ref{eq:est4}), we find  that as 
$t\to 0$, 
\begin{align}\label{eq:est5}
&n_{\theta_{t},t}\left(x\right)\to n_{0,d,s}\left(x\right) \nonumber \\
&=
\varphi\frac{1}{\left(2\pi\right)^{n}}
\int_{\widehat{T_{\R,x}X}}^{}\Trs^{\Lambda\left(\overline{\widehat{T^{*}_{x}X}}\right)\ho D_{s,x}}\left[\left[S_{0,d,s,x}\left(\left(0,Y\right),\left(0,Y\right)\right)1\right]^{\max}\right]\frac{dY}{\left(2\pi\right)^{n}}, \\
&n_{\underline{\theta}_{t},t}\left(x\right)\to n_{0,s}\left(x\right) 
\nonumber \\
&=
\varphi\frac{1}{\left(2\pi\right)^{n}}
\int_{\widehat{T_{\R,x}X}}^{}\Trs^{\Lambda\left(\overline{\widehat{T^{*}_{x}X}}\right)\ho D_{s,x}}\left[\left[S_{0,s,x}\left(\left(0,Y\right),\left(0,Y\right)\right)1\right]^{\max}\right]\frac{dY}{\left(2\pi\right)^{n}}. \nonumber 
\end{align}
By (\ref{eq:parjer14}),  given $\rho\in ]0,3[$, there exists $C>0$ such that for any $x\in X$, 
\begin{align}\label{eq:est6}
&\left\vert  n_{\theta_{t},t}\left(x\right)\right\vert\le C,
&\left\vert  n_{\underline{\theta}_{t},t}\left(x\right)\right\vert\le 
C.
\end{align}

By (\ref{eq:form3}), (\ref{eq:est5}), and (\ref{eq:est6}),  as $t\to 0$, 
\begin{align}\label{eq:est6a1}
&\ch\left(A^{\prime \prime 
}_{Y},\omega^{X}/t,g^{D},g^{\widehat{TX}}/t^{3},\theta_{t}\right)\to 
\int_{X}^{}n_{0,d,s}\left(x\right)\widehat{dx},\\
&\ch\left(A^{\prime \prime 
}_{Y},\omega^{X}/t,g^{D},g^{\widehat{TX}}/t^{3},\theta_{t}\right)\to 
\int_{X}^{}n_{0,s}\left(x\right)\widehat{dx}, \nonumber 
\end{align}
By (\ref{eq:12a6}), (\ref{eq:est6a1}), we get the second 
equation in (\ref{eq:co1bi}).

To compute the right-hand side of the first equation in 
(\ref{eq:est6a1}), we proceed as in  the proof of \cite[Theorem 
11.6.1]{Bismut10b}. Consider equations (\ref{eq:froid4x1}), 
(\ref{eq:wolf3}) that give a formula for $\mathfrak P_{0,d,s,x}$. When 
$A^{E_{0}}$ is the trivial superconnection on $\C$, this is exactly 
the operator obtained in \cite[eq. (11.6.5)]{Bismut10b}. As explained 
in detail in \cite{Bismut10b}, in this case, if we  replace formally
the variables $\mathfrak w_{i}, \overline{\mathfrak w}_{i}$ by 
variables $i_{w_{i}}, i_{\overline{w}_{i}}$ that act on an extra copy 
of
$\Lambda\left(T^{*}_{\C,x}X\right)$, the operator $\mathfrak P_{0,d,s,x}$ 
coincides with the operator $\mathcal{L}_{d,Y}$\footnote{In 
\cite{Bismut10b}, because of the presence of an extra parameter 
$b>0$, this operator is denoted $\mathcal{L}_{d,Y_{b}}$.} considered in 
\cite[eq. (10.2.4)]{Bismut10b} that is associated with the vector 
bundle $TX$, the metric $\widehat{g}^{TX}$, the $(1,1)$-form 
$\omega^{X}$, and the section $Y$. When $A^{E_{0}}$ is non trivial, 
in $\mathfrak P_{0,d,s,x}$, we have the extra term $A^{E_{0},2}_{s,x}$.

Using the sign conventions 
explained in \cite[eq. (10.2.2)]{Bismut10b}, that match 
(\ref{eq:dee12a1}), and using the notation in Subsection 
\ref{subsec:exo},  for $d>0$, we get
\begin{equation}\label{eq:est8}
\int_{X}^{}n_{0,d}\left(x\right)\widehat{dx}=p_{*}\left[q^{*}a\left(TX,d\omega^{X},\widehat{g}^{TX}\right)\ch\left(A^{E_{0} \prime \prime },g^{D}\right)\right]
\end{equation}
By (\ref{eq:est6a1}), (\ref{eq:est8}), we obtain the first equation 
in (\ref{eq:co1bi}). The proof of our theorem is completed. 
\end{proof}
\begin{theorem}\label{thm:tafin}
	The following identity holds:
	\begin{equation}\label{eq:nank2}
		\ch_{\mathrm{BC}}\left(\mathcal{A}''_{Y}\right)=p_{*}\left[q^{*}\Td_{\mathrm{BC}}\left(TX\right)\ch_{\mathrm{BC}}\left(A^{E \prime \prime }\right)\right]\,\mathrm{in}\,H^{(=)}_{\mathrm{BC}}\left(S,\R\right).
\end{equation}
\end{theorem}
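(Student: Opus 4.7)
The plan is to combine the invariance of the Bott-Chern class under deformation of parameters (Theorem \ref{thm:Tdoublebibi}) with the asymptotic computation of Theorem \ref{thm:fina1}, using Proposition \ref{prop:plim} to pass from convergence of forms to convergence of Bott-Chern classes.

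First, I fix an arbitrary $d>0$ and $\rho \in ]0,3[$, and set $\underline{\theta}_t = (1, dt^{\rho+1})$ as in \eqref{eq:sca2}. By Theorem \ref{thm:Tdoublebibi}, the Bott-Chern class of $\ch(\mathcal{A}''_Y, \omega^X/t, g^D, g^{\widehat{TX}}/t^3, \underline{\theta}_t)$ equals $\cBC(\mathcal{A}''_Y)$ for every $t>0$, independent of the parameter choice. Hence for each $t>0$, the closed form $\ch(\mathcal{A}''_Y, \omega^X/t, g^D, g^{\widehat{TX}}/t^3, \underline{\theta}_t) \in \Omega^{(=)}(S,\R)$ represents $\cBC(\mathcal{A}''_Y)$ in $H^{(=)}_{\mathrm{BC}}(S,\R)$.

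Second, by the second equation of \eqref{eq:co1bi} in Theorem \ref{thm:fina1}, as $t \to 0$ we have the uniform convergence
\begin{equation}
\ch(\mathcal{A}''_Y, \omega^X/t, g^D, g^{\widehat{TX}}/t^3, \underline{\theta}_t) \to p_*\left[q^*\Td(TX, \widehat{g}^{TX})\ch(A^{E_0 \prime \prime}, g^D)\right].
\end{equation}
In particular, the convergence holds in the sense of currents on $S$. Since each form on the left is closed and lies in $\ker \overline{\pa}^S \cap \ker \pa^S$, Proposition \ref{prop:plim} applies and yields convergence of the corresponding Bott-Chern classes. The limit Bott-Chern class is therefore $\cBC(\mathcal{A}''_Y)$.

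Third, I identify the limit Bott-Chern class explicitly. The form $\Td(TX, \widehat{g}^{TX})$ represents $\Td_{\mathrm{BC}}(TX)$ by Proposition \ref{prop:pbc}, while $\ch(A^{E_0 \prime \prime}, g^D)$ represents $\cBC(A^{E \prime \prime})$ by Theorem \ref{thm:carch} and Definition \ref{def:BCcl}. Their product, pulled back by $q$ and pushed forward by the submersion $p$, represents $p_*[q^*\Td_{\mathrm{BC}}(TX)\cBC(A^{E\prime\prime})]$ in $H^{(=)}_{\mathrm{BC}}(S,\R)$ --- this is legitimate at the level of smooth forms since $p$ is a proper submersion, so both $q^*$ and $p_*$ act on smooth forms and commute with $\overline{\pa}^S \pa^S$. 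Putting together the two representations of the limit class yields \eqref{eq:nank2}. No step presents a serious technical obstacle once Theorems \ref{thm:Tdoublebibi} and \ref{thm:fina1} are in hand: the only point requiring minor care is the verification that passing to the Bott-Chern limit is legitimate, which is exactly the content of Proposition \ref{prop:plim}.
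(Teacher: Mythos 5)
Your proof is correct and follows one of the two routes the paper explicitly offers: you take the second equation of~\eqref{eq:co1bi} with $\underline{\theta}_{t}$, use Theorem~\ref{thm:Tdoublebibi} to keep the Bott--Chern class fixed along the deformation, and pass to the limit via Proposition~\ref{prop:plim} before identifying the limit class via Proposition~\ref{prop:pbc}, Theorem~\ref{thm:carch}, and the compatibility of $p_{*}$ with Bott--Chern classes for proper submersions. This is exactly the argument the paper compresses into one sentence; your only addition is to make explicit the appeal to Proposition~\ref{prop:plim}, which the paper leaves implicit, and this is a legitimate and useful expansion rather than a different route.
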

\begin{proof}
	Using Theorem \ref{thm:Tdoublebibi} and either (\ref{eq:exo2}) 
	combined with the first equation in (\ref{eq:co1bi}), or  the second 
	row in (\ref{eq:co1bi}),  we get (\ref{eq:nank2}). The proof of our theorem is completed. 
\end{proof}
\subsection{A proof of Theorem \ref{thm:sub}}%
\label{subsec:finsu}
By combining Theorems \ref{thm:fun1} and  \ref{thm:tafin}, we get 
Theorem \ref{thm:sub}, when $\mathscr F= \mathscr E$. As explained in 
Subsection \ref{subsec:rebl}, this is enough to establish 
Theorem \ref{thm:sub} in full generality. This also completes the 
proof of our main result stated in Theorem \ref{thm:rrg}.

\printindex

\bibliography{Bismut,Others}

\newcommand{\etalchar}[1]{$^{#1}$}
\def\cprime{$'$}
\begin{thebibliography}{{Hos}20b}


\bibitem[BGV92]{BerlineGetzlerVergne}
N.~Berline, E.~Getzler, and M.~Vergne.
\newblock {\em Heat kernels and {D}irac operators}.
\newblock Grundl. Math. Wiss. Band 298. Springer-Verlag, Berlin, 1992. 

\bibitem[B86]{Bismut86d}
J.-M. Bismut.
\newblock The {A}tiyah-{S}inger index theorem for families of {D}irac
  operators: two heat equation proofs.
\newblock {\em Invent. Math.}, 83(1):91--151, 1986.

\bibitem[B89b]{Bismut89a}
J.-M. Bismut.
\newblock A local index theorem for non-{K}\"ahler manifolds.
\newblock {\em Math. Ann.}, 284(4):681--699, 1989.

\bibitem[B90]{Bismut90d}
J.-M. Bismut.
\newblock Koszul complexes, harmonic oscillators, and the {T}odd class.
\newblock {\em J. Amer. Math. Soc.}, 3(1):159--256, 1990.
\newblock With an appendix by the author and C. Soul\'e.

\bibitem[B97]{Bismut97a}
J.-M. Bismut.
\newblock Holomorphic families of immersions and higher analytic torsion forms.
\newblock {\em Ast\'erisque}, (244):viii+275, 1997.

\bibitem[B05]{Bismut05a}
J.-M. Bismut.
\newblock The hypoelliptic {L}aplacian on the cotangent bundle.
\newblock {\em J. Amer. Math. Soc.}, 18(2):379--476 (electronic), 2005.

\bibitem[B08]{Bismut06d}
J.-M. Bismut.
\newblock The hypoelliptic {D}irac operator.
\newblock In {\em Geometry and dynamics of groups and spaces}, volume 265 of
  {\em Progr. Math.}, pages 113--246. Birkh\"auser, Basel, 2008.

\bibitem[B12]{Bismut11a}
J.-M. Bismut.
\newblock Index theory and the hypoelliptic {L}aplacian.
\newblock In {\em Metric and differential geometry}, number 297 in Progress in
  Mathematics, pages 181--232. Birkh\"auser/Springer, Basel, 2012.

\bibitem[B13]{Bismut10b}
J.-M. Bismut.
\newblock {\em Hypoelliptic {L}aplacian and {B}ott-{C}hern cohomology}, volume
  305 of {\em Progress in Mathematics}.
\newblock Birkh\"auser/Springer, Cham, 2013.
\newblock A theorem of Riemann-Roch-Grothendieck in complex geometry.


\bibitem[BF86]{BismutFreed86b}
J.-M. Bismut and D.S. Freed.
\newblock The analysis of elliptic families. {I}{I}. {D}irac operators, eta
  invariants, and the holonomy theorem.
\newblock {\em Comm. Math. Phys.}, 107(1):103--163, 1986.

\bibitem[BGS88a]{BismutGilletSoule88a}
J.-M. Bismut, H.~Gillet, and C.~Soul{\'e}.
\newblock Analytic torsion and holomorphic determinant bundles. {I}.
  {B}ott-{C}hern forms and analytic torsion.
\newblock {\em Comm. Math. Phys.}, 115(1):49--78, 1988.

\bibitem[BGS88b]{BismutGilletSoule88b}
J.-M. Bismut, H.~Gillet, and C.~Soul{\'e}.
\newblock Analytic torsion and holomorphic determinant bundles. {I}{I}.
  {D}irect images and {B}ott-{C}hern forms.
\newblock {\em Comm. Math. Phys.}, 115(1):79--126, 1988.

\bibitem[BGS88c]{BismutGilletSoule88c}
J.-M. Bismut, H.~Gillet, and C.~Soul{\'e}.
\newblock Analytic torsion and holomorphic determinant bundles. {I}{I}{I}.
  {Q}uillen metrics on holomorphic determinants.
\newblock {\em Comm. Math. Phys.}, 115(2):301--351, 1988.

\bibitem[BGS90a]{BismutGilletSoule90b}
J.-M. Bismut, H.~Gillet, and C.~Soul{\'e}.
\newblock Bott-{C}hern currents and complex immersions.
\newblock {\em Duke Math. J.}, 60(1):255--284, 1990.

\bibitem[BGS90b]{BismutGilletSoule90a}
J.-M. Bismut, H.~Gillet, and C.~Soul{\'e}.
\newblock Complex immersions and {A}rakelov geometry.
\newblock In {\em The {G}rothendieck {F}estschrift, {V}ol.\ {I}}, volume~86 of
  {\em Progr. Math.}, pages 249--331. Birkh\"auser Boston, Boston, MA, 1990.
  
  \bibitem[BL08]{BismutLebeau06a}
J.-M. Bismut and G.~Lebeau.
\newblock {\em The hypoelliptic {L}aplacian and {R}ay-{S}inger metrics}, volume
  167 of {\em Annals of Mathematics Studies}.
\newblock Princeton University Press, Princeton, NJ, 2008.

  \bibitem[BL95]{BismutLott95}
J.-M. Bismut and J.~Lott.
\newblock Flat vector bundles, direct images and higher real analytic torsion.
\newblock {\em J. Amer. Math. Soc.}, 8(2):291--363, 1995.

\bibitem[Bl10]{Block10}
J.~Block.
\newblock Duality and equivalence of module categories in noncommutative
  geometry.
\newblock In {\em A celebration of the mathematical legacy of {R}aoul {B}ott},
  volume~50 of {\em CRM Proc. Lecture Notes}, pages 311--339. Amer. Math. Soc.,
  Providence, RI, 2010.


\bibitem[Bor87]{Borel87}
A.~Borel, P.-P. Grivel, B.~Kaup, A.~Haefliger, B.~Malgrange, and F.~Ehlers.
\newblock {\em Algebraic {$D$}-modules}, volume~2 of {\em Perspectives in
  Mathematics}.
\newblock Academic Press, Inc., Boston, MA, 1987.

\bibitem[BorS58]{BorelSerre58}
A.~Borel and J.-P. Serre.
\newblock Le th\'eor\`eme de {R}iemann-{R}och.
\newblock {\em Bull. Soc. Math. France}, 86:97--136, 1958.

\bibitem[BoC65]{BottChern65}
R.~Bott and S.~S. Chern.
\newblock Hermitian vector bundles and the equidistribution of the zeroes of
  their holomorphic sections.
\newblock {\em Acta Math.}, 114:71--112, 1965.

\bibitem[Col71]{SGA6}
Collective.
\newblock {\em Th\'{e}orie des intersections et th\'{e}or\`eme de
  {R}iemann-{R}och}.
\newblock Lecture Notes in Mathematics, Vol. 225. Springer-Verlag, Berlin-New
  York, 1971.
\newblock S\'{e}minaire de G\'{e}om\'{e}trie Alg\'{e}brique du Bois-Marie
  1966--1967 (SGA 6), Dirig\'{e} par P. Berthelot, A. Grothendieck et L.
  Illusie. Avec la collaboration de D. Ferrand, J. P. Jouanolou, O. Jussila, S.
  Kleiman, M. Raynaud et J. P. Serre.

\bibitem[D09]{Demaillylivre}
J.-P. Demailly.
\newblock {\em Complex analytic and differential geometry}.
\newblock OpenContent Book,
  http://www-fourier.ujf-grenoble.fr/~demailly/manuscripts/agbook.pdf, 2009.
  
 \bibitem[Do88]{Donaldson88}
S.~K. Donaldson.
\newblock {\em Geometry of four-manifolds}.
\newblock ICM Series. American Mathematical Society, Providence, RI, 1988.
\newblock A plenary address presented at the International Congress of
  Mathematicians held in Berkeley, California, August 1986, Introduced by I. M.
  James.

\bibitem[G86]{Getzler86}
E.~Getzler.
\newblock A short proof of the local {A}tiyah-{S}inger index theorem.
\newblock {\em Topology}, 25(1):111--117, 1986.

\bibitem[GlJ87]{GlimmJaffe87}
J.~Glimm and A.~Jaffe.
\newblock {\em Quantum physics}.
\newblock Springer-Verlag, New York, second edition, 1987.
\newblock A functional integral point of view.

\bibitem[GrR84]{GrauertRemmert84}
H.~Grauert and R.~Remmert.
\newblock {\em Coherent analytic sheaves}, volume 265 of {\em Grundlehren der
  Mathematischen Wissenschaften [Fundamental Principles of Mathematical
  Sciences]}.
\newblock Springer-Verlag, Berlin, 1984.

\bibitem[Gr60]{Grauert60a}
H.~Grauert.
\newblock Ein {T}heorem der analytischen {G}arbentheorie und die {M}odulr\"aume
  komplexer {S}trukturen.
\newblock {\em Inst. Hautes \'Etudes Sci. Publ. Math.}, (5):64, 1960.

\bibitem[Gre80]{Green80}
H.~I. Green.
\newblock {\em {C}hern classes for coherent sheaves}.
\newblock PhD thesis, University of Warwick, January 1980.

\bibitem[Gri10]{Grivaux10}
J.~Grivaux.
\newblock Chern classes in {D}eligne cohomology for coherent analytic sheaves.
\newblock {\em Math. Ann.}, 347(2):249--284, 2010.

\bibitem[H67]{Hormander67}
L.~H{\"o}rmander.
\newblock Hypoelliptic second order differential equations.
\newblock {\em Acta Math.}, 119:147--171, 1967.

\bibitem[H83]{Hormander83a}
L.~H{\"o}rmander.
\newblock {\em The analysis of linear partial differential operators. {I}},
  volume 256 of {\em Grundlehren der Mathematischen Wissenschaften [Fundamental
  Principles of Mathematical Sciences]}.
\newblock Springer-Verlag, Berlin, 1983.
\newblock Distribution theory and Fourier analysis.

\bibitem[Ho20a]{Hosgood20a}
T.~{Hosgood}.
\newblock {Simplicial Chern-Weil theory for coherent analytic sheaves, part I}.
\newblock {\em arXiv e-prints}, page arXiv:2003.10023, March 2020.

\bibitem[Ho20b]{Hosgood20b}
T.~{Hosgood}.
\newblock {Simplicial Chern-Weil theory for coherent analytic sheaves, part
  II}.
\newblock {\em arXiv e-prints}, page arXiv:2003.10591, March 2020.

\bibitem[KaS90]{KashShap90}
M.~Kashiwara and P.~Schapira.
\newblock {\em Sheaves on manifolds}, volume 292 of {\em Grundlehren der
  Mathematischen Wissenschaften [Fundamental Principles of Mathematical
  Sciences]}.
\newblock Springer-Verlag, Berlin, 1990.
\newblock With a chapter in French by Christian Houzel.

\bibitem[Ke06]{Keller06}
B.~Keller.
\newblock On differential graded categories.
\newblock In {\em International {C}ongress of {M}athematicians. {V}ol. {II}},
  pages 151--190. Eur. Math. Soc., Z\"{u}rich, 2006.

\bibitem[KnM76]{KnudsenMumford}
F.~F. Knudsen and D.~Mumford.
\newblock The projectivity of the moduli space of stable curves. {I}.
  {P}reliminaries on ``det'' and ``{D}iv''.
\newblock {\em Math. Scand.}, 39(1):19--55, 1976.

\bibitem[KN69]{KobayashiNomizu69}
S.~Kobayashi and K.~Nomizu.
\newblock {\em Foundations of differential geometry. {V}ol. {II}}.
\newblock Interscience Tracts in Pure and Applied Mathematics, No. 15 Vol. II.
  Interscience Publishers John Wiley \& Sons, Inc., New York-London-Sydney,
  1969.

\bibitem[KoS60]{KodairaSpencer60}
K.~Kodaira and D.~C. Spencer.
\newblock On deformations of complex analytic structures. {III}. {S}tability
  theorems for complex structures.
\newblock {\em Ann. of Math. (2)}, 71:43--76, 1960.

\bibitem[Le87]{RLevy87}
R.~N. Levy.
\newblock The {R}iemann-{R}och theorem for complex spaces.
\newblock {\em Acta Math.}, 158(3-4):149--188, 1987.

\bibitem[Ma67]{Malgrange67}
B.~Malgrange.
\newblock {\em Ideals of differentiable functions}.
\newblock Tata Institute of Fundamental Research Studies in Mathematics, No. 3.
  Tata Institute of Fundamental Research, Bombay; Oxford University Press,
  London, 1967.

\bibitem[MQu86]{MathaiQuillen86}
V.~Mathai and D.~Quillen.
\newblock Superconnections, {T}hom classes, and equivariant differential forms.
\newblock {\em Topology}, 25(1):85--110, 1986.

\bibitem[OTT81]{OBrianToledoTong81a}
N.~R. O'Brian, D.~Toledo, and Y.L.L. Tong.
\newblock The trace map and characteristic classes for coherent sheaves.
\newblock {\em Amer. J. Math.}, 103(2):225--252, 1981.

\bibitem[OTT85]{OBrianToledoTong85}
N.~R. O'Brian, D.~Toledo, and Y.~L.~L. Tong.
\newblock A {G}rothendieck-{R}iemann-{R}och formula for maps of complex
  manifolds.
\newblock {\em Math. Ann.}, 271(4):493--526, 1985.

\bibitem[Q16]{Qiang16}
H.~{Qiang}.
\newblock {On the Bott-Chern characteristic classes for coherent sheaves}.
\newblock {\em arXiv e-prints}, page arXiv:1611.04238, November 2016.

\bibitem[Q17]{Qiang17}
H.~Qiang.
\newblock {\em {B}ott-{C}hern characteristic forms and index theorems for
  coherent sheaves on complex manifolds}.
\newblock PhD thesis, University of Pennsylvania, 2017.

\bibitem[Qu85]{Quillen85a}
D.~Quillen.
\newblock Superconnections and the {C}hern character.
\newblock {\em Topology}, 24(1):89--95, 1985.

\bibitem[Sc07]{Schweitzer07}
M.~{Schweitzer}.
\newblock {Autour de la cohomologie de Bott-Chern}.
\newblock {\em arXiv e-prints}, page arXiv:0709.3528, September 2007.

\bibitem[{St}20]{stacks-project}
The {Stacks Project Authors}.
\newblock The stacks project.
\newblock https://stacks.math.columbia.edu, 2020.

\bibitem[TT86]{ToledoTong86}
D.~Toledo and Y.~L.~L. Tong.
\newblock Green's theory of {C}hern classes and the {R}iemann-{R}och formula.
\newblock In {\em The {L}efschetz centennial conference, {P}art {I} ({M}exico
  {C}ity, 1984)}, volume~58 of {\em Contemp. Math.}, pages 261--275. Amer.
  Math. Soc., Providence, RI, 1986.

\bibitem[V02]{Voisin02}
C.~Voisin.
\newblock A counterexample to the {H}odge conjecture extended to {K}\"{a}hler
  varieties.
\newblock {\em Int. Math. Res. Not.}, (20):1057--1075, 2002.

\bibitem[W20]{Wu20}
X.~{Wu}.
\newblock {Intersection theory and Chern classes in Bott-Chern cohomology}.
\newblock {\em arXiv e-prints}, page arXiv:2011.13759, November 2020.

\end{thebibliography}
\end{document}